\theoremstyle{plain}
\newtheorem{theorem}{Theorem}[section]
\newtheorem{claim}[theorem]{Claim}
\newtheorem{subclaim}[theorem]{Subclaim}
\newtheorem{fact}[theorem]{Fact}
\newtheorem{corollary}[theorem]{Corollary}
\newtheorem{lemma}[theorem]{Lemma}
\newtheorem{definition}[theorem]{Definition}
\theoremstyle{remark}
\newtheorem{remark}[theorem]{Remark}
\newcommand{\forces}{\mathrel{\Vdash}}
\newcommand{\cat}{^\frown}
\newcommand{\I}{\mathbb{I}}
\newcommand{\J}{\mathbb{J}}
\newcommand{\A}{\mathbb{A}}
\newcommand{\B}{\mathbb{B}}
\newcommand{\C}{\mathbb{C}}
\newcommand{\E}{\mathbb{E}}
\renewcommand{\P}{{\mathbb{P}}}    
\newcommand{\Q}{\mathbb{Q}}
\newcommand{\QTT}{\mathbb{QTT}}
\newcommand{\TT}{\mathbb{TT}}
\newcommand{\TBC}{\mathbb{TBC}}
\newcommand{\FL}{\mathbb{L}}
\newcommand{\FM}{\mathbb{M}}
\newcommand{\K}{\mathbb{K}}
\newcommand{\R}{\mathbb{R}}
\renewcommand{\S}{\mathbb{S}}
\newcommand{\U}{\mathbb{U}}
\newcommand{\T}{\mathbb{T}}
\newcommand{\D}{\mathbb{D}}
\newcommand{\X}{\mathbb{X}}
\newcommand{\termspace}{{\mathcal A}}
\newcommand{\laux}{ {\mathbb{L}_{\mathrm{aux}}} }
\newcommand{\raux}{ {\mathbb{R}_{\mathrm{aux}}} }
\newcommand{\lauxgen}{ L_{\mathrm{aux} } }
\newcommand{\rauxgen}{ R_{\mathrm{aux} } }
\newcommand{\Agg}{{A^{gg}}}
\newcommand{\Aggforcing}{{\A^{gg}}}
\newcommand{\Vi}{V_{\mathrm{inn}}}
\newcommand{\Vd}{V_{\mathrm{def}}}
\newcommand{\Vint}[1]{V_{\mathrm{int},#1}}
\newcommand{\Vl}{V^l}
\newcommand{\Vlb}{V^{lb}}
\newcommand{\Vlbi}{V^{lbi}}
\newcommand{\formerlyrho}{\sigma} 
\DeclareMathOperator{\Add}{Add}
\DeclareMathOperator{\cof}{cof}
\DeclareMathOperator{\dom}{dom}
\DeclareMathOperator{\stem}{stem}
\DeclareMathOperator{\rge}{rge}
\DeclareMathOperator{\crit}{crit}
\DeclareMathOperator{\ot}{ot}
\DeclareMathOperator{\Coll}{Coll}
\DeclareMathOperator{\East}{East}
\DeclareMathOperator{\rk}{rk}
\DeclareMathOperator{\lh}{lh}
\DeclareMathOperator{\cf}{cf}
\DeclareMathOperator{\s}{s}
\DeclareMathOperator{\lev}{lev}
\title{The tree property on long intervals of regular cardinals}
\author[J.~Cummings]{James Cummings}
\author[Y.~Hayut]{Yair Hayut}
\author[M.~Magidor]{Menachem Magidor}
\author[I.~Neeman]{Itay Neeman}
\author[D.~Sinapova]{Dima Sinapova}
\author[S.~Unger]{Spencer Unger}
\begin{document}

\maketitle


\begin{abstract}
 In this paper we prove that the tree property can hold on regular
 cardinals in an interval which overlaps a strong limit cardinal. This is
 a crucial milestone in the long term project, tracing back to a question
 raised by Foreman and Magidor in the 1980s, of obtaining the tree property at every regular
 cardinal above $\omega_1$.
\end{abstract}

\section{Introduction}

Let $\kappa$ be a regular infinite cardinal.
A {\em $\kappa$-tree} is a tree of height $\kappa$ where every level has cardinality
less than $\kappa$, and $\kappa$ has the {\em tree property} if every $\kappa$-tree has a branch of length $\kappa$.
A {\em $\kappa$-Aronszajn tree} is a counterexample to the tree property at $\kappa$, that is to say a $\kappa$-tree with
no branch of length $\kappa$. A $\lambda^+$-tree $T$ is {\em special} if there is a function $f: T \rightarrow \lambda$ such that
$u <_T v \implies f(u) \neq f(v)$: such a tree is a robust counterexample to the tree property, in the sense that it
is a $\lambda^+$-Aronszajn tree in any outer model where $\lambda^+$ remains a cardinal.

The tree property belongs to a class of {\em compactness properties}, which are of great interest in combinatorial set theory.
Significant results about the tree property include:
\begin{itemize} 
\item (K\H{o}nig \cite[1927]{Konig})  $\omega$ has the tree property. 
\item (Specker \cite[1949]{Specker}) If $\kappa^{<\kappa} = \kappa$ then there is a special $\kappa^+$-tree.
\item If $\kappa$ is strongly inaccessible then:
\begin{itemize}  
\item (Keisler and Tarski \cite[1963]{KeislerTarski}) If $\kappa$ has the tree property then $\kappa$ is weakly compact.
\item (Monk and Scott \cite[1964]{MonkScott}) If $\kappa$ is weakly compact then $\kappa$ has the tree property. 
\end{itemize}
\item (Silver \cite[Theorem 5.9, 1972]{Mitchell})
  If $\kappa$ is uncountable and has the tree property, then $\kappa$ is weakly compact in $L$.
\item (Magidor and Shelah \cite[1996]{MagidorShelah})
  If $\lambda$ is a singular limit of cardinals which are $\lambda^+$-strongly compact, then $\lambda^+$ has the tree property.
\item (Combining results of Foreman, Magidor and Schindler \cite[2001]{ForemanMagidorSchindler},
  Schimmerling and Zeman \cite[2004]{SchimmerlingZeman}, and Jensen and Steel \cite[2013]{JensenSteel})
  If $\kappa$ and $\kappa^+$ are successive regular cardinals with the tree property,
  then there is an inner model with a Woodin cardinal.
\item (Combining results of Schimmerling and Zeman \cite[2004]{SchimmerlingZeman}, and Jensen and Steel \cite[2013]{JensenSteel})
  If $\lambda$ is a singular cardinal such that $\lambda^+$ has the tree property, then there
  is an inner model with a Woodin cardinal. 
\end{itemize}

It is known to be consistent that certain small regular cardinals can have the tree property. Mitchell \cite{Mitchell}
showed that if $\lambda < \kappa$ with $\lambda$ regular and $\kappa$ weakly compact, then there
is a generic extension by $<\lambda$-closed $\kappa$-cc forcing 
in which $2^\lambda = \kappa = \lambda^{++}$ and the tree property of $\kappa$ is preserved.
Magidor and Shelah \cite{MagidorShelah} showed it to be consistent modulo a hypothesis at the level of huge cardinals that
$\aleph_{\omega+1}$ is strong limit and has the tree property.

A natural question, raised by Foreman and by Magidor among others,
asks whether it is consistent that all regular $\kappa > \aleph_1$ should simultaneously have the tree property.
There are many obstacles to be overcome in resolving this question: in particular we need a model where
GCH fails everywhere and Jensen's ``weak square'' principle $\square^*_\lambda$ fails for every $\lambda$.
On a closely related point, we seem to need instances of strong compactness in order to violate weak square for singular $\lambda$,
but there is tension here with Solovay's theorem that SCH holds above a strongly compact cardinal. 

Our main result is:
\begin{theorem} \label{mainthm}
  Modulo a suitable large cardinal assumption, it is consistent that
  $\aleph_{\omega^2}$ is strong limit and the tree property holds for all
  regular cardinals $\kappa$ such that $\aleph_2 \le \kappa \le \aleph_{\omega^2 + 3}$.
\end{theorem}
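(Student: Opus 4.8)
The plan is to build the model by a layered forcing over a ground model carrying a long sequence of supercompact cardinals: first a preparatory Laver-style forcing to make the supercompacts indestructible under everything that will act above them; then a ``Mitchell-style'' iteration that places finitely many cardinals in each block of the $\aleph$-hierarchy and secures the tree property at the successor cardinals of each block; and finally a Prikry-type layer that singularizes the block-boundary cardinals to cofinality $\omega$, so that $\aleph_{\omega^2}$ ends up a strong limit. Concretely, fix an increasing sequence $\langle \kappa_\alpha : \alpha \le \omega^2 \rangle$ of supercompact cardinals together with two further sufficiently large cardinals $\lambda_1 < \lambda_2$ above $\kappa_{\omega^2}$ to feed the two topmost successors. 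After the Laver preparation, force with an iteration $\mathbb{P}$ which in the block between $\kappa_{\omega\cdot n}$ and $\kappa_{\omega\cdot(n+1)}$ interleaves L\'evy collapses with ``add a Cohen subset'' forcings in the Mitchell--Neeman pattern, so that the $\kappa_{\omega\cdot n+k}$ come to occupy finitely many consecutive $\aleph$'s and each $\kappa_{\omega\cdot n+k}$ with $k\ge 1$ acquires the tree property; then compose with a (diagonal supercompact) Prikry forcing that singularizes each $\kappa_{\omega\cdot n}$, interleaved with the collapses of the intervening cardinals, together with a final, more delicate singularization of $\kappa_{\omega^2}$. Taking the supremum over $n$ makes $\aleph_{\omega^2} = \sup_n \aleph_{\omega\cdot n}$ a singular strong limit of cofinality $\omega$, and a dedicated top block built from $\kappa_{\omega^2}, \lambda_1, \lambda_2$ produces $\aleph_{\omega^2+1}, \aleph_{\omega^2+2}, \aleph_{\omega^2+3}$ with the tree property.

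For the tree-property verifications I would follow the standard branch-lemma-plus-reflection template, using at each target cardinal $\kappa$ a factorization of $\mathbb{P}$ into a ``left'' auxiliary forcing of size $<\kappa$ (which adds no cofinal branch to a $\kappa$-tree) and a ``right'' auxiliary forcing that is sufficiently closed in the appropriate intermediate model (which likewise adds none), possibly after passing to a term-space forcing to boost closure; any Aronszajn tree would then already be Aronszajn in an intermediate model to which a supercompactness embedding with critical point destined to become $\kappa$ applies, and the Cohen components are exactly what makes the lifted embedding define a branch. At $\aleph_{\omega^2+1}$ and at each $\aleph_{\omega\cdot n+1}$ --- successors of singular strong limits of cofinality $\omega$ --- one argues instead through the strong Prikry property: a cofinal branch of a would-be tree $T$ would be captured by two mutually generic copies of the relevant Prikry sequence, and a disjointness/``fresh branch'' computation rules that out, after which the tree property transfers by reflection from a weakly compact or supercompact image. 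At $\aleph_{\omega^2+2}$ and $\aleph_{\omega^2+3}$ one combines the two ideas: the top block splits, relative to the Prikry layer beneath it, into a small initial part and a closed enough quotient, so a new branch is impossible on either side.

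The main obstacle, which absorbs most of the technical work, is the interaction of the Prikry singularization with the Mitchell-style collapses and Cohen forcings: the singularization of a $\kappa_{\omega\cdot n}$ (and of $\kappa_{\omega^2}$) must cohere with the block forcings simultaneously collapsing cardinals up to that point, it must not resurrect a special Aronszajn tree on a lower $\aleph_{\omega\cdot m+1}$, and the branch-adding used for the tree property at one cardinal must be invisible to the argument at another. I expect this to be handled by a ``mutual genericity'' apparatus --- a disjoint system of branches --- in which the Cohen and collapse components are arranged so that the generics relevant to distinct tree-property arguments are mutually generic over the appropriate intermediate models, together with careful indestructibility bookkeeping ensuring that each $\kappa_\alpha$ (and $\lambda_1, \lambda_2$) retains enough supercompactness after everything above it has been forced. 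The genuinely new difficulty, compared with earlier work reaching the successor of the strong limit but not beyond, is the top block: obtaining \emph{three} successors above the strong limit forces one to run Mitchell-style forcing for the double and triple successor of a singular on top of a Prikry-like, hence not nicely closed, layer, so the branch lemmas there need the strong Prikry property and Mathias-style genericity rather than mere closure; making the whole $\omega^2$-length construction uniform, so that the limit behaviour at $\aleph_{\omega^2}$ is controlled, is the other point demanding care.
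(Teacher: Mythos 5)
Your outline has the right general shape (indestructibility preparation, Mitchell--Neeman blocks, a diagonal supercompact Prikry layer with interleaved collapses, branch lemmas), but the architecture you propose for the singular limits is wrong in a way that matters. You plan to singularize \emph{each} $\kappa_{\omega\cdot n}$ by its own Prikry forcing and then run a successor-of-singular argument at each $\aleph_{\omega\cdot n+1}$. The construction in the paper does something different and for good reason: there is exactly \emph{one} Prikry-type forcing, at the cardinal $\kappa$ destined to be $\aleph_{\omega^2}$; the block boundaries $\aleph_{\omega\cdot n}$ arise as cardinals $\Lambda^{a}_\omega(\tau),\Lambda^{b}_\omega(\tau)$ attached to the Prikry points $\tau$, and these are \emph{already} singular in $V$ (each is a limit of $\omega$ supercompacts), so nothing needs to be singularized there. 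Moreover the cardinal arithmetic is deliberately arranged so that these intermediate limits are \emph{not} strong limits ($2^{\Lambda^b_{17}(\tau)}=\Lambda^b_{\omega+3}(\tau)$, etc.); if they were strong limits, Specker's theorem would force you to violate SCH at every one of them to avoid a special Aronszajn tree at the double successor, which your scheme does not address. The tree property at the successors $\Lambda^{a,b}_{\omega+1}(\tau)$ is then obtained not by a Prikry/mutual-genericity argument but by a quite different device: an ordinal $\rho$ is chosen in advance (using generic supercompactness of the cardinal that becomes $\aleph_2$) so that an auxiliary poset $\laux(\rho,\lambda)\times\raux(\lambda)$ -- which is \emph{never actually forced with} -- is certified to force the tree property at $\lambda_{\omega+1}$, and the relevant fragment of the final model is then absorbed into an extension by that poset via $\rho$-closed quotient-to-term forcings. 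This selection of $\rho$, and the reflection of $(\lambda^a,\lambda^b)$ to the Prikry points, is the mechanism that makes all $\omega$-many successor-of-singular cases below $\aleph_{\omega^2}$ go through uniformly; your sketch has no substitute for it.

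A second concrete gap is the definition of the Prikry forcing itself. The interleaved posets $\Q(\tau,\tau^*)$ have cardinality larger than $\tau^*$, so the constraint functions in the upper parts of conditions cannot simply be arbitrary; they must be coordinated by a ``guiding generic'' $K$ for $\Q(\kappa,j(\kappa))$ over an iterated ultrapower, and constructing $K$ (and arranging that the factor maps $k_n$ from the single ultrapowers have critical point above $j(\lambda^b_{17})$, via the Woodin/Gitik--Sharon trick of prescribing $j(f_i)(\eta)$) requires an auxiliary Cohen forcing $\Agg$ added before $\bar\P$ is even defined. Without this, the Prikry lemma and the chain condition of $\bar\P$ do not go through, and the entire analysis of bounded subsets of $\kappa$ collapses. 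Finally, your hypothesis is understated: two extra large cardinals on top do not suffice. One needs $\theta^+$ supercompacts above $\kappa$ together with a supercompactness embedding $j$ in whose target those cardinals remain supercompact (so that each Prikry point carries fully supercompact reflections), and the top three successors come from $(\lambda^b)^{+\omega+1},(\lambda^b)^{+\omega+2},(\lambda^b)^{+\omega+3}$ as computed after a collapse of a whole block of formerly supercompact cardinals -- with $\aleph_{\omega^2+1}$ in particular requiring the full system-of-branches machinery together with a new branch lemma tailored to a Prikry forcing whose direct extension order is not countably closed.
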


Theorem \ref{mainthm} is the first known instance where the tree property holds on
regular cardinals in an interval which overlaps a strong limit cardinal.
To be more precise, observe that if $\kappa$ is a
singular strong limit cardinal and SCH holds at $\kappa$,
then by Specker's theorem there is a special $\kappa^{++}$-tree.
It follows to get a model where all regular cardinals above $\omega_1$
have the tree property, we are required to produce a singular strong limit cardinal 
$\kappa$ where regular cardinals between $\omega_1$ and $\kappa$ all have the tree property,
$2^\kappa \ge \kappa^{++}$, and $\kappa^+$ has the tree property. 

 This has long been considered the next key step in the longstanding
 goal of obtaining the tree property everywhere. Notice that in Theorem \ref{mainthm} the strong
 limit cardinal which is overlapped is $\aleph_{\omega^2}$, not $\aleph_\omega$. This
 sidesteps another key question, which is still open, as to whether the failure of the
 Singular Cardinals Hypothesis at $\aleph_\omega$ is consistent with the tree property at $\aleph_{\omega+1}$.

The history behind Theorem \ref{mainthm} and the ingredients that go into its proof is a long one.
We survey this history very briefly, where the price of brevity is that some contributions are omitted.
In the light of the preceding discussion, we will be rather specific about cardinal arithmetic.
\begin{itemize}
\item Building on work of Abraham \cite{Abraham},
  Cummings and Foreman \cite{CummingsForeman} showed that consistently $2^{\aleph_n} = \aleph_{n+2}$
  and $\aleph_{n+2}$ has the tree property for all $n < \omega$ simultaneously. 
  They also showed that the tree property can hold at $\kappa^{++}$ where $\kappa$
  is strong limit of cofinality $\omega$ and $2^\kappa = \kappa^{++}$. 
\item Neeman \cite{NeemanUpto} showed that the tree property can hold at $\aleph_{n+2}$ for $n < \omega$
  and at $\aleph_{\omega+1}$. In this model $2^{\aleph_n} = \aleph_{n+2}$ for $n < \omega$
  and $2^{\aleph_\omega} = \aleph_{\omega+1}$. 
  Unger \cite{UngerUpto} showed it can hold for all regular
  cardinals in the interval $[\aleph_2, \aleph_{\omega + \omega})$. In this model
    $2^{\aleph_0} = \aleph_2$, $2^{\aleph_i} = \aleph_{\omega+2}$ for $1 \le i < \omega$,
    and $2^{\aleph_{\omega+i}} = \aleph_{\omega+i+2}$ for $i < \omega$.
  \item  Building on work of Gitik and Sharon \cite{GitikSharon}, Neeman \cite{ItaySCH} showed that
    the tree property can hold at $\kappa^+$ where $\kappa$
  is strong limit of cofinality $\omega$ and $2^\kappa = \kappa^{++}$. 
\item Sinapova \cite{SinapovaPrikry} produced a model of GCH where $\aleph_{\omega + 1}$
  has the tree property, using  different methods
  from those of Magidor and Shelah together with weaker hypotheses.
\item Sinapova \cite{sinapova-omega^2} produced a model where
  $\aleph_{\omega^2}$ is strong limit, $2^{\aleph_{\omega^2}} = \aleph_{\omega^2 +2}$,
  and  $\aleph_{\omega^2 + 1}$ has the tree property. 
  Sinapova and Unger \cite{SinapovaUnger} produced a model
  where  $\aleph_{\omega^2}$ is strong limit, $2^{\aleph_{\omega^2}} = \aleph_{\omega^2 +2}$,
  and both  $\aleph_{\omega^2 + 1}$ and $\aleph_{\omega^2 + 2}$ have the tree property.
\item Unger \cite{UngerIJM} proved a result closely related to Theorem \ref{mainthm}, producing
  a model where there is no {\em special} $\kappa$-Aronszajn tree for regular $\kappa$ with
  $\aleph_2 \le \kappa \le \aleph_{\omega^2 + 3}$.
  In this model none of the cardinals $\aleph_{\omega \cdot n}$ for $n$ finite
  is strong limit, in fact  $2^{\aleph_{(\omega \cdot n) + 4}} =  \aleph_{\omega \cdot (n+1) + 3}$.
  However $\aleph_{\omega^2}$ is strong limit, and $2^{\aleph_{\omega^2}} = \aleph_{\omega^2 + 3}$. 
\end{itemize}

The proof of Theorem \ref{mainthm} has several steps, which we outline here with many technicalities omitted.

\begin{itemize}

\item (Section \ref{preparation}) We start with a model $V_0$ such that $\theta$ and $\kappa$ are the first two
  supercompact cardinals, there are $\theta^+$ supercompact cardinals above $\kappa$, and if $\delta$ is the
  supremum of the first $\theta^+$ supercompact cardinals then there is $j_0 : V_0 \rightarrow M_0$
  which witnesses that $\kappa$ is $\delta^+$-supercompact and is such that supercompact cardinals
  up to $\delta$ are supercompact in $M_0$. 

\item (Section \ref{preparation})   We build a generic extension $V$ of $V_0$, 
  in which $\theta$ is the continuum and exhibits a strong form of generic supercompactness.
 We lift $j_0$ to obtain $j: V \rightarrow M$ with similar properties. 
  
\item (Section \ref{Rlambda}) Working in $V$, we use the generic supercompactness of $\theta$ to show that
  for every supercompact cardinal $\lambda$ with $\kappa < \lambda < \delta$,
  there exist an $\omega$-successor cardinal $\rho < \theta$ 
  and a forcing poset uniformly defined from $\rho$ and $\lambda$, forcing (among other things) the following conclusions:
  $\rho^+$ is $\aleph_1$, the successor of the supremum of the first $\omega$ supercompact cardinals above
  $\lambda$ is $\aleph_{\omega+1}$, and $\aleph_{\omega+1}$ has the tree property.
  We will never actually force with this forcing,
  rather we will use it as a device to show that certain cardinals in our final model have the tree property. 
  Informally we can think of $\rho$ as being ``good for $\lambda$''.

\item  (Section \ref{choiceofrho}) Still working in $V$, we select cardinals $\rho$, $\lambda^a$, $\lambda^b$ such that
  $\rho < \theta < \kappa < \lambda^a < \lambda^b < \delta$ and $\rho$ is good for both $\lambda^a$ and $\lambda^b$ in the
  sense described above. In the final model $\rho^+$ will become $\aleph_1$, $\theta$ will become $\aleph_2$, and
  $\kappa$ will be $\aleph_{\omega^2}$.

  Using the supercompactness of $\kappa$, 
  we argue that there are many triples $(\tau, \Lambda^a(\tau), \Lambda^b(\tau))$
  where $\tau < \Lambda^a(\tau) < \Lambda^b(\tau) < \kappa$ and  $(\tau, \Lambda^a(\tau), \Lambda^b(\tau))$ reflects
  the properties of $(\kappa, \lambda^a, \lambda^b)$. In particular
  $\rho$ is good for both $\Lambda^a(\tau)$ and $\Lambda^b(\tau)$. 

\item (Section \ref{moreprep}) We build a generic extension $V[L]$ of $V$ in which $\kappa$ is still highly supercompact 
  and certain cardinals above $\lambda^a$ are collapsed: in particular the cardinals $(\lambda^b)^{+n}$
  for $n \in \omega \cup \{ \omega+2, \omega+3 \}$
  as computed in $V[L]$ were all supercompact in $V$. A similar situation holds below $\kappa$ in $V[L]$ for the reflected cardinals $\Lambda^b(\tau)$.
  Working in $V[L]$ we carefully choose an embedding $j^*$ witnessing that $\kappa$
  is $<(\lambda^b)^{+\omega+3}$-supercompact,
  and derive supercompactness measures $U_n$ on $P_\kappa (\lambda^b)^{+n}$ for large enough $n < \omega$. 
  
\item (Section \ref{prikryforcing})  Working in $V[L]$ we define a forcing poset $\Aggforcing * \bar\P$
  where $\Aggforcing$ is a highly distributive auxiliary forcing, and $\bar\P$ is a diagonal supercompact Prikry forcing
  with some complex forcing posets interleaved between successive points of the generic $\omega$-sequence.
  Our final model is the extension of $V[L]$ by $\Aggforcing * \bar\P$.
  
  The definition of $\bar\P$  uses the measures $U_n$, and a ``guiding generic'' $K$
  whose definition involves  $\Aggforcing$.
  $\bar\P$  has the effect of making $\rho^+ = \aleph_1$, $\theta = \aleph_2$,
  and $\kappa = \aleph_{\omega^2}$. Above $\kappa$ all cardinals up to and including $(\lambda_b)^{+\omega}$ are collapsed
  to have cardinality $\kappa$, while cardinals above this point are preserved,
  so that $(\lambda^b)^{+\omega +n}$ becomes $\aleph_{\omega^2 + n}$.

\item (Section \ref{treeprop})
  We verify that in our final model all regular
  cardinals in $[\aleph_2, \aleph_{\omega^2})$ have the tree property.
    
\item (Section \ref{treeprop2}) 
  We verify that in our final model the cardinals $\aleph_{\omega^2+1}$,
  $\aleph_{\omega^2+2}$ and $\aleph_{\omega^2+3}$ all have the tree property. 

\end{itemize}

Our notational conventions are fairly standard. When $p$ and $q$ are forcing conditions
 we write ``$q \le p$'' when $q$ is stronger than $p$.
A poset is {\em $\tau$-closed} if every decreasing $\tau$-sequence has a lower bound,
and {\em $<\tau$-closed} if every decreasing $<\tau$-sequence has a lower bound: note that
some authors call these properties {\em $\tau^+$-closed} and {\em $\tau$-closed} respectively.
Our convention for directed closure is similar, so that a poset is {\em $<\tau$-directed closed}
if every directed subset of size less than $\tau$ has a lower bound.
When the decreasing sequences have greatest lower bounds we describe posets
 as being {\em canonically closed}: in particular
a poset is {\em canonically $\tau$-closed} if every decreasing $\tau$-sequence has a greatest lower bound,
and {\em canonically $<\tau$-closed} if every decreasing $<\tau$-sequence has a greatest lower bound.
Of course the Cohen poset $\Add(\tau, \rho)$ and the Levy collapse posets $\Coll(\tau, \rho)$ and
$\Coll(\tau, < \rho)$ are examples of canonically $<\tau$-closed posets. 
When $p_0$ and $p_1$ are compatible conditions we will sometimes abuse notation
and write ``$p_0 \wedge p_1 \forces \phi$'', when we should more properly write
``$p \forces \phi$ for every common refinement $p \le p_0, p_1$''.
Most of the forcing posets appearing in this paper have a top element, but we do not demand this.

In general we will name forcing posets with blackboard bold letters
(for example $\A$) and the associated generic objects with the corresponding
upper case italic letter (for example $A$).
When this naming convention would cause confusion we may call the $\A$-generic object $G_\A$.
If $\dot\tau$ is an $\A$-name then
${\dot\tau}[A]$ is the interpretation of $\dot \tau$ by $A$. For $x$
in the ground model, $\check x$ is the canonical name for $x$, where the
forcing for which $\check x$ is a name should always be clear from the context.

When $\kappa$ is inaccessible and $\lambda \ge \kappa$ we abuse
notation and write $P_\kappa \lambda$ for the set of $x \subseteq \lambda$ with
$x \cap \kappa \in \kappa$ and $\vert x \vert < \kappa$.
When $x, y \in P_\kappa \lambda$  we write $x \prec y$ for the relation
``$x \subseteq y$ and $\ot(x) < y \cap \kappa$. Of course we will also 
use $\prec$ for the relation ``is an elementary substructure of'' but
in practice there is no possibility of confusion.

Once the main construction begins at the start of Section \ref{setup},
we will begin to introduce many objects which are then fixed for the whole
duration of the construction. To help the reader keep track, all these ``global''
objects will be flagged as they appear and will correspond to entries in the
``Index of Notation'' section.

\section{Preliminaries}

\subsection{A  fact about $\kappa$-cc forcing}

The following Lemma is often useful.
\begin{lemma}\label{kappaccfact}
  Let $\kappa$ be regular and uncountable, let $\P$ be $\kappa$-cc and let
  $(p_\alpha)_{\alpha < \kappa}$ be a $\kappa$-sequence of conditions in $\P$. Then
  there exists $\alpha < \kappa$ such that
  $p_\alpha$ forces $\{ \beta < \kappa: p_\beta \in P \}$ to be unbounded in $\kappa$.
\end{lemma}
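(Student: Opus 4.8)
The plan is to prove the contrapositive by manufacturing a large antichain. Suppose, towards a contradiction, that for \emph{every} $\alpha < \kappa$ the condition $p_\alpha$ fails to force $\{ \beta < \kappa : p_\beta \in P \}$ to be unbounded in $\kappa$. Then for each $\alpha$ there is some $q'_\alpha \le p_\alpha$ forcing this set to be bounded; refining $q'_\alpha$ a little further to decide a bound, I obtain a condition $q_\alpha \le p_\alpha$ and an ordinal $\gamma_\alpha < \kappa$ such that $q_\alpha \forces$ ``$p_\beta$ is not in the generic filter, for every $\beta$ with $\gamma_\alpha \le \beta < \kappa$''.

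The one mildly nonroutine observation is that for conditions $q, r$ the statement $q \forces$ ``$\check r$ is not in the generic filter'' is equivalent to $q \perp r$: indeed, if $s \le q, r$ were a common refinement, then $s \le r$ would force $\check r$ \emph{into} the generic filter, contradicting $s \le q$. Conversely, if $q\perp r$ then trivially $q\forces \check r\notin\dot P$. Applying this to the conditions produced above, $q_\alpha$ is incompatible with $p_\beta$ for every $\beta \in [\gamma_\alpha, \kappa)$.

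Now I would build, by recursion on $\xi < \kappa$, a strictly increasing sequence of indices $\langle \alpha_\xi : \xi < \kappa \rangle$: given $\langle \alpha_\eta : \eta < \xi \rangle$ with $\xi < \kappa$, use the regularity of $\kappa$ to pick $\alpha_\xi < \kappa$ lying strictly above both $\sup_{\eta < \xi} \alpha_\eta$ and $\sup_{\eta < \xi} \gamma_{\alpha_\eta}$. Then for each $\eta < \xi$ we have $\alpha_\xi \ge \gamma_{\alpha_\eta}$, hence $q_{\alpha_\eta} \perp p_{\alpha_\xi}$, and since $q_{\alpha_\xi} \le p_{\alpha_\xi}$ this yields $q_{\alpha_\eta} \perp q_{\alpha_\xi}$. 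As no condition is incompatible with itself, the $q_{\alpha_\xi}$ are pairwise distinct, so $\{ q_{\alpha_\xi} : \xi < \kappa \}$ is an antichain of size $\kappa$ in $\P$, contradicting the $\kappa$-cc and completing the proof.

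There is no genuine obstacle here: the argument is a standard $\Delta$-system-free antichain construction. The only points demanding a little care are the equivalence ``$q$ forces $\check r$ out of the generic filter iff $q \perp r$'', and the bookkeeping in the recursion, where the regularity of $\kappa$ is precisely what guarantees that the suprema of the previously chosen indices $\alpha_\eta$ and of the associated bounds $\gamma_{\alpha_\eta}$ stay below $\kappa$. (The uncountability of $\kappa$ is not actually needed for this argument, only its regularity.)
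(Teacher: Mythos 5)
Your proof is correct and follows essentially the same route as the paper's: assume no $p_\alpha$ works, refine each $p_\alpha$ to a condition forcing boundedness with an explicit bound, and extract a $\kappa$-sized antichain contradicting the $\kappa$-cc. The only cosmetic differences are that the paper obtains the bound $\eta_\alpha$ from the chain condition without further extending $r_\alpha$, and selects the antichain along a club of closure points of $\alpha\mapsto\eta_\alpha$ rather than by your equivalent recursive construction.
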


\begin{proof}
  If not, for each $\alpha$ we choose $r_\alpha \le p_\alpha$ such that
  $r_\alpha$ forces $\{ \beta < \kappa: p_\beta \in P \}$ to be bounded in $\kappa$,
  and then use $\kappa$-cc to find an ordinal $\eta_\alpha < \kappa$ such that
  $r_\alpha$ forces $\{ \beta < \kappa: p_\beta \in P \} \subseteq \eta_\alpha$.
  Let $C = \{ \delta: \forall \gamma < \delta \; \eta_\gamma < \delta \}$, so
  that $C$ is club in $\gamma$.
  If $\gamma, \delta \in C$ with $\gamma < \delta$ then
  $r_\gamma$ forces $p_\delta \notin P$, so that
  $r_\gamma$ forces $r_\delta \notin P$, which is to say that $r_\gamma$ is incompatible with
  $r_\delta$. So $\{ r_\gamma : \gamma \in C \}$ is an antichain in $\P$,
  contradicting $\kappa$-cc for $\P$.
\end{proof}

\begin{remark} An easy variation on this argument shows that there is
  $\alpha$ such that $p_\alpha$ forces $\{ \beta < \kappa: p_\beta \in P \}$ to be stationary
  in $V[P]$, but this is more than we need.
\end{remark}

\subsection{Laver functions and Laver indestructibility} \label{laverfunctions}

Recall that if $\kappa$ is supercompact, there is a {\em Laver function} $f: \kappa \rightarrow V_\kappa$, 
that is to say a function such that for all $x$ and all $\lambda$ there is a $\lambda$-supercompactness embedding
$j: V \rightarrow M$ such that $\crit(j) = \kappa$ and $j(f)(\kappa) = x$.

\begin{definition} Let $f$ be a partial function defined on ordinals. A {\em closure point of
    $f$} is an ordinal $\gamma$ such that $f(\alpha) \in V_\gamma$ for all $\alpha \in \dom(f) \cap \gamma$.
\end{definition}
Thinning the domain of a Laver function $f$, we may assume that $\dom(f)$ consists
of inaccessible closure points of $f$.

Given a Laver function
$f$ the {\em Laver iteration} is an Easton support iteration $\FL$ of length $\kappa$, where
we force with $f(\alpha)[L_\alpha]$ whenever $f(\alpha)$ is a $\FL_\alpha$-name for a $<\alpha$-directed closed
forcing poset: the poset $\FL$ is $\kappa$-cc and has cardinality
$\kappa$. Laver \cite{Laver} showed that if $\kappa$ 
is supercompact then a Laver function exists, and that the Laver iteration forces the supercompactness of $\kappa$
to be indestructible by subsequent $<\kappa$-directed closed forcing.
With our conventions the Laver iteration defined from $f$ preserves the inaccessibility
of all points in $\dom(f)$.

\begin{definition} Let $I$ be an interval of cardinals, then a partial function $f$ on $I$ is a
  {\em universal Laver function on $I$} if and only if $\dom(f) \subseteq \sup(I)$,
  and $f \restriction \kappa$ is a Laver function on $\kappa$ for every supercompact $\kappa \in I$.
\end{definition} 

Adapting the standard argument for the existence of a Laver function, it is easy to see that
every interval has a universal Laver function. Since the construction of a Laver function $f$ proceeds
by choosing $f(\alpha)$ as the least counterexample to $f \restriction \alpha$ being a Laver function,
we may (and will) assume that the domain of a universal Laver function contains no supercompact cardinals.
If $f$ is a universal Laver function on $I$ then the {\em standard Laver iteration defined from $f$} is the Easton support iteration
$\FL$ which runs from $\min(I)$ to $\sup(I)$, forcing as before with $f(\alpha)[L_\alpha]$ whenever
$f(\alpha)$ is a $\FL_\alpha$-name for a $<\alpha$-directed closed
forcing poset. The poset $\FL$ is $<\min(\dom(f))$-directed closed and makes every supercompact cardinal in $I$ indestructible. 

 We will need a strengthening of the concept of Laver indestructibility
 due to Neeman \cite{NeemanUpto}. 
 
 \begin{definition} \label{indestructibleLaverfunction}
   Let $\kappa$ be a supercompact cardinal. An {\em indestructible Laver function for $\kappa$}
   is a partial function $\phi$ from $\kappa$ to $V_\kappa$ such that for every $x \in V$, $\lambda \ge \kappa$
   and $<\kappa$-directed closed forcing extension $V[E]$, there is an elementary embedding
   $\pi: V[E] \rightarrow N$ such that:
   \begin{enumerate}
   \item The embedding $\pi$ is defined in $V[E]$, and witnesses that $\kappa$ is $\lambda$-supercompact in $V[E]$.
   \item $\pi \restriction ON$ is definable in $V$. 
   \item $\kappa \in \dom(\pi(\phi))$ and $\pi(\phi)(\kappa) = x$.
   \item The first point in $\dom(\pi(\phi))$ past $\kappa$ is greater than $\lambda$.
   \end{enumerate}
 \end{definition}

 Note that an indestructible Laver function for $\kappa$ can only exist when $\kappa$
 is indestructibly supercompact. Adapting the arguments of \cite{NeemanUpto} to use
 a universal Laver function, one can readily get a {\em universal indestructible Laver function}.
 
 \begin{fact} \label{universalindestructibleLaverfunction} Let $I$ be an interval of cardinals.
   Then there is a forcing poset $\FL$ such that in the extension by $\FL$,
   there exists a partial function $\phi$  such that $\phi \restriction \kappa$ is
   an indestructible Laver function for every $V$-supercompact cardinal $\kappa \in I$.
\end{fact}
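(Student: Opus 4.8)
The plan is to adapt the construction of an indestructible Laver function from \cite{NeemanUpto}, carrying it out uniformly along the interval $I$. First I would fix a universal Laver function $f$ on $I$, and by the thinning discussed in Section \ref{laverfunctions} arrange that $\dom(f)$ consists of inaccessible closure points of $f$, contains no supercompact cardinal, and moreover that consecutive elements of $\dom(f)$ are very far apart (say, each element is larger than the next inaccessible above the previous one raised to a large power); we also arrange that each value $f(\alpha)$ is coded as a pair $(\dot\Q_\alpha,\dot x_\alpha)$ in which $\dot\Q_\alpha$ is an $\FL_\alpha$-name for a $<\alpha$-directed closed poset and $\dot x_\alpha$ is an $(\FL_\alpha*\dot\Q_\alpha)$-name. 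Let $\FL$ be the standard Laver iteration defined from $f$, i.e. the Easton-support iteration running from $\min(I)$ to $\sup(I)$ which at stage $\alpha\in\dom(f)$ forces with $\dot\Q_\alpha[L_\alpha]$; as recalled in Section \ref{laverfunctions}, $\FL$ is $<\min(\dom f)$-directed closed, has good chain condition at each supercompact, and makes every supercompact $\kappa\in I$ indestructibly supercompact. In $V[\FL]$ define $\phi$ by $\dom(\phi)=\dom(f)$ and $\phi(\alpha)=\dot x_\alpha[L_{\alpha+1}]$, the interpretation of the anticipated target by the generic added at stage $\alpha$.

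Next I would verify that $\phi\restriction\kappa$ is an indestructible Laver function for each $V$-supercompact $\kappa\in I$. Fix such a $\kappa$, a $<\kappa$-directed closed extension $V[\FL][E]$, a target $x$, and $\lambda\geq\kappa$. Let $\Q$ be the poset witnessing that $E$ is $\Q$-generic over $V[\FL]$ and $\dot x^*$ a $\Q$-name with $\dot x^*[E]=x$. Using that $f\restriction\kappa$ is a Laver function in $V$, choose a supercompactness embedding $j:V\to\bar M$ with $\crit(j)=\kappa$, of degree much larger than $\lambda$ and than the ranks of names for $\Q$ and $\dot x^*$, with $j(f)(\kappa)=(\Q,\dot x^*)$, and — using the sparseness built into $\dom(f)$ together with the closure of $\bar M$ — such that $\dom(j(f))\cap(\kappa,\lambda]=\emptyset$. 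Then $j(\FL)$ factors as $\FL*\dot\Q*\dot\FL_{\mathrm{tail}}$, forcing with $E$ realizes the middle factor, and a standard master-condition and lifting argument — using the directed closure of $\dot\FL_{\mathrm{tail}}$ above $\kappa$, the closure of $\bar M$, and the chain condition of $\FL_\kappa$ — produces, inside $V[\FL][E]$, an elementary embedding $\pi:V[\FL][E]\to N$ witnessing that $\kappa$ is $\lambda$-supercompact there; this is clause (1). Lifting never alters the action on ordinals, so $\pi\restriction\ord=j\restriction\ord$, which is definable in $V$ and hence in $V[\FL]$: clause (2). Since $\dom(\pi(\phi))=j(\dom f)\ni\kappa$ and $\pi(\phi)(\kappa)=j(\dot x_\bullet)[\text{stage-}\kappa\text{ generic of }j(\FL)]=\dot x^*[E]=x$ we get clause (3), while clause (4) follows from $\dom(j(f))\cap(\kappa,\lambda]=\emptyset$.

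Finally I would address uniformity. Because $\FL$ is an Easton-support iteration whose support avoids supercompact cardinals, for each supercompact $\kappa\in I$ it factors as $\FL_\kappa*\dot\FL_{[\kappa,\sup I)}$ with $\FL_\kappa$ of size $\kappa$ and $\kappa$-cc and the tail sufficiently closed (in particular adding no new subsets of $\kappa$); since $\phi\restriction\kappa$ depends only on $L_\kappa$, the entire verification of the previous paragraph applies with $V[\FL]$ replaced by $V[\FL_\kappa]$ and the resulting embedding then lifted through the tail, so the single function $\phi$ works for all $V$-supercompact $\kappa\in I$ at once. The main obstacle is precisely the bookkeeping inside the lifting argument: one must choose $j$ of high enough degree of supercompactness to anticipate $\Q$, $\dot x^*$ and the gap past $\lambda$; build the master condition for the factor realized by $E$; and check that the target $N$ is closed enough to witness $\lambda$-supercompactness in $V[\FL][E]$. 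All of this is routine but delicate, and is exactly the content of the argument of \cite{NeemanUpto} adapted from a single supercompact to a universal Laver function on $I$.
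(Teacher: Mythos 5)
Your construction is exactly the paper's: split the universal Laver function $f$ into a forcing-coordinate and a target-coordinate, run the standard Laver iteration against the first, and define $\phi$ by interpreting the second along the generic — the paper's proof is just these two lines, deferring the verification to \cite{NeemanUpto}, while you additionally spell out that (standard) lifting argument. The only cosmetic difference is that you interpret the target name by $L_{\alpha+1}$ rather than $L_\alpha$, which is harmless for the stated definition.
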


\begin{proof} We do a straightforward adaptation of the argument from the beginning of \cite[Section 4]{NeemanUpto}.
  Let $f$ be a universal Laver function on $I$,
  and derive functions $f_0$ and $f_1$ from $f$ such that
  $f(\alpha) = (f_0(\alpha), f_1(\alpha))$ when $f(\alpha)$ is an ordered pair
  and the values $f_i(\alpha)$ are undefined otherwise. Let $\FL$ be the
  standard Laver iteration defined from $f_0$, and let $L$ be $\FL$-generic
  over $V$. Define $\phi(\alpha) = f_1(\alpha)[L_\alpha]$ at
  every point $\alpha$ such that $f_1(\alpha)$ is an $\FL_\alpha$-name.
 \end{proof}

\begin{remark} The poset $\FL$ does not create any new instances of supercompactness, and
   by convention the domain of a universal Laver function does not include any supercompact cardinals.
   It follows that in the extension by $\FL$, $\kappa \notin \dom(\phi)$ and 
   $\phi \restriction \kappa$ is an indestructible Laver function for every
   supercompact $\kappa \in I$.
\end{remark}
   
Unfortunately the property of Laver indestructibility is quite fragile:
\begin{fact}[Hamkins \cite{JoelSup}] \label{superdestructible}
  If $\kappa$ is supercompact and $\Q$ is a non-trivial
  forcing poset with $\vert \Q \vert < \kappa$,
  then $\kappa$ is not indestructible in the extension
  by $\Q$. In fact $\kappa$ becomes ``superdestructible'',
  that is to say its supercompactness (even its weak compactness)
  is destroyed by any further $<\kappa$-closed forcing which adds a new subset of $\kappa$.
\end{fact}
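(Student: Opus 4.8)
The plan is to reduce to the superdestructibility assertion and then establish that by an application of Hamkins' gap forcing theory. For the reduction: suppose it is shown that whenever $Q$ is $\Q$-generic over $V$, every $<\kappa$-closed poset $\R \in V[Q]$ that adds a new subset of $\kappa$ destroys the weak compactness of $\kappa$. Then $\kappa$ is not indestructibly supercompact in $V[Q]$, since by the L\'evy--Solovay theorem $\kappa$ remains supercompact there, whereas $\Add(\kappa,1)^{V[Q]}$ is a non-trivial $<\kappa$-directed closed poset in $V[Q]$ which adds a new subset of $\kappa$ and hence, by assumption, destroys even the weak compactness (a fortiori the supercompactness) of $\kappa$. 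So it is enough to prove superdestructibility. Fix $Q$ as above, fix $\R \in V[Q]$ which is $<\kappa$-closed and adds a new subset $A$ of $\kappa$, let $R$ be $\R$-generic over $V[Q]$, and suppose toward a contradiction that $\kappa$ is weakly compact in $V[Q][R]$. (Throughout, $\kappa$ stays inaccessible, as inaccessibility is preserved by small and by $<\kappa$-closed forcing.)

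Two points about $\Q$ and $\R$ will be used. First, since $\Q$ is non-trivial and $\mu := \vert\Q\vert < \kappa$, $\Q$ adds a new subset $a$ of $\mu$: enumerate a dense subset of $\Q$ as $\{r_\xi : \xi < \mu\}$ and set $a := \{\xi < \mu : r_\xi \in Q\}$; were $a \in V$, then $Q$ would be the upward closure in $\Q$ of $\{r_\xi : \xi \in a\}$ and so would lie in $V$, which is absurd. Moreover $a = \dot a[Q]$ for some $\Q$-name $\dot a \in V_\kappa$. Second, because $\R$ is $<\kappa$-closed, the iteration $\Q * \dot\R$ admits a gap at $\mu < \kappa$ in Hamkins' sense: it is a non-trivial poset of size $\mu$ followed by a $<\mu^+$-closed poset. (It is exactly the non-triviality of $\Q$ that makes the low part of this factorization non-trivial, as the theorem requires.)

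The heart of the argument is cleanest when $\kappa$ is actually supercompact in $V[Q][R]$, so I would do that case first. Fix a $\lambda$-supercompactness embedding $j : V[Q][R] \to N$ with $\lambda$ large. Because $\Q * \dot\R$ admits a gap at $\mu < \kappa = \crit(j)$, the gap forcing theorem gives that $\bar\jmath := j \restriction V \colon V \to \bar N$ is an embedding definable in $V$; hence $\bar N \subseteq V$, and taking $\lambda$ large enough we may assume $\bar N$ computes $\mathcal P(\kappa)$ correctly, i.e.\ $\mathcal P(\kappa)^{\bar N} = \mathcal P(\kappa)^V$. The theorem also gives that $N$ is a $\bar\jmath(\Q * \dot\R)$-generic extension of $\bar N$; since $\vert\Q\vert < \crit(\bar\jmath)$ we have $\bar\jmath(\Q) = \Q$, so $N = \bar N[Q][R^*]$ where $R^*$ is generic over $\bar N[Q]$ for the $\bar\jmath$-image of $\R$. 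As $V[Q] \models$ ``$\R$ is $<\kappa$-closed'', elementarity of the canonical lift $V[Q] \to \bar N[Q]$ of $\bar\jmath$ shows this image poset to be $<\bar\jmath(\kappa)$-closed, hence $<\kappa^+$-closed, over $\bar N[Q]$, so it adds no new subset of $\kappa$. Since also $\mathcal P(\kappa)^{\bar N[Q]} = \mathcal P(\kappa)^{V[Q]}$ (we are forcing with the same small poset $\Q$ over models that agree on $\mathcal P(\kappa)$), we conclude $\mathcal P(\kappa)^N = \mathcal P(\kappa)^{V[Q]}$. But $A = j(A) \cap \kappa$ lies in $N$, so $A \in \mathcal P(\kappa)^{V[Q]}$, contradicting that $A$ is new over $V[Q]$.

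When $\kappa$ is only assumed weakly compact in $V[Q][R]$ one repeats this with a weak compactness embedding $j : M \to N$, $\crit(j) = \kappa$, where $M$ is a transitive model of $\mathrm{ZFC}^-$ of size $\kappa$, closed under $<\kappa$-sequences, containing $A$, an $\R$-name for it, $\dot a$, $Q$, $R$, $\Q$, $\R$ and $\mu$ (one may assume $\vert\R\vert \le \kappa$ after a routine reflection, or handle larger $\R$ separately); using the smallness and non-triviality of $\Q$, $M$ decomposes as $M_0[Q][R]$ with $M_0 = M \cap V$, and the restriction of $j$ to $M_0$ behaves like a ground embedding in the sense supplied for the ambient universe by the gap forcing theorem, so the same computation forces $A$ into $V[Q]$, a contradiction. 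The delicate step is precisely this last one: verifying the set-model analogue of the gap forcing / L\'evy--Solovay factorization — that $j \restriction M_0$ factors through a model of $V[Q]$ in the required way — and it is here that the fresh bounded set $a$, and hence the non-triviality of $\Q$, is genuinely needed. This is the content of Hamkins' argument in \cite{JoelSup}.
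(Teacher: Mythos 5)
The paper does not prove this statement: it is quoted as a black-box Fact with a citation to Hamkins, so there is no in-paper argument to compare against. Judged on its own terms, your reduction to superdestructibility and your treatment of the supercompactness case are correct. The route you take there — factor $\Q * \dot\R$ as a gap at $\vert\Q\vert^+$, apply the gap forcing theorem to get $\bar\jmath = j\restriction V$ definable in $V$ with $N$ a $\bar\jmath(\Q*\dot\R)$-generic extension of $\bar N$, and then observe that $\mathcal P(\kappa)^N = \mathcal P(\kappa)^{V[Q]}$ so that $A = j(A)\cap\kappa$ cannot be new — is valid, though it is anachronistic: Hamkins' original 1998 argument predates the gap forcing theorem and instead works directly with the freshness of $A$ (every initial segment lies in $V[Q]$) together with a ``key lemma'' showing that a weak compactness embedding $j:M\to N$ in $V[Q][R]$ restricts/factors through the small extension $V[Q]$. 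The gap forcing theorem buys you a cleaner class-embedding argument at the cost of invoking much heavier machinery, and only for the supercompact case.

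The genuine gap is the weak compactness clause, which is the actual content of ``superdestructible'' as stated. Your treatment of it consists of saying that one repeats the computation with a set-sized embedding $j:M\to N$ and that ``the same computation forces $A$ into $V[Q]$,'' while simultaneously acknowledging that the set-model analogue of the gap-forcing factorization is the delicate point and deferring it to \cite{JoelSup}. That is circular as a proof: the step you defer is precisely the theorem being proved. The gap forcing theorem as you invoke it applies to embeddings definable in the full universe $V[Q][R]$ with suitable closure of the target, and it does not hand you the corresponding factorization for an arbitrary weak compactness embedding between $\kappa$-sized transitive models; establishing that factorization (Hamkins' Key Lemma) is where the non-triviality of $\Q$ and the freshness of $A$ do their real work, and it is exactly what your writeup omits. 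So the proposal proves ``small forcing ruins indestructibility of supercompactness'' but not the superdestructibility statement as quoted.
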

Since indestructibility plays a central role in our arguments, we will need to make repeated appeals
to Fact \ref{universalindestructibleLaverfunction}. 

\begin{fact} \label{laversmallfact}
  Let $\Q$ be a forcing poset, let $\vert \Q \vert < \mu$ and let $f$ be a universal Laver function
  defined up to $\mu$. Let $Q$ be $\Q$-generic over $V$ and define $F \in V[Q]$ by setting
  $F(\alpha) = f(\alpha)[Q]$ for all $\alpha$ such that $f(\alpha)$ is a $\Q$-name. Then
  $F$ is a universal Laver function on the interval $(\vert {\Q} \vert, \mu)$ in $V[Q]$.
\end{fact}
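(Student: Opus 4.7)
The plan is to run the standard Levy--Solovay style lifting argument while tracking how the reinterpretation $F$ of $f$ behaves under embeddings whose critical point lies above $\vert \Q \vert$.

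Fix a cardinal $\kappa$ with $\vert\Q\vert < \kappa < \mu$ that is supercompact in $V[Q]$. Since $\vert\Q\vert < \kappa$, the classical Levy--Solovay theorem also makes $\kappa$ supercompact in $V$, so by universality of $f$ the restriction $f \restriction \kappa$ is a Laver function for $\kappa$ in $V$. I must show that $F \restriction \kappa$ is a Laver function for $\kappa$ in $V[Q]$. Given $x \in V[Q]$ and $\lambda \ge \kappa$, pick any $\Q$-name $\dot{x} \in V$ for $x$ and apply the Laver property of $f$ in $V$ to obtain a $\lambda$-supercompactness embedding $j : V \to M$ with $\crit(j) = \kappa$ and $j(f)(\kappa) = \dot{x}$.

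Because $\vert\Q\vert < \kappa = \crit(j)$, we have $\Q \in V_\kappa$, $j$ is the identity on $\Q$, and $j(\Q) = \Q$; so $Q$ is automatically $j(\Q)$-generic over $M$ and $j$ lifts inside $V[Q]$ to an elementary $j^* : V[Q] \to M[Q]$ by $j^*(\dot\tau[Q]) = j(\dot\tau)[Q]$. By elementarity and the uniform definition of $F$ from $f$ and $Q$, we obtain $j^*(F)(\kappa) = j(f)(\kappa)[Q] = \dot{x}[Q] = x$. The embedding $j^*$ witnesses $\lambda$-supercompactness of $\kappa$ in $V[Q]$: the usual seed argument with $j^*[\lambda] = j[\lambda]$ yields a normal fine measure on $(P_\kappa\lambda)^{V[Q]}$, and the closure $M[Q]^\lambda \subseteq M[Q]$ in $V[Q]$ follows from $M^\lambda \subseteq M$ in $V$ together with the observation that any $\lambda$-sequence in $V[Q]$ has a $\Q$-name in $V$ which can be decomposed componentwise inside $M$.

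There is no serious obstacle here; the entire argument is the standard Levy--Solovay phenomenon that small forcing is invisible to embeddings with critical point above $\vert\Q\vert$, combined with the routine verification that name interpretation $\alpha \mapsto f(\alpha)[Q]$ commutes with the trivial lift of $j$. The one piece of bookkeeping worth flagging is checking that every target $x \in V[Q]$ is realised as $j^*(F)(\kappa)$, but this is immediate because every element of $V[Q]$ has a $\Q$-name in $V$ and the Laver property of $f$ in $V$ permits us to aim $j(f)(\kappa)$ at any prescribed such name.
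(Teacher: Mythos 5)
Your proposal is correct and follows essentially the same route as the paper's proof: choose a name $\dot x$ for the target $x$, use the Laver property of $f$ in $V$ to aim $j(f)(\kappa)$ at $\dot x$, lift $j$ through the small forcing $\Q$ (Levy--Solovay), and observe that the lifted embedding sends $F$ to a function with $j(F)(\kappa) = \dot x[Q] = x$. The extra detail you supply on verifying $\lambda$-supercompactness of the lift is routine and consistent with what the paper leaves implicit.
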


\begin{proof} Let $\vert \Q \vert < \kappa < \mu$ with $\kappa$ supercompact in $V[Q]$,
  so that $\kappa$ is supercompact in $V$ and $f \restriction \kappa$ is a Laver function.
  Let $\gamma > \kappa$, let $x \in H_\gamma^{V[Q]}$ and let $x = {\dot x}[Q]$ for some
  $\dot x \in H_\gamma$. Choose $j:V \rightarrow M$ witnessing $\kappa$ is $\gamma$-supercompact in $V$
  with $j(f)(\kappa) = \dot x$, then $j$ lifts to an embedding $j:V[Q] \rightarrow M[Q]$
  such that $j$ witnesses $\kappa$ is $\gamma$-supercompact in $V[Q]$ and  $j(F)(\kappa) = x$. 
\end{proof} 

\begin{remark} Note that the Laver functions $\phi$ and $F$ from Facts 
  \ref{universalindestructibleLaverfunction} and \ref{laversmallfact} are derived from an initial
  Laver function $f$ in such a way that $\rk(\phi(\alpha)), \rk(F(\alpha)) \le \rk(f(\alpha))$. It follows
  that closure points of $f$ are automatically closure points of its derived Laver functions.
\end{remark}

\subsection{Trees and systems} \label{branchlemmas}

We will sometimes be in a situation where $T$ is a tree, we know that $T$ has a branch in some
 generic extension, and we want to conclude that $T$ has a branch in $V$. 
In this situation we will often use one of the following {\em preservation lemmas} 
or {\em branch lemmas}. 

\begin{fact}[Unger \cite{ungersucc}]\label{chain} Let $\kappa$ be regular and uncountable.  If $\P \times \P$ is
  $\kappa$-cc, then $\P$ has the $\kappa$-approximation property. In particular
  forcing with $\P$ cannot add a branch through a tree
of height $\kappa$. \end{fact}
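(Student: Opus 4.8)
The plan is to prove the two assertions in turn: first that $\kappa$-cc of $\P \times \P$ yields the $\kappa$-approximation property, and then that the $\kappa$-approximation property rules out new branches through trees of height $\kappa$.

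For the approximation property, suppose $\dot{X}$ is a $\P$-name and $p \in \P$ forces that $\dot{X} \subseteq \check{\lambda}$ for some ordinal $\lambda$ and that every set of size $<\kappa$ in $V$ approximates $\dot{X}$ correctly (i.e.\ $\dot{X} \cap z \in V$ for all $z \in V$ with $|z| < \kappa$), yet $p$ does not decide $\dot{X}$ to lie in $V$. Then I would build below $p$ a tree of conditions $(p_s)_{s \in {}^{<\kappa}2}$ together with ordinals $\gamma_s$ such that $p_{s^\frown i} \forces \check{\gamma_s} \in \dot{X} \iff i = 1$, using at successor steps the fact that no condition decides $\dot X$, and at limit steps using that $\P$ is $\kappa$-closed enough — except it is not given to be closed, so instead the standard trick is to run the construction for $\kappa$ many steps along a single branch and derive a contradiction with $\kappa$-cc of the \emph{product}. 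Concretely: the distinct branches of the tree give, for each pair $s \neq t$ at the same level, a disagreement ordinal, and from two branches $b, b'$ one reads off a condition in $\P \times \P$ forcing $\dot X$ and $\dot X$ to disagree on a ground-model set of size $<\kappa$ (the set of disagreement ordinals along the branches), contradicting that $p$ forces all small approximations to be in $V$. Antichains of size $\kappa$ in the tree then produce antichains of size $\kappa$ in $\P \times \P$. I expect the bookkeeping in this first part — getting the tree-of-conditions construction to close up at limits without a closure hypothesis — to be the main obstacle, and the resolution is precisely to phrase everything so that only $\kappa$-cc of $\P \times \P$ (not closure) is used, following Unger's argument in \cite{ungersucc}.

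For the second assertion, let $T$ be a tree of height $\kappa$ in $V$, and suppose toward a contradiction that some $\P$-generic extension $V[G]$ has a cofinal branch $b$ through $T$ with $b \notin V$. I would consider the associated $\P$-name $\dot{b}$ and check that it satisfies the hypothesis of the approximation property: for any $z \in V$ with $|z| < \kappa$, the intersection $b \cap z$ is determined by the restriction of the branch to the levels of $T$ meeting $z$, which form a bounded subset of $\kappa$ since $|z| < \kappa = \operatorname{cf}(\kappa)$; but a branch restricted to a bounded initial segment of its levels is an element of $V$, as $T \in V$ and the branch below any fixed level is just a node of $T$ (or a $V$-computable chain of nodes). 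Hence every $<\kappa$-sized approximation of $b$ lies in $V$, so by the approximation property $b \in V$, a contradiction.

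I would assemble the two parts as: apply the first to conclude $\P$ has the $\kappa$-approximation property, then apply the second. The only subtlety worth a sentence in the writeup is that "tree of height $\kappa$" here is used without a bound on level size, so one should note that a cofinal branch is literally a function $\kappa \to T$ whose restrictions to ordinals $<\kappa$ are in $V$ — which is exactly the content checked above — rather than invoking any counting of nodes.
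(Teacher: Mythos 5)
The paper does not actually prove this Fact --- it is quoted from Unger's paper \cite{ungersucc} with no argument given --- so I am assessing your proposal on its own merits. Your second half is fine: for a cofinal branch $b$ of a tree $T \in V$ of height $\kappa$ and any $z \in V$ with $\vert z \vert < \kappa$, the nodes of $z$ lie below some level $\alpha < \kappa$ by regularity, and $b$ restricted to levels below $\alpha$ is the set of $<_T$-predecessors of a single node of $b$, hence in $V$; so the approximation property applies and $b \in V$.

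The gap is in the first half. You correctly note that a tree of conditions indexed by ${}^{<\kappa}2$ cannot be closed off at limits without a closure hypothesis, but the replacement you sketch does not yield a contradiction. A pair of conditions in $\P \times \P$ forcing the two copies of $\dot X$ to disagree on a small ground-model set contradicts nothing: mutually generic copies of a fresh set are expected to disagree, and the hypothesis ``all $<\kappa$-approximations of $\dot X$ lie in $V$'' is about each copy separately, not about agreement between copies. What is missing is the mechanism that manufactures a $\kappa$-sized antichain in $\P \times \P$. The standard argument is a \emph{linear} recursion of length $\kappa$, needing no closure: having chosen $(p_\beta, q_\beta, \gamma_\beta)$ for $\beta < \alpha$, set $d_\alpha = \{\gamma_\beta : \beta < \alpha\}$, use the fact that $p \forces \dot X \cap \check d_\alpha \in \check V$ (this is exactly where the approximation hypothesis enters) to find a \emph{single} condition $r_\alpha \le p$ deciding $\dot X \cap d_\alpha$, and then split below $r_\alpha$ to obtain $p_\alpha, q_\alpha \le r_\alpha$ and $\gamma_\alpha$ about which they decide ``$\gamma_\alpha \in \dot X$'' oppositely (such a split exists below every condition, else $\dot X$ is decided and hence in $V$). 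Since $p_\alpha$ and $q_\alpha$ both extend $r_\alpha$, they agree about each $\gamma_\beta$ with $\beta < \alpha$, and whichever truth value they share clashes with exactly one of $p_\beta, q_\beta$; hence $(p_\alpha, q_\alpha) \perp (p_\beta, q_\beta)$ in $\P \times \P$, and $\{(p_\alpha, q_\alpha) : \alpha < \kappa\}$ is an antichain contradicting the $\kappa$-cc. This ``one condition deciding the past, then split'' step is the heart of the proof and is absent from your sketch.
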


\begin{fact}[{Unger \cite[Lemma 6]{ungerind}}] \label{formerlyclosed}
  Let $\kappa$ and $\eta$ be regular and uncountable with $\kappa < \eta \le 2^{<\kappa}$.
  Let $\P$ be $<\kappa$-closed in $V$ and let $V'$ be a $\kappa$-cc extension of $V$.
  Then forcing with $\P$ over $V'$ cannot add a branch through an $\eta$-tree in $V'$. 
\end{fact}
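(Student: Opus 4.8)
The plan is to run the standard "tuning fork" or branch-splitting argument for adding branches with closed forcing, adapted to the setting where the ground model $V'$ is only a $\kappa$-cc extension of a model $V$ over which $\P$ is $<\kappa$-closed. Suppose toward a contradiction that $T$ is an $\eta$-tree in $V'$, that $\P$ is $<\kappa$-closed in $V$, and that some condition in $\P$ forces (over $V'$) that $\dot b$ is a new cofinal branch through $T$. Since $\eta \le 2^{<\kappa}$ computed in $V$, we may fix in $V$ an injection from ${}^{<\kappa}2$ (or from some set of size at least $\eta$ coded by bounded subsets of $\kappa$) that will let us "read off" enough branch information to manufacture a large antichain back in $V'$; this is where the cardinal arithmetic hypothesis $\eta \le 2^{<\kappa}$ is used.

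First I would build, inside $V$, a binary tree of conditions $(p_s : s \in {}^{<\kappa}2)$ in $\P$, decreasing along each branch, together with ordinals $(\alpha_s : s \in {}^{<\kappa}2)$ below $\eta$, such that for each $s$ the two extensions $p_{s^\frown 0}$ and $p_{s^\frown 1}$ force incompatible values for $\dot b$ at level $\alpha_s$ of $T$ — i.e. they decide $\dot b(\alpha_s)$ to be distinct nodes. The construction is by recursion on the length of $s$: at successor stages we use that $\dot b$ is forced to be a \emph{new} branch, so below any condition $p$ there are two extensions disagreeing about membership in the generic at some level; at limit stages of length less than $\kappa$ we use $<\kappa$-closure of $\P$ in $V$ to find lower bounds. (The branch splitting data $\alpha_s$ and the nodes involved live in $V'$, so strictly one runs the recursion choosing names and uses that $\P$ is still $<\kappa$-closed enough; alternatively, since $\P \in V$ and the forcing relation for "$\dot b(\alpha)$ equals a given node" is a $\P$-over-$V'$ relation, one carries out the recursion in $V'$ using the $<\kappa$-closure of $\P$, which transfers from $V$ because $V'$ adds no new $<\kappa$-sequences of ordinals below $\kappa$ — here one invokes that $\P$ being $<\kappa$-closed in $V$ together with $\kappa$-cc of the extension $V \subseteq V'$ gives that $\P$ remains $<\kappa$-distributive over $V'$, which suffices.)

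Having the splitting tree, for each $x \in {}^{\kappa}2$ in $V'$ let $p_x$ be a lower bound of the chain $(p_{x \restriction \alpha} : \alpha < \kappa)$ — again using $<\kappa$-closure, and noting the chain has length $\kappa$ but we only need lower bounds of its proper initial segments to \emph{define} things; for the final contradiction I would instead fix a single branch and extract a node of $T$: the point is that distinct $x, y \in {}^\kappa 2$ agreeing up to some $s$ and then splitting give conditions forcing $\dot b$ to pass through distinct nodes at level $\alpha_s$, hence the map $x \mapsto$ (the node of $T$ that $p_{x\restriction\kappa}$ forces $\dot b$ to hit at a suitable level) is injective on $2^\kappa$-many inputs into a single level of $T$, contradicting that $T$ has levels of size $<\eta \le 2^{<\kappa}$. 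The cleaner packaging: the splitting tree yields, in $V'$, an antichain in $\P$ of size $2^{<\kappa}$ whose elements are indexed by a level of $T$ via the forced values of $\dot b$, but a level of $T$ has size $<\eta \le 2^{<\kappa}$, so two incompatible conditions force the same value of $\dot b$ at that level — absurd.

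The main obstacle, and the step deserving the most care, is the transfer of closure: $\P$ is $<\kappa$-closed in $V$, not in $V'$, so limit stages of the recursion of length $<\kappa$ need lower bounds that exist in $V$ but whose \emph{specification} (which $p_s$ to take a lower bound of) is a $<\kappa$-sequence living in $V'$. This is handled precisely by the observation that a $\kappa$-cc extension adds no new $<\kappa$-sequences of ground-model ordinals into the ground model's sets in the relevant sense — more carefully, one shows $\P$ is $<\kappa$-distributive as a forcing over $V'$: any $<\kappa$-sequence of dense open subsets of $\P$ lying in $V'$ is covered, by $\kappa$-cc, by a $<\kappa$-sequence of sets in $V$ each of which is dense in $\P$, and intersecting those (using $<\kappa$-closure in $V$) produces the needed condition. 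Once this distributivity is in hand the argument is the routine branch-splitting contradiction; everything else is bookkeeping about levels of the tree and the definition of the antichain. This is exactly the content of \cite[Lemma 6]{ungerind}, so one could also simply cite it, but the sketch above is how I would reconstruct the proof.
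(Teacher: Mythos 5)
The paper does not prove this Fact itself (it is quoted from Unger), but its proof of the closely analogous Lemma \ref{widersystems} is the template, and measured against that your sketch has two genuine gaps. First, the height of your splitting tree is wrong: you index it by ${}^{<\kappa}2$, so the final step requires lower bounds of decreasing $\kappa$-sequences, which $<\kappa$-closure does not provide, and your attempted workaround (``we only need lower bounds of proper initial segments'') does not produce the conditions $p_x$ you then use. The correct move is to take $\delta<\kappa$ minimal with $2^\delta\ge\eta$ (such $\delta$ exists because $\eta$ is regular and $\eta\le 2^{<\kappa}$), build the tree on ${}^{<\delta}2$, and take lower bounds $p_f$ only for $f\in{}^{\delta}2$; then $2^{<\delta}<\eta$ and the regularity of $\eta$ let you fix a single level $\gamma^*$ above all splitting levels, and $2^\delta\ge\eta$ gives the pigeonhole against a level of size $<\eta$. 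Relatedly, your ``cleaner packaging'' misstates the contradiction: two incompatible conditions forcing the same node is not absurd; what is absurd is that $q_f\le p_{\sigma^\frown 0}$ and $q_g\le p_{\sigma^\frown 1}$ force the same node at level $\gamma^*$ and hence the same (unique) predecessor at the splitting level, contradicting the forced divergence there.

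Second, and more seriously, your handling of the $V$-versus-$V'$ issue does not work. A $\kappa$-cc extension certainly does add new $<\kappa$-sequences of ordinals, so the decreasing sequences arising at limit stages of a recursion run in $V'$ need not lie in $V$, and $<\kappa$-closure of $\P$ in $V$ gives them no lower bound; nor does $<\kappa$-distributivity of $\P$ over $V'$ (which is true, by Easton's lemma, but distributivity never yields lower bounds of externally given decreasing sequences). The correct mechanism, visible in Claims \ref{spencerclaimone}--\ref{spencerclaimthree} of the proof of Lemma \ref{widersystems}, is to run the entire construction in $V$ with respect to $\E\times\P$ (where $V'=V[E]$): at each splitting node one builds, in $V$, a maximal antichain $A$ of the $\kappa$-cc poset $\E$ together with a single pair of $\P$-conditions such that for every $e\in A$ the triple forces divergence at some level below a bound $\gamma^*$. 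The $\kappa$-cc guarantees the antichain is hit after fewer than $\kappa$ steps, so the decreasing $\P$-sequences produced along the way lie in $V$ and have length $<\kappa$, and closure applies. Your alternative (a) gestures at this but omits the antichain mechanism, which is precisely where the $\kappa$-cc hypothesis is used; without it the recursion cannot be carried out.
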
 

If the universe is a $\kappa$-cc generic extension of a submodel in which
$\P$ is $<\kappa$-closed, we sometimes say that $\P$ is {\em formerly $<\kappa$-closed}.

\begin{fact}[{Magidor and Shelah \cite[Theorem 2.1]{MagidorShelah}}] \label{MS preservation}
  Suppose that $\mu < \nu$ where $\mu$ is an infinite cardinal
  and $\nu$ is a singular cardinal of cofinality $\omega$.
  Let $V[G]$ be a $\mu$-closed generic extension, and let $E$ be generic over $V[G]$ for a poset in $V$ of size $\mu$.
  If $T \in V[E]$ and $T$ is a $\nu^+$-tree, then any branch through $T$ in $V[E][G]$ is already in
  $V[E]$.
\end{fact}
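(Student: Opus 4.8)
The plan is to follow the tree‑of‑conditions argument of Magidor and Shelah. Write $\mathbb{Q}$ for the poset of size $\mu$ giving rise to $E$ and $\mathbb{R}$ for the $\mu$‑closed poset giving rise to $G$; both lie in $V$. Since $|\mathbb{Q}|=\mu$ the poset $\mathbb{Q}$ is $\mu^+$‑cc, and $\mathbb{R}$ is $<\mu^+$‑closed, so Easton's lemma applies: $G$ and $E$ are mutually generic, $V[E][G]=V[G][E]$ is the extension of $V$ by $\mathbb{Q}\times\mathbb{R}$, and forcing with $\mathbb{R}$ over $V[E]$ adds no new $\le\mu$‑sequences of ordinals. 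It therefore suffices to work with the product: assuming toward a contradiction that some $(q^*,r^*)\in\mathbb{Q}\times\mathbb{R}$ forces that $\dot b$ is a cofinal branch of the $\nu^+$‑tree $\dot T$ (a name depending only on the $\mathbb{Q}$‑coordinate, since $T\in V[E]$) and that $\dot b$ does not belong to the submodel generated by the $\mathbb{Q}$‑part of the generic, derive a contradiction.

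I would first record the easy consequence of the closure: for every $\gamma<\nu^+$ with $\mathrm{cf}(\gamma)\le\mu$ the node $b(\gamma)$ is determined by a cofinal sequence into $\gamma$ of length $\le\mu$ whose entries lie in $V[E]$, hence $b\restriction\gamma\in V[E]$; in particular this covers $\gamma=\nu$, using $\mathrm{cf}(\nu)=\omega$. The remaining levels are handled by a single global construction. Fix $\langle\nu_n:n<\omega\rangle$ increasing and cofinal in $\nu$ with $\nu_0>\mu$. Build, by recursion on $n<\omega$, a tree of conditions in $\mathbb{R}$ below $r^*$, a strictly increasing sequence $\langle\delta_n:n<\omega\rangle$ of ordinals below $\nu^+$, and for each node $s$ of level $n$ a node $t_s$ of $\dot T$ at level $\delta_n$, so that: conditions decrease and the $t_s$ refine along branches; at each step one uses that $\dot b$ is not decided by $\mathbb{Q}$ alone (so below any condition, at sufficiently high levels, $\dot b$ is still undecided) to split the current condition into many extensions steering $\dot b$ through distinct nodes of $\dot T$ at level $\delta_n$, threading through the $\le\mu$ relevant conditions of $\mathbb{Q}$ and exploiting $<\mu^+$‑closure of $\mathbb{R}$ to descend past all of them; and the $\delta_n$ are chosen uniformly across the finitely many nodes of level $n$, so that $\delta:=\sup_n\delta_n<\nu^+$. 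For a branch $x$ through this tree, $\omega$‑closure of $\mathbb{R}$ gives a lower bound $r_x$, which forces $\dot b$ through each $t_{x\restriction n}$ and hence forces $\dot b(\delta)$ to equal the node $t^*_x$ of $\dot T$ at level $\delta$ above all of them; distinct branches give distinct $t^*_x$, because the relevant $t_{x\restriction n}$ are $<_{\dot T}$‑incomparable. Arranging the construction so that the tree of conditions has at least $\prod_n\nu_n$ branches, and invoking $\prod_n\nu_n>\nu$ (K\"onig), we obtain inside $V[E]$ more than $\nu$ pairwise distinct nodes of $\dot T$ on the single level $\delta<\nu^+$, contradicting that $\dot T$ is a $\nu^+$‑tree.

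The main obstacle is the splitting step: arranging that the tree of conditions branches widely enough — into $\nu_n$ extensions at level $n$, not merely two — so that the final count $\prod_n\nu_n$ overshoots $\nu$. This is exactly where all three hypotheses are used together: $\mathrm{cf}(\nu)=\omega$ lets the construction have height only $\omega$, so even the weak closure of $\mathbb{R}$ still furnishes lower bounds along branches; $\nu>\mu\ge|\mathbb{Q}|$ guarantees that the width $\nu_n$ one tries to realize from $\dot T$ dwarfs the number of conditions of $\mathbb{Q}$ that must be managed at each stage; and $T\in V[E]$ ensures the nodes $t_s$ and the final count live in the intermediate model. Checking this bookkeeping — keeping the $\delta_n$ uniform, the conditions coherent, and the split genuinely $\nu_n$‑wide — is the substantive work; the Easton reduction and the closure observations are routine.
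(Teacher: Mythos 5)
Your outline correctly reproduces the architecture of the Magidor--Shelah argument (the paper only cites this fact, so there is no in-paper proof to compare against): reduce to the product $\mathbb{Q}\times\mathbb{R}$, build a tree of conditions of height $\omega$ splitting into $\nu_n$ pieces at level $n$, take lower bounds along branches using the closure of $\mathbb{R}$, and harvest $\prod_n\nu_n>\nu$ distinct nodes on one level via K\"onig's inequality. The routine parts (the product/Easton reduction, the distributivity observation about levels of small cofinality, the final count) are fine.

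The gap is precisely the step you dismiss as bookkeeping: producing, below a given condition, $\nu_n$ pairwise divergent extensions in $\mathbb{R}$. The justification you offer --- that $\dot b$ is not decided by $\mathbb{Q}$ alone, so it is ``still undecided at sufficiently high levels'' --- yields exactly \emph{two} divergent extensions (and even that already costs an iteration of length $\mu$ through the conditions of $\mathbb{Q}$, using the closure of $\mathbb{R}$). None of the obvious amplifications from $2$ to $\nu_n$ is available here: a decreasing construction of length $\nu_n$ in $\mathbb{R}$ is impossible, since $\mathbb{R}$ is only $\mu$-closed and $\nu_n>\mu$; taking the $2^\theta$ branches of a binary tree of conditions of height $\theta$ with $2^\theta\ge\nu_n$ fails for the same reason, and because $2^\mu$ may be smaller than $\nu_n$ (if $2^\mu\ge\nu^+$ one is back in the situation of Fact~\ref{formerlyclosed} and no K\"onig argument is needed); and there is no reason why the set of possible values of $\dot b(\delta_n)$ below a fixed condition should have size $\nu_n$ at a level $\delta_n$ chosen in advance --- if every level admitted fewer than $\nu_n$ possible values you would be facing a narrow subtree of $T$, and ruling that case out is itself not routine. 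The actual proof requires a further idea of the kind appearing in the paper's own Claim~\ref{splittingclaim}: a ``spine'' of many nodes at increasing levels, each forced onto $\dot b$ by conditions that are pairwise compatible (so that no long descending sequence in $\mathbb{R}$ is ever formed), with a binary splitting above each spine node confined below the next one; the resulting ``off-spine'' nodes are pairwise incomparable for purely tree-theoretic reasons. A secondary imprecision in the same step: a condition $r_x\in\mathbb{R}$ alone cannot ``force $\dot b(\delta)$ to equal $t^*_x$'' --- the value depends on the $\mathbb{Q}$-coordinate, and the divergence must be arranged so that it survives pairing with compatible members of the single generic filter $E$ used in the final count. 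As written, the proposal is a correct roadmap with its central lemma missing.
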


The concepts of {\em system}
and a {\em system of branches} will play a central role.
Typically a system arises from a name for a tree $T$ in some generic extension,
and a system of branches arises from a name for a branch of $T$ in a further generic extension.

\begin{definition} \label{systemdef}
  Let $D$ be a set of ordinals and $\tau$ be a cardinal. A {\em system on $D \times \tau$}
  is an indexed collection $(R_i)_{i \in I}$ of transitive reflexive
  relations on $D \times \tau$ such that:
\begin{itemize}
\item $(\alpha, \eta) R_i (\beta, \zeta)$ and $(\alpha, \eta) \neq (\beta, \zeta)$ implies $\alpha < \beta$.
\item $(\alpha, \eta) R_i (\beta, \zeta)$ and $(\alpha', \eta') R_i (\beta, \zeta)$ implies that
  $(\alpha, \eta)$ and $(\alpha', \eta')$ are $R_i$-comparable.
\item For $\alpha < \beta$ both in $D$ there exist $\eta, \zeta < \tau$ and $i \in I$ such that
  $(\alpha, \eta) R_i (\beta, \zeta)$.
\end{itemize}

A {\em system of branches} through such a system is an indexed collection $(b_j)_{j \in J}$ of partial functions
from $D$ to $\tau$ such that:
\begin{itemize}
\item $b_j$ is a {\em branch through $R_i$} for some $i$, that is for every $\beta \in \dom(b_j)$ and every $\alpha \in D \cap \beta$,
  $\alpha \in \dom(b_j)$ if and only if there is $\eta$ with $(\alpha, \eta) R_i (\beta, b_j(\beta))$ and
  in this case $b_j(\alpha)$ is the unique such $\eta$.
\item For every $\alpha \in D$ there is $j$ such that $\alpha \in \dom(b_j)$.
\end{itemize}
\end{definition} 

We will need the following technical fact about systems and systems of branches,
which appears in a slightly different form as \cite[Remark 3.4]{NeemanUpto}.

\begin{fact} \label{Itay3.4}
  Let $(R_i)_{i \in I}$ be a system on $D \times \tau$ and let $\nu$ be a cardinal
  such that $D$ is a cofinal subset of $\nu^+$. Let $\P$ be a poset which adds a system
  of branches $(b_j)_{j \in J}$ through the system $(R_i)_{i \in I}$, and let $\lambda$ be a regular cardinal
  such that:
  \begin{itemize}
  \item $\max(\vert I \vert, \vert J \vert, \tau) < \lambda < \nu$.
  \item There is a forcing $\Q$ which adds $\lambda$ mutually generic filters
    for $\P$, without  collapsing  $\lambda$ or forcing that $\cf(\nu^+) \le \lambda$.
  \end{itemize} 
 Then there is $j \in J$ such that $b_j \in V$ and $\dom(b_j)$ is cofinal in $\nu^+$.  
\end{fact}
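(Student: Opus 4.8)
The plan is to argue by contradiction: assume that for every $j \in J$, either $b_j \notin V$ or $\dom(b_j)$ is bounded in $\nu^+$. First I would discard the finitely-many-in-$V$ branches: let $J_0 = \{ j \in J : b_j \in V \}$; by assumption each $b_j$ for $j \in J_0$ has domain bounded below $\nu^+$, so since $\vert J \vert < \lambda < \nu$ and $\nu^+$ is regular, there is $\delta^* < \nu^+$ above the supremum of all $\dom(b_j)$ for $j \in J_0$. Replacing $D$ by $D \cap (\delta^*, \nu^+)$ and each $R_i$ by its restriction, we still have a system on $D \times \tau$ with $D$ cofinal in $\nu^+$, and now no branch in $V$ meets $D$. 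So it suffices to derive a contradiction under the extra assumption that \emph{no} $b_j$ lies in $V$.

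Next I would use the poset $\Q$ from the hypothesis to force $\lambda$ mutually generic filters $G_\xi$ ($\xi < \lambda$) for $\P$ over $V$, obtaining in $V[\Q]$ systems of branches $(b_j^\xi)_{j \in J}$, one copy from each $G_\xi$. For each $\xi$ and each $\alpha \in D$, fix $j(\xi, \alpha) \in J$ with $\alpha \in \dom(b_{j(\xi,\alpha)}^\xi)$; since $\vert J \vert < \lambda$ and $\lambda$ is regular and (by hypothesis) not collapsed, for each $\alpha$ there is a fixed $j$ and a set $S_\alpha \subseteq \lambda$ of size $\lambda$ with $j(\xi,\alpha) = j$ for all $\xi \in S_\alpha$; and then, since $\vert J \vert < \lambda \le \cf(\nu^+)$ (the latter because $\Q$ does not force $\cf(\nu^+) \le \lambda$, and $D$ is cofinal in $\nu^+$), we may thin $D$ to a cofinal $D' \subseteq \nu^+$ on which this $j$ is constant — call it $j^*$ — and then thin again so that a single $i^* \in I$ works, using $\vert I \vert < \lambda \le \cf(\nu^+)$. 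So on $D'$ we have $\lambda$-many branches through the \emph{single} relation $R_{i^*}$, each total on (a cofinal-in-$\nu^+$ piece of) $D'$, coming from mutually generic $G_\xi$.

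The core of the argument, and the main obstacle, is now a counting/pigeonhole contradiction along the lines of \cite[Remark 3.4]{NeemanUpto}: for $\xi \ne \xi'$ in the relevant index set, the branches $b_{j^*}^\xi$ and $b_{j^*}^{\xi'}$ through $R_{i^*}$ must eventually disagree — otherwise they would agree on a cofinal subset of $\nu^+$ and, being determined by $R_{i^*}$ (whose fibers above a node are uniquely determined by the first clause of the system definition and the ``$R_i$-comparable'' clause), their common restriction would be a branch through $R_{i^*}$ lying in $V$, since it is the intersection of two sets which, by mutual genericity, has an $R_{i^*}$-prediction computable in $V$; this contradicts the reduction above. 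So for each $\alpha \in D'$ past some threshold, the values $b_{j^*}^\xi(\alpha) < \tau$ separate the $\xi$'s into classes; pushing $\alpha$ up along $D'$ refines these classes, and by the ``$R_i$-comparable'' clause two branches that agree at a level $\beta$ agree at every $\alpha <_{D'} \beta$ below it. Hence fixing $\alpha$ large enough, the map $\xi \mapsto b_{j^*}^\xi(\alpha)$ is injective on a set of size $\lambda$ into $\tau < \lambda$ — the contradiction. I expect the delicate points to be: (i) justifying that a branch through a single $R_{i^*}$ which is common to two mutually generic extensions must lie in $V$ (this is where mutual genericity and the transitivity/uniqueness built into Definition~\ref{systemdef} are used, essentially a mutual-genericity product-forcing argument), and (ii) carefully managing the successive thinnings of $D$ so that cofinality in $\nu^+$ is preserved at each stage, which is exactly what the hypotheses on $\Q$ (no collapse of $\lambda$, no lowering of $\cf(\nu^+)$ to $\le \lambda$) are there to guarantee.
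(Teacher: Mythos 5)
Your proposal is correct and follows essentially the same route as the paper's proof sketch: mutual genericity forces distinct cofinal realisations of each branch name, a single threshold $\eta<\nu^+$ lying above all bounded domains and all pairwise divergence points exists because $\cf(\nu^+)>\lambda$ in $V[Q]$, and a pigeonhole on the fewer than $\lambda$ possibilities for $(j,i,\zeta)$ at one level $\gamma\ge\eta$ gives the contradiction. The paper merely compresses your successive thinnings over $j$ and $i$ and the final count of values at a fixed level into one pigeonhole on triples.
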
 

Since the proof is quite short we sketch it here. 

\begin{proof}[Proof sketch] Towards a contradiction, 
  we may assume without loss of generality that $\P$ forces
  ``$\dom(b_j)$ cofinal implies $b_j \notin V$'' for all $j$.
  Force with $\Q$ and let $b^\alpha_j$ be the realisation of $\dot b_j$ by the $\alpha^{\rm th}$ $\P$-generic filter.
  If $\alpha \neq \beta$ and both $\dom(b_j^\alpha)$ and $\dom(b_j^\beta)$ are cofinal, then by mutual
  genericity $b^\alpha_j \neq b^\beta_j$. Since $\cf(\nu^+) > \lambda$ we may choose $\eta < \nu^+$ so large such that
  $\dom(b^\alpha_j)$ bounded implies $\dom(b^\alpha_j) \subseteq \eta$ for all $j \in J$ and $\alpha < \lambda$, and  
  also $\dom(b_j^\alpha)$ and $\dom(b_j^\beta)$ both cofinal and $\alpha \neq \beta$ implies
  $b_j^\alpha \restriction \eta \neq b_j^\beta \restriction \eta$ for all $j \in J$ and distinct $\alpha, \beta < \lambda$.
  Let $\gamma \in D \setminus \eta$, then for all $\alpha < \lambda$ there exist $j \in J$, $i \in I$ and $\zeta < \tau$
  such that $\gamma \in \dom(b_j^\alpha)$ (in particular $\dom(b_j^\alpha)$ is cofinal),
  $b^j_\alpha$ is a branch through $R_i$ and $b_j^\alpha(\gamma) = \zeta$.
  Since $\lambda$ is a cardinal we may choose $\alpha \neq \beta$ which give the same values for $(j, i, \zeta)$,
  but then $b^\alpha_j(\gamma) = b^\beta_j(\gamma)$ and both $b^\alpha_j, b^\beta_j$ are branches through $R_i$,
  so that $b^\alpha_j \restriction \gamma = b^\beta_j \restriction \gamma$ in contradiction to the choice
  of $\eta$. 
\end{proof}
  
\subsection{A branch lemma} \label{spencerlemma} 

In Fact \ref{Itay3.4} it is important that the ``width'' $\tau$ of the system is considerably less than the ``height''
$\nu^+$. In Section \ref{plustwo} we are forced to consider systems where the height is the successor of the width,
and to handle these we will use an alternative branch lemma (due to Unger) whose proof is
similar to that of Fact \ref{formerlyclosed}. 

\begin{lemma} \label{widersystems} 
Let $V \subseteq W$.
Let $\delta < \nu < \mu < \lambda$ be cardinals in $W$ where $\mu$ and $\lambda$ are regular.
Assume that $2^\delta \ge \lambda$ in $V$, and $W = V[E]$ where $\E$ is $\mu$-cc in $V$.
Let $\P$ be $<\mu$-closed in $V$, where we note that by Easton's Lemma $\cf(\lambda) \ge \mu$ in $W[P]$. 

Let $\mathcal R$ be a system on $\lambda \times \mu$ in $W$, with relations $R_i$ for $i < \nu$. 
Assume that forcing with $\P$ over $W$ adds a system of branches $(b_i)_{i \in \nu}$
where $b_i$ is a branch through $R_i$. Then there is
$i$ such that $b_i \in W$ and $\dom(b_i)$ is cofinal in $\lambda$.
\end{lemma}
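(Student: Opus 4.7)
The plan is to adapt the proof of Fact~\ref{formerlyclosed} to the system setting, using the hypothesis $2^\delta \ge \lambda$ in $V$ to fuel a splitting argument.

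I would argue by contradiction, assuming no $b_i$ both lies in $W$ and has cofinal domain in $\lambda$. First, by Easton's Lemma applied to $\P$ (which is $<\mu$-closed in $V$) and $\E$ (which is $\mu$-cc in $V$), one has $\cf^{W[P]}(\lambda) \ge \mu > \nu$. Since there are at most $\nu < \cf^{W[P]}(\lambda)$ branches with bounded domain, the union of their domains is bounded by some $\alpha_0 < \lambda$, and by the covering axiom for systems of branches any $\alpha$ above $\alpha_0$ is covered by a cofinal-domain $b_i$, which under our assumption satisfies $b_i \notin W$. From $R_i$ I would extract in $W$ a tree $T_i$ of height $\lambda$ with levels of size $\le \mu$ (nodes are $R_i$-initial segments of pairs $(\beta, \zeta)$), and $b_i$ then corresponds to a cofinal branch of $T_i$.

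Next, I would choose $\theta$ minimal with $2^\theta > \mu$; from $2^\delta \ge \lambda > \mu$ one gets $\theta \le \delta < \nu < \mu$, in particular $2^{<\theta} \le \mu$. Working in $V$ with an $\E * \P$-name $\dot b_i$, I would construct a splitting tree $(p_s)_{s \in 2^{<\theta}}$ of conditions in $\P$ together with $\E$-names $\dot\alpha_s$ for ordinals below $\lambda$, arranging $p_{s \cat 0}, p_{s \cat 1} \le p_s$ in $\P$ so that they force $\dot b_i(\dot\alpha_s)$ to take distinct values. The splitting step is available because $b_i$ is forced not to lie in $W$: in $V[E]$ every $p \in \P$ admits two $\P$-refinements deciding $\dot b_i$ differently at some level, and back in $V$ this is captured by $\E$-names. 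At limit levels $\theta' < \theta$ one takes lower bounds along each branch of the splitting tree using the $<\mu$-closure of $\P$ in $V$ (valid since $\theta < \mu$).

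To close the argument, I would use the $\mu$-cc of $\E$ in $V$ to bound the splitting levels: each $\dot\alpha_s$ is decided by a $<\mu$-sized antichain, so its possible values in $V$ form a set of size $<\mu$, with supremum $\gamma_s < \lambda$ (using regularity of $\lambda$ in $V$, which is inherited from $W$). With $2^{<\theta} \le \mu < \cf(\lambda)$ many splitting nodes, $\gamma := \sup_s \gamma_s < \lambda$. Lower bounds $p_t$ for $t \in 2^\theta$ exist by the $<\mu$-closure, and any two distinct $p_t, p_{t'}$ force $\dot b_i$ to disagree at some level $\le \gamma$; since $\dot b_i$ is forced to have cofinal domain, each $p_t$ picks out (via the tree structure of $T_i$) a unique node at level $\gamma$. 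This yields $2^\theta > \mu$ distinct nodes at level $\gamma$ of $T_i$, contradicting the bound $\mu$ on the size of that level.

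The hardest part will be coordinating the two-step forcing $\E * \P$: the splitting naturally wants to live in $\P$ (where we have $<\mu$-closure in $V$), but values of $\dot b_i$ also depend on $E$, so the deciding levels must be handled via $\E$-names; bounding their actual interpretations below $\lambda$ rests jointly on the $\mu$-cc of $\E$ and the arithmetic $2^{<\theta} \le \mu < \lambda$.
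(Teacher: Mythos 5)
Your overall architecture — a binary splitting tree of conditions in $\P$ built in $V$ using its $<\mu$-closure, $\E$-names/antichains to absorb the dependence on $E$, a $\mu$-cc argument to bound all splitting levels below some $\gamma^*<\lambda$, and a pigeonhole driven by $2^\delta\ge\lambda$ — is the same as the paper's. But there is a genuine gap: you fix a single index $i$ and run the splitting argument for that one $\dot b_i$. The hypothesis only tells you that each $b_i$ is either bounded or not in $W$, and \emph{which} indices have cofinal domain depends on the generic: below some conditions $\dom(b_i)$ may be forced bounded (so no splitting is available for that $i$), and the covering property only supplies, for each level and each generic, \emph{some} $i$ whose branch passes through that level. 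So a splitting tree for one fixed $i$ cannot reach a contradiction. The paper's proof handles this by forcing divergence for \emph{all} $i<\nu$ simultaneously at each node of the binary tree (Claims \ref{spencerclaimtwo} and \ref{spencerclaimthree}, which diagonalize over $i<\nu$ using $\nu<\mu$, the $<\mu$-closure of $\P$ and the regularity of $\lambda$), and only chooses the index at the very end: after forcing with $\E$ and passing to $q_f\le p_f$, one uses the covering property of the system of branches to pick an $i$ with $\gamma^*\in\dom(b_i[E])$, and the pigeonhole then ranges over the $\nu\cdot\mu=\mu$ pairs $(i,\zeta)$.

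A secondary problem is your endgame. You want each lower bound $p_t$ to "pick out a unique node at level $\gamma$ of $T_i$" and to contradict a level-size bound of $\mu$. But a branch through $R_i$ need not be defined at the level $\gamma$ where you read it off (its domain is only cofinal), and if you instead count $R_i$-initial segments of height $\gamma$ the bound $\mu$ on that level of your tree $T_i$ is not established (such initial segments are indexed by pairs $(\beta,\zeta)$ with $\beta$ possibly above $\gamma$). Moreover the node determined by $p_t$ still depends on $E$, so "unique node" is not meaningful before the $\E$-generic is fixed. The paper sidesteps all of this: it derives the contradiction directly from two conditions $q_f,q_g$ that force the \emph{same} value $b_i[E](\gamma^*)=\zeta$ while some $e\in E$ forces them to diverge at a level $\gamma<\gamma^*$ — divergence there being impossible because agreement at $(\gamma^*,\zeta)$ forces agreement (including agreement about membership in the domain) at every earlier level.
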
 

\begin{proof}
We work in $V$ until further notice. For each $i < \nu$ we fix $\dot R_i$ an $\E$-name for $R_i$ and $\dot b_i$
an $\E \times \P$-name for $b_i$.
Assume for a contradiction that $\E \times \P$ forces that $b_i \notin W$ for every $i$ with
$\dom(b_i)$ cofinal. 
Since $\P$ is $<\mu$-closed and $\E$ is $\mu$-cc in $V[P]$,
it is easy to find $e^* \in \E$, $p^* \in \P$ and $\eta < \lambda$ such that
$(e^*, p^*) \forces^V_{\E \times \P} \mbox{``$\dom(b_i)$ bounded implies $\dom(b_i) \subseteq \eta$''}$ for all $i<\nu$.
Going forward we work below $(e^*, p^*)$.

 Let $e \in \E$, $p_0, p_1 \in \P$, $i < \nu$ and $\gamma \in [\eta, \lambda)$.
Then $(e, p_0, p_1)$ {\em forces divergence for $b_i$ at $\gamma$} if
both $(e, p_0)$ and $(e, p_1)$
decide ``$\dom(b_i)$ is cofinal'' and one of the following holds:
\begin{itemize}
\item At least one of $(e, p_0)$ and $(e, p_1)$ forces ``$\dom(b_i)$ is bounded''.  
\item Both $(e, p_0)$ and $(e, p_1)$ force ``$\dom(b_i)$ is cofinal'', and
  one of the following holds:
  \begin{itemize}
  \item $(e, p_0) \forces \gamma \in \dom(b_i)$, $(e, p_1) \forces \gamma \notin \dom(b_i)$.
  \item $(e, p_0) \forces \gamma \notin \dom(b_i)$, $(e, p_1) \forces \gamma \in \dom(b_i)$.  
  \item $(e, p_j) \forces b_i(\gamma) = \zeta_j$ for $j \in 2$, and $\zeta_0 \neq \zeta_1$.   
  \end{itemize}
\end{itemize}

\begin{claim} \label{spencerclaimone}
  Let $e \in \E$, $p_0, p_1 \in \P$ and $i < \nu$. Then there exist
$\gamma \in [\eta, \lambda)$ and $(e', p_0', p_1') \le (e, p_0, p_1)$ such that
$(e', p_0', p_1')$ forces divergence for $b_i$ at $\gamma$.
\end{claim}

\begin{proof} Extending if necessary, we may as well assume that
both $(e, p_0)$ and $(e, p_1)$ decide ``$\dom(b_i)$ is cofinal''. There is
nothing to do unless $(e, p_0)$ and $(e, p_1)$ force ``$\dom(b_i)$ is cofinal''.  In this
case force with $\E$ below $e$, and then force over $V[E]$ with the formerly closed forcing $\P \times \P$ below $(p_0, p_1)$
to obtain $P_{left} \times P_{right}$. Now since $b_i$ is forced over $V[E]$ by $\mathbb P$ not to lie in $V[E]$, 
$b_i[E \times P_{left}]$ and $b_i[E \times P_{right}]$  
are distinct partial functions with cofinal domains, and it is easy to choose
$(e', p_0', p_1') \in \E \times \P \times \P$ and $\gamma$ as required.
\end{proof} 

\begin{claim} \label{spencerclaimtwo}
  Let $p_0, p_1 \in \P$ and $i < \nu$. Then there exist $p_0' \le p_0$, $p_1' \le p_1$,
 $\gamma^* \in [\eta, \lambda)$ and a maximal antichain $A$ in $\E$ such that for all $e \in A$,
 $(e, p_0', p_1')$ forces divergence for $b_i$ at $\gamma$ for some $\gamma \in [\eta, \gamma^*)$.
\end{claim}

\begin{proof} We construct pairwise incompatible $e^\alpha \in \E$, decreasing
$p_0^\alpha$ below $p_0$ and $p_1^\alpha$ below $p_1$, and $\gamma_\alpha < \lambda$. 
Since $\E$ is $\mu$-cc the construction halts before $\mu$ steps.
If $(e^\alpha)_{\alpha < \beta}$ does not enumerate a maximal antichain in $\E$,
then we choose $f^\beta \in \E$ incomparable with all $e^\alpha$ for $\alpha < \beta$
and lower bounds $q^\beta_0$ and $q^\beta_1$ in $\P$ for the sequences
$(p_0^\alpha)_{\alpha < \beta}$ and
$(p_1^\alpha)_{\alpha < \beta}$, and then apply Claim \ref{spencerclaimone} to the condition
$(f^\beta, q^\beta_0, q^\beta_1)$. This gives $(e^\beta, p_0^\beta, p_1^\beta) \le (f^\beta, q^\beta_0, q^\beta_1)$ 
and $\gamma_\beta \in [\eta, \lambda)$ such that $(e^\beta, p_0^\beta, p_1^\beta)$
forces divergence at $\gamma_\beta$. Once the construction terminates after $\beta$ stages, we
let $A = \{ e_\alpha : \alpha < \beta \}$, let $p_0'$ and $p_1'$ be lower bounds for 
$(p_0^\alpha)_{\alpha < \beta}$ and $(p_1^\alpha)_{\alpha < \beta}$ respectively, and let $\gamma^* = \sup_{\alpha < \beta} \gamma_\alpha$.
\end{proof}

\begin{claim} \label{spencerclaimthree}
  Let $p_0, p_1 \in \P$. Then there exist $p_0' \le p_0$, $p_1' \le p_1$ and
$\gamma^* \in [\eta, \lambda)$ such that for all $i < \nu$,
there is a maximal antichain $A_i$ in $\E$ such that
for all $e \in A_i$,
 $(e, p_0', p_1')$ forces divergence for $b_i$ at $\gamma$ for some $\gamma \in [\eta, \gamma^*)$.
\end{claim}

\begin{proof} Apply Claim \ref{spencerclaimtwo} to each $i < \nu$ in turn, using the $<\mu$-closure of $\P$
and the regularity of $\lambda$ to find $p_0'$, $p_1'$ and $\gamma^*$ that work for all $i$.
\end{proof}

\begin{claim}
  Assuming that $\delta$ is minimal with $2^\delta \ge \lambda$,
  there exist a binary tree of decreasing sequences $(p_\sigma)_{\sigma \in {}^{<\delta} 2}$
and an ordinal $\gamma^* \in [\eta, \lambda)$ with the following property:
for all $\sigma \in {}^{<\delta} 2$ and all $i < \nu$,
there is a maximal antichain $A^\sigma_i$ such that 
for all $e \in A^\sigma_i$, $(e, p_{\sigma^\frown 0}, p_{\sigma^\frown 1})$
forces divergence for $b_i$ at $\gamma$ for some $\gamma < \gamma^*$.
\end{claim}

\begin{proof} For each $\sigma \in {}^{<\delta } 2$ we appeal to Claim \ref{spencerclaimthree}
  with $p_0 = p_1 = p_\sigma$ to find $p_{\sigma^\frown 0}$, $p_{\sigma^\frown 1}$, antichains
  $A^\sigma_i$ for $i < \nu$, and an ordinal $\gamma_\sigma$ as in the conclusion. At limit stages we take lower bounds.
  Since $2^{<\delta} < \lambda$
  and $\lambda$ is regular, we may set $\gamma^* = \sup_\sigma \gamma_\sigma$.
\end{proof}

  For each $f \in {}^\delta 2$, let $p_f$ be a lower bound for $(p_{f \restriction j})_{j < \delta}$.
  Now force with $\E$ and start to work in $V[E]$, so that ${\mathcal R} = \dot{\mathcal R}[E]$ is a
  system with relations $R_i = {\dot R}_i[E]$,  and $(\dot b_i[E])_{i < \nu}$ is a $\P$-name for a system of branches
  with $b_i$ a branch through $R_i$. 
  
  For each $f \in {}^\delta 2$, let $q_f \le p_f$ decide a value of $i$ such that $\gamma^* \in \dom b_i[E]$,
  and let $q_f$ also decide the value $b_i[E](\gamma^*)$ for this $i$. Since $\lambda$ is still a cardinal in $V[E]$,
  there exist $f \neq g$ and values $i < \nu$ and $\zeta < \mu$ such that $q_f$ and $q_g$ both force that
  $b_i[E](\gamma^*) = \zeta$. Let $\sigma$ be the longest common initial segment
  of $f$ and $g$, so that without loss of generality $q_f \le p_{\sigma^\frown 0}$ and 
  $q_g \le p_{\sigma^\frown 1}$. By construction there exist a condition $e \in E$ and $\gamma < \gamma^*$ such that:
\begin{itemize}
\item  $(e, p_{\sigma^\frown 0}, p_{\sigma^\frown 1})$   forces divergence for $b_i$ at $\gamma$.
\item  $e$ forces that both $q_f$ and $q_g$ force  $b_i[E](\gamma^*) = \zeta$.
\end{itemize}
  This is impossible as both $q_f$ and $q_g$
  force that $b_i[E](\gamma) = \zeta'$ for the unique $\zeta'$ such that $(\gamma, \zeta') R_i (\gamma^*, \zeta)$.
\end{proof}

\subsection{Another branch lemma} \label{newbranchlemma}

We will require a branch lemma with the same general flavour as Fact \ref{MS preservation}.
This will be used in Section \ref{plusone} to help establish the tree property
at $\aleph_{\omega^2 + 1}$ in our final model. This branch lemma is quite general
and has some independent interest, so we prove it here axiomatising the needed assumptions.  
It is a descendant of a branch lemma due to Sinapova and Unger \cite{SinapovaUnger}.

Let $\P$ and $\R$ be forcing posets and let $\mu$ and $\nu$ be cardinals.
 We assume that:
\begin{enumerate}
\item \label{preslemmahyp1}    $\nu$ is a cardinal of cofinality $\omega$ and $\mu = \nu^+$.
\item \label{preslemmahyp2} There is a cardinal $\kappa \le \nu$ 
  such that $\forces^V_\P \mu = \kappa^+$ and  $\forces^V_{\P \times \R} \mu = \kappa^+$. 
\item \label{preslemmahyp3} $\R$ is $< \mu$-distributive and countably closed.
\item \label{preslemmahyp4} Every condition $p$ in $\P$ has a {\em stem} (we write it as $\stem(p)$)
  and there are at most $\nu$ stems.
\item \label{preslemmahyp5}
If $\stem(p) = \stem(p') = h$ then there is $q \le p, p'$ with $\stem(q) = h$.
\item  \label{preslemmahyp7} If $(p_n)_{n \in \omega}$ is a decreasing sequence of conditions with stem $h$
  then there is a lower bound with stem $h$. 
\end{enumerate}

We note for the record that by assumptions \ref{preslemmahyp1},
\ref{preslemmahyp4} and  \ref{preslemmahyp5} the poset $\P$ is $\mu$-cc.   

We say that stem $h'$ {\em extends} stem $h$ if there are conditions $p, p' \in \P$
such that $\stem(p) = h$, $\stem(p') = h'$ and $p' \le p$. 

The motivating idea is that $\P$ is some type of Prikry forcing, and $\R$ is a ``mild'' forcing poset.
Our assumptions on $\P$ are quite weak, in particular we
do not need to assume any form of the Prikry lemma.
In the intended application $\P$ will be a complex Prikry-type forcing where 
taking a direct extension can change the stem, and the direct extension ordering
is not countably closed.

\begin{lemma}\label{preslemma}
Let $P \times R$ be $\P \times \R$-generic and let  $T \in V[P]$ be a $\mu$-tree.  
 If $T$ has a cofinal branch in $V[P][R]$, then $T$ has a cofinal
 branch in $V[P]$. 
\end{lemma}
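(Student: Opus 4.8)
The plan is to argue by contradiction, assuming that $T$ acquires a cofinal branch $b$ in $V[P][R]$ which is not in $V[P]$, and then to use the structure of $\P$—in particular the fact that there are only $\nu < \mu$ stems and that conditions sharing a stem are compatible below a common condition with the same stem—to build, inside $V[P]$, a ``system of branches'' of width at most $\nu$ through a tree of height $\mu = \nu^+$, and derive a contradiction from a counting argument on the $\mu$-th level of $T$. More concretely, let $\dot b$ be a $\P\times\R$-name for the branch, and work below a condition $(p^*,r^*)$ forcing that $\dot b$ is a cofinal branch not in $V[P]$. First I would use a fusion-style construction along $\omega$ (legitimate by assumption \ref{preslemmahyp7}, since the relevant conditions will all share a fixed stem) combined with the $<\mu$-distributivity and countable closure of $\R$ (assumption \ref{preslemmahyp3}) to produce, for each stem $h$ extending $\stem(p^*)$, a condition $p_h \le p^*$ with $\stem(p_h)=h$ together with a value $\beta_h < \mu$ such that $p_h$ decides $\dot b\restriction\beta_h$ and, crucially, such that two conditions with \emph{incompatible} stems force incompatible initial segments of $\dot b$ past a common ordinal. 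Since there are at most $\nu$ stems and $\mu=\nu^+$ is regular, I can take $\beta^* = \sup_h \beta_h < \mu$.

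Next, still in $V[P]$ (after forcing with $\R$ is \emph{not} needed here—the point is to extract the data below), for each stem $h$ define a candidate branch $c_h$ of $T$ of length $\beta_h$ by reading off the node that $p_h$ forces $\dot b$ to pass through at each level $<\beta_h$; assumptions \ref{preslemmahyp4} and \ref{preslemmahyp5} guarantee that whenever $h'$ extends $h$ the branch $c_{h'}$ end-extends $c_h$, so the $c_h$ organise into a tree-like system indexed by the $\le\nu$-many stems. The family $(c_h)_h$ is a system of branches of width $\le\nu$ through $T$, which has height $\mu=\nu^+$. Now I would invoke the counting argument: look at the set of nodes of $T$ at level $\beta^*$ (or at levels cofinal in $\mu$); because $T$ is a $\mu$-tree each level has size $<\mu$, while the branch $b$ in $V[P][R]$ passes through level $\gamma$ for cofinally many $\gamma<\mu$ via some condition whose stem must be among the $\le\nu$ fixed ones—so by pigeonhole on the $\le\nu$ stems and the $<\kappa^+=\mu$ nodes at a suitable level (using assumption \ref{preslemmahyp2} to know $\mu$ is really $\kappa^+$ on both sides), two genuinely different extensions of $b$ would have to agree, contradicting the divergence established in the first step; alternatively one shows directly that the system of branches $(c_h)_h$ forces one of the $c_h$ to be cofinal, and that $c_h \in V[P]$, contradicting our assumption that $b\notin V[P]$.

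The main obstacle I anticipate is the first step: arranging the conditions $p_h$ so that distinct stems really do force \emph{diverging} initial segments of $\dot b$. The subtlety is that $\P$ need not satisfy any Prikry lemma (this is emphasized in the paragraph before the lemma), and taking a direct extension can change the stem, and the direct-extension order is not countably closed—so one cannot simply fix a stem and work below it in a closed way. The repair is to do the fusion using only the closure given by assumption \ref{preslemmahyp7} (lower bounds for $\omega$-sequences with a \emph{fixed} stem exist) together with the $<\mu$-distributivity of $\R$: one interleaves steps that extend the stem (finitely often, or along an $\omega$-sequence of stems) with steps that shrink within a fixed stem, and uses $\R$'s distributivity to decide longer and longer initial segments of $\dot b$ while only passing to finitely many stem-extensions at a time. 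Once the ``diverging initial segments'' property is in hand—this is exactly where assumption \ref{preslemmahyp5} is used to keep things compatible within a stem and where the bound of $\nu$ many stems becomes the ``width''—the rest is a routine pigeonhole on the levels of the $\mu$-tree, in the same spirit as the proof of Fact \ref{MS preservation} and the branch lemma of Sinapova and Unger.
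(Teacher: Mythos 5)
Your plan---split the name $\dot b$ along the \emph{stems} of $\P$, package the resulting partial branches into a width-$\nu$ system through the $\nu^+$-tree $T$, and finish by a pigeonhole on levels---diverges from the paper's argument in ways that I do not see how to repair. The opening step is already not achievable: $\dot b$ is a $\P\times\R$-name, so a condition $p_h\in\P$ by itself decides nothing about $\dot b$, and even for pairs there is no mechanism for deciding an initial segment $\dot b\restriction\beta_h$ of uncountable length. All the distributivity in the hypotheses sits on the $\R$ side; on the $\P$ side you have only item \ref{preslemmahyp7}, which gives lower bounds for $\omega$-sequences with a \emph{fixed} stem, and the preamble to the lemma explicitly forbids assuming a Prikry lemma. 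Your proposed interleaved fusion still has to decide $\beta_h$-many values of $\dot b$ by $\P$-extensions, which nothing supports. Relatedly, the divergence you need must be organized along $\R$, not along stems: distinct stems need not force incompatible information (one stem can extend another), and the hypothesis $b\notin V[P]$ yields splitting of $\dot b$ \emph{over} $V[P]$, i.e., by pairs of $\R$-conditions below a single $\P$-condition with a fixed stem---exactly what the paper's notion of an $(h,s)$-splitting (Definition \ref{hsplitting}) encodes.

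The endgame is also unjustified: a system of at most $\nu$ branches through a tree of height $\nu^+$ does not by itself yield a cofinal branch in $V[P]$, since the branch lemma for systems (Fact \ref{Itay3.4}) requires the width to lie strictly below some regular $\lambda<\nu$ admitting mutually generic filters, and with width $\nu$ and height $\nu^+$ there is no room. The counting that actually drives the proof happens elsewhere. One first isolates a stem $h$ satisfying $\dagger_h$ (unboundedly many levels at which same-stem conditions force a fixed node into $\dot b[R]$), and then shows (Claim \ref{newclaim}) that $\dot b$ cannot be $h$-new, by building a tree of conditions indexed by $\nu^{<\omega}$---using the countable closure of $\R$ and item \ref{preslemmahyp7} to take lower bounds along branches---and applying the pigeonhole $\nu^\omega>\nu\ge\kappa$. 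This is precisely where $\cf(\nu)=\omega$ enters, and your outline never invokes that hypothesis, which is a warning sign. Once non-splitting below a suitable $s^*$ is established, the branch is read off directly in $V[P]$ as the set of nodes forced into $\dot b$ by pairs $(s',q)$ with $q$ in the generic filter; there is no contradiction-by-counting at the level of $T$ itself.
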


\begin{proof}
  Suppose that $\dot{T} \in V$ is a $\P$-name of a $\mu$-tree. As usual, for each $\alpha < \mu$ we
assume that level $\alpha$ in the tree consists of pairs in $\{ \alpha \} \times \kappa$. We refer to elements
of $\mu \times \kappa$ as {\em nodes}, and if $u$ is a node often we call $u_0$ the {\em level of $u$}
and write it as $\lev(u)$. Of course $\dot T \in V[R]$ and can be viewed as a $\P$-name for a $\mu$-tree
in this model. 

A note on notation: $\alpha$ and $\beta$ will typically be levels of nodes.
$h$ will typically be a stem. $p$ and $q$ will typically
be conditions in $\P$. $r$ and $s$ will typically be conditions in $\R$. $u$ and $v$ will typically be nodes.
Of course these letters
may be decorated with subscripts and superscripts as needed.

Without loss of generality, let  $\dot{b}$ be an $\R \times \P$-name
which is forced by the empty condition to be a cofinal branch though $\dot{T}$.
 Let $\dot{b}[R]$ be the $\P$-name in $V[R]$
 for such a branch obtained by partially realising $\dot b$.
 
 \begin{claim} \label{usefulclaim} 
   Let $p \in {\P}$.  
   Let $u$ and $v$ be nodes with $\lev(u)  < \lev(v)$.
     If $p \forces^{V[R]}_{\P} u, v \in {\dot b}[R]$ then
     $p \forces^V_{\P} u <_{\dot T} v$. 
\end{claim}
   
\begin{proof}
   Let $r$ force that $p \forces^{V[R]}_{\P} u, v \in {\dot b}[R]$, so that
   $(r, p) \forces^V_{\R \times \P} u,v \in {\dot b}$. If there is $p' \le p$ such that
   $p' \forces^V_{\P} u \nless_{\dot T} v$, then 
   $(r, p') \forces^V_{\R \times \P} u \nless_{\dot T} v$ and $(r, p') \forces^V_{\R \times \P} u,v \in {\dot b}$,
   which is impossible as $\dot b$ is forced to be a branch. So $p \forces^V_{\P} u <_{\dot T} v$ as claimed.
\end{proof}

 The following definition takes place in $V[R]$.

\begin{definition} \label{dagger}
  For a stem $h$, we say that {\em $\dagger_h$ holds} if there are an unbounded $J \subseteq \mu$,
  $\xi < \kappa$ and $\langle p_\alpha \mid \alpha \in J \rangle$ such that for all $\alpha \in J$: 
  \begin{itemize}
    
  \item The condition $p_\alpha$ has stem $h$.

  \item $p_\alpha \forces^{V[R]}_{\P} \langle \alpha, \xi \rangle \in \dot{b}[R]$.

  \end{itemize}
\end{definition}

Before reading the following remark, the reader should recall our convention that
when we write ``$p_0 \wedge p_1 \forces \phi$'' we mean only that $p_0$ and $p_1$ are compatible
and every common lower bound $r \le p_0, p_1$ is such that $r \forces \phi$. We are not asserting that
$p_0$ and $p_1$ have a greatest lower bound.

\begin{remark}  Let $J$, $\xi$ and $\langle p_\alpha \mid \alpha \in J \rangle$
  witness $\dagger_h$ as in Definition \ref{dagger} and let $\alpha, \beta \in J$ with $\alpha < \beta$.
 Then $p_\alpha$ and $p_\beta$ are compatible in $\P$ by item \ref{preslemmahyp5} of our hypotheses, and    
  $p_\alpha \wedge p_\beta \forces^V_{\P} \langle \alpha, \xi \rangle <_{\dot{T}} \langle \beta, \xi \rangle$ by  Claim \ref{usefulclaim}. 
\end{remark}

\begin{claim} \label{daggerstemsdense}
In $V[R]$ every stem $h$ can be extended to a stem $h'$ such that $\dagger_{h'}$ holds.
\end{claim}

\begin{proof}
  Work in $V[R]$. Let $h$ be a stem, and let $p \in \P$ be a condition with stem
  $h$. For each $\alpha < \mu$, let $p_\alpha \leq p$ and $u_\alpha= \langle \alpha, \xi_\alpha \rangle$
  be such that $p_\alpha \forces_{\P} u_\alpha\in \dot{b}[R]$.
  Since the number of stems is less than $\mu$, there exist an unbounded set $J \subseteq \mu$,
  an ordinal $\xi < \kappa$ and a stem $h'$ such that $p_\alpha$ has stem $h'$ and $\xi_\alpha = \xi$
  for all $\alpha \in J$.  Then $h'$ extends $h$ and $\dagger_{h'}$ holds.
\end{proof}

If $\dagger_h$ holds and $h'$ extends $h$, it does not follow in general that $\dagger_{h'}$ will hold.
The issue is that in general not every condition with stem $h$ can be extended to a condition with stem $h'$.
However we do have the following in $V[R]$:
\begin{claim} \label{daggerhandy}
  If $\dagger_h$ holds then there is a condition $p$ with stem $h$ such that
   $\{ p' : \mbox{$\dagger_{\stem(p')}$ holds} \}$ is dense below $p$. 
\end{claim}

\begin{proof} Let $J$, $\xi$ and $(p_\alpha)_{\alpha \in J}$ witness $\dagger_h$.
  As $\P$ is $\mu$-cc, it follows from Lemma \ref{kappaccfact} that there is $\alpha$ such that $p_\alpha$ forces
  the set of $\beta \in J$ with $p_\beta \in P$ to be unbounded. Set $p$ equal to
  $p_\alpha$ and let $\bar p \le p$. Then $\bar p$ is compatible with $p_\beta$ for
  every $\beta$ in some unbounded $J' \subseteq J$, and we may choose $p'_\beta \le \bar p, p_\beta$
  for all $\beta \in J'$. 
  Thinning out $J'$ we may assume that for some stem $h'$,  $\stem(p'_\beta) = h'$ for all $\beta \in J'$.
  Then the conditions $(p'_\beta)_{\beta \in J'}$ together with $\xi$ and $J'$ witness $\dagger_{h'}$,
  and for any $\beta \in J'$ we have that $p'_\beta \le \bar p$ and $\dagger_{\stem(p'_\beta)}$ holds.
\end{proof}

The following definition takes place in $V$.

\begin{definition} \label{hsplitting}
    Let $h$ be a stem and let $s \in \R$. There is an {\em $(h,s)$-splitting} if there are a
condition $p \in \P$ with stem $h$, conditions $s^0, s^1$ in
$\R$, and nodes $u^0, u^1 \in \mu \times \kappa$ such that:
\begin{enumerate}
\item $s^0, s^1 \le s$. 
\item  $(s^k, p) \forces^V_{\R \times \P} u^k \in \dot{b}$ for $k \in 2$.
\item  $p \forces^V_{\P} \mbox{``$u^0$ and $u^1$ are incomparable in $\dot{T}$.''}$
\end{enumerate}
\end{definition}

We note that the witnessing conditions $s^0$ and $s^1$ for an $(h, s)$-splitting must be incompatible.
The issue is that if $s^* \le s^0, s^1$ then $(s^*, p)$ forces that both $u^0$ and $u^1$ lie on $\dot b$,
while $p$ forces them to be incomparable in $T$.

\begin{definition} \label{hnew}
  $\dot b$ is {\em $h$-new below $s$} if and only if the set of $s'$ such that there is
  an $(h, s')$-splitting is dense below $s$.
\end{definition}

The following key claim takes place in $V$. 
\begin{claim} \label{splittingclaim} 
  Suppose that $s$ forces ``$\dagger_h$ holds'', and $\dot b$ is
  $h$-new below $s$. There are sequences $\langle s_i \mid i < \nu\rangle$,
  $\langle p_i \mid i < \nu \rangle$, and $\langle v_i \mid i< \nu\rangle$, such that
\begin{enumerate}
\item For all $i < \nu$, $s_i \leq s$ and the stem of $p_i$ is $h$.
\item For all $i < \nu$, $(s_i, p_i) \forces^V_{\R\times\P} v_i \in \dot{b}$.
\item For $i < j < \nu$, 
     $p_i \wedge p_j \forces^V_\P \mbox{``$v_i$ and $v_j$ are incomparable in $\dot{T}$''}$.
\end{enumerate}
\end{claim}

\begin{proof}
  Suppose that $s$ forces  $\dagger_h$  as witnessed by $\xi$, $\dot{J}$, and ${\dot p}_\alpha$ for
  $\alpha \in \dot J$.  
   Forcing below $s$ we pass to a generic extension $V[R]$ where $\xi$, $J$ and
  $\langle p_\alpha : \alpha \in J \rangle$ witness $\dagger_h$. 

  The following subclaim takes place in $V[R]$. 
\begin{subclaim} \label{splittingsubclaim}
  For every $\gamma \in J$ there exist $p \in \P$ with stem $h$, conditions $r^0, r^1$ in $\R$ below $s$
  and nodes $v^0, v^1 \in \mu \times \kappa$
  such that:
  \begin{itemize}
  \item $(r^0, p) \forces^V_{\R \times \P} (\gamma, \xi) \in \dot b$.   
  \item For $k \in 2$, $(r^k, p) \forces^V_{\R \times P} v^k \in \dot{b}$.
  \item $p \forces^V_\P \mbox{``$v^0$ and $v^1$ are incomparable elements above $(\gamma, \xi)$ in $\dot T$''}$.
  \item $r^0 \in R$.
  \end{itemize}
\end{subclaim}

\begin{proof} We will do a density argument in $V$ to show that suitable values for $r^0$ are dense
  below $s$. Let $r \le s$ force that $\gamma \in J$, and decide the value of $p_\gamma$ as
  $q$. Then $q$ has stem $h$ and $(r, q) \forces^V_{\R \times \P} (\gamma, \xi) \in \dot b$.
  Since $r \le s$ and $\dot b$ is $h$-new below $s$,
  we may extend $r$ if needed and assume that there is an $(h, r)$-splitting.
  
  Fix $r^0, r^1 \le r$, nodes $v^0, v^1$ and a condition $q'$ with stem $h$
  such that:
\begin{enumerate}
\item  $(r^k, q') \forces^V_{\R \times \P} v^k \in \dot{b}$ for $k \in 2$.
\item  $q' \forces^V_{\P} \mbox{``$v^0$ and $v^1$ are incomparable in $\dot{T}$.''}$.
\end{enumerate}

Since $\stem(q) = \stem(q') = h$, by item \ref{preslemmahyp5} of our hypotheses
 we may find $p \le q, q'$ with $\stem(p) = h$. 
 Since $(r^k, p)$ forces that both $(\gamma, \xi)$
and $v^k$ are in $\dot b$, $p$ forces that $(\gamma, \xi)$ and $v^k$ are comparable in
$\dot T$. Since $p$ also forces that $v^0$ and $v^1$ are incomparable,
it follows that they are both on levels above $\gamma$.  
\end{proof}

Still working in $V[R]$, choose a club $C \subseteq \mu$ such that for all $\beta \in C$ and all
$\gamma \in J \cap  \beta$,  the conclusion of Subclaim \ref{splittingsubclaim} holds
with witnessing nodes $v^0,v^1$ having levels below $\beta$.

We select increasing sequences $\gamma_i$ and $\beta_i$ for $i < \nu$ such that
\begin{enumerate}
\item $\beta_i \in C$,
\item $\gamma_i \in J$,
\item $\gamma_i < \beta_i \leq \gamma_{i+1}$.
\end{enumerate}

Now for each $\gamma_i$ the conclusion of Subclaim \ref{splittingsubclaim} holds,
with witnessing nodes on levels below $\gamma_{i+1}$.
We record the witnesses to this splitting as $p_i$, $r_i^k$ and $v_i^k$. 
Let $u_i = (\gamma_i, \xi)$. 

For $i < j$ we claim that $p_i \wedge p_j$ forces that $v_i^1$ and $v_j^1$ are incomparable.
The point is that we can choose a lower bound $r^*$ for  $r_i^0$, $r_j^0$ as both are in $R$. 
Now $(r^*, p_i \wedge p_j) \forces^V_{\R \times \P} v_i^0, u_j \in \dot b$, so
$p_i \wedge p_j \forces^V_\P v_i^0 <_{\dot T} u_j <_{\dot T} v_j^1$. Since $v_i^0, v_i^1$ are on levels
below $\gamma_j$ and $p_i \wedge p_j$ forces that $v_i^0$ is  incomparable with $v_i^1$,
$p_i \wedge p_j$ forces that $v_i^1$ is incomparable with $v_j^1$. 

Now let $v_i = v_i^1$ and $s_i = r_i^1$. 
By the distributivity of $\R$, the sequence $\langle s_i, p_i,v_i \mid i < \nu \rangle$ is in $V$
and by construction it satisfies the desired properties.  
\end{proof}

\begin{claim} \label{newclaim}
  If $s$ forces $\dagger_h$, then $\dot b$ is not $h$-new below $s$.
\end{claim}

\begin{proof}
  Assume for a contradiction that $s$ forces $\dagger_h$ and $\dot b$ is $h$-new below $s$. Note that
  these properties also hold for conditions below $s$. 
  
Using Claim \ref{splittingclaim}, we will construct a tree of conditions
$\langle (r_\sigma, p_\sigma) \mid \sigma \in \nu^{<\omega} \rangle$ in $\R \times \P$
and nodes $\langle v_\sigma \mid \sigma\in\nu^{<\omega}\rangle$ such that:
\begin{enumerate}
\item For all $\sigma$, $p_\sigma$ has stem $h$.
\item If  $\sigma'\supset \sigma$, then $(r_{\sigma'},p_{\sigma'})\leq (r_\sigma, p_\sigma)$.
\item  For all $\sigma$, $(r_\sigma, p_\sigma) \forces^V_{\R \times \P} v_\sigma\in \dot{b}$.
\item  For all $\sigma$ and all  $i \neq j$ in $\nu$,
   $p_{\sigma\cat i} \wedge p_{\sigma\cat j}$ forces that $v_{\sigma\cat i}$ and $v_{\sigma\cat j}$
are incomparable in $\dot{T}$.
\end{enumerate}
 
Given $r_\sigma$ and $s_\sigma$, we appeal to Claim \ref{splittingclaim}
with $s_\sigma$ in place of $s$ to produce $s_i$, $v_i$ and $p_i$.
We then set $r_{\sigma\cat i} = s_i$ and
$v_{\sigma\cat i} = v_i$. Finally we set $p_{\sigma\cat i} =  p_\sigma \wedge p_i$.

 When the construction is done we choose $\gamma < \mu$ such that all the nodes $v_\sigma$
 have levels below $\gamma$. We use the countable closure of $\R$ and item \ref{preslemmahyp7} of  our hypotheses
 on $\P$ to choose $(r_f, p_f)$ for $f \in \nu^{\omega}$ such that $p_f$ has stem $h$,
 and  $(r_f, p_f) \le (r_{f \restriction n}, p_{f \restriction n})$ for all $n < \omega$.
 We then choose $(r'_f, p'_f) \le (r_f, p_f)$ so that $(r'_f, p'_f)$ determines  
 the node on level $\gamma$ in the branch $\dot b$ as $u_f$.

   Since $\nu^\omega > \nu \ge \kappa$, there exist $f \neq g$ such that
   $u_f = u_g = u^*$  and $\stem(p'_f) = \stem(p'_g) = h^*$ for some node $u^*$ and stem $h^*$. Let $n$ be least such that $f(n) \neq g(n)$. Let
   $\sigma = f \restriction n = g \restriction n$, $i = f(n)$ and $j = g(n)$,
   so that $f \restriction n + 1= \sigma\cat i$ and $g \restriction n + 1= \sigma\cat j$.

   By construction
   $p'_f \wedge p'_g$ forces that $v_{\sigma\cat i}$ and $v_{\sigma\cat j}$ are incomparable in $\dot{T}$.
   Also $(r_f, p'_f \wedge p'_g)$ forces that both $v_{\sigma\cat i}$ and $u^*$ are in $\dot b$,
   so $p'_f \wedge p'_g \forces^V_{\P} v_{\sigma\cat i} < u^*$. Similarly 
   $p'_f \wedge p'_g \forces^V_{\P} v_{\sigma\cat j} < u^*$. This is a contradiction. 
 \end{proof}

For each stem $h$, let $D_h$ be the set of $s$ such that
either $s$ forces $\neg \dagger_h$ or there is no $(h, s)$-splitting.
It is easy to see that $D_h$ is open, and we claim that it is also
dense. To see this let $s$ be arbitrary, where by extending $s$
we may assume that $s$ decides $\dagger_h$. If $s$ forces $\neg\dagger_h$
then $s \in D_h$ by definition. If $s$ forces $\dagger_h$ 
then $\dot b$ is not $h$-new below $s$ by Claim \ref{newclaim},
in which case by definition there is $s' \le s$ with no $(h, s')$-splitting
and $s' \in D_h$. 
 
Since $\R$ is $\nu$-distributive, $\bigcap_h D_h$ is dense and open.
Let $s^* \in \bigcap_h D_h$. By Claims \ref{daggerstemsdense} and
\ref{daggerhandy}, and extending $s^*$ if necessary, we may assume that:
\begin{itemize}
\item For some stem $h$,  $s^*$ forces that $\dagger_h$ holds. 
\item There is a condition $p$ such that $s^*$ forces
 that  $\{ p' : \dagger_{\stem(p')} \}$ is dense below $p$. 
\end{itemize}

Now we force below $p$ to obtain a $\P$-generic filter $P$ with $p \in P$.
Working in $V[P]$, let
\[
d = \{ u \in \mu \times \kappa : \exists q \in P \; \exists s' \le s^* \;
(s', q) \forces^V_{\R \times \P} u \in \dot b \}.
\]

\begin{claim} $d$ is a cofinal branch in $T$.
\end{claim}

\begin{proof} Since $\R \times \P$ forces that $\dot b$ is a cofinal branch, it is routine to check that
  $d$ contains nodes with unboundedly high levels. The key remaining point is that $d$ is a chain in $T$.
  To see this, suppose for a contradiction that $u^0$ and $u^1$ are incomparable members of $d$.

  We may choose $q \in P$ with $q \le p$, together with $s^0, s^1 \le s^*$, such that:
  \begin{itemize}
  \item $q \forces^V_\P \mbox{``$u^0$ and $u^1$ are incomparable in $\dot T$.''}$.
  \item $(s^i, q) \forces^V_{\R \times \P} u^i \in \dot b$.
   \end{itemize} 

  Forcing with $\R$ below $s^*$ over $V[P]$ we obtain $R$ mutually generic with $P$,
  such that in $V[R]$ the set $\{ p' : \dagger_{\stem(p')} \}$ is dense below $p$.
  So we may choose $p' \in P$ with $p' \le q$ and $s^{**} \le s^*$ such that
  $s^{**}$ forces $\dagger_h$ where $h = stem(p')$. So $s^*$ does not force $\neg\dagger_h$,
  and since $s^* \in D_h$ it follows that there is no $(h, s^*)$-splitting.
  However $p'$ together with $s^i$ and $u^i$ form an example of
  an $(h, s^*)$-splitting, which is a contradiction.  
\end{proof}

This completes the proof of Lemma \ref{preslemma}.
\end{proof}

\subsection{Term forcing} \label{termforcing}

Let $\P$ be a forcing poset and $\dot {\Q}$ be a $\P$-name for a forcing poset.
Then $\termspace(\P, \dot\Q)$ is the set of $\P$-names
for elements of $\dot\Q$, where we identify names $\dot q_0$ and $\dot q_1$
if $\forces_\P \dot q_0 = \dot q_1$.
$\termspace(\P, \dot\Q)$ is 
ordered as follows: $\dot q_1 \le \dot q_0$ in $\termspace(\P, \dot\Q)$ 
if and only if $\forces_{\P} \dot q_1 \le_{\dot {\Q}} \dot q_0$. Term forcing was introduced by Laver,
and the theory was elaborated by Foreman.

The following Lemmas are standard:
\begin{lemma} \label{standardtermforcinglemma}
Let $\P$ be a forcing poset, $\dot {\Q}$ be a $\P$-name for a forcing poset, and let $\R = \termspace(\P, \dot \Q)$. 
\begin{enumerate}  
\item The identity function is a projection from $\P \times \R$
  to $\P * \dot\Q$.
\item If $P \times R$ is $\P \times \R$-generic over $V$
  and  $Q = \{ {\dot q}[P]: \dot q \in R \}$, then $Q$ is $\dot {\Q}[G]$-generic over $V[P]$.
\item  \label{quotient_to_term}  If $P * Q$ is ${\P} * {\Q}$-generic and we force over $V[P * Q]$ with
  $\{ \dot q \in R : {\dot q}[P] \in Q \}$ using the ordering inherited
  from $\R$, then we obtain $R$ such that $P \times R$ is
  ${\P} \times \R$-generic and induces $P * Q$.  
\item If $\lambda$ is forced by $\P$ to be a regular uncountable cardinal and $\dot\Q$ is forced by $\P$ 
  to be $<\lambda$-closed (resp $<\lambda$-directed closed, canonically $<\lambda$-closed),
  then $\R$  is $<\lambda$-closed (resp $<\lambda$-directed closed, canonically $<\lambda$-closed).
\end{enumerate}
\end{lemma}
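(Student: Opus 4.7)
The plan is to prove the four items in order, with item (1) being the conceptual core and items (2) and (3) following from (1) by general projection machinery, while (4) is a pointwise transfer of closure.

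For (1), I will verify that the identity on the underlying set is a projection from $\P \times \R$ to $\P * \dot\Q$. Order preservation is immediate from the definition of $\le_\R$: if $(p', \dot q') \le_{\P \times \R} (p, \dot q)$ then $p' \le p$ and $\forces_\P \dot q' \le_{\dot\Q} \dot q$, which directly gives $(p', \dot q') \le_{\P * \dot\Q} (p, \dot q)$. For the key refinement property, given $(p, \dot q) \in \P \times \R$ and $(p', \dot r) \le_{\P * \dot\Q} (p, \dot q)$ (so $p' \le p$ and $p' \forces_\P \dot r \le_{\dot\Q} \dot q$), I will use a standard mixing construction: let $\dot q^*$ be a name interpreted as $\dot r$ whenever $p'$ belongs to the generic and as $\dot q$ on the complement. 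A case split gives $\forces_\P \dot q^* \le_{\dot\Q} \dot q$, so $(p', \dot q^*) \in \P \times \R$ refines $(p, \dot q)$, while below $p'$ we have $\dot q^* = \dot r$, hence $(p', \dot q^*) \le_{\P * \dot\Q} (p', \dot r)$.

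Items (2) and (3) then follow by the standard theory of projections. For (2), given $P \times R$ generic for $\P \times \R$, the image of $P \times R$ under the identity projection is a $\P * \dot\Q$-generic filter whose first coordinate is $P$ and whose second coordinate, after interpretation by $P$, is the upward closure in $\dot\Q[P]$ of $\{\dot q[P] : \dot q \in R\}$; by the factor lemma this set is $\dot\Q[P]$-generic over $V[P]$. For (3), given $P * Q$ generic, $\{\dot q \in \R : \dot q[P] \in Q\}$ is (isomorphic to) the quotient forcing associated to the identity projection over the generic $P * Q$, and forcing with it over $V[P * Q]$ produces an $R$ with $P \times R$ being $\P \times \R$-generic and inducing $P * Q$.

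For (4), the strategy is to transfer closure pointwise using the maximality principle. Given a decreasing sequence (resp.\ directed subset) in $\R$ of size less than $\lambda$, the forced $<\lambda$-closure (resp.\ $<\lambda$-directed closure, canonical closure) of $\dot\Q$ lets me produce a single $\P$-name $\dot q^*$ with $\forces_\P$ ``$\dot q^*$ is a lower bound (resp.\ greatest lower bound) of the corresponding sequence or directed set in $\dot\Q$''; then $\dot q^* \le_\R \dot q_\alpha$ for each $\alpha$ by definition of $\R$. The main obstacle, to the extent there is one, is the mixing construction in (1); after that the remaining steps are mechanical, with the only real care needed being to distinguish cleanly between the filters $Q \subseteq \dot\Q[P]$ and $P * Q \subseteq \P * \dot\Q$ when passing between them via the factor lemma.
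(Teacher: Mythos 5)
Your proof is correct and is exactly the standard argument the paper has in mind: the paper states this lemma without proof as a standard fact about term forcing, and your mixing construction for the projection property, the factor-lemma derivations of (2) and (3), and the pointwise transfer of closure via fullness in (4) are the usual way to establish it. No gaps.
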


\begin{lemma} \label{standardtermforcinglemma2}
  Let $\P$ be a forcing poset.
\begin{enumerate}  
\item \label{termspaceproduct} If it is forced by $\P$ that $\dot \Q$ and $\dot \R$ are forcing posets, then 
  $\termspace(\P, \dot \Q \times \dot \R)$ is canonically
  isomorphic to $\termspace(\P, \dot \Q) \times \termspace(\P, \dot \R)$,
  identifying names for pairs with pairs of names. 
\item \label{termspaceprojection}   If it is forced by $\P$ that $\dot \Q_1$ and $\dot \Q_0$ are forcing posets and
  $\dot \pi$ is a projection from $\dot \Q_1$ to $\dot \Q_0$,
   then $\dot q \mapsto {\dot \pi}(\dot q)$ is a projection from $\termspace (\P, \dot Q_1)$ to $\termspace (\P, \dot \Q_0)$. 
\end{enumerate}
\end{lemma}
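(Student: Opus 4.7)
Both parts are routine verifications, but let me sketch the approach.

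For part \ref{termspaceproduct}, the plan is to exhibit mutually inverse, order-preserving maps between the two term orderings. Given a $\P$-name $\dot s$ for an element of $\dot\Q \times \dot\R$, apply the standard first and second coordinate operations on names to obtain $\P$-names $\dot s_0$ and $\dot s_1$ for elements of $\dot\Q$ and $\dot\R$ respectively; the map is $\dot s \mapsto (\dot s_0, \dot s_1)$. In the reverse direction, given $\dot q \in \termspace(\P, \dot\Q)$ and $\dot r \in \termspace(\P, \dot\R)$, form the canonical name $\langle \dot q, \dot r\rangle$ for the ordered pair, which is a $\P$-name for an element of $\dot\Q \times \dot\R$. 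Under the convention that names forced to be equal are identified, these two operations are inverses of each other. That the correspondence preserves the ordering follows from the fact that $\forces_\P \dot s_1 \le_{\dot\Q \times \dot\R} \dot s_0$ is equivalent to the conjunction of $\forces_\P (\dot s_1)_0 \le_{\dot\Q} (\dot s_0)_0$ and $\forces_\P (\dot s_1)_1 \le_{\dot\R} (\dot s_0)_1$, which is exactly the ordering on the product of term orderings.

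For part \ref{termspaceprojection}, define $\Pi : \termspace(\P, \dot\Q_1) \to \termspace(\P, \dot\Q_0)$ by $\Pi(\dot q) = \dot\pi(\dot q)$. Order preservation is immediate from the fact that $\forces_\P \dot\pi$ is order preserving: if $\forces_\P \dot q_1' \le_{\dot\Q_1} \dot q_1$ then $\forces_\P \dot\pi(\dot q_1') \le_{\dot\Q_0} \dot\pi(\dot q_1)$. The essential condition to verify is that whenever $\dot q_1 \in \termspace(\P, \dot\Q_1)$ and $\dot q_0 \in \termspace(\P, \dot\Q_0)$ satisfy $\dot q_0 \le \Pi(\dot q_1)$, there exists $\dot q_1' \le \dot q_1$ in $\termspace(\P, \dot\Q_1)$ with $\Pi(\dot q_1') \le \dot q_0$.

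The main (and only) point of interest is this last condition, which I would verify using the forcing theorem and the maximal principle. Assuming $\forces_\P \dot q_0 \le_{\dot\Q_0} \dot\pi(\dot q_1)$, in any $\P$-generic extension $V[G]$ the projection property of $\dot\pi[G]$ applied to $\dot q_1[G]$ and $\dot q_0[G]$ yields some $q_1' \le_{\dot\Q_1[G]} \dot q_1[G]$ with $\dot\pi[G](q_1') \le_{\dot\Q_0[G]} \dot q_0[G]$. Since this is forced by the empty condition, the maximal principle (fullness) provides a $\P$-name $\dot q_1'$ for an element of $\dot\Q_1$ such that $\forces_\P \dot q_1' \le_{\dot\Q_1} \dot q_1$ and $\forces_\P \dot\pi(\dot q_1') \le_{\dot\Q_0} \dot q_0$. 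Unpacking the definition of the term ordering, this is exactly $\dot q_1' \le \dot q_1$ in $\termspace(\P, \dot\Q_1)$ and $\Pi(\dot q_1') \le \dot q_0$ in $\termspace(\P, \dot\Q_0)$, as needed. There is no real obstacle here; the proof is essentially a bookkeeping exercise translating between the forcing relation and the term ordering, with the maximal principle doing the only substantive work.
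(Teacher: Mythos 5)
Your proof is correct and is exactly the routine verification the paper has in mind (the paper states this lemma as standard and gives no proof): coordinatewise identification of names for part (1), and order-preservation plus the fullness/maximal-principle argument to lift the projection property to the term ordering for part (2). No gaps.
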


We can view $\termspace(\P, \dot \Q)$ as adding a ``universal generic object''
for $\dot \Q$, which can be realised using any $V$-generic filter $P$ on $\P$ as
a $V[P]$-generic filter on ${\dot Q}[P]$. 
For use later we record some more easy facts about term forcing.

\begin{lemma} \label{termforcingcclemma}
 If $\kappa$ is weakly compact, $\vert \P \vert < \kappa$ and  $\forces_\P \mbox{``$\dot \Q$ is $\kappa$-cc''}$,
 then $\termspace(\P, \dot \Q)$ is $\kappa$-cc. 
\end{lemma}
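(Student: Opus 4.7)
My plan is to use the partition property of weakly compact cardinals, namely $\kappa \to (\kappa)^2_\mu$ for every $\mu < \kappa$, applied to a coloring by conditions of $\P$.

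First I would reformulate incompatibility in $\termspace(\P, \dot\Q)$ locally. I claim that two terms $\dot q_0, \dot q_1$ are incompatible in $\termspace(\P, \dot\Q)$ if and only if there is some $p \in \P$ with $p \forces_\P \dot q_0 \perp_{\dot\Q} \dot q_1$. One direction is immediate from the definitions; for the other, if no such $p$ exists, the set of $p$ forcing that $\dot q_0, \dot q_1$ have a common refinement in $\dot\Q$ is dense, so picking a maximal antichain of such $p$ together with witnessing names $\dot r_p$ and mixing them yields a name $\dot r$ with $\forces_\P \dot r \leq_{\dot\Q} \dot q_0, \dot q_1$, witnessing compatibility in the term forcing.

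Now suppose toward a contradiction that $(\dot q_\alpha)_{\alpha < \kappa}$ is an antichain of size $\kappa$ in $\termspace(\P, \dot\Q)$. By the reformulation, for each pair $\alpha < \beta$ we may choose some $p_{\alpha,\beta} \in \P$ with $p_{\alpha,\beta} \forces_\P \dot q_\alpha \perp_{\dot\Q} \dot q_\beta$. This defines a coloring $c : [\kappa]^2 \to \P$ with range of size $\vert \P \vert < \kappa$. Since $\kappa$ is weakly compact it satisfies $\kappa \to (\kappa)^2_{\vert\P\vert}$, so there is an unbounded $H \subseteq \kappa$ and a fixed $p^* \in \P$ such that $p^* \forces_\P \dot q_\alpha \perp_{\dot\Q} \dot q_\beta$ for all $\alpha < \beta$ in $H$.

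To finish, let $P$ be $\P$-generic with $p^* \in P$. Weak compactness implies $\kappa$ is inaccessible, and since $\vert \P \vert < \kappa$ the forcing $\P$ preserves $\kappa$, so $H$ still has size $\kappa$ in $V[P]$. By the choice of $p^*$, $\{ \dot q_\alpha[P] : \alpha \in H \}$ is an antichain of size $\kappa$ in $\dot\Q[P]$, contradicting the hypothesis that $\P$ forces $\dot\Q$ to be $\kappa$-cc. The only nontrivial step is the reformulation of incompatibility in the term forcing, and that is handled by the standard mixing-of-names trick using maximal antichains.
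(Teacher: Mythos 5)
Your proof is correct and follows essentially the same route as the paper: colour pairs from a putative $\kappa$-sized antichain by conditions of $\P$ forcing incompatibility in $\dot\Q$, apply $\kappa \to (\kappa)^2_{|\P|}$, and realise the homogeneous set below the constant colour to contradict the $\kappa$-cc of $\dot\Q$. The only difference is that you spell out (via the maximal-antichain/mixing argument) the equivalence between incompatibility in $\termspace(\P,\dot\Q)$ and some condition forcing incompatibility in $\dot\Q$, which the paper leaves implicit.
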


\begin{proof} Suppose for a contradiction that $(\tau_i)_{i < \kappa}$ enumerates an antichain in
  $\termspace(\P, \dot \Q)$, so that for $i < j$ we have
  $\not\forces_\P \mbox{``$\tau_i$ and $\tau_j$ are compatible in $\dot\Q$''}$.
  Define a colouring of $[\kappa]^2$ in $\vert \P \vert$ colours, by
  colouring $(i, j)$ with some condition $p(i, j)$ such that
  $p(i, j) \forces \mbox{``$\tau_i$ and $\tau_j$ are incompatible in $\dot\Q$''}$.

  Since $\kappa$ is weakly compact, there exist $H \in [\kappa]^\kappa$ and $p$ such that
  $p(i, j) = p$ for all $(i, j) \in [H]^2$. But then $p$ forces that
  $(\tau_i)_{i \in H}$ enumerates an antichain in $\dot \Q$, contradicting the hypothesis.
\end{proof}  
  
\begin{lemma} \label{iteratedtermforcing}  
 If $\dot \R$ is a $\P * \dot \Q$-name for a forcing poset then
 $\termspace(\P * \dot \Q, \dot \R) \simeq \termspace(\P, \termspace^{V[P]}(\dot \Q, \dot \R))$.
\end{lemma}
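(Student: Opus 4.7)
The plan is to exhibit an explicit order-preserving bijection between the two term forcing posets, based on the standard correspondence between $\P*\dot\Q$-names and $\P$-names for $\dot\Q$-names. Recall that for any $\P*\dot\Q$-name $\tau$ in $V$ there is a canonically associated $\P$-name $\tau'$ in $V$ such that in $V[P]$ the realisation $\tau'[P]$ is a $\dot\Q[P]$-name, and $\tau'[P][Q]=\tau[P*Q]$ for every $V$-generic $P*Q$. Up to the usual identification of names $\sigma_0,\sigma_1$ satisfying $\forces\sigma_0=\sigma_1$, the map $\tau\mapsto\tau'$ is a bijection between $\P*\dot\Q$-names and $\P$-names for $\dot\Q[P]$-names.

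First I would define $\Phi(\tau)=\tau'$ and verify that $\Phi$ maps $\termspace(\P*\dot\Q,\dot R)$ into $\termspace(\P,\termspace^{V[P]}(\dot\Q,\dot R))$. Membership in the left-hand side means $\forces_{\P*\dot\Q}\tau\in\dot R$, which by the standard unpacking of two-step iterated forcing is equivalent to $\forces_\P(\forces_{\dot\Q}\tau'\in\dot R)$, i.e.\ $\forces_\P \tau'\in\termspace^{V[P]}(\dot\Q,\dot R)$, which is exactly membership of $\tau'$ in the right-hand side.

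Next I would check that the orderings agree. By definition of term forcing, $\tau_1\le\tau_0$ in $\termspace(\P*\dot\Q,\dot R)$ iff $\forces_{\P*\dot\Q}\tau_1\le_{\dot R}\tau_0$. Unpacking the two-step forcing again, this is equivalent to $\forces_\P(\forces_{\dot\Q}\tau'_1\le_{\dot R}\tau'_0)$. Since term forcing is defined inside $V[P]$ using the same statement, this says $\forces_\P\tau'_1\le_{\termspace^{V[P]}(\dot\Q,\dot R)}\tau'_0$, which is precisely $\tau'_1\le\tau'_0$ in the outer term forcing. So $\Phi$ is an order embedding.

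Finally, surjectivity (up to the forced-equal identification) comes from the inverse direction of the name correspondence: given a $\P$-name $\sigma$ for a $\dot\Q[P]$-name, one assembles a $\P*\dot\Q$-name $\tau$ with $\tau[P*Q]=\sigma[P][Q]$, and if $\sigma\in\termspace(\P,\termspace^{V[P]}(\dot\Q,\dot R))$ then the same unpacking shows $\forces_{\P*\dot\Q}\tau\in\dot R$, so $\tau\in\termspace(\P*\dot\Q,\dot R)$ and $\Phi(\tau)=\sigma$ up to equivalence. The only real obstacle is bookkeeping: one must be careful that the identification of names up to forced equality is made in a uniform way on both sides, so that the ``isomorphism'' $\simeq$ is between the actual quotient posets rather than between raw name classes. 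No deeper argument is required beyond standard iterated forcing manipulations.
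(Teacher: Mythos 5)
Your proof is correct and takes essentially the same route as the paper, which simply invokes the canonical identification between $\P * \dot \Q$-terms for elements of $\R$ and $\P$-terms for $\Q$-terms for elements of $\R$. You have merely spelled out the order-preservation and surjectivity checks that the paper leaves implicit.
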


\begin{proof} This is immediate using the canonical identification between
  $\P * \dot \Q$-terms for elements of $\R$ on the one hand,  and $\P$-terms 
  for $\Q$-terms for elements of $\R$ on the other hand.
\end{proof}

It is also useful to analyse $\termspace(\P, \dot \Q * \dot \R)$ 
where $\Q * \dot \R \in V[P]$. 
\begin{lemma} \label{termiteratedforcing}
  There is a projection from $\termspace(\P, \dot \Q) \times \termspace(\P * \dot \Q, \dot \R)$
  to $\termspace(\P, \dot \Q * \dot \R)$ 
\end{lemma}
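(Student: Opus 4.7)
The plan is to write down an explicit map $\pi$ from the product to $\termspace(\P, \dot\Q * \dot\R)$ and verify directly that it is a projection. For $\sigma \in \termspace(\P * \dot\Q, \dot\R)$, let $\hat\sigma$ denote the $\P$-name for a $\dot\Q$-name for an element of $\dot\R$ obtained from $\sigma$ via the canonical bijection between $\P * \dot\Q$-names and $\P$-names for $\dot\Q$-names. Set $\pi(\tau, \sigma) = (\tau, \hat\sigma)$, viewed as a $\P$-name for an ordered pair that is forced to lie in $\dot\Q * \dot\R$.

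Order preservation is routine: if $(\tau', \sigma') \le (\tau, \sigma)$ in the product then $\forces_\P \tau' \le_{\dot\Q} \tau$ and $\forces_{\P * \dot\Q} \sigma' \le_{\dot\R} \sigma$. The second clause, unpacked via the bijection, says $\forces_\P \dot q \forces_{\dot\Q} \hat{\sigma'} \le \hat\sigma$ for every $\dot q \in \termspace(\P, \dot\Q)$; specialising to $\dot q = \tau'$ and combining with the first clause gives $\forces_\P (\tau', \hat{\sigma'}) \le (\tau, \hat\sigma)$ in $\dot\Q * \dot\R$, as required.

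The substantive step is the projection property. Suppose $(\tau'', \rho) \le \pi(\tau, \sigma) = (\tau, \hat\sigma)$ in $\termspace(\P, \dot\Q * \dot\R)$, so $\forces_\P \tau'' \le \tau$ and $\forces_\P \tau'' \forces_{\dot\Q} \rho \le \hat\sigma$. Let $\check\rho \in \termspace(\P * \dot\Q, \dot\R)$ be the $\P * \dot\Q$-name corresponding to $\rho$, so that the second hypothesis becomes $(p, \tau'') \forces_{\P * \dot\Q} \check\rho \le \sigma$ for every $p \in \P$. Using the dense set of conditions $(p, \dot q) \in \P * \dot\Q$ with $p$ deciding whether $\dot q \le \tau''$, I would apply a standard mixing construction to produce $\sigma^\star \in \termspace(\P * \dot\Q, \dot\R)$ that agrees with $\check\rho$ on each $(p, \dot q)$ forcing $\dot q \le \tau''$ and with $\sigma$ on each condition forcing $\dot q \not\le \tau''$. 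Then $\forces_{\P*\dot\Q} \sigma^\star \le \sigma$: on the first part this is exactly the hypothesis (since any generic containing $(p, \dot q)$ with $\dot q \le \tau''$ also contains $\tau''$), and on the second part $\sigma^\star$ is literally $\sigma$. Thus $(\tau'', \sigma^\star) \le (\tau, \sigma)$ in the product, and since $\hat{\sigma^\star}$ is forced to equal $\rho$ below $\tau''$ by construction, we obtain $\pi(\tau'', \sigma^\star) = (\tau'', \hat{\sigma^\star}) \le (\tau'', \rho)$, completing the verification.

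The only real obstacle is bookkeeping around the canonical identifications of names and the mixing step; no combinatorial input beyond standard forcing manipulations is needed.
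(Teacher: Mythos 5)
Your map $\pi$ is the right one (it is the identity under the canonical identifications, which is exactly the map the paper uses), and your order-preservation argument is fine; the paper itself takes a shorter route, obtaining the projection by applying the functor $\termspace(\P,-)$ to the projection $\Q\times\termspace^{V[P]}(\Q,\dot\R)\to\Q*\dot\R$ (Lemmas \ref{standardtermforcinglemma} and \ref{standardtermforcinglemma2}) and then invoking the isomorphism of Lemma \ref{iteratedtermforcing}. A direct verification is a perfectly legitimate alternative, but your mixing step is not well defined as stated. You mix over the dense set of conditions $(p,\dot q)$ with $p$ deciding whether $\dot q\le\tau''$, assigning $\check\rho$ on the class where $p\forces\dot q\le\tau''$ and $\sigma$ on the class where $p\forces\dot q\not\le\tau''$. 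These two classes are not separated by incompatibility: for instance $(p,1_{\dot\Q})$ lies in the second class while its extension $(p,\tau'')$ lies in the first, so you are demanding that $\sigma^\star$ be interpreted as both $\sigma$ and $\check\rho$ in any generic containing $(p,\tau'')$. More to the point, a generic filter on $\Q$ containing $\tau''[P]$ will typically also contain conditions $q'$ with $q'\not\le\tau''[P]$, so "the $\Q$-coordinate of the condition extends $\tau''$" is not the dichotomy you want.

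The repair is standard: mix over the $\P*\dot\Q$-statement ``$\tau''$ belongs to the generic filter on $\dot\Q$'' (equivalently, over a maximal antichain deciding it), setting $\sigma^\star=\check\rho$ when it holds and $\sigma^\star=\sigma$ when it fails. Then $\forces_{\P*\dot\Q}\sigma^\star\le\sigma$ (in the first case because $\tau''[P]\in Q$ and the hypothesis $\forces_\P\tau''\forces_{\dot\Q}\rho\le\hat\sigma$ gives $\rho[P][Q]\le\sigma[P][Q]$; in the second case trivially), and $\forces_\P\tau''\forces_{\dot\Q}\hat{\sigma^\star}=\rho$, since every $\Q$-generic containing $\tau''[P]$ realises the first case. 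With this change both halves of the projection property go through and your proof is complete.
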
 

\begin{proof} In $V[P]$ there is a projection from $\Q \times \termspace(\Q, \dot \R)$ to $\Q * {\dot \R}$,
  and by item \ref{termspaceprojection} of Lemma \ref{standardtermforcinglemma2}
  this induces a projection from $\termspace(\P, \dot \Q) \times \termspace(\P, \termspace^{V[P]}(\Q, \dot \R))$
  to $\termspace(\P, \dot \Q * \dot \R)$.
  By Lemma \ref{iteratedtermforcing}  
  the posets $\termspace(\P, \termspace^{V[P]}(\Q, \dot \R))$
  and $\termspace(\P * \dot \Q, \dot \R)$ are canonically isomorphic.
\end{proof}

\begin{remark} With suitable identifications, the projection map
  from the proof of Lemma \ref{termiteratedforcing} 
  is the identity map.
\end{remark}


Let $P * Q$ be $\P * \dot \Q$-generic. 
In a mild abuse of notation, we sometimes denote by ``$\termspace(\P, \dot\Q)/P * Q$'' the forcing
 from item \ref{quotient_to_term} of Lemma \ref{standardtermforcinglemma}, which is defined in $V[P * Q]$
to produce an $\termspace(\P, \dot\Q)$-generic filter  $R$ such that $P \times R$ projects
to $P * Q$.
We will call this kind of forcing poset a {\em quotient to term} poset.
We will often  say ``force to remove the dependence of $Q$ on $P$''
or ``force to refine $P * Q$ to $P \times R$'' as shorthand for ``force with the quotient to term poset $\termspace(\P, \dot\Q)/P * Q$''. 
The forcing $\termspace(\P, \dot\Q)/P * Q$ is defined in $V[P * Q]$ but we may force with it over
generic extensions of this model:

\begin{lemma} \label{rearrangeqtot} 
  Let $P * Q$ be $\P * \dot \Q$-generic over $V$, let $\K \in V[P * Q]$ and let $K$ be
  $\K$-generic over $V[P * Q]$. Forcing with $\termspace(\P, \dot\Q)/P * Q$ over $V[P * Q * K]$ produces
  $R$ such that $P \times R$ induces $P * Q$ and $K$ is $\K$-generic over $V[P \times R]$.
\end{lemma}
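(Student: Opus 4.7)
The plan is to regard $M := V[P * Q]$ as the ambient ``ground model'' for the situation, note that both $\mathbb T := \termspace(\P,\dot\Q)/P*Q$ and $\K$ are forcing posets in $M$, and then appeal to the product lemma for forcing.

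First I would observe that $\mathbb T$ is defined in $M$ and hence lies in $V[P*Q*K]$, so forcing with $\mathbb T$ over $V[P*Q*K]$ does make sense. Since $\mathbb T \in M \subseteq V[P*Q*K]$, every dense open subset of $\mathbb T$ in $M$ also lies in $V[P*Q*K]$, so any $R$ that is $\mathbb T$-generic over $V[P*Q*K]$ is automatically $\mathbb T$-generic over $M$. Applying item \ref{quotient_to_term} of Lemma \ref{standardtermforcinglemma} to this $R$, we conclude that $P\times R$ is $\P\times\termspace(\P,\dot\Q)$-generic over $V$ and induces $P*Q$, which is the first conclusion. As a byproduct we have $V[P\times R] = V[P*Q][R] = M[R]$.

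For the second conclusion I would invoke the product lemma in $M$. Since $\mathbb T,\K \in M$, $K$ is $\K$-generic over $M$ and $R$ is $\mathbb T$-generic over $M[K]$, the product lemma tells us that $R\times K$ is $\mathbb T\times\K$-generic over $M$; equivalently, $R$ is $\mathbb T$-generic over $M$ and $K$ is $\K$-generic over $M[R]$. Since $M[R] = V[P\times R]$, this is precisely the second conclusion.

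There is no real obstacle here: the argument is a routine bookkeeping exercise combining item \ref{quotient_to_term} of Lemma \ref{standardtermforcinglemma} with the standard product lemma for forcing. The only mildly delicate point is to keep clear that $\mathbb T$, although introduced in the context of $V[P*Q*K]$, actually lies in the smaller model $M$, so that the product lemma applies inside $M$ with both factors as ground-model posets.
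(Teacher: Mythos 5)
Your proof is correct and is essentially the paper's own argument: both rest on the observation that $\K$ and $\termspace(\P,\dot\Q)/P*Q$ lie in $V[P*Q]$, so that the product lemma yields mutual genericity of $K$ and $R$ over $V[P*Q]$ and hence genericity of $K$ over $V[P\times R]$. The extra bookkeeping you include for the first conclusion (via item \ref{quotient_to_term} of Lemma \ref{standardtermforcinglemma}) is fine and matches what the paper leaves implicit.
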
 

\begin{proof} Let $A$ be $\termspace(\P, \dot\Q)/P * Q$-generic  over $V[P * Q * K]$ and
  let $V[P * Q * A] = V[P \times R]$. 
  Since $\K$ and $\termspace(\P, \dot\Q)/P * Q$ are both in $V[P * Q]$, $K$ and $A$ are mutually generic over
  $V[P * Q]$, so $K$ is $\K$-generic over $V[P \times R]$.   
\end{proof}

\begin{remark} \label{rearrangeqtotremark}
  In the sequel we sometimes replace $\termspace(\P, \dot \Q)$ by more elaborate posets
  which have the similar effect of adding a $\P$-name for a ${\dot \Q}[P]$-generic object:
  the analogue of Lemma \ref{rearrangeqtot} is true for such posets by the same argument.
\end{remark}  

  We record some easy but useful equivalences involving quotient to term posets.

\begin{lemma} \label{obviousinhindsight}
   In $V$,  $\P * \Q * \termspace(\P, \dot \Q)/P * Q$ is equivalent to
$\P \times \termspace(\P, \dot \Q)$. 
   In $V[P]$,  $\Q * (\termspace(\P, \dot \Q)/P * Q)$ is equivalent to $\termspace(\P, \dot \Q)$.
\end{lemma}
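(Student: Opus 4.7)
The proof is a straightforward application of the standard theory of term forcing. For the first equivalence, suppose $P * Q * R$ is generic for $\P * \dot\Q * (\termspace(\P, \dot\Q)/P * Q)$ over $V$. Item \ref{quotient_to_term} of Lemma \ref{standardtermforcinglemma} states precisely that $P \times R$ is $\P \times \termspace(\P, \dot\Q)$-generic and induces $P * Q$; so $V[P * Q * R] = V[P \times R]$. Conversely, starting from $P \times R$ generic for $\P \times \termspace(\P, \dot\Q)$, item 1 of the same lemma gives that $P * Q$ is $\P * \dot\Q$-generic (via the identity projection), and a routine verification (essentially the converse direction of item \ref{quotient_to_term}) shows that $R$ is then $(\termspace(\P, \dot\Q)/P * Q)$-generic over $V[P * Q]$. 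Thus the two posets yield the same class of generic extensions and so are forcing equivalent.

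For the second equivalence, the same argument applies relativised to $V[P]$. Given $Q * R$ generic for $\Q * (\termspace(\P, \dot\Q)/P * Q)$ over $V[P]$, the sequence $P * Q * R$ is generic for $\P * \dot\Q * (\termspace(\P, \dot\Q)/P * Q)$ over $V$, so by the first paragraph $P \times R$ is $\P \times \termspace(\P, \dot\Q)$-generic over $V$, whence $R$ is $\termspace(\P, \dot\Q)$-generic over $V[P]$ and $V[P][Q][R] = V[P][R]$. Conversely, if $R$ is $\termspace(\P, \dot\Q)$-generic over $V[P]$, then item 2 of Lemma \ref{standardtermforcinglemma} provides a $\dot\Q[P]$-generic $Q = \{\dot q[P] : \dot q \in R\}$, and $R$ is then recovered as the quotient-to-term generic over $V[P * Q]$.

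The one mildly delicate point, which I regard as the main obstacle, is reconciling the quotient-to-term poset (consisting of $\dot q' \in \termspace(\P, \dot\Q)$ with $\dot q'[P] \in Q$, as defined in Section \ref{termforcing}) with the naive quotient arising from the identity projection $\P \times \termspace(\P, \dot\Q) \to \P * \dot\Q$ (consisting of pairs $(p', \dot q')$ with $p' \in P$ and $\dot q'[P] \in Q$, ordered componentwise). The forgetful map $(p', \dot q') \mapsto \dot q'$ is an order-preserving surjection whose fibers are pairwise compatible in the quotient: given $(p_1, \dot q), (p_2, \dot q)$ with the same second coordinate, any $p \in P$ below both $p_1$ and $p_2$ (which exists by genericity of $P$) witnesses that $(p, \dot q)$ is a common refinement. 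Hence the two posets have canonically isomorphic Boolean completions, which legitimises the identifications above and completes the proof.
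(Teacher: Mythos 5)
The paper offers no proof of this lemma at all -- it is stated as one of several ``easy but useful equivalences'' and left as immediate from Lemma \ref{standardtermforcinglemma} -- so there is nothing to compare against line by line. Your argument is correct, and you have correctly identified the one point that actually requires checking: the quotient-to-term poset of Section \ref{termforcing} consists of terms $\dot q$ alone, whereas the quotient arising from the identity projection $\P \times \termspace(\P, \dot\Q) \to \P * \dot\Q$ consists of pairs $(p', \dot q')$. Your forgetful map $(p', \dot q') \mapsto \dot q'$ is order-preserving, surjective, and incompatibility-preserving (if $\dot q_1$ and $\dot q_2$ have a common refinement $\dot q$ with $\dot q[P] \in Q$, then any $p \in P$ below $p_1, p_2$ gives $(p, \dot q)$ below both pairs -- the same two lines as your fiber argument), hence a dense embedding, which gives the equivalence. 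Two small cautions on phrasing: the inference in your first paragraph from ``same class of generic extensions'' to ``forcing equivalent'' is not valid in general and should be replaced by the dense-embedding argument you in fact supply in the third paragraph; and ``fibers are pairwise compatible'' is weaker than what you need (preservation of incompatibility for arbitrary pairs with compatible images), though the argument you give proves the stronger statement verbatim.
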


The idea of term forcing extends in a natural way to iterations with more than two steps.
Suppose that $\langle \P_\alpha, \dot \Q_\alpha: \alpha < j \rangle$ is an iteration with
limit $\P_j$. Then we may form a product of term posets $\prod_{\alpha < j} \termspace(\P_\alpha, \dot \Q_\alpha)$,
using the same supports that were used to form $\P_j$. We note that the poset $\P_0$ is trivial,
so the first term poset in the product is equivalent to $\Q_0$.

It is easy to see that:
\begin{itemize}
\item the underlying set of $\prod_{\alpha < j} \termspace(\P_\alpha, \dot \Q_\alpha)$ is the underlying set of $\P_j$. 
\item The identity function is a projection from $\prod_{\alpha < j} \termspace(\P_\alpha, \dot \Q_\alpha)$ to $\P_j$.
\item There is a natural quotient to term forcing defined in $V[P_j]$ to produce a $\prod_{\alpha < j} \termspace(\P_\alpha, \dot \Q_\alpha)$-generic
  object which projects to $\P_j$. 
\end{itemize}

We will need some lemmas relating Cohen posets computed in different models.

\begin{lemma} \label{oldlemma}
  Let $\kappa^{<\kappa} = \kappa \le \lambda$ and let $\P$ be a $\kappa$-cc forcing poset
  of cardinality at most $\kappa$. Let $\dot \Q$ be a $\P$-name
  for $\Add^{V[\P]}(\kappa, \lambda)$.
  Then $\termspace(\P, \dot\Q)$ is equivalent to $\Add^V(\kappa, \lambda)$.
\end{lemma}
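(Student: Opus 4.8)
The plan is to exhibit an explicit isomorphism between $\termspace(\P, \dot \Q)$ and $\Add^V(\kappa, \lambda)$. First I would recall that under the hypotheses $\kappa^{<\kappa} = \kappa$ and $\P$ is $\kappa$-cc of size at most $\kappa$, so $\P$ preserves $\kappa^{<\kappa} = \kappa$ and hence $\Add^{V[P]}(\kappa, \lambda)$ is (forced to be) the poset of partial functions $p : \lambda \times \kappa \to 2$ with $|p| < \kappa$, ordered by reverse inclusion. I would then note that since $\P$ is $\kappa$-cc, any $\P$-name for such a condition is decided by a maximal antichain of size at most $\kappa$, each of whose elements forces a value below some fixed ordinal less than $\kappa$; a standard counting argument using $\kappa^{<\kappa} = \kappa$ and $|\P| \le \kappa$ shows there are only $\lambda$ (up to forced equality) such names, so $\termspace(\P, \dot\Q)$ has the right cardinality. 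The key structural point, though, is not cardinality but the combinatorial shape of the poset.

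The main step is to show $\termspace(\P, \dot\Q)$ is itself (isomorphic to) a Cohen poset $\Add^V(\kappa, \mu)$ for the appropriate $\mu$. To see this I would invoke the well-known fact that a separative poset which is $<\kappa$-closed, $\kappa^+$-cc, of size $\le \mu$, and such that every condition has $\mu$ incompatible extensions (so the poset is ``everywhere $\mu$-branching''), is forcing-equivalent, indeed isomorphic on a dense set, to $\Add^V(\kappa, \mu)$ provided $\mu^{<\kappa} = \mu$ — this is the standard characterization of $\Add(\kappa,\mu)$ as the unique (up to isomorphism of dense subsets) homogeneous $<\kappa$-closed separative atomless poset of the right size and saturation. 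By item (4) of Lemma \ref{standardtermforcinglemma}, since $\dot\Q$ is forced to be canonically $<\kappa$-closed, $\termspace(\P,\dot\Q)$ is canonically $<\kappa$-closed; the $\kappa^+$-chain condition follows from $|\termspace(\P,\dot\Q)| \le \lambda$ together with $\kappa$-cc of $\P$ (or one can argue directly via Lemma \ref{termforcingcclemma}-style reasoning if $\kappa$ is inaccessible, but here we only need $\kappa^+$-cc which is free from the size bound). Here $\mu = \lambda$ when $\lambda^{<\kappa} = \lambda$; in general I would instead argue more carefully by directly constructing the isomorphism rather than appealing to a black-box uniqueness theorem.

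Concretely, the cleanest route I would actually take is to build the isomorphism by hand. Decompose $\Add^{V[P]}(\kappa,\lambda)$ as $\prod_{\beta < \lambda} \Add^{V[P]}(\kappa,1)$ with $<\kappa$-supports; by Lemma \ref{standardtermforcinglemma2}(\ref{termspaceproduct}) and the iterated version sketched after Lemma \ref{obviousinhindsight}, $\termspace(\P, \dot\Q)$ is isomorphic to the $<\kappa$-support product $\prod_{\beta<\lambda} \termspace(\P, \Add^{V[P]}(\kappa,1))$. So it suffices to handle the single-coordinate case: show $\termspace(\P, \Add^{V[P]}(\kappa,1)) \cong \Add^V(\kappa,1)$, or at least that it is canonically $<\kappa$-closed, separative, atomless and of size $\kappa$ (using $\kappa^{<\kappa}=\kappa$ and $|\P|\le\kappa$ for the size bound), since any two such posets have isomorphic dense subsets — and then a $<\kappa$-support product of $\lambda$ copies of (a poset with a dense copy of) $\Add^V(\kappa,1)$ is densely $\Add^V(\kappa,\lambda)$. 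I expect the main obstacle to be the bookkeeping in the single-coordinate case: verifying that $\termspace(\P,\Add^{V[P]}(\kappa,1))$ is atomless and separative requires using $\kappa$-cc of $\P$ to split any name below some level $\xi<\kappa$, and confirming the size is exactly $\kappa$ needs $\kappa^{<\kappa}=\kappa$ to count antichains in $\P$ together with functions below $\xi$. Once that local analysis is in place, the global statement follows formally from the product decomposition of term forcing, and I would present the argument in that order: (i) reduce to one coordinate via the term-forcing/product lemmas; (ii) analyze the single-coordinate term poset; (iii) reassemble.
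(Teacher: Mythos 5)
Your proposal is correct, and its opening move --- writing $\Add^{V[P]}(\kappa,\lambda)$ as a $<\kappa$-support product and using the $\kappa$-cc of $\P$ to commute the term-space operation past the product --- is exactly the paper's first step. Where you diverge is in the coordinatewise analysis. The paper decomposes one level further, into copies of the two-element antichain $2$ rather than copies of $\Add^{V[P]}(\kappa,1)$: a term $\dot t$ for an element of $2$ is determined by the Boolean value $b_{\dot t}$ of ``$\dot t=0$'', two terms are comparable iff they are forced equal iff these Boolean values coincide, and since $\P$ is $\kappa$-cc of size at most $\kappa$ and $\kappa^{<\kappa}=\kappa$ there are at most $\kappa$ such values; so each coordinate of the term space is literally an antichain of size at most $\kappa$, and a $<\kappa$-support product of $\lambda$ such antichains is immediately equivalent to $\Add^V(\kappa,\lambda)$. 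You instead stop at $\termspace(\P,\Add^{V[P]}(\kappa,1))$ and invoke the folklore uniqueness of $\Add(\kappa,1)$ among nontrivial separative, atomless, $<\kappa$-closed posets of size at most $\kappa$ when $\kappa^{<\kappa}=\kappa$. That route also works, but it obliges you to verify separativity and atomlessness of the term poset by mixing names over maximal antichains of $\P$ --- precisely the bookkeeping the paper's finer decomposition renders unnecessary. Two points to tighten if you write this up: the aside that $\kappa^+$-cc ``follows from $|\termspace(\P,\dot\Q)|\le\lambda$'' is not an argument (one needs a $\Delta$-system lemma), though it plays no role in your final proof; and for the reassembly step what you actually need from the folklore fact is not bare forcing equivalence of each coordinate with $\Add^V(\kappa,1)$ but a dense subset of each coordinate order-isomorphic to $2^{<\kappa}$ --- the standard tree-building proof does supply this, and the $<\kappa$-support product of these dense sets is then dense in the product and isomorphic to a dense subset of $\Add^V(\kappa,\lambda)$.
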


\begin{proof} We can view $\Add^{V[P]}(\kappa, \lambda)$ as the $<\kappa$-support product in $V[P]$ of $\lambda$ copies
  of $2$, considered as a poset where $0$ and $1$ are incomparable elements. By the chain condition
  $\termspace(\P, \dot\Q)$ is equivalent to the $<\kappa$-support product of $\lambda$ copies of
  $\termspace(\P, \dot 2)$. An element $\dot t$ of $\termspace(\P, \dot 2)$ is determined by the Boolean
  value $b_{\dot t}$ of ``$\dot t = 0$'', and easily
  $\forces {\dot t_0} \le {\dot t_1} \iff \forces {\dot t_0} = {\dot t_1} \iff b_{{\dot t}_0} = b_{{\dot t}_1}$.    
  So $\termspace(\P, \dot 2)$ is a poset with at most $\kappa$ pairwise incomparable conditions, and then easily
  $\termspace(\P, \dot\Q)$ is equivalent to $\Add^V(\kappa, \lambda)$.
\end{proof} 

In the situation of Lemma \ref{oldlemma}, let $\R = \termspace(\P, \dot\Q)$.
If $P \times  R$ induces $P * Q$
then $V[P * Q]$ and $V[P \times R]$ have the same $<\kappa$-sequences of ordinals:
  to put it another way the associated quotient to term forcing $\R/P * Q$
  is $<\kappa$-distributive in $V[P * Q]$.  Since both $\P * \Q$ and $\P \times \R$ are
  $\kappa^+$-cc, $\R/P * Q$ is $\kappa^+$-cc in $V[P * Q]$. 

  For use in the proofs of Claims \ref{GroupIInequals1} and \ref{GroupIInequals0} from Section \ref{GroupII},
  we need an easy lemma about $\R/P * Q$.
  The point of Lemma \ref{refinement1} is that  chain condition properties of $\R/P * Q$
   can be deduced from corresponding properties for $\R$. 
  
\begin{lemma} \label{refinement1} 
  Let the hypotheses of Lemma \ref{oldlemma} hold and let $\R = \termspace(\P, \dot\Q)$.
  Then in any outer model $W$ of $V[P * Q]$ where $\R$ is $(\kappa^+)^W$-Knaster, $\P \times (\R/P * Q)$ 
 is also $(\kappa^+)^W$-Knaster. 
\end{lemma}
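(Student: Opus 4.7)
The plan is to reduce the desired $(\kappa^+)^W$-Knasterness of $\P \times (\R/P*Q)$ to the hypothesized $(\kappa^+)^W$-Knasterness of $\R$ itself. Set $\mu = (\kappa^+)^W$. The first observation is that $|\P|^V \le \kappa$ implies $|\P|^W \le \kappa < \mu$, so given any $\mu$-sequence of conditions $(p_\alpha, \dot r_\alpha)_{\alpha < \mu}$ in $\P \times (\R/P*Q)$ lying in $W$, a pigeonhole argument on the first coordinate lets us pass to an unbounded subset on which $p_\alpha = p^*$ is constant. It therefore suffices to find an unbounded $I \subseteq \mu$ such that $\dot r_\alpha$ and $\dot r_\beta$ are compatible in $\R/P*Q$ for all $\alpha, \beta \in I$.

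Viewing the $\dot r_\alpha$ as elements of $\R$ and applying the assumed $\mu$-Knasterness of $\R$ in $W$, I extract an unbounded $I$ on which the $\dot r_\alpha$ are pairwise compatible in $\R$. The only real content of the argument is the following obstacle: a witness $\dot r \le \dot r_\alpha, \dot r_\beta$ in $\R$ need not lie in $\R/P*Q$, because one only knows that $\dot r[P] \le \dot r_\alpha[P], \dot r_\beta[P]$ in $\Q$, and the filter $Q$ is merely closed upward, so $\dot r[P]$ could easily fail to belong to $Q$.

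To bypass this I propose a standard mixed-name modification. Since $Q$ is a filter containing both $\dot r_\alpha[P]$ and $\dot r_\beta[P]$, pick $q \in Q$ with $q \le \dot r_\alpha[P], \dot r_\beta[P]$ in $\Q$, choose a condition $p^{**} \in P$ forcing $\check q \le \dot r_\alpha, \dot r_\beta$ in $\dot\Q$, and fix a $\P$-name $\dot q \in V$ with $\dot q[P] = q$. Now form the mixed $\P$-name $\dot r' \in \R$ which equals $\dot q$ on conditions below $p^{**}$ and equals $\dot r$ on conditions incompatible with $p^{**}$. Since $p^{**} \in P$ we have $\dot r'[P] = q \in Q$, so $\dot r' \in \R/P*Q$; and a case split on the two parts of the mixture shows $\forces_\P \dot r' \le \dot r_\alpha, \dot r_\beta$ in $\dot\Q$, so $\dot r' \le \dot r_\alpha, \dot r_\beta$ in $\R$. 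Thus $(p^*, \dot r')$ witnesses compatibility of $(p^*, \dot r_\alpha)$ and $(p^*, \dot r_\beta)$ in $\P \times (\R/P*Q)$, and we conclude that $\P \times (\R/P*Q)$ is $\mu$-Knaster in $W$. The argument never uses that $\Q$ is specifically the Cohen poset; only the filter property of $Q$, the definition of term forcing, and the size bound on $\P$ play a role.
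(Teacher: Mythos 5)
Your proposal is correct, but it takes a genuinely different route from the paper's. The paper's proof leans entirely on the concrete description of $\R$ established in Lemma \ref{oldlemma}: because $\dot\Q$ is a Cohen poset, $\R$ is (equivalent to) $\Add^V(\kappa,\lambda)$, so two compatible conditions $r_\alpha, r_\beta$ agree on their common domain, their union $r_\alpha \cup r_\beta$ is a canonical greatest lower bound in $\R$, and $(r_\alpha \cup r_\beta)[P] = r_\alpha[P] \cup r_\beta[P]$ lies in $Q$ by upward closure of the filter — so the union itself already witnesses compatibility in $\R/P*Q$, with no further work. You replace this with a mixing argument: take an arbitrary lower bound $\dot r$ in $\R$ supplied by Knasterness, a lower bound $q \in Q$ of the realizations (which exists because $Q$ is a filter), and splice a name for $q$ onto the cone below a condition $p^{**} \in P$ forcing the required inequalities, falling back to $\dot r$ off that cone; this manufactures a lower bound whose realization is guaranteed to land in $Q$. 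Both arguments are sound, and both begin with the same reduction (pigeonholing the $\P$-coordinate using $\vert \P \vert \le \kappa < (\kappa^+)^W$). What your version buys is generality — as you note, it never uses that $\dot\Q$ is Cohen forcing, only the filter property of $Q$ and the definition of the term ordering — whereas the paper's version buys brevity, being an immediate consequence of the representation it has already set up. One cosmetic point: write $\dot q$ rather than $\check q$ when you first choose $p^{**}$, since $q$ need not belong to $V$; you fix the name $\dot q$ a clause later, so the argument is unaffected.
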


\begin{proof} Since $\vert \P \vert = \kappa$ it is enough to show that $\R/P * Q$ is 
  $(\kappa^+)^W$-Knaster. We use the description of $\R$ from the proof of Lemma \ref{oldlemma}.
  Let $(r_\alpha)_{\alpha < (\kappa^+)^W}$ be a sequence in $W$ such that $r_\alpha \in \R/P * Q$.
  Since $\R$ is $(\kappa^+)^W$-Knaster in $W$, we may find $B \in W$ such that $B$ is unbounded
  in $(\kappa^+)^W$ and $(r_\alpha)_{\alpha \in B}$ is a sequence of conditions which
  are pairwise compatible in $\R$. Let $\alpha, \beta \in B$, then by definition
  for every $\eta \in \dom(r_\alpha) \cap \dom(r_\beta)$ the same term appears
  at coordinate $\eta$ in $r_\alpha$ and $r_\beta$. It is easy to see that
  $(r_\alpha \cup r_\beta)[P] = r_\alpha[P] \cup r_\beta[P] \in Q$, so that
  $r_\alpha \cup r_\beta \in \R/P * Q$ and is a common lower bound
  in $\R/P*Q$ for $r_\alpha$ and $r_\beta$.
\end{proof}

We also record an easy fact about the closure of quotient-to-term posets. 

\begin{lemma} \label{qtot-closed}
  Let $\P$ be $\rho$-distributive and let $\P$ force ``$\dot \Q$ is canonically $\rho$-closed''.
  Let $P * Q$ be $\P * \dot \Q$-generic over $V$. Then the quotient-to-term poset
  $\termspace(\P, \dot \Q)/P * Q$ is canonically $\rho$-closed in $V[P * Q]$.

  More generally, if $\langle \P_\alpha, \dot \Q_\alpha: \alpha < j \rangle$ is an iteration whose
  supports are closed under increasing $\rho$-sequences, $\Q_0$ is $\rho$-distributive 
  and $\forces_\alpha$ ``$\Q_\alpha$ is $\rho$-canonically $\rho$-closed'' for $0 < \alpha < j$ then
  the associated quotient-to-term poset is canonically $\rho$-closed in $V[P_j]$. 
  
\end{lemma}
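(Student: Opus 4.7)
The plan is to lift a decreasing $\rho$-sequence in $\termspace(\P,\dot\Q)/P*Q$ back to a sequence in the ambient term poset inside $V$, exploit canonical $\rho$-closure there via item~(4) of Lemma~\ref{standardtermforcinglemma}, and verify that the resulting greatest lower bound descends to the quotient. Fix $(\dot q_i)_{i<\rho}$ a decreasing $\rho$-sequence in $\termspace(\P,\dot\Q)/P*Q$, as seen in $V[P*Q]$.

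The first step is to observe that $(\dot q_i)_{i<\rho}$ lies in $V$. Canonical $\rho$-closure of $\dot\Q$ makes $\dot\Q$ $\rho$-distributive over $V[P]$, so $V[P*Q]$ adds no $\rho$-sequence of elements of $V[P]$; combined with $\rho$-distributivity of $\P$, $V[P*Q]$ and $V$ share the same $\rho$-sequences of elements of $V$. Since $\termspace(\P,\dot\Q)\in V$, the sequence lies in $V$. Working in $V$, Lemma~\ref{standardtermforcinglemma}(4) makes $\termspace(\P,\dot\Q)$ canonically $\rho$-closed, so I let $\dot q^*$ be the canonical glb of $(\dot q_i)_{i<\rho}$ there.

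What remains is to check $\dot q^*[P]\in Q$, which places $\dot q^*$ in the quotient and, by the universal property of the glb in the ambient poset, automatically makes it the canonical glb in the quotient. Work in $V[P]$ and put
\[
D=\{\,r\in\Q \;:\; r\le \dot q^*[P]\ \text{or}\ r\perp \dot q_i[P]\ \text{for some }i<\rho\,\}.
\]
Using canonical $\rho$-closure of $\Q$, $D$ is dense below $\dot q_0[P]$: given $r\le\dot q_0[P]$, greedily extend to $r=r_0\ge r_1\ge\cdots$ with $r_{i+1}\le r_i,\dot q_{i+1}[P]$, taking lower bounds at intermediate limits; if the construction halts at some $r_i\perp \dot q_{i+1}[P]$ then $r_i\in D$, and otherwise the canonical glb $r_\rho$ at stage $\rho$ is a lower bound of every $\dot q_i[P]$, so by the universal property $r_\rho\le\dot q^*[P]$, again landing in $D$. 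Since each $\dot q_i[P]\in Q$, genericity of $Q$ forces the witness from $D$ into the first clause, whence $\dot q^*[P]\in Q$.

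The iteration case follows the same template applied to the product term poset. The hypotheses make $\P_j$ itself $\rho$-distributive: closure of supports under $\rho$-sequences together with canonical $\rho$-closure of each $\dot\Q_\alpha$ for $\alpha>0$ makes the tail iteration over $V[Q_0]$ $\rho$-closed, which combined with $\rho$-distributivity of $\Q_0$ yields $\rho$-distributivity of $\P_j$. So the given $\rho$-sequence of conditions lies in $V$, coordinate-wise canonical glbs for $\alpha>0$ are produced by Lemma~\ref{standardtermforcinglemma}(4), and the density argument above is iterated coordinate-by-coordinate to place each coordinate of the glb in its respective generic filter. The main obstacle will be handling the first coordinate $\alpha=0$, where $\Q_0$ is only $\rho$-distributive and not canonically $\rho$-closed: the required lower bound of the $V$-sequence $(q^i_0)_{i<\rho}$ must be located inside $Q_0$ itself, which is done by noting that the family $\{\{r\in\Q_0:\, r\le q^i_0\ \text{or}\ r\perp q^i_0\}: i<\rho\}$ of $V$-dense-opens has dense-open intersection by $\rho$-distributivity, so genericity of $Q_0$ provides the desired common extension inside $Q_0$.
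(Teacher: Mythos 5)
Your proof is correct and follows essentially the same route as the paper's: pull the decreasing $\rho$-sequence back into $V$ by $\rho$-distributivity of $\P * \dot\Q$, take the canonical greatest lower bound in the ambient term poset, and verify that its realization lands in $Q$ so that the glb survives in the quotient. The only difference is one of detail: the paper simply asserts that the realized glb lies in $Q$, whereas you spell out the underlying density/genericity argument (and you treat the $\alpha=0$ coordinate of the iteration case explicitly, where the paper just says the argument is "essentially the same"); note that in that first coordinate your argument yields a lower bound in $Q_0$ rather than a greatest one, but this matches how the lemma is actually used later in the paper.
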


\begin{proof} Let $(\tau_i)_{i < \rho} \in V[P * Q]$ be a decreasing sequence in $\termspace(\P, \dot \Q)/P * Q$.
  Since $\P * \dot \Q$ is $\rho$-distributive, $(\tau_i)_{i < \rho} \in V$. By the definition of $\termspace(\P, \dot \Q)/P * Q$,
  $\tau_i[P] \in Q$ for all $i$ and $\forces_\P \tau_j \le \tau_i$ for $i < j < \rho$. Let $\tau$ be a name
  for a greatest lower bound for $(\tau_i)_{i < \rho}$ in $\dot \Q$, then $\tau[P]$ is a greatest lower bound for $(\tau_i[P])_{i < \rho}$ in $\dot Q[P]$,
  so $\tau[P] \in Q$ and hence $\tau \in \termspace(\P, \dot \Q)/P * Q$.
  It follows easily that $\tau$ is a greatest lower bound
  for $(\tau_i)_{i < \rho}$ in $\termspace(\P, \dot \Q)/P * Q$. The argument for longer iterations is essentially the same.
\end{proof}

\subsection{Projection and absorption} \label{projections}
  
We also collect some facts about projections between forcing posets
and absorbing forcing posets by collapses which will be used in the sequel.
We refer the reader to \cite{MagidorShelah} for a careful discussion of these
matters.

\begin{definition}
 Let $\P$ and $\Q$ be canonically $<\kappa$-closed.
 A projection map $\pi:\P \to \Q$ is
 {\em $<\kappa$-continuous} if it preserves the greatest lower bounds assured by the canonical closure.
 That is to say if $(p_i)_{i < \alpha}$ is decreasing in $\P$ for some $\alpha < \kappa$,
 and $p$ is the greatest lower bound in $\P$ for $(p_i)_{i < \alpha}$, then
 $\pi(p)$ is the greatest lower bound in $\Q$ for $(\pi(p_i))_{i < \alpha}$.
\end{definition}

Facts \ref{closedquotient} and \ref{absorb} both form part of \cite[Lemma 2.6]{MagidorShelah}.

\begin{fact} \label{closedquotient} Suppose that $\P$ and $\Q$ are canonically $<\kappa$-closed
  and $\pi:\P \to \Q$ is a $<\kappa$-continuous projection.
  If $Q$ is $\Q$-generic, then in
$V[Q]$ the quotient forcing $\P/Q$ is canonically $<\kappa$-closed. \end{fact}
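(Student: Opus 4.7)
The plan is straightforward: given a decreasing sequence $(p_i)_{i < \alpha}$ in $\P/Q$ with $\alpha < \kappa$, take the greatest lower bound $p \in \P$ provided by canonical $<\kappa$-closure of $\P$, and show that (i) it lies in $\P/Q$, and (ii) it is the greatest lower bound there. The first observation is that because $\Q$ is $<\kappa$-closed it is $<\kappa$-distributive, so the sequence $(p_i)_{i<\alpha}$, viewed as a $<\kappa$-sequence of elements of $\P \subseteq V$ living in $V[Q]$, actually lies in $V$. Thus it makes sense to form the greatest lower bound $p \in \P$ of $(p_i)_{i<\alpha}$ in $V$.

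Next, applying $\pi$ and using $<\kappa$-continuity of $\pi$, the sequence $(\pi(p_i))_{i<\alpha}$ is decreasing in $\Q$ and $\pi(p)$ is its greatest lower bound in $\Q$. The key step is to verify that $\pi(p) \in Q$, for then $p \in \P/Q$ by definition. Each $\pi(p_i) \in Q$ because $p_i \in \P/Q$. Consider the set $D = \{r \in \Q : r \le \pi(p)\}$; this set is dense below $\pi(p_0)$, since $\pi(p)$ itself witnesses membership and $\pi(p) \le \pi(p_0)$. By $V$-genericity of $Q$ together with $\pi(p_0) \in Q$, we obtain some $r \in Q \cap D$, and since $Q$ is a filter $\pi(p) \in Q$ by upward closure.

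Finally, $p$ is trivially a lower bound for $(p_i)_{i<\alpha}$ in $\P/Q$ under the inherited ordering from $\P$. For maximality: if $p' \in \P/Q$ is any lower bound, then $p' \le_\P p_i$ for all $i$, so $p' \le_\P p$ as $p$ is the greatest lower bound in $\P$; hence $p' \le p$ in $\P/Q$. There is no serious obstacle to this argument; the only slightly delicate point is the density argument used to put $\pi(p)$ into $Q$, which is precisely why one needs $<\kappa$-continuity of the projection and canonical (not just ordinary) $<\kappa$-closure of $\Q$ rather than just $<\kappa$-closure.
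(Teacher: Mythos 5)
The paper does not prove this statement; it quotes it as part of \cite[Lemma 2.6]{MagidorShelah}. Judged on its own, your argument has the right skeleton (distributivity of $\Q$ puts the sequence in $V$; take the greatest lower bound $p$ in $\P$; use $<\kappa$-continuity to identify $\pi(p)$ as the greatest lower bound of the $\pi(p_i)$; verify greatest-lower-bound-ness in $\P/Q$ at the end), but the key step is broken. The set $D=\{r\in\Q : r\le\pi(p)\}$ is \emph{not} dense below $\pi(p_0)$: an arbitrary $q\le\pi(p_0)$ need not be compatible with $\pi(p)$ at all (already for $\Q=\Add(\omega_1,1)$ and an $\omega$-sequence $\pi(p_i)$, there are extensions of $\pi(p_0)$ incompatible with $\pi(p_1)$, hence with $\pi(p)$). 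What you have shown is only that $D$ is nonempty below $\pi(p_0)$, which genericity does not see.

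The correct way to get $\pi(p)\in Q$ is to use the dense set
$E=\{r\in\Q : r\le\pi(p)\}\cup\{r\in\Q : \exists i<\alpha\ (r\perp\pi(p_i))\}$.
To see that $E$ is dense, fix $s\in\Q$; if $s$ is incompatible with some $\pi(p_i)$ you are done, and otherwise you recursively build a decreasing sequence $r_0=s$, $r_{i+1}\le r_i,\pi(p_i)$, taking greatest lower bounds at limits (this is where the canonical $<\kappa$-closure of $\Q$ is actually used), until either you hit an incompatibility or you reach $r_\alpha\le\pi(p_i)$ for all $i$, whence $r_\alpha\le\pi(p)$ because $\pi(p)$ is the \emph{greatest} lower bound. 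Since $Q$ is generic and contains every $\pi(p_i)$, it cannot meet the second part of $E$, so it contains some $r\le\pi(p)$ and hence $\pi(p)$ itself. With this substitution the rest of your argument goes through.
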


\begin{fact} \label{absorb} Suppose that $\kappa < \mu$ are inaccessible
cardinals.  Suppose that $\Q$ is a canonically $<\kappa$-closed forcing of size
at most $\mu$.  Then there is a $<\kappa$-continuous projection from $\Coll(\kappa,\mu)$
to $\Q$. \end{fact}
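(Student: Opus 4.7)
The plan is to define the projection by fixing an enumeration of $\Q$ and using the values of $p \in \Coll(\kappa,\mu)$ on the initial segment $[0,\alpha(p))$ of its domain to traverse a descending chain in $\Q$.

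Fix an enumeration $\Q = \{q_\alpha : \alpha < \mu\}$, possible since $|\Q| \le \mu$. For a function $f : \beta \to \mu$ with $\beta < \kappa$, define $\pi^*(f) \in \Q$ by recursion on $\beta$: set $\pi^*(\emptyset) = 1_\Q$; for $\beta = \gamma+1$ a successor, set $\pi^*(f) = q_{f(\gamma)}$ if $q_{f(\gamma)} \le \pi^*(f\restriction \gamma)$ and otherwise $\pi^*(f) = \pi^*(f\restriction \gamma)$; at a limit $\beta$ let $\pi^*(f)$ be the canonical greatest lower bound of $(\pi^*(f\restriction \gamma))_{\gamma < \beta}$, which exists by the canonical $<\kappa$-closure of $\Q$. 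For arbitrary $p \in \Coll(\kappa,\mu)$ let $\alpha(p)$ denote the least ordinal not in $\dom(p)$; by regularity of $\kappa$ we have $\alpha(p) < \kappa$, and $p\restriction [0,\alpha(p))$ is a total function on $[0,\alpha(p))$. Set $\pi(p) := \pi^*\bigl(p\restriction [0,\alpha(p))\bigr)$.

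To verify that $\pi$ is a $<\kappa$-continuous projection, first observe that $\pi^*$ is monotone under end-extension by a straightforward induction (each successor step either refines further or repeats), and is $<\kappa$-continuous by the limit clause. Monotonicity of $\pi$ follows: if $p_1 \le p_0$ then $\alpha(p_1) \ge \alpha(p_0)$ and $p_1\restriction [0,\alpha(p_1))$ end-extends $p_0\restriction [0,\alpha(p_0))$, so $\pi(p_1) \le \pi(p_0)$. For the lifting property, given $q \le \pi(p)$, write $q = q_\gamma$ and set $p' = p \cup \{(\alpha(p),\gamma)\}$, which is a valid condition below $p$; the successor clause at stage $\alpha(p)$ forces $\pi^*(p'\restriction [0,\alpha(p)+1)) = q_\gamma$, and any further end-extension only refines $\pi^*$, so $\pi(p') \le q$. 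For continuity, given a decreasing $(p_i)_{i<\eta}$ with canonical glb $p = \bigcup_i p_i$, a short argument shows $\alpha(p) = \sup_i \alpha(p_i)$; then the functions $p_i\restriction [0,\alpha(p_i))$ form a cofinal family of end-extensions of the $p\restriction [0,\beta)$ for $\beta < \alpha(p)$, so the canonical glb of $(\pi(p_i))$ coincides with $\pi(p)$ by the limit clause of $\pi^*$.

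The main obstacle is to define $\pi(p)$ so that monotonicity and the lifting property both hold. The na\"ive approach of enumerating $\dom(p)$ in its induced ordering and decoding values along that enumeration fails monotonicity: an extension $p_1 \supseteq p_0$ can insert new ordinals into $\dom(p_0)$ between existing elements, rearranging the induced enumeration and producing values $\pi(p_1), \pi(p_0)$ which are incomparable in $\Q$. Restricting attention to the initial segment of $\dom(p)$ below $\alpha(p)$ sidesteps this, since no extension can add ordinals below $\alpha(p)$, so extensions only push $\alpha(\cdot)$ upward and end-extend the relevant restriction.
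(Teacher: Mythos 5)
Your construction of $\pi$ is a correct projection: order-preservation and the lifting property both check out (and your diagnosis of why one must read only the initial segment $[0,\alpha(p))$ rather than the increasing enumeration of all of $\dom(p)$ is exactly right). The paper itself gives no proof — it cites this as part of \cite[Lemma 2.6]{MagidorShelah} — so I am judging your argument on its own terms. There is, however, a genuine gap in the continuity verification.

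The "short argument" that $\alpha(p) = \sup_i \alpha(p_i)$ for the canonical greatest lower bound $p = \bigcup_i p_i$ of a decreasing chain does not exist: the claim is false, and with it $<\kappa$-continuity fails for your map. Take $p_i = \{(j,0) : j < i\} \cup \{(\omega,1)\}$ for $i < \omega$. This is a decreasing chain with $\alpha(p_i) = i$, but its greatest lower bound $p = \bigcup_i p_i$ has $\dom(p) = \omega+1$, so $\alpha(p) = \omega + 1 > \omega = \sup_i \alpha(p_i)$. The glb $p$ "sees" the coordinate $\omega$ that none of the $p_i$ saw, and $\pi^*$ then processes one extra value. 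Concretely, if $q_1 < q_0$ in $\Q$, the chain above gives $\pi(p_i) = q_0$ for all $i \ge 1$, so the greatest lower bound of $(\pi(p_i))_i$ is $q_0$, while $\pi(p) = q_1$. So $\pi(p)$ is a lower bound for $(\pi(p_i))_i$ but not the greatest one, and continuity in the sense required (and needed for Fact \ref{closedquotient}) fails. The obstruction is structural: any decoding of the form "read an initial segment of $\dom(p)$" must, at the glb of a chain, read strictly more than the limit of what it read along the chain whenever $\sup_i \alpha(p_i) \in \dom(p)$.

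The standard remedy is to work with the presentation of the collapse in which conditions are functions $p : \gamma \to \mu$ with $\gamma < \kappa$ (equivalently, to restrict $\pi$ to the dense subset $D \subseteq \Coll(\kappa,\mu)$ of conditions with ordinal domain). On $D$ one has $\alpha(p) = \dom(p)$, $D$ is closed under the canonical greatest lower bounds of $\Coll(\kappa,\mu)$ (the union of a chain of functions with ordinal domains has ordinal domain), and your limit clause for $\pi^*$ then gives continuity verbatim; this suffices for every use of the fact in the paper, since the quotient analysis of Fact \ref{closedquotient} is unaffected by passing to a canonically closed dense subset. Separately and much more minorly: the paper does not assume $\Q$ has a top element, so you should either adjoin one or start the recursion at an arbitrary fixed element of $\Q$ and note that the range of $\pi$ is still dense.
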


\begin{definition} \label{Easton-collapse} Let $\kappa < \lambda$ where $\kappa$ is inaccessible and $\lambda$ is Mahlo.
  Let $E$ be a set of inaccessible cardinals such that $\kappa, \lambda \in E$, and
  $E \cap [\kappa, \lambda)$ is the intersection of a club subset of $[\kappa, \lambda)$ with
      the set of inaccessible cardinals in this interval. For each $\alpha \in E$, let
    $\alpha^* = \min(E \setminus (\alpha +1))$.   
  
Let $\East^E(\kappa,<\lambda)$  be the collection of partial functions $f$ with $\dom(f) \subseteq E \cap [\kappa, \lambda)$ such that
\begin{enumerate}
\item \label{ecitem1}  $\dom(f)$ is an {\em Easton set}, that
  is to say it is bounded in every inaccessible cardinal. 
\item For all $\alpha \in \dom(f)$, $f(\alpha) \in \Coll(\alpha, <\alpha^*)$.
\end{enumerate}
$\East^E(\kappa,<\lambda)$ is ordered coordinatewise. 
\end{definition}

Note that by the hypotheses on $E$ and $\lambda$,
$E \cap [\kappa, \lambda)$ is stationary in $\lambda$, and the Easton support condition for $f$ is equivalent to demanding that
$\dom(f)$ is bounded in every cardinal in $E \cup \{ \lambda \}$. 

\begin{lemma}  $\East^E(\kappa,<\lambda)$ is canonically $<\kappa$-closed and $\lambda$-Knaster.
\end{lemma}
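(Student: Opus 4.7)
The plan is to handle the two conclusions separately. For canonical $<\kappa$-closure the natural candidate for a greatest lower bound of a decreasing sequence is the coordinatewise greatest lower bound, and the main work is to check that the support remains Easton. For $\lambda$-Knaster the plan is to apply a $\Delta$-system argument on the supports, followed by a pigeonhole on the values on the root, exploiting the fact that $\lambda$ is inaccessible (since Mahlo implies inaccessible).

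For the closure, let $(f_i)_{i < \gamma}$ be a decreasing sequence in $\East^E(\kappa, <\lambda)$ with $\gamma < \kappa$. I would set $g(\alpha)$, for $\alpha \in \bigcup_i \dom(f_i)$, equal to the greatest lower bound in $\Coll(\alpha,<\alpha^*)$ of $(f_i(\alpha))_{i : \alpha \in \dom(f_i)}$; this exists because $\Coll(\alpha,<\alpha^*)$ is canonically $<\alpha$-closed and $\alpha \geq \kappa > \gamma$. The content is then checking that $\dom(g) = \bigcup_i \dom(f_i)$ is an Easton set. For any inaccessible $\mu$ with $\kappa < \mu \leq \lambda$, each $\dom(f_i)$ is bounded below $\mu$, and since $\cf(\mu) = \mu > \kappa > \gamma$ the supremum remains below $\mu$; below $\kappa$ the domain is trivially empty since $\dom(f_i) \subseteq [\kappa,\lambda)$. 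Coordinatewise minimality of $g$ then makes it a greatest lower bound in $\East^E(\kappa,<\lambda)$.

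For the chain condition, let $(f_\xi)_{\xi < \lambda}$ be given. Each $\dom(f_\xi)$ is Easton and hence bounded below $\lambda$, so $|\dom(f_\xi)| < \lambda$. Since $\lambda$ is inaccessible, the generalised $\Delta$-system lemma yields $A \subseteq \lambda$ of size $\lambda$ such that $(\dom(f_\xi))_{\xi \in A}$ forms a $\Delta$-system with some root $r$. The root $r$ is itself a bounded subset of $E \cap [\kappa,\lambda)$, so $|r| < \lambda$ and $\sup_{\alpha \in r} \alpha^* < \lambda$ (using that $E \cap [\kappa,\lambda)$ is unbounded in $\lambda$ by the hypothesis on $E$). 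Since $|\Coll(\alpha,<\alpha^*)| = \alpha^*$ and $\lambda$ is strong limit, the number of possible restrictions $f_\xi \restriction r$ is at most $(\sup_{\alpha \in r}\alpha^*)^{|r|} < \lambda$. By pigeonhole we find $A' \subseteq A$ of size $\lambda$ with $f_\xi \restriction r$ constant. Then for $\xi \neq \xi'$ in $A'$, the conditions $f_\xi$ and $f_{\xi'}$ agree on the only potential overlap $r$, so their union is again an element of $\East^E(\kappa,<\lambda)$ (the union of two Easton sets is Easton since $\cf(\mu) > \omega$ for every inaccessible $\mu$) and is a common lower bound.

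The main obstacle I expect is the invocation of the generalised $\Delta$-system lemma: if the cardinalities $|\dom(f_\xi)|$ are cofinal in $\lambda$ then a single $\theta < \lambda$ does not bound $\lambda$-many of them and the usual statement of the lemma does not apply verbatim. The standard way around this is to first split into cases by regularity of $\lambda$: either some fibre $\{\xi : |\dom(f_\xi)| \leq \theta\}$ has size $\lambda$ for some $\theta < \lambda$, in which case $\theta^{<\theta} < \lambda$ by strong limitness and the classical $\Delta$-system lemma applies, or else one does a recursive tree/closure-point argument using that $\lambda$ is inaccessible. The remaining verifications (that $g$ really is the greatest lower bound, that compatible unions give actual conditions) are bookkeeping.
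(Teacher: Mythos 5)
Your treatment of canonical $<\kappa$-closure is correct and is essentially the paper's argument (coordinatewise greatest lower bounds, plus the observation that a union of fewer than $\kappa$ Easton sets is Easton).

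The Knaster argument, however, has a genuine gap, and it sits exactly where you flagged "the main obstacle." Your dichotomy is: either $\lambda$-many conditions have domains of size at most some fixed $\theta<\lambda$ (in which case the classical $\Delta$-system lemma applies and your argument goes through), or else you appeal to "a recursive tree/closure-point argument using that $\lambda$ is inaccessible." The second horn is not filled in, and it cannot be filled in by inaccessibility alone: for a family of $\lambda$ sets whose sizes are unbounded below $\lambda$, the generalised $\Delta$-system lemma simply fails (e.g.\ the family $a_\xi=\xi$ admits no $\Delta$-subsystem of size $3$). The thinning you can do with regularity of $\lambda$ controls only where the domains \emph{end}, not the "low parts" where two domains can still overlap and conflict, and there is no single $\beta<\lambda$ below which to stabilise those low parts.

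What actually closes the gap is the Mahloness of $\lambda$, which you explicitly set aside. The paper's proof runs as follows: since $E\cap[\kappa,\lambda)$ is stationary in $\lambda$, for stationarily many inaccessible $i$ the Easton support condition makes $\dom(p_i)\cap i$ \emph{bounded below $i$}, so $i\mapsto\sup(\dom(p_i)\cap i)$ is regressive; Fodor plus a counting argument (using that $\lambda$ is a strong limit) stabilises $p_i\restriction i$ on a stationary set, and a further thinning arranges $\dom(p_i)\subseteq j$ for $i<j$, after which any two conditions agree on their common domain. So the Easton support hypothesis and the stationarity of the inaccessibles in $E$ are doing the work your $\Delta$-system lemma is being asked to do. To repair your proof you should replace the second horn with this pressing-down argument (or, equivalently, run the Fodor argument from the start and drop the dichotomy).
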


\begin{proof} The closure is immediate since each component is canonically $<\kappa$-closed and the union of fewer than
  $\kappa$ Easton subsets of $[\kappa, \lambda)$ is Easton. Given $(p_i)_{i < \lambda}$ we may
    find a stationary set $E' \subseteq E \cap [\kappa, \lambda)$ such that $p_i \restriction i$ is constant for $i \in E'$,
    and then a stationary $E'' \subseteq E'$ such that $\dom(p_i) \subseteq j$ for $i, j \in E''$ with
    $i < j$. The conditions $p_i$ for $i \in E''$ are pairwise compatible. 
\end{proof}

\begin{lemma} \label{Easton-collapse-absorb}
  With the same hypotheses as in Definition \ref{Easton-collapse}, let  $(\U(\alpha))_{\alpha \in E \cap [\kappa, \lambda)}$
    be such that  $\U(\alpha)$ is a canonically $<\alpha$-closed poset (which may be trivial) in $V_{\alpha^*}$,
    and let  $\U$ be the Easton support product of the $\U(\alpha)$'s.
    Then there is a $<\kappa$-continuous projection from $\East^A(\kappa,<\lambda)$ to $\U$. 
\end{lemma}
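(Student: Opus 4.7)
The plan is to construct the projection coordinate-by-coordinate and then combine the coordinate projections using the Easton support structure. The main tool is Fact \ref{absorb}, which handles each individual coordinate since by hypothesis each $\U(\alpha)$ is a canonically $<\alpha$-closed poset living in $V_{\alpha^*}$ (hence of size less than the inaccessible $\alpha^*$), and both $\alpha, \alpha^* \in E$ are inaccessible with $\alpha < \alpha^*$.

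Concretely, for each $\alpha \in E \cap [\kappa, \lambda)$ with $\U(\alpha)$ nontrivial, apply Fact \ref{absorb} (together with the standard projection from $\Coll(\alpha, <\alpha^*)$ onto an appropriate single-factor collapse) to obtain a $<\alpha$-continuous projection $\pi_\alpha : \Coll(\alpha, <\alpha^*) \to \U(\alpha)$; when $\U(\alpha)$ is trivial let $\pi_\alpha$ be the trivial map. Now define $\pi : \East^E(\kappa, <\lambda) \to \U$ by $\pi(f)(\alpha) = \pi_\alpha(f(\alpha))$ for $\alpha \in \dom(f)$. Note $\dom(\pi(f)) \subseteq \dom(f)$ is Easton, so $\pi(f) \in \U$, and $\pi$ clearly preserves order.

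To verify the projection property, suppose $f \in \East^E(\kappa, <\lambda)$ and $u \in \U$ with $u \le \pi(f)$. For each $\alpha \in \dom(u)$, use that $\pi_\alpha$ is a projection to choose $g(\alpha) \in \Coll(\alpha, <\alpha^*)$ with $g(\alpha) \le f(\alpha)$ (where $f(\alpha)$ is interpreted as the trivial condition if $\alpha \notin \dom(f)$) and $\pi_\alpha(g(\alpha)) \le u(\alpha)$; for $\alpha \in \dom(f) \setminus \dom(u)$ set $g(\alpha) = f(\alpha)$. Since $\dom(g) = \dom(f) \cup \dom(u)$ is a union of two Easton subsets of $E \cap [\kappa,\lambda)$ it is Easton, so $g \in \East^E(\kappa, <\lambda)$, and by construction $g \le f$ with $\pi(g) \le u$.

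Finally, for $<\kappa$-continuity, suppose $(f_i)_{i < \gamma}$ is decreasing in $\East^E(\kappa,<\lambda)$ with $\gamma < \kappa$, and let $f$ be its greatest lower bound. Then $\dom(f) = \bigcup_i \dom(f_i)$ (an Easton set, since a union of fewer than $\kappa$ Easton subsets of $[\kappa, \lambda)$ is Easton) and for each $\alpha \in \dom(f)$, $f(\alpha)$ is the greatest lower bound in $\Coll(\alpha, <\alpha^*)$ of the relevant coordinate sequence. Because $\gamma < \kappa \le \alpha$, the $<\alpha$-continuity of each $\pi_\alpha$ implies that $\pi_\alpha(f(\alpha))$ is the greatest lower bound of the corresponding sequence in $\U(\alpha)$, and hence $\pi(f)$ is the greatest lower bound of $(\pi(f_i))_{i < \gamma}$ in $\U$. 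The only minor subtlety to check is the bookkeeping of Easton supports under these operations; everything else is forced by the coordinate-wise construction.
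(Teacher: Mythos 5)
Your proof is correct and follows essentially the same route as the paper's: apply Fact \ref{absorb} coordinatewise to get $<\alpha$-continuous projections $\pi_\alpha:\Coll(\alpha,<\alpha^*)\to\U(\alpha)$ and combine them coordinatewise, with Easton supports causing no trouble since unions of fewer than $\kappa$ Easton sets are Easton. The paper leaves the projection and continuity verifications as "easy to check"; you have simply written them out.
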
 

\begin{proof} By Fact \ref{absorb}, for every $\alpha \in A \cap [\kappa, \lambda)$
 there is a $<\alpha$-continuous projection $\pi_\alpha: \Coll(\alpha, <\alpha^*) \rightarrow \U(\alpha)$.
  We define a projection $\pi$ from $\East^A(\kappa,<\lambda)$ to $\U$
  by defining $\dom(\pi(f)) = \dom(f)$ and $\pi(f)(\alpha) = \pi_\alpha(f(\alpha))$ for all $\alpha$.  
  It is easy to check that  the map $\pi$ is a $<\kappa$-continuous projection since each of the maps  $\pi_\alpha$ is a $<\alpha$-continuous
  projection.
\end{proof}

  The point of  $\East^E(\kappa, <\lambda)$ is that it can absorb suitable Easton support iterations in a reasonable way.
  The following lemma is a prototype for the arguments in Section \ref{GroupIV}. 
  
\begin{lemma} \label{Easton-collapse-lemma}
 With the same hypotheses as in Definition \ref{Easton-collapse}, 
  let $\langle \P_\alpha, \dot \Q_\alpha: \alpha \in E \cap [\kappa, \lambda) \rangle$ be an
    Easton support iteration, assume that 
    $\forces_\alpha \mbox{``$\dot \Q_\alpha \in V_{\alpha^*}$''}$
    and  $\forces_\alpha \mbox{``$\dot \Q_\alpha$ is canonically $<\alpha$-closed''}$ for all $\alpha$, 
    and let $\P_\lambda$ be the direct limit.
Then $\P_\lambda$ can be absorbed into $\East^E(\kappa, \lambda)$ so that the quotient forcing is canonically $<\kappa$-closed.
\end{lemma}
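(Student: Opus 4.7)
The plan is to build the projection from $\East^E(\kappa,<\lambda)$ to $\P_\lambda$ as a composition, factoring through an Easton-support product of term forcings, and then read off the closure of the quotient from Fact \ref{closedquotient}.

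First I would set up the intermediate poset. For each $\alpha \in E \cap [\kappa,\lambda)$, let
\[
\U(\alpha) = \termspace(\P_\alpha, \dot{\Q}_\alpha).
\]
Since $\dot{\Q}_\alpha$ is forced to be canonically $<\alpha$-closed, Lemma \ref{standardtermforcinglemma}(4) gives that $\U(\alpha)$ is canonically $<\alpha$-closed. Moreover $\P_\alpha$ has size less than $\alpha^*$ and $\dot{\Q}_\alpha$ is forced to be in $V_{\alpha^*}$, so after passing to nice names we may take $\U(\alpha) \in V_{\alpha^*}$. Let $\U$ be the Easton-support product of the $\U(\alpha)$'s. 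By the discussion at the end of Section \ref{termforcing}, $\U$ has the same underlying set as $\P_\lambda$, and the identity map is a projection $\pi_2: \U \to \P_\lambda$.

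Next I would check that $\pi_2$ is $<\kappa$-continuous. Given a decreasing $\delta$-sequence $(p_i)_{i<\delta}$ in $\U$ with $\delta < \kappa$, its greatest lower bound in $\U$ is computed coordinate-wise: at each $\alpha$ we take the greatest lower bound $\tau_\alpha$ of $(p_i(\alpha))_{i<\delta}$ in the term poset $\U(\alpha)$, namely a $\P_\alpha$-name $\tau_\alpha$ such that $\forces_\alpha$ ``$\tau_\alpha$ is the greatest lower bound of $(p_i(\alpha))_{i<\delta}$ in $\dot{\Q}_\alpha$''. The same $\tau_\alpha$ also realises the greatest lower bound in the iteration $\P_\lambda$ coordinate-wise (by Lemma \ref{qtot-closed} applied at each stage, or directly by the definition of the canonical closure of the iteration), so $\pi_2$ preserves greatest lower bounds of $<\kappa$-sequences. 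The Easton support condition is preserved since the union of $<\kappa$ bounded subsets below any inaccessible $\geq \kappa$ is still bounded; this also shows that $\P_\lambda$ itself is canonically $<\kappa$-closed.

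Now Lemma \ref{Easton-collapse-absorb}, applied to the family $(\U(\alpha))_{\alpha \in E \cap [\kappa,\lambda)}$, supplies a $<\kappa$-continuous projection $\pi_1: \East^E(\kappa,<\lambda) \to \U$. Composing, $\pi_2 \circ \pi_1: \East^E(\kappa,<\lambda) \to \P_\lambda$ is a $<\kappa$-continuous projection between canonically $<\kappa$-closed posets, so Fact \ref{closedquotient} delivers that the quotient $\East^E(\kappa,<\lambda) / \P_\lambda$ is canonically $<\kappa$-closed, as required.

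The only mildly subtle point is the verification that the identity $\pi_2: \U \to \P_\lambda$ is $<\kappa$-continuous, i.e.\ that the ``term-product'' greatest lower bound of a short descending sequence of iteration conditions really is the same object as the ``iteration'' greatest lower bound. This is where the canonical (as opposed to merely mild) closure of each $\dot{\Q}_\alpha$ is essential: without canonicity the two notions of greatest lower bound could diverge and the composition could fail to be continuous, costing us the closure of the quotient.
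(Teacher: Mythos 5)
Your proof is correct and follows essentially the same route as the paper: factor through the Easton-support product $\U$ of the term posets $\termspace(\P_\alpha,\dot\Q_\alpha)$, check that the natural projection $\U\to\P_\lambda$ is $<\kappa$-continuous, absorb $\U$ into $\East^E(\kappa,<\lambda)$ via Lemma \ref{Easton-collapse-absorb}, and invoke Fact \ref{closedquotient}. The only cosmetic difference is that the paper keeps the two quotients (absorbing $\P_\lambda$ into $\U$ and $\U$ into the Easton collapse) separate rather than composing the projections, which makes no difference to the conclusion.
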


\begin{proof}
    Let $\U(\alpha) = \termspace(\P_\alpha, \dot \Q_\alpha)$ and let $\U$ be the Easton support product of the posets
    $\U(\alpha)$. Then there is a natural projection from $\U$ to the limit poset $\P_\lambda$,
    and it routine to check that the projection is $<\kappa$-continuous. 
    Lemma \ref{Easton-collapse-absorb} gives a $<\kappa$-continuous projection from $\East^E(\kappa, \lambda)$ to $\U$.
    It follows from Fact \ref{closedquotient} 
    that the quotient forcings for
    absorbing $\P_\lambda$ into $\U$ and $\U$ into $\East^E(\kappa, \lambda)$ are both canonically $<\kappa$-closed. 
\end{proof}     

There is a parallel but simpler fact for the standard Levy collapse. 

\begin{lemma} \label{Levy-collapse-absorb} 
  Let $\kappa$ and $\lambda$ be inaccessible with $\kappa < \lambda$, let $(\U(\alpha))_{\alpha \in [\kappa, \lambda)}$
    be a sequence of canonically $<\kappa$-closed posets of cardinality less than $\lambda$ and let $\U$ be the product
    of the $\U(\alpha)$'s with $<\kappa$-supports. Then there is a $<\kappa$-continuous projection from $\Coll(\kappa, <\lambda)$ to $\U$.
\end{lemma}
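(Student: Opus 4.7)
The plan is to mimic the proof of Lemma \ref{Easton-collapse-absorb}, using the standard Levy collapse $\Coll(\kappa,<\lambda)$ in place of $\East^E(\kappa,<\lambda)$; since the Levy collapse is a $<\kappa$-support product rather than an Easton support iteration, the bookkeeping is actually easier.

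First, for each $\alpha \in [\kappa,\lambda)$ I would pick an inaccessible cardinal $\mu_\alpha$ with $\kappa < \mu_\alpha < \lambda$ and $\vert \U(\alpha) \vert \le \mu_\alpha$ (using that $\lambda$ is inaccessible and $\vert \U(\alpha) \vert < \lambda$, so that inaccessibles cofinal in $\lambda$ above $\vert \U(\alpha) \vert$ are available in the intended applications). By Fact \ref{absorb} each of these choices yields a $<\kappa$-continuous projection $\pi_\alpha\colon \Coll(\kappa,\mu_\alpha) \to \U(\alpha)$.

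Next I would exhibit $\Coll(\kappa,<\lambda)$ as a $<\kappa$-support product with one designated factor $\Coll(\kappa,\mu_\alpha)$ assigned to each index $\alpha \in [\kappa,\lambda)$. Concretely, since $\Coll(\kappa,<\lambda)$ is the $<\kappa$-support product of the $\Coll(\kappa,\beta)$ for $\beta \in [\kappa,\lambda)$, and this index set has cardinality $\lambda$, one may fix an injection $\alpha \mapsto \beta(\alpha)$ from $[\kappa,\lambda)$ into itself with $\beta(\alpha) = \mu_\alpha$, using a partition of $[\kappa,\lambda)$ into $\lambda$ pieces each containing a suitable inaccessible; the remaining factors will simply be projected away. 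Then define
\[
\pi\colon \Coll(\kappa,<\lambda) \longrightarrow \U, \qquad \pi(p)(\alpha) = \pi_\alpha\bigl( p(\beta(\alpha)) \bigr),
\]
where $p(\beta(\alpha))$ is interpreted as the trivial condition when $\beta(\alpha)\notin\dom(p)$.

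Finally I would verify that $\pi$ is a $<\kappa$-continuous projection. The support of $\pi(p)$ has size $<\kappa$ because the support of $p$ does; the map is order preserving because each $\pi_\alpha$ is; and given $u \le \pi(p)$ in $\U$, one refines $p$ by choosing, for each $\alpha$ in the $<\kappa$-sized support of $u$, a refinement $q_\alpha \le p(\beta(\alpha))$ in $\Coll(\kappa,\mu_\alpha)$ with $\pi_\alpha(q_\alpha) \le u(\alpha)$, and then replaces the coordinate $\beta(\alpha)$ of $p$ by $q_\alpha$. The resulting condition $q \le p$ satisfies $\pi(q) \le u$ by construction, and $<\kappa$-continuity is immediate coordinate by coordinate from the $<\kappa$-continuity of the $\pi_\alpha$. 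The only real subtlety, and the place where care is needed, is the rearrangement in the second step: one must check that the assignment $\alpha \mapsto \beta(\alpha)$ can be made while ensuring that the target factors $\Coll(\kappa,\mu_\alpha)$ really appear (possibly with repetitions absorbed by choosing appropriate bijections of the spare coordinates), which is where the inaccessibility of $\lambda$ and the availability of enough inaccessibles below $\lambda$ in context are used.
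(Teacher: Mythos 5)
The paper states this lemma without proof, so there is nothing to compare against line by line; but your argument is precisely the one suggested by the paper's proof of the companion Lemma \ref{Easton-collapse-absorb}: realise $\Coll(\kappa,<\lambda)$ as a $<\kappa$-support product, absorb each $\U(\alpha)$ into a designated coordinate via Fact \ref{absorb}, and take the coordinatewise composition. Your verifications that the composite map has $<\kappa$ supports, is order preserving, satisfies the projection property (refining only the coordinates $\beta(\alpha)$ for $\alpha$ in the support of $u$, which is legitimate because $\beta$ is injective), and is $<\kappa$-continuous (because greatest lower bounds in both products are computed coordinatewise and each $\pi_\alpha$ is $<\kappa$-continuous) are all correct.

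The one point you flag is indeed the only real issue. Fact \ref{absorb} as the paper states it requires the cardinal $\mu_\alpha$ to be inaccessible, and the hypotheses of the lemma ($\kappa,\lambda$ inaccessible, $\vert\U(\alpha)\vert<\lambda$) do not by themselves supply any inaccessibles in $(\kappa,\lambda)$, let alone an injective assignment $\alpha\mapsto\mu_\alpha$ of them. In every application of the lemma $\lambda$ is Mahlo, so the inaccessibles below $\lambda$ are stationary, hence of cardinality $\lambda$ and cofinal, and the recursion producing your injection goes through by the regularity of $\lambda$; note that the paper's own proof of Lemma \ref{Easton-collapse-absorb} makes the same silent appeal when it projects $\Coll(\alpha,<\alpha^*)$ onto $\U(\alpha)$. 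If one wants the lemma in the literal generality stated, the fix is to use the underlying Magidor--Shelah absorption fact for any $\mu$ with $\mu^{<\kappa}=\mu$ and $\vert\Q\vert\le\mu$ (inaccessibility of $\mu$ is not needed for it), taking $\mu_\alpha=\vert\U(\alpha)\vert^{<\kappa}<\lambda$; such $\mu_\alpha$ are available cofinally below $\lambda$ by the inaccessibility of $\lambda$ alone, and the rest of your argument is unchanged.
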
 

\subsection{Easton sets in Easton extensions}

In Section \ref{GroupIV} we will need to absorb some iterations of the form $\FL * \P$,
where $\FL$ and $\P$ are Easton support iterations done over the same set of cardinals, into a suitable
Easton support product of term forcings. To apply the ideas of Section \ref{projections} we need
to analyse the Easton sets in $V[L]$, because they will form the supports of conditions in $\P$.

Let $E$ be a set of inaccessible cardinals with limit order type such that for every inaccessible
$\alpha < \sup(E)$ with $\alpha = \sup(E \cap \alpha)$, $\alpha$ is in $E$.
For each $\alpha < \sup(E)$, let $\alpha^* = \min(E \setminus (\alpha + 1))$. 
Note that a subset of $E$ is an Easton set if and only if it is bounded in every element of $E$,
together with $\sup(E)$ in case this cardinal is inaccessible.

Let $\FL$ be an iteration with Easton supports such that:
\begin{itemize}
\item The support of $\FL$ is contained in $E$.
\item For every $\alpha \in E$, it is forced by $\FL \restriction \alpha$ that the iterand at $\alpha$ is $<\alpha$-closed.   
\item For every $\alpha \in E$, $\vert \FL \restriction \alpha +1 \vert < \alpha^*$.
\end{itemize}

  It is easy to see that for every $\beta < \sup(E)$, 
$\vert \FL \restriction \beta + 1 \vert < \beta^*$. As a consequence
it is forced by $\FL \restriction \beta + 1$ that the tail iteration
above $\beta$ is $<\beta^*$-closed. It follows easily that every $\alpha \in E$
remains inaccessible in $V[L]$. 

\begin{lemma} \label{eastonsets}
If $S \in V[L]$ is an Easton subset of $E$ then $S$ is covered
by an Easton subset of $E$ which lies in $V$.
\end{lemma}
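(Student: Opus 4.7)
The plan is to exhibit an Easton cover $S' \in V$ defined directly from a name for $S$, and then verify the Easton condition on $S'$ using the closure and chain-condition properties of $\FL$. Fix an $\FL$-name $\dot S \in V$ with $\dot S[L] = S$ and set
\[
S' = \{ \eta \in E : \exists p \in \FL\;\; p \forces \eta \in \dot S \}.
\]
Clearly $S' \in V$ and $S \subseteq S'$, so it suffices to prove that $S' \cap \alpha$ is bounded in $\alpha$ for every $\alpha \in E$, and bounded in $\sup(E)$ if that cardinal is inaccessible.

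Fix $\alpha \in E$. The first step is to localise $\dot S \cap \alpha$ below stage $\alpha$. Write $\FL = \FL \restriction \alpha * \dot \FL^{\mathrm{tail}}$. The tail factor is the composition of the iterand at stage $\alpha$, which is $<\alpha$-closed, with the iteration past $\alpha + 1$, which is $<\alpha^*$-closed and hence $<\alpha$-closed; so $\dot \FL^{\mathrm{tail}}$ is forced to be $<\alpha$-closed and adds no new bounded subsets of $\alpha$. Since $S \cap \alpha$ is a bounded subset of $\alpha$ in $V[L]$, we conclude $S \cap \alpha \in V[L \restriction \alpha]$ and can fix an $\FL \restriction \alpha$-name $\dot S_\alpha \in V$ with $\forces_{\FL \restriction \alpha} \dot S_\alpha = \dot S \cap \alpha$. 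Since $\dot S[L]$ is forced to be Easton, $\forces_{\FL \restriction \alpha} \mbox{``$\dot S_\alpha$ is bounded in $\check\alpha$''}$.

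The second step is to observe that $\FL \restriction \alpha$ is $\alpha$-cc. For each $\beta \in E \cap \alpha$ we have $\beta^* \le \alpha$, so the iterand at $\beta$ lies in $V_{\beta^*}$ and has size less than $\alpha$; combined with the Easton supports and the $<\beta$-closure of each iterand, the usual delta-system argument for Easton-support iterations of sufficiently closed small iterands below an inaccessible yields that $\FL \restriction \alpha$ is $\alpha$-cc (and of cardinality at most $\alpha$). Now fix an $\FL \restriction \alpha$-name $\dot\beta_\alpha$ for $\sup(\dot S_\alpha)$ and, using $\alpha$-cc, choose a maximal antichain $A \subseteq \FL \restriction \alpha$ each of whose elements decides $\dot\beta_\alpha$, say $p \forces \dot\beta_\alpha = \beta_p < \check\alpha$. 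Then $|A| < \alpha$ and by the inaccessibility of $\alpha$, $\gamma_\alpha := \sup_{p \in A} \beta_p < \alpha$. Hence $\forces_{\FL \restriction \alpha} \dot S_\alpha \subseteq \check\gamma_\alpha$, and by the definition of $S'$ we get $S' \cap \alpha \subseteq \gamma_\alpha$, which is the desired bound. The case $\alpha = \sup(E)$ inaccessible (if it arises) is handled identically, using that the same argument shows $\FL$ itself is $\sup(E)$-cc.

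The main obstacle is the careful verification of the closure of the quotient $\FL/\FL \restriction \alpha$ and of the $\alpha$-cc of $\FL \restriction \alpha$ at every $\alpha \in E$; both facts are standard consequences of Easton supports together with the size and closure hypotheses listed before the statement, but they depend delicately on the bookkeeping that $\beta^* \le \alpha$ for $\beta \in E \cap \alpha$ and $\vert \FL \restriction \alpha + 1 \vert < \alpha^*$. Once these are in hand, the definition of $S'$ via names and the chain-condition reflection of the bound $\dot\beta_\alpha$ give the proof essentially for free.
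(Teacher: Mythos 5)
Your overall strategy has a genuine gap: the chain-condition claim in your second step is not a consequence of the hypotheses, and your globally defined cover $S'$ can fail to be Easton for exactly that reason. The only assumptions on $\FL$ are Easton supports contained in $E$, the $<\beta$-closure of the iterand at each $\beta\in E$, and $\vert \FL\restriction\beta+1\vert<\beta^*$. For $\alpha\in E$ with $E\cap\alpha$ cofinal in $\alpha$ of order type $\alpha$, nothing forces $\alpha$ to be Mahlo, nor the set of stages where $\FL$ takes a direct limit to be stationary in $\alpha$: all limit points of $E\cap\alpha$ below $\alpha$ may be singular, in which case the Easton condition places no constraint on supports at those limits. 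One can then take pairwise incompatible pairs $p_\beta\perp q_\beta$ in the iterand at each $\beta\in E\cap\alpha$ and form conditions $r_i$ with support $\{\beta_j: j\le i\}$, $r_i(\beta_j)=p_{\beta_j}$ for $j<i$ and $r_i(\beta_i)=q_{\beta_i}$; this is an antichain of size $\alpha$, so $\FL\restriction\alpha$ is not $\alpha$-cc. (This is the same reason the paper demands $\lambda$ Mahlo and $E$ club-like in Definition \ref{Easton-collapse} before asserting Knasterness of $\East^E(\kappa,<\lambda)$; no such hypothesis appears here.) Without the chain condition, your set $S'=\{\eta:\exists p\; p\forces\eta\in\dot S\}$ can be unbounded in $\alpha$ even though every generic interpretation of $\dot S\cap\alpha$ is bounded — e.g.\ if $\dot S\cap\alpha$ names a singleton whose possible values are cofinal in $\alpha$ — so the conclusion cannot be extracted from a name in this uniform way.

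The paper's proof avoids both problems by proving a density statement rather than bounding the union over all conditions: for every $p$, $\beta<\gamma$, there is $p'\le p$ with $p'\restriction\beta+1=p\restriction\beta+1$ and an Easton $T\in V$ with $p'\forces\dot S\cap[\beta,\gamma)\subseteq\check T$; genericity then yields the cover. In the crucial case where $\gamma$ is inaccessible, the chain-condition step is replaced by a fusion argument: using that all iterands past $\beta$ are $<\beta^*$-closed and that $\vert\FL\restriction\beta+1\vert<\beta^*$, one builds a single $p'\le p$ so that for every $q\le p'$ deciding $\sup(\dot S\cap\gamma)$, the decision already follows from $q\restriction\beta+1$ together with the fixed tail of $p'$; the bound is then the supremum of the $<\beta^*<\gamma$ many decided values. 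Your localization step and the reduction of the Easton condition to elements of $E$ (and $\sup(E)$) are fine, and the singular-limit case you omit is handled in the paper by a routine closure argument; but to repair the proof you would have to replace the maximal antichain by this diagonalization below a fixed condition and restate the conclusion as a density fact, which is essentially the paper's argument.
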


\begin{proof}
Let $\dot S$ be an $\FL$-name for an Easton subset of $E$.  We will establish
that a stronger statement holds for all triples $(p, \beta, \gamma)$ where
$p \in \FL$, and $\beta < \gamma \le \sup(E)$:

\medskip

\noindent $R(p, \beta, \gamma)$ is the statement ``There exist $p' \le p$ 
with $p' \restriction \beta + 1 = p \restriction \beta + 1$ and
an Easton set  $T \subseteq [\beta, \gamma) \cap E$ such that
$p' \forces \dot S \cap [\beta, \gamma) \subseteq \check T$''. 

\medskip

Our desired conclusion will follow by setting $\beta = 0$ and $\gamma = \sup(E)$.
We prove that $R(p, \beta, \gamma)$ holds for all triples $(p, \beta, \gamma)$ by induction on $\gamma$.

\begin{itemize}

\item Case I: $E$ is bounded in $\gamma$, say $\bar \gamma = \sup(E \cap \gamma) < \gamma$.
  If $\beta = \bar \gamma$ then $[\beta, \gamma) \cap E = \emptyset$ and there is nothing to do.
    If $\beta < \bar \gamma$ then we appeal to $R(p, \beta, \bar \gamma)$, which is true by induction.

\item Case II: $\gamma = \sup(E \cap \gamma)$ and $\cf(\gamma) = \mu < \gamma$. 

  \begin{itemize}

  \item   Subcase IIa: $\mu \le \beta$. Note that the union of at most $\mu$ Easton subsets of
    $[\beta, \gamma) \cap E$ is Easton. 
    Choose an increasing sequence $(\gamma_i)_{i < \mu}$ of ordinals
    which is cofinal in $(\beta, \gamma)$. Let $p_0 = p$, and build a decreasing sequence $(p_i)_{i < \mu}$
    of conditions in $\FL$ with $p_i \restriction \beta + 1 = p_0 \restriction \beta + 1$,
    together with Easton sets $T_i \subseteq [\beta, \gamma_i) \cap E$, such that
      $p_{i+1} \forces \dot S \cap [\beta, \gamma_i) \subseteq \check T_i$.

    At successor steps we choose
    $p_{i+1}$ by appealing to $R(p_i, \beta, \gamma_i)$, at limits we may take lower bounds because all iterands
    past $\beta$ are forced to be $\mu$-closed and (by the remark about unions of Easton sets) there is
    no problem with the supports. After $\mu$ steps we let $p'$ be a lower bound for the conditions $p_i$ such that
    $p' \restriction \beta + 1 = p_0 \restriction \beta + 1$, and $T = \bigcup_{i < \mu} T_i$, where $p'$ can be chosen
    as in the choice of $p_i$ for $i$ limit and $T$ is Easton by the remark on unions of Easton sets.  
    
  \item   Subcase IIb: $\beta < \mu < \gamma$. Start by appealing to $R(p, \beta, \mu)$ to produce $p' \le p$ and 
  $T_0$ an Easton subset of $[\beta, \mu)$ such that $p' \restriction \beta + 1 = p \restriction \beta + 1$ 
  and $p' \forces \dot S \cap [\beta, \mu) \subseteq \check T_0$. Then replace $\beta$ by $\mu$ and argue as in Subcase IIa
    to produce $p'' \le p'$ and an Easton set $T_1 \subseteq [\mu, \gamma) \cap E$ such that $p'' \restriction \mu + 1 = p' \restriction \mu + 1$
      and $p'' \forces \dot S \cap [\mu, \gamma) \subseteq \check T_1$. Clearly $p''$ and $T_0 \cup T_1$ will serve to witness
        $R(p, \beta, \gamma)$.

  \end{itemize}
  
\item Case III: $\gamma = \sup(E \cap \gamma)$ and $\gamma$ is inaccessible, in particular $\gamma \in E$ or $\gamma = \sup(E)$.

  It is forced that $\dot S$ is bounded in $\gamma$. Since $\vert \FL \restriction \beta + 1 \vert < \beta^* < \gamma$ we may
  build $p' \le p$ such that $p' \restriction \beta + 1 = p \restriction \beta + 1$, and for every $q \le p'$
  such that $q$ decides $\sup(\dot S \cap \gamma)$ we have that $q \restriction \beta + 1^\frown p' \restriction (\beta + 1, \sup(A))$
  decides it: the key points are that all iterands past $\beta$ are $<\beta^*$-closed, $(\beta + 1, \gamma) \cap E = [\beta^*, \gamma) \cap E$,
  and the union of fewer than $\beta^*$ Easton subsets of $[\beta^*, \gamma) \cap E$  is Easton.
  If we let $\gamma' = \sup \{ \eta : \exists r \in \FL \restriction \beta + 1 \; r^\frown p' \restriction (\beta + 1, \gamma) \forces \sup(\dot S \cap \gamma) = \eta \}$
  then $\gamma' < \gamma$ and $p' \forces \dot S \cap \gamma \subseteq \gamma'$. Appealing to $R(p', \beta, \gamma')$ we find $p'' \le p'$ and
  an Easton set $T \subseteq [\beta, \gamma')$ such that $p'' \restriction \beta + 1 = p' \restriction \beta + 1$ and
  $p'' \forces \dot S \cap [\beta, \gamma) \subseteq \check T$. 

\end{itemize}
    
\end{proof}

\subsection{Robustness of chain condition}

In the sequel we will often force over $V$ using the Cohen poset $\Add(\kappa, \lambda)$
defined over some inner model of $V$. This idea is often useful in the situation where $\kappa = \mu^+$
and $2^\mu > \mu^+$: forcing with $\Add(\mu^+, \lambda)$ as defined in $V$ will collapse $\mu^+$,
 so instead we force with $\Add(\mu^+, \lambda)$ defined in some inner model where $2^\mu = \mu^+$. 
 The following Lemma shows that the chain condition (really the Knaster property) of Cohen forcing is quite robust.
 We will also need that the distributivity of Cohen forcing is  robust, but
 we will typically establish this by {\it ad hoc} arguments
 using Easton's Lemma and term forcing, see for instance Lemma \ref{meetingindyreqs} below.

\begin{lemma}\label{Cohenrobust}
  Let $\kappa$ be regular and let $\P = \Add(\kappa, \lambda)$.
  If $\eta^{<\kappa} < \mu$ for every $\eta < \mu$,
    and $W$ is an outer model in which $\mu$ is regular and  every set of ordinals
    of size less than $\kappa$ in $W$ is covered by a set of size less than $\kappa$ in $V$, then
    $W \models \mbox{``$\P$ is $\mu$-Knaster''}$.
\end{lemma}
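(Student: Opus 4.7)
The plan is to run a standard $\Delta$-system argument, carefully adapted to the outer model $W$. Let $\langle p_\alpha : \alpha < \mu \rangle \in W$ be a $\mu$-sequence of conditions in $\P$. Each $p_\alpha$ is a $V$-object (since $\P \in V$), and so is its domain $d_\alpha \subseteq \kappa \times \lambda$, with $|d_\alpha|^V < \kappa$. Applying the arithmetic hypothesis at $\eta = 2$ gives $2^{<\kappa} < \mu$ in $V$, which forces $\kappa < \mu$ as ordinals. Moreover, since $V \subseteq W$, regularity of $\mu$ in $W$ implies that $\mu$ is a regular cardinal in $V$ as well, because any $V$-witness to singularity would live in $W$.

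First I would thin the sequence so that $|d_\alpha|^V = \delta$ is constant for some fixed $\delta < \kappa$; this is a pigeonhole using regularity of $\mu$ in $W$ together with $\kappa < \mu$. Next, by a $\Delta$-system argument carried out in $W$, I would find $H_1 \subseteq \mu$ unbounded in $W$ and a $V$-set $r \subseteq \kappa \times \lambda$ of $V$-cardinality less than $\kappa$ such that $\{d_\alpha : \alpha \in H_1\}$ is a $\Delta$-system with root $r$. Finally, since the number of $V$-functions from $r$ into $2$ is at most $2^{|r|^V} \le (|r|^V)^{<\kappa,V} < \mu$ by the arithmetic hypothesis, I can thin once more using regularity of $\mu$ in $W$ to produce $H_2 \subseteq H_1$ unbounded on which $p_\alpha \restriction r$ is a fixed value $p^*$. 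For distinct $\alpha, \beta \in H_2$ we then have $d_\alpha \cap d_\beta = r$ and $p_\alpha \restriction r = p_\beta \restriction r$, so $p_\alpha \cup p_\beta$ is a function in $\P$, witnessing compatibility.

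The main obstacle is the $\Delta$-system step, which must be executed in $W$ even though $W$ may have collapsed cardinals or added new bounded subsets of ordinals. The key observation is that all the relevant objects --- the $d_\alpha$, their enumerations in increasing order, and every candidate root $d_\alpha \cap d_\beta$ --- are $V$-objects, so each pigeonhole bucket in the classical proof is a $V$-set whose $V$-cardinality is of the form $\eta^{<\kappa,V}$ for some $\eta < \mu$, hence less than $\mu$. Its $W$-cardinality is therefore also less than $\mu$, and combined with regularity of $\mu$ in $W$ each pigeonhole step yields an unbounded subset in $W$. The covering hypothesis enters to handle auxiliary $W$-sets of $W$-size less than $\kappa$ that appear along the way: since each such set is covered by a $V$-set of $V$-size less than $\kappa$, the combinatorics of the argument stays anchored to the $V$-side cardinal arithmetic where the hypothesis $\eta^{<\kappa} < \mu$ is available.
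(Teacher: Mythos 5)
Your overall strategy is the same as the paper's: its proof is precisely a $\Delta$-system-style extraction carried out in $W$, with the covering hypothesis used to keep all the counting on the $V$-side. Your first thinning, your final pigeonhole on $p_\alpha \restriction r$, and the compatibility argument are all fine. The problem is the middle step, which you defer to ``a $\Delta$-system argument carried out in $W$'' and justify by the claim that all the relevant objects are $V$-objects, so that each pigeonhole bucket of the classical proof is a $V$-set of $V$-cardinality $\eta^{<\kappa}<\mu$. That claim is false at the decisive point, and covering is not merely handling ``auxiliary'' sets along the way --- it is what rescues the main pigeonhole.

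Concretely: the classical proof first replaces the family by subsets of $\mu$, via an enumeration $(x_j)_{j<\mu}$ of $X=\bigcup_\alpha \dom(p_\alpha)$. That enumeration, and hence each relabelled domain $e_\alpha=\{j : x_j\in\dom(p_\alpha)\}$, exists only in $W$, since the sequence $(p_\alpha)_{\alpha<\mu}$ is a $W$-object. One then presses down on $\alpha\mapsto\sup(e_\alpha\cap\alpha)$ for $\alpha\in\mu\cap\cof(\kappa)$ (Fodor in $W$) to fix some $\eta<\mu$, and the next pigeonhole is on the traces $e_\alpha\cap\eta$. These traces are $W$-sets, and covering gives no bound on how many subsets of $\eta$ of size $<\kappa$ the model $W$ has --- there could be $\mu$ of them --- so this pigeonhole fails as you have set it up. This is exactly where the covering hypothesis must be invoked: cover each $e_\alpha\cap\eta$ by some $D_\alpha\in V$ with $\vert D_\alpha\vert^V<\kappa$ and pigeonhole on $D_\alpha$, which ranges over $([\eta]^{<\kappa})^V$, a set of size $<\mu$ by the arithmetic hypothesis in $V$. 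Doing so does not produce an exact $\Delta$-system; it produces an unbounded set on which all pairwise intersections $\dom(p_\alpha)\cap\dom(p_\beta)$ are contained in a single $V$-set $Z$ of size $<\kappa$ (a second application of covering), after which making $p_\alpha\restriction Z$ constant already yields compatibility. (One can upgrade to a genuine $\Delta$-system with one further pigeonhole on $\dom(p_\alpha)\cap Z$, but it is not needed.) Note also that you cannot pigeonhole directly on the ``candidate roots'' $d_\alpha\cap d_\beta$, $V$-objects though they are: there are up to $\mu$ of them.
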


\begin{proof}
  We work in $W$, noting that our hypotheses imply that
  $\kappa$ is still regular in $W$. Let $(p_i)_{i < \mu}$ be a $\mu$-sequence of conditions
  in $\P$. Let $X = \bigcup_{i < \mu} \dom(p_i)$, so that $X \subseteq \kappa \times \lambda$ with
  $\vert X \vert \le \mu$, and enumerate $X$ as $(x_i)_{i < \mu}$. Let $d_i = \{ j < \mu : x_j \in \dom(p_i) \}$.
  
  Let $S = \mu \cap \cof(\kappa)$, and for $i \in S$ define $f(i) = \sup(d_i \cap i)$. Since $f$ is regressive
  we may fix $S_0 \subseteq S$ stationary and $\eta < \mu$ such that $f(i) = \eta$ for all $i \in S$.
  Thinning out $S_0$ if necessary, we may assume that if $i, j \in S_0$ with $i < j$ then  $\sup d_i < j$. 
  Let $D_i \in V$ be such that $d_i \cap \eta \subseteq D_i \subseteq \eta$ and $\vert D_i \vert < \kappa$.
  Since $\eta^{<\kappa} < \mu$ in $V$, we may find $S_1 \subseteq S_0$ stationary and $D$ such that $D_i = D$ for every
  $i \in S_1$.

  Now let $z = \{ x_j : j \in D \}$, and use the covering hypothesis again to find $Z \in V$
  such that $z \subseteq Z \subseteq \kappa \times \lambda$ and $\vert Z \vert < \kappa$.
  Since $2^{\vert Z \vert} < \mu$ in $V$, we may find $S_2 \subseteq S_1$ stationary
  and a partial function $p$ from $Z$ to $2$ such that $p_i \restriction Z = p$ for all $i \in S_2$.

  We claim that the conditions $p_i$ for $i \in S_2$ are compatible. Let $i < j$ with $i, j \in S_2$,
  and let $(\alpha, \beta) \in \dom(p_i) \cap \dom(p_j)$. Since $(\alpha, \beta) \in X$,
  we find $k$ with $(\alpha, \beta) = x_k$, so that by definition $k \in d_i \cap d_j$.
  Since $\sup(d_i) < j$, $k \in d_j \cap j$, so $k < f(j) = \eta$.
  
  It follows that $k \in d_i \cap \eta$, so that $k \in  D_i =  D$.
  By definition $x_k = (\alpha, \beta) \in z \subseteq Z$,
  and since $(\alpha, \beta) \in \dom(p_i) \cap \dom(p_j)$ we have
  $p_i(\alpha, \beta) = p(\alpha, \beta) = p_j(\alpha, \beta)$.
\end{proof}

\begin{remark}  Similar lemmas with similar purposes appear in papers by
  Abraham \cite[Lemma 2.16]{Abraham} and Cummings and Foreman \cite[Lemma 2.6]{CummingsForeman}
\end{remark}

To streamline the process of applying Lemma \ref{Cohenrobust}, we encapsulate some of the hypotheses
in a definition.

\begin{definition} \label{kappamugood}  Let $\kappa$ and $\mu$ be regular cardinals. 
  Then an outer model $W \supseteq V$ is {\em $(\kappa, \mu)$-good} if and only if
  every set of ordinals of size less than $\kappa$ in $W$ is covered by a set of size less than $\kappa$ in $V$,
  and $\mu$ is regular in $W$.  
\end{definition}

\begin{remark} If $W_1$ is a $(\kappa, \mu)$-good outer model of $V$ and $W_2$ is a $(\kappa, \mu)$-good outer
  model of $W_1$, then $W_2$ is a $(\kappa, \mu)$-good outer model of $V$.
  If $W$ is a $(\kappa, \mu)$-good outer model of $V$ and $W'$ is an intermediate model,
  then $W'$ is also a $(\kappa, \mu)$-good outer model of $V$.
\end{remark}

\subsection{A technical fact}

We will need a version of a technical fact from \cite{NeemanUpto}. The exact statement is
slightly different but the proof will be essentially the same, see the discussion following the statement.

\begin{fact}[{essentially \cite[Lemma 3.10]{NeemanUpto}}] \label{Itay3.10}
   Let $(\kappa_m)_{2 \le m < \omega}$ be an increasing sequence of regular cardinals and
  let $\nu = \sup_m \kappa_m$. Let $Index \subseteq \kappa_2$, let $N < \omega$  and let $\FM(\rho)$ for
  $\rho \in Index$ be forcing posets such that $\vert \FM(\rho) \vert \le \kappa_N$ for all $\rho$.
  Let $R = V_\zeta$ where $\zeta > \nu^+$ and $R$ satisfies a large enough finite fragment of ZFC.  
  Assume that:
  \begin{itemize}
  \item For all sufficiently large $m < \omega$, there exist posets $\P$ and $\Q$ such that:
    \begin{itemize}
    \item $\P$ adds a generic embedding $\pi: V \rightarrow V^*$ such that $\crit(\pi) > \kappa_m$
      and $\pi$ is discontinuous at $\nu^+$.
    \item $\Q$ adds $\kappa_m$ mutually generic filters for $\P$.
    \item $\Q$ preserves cardinals up to and including $\kappa_m$, and forces
    $\cf(\nu^+) > \kappa_m$.   
    \end{itemize}
  \item There are stationarily many $X \prec R$ such that for some $\nu^+$-Knaster poset $\P_X$:
    \begin{itemize}
    \item $\nu^+ \subseteq X$ and $\vert X \vert = \nu^+$.
    \item Letting $M$ be the transitive collapse of $X$, $\P_X$ adds a generic embedding 
      $\pi: M \rightarrow M^*$ such that $\crit(\pi) = \kappa_2$, $\pi(\kappa_2) > \nu^+$, and $\pi$ is discontinuous at $\nu^+$.
    \item $\nu \in \pi(Index)$, and $\P_X$  adds $L$ which is  $\pi(\FM)(\nu)$-generic over $M^*$.
    \end{itemize}
  \end{itemize}
  
  Then there exists $\rho \in Index$ such that $\FM(\rho)$ forces  ``$\nu^+$ has the tree property''.
\end{fact}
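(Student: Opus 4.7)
Plan: The proof is by contradiction, following the strategy of \cite[Lemma 3.10]{NeemanUpto}. Suppose that for every $\rho \in Index$ the poset $\FM(\rho)$ fails to force the tree property at $\nu^+$; since the conclusion is vacuous when $\nu^+$ is collapsed, we may fix for each such $\rho$ an $\FM(\rho)$-name $\dot T_\rho$ for a $\nu^+$-Aronszajn tree.

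First I would apply the second hypothesis to choose $X \prec R$ of size $\nu^+$ with $\nu^+ \subseteq X$, containing all relevant parameters (in particular the sequence $(\dot T_\rho)_{\rho \in Index}$). Let $\sigma : M \to X$ be the inverse Mostowski collapse, which is the identity on $\nu^+$, and force with the $\nu^+$-Knaster poset $\P_X$ to obtain the generic elementary embedding $\pi : M \to M^*$ with $\crit(\pi) = \kappa_2$, $\pi(\kappa_2) > \nu^+$, and $\pi$ discontinuous at $\nu^+$, together with the filter $L$ which is $\pi(\FM)(\nu)$-generic over $M^*$. By the elementarity of $\sigma$ and the contradiction hypothesis, $M$ believes that no $\bar\rho \in Index^M$ has $\FM^M(\bar\rho)$ forcing the tree property at $\nu^+$; pushing this through $\pi$ and evaluating at the new index $\nu \in \pi(Index)$, $M^*$ contains a $\pi(\FM)(\nu)$-name $\dot T^*$ whose interpretation $T^* := \dot T^*[L]$ is a $\pi(\nu^+)$-Aronszajn tree in $M^*[L]$. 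Setting $\beta := \sup \pi[\nu^+]$, which by discontinuity is strictly below $\pi(\nu^+)$, any node on level $\beta$ of $T^*$ determines a cofinal branch through $T^* \restriction \beta$. Pulling back by $\pi$ yields a cofinal branch $b$ through a tree $\tilde T$ of height $\nu^+$ obtained as the $\pi$-pullback of $T^* \restriction \pi[\nu^+]$.

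Next I would identify $\tilde T$ with an object over $V$. The aim is to show that, after suitable reinterpretation using the absorbing structure of $\P_X$ and the elementarity of $\sigma$, $\tilde T$ realizes a tree of the form $\dot T_{\rho_0}[G_0]$ for some $\rho_0 \in Index$ and some $\FM(\rho_0)$-generic $G_0$ over $V$, where the pair $(\rho_0,G_0)$ is extracted from $(\pi,L)$ inside the $\P_X$-extension. The branch $b$ then lives in an outer model of $V[G_0]$ but a priori not in $V[G_0]$ itself. To bridge this gap, I would appeal to the first hypothesis: for sufficiently large $m < \omega$, the posets $\P$ and $\Q$ provide $\kappa_m$ mutually generic copies of the embedding-adding poset $\P$, while $\Q$ preserves $\kappa_m$ and forces $\cf(\nu^+) > \kappa_m$. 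Running $\kappa_m$ parallel versions of the branch-construction above along the mutual generics supplied by $\Q$ produces $\kappa_m$ candidate cofinal branches through a common tree, which organize into a system of branches on $\nu^+ \times \kappa_m$ in the sense of Definition \ref{systemdef}.

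Finally I would invoke Fact \ref{Itay3.4}, taking $\lambda = \kappa_m$ for $m$ large enough that $\kappa_m$ dominates both the width and the index-set sizes of the derived system, which is possible because these are bounded in terms of $\kappa_N$ and the $\nu^+$-reflection parameters. The mutual-genericity assumption of the fact is exactly what $\Q$ supplies, and the cardinal-preservation properties of $\Q$ guarantee that $\cf(\nu^+) > \kappa_m$ survives. The fact then delivers a cofinal branch lying in $V[G_0]$, contradicting the Aronszajn-ness of $\dot T_{\rho_0}[G_0]$. The main technical obstacle is the identification in the previous paragraph: precisely matching $\tilde T$ with $\dot T_{\rho_0}[G_0]$ requires careful bookkeeping of how the $M^*$-poset $\pi(\FM)(\nu)$ and its generic $L$ translate, via the absorbing properties of $\P_X$ and the elementarity of $\sigma$, into a genuine $\FM(\rho_0)$-generic filter for some $\rho_0 \in Index$ inside the $\P_X$-extension of $V$.
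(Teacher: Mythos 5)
The paper does not actually reprove this fact: it is quoted from \cite{NeemanUpto} with two noted adjustments, so your proposal has to be measured against Neeman's argument. Your outline has the right overall shape (contradiction, a reflection step via the second hypothesis, a preservation step via the first hypothesis and Fact \ref{Itay3.4}), but the step you yourself flag as ``the main technical obstacle'' is not a bookkeeping issue --- it fails as stated, and the intended argument avoids it entirely.

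The problem is this. Since $Index \subseteq \kappa_2 = \crit(\pi)$ while $\nu > \kappa_2$, the index $\nu \in \pi(Index)$ is a \emph{new} index, not of the form $\pi(\rho_0)$ for any $\rho_0 \in Index$. The filter $L$ is generic for $\pi(\FM)(\nu)$ over $M^*$ only, and there is no mechanism for extracting from $(\pi, L)$ an $\FM(\rho_0)$-generic filter $G_0$ over $V$ with $\tilde T = \dot T_{\rho_0}[G_0]$; the object $T^*\restriction \pi[\nu^+]$ has levels of width up to $\pi(\nu)$ and lives only in the $\P_X$-extension. What the reflection step actually produces is a purely combinatorial object \emph{in $V$}: from the names one derives the relations $(\alpha,\eta)\,R_{\rho,p}\,(\beta,\zeta)$ iff $p \forces (\alpha,\eta) <_{\dot T_\rho}(\beta,\zeta)$, indexed by pairs $(\rho,p)$ with $p \in \FM(\rho)$ (an index set of size at most $\kappa_N$, which is exactly what the paper's first remark is about), and one shows that for some $n$ and some cofinal $D\subseteq\nu^+$ lying in $V$ these form a system on $D\times\kappa_n$. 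The bound $\kappa_n$ on the width comes from pigeonholing the second coordinates of the branch below $\sup\pi[\nu^+]$, using continuity of $\pi$ at $\nu$, and the cofinal set $D$ is pulled back into $V$ using the $\nu^+$-Knaster property of $\P_X$ --- a hypothesis your proposal never uses. The first hypothesis then plays a different role from the one you assign it: $\P$ (critical point above $\kappa_m$, discontinuous at $\nu^+$) is applied to this $V$-system to add a system of branches through it, $\Q$ supplies $\kappa_m$ mutually generic filters for $\P$ (not ``$\kappa_m$ parallel versions'' of the $\P_X$-reflection, which $\Q$ does not provide), and Fact \ref{Itay3.4} with $\lambda = \kappa_m$ for some $m > n, N$ yields a branch of some $R_{\rho,p}$ lying in $V$ with cofinal domain; that branch shows $p$ forces $\dot T_\rho$ to have a cofinal branch, which is the contradiction. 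Note finally that a system ``on $\nu^+\times\kappa_m$'' would violate the requirement $\tau < \lambda$ in Fact \ref{Itay3.4}; the width must be some $\kappa_n$ with $n$ strictly below the $m$ used there.
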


The only differences between the proof here and in \cite[Lemma 3.10]{NeemanUpto} are that:
\begin{itemize}
\item  The forcing posets $\FM(\rho)$ are potentially larger (cardinality $\kappa_N$ rather than $\kappa_2$)
    which does not materially affect the argument for choosing $D$ and $n$.  
\item  Only a tail of the cardinals $\kappa_m$ for $m > 2$ are assumed to have the necessary properties,
 but we can still choose a suitable $m > n, N$. 
\end{itemize}

\section{$\A$, $\B$, $\U$, $\C$ and $\S$} \label{abcu}

We will use several versions of the main forcing construction from
Neeman's paper \cite{NeemanUpto}. To minimise repetition we  describe here the simplest
version that we will need, then later in the paper we modify the construction as needed.
See Section \ref{modify} for a discussion of how we modify the construction.

\subsection{The basic forcing} \label{basicaus} 

The version we describe in detail here is basically the forcing of \cite{NeemanUpto} with the
minor simplification that the cardinal $\mu_1$ is fixed from the start rather than
chosen generically. 
The initial setup involves an increasing $\omega$-sequence of regular cardinals $(\mu_n)_{n < \omega}$,
where $\mu_0^{<\mu_0} = \mu_0$, $\mu_1^{<\mu_1} = \mu_1$, and the $\mu_n$'s are indestructibly supercompact for $n \ge 2$, together with 
 a universal indestructible Laver function $\phi$ defined on $(\mu_1, \mu_\omega)$, where $\mu_\omega = \sup_n \mu_n$.
 We will define forcing posets $\A$, $\B$, $\U$, $\C$ and $\S$ along with a number of auxiliary forcing posets. 

 The basic idea is that we will define $\A, \B, \C \in V$ and then project $\mathbb A \times \mathbb B \times \mathbb C$
 to an iteration $\A * \U * \S$. 
 Forcing with $\A * \U * \S$ will produce an extension in which
 $2^{\mu_n} = \mu_{n+2}$ for all $n$, $\mu_{n+1} = \mu_n^+$ for all $n > 0$,
 and $\mu_n$ enjoys a highly indestructible version of the tree property
 for $n \ge 2$. Very roughly speaking $\A$ is responsible for making
 $2^\mu_n = \mu_{n+2}$ for all $n$, $\S$ is responsible for collapsing 
 cardinals so that $\mu_{n+1}$ becomes the successor of  $\mu_n$ for $n > 0$,
 and $\U$ is responsible for making the tree property at $\mu_n$ indestructible
 for $n \ge 2$. We discuss these points in more detail after defining
 $\A * \U * \S$.

 \begin{remark}   Readers of \cite{NeemanUpto} will notice that the definitions of $\A_n$ and $\A$ are slightly
   different here. This makes the definitions more uniform, and is possible because
   the value of $\mu_1$ is fixed. 
 \end{remark}

\begin{itemize}

\item  $\A$:  Conditions in $\A_n$ are partial functions from
  the interval $[\mu_{n+1}, \mu_{n+2})$ to $2$ with supports of size less than
    $\mu_n$, ordered by extension.
   We will sometimes write $\A_n$ as $\Add(\mu_n, [\mu_{n+1}, \mu_{n+2}))$.
    Of course $\A_n$ is equivalent to the standard 
    Cohen poset $\Add(\mu_n, \mu_{n+2})$. The poset $\A_n \restriction \alpha$ is defined
    in the obvious way. 

  $\A$ is the full support product of the posets $\A_n$ for $n < \omega$.
  Whenever it is convenient we will regard conditions in $\A$
  as partial functions $p$ from $[\mu_1, \mu_\omega)$ to
  $2$, such that $p \restriction [\mu_{n+1}, \mu_{n+2})$ has support of size less than $\mu_n$. 
  Intuitively $\A$ is set up so that we finish adding Cohen subsets of $\mu_n$
  before we begin to add Cohen subsets of $\mu_{n+1}$.
      
  For $\alpha < \mu_\omega$, $\A \restriction \alpha$ is equivalent
  to $\prod_{i < n} \A_i \times \A_n \restriction \alpha$ for the least $n$ such that $\alpha \le \mu_{n+2}$.
  We let $A$ be some $\A$-generic object, and define
  $A_n$ and $A \restriction \alpha$ in the obvious way.

\item  $\B$ and $\U$:  $\B$ and $\U$ are two posets with the same set of conditions but different orderings,
  with $\B \in V$ and $\U \in V[A]$. In a sense that we make precise later $\B$ is
  a term forcing for $\U$, but its definition involves a kind of ``self-reference'' not present
  in the simple term forcing of Section \ref{termforcing}.
  Conditions in $\B$ will be certain functions with domains contained in
  $(\mu_1, \mu_\omega)$, and $\B \restriction \alpha$ is the set of $b \in \B$ with $\dom(b) \subseteq \alpha$;
  more generally if $I$ is an interval then $\B \restriction I$ is the set of  $b \in \B$ with
  $\dom(b) \subseteq I$, and in all cases we will view $\B \restriction I$ as a poset with the ordering
  inherited from $\mathbb B$. 
  ${\U} \restriction \alpha$ has the same conditions as $\B \restriction \alpha$.

  Formally speaking we will define $\B \restriction \alpha$ and ${\U} \restriction \alpha$ by simultaneous
  induction on $\alpha$, in such a way that ${\U} \restriction \alpha \in V[A \restriction \alpha]$. 
  A condition $b \in \B \restriction \alpha$ is a function such that:
  \begin{itemize}
  \item $\dom(b)$ is an Easton subset of the set of $\alpha' \in \dom(\phi) \cap \alpha$
    such that  
    $\phi(\alpha')$ is a $\A \restriction \alpha' * \dot{\U} \restriction \alpha'$-name
    for a $<\alpha'$-directed closed forcing poset.
  \item For every $\alpha' \in \dom(b)$, $b(\alpha')$ is an
    $\A \restriction \alpha' * \dot{\U} \restriction \alpha'$-name
    for a condition in $\phi(\alpha')$.
  \end{itemize}

  $\B \restriction \alpha$ and ${\U} \restriction \alpha$ are ordered as follows:
  \begin{itemize}
  \item  $b_1 \le b_0$ in $\B \restriction \alpha$ if and only if $\dom(b_0) \subseteq \dom(b_1)$
    and
    $(0, b_1 \restriction \alpha') \forces_{\A \restriction \alpha' * \U \restriction \alpha'}
    b_1(\alpha') \le b_0(\alpha')$ for all $\alpha' \in \dom(b_0)$.
  \item  $u_1 \le u_0$ in ${\U} \restriction \alpha$ if and only if $\dom(u_0) \subseteq \dom(u_1)$
    and there is $a \in A \restriction \alpha$ such that
    $(a \restriction \alpha', u_1 \restriction \alpha')
    \forces_{\A \restriction \alpha' * \U \restriction \alpha'}  u_1(\alpha') \le u_0(\alpha')$ for all $\alpha' \in \dom(u_0)$.
  \end{itemize}

\end{itemize}

By going to a dense subset we may view $\A * \dot\U$ as consisting of pairs $(a, u)$ where $a \in \A$ and
$u \in \B$, ordered as follows: $(a_1, u_1) \le (a_0, u_0)$ if and only if $a_1 \le a_0$ in $\A$,
$\dom(u_0) \subseteq \dom(u_1)$,
and $(a_1 \restriction \alpha, u_1 \restriction \alpha) \forces u_1(\alpha) \le u_0(\alpha)$
for all $\alpha \in \dom(u_0)$. A similar remark applies to initial segments $\A \restriction \beta * \dot\U \restriction \alpha$
 where $\alpha \le \beta \le \mu_\omega$.

\begin{remark} \label{easyaubfacts}
  \leavevmode
\begin{enumerate}
\item   
  We see from the definition that $\U \in V[A]$, and that $\U$ may be viewed as some type of iteration
  in $V[A]$, where at every $\alpha$ in the domain of $\mathbb B$ we use the $\U \restriction \alpha$-name
  $\phi(\alpha)[A \restriction \alpha]$
\item
  The construction of $\B$ is also iterative, so that in particular for $\alpha < \beta \le \mu_\omega$
  the poset $\B \restriction \beta$ is {\em not} isomorphic to
  $\B \restriction \alpha \times \B \restriction [\alpha, \beta)$. However
    standard term forcing arguments show that the natural concatenation map is a
    projection from $\B \restriction \alpha \times \B \restriction [\alpha, \beta)$ to $\B \restriction \beta$.
\end{enumerate}
\end{remark}

 Let $\alpha \le \beta \le \mu_\omega$ and let $F \subseteq \A \restriction \beta * \dot\U \restriction \alpha$ be a filter,
 which we assume to be generated by pairs $(a, u)$ with $a \in \A \restriction \beta$ and
 $u \in \U \restriction \alpha$. 
 The reader is warned that $F$ may only exist in a generic extension
 of $V$.

 \begin{itemize}

 \item $\B^{+F} \restriction [\alpha, \beta)$: The underlying set of the poset $\B^{+F} \restriction [\alpha, \beta)$ 
   is $\B \restriction [\alpha, \beta)$, and it is ordered by feeding in information from $F$.
   Formally $b_1 \le b_0$ if and only if $\dom(b_0) \subseteq \dom(b_1)$ and there is $(a, u) \in F$ such that
   $(a \restriction \alpha', u \cup b_1 \restriction \alpha') \forces b_1(\alpha') \le b_0(\alpha')$ for all $\alpha' \in \dom(b_0)$.
   Note that the definition makes sense because $\dom(u) \subseteq \alpha$ and $\dom(b_1) \subseteq [\alpha, \beta)$,
     so that $u \cup b_1 \restriction \alpha' \in \B \restriction \alpha'$. Note also that $F$ being a filter
     generated by pairs $(a, u)$ as above
     is sufficient to show that the ordering on $\B^{+F} \restriction [\alpha, \beta)$ is transitive. 

 \end{itemize}

 A couple of examples may help to clarify this definition, where throughout $\alpha \le \beta < \mu_\omega$:
\begin{itemize} 
\item   $F = 0$ (the trivial filter): $\B^{+0} \restriction [\alpha, \beta) = \B \restriction [\alpha, \beta)$.
\item   $\alpha = 0$ and  $F = A \restriction \beta * 0$: $\B^{+A \restriction \beta * 0} \restriction [0, \beta) =
  \U \restriction \beta$.
\item   $F = A \restriction \beta * U \restriction \alpha$: $\B^{+A \restriction \beta * U \restriction \alpha} \restriction [\alpha, \beta)$
  is equivalent to the natural forcing for prolonging $A \restriction \beta * U \restriction \alpha$ to  $A \restriction \beta * U \restriction \beta$.
  We let $\U \restriction [\alpha, \beta) = \B^{+A \restriction \beta * U \restriction \alpha} \restriction [\alpha, \beta)$.
\end{itemize}

We quote without proof some facts from \cite{NeemanUpto}: the proofs are in every case the same, or slightly easier
because here we fixed a value for $\mu_1$ in advance.

\begin{fact}[{\cite[Claim 4.5]{NeemanUpto}}] \label{Itay4.5}
  Let $F_0 \subseteq F_1$ be two filters on $\A \restriction \beta * \dot\U \restriction \alpha$,
  and let $G_0$ be generic for $\B^{+F_0} \restriction [\alpha, \beta)$ over a universe $W \supseteq V$ with $F_0, F_1 \in W$. Let
    $G_1$ be the upwards closure of $G_0$ in $\B^{+F_1} \restriction [\alpha, \beta)$. Then 
  $G_1$ is generic for $\B^{+F_1} \restriction [\alpha, \beta)$ over $W$.
\end{fact}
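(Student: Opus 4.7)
The strategy is to verify that the identity map $\iota\colon \B^{+F_0}\restriction[\alpha,\beta) \to \B^{+F_1}\restriction[\alpha,\beta)$ is a projection of forcing posets. Granted this, the conclusion is immediate: the filter in $\B^{+F_1}\restriction[\alpha,\beta)$ generated by $\iota[G_0]=G_0$ is exactly the upward closure $G_1$, and it is generic over $W$ by the standard fact that projections push generics forward.

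That $\iota$ preserves order and sends top to top is immediate from $F_0\subseteq F_1$: any witness $(a,u)\in F_0$ to $b_1\le_{F_0} b_0$ lies in $F_1$ and witnesses $b_1\le_{F_1} b_0$. The real content is therefore the lifting property for projections: given $b_0\in\B$ and $c\le_{F_1} b_0$ with witness $(a_1,u_1)\in F_1$, construct $c'\in\B$ satisfying $c'\le_{F_0} b_0$ and $c'\le_{F_1} c$.

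My construction takes $\dom(c')=\dom(c)$ and defines the $\A\restriction\alpha' * \dot\U\restriction\alpha'$-names $c'(\alpha')$ by recursion on $\alpha'\in\dom(c)$, while proving in parallel the invariant that $(a_1\restriction\alpha',\, u_1\cup c'\restriction\alpha')$ extends $(a_1\restriction\alpha',\, u_1\cup c\restriction\alpha')$ at each stage. Each $c'(\alpha')$ is a mixed name: interpreted as $c(\alpha')$ on the Boolean event that $(a_1\restriction\alpha',\, u_1\cup c\restriction\alpha')$ lies in the generic, and as $b_0(\alpha')$ elsewhere (with the convention $b_0(\alpha')=\mathbf{1}$ for $\alpha'\notin\dom(b_0)$). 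The unconditional bound $c'(\alpha')\le b_0(\alpha')$ then yields $c'\le_{F_0} b_0$ using the top element of $F_0$ as witness, since that reduces the definition to the requirement $c'\restriction\alpha' \forces c'(\alpha')\le b_0(\alpha')$, which is already forced outright. The invariant ensures that in any generic containing $(a_1\restriction\alpha',\, u_1\cup c'\restriction\alpha')$ the condition $(a_1\restriction\alpha',\, u_1\cup c\restriction\alpha')$ is also present, so $c'(\alpha')$ evaluates to $c(\alpha')$, giving $c'\le_{F_1} c$ via $(a_1,u_1)$.

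The main obstacle is the inductive verification of the invariant, which requires unfolding the definition of the $\U\restriction\alpha'$-ordering and checking it coordinate-by-coordinate, using the inductive hypothesis at positions $\alpha''<\alpha'$ in $\dom(c)$ together with the fact that the names $u_1$ at positions in $\dom(u_1)$ are identical on both sides. With the projection established, the remaining properties of $G_1$ follow routinely: $G_1$ is a filter because common lower bounds in $\B^{+F_0}$ remain common lower bounds in $\B^{+F_1}$; and for any dense open $D\in W$ in $\B^{+F_1}\restriction[\alpha,\beta)$, the lifting property shows $D$ is dense in $\B^{+F_0}\restriction[\alpha,\beta)$, hence $G_0\cap D\subseteq G_1\cap D$ is nonempty.
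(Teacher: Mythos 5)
The paper does not actually prove this fact; it is quoted without proof from Neeman's Claim 4.5, and your argument is correct and is essentially the one given there: the entire content is the interpolation step (given $c \le_{F_1} b_0$, produce $c'$ with $c' \le_{F_0} b_0$ and $c' \le_{F_1} c$), carried out by mixing names coordinate-by-coordinate, with your inductive invariant $(a_1\restriction\alpha', u_1\cup c'\restriction\alpha') \le (a_1\restriction\alpha', u_1\cup c\restriction\alpha')$ being exactly what makes the mixed names cohere. Whether one packages the interpolation as ``the identity is a projection from $\B^{+F_0}\restriction[\alpha,\beta)$ to $\B^{+F_1}\restriction[\alpha,\beta)$'' or directly as ``every dense open subset of $\B^{+F_1}\restriction[\alpha,\beta)$ in $W$ is dense in $\B^{+F_0}\restriction[\alpha,\beta)$'' is immaterial, and your appeal to the top element of $F_0$ is harmless since the unconditional bound $\Vdash c'(\alpha')\le b_0(\alpha')$ makes any member of the (nonempty) filter $F_0$ a witness.
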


\begin{remark}
Fact \ref{Itay4.5} explains our comment above that $\B$ is a kind of term forcing.
As an instructive example let $G_0$ be $\B^{+A \restriction \alpha * U \restriction \alpha} \restriction [\alpha, \beta)$-generic
over $V[A \restriction \alpha * U \restriction \alpha]$. If we force over $V[A \restriction \alpha * U \restriction \alpha][G_0]$ 
with $\A \restriction [\alpha, \beta)$ and prolong
  $A \restriction \alpha * U \restriction \alpha$ to $A \restriction \beta * U \restriction \alpha$,
  then in $V[A \restriction \beta * U \restriction \alpha][G_0]$ we may induce
  $G_1$ which is $\U \restriction [\alpha, \beta) = \B^{+A \restriction \beta * U \restriction \alpha} \restriction [\alpha, \beta)$-generic
      over $V[A \restriction \beta * U \restriction \alpha][G_0]$. So $\B^{+A \restriction \alpha * U \restriction \alpha} \restriction [\alpha, \beta)$
   serves as a kind of term poset,      
    adding an $\A \restriction [\alpha, \beta)$-name for a $\U \restriction [\alpha, \beta)$-generic object.
\end{remark}

Fact \ref{Itay4.5} has a kind of reversal: if $G_1$ is generic for $\B^{+F_1} \restriction [\alpha, \beta)$ over $W$
  then we can force over $W[G_1]$ with a suitable factor forcing to obtain $G_0$ which induces $G_1$ as above:
  the factor forcing is just $G_1$ with the ordering of $\B^{+F_0} \restriction [\alpha, \beta)$,
  and is a version of the ``quotient to term'' forcing discussed in Section \ref{termforcing}.

\begin{fact}[{\cite[Claim 4.7]{NeemanUpto}}] \label{Itay4.7}
  If $\alpha' \le \alpha$ and $F'$ is $\A \restriction \alpha' * \U \restriction \alpha'$-generic
  over $V$, then
  $\B^{+F'} \restriction [\alpha, \beta)$ is  $<\alpha$-directed closed in $V[F']$.
\end{fact}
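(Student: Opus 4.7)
The plan is to construct a lower bound coordinate-by-coordinate. Given a directed family $D = \{b_\xi : \xi < \kappa\} \in V[F']$ in $\B^{+F'} \restriction [\alpha, \beta)$ with $\kappa < \alpha$, I would build a candidate $b^* \in V[F']$ with $\dom(b^*) := d = \bigcup_{\xi < \kappa} \dom(b_\xi)$ and, at each $\gamma \in d$, $b^*(\gamma)$ a name for a lower bound in $\phi(\gamma)$ of $\{b_\xi(\gamma) : \xi \in I_\gamma\}$, where $I_\gamma := \{\xi < \kappa : \gamma \in \dom(b_\xi)\}$. The feasibility rests on two facts built into the construction of $\B$: every $\gamma$ in any $\dom(b_\xi)$ satisfies $\gamma \ge \alpha > \kappa$, and $\phi(\gamma)$ is forced by $\A \restriction \gamma * \dot \U \restriction \gamma$ to be $<\gamma$-directed closed, so lower bounds of $\kappa$-sized directed families should exist coordinate-wise.

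Carrying this out, the first step is to verify that $d$ is an Easton subset of $\dom(\phi) \cap [\alpha, \beta)$: each $\dom(b_\xi)$ is bounded below every inaccessible $\mu \ge \alpha$, and since $\kappa < \alpha \le \cf(\mu)$ there are too few indices for the union to become cofinal in $\mu$. The second step is to produce, for each $\gamma \in d$, an $\A \restriction \gamma * \dot \U \restriction \gamma$-name $b^*(\gamma)$ which is forced, below a suitable condition extracted from $F'$, to lie below $b_\xi(\gamma)$ in $\phi(\gamma)$ for every $\xi \in I_\gamma$. The third step is to verify that $b^* = \langle b^*(\gamma) : \gamma \in d \rangle$ really belongs to $\B$ and satisfies $b^* \le_{\B^{+F'}} b_\xi$ for every $\xi < \kappa$, which reduces to exhibiting, for each $\xi$, a single witness $(a_\xi, u_\xi) \in F'$ such that $(a_\xi \restriction \gamma, u_\xi \cup b^* \restriction \gamma) \forces b^*(\gamma) \le b_\xi(\gamma)$ for all $\gamma \in \dom(b_\xi)$.

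The main obstacle is that the $\B^{+F'}$-ordering is relative to $F'$: the relation $b_\eta \le_{\B^{+F'}} b_\xi$ only holds modulo some witness in $F'$, and a priori these witnesses vary with the pair $(\eta,\xi)$. To handle this cleanly I would fix a name $\dot D$ for the family and a condition $(a^0, u^0) \in F'$ forcing $\dot D$ to be a directed subset of $\B^{+\dot F'} \restriction [\alpha,\beta)$ of size less than $\alpha$, and work in $V$ below $(a^0, u^0)$. For each $\gamma \in d$, using the directedness of $\dot D$ and the fact that $F'$ meets the dense set of conditions witnessing directedness for any given finite collection of indices, I would extract a name $b^*(\gamma)$ together with a single witness $(a, u) \in F'$ extending $(a^0, u^0)$ that works uniformly for all $\xi \in I_\gamma$; the $<\gamma$-directed closure of $\phi(\gamma)$, applied to a $\kappa$-sized family of names that is forced to be directed, then supplies the required lower bound name at coordinate $\gamma$. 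Assembling the $b^*(\gamma)$'s for $\gamma \in d$ yields the desired lower bound $b^* \in V[F']$, and the witnesses constructed along the way show $b^* \le_{\B^{+F'}} b_\xi$ for every $\xi < \kappa$.
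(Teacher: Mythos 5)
The paper itself quotes this Fact from \cite{NeemanUpto} without proof, but it does prove two close analogues -- Lemma \ref{modestclosure} and the closure claim inside Lemma \ref{q0info} -- by exactly the coordinate-by-coordinate scheme you propose, so your skeleton (take $d=\bigcup_\xi\dom(b_\xi)$, check it is Easton, use the forced $<\gamma$-directed closure of $\phi(\gamma)$ at each $\gamma$) is the right one. The genuine gap is in how you obtain a single condition below which the names $\{b_\xi(\gamma):\xi\in I_\gamma\}$ are forced to form a directed subset of $\phi(\gamma)$. You propose to extract ``a single witness $(a,u)\in F'$ that works uniformly for all $\xi\in I_\gamma$'' from the fact that $F'$ meets the dense sets witnessing directedness for finite subfamilies. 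That does not work: $|I_\gamma|$ can be any cardinal below $\alpha$, the pairwise witnesses are $\kappa$ many conditions of $F'$ with no reason to have a common refinement in $F'$ (a generic filter is not $<\alpha$-closed), and moreover the witnessing conditions are not elements of $F'$ alone but conditions of the form $(a\restriction\gamma,\,u\cup b_\zeta\restriction\gamma)$ whose second coordinates vary with the intermediate element $b_\zeta$ supplied by directedness. The correct mechanism, visible in the proofs of Lemma \ref{modestclosure} and Lemma \ref{q0info}, is a recursion on $\gamma\in d$ maintaining the induction hypothesis that $b^*\restriction\gamma$ is \emph{already} a lower bound for $\{b_\xi\restriction\gamma\}$ in $\B^{+F'}\restriction[\alpha,\gamma)$. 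Then one forces with $\A\restriction\gamma*\U\restriction\gamma$ below the single condition $(0,b^*\restriction\gamma)$: since $b^*\restriction\gamma\le b_\zeta\restriction\gamma$ is forced, every pairwise witness $(a\restriction\gamma,\,u\cup b_\zeta\restriction\gamma)$ is absorbed into the generic filter, the realizations become directed there, and $<\gamma$-directed closure of $\phi(\gamma)$ yields a name $b^*(\gamma)$ with $(0,b^*\restriction\gamma)\forces b^*(\gamma)\le b_\xi(\gamma)$ for all $\xi$. That single condition is the uniform witness; it cannot be found inside $F'$.

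A secondary point you should not gloss over: the underlying set of $\B^{+F'}\restriction[\alpha,\beta)$ is $\B\restriction[\alpha,\beta)$, a set of the ground model, so the lower bound $b^*$ (in particular its domain $d$) must be an element of $V$, whereas your directed family, and hence $\bigcup_\xi\dom(b_\xi)$, may be a new set of $V[F']$. You do fix a name $\dot D$, but you then compute $d$ and $b^*$ from the realized family. One must instead work in $V$ with the name, take $d$ to be the union of all \emph{possible} domains of the $\dot b_\xi$ (using the chain condition of $\A\restriction\alpha'*\U\restriction\alpha'$ below each inaccessible to keep this Easton), and build the names $b^*(\gamma)$ in $V$ so that $b^*$ is a genuine condition forced to work.
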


\begin{remark} \label{bclosure} As a useful special case of Claim \ref{Itay4.7}, we may set $\alpha' = 0$ and $F' = 0$ to see that
  $\B \restriction [\alpha, \beta)$ is  $<\alpha$-directed closed in $V$.
\end{remark}

  To lend some insight into what the forcing $\A * \U$ is doing, we quote a fact
  from \cite{NeemanUpto}. We will not be appealing to this fact directly,
  but the ideas in its proof will be used heavily in the proof of Lemma \ref{indestructible} below. 
        
\begin{fact}[{\cite[Claim 4.12]{NeemanUpto}}] \label{Itay4.12}
  Let $A * U \restriction \mu_{n+2}$ be $\A * \U \restriction \mu_{n+2}$-generic over $V$.
  Then in $V[A * U \restriction \mu_{n+2}]$ the cardinal  $\mu_{n+2}$ is indestructibly generically supercompact  
  for $<\mu_{n+2}$-directed closed posets lying in $V[A \restriction \mu_{n+2} * U \restriction \mu_{n+2}]$,
    where the generic embeddings $\pi$ witnessing the generic supercompactness are added by posets
    of the form $\Add^V(\mu_n, \pi(\mu_{n+2})) \times \Add^V(\mu_{n+1}, \pi(\mu_{n+3}))$.
\end{fact}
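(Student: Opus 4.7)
My plan is to do a Silver-style lifting argument. Fix a $<\mu_{n+2}$-directed closed poset $\R \in V[A\restriction\mu_{n+2} * U\restriction\mu_{n+2}]$, a generic $R$ for $\R$ over $V[A * U\restriction\mu_{n+2}]$, and a target cardinal $\lambda$. The indestructibility of $\mu_{n+2}$ in $V$ built in via $\phi$ (Section~\ref{laverfunctions}) supplies $j : V \to M$ with $\crit(j) = \mu_{n+2}$ witnessing $\lambda'$-supercompactness for some $\lambda' \gg \lambda + |\R|$, with $M^{\lambda'} \subseteq M$. I will extend $j$ step by step through $\A$, $\U\restriction\mu_{n+2}$, and $\R$; the two Cohen posets $\Add^V(\mu_n, j(\mu_{n+2}))$ and $\Add^V(\mu_{n+1}, j(\mu_{n+3}))$ enter exactly at the two places where the lift through $\A$ is non-trivial, namely $\A_n$ and $\A_{n+1}$.

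Decompose $\A = \A\restriction\mu_{n+2} \times \A_{n+1} \times \A_{\ge n+2}$, where $\A\restriction\mu_{n+2} = \prod_{m\le n}\A_m$ and $\A_{\ge n+2} = \prod_{m \ge n+2}\A_m$. For $m < n$ the factor $\A_m$ has size below $\crit(j)$ and is fixed by $j$. For $\A_n = \Add^V(\mu_n, [\mu_{n+1}, \mu_{n+2}))$, use $A_n$ for the coordinates below $\mu_{n+2}$ and a $\Add^V(\mu_n, j(\mu_{n+2}))$-generic $H_0$ over $V[A * U\restriction\mu_{n+2}][R]$ for the coordinates in $[\mu_{n+2}, j(\mu_{n+2}))$; the combination yields a $j(\A_n)$-generic $A_n^*$ over $M$ containing $j[A_n]$. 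Similarly $\A_{n+1}$ is handled by a $\Add^V(\mu_{n+1}, j(\mu_{n+3}))$-generic $H_1$, producing $A_{n+1}^*$. Each $\A_m$ with $m \ge n+2$ is $<\mu_m$-closed and hence $<\mu_{n+2}$-closed, so by the $\lambda'$-closure of $M$ the image $j[A_m]$ lies in $M$ as a directed subset of $j(\A_m)$, and a master-filter construction in $M$ supplies $A_m^*$ with no additional forcing. Together this gives a lift $j : V[A] \to M[A^*]$ definable in $V[A][H_0 \times H_1]$.

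For $\U\restriction\mu_{n+2}$, since $j(\phi)\restriction\mu_{n+2} = \phi\restriction\mu_{n+2}$, the initial segment $j(\U)\restriction\mu_{n+2}$ coincides with $\U\restriction\mu_{n+2}$ and is already generic via the given $U\restriction\mu_{n+2}$. For the coordinates in $\dom(j(\phi)) \cap [\mu_{n+2}, j(\mu_{n+2}))$ I build $U^*$ inductively in $M[A^*]$: at each such $\alpha'$ the iterand $j(\phi)(\alpha')[A^*\restriction\alpha' * U^*\restriction\alpha']$ is $<\alpha'$-directed closed by our convention on $\phi$ (cf.~Fact~\ref{Itay4.7}), and the $\lambda'$-closure of $M$ lets me take successive lower bounds for the directed images of what has been lifted so far. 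Finally, $\R$ is $<\mu_{n+2}$-directed closed in $V[A\restriction\mu_{n+2} * U\restriction\mu_{n+2}]$, so $j(\R)$ is $<j(\mu_{n+2})$-directed closed in $M[A^* * U^*]$, and the directed image $j[R]$ (of size $\le \lambda'$) admits a master condition from which a $j(\R)$-generic $R^*$ containing $j[R]$ is extracted. Composing the lifts yields $\pi : V[A * U\restriction\mu_{n+2}][R] \to M[A^* * U^* * R^*]$ defined in $V[A * U\restriction\mu_{n+2}][R][H_0 \times H_1]$, and $\pi$ inherits $\lambda$-supercompactness from $j$. The main obstacle is this stage of lifting through $\U\restriction\mu_{n+2}$, because $\U$ is defined over $V[A]$ rather than over $V$ so Laver indestructibility does not apply to it directly; the iterative construction combined with Fact~\ref{Itay4.7} and the closure of $M$ is what makes it work.
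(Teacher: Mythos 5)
Your overall architecture (lift through $\A$, then $\U\restriction\mu_{n+2}$, then $\R$, with the two Cohen posets entering at $\A_n$ and $\A_{n+1}$) is the right one, but two of your three "free" lifting steps hide the actual content of the argument, and as written they fail. First, the coordinates $\A_m$ for $m\ge n+2$: $j[A_m]$ is a directed subset of $j(\A_m)$ of size $|\A_m|=\mu_{m+2}$, and even granting a master condition (which needs $\lambda'\ge\mu_{m+2}$), a master condition is not a generic filter. To produce a $j(\A_{[n+2,\omega)})$-generic over $M$ you would have to meet on the order of $2^{\lambda'}$ maximal antichains of $M$ using only $\lambda'$-closure, which is hopeless. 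The intended move is not to lift through these coordinates at all: since $\A_{[n+2,\omega)}$ is $<\mu_{n+2}$-directed closed in $V$ and $\mu_{n+2}$ is \emph{indestructibly} supercompact, one chooses $j$ already defined on $V[A_{[n+2,\omega)}]$ (this is exactly how Definition \ref{indestructibleLaverfunction} is phrased, and how Lemma \ref{indestructible} sets up its embedding, with $H\supseteq A_{[n+2,\omega)}$).

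Second, and more fundamentally, you never use the guessing property of $\phi$: the embedding must be chosen so that $j(\phi)(\mu_{n+2})$ is an $\A\restriction\mu_{n+2}*\U\restriction\mu_{n+2}$-name for $\R$ and so that the next point of $\dom(j(\phi))$ past $\mu_{n+2}$ lies above $\lambda'$. This buys two indispensable things: (i) the given generic $R$ itself serves as the stage-$\mu_{n+2}$ generic of $j(\U)$, and (ii) $j(\B)\restriction(\mu_{n+2},j(\mu_{n+2}))$ has empty support up to beyond $\lambda'$ and hence is $\lambda'$-closed, which, combined with arranging a fixed point of $j$ above $\lambda'$ so that the number of its maximal antichains in $M$ is at most $\lambda'^+$, is what makes the diagonalization for the tail of $j(\U)$ possible. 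Your "successive lower bounds for the directed images" produce a filter, not a filter generic over $M[A^*]$, and without the gap in the support the tail has far too little closure to meet the required dense sets. The same objection kills your last step: a $j(\R)$-generic over $M[A^**U^*]$ containing $j[R]$ cannot be "extracted" from a master condition by closure alone --- if it could, every supercompact cardinal would be indestructible with no preparation, contradicting Fact \ref{superdestructible}. In the actual argument (see Stage 1 of Lemma \ref{indestructible} and the detailed version in Appendix A) that generic is obtained by building, in the ground model, a generic for the image of the term forcing $\termspace(\A\restriction\mu_{n+2}*\dot\U\restriction\mu_{n+2},\dot\R)$ over $M$ via the same counting argument, threading in a term for the master condition, realising it, and invoking Easton's Lemma to get genericity over the lifted model.
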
 

Now we define more posets ${\C} \in V$,  ${\C}^{+F}$ for a filter $F$ on $\A * \U$,
and $\S$:

\begin{itemize}  
  
\item $\C$: 
 The forcing poset $\C$ is the full support product of forcing posets
  ${\C}_n$ for $n < \omega$. Conditions in ${\C}_n$ are functions whose domains are subsets
  of $(\mu_{n+1}, \mu_{n+2})$ with domains of size less than $\mu_{n+1}$.
  If $c \in {\C}$ and $\alpha \in \dom(c) \cap (\mu_{n+1}, \mu_{n+2})$ then $c(\alpha)$ is an
  $\A \restriction \alpha * \dot \U \restriction \mu_{n+1}$-name
  for a condition in $\Add(\mu_{n+1}, 1)^{V[A \restriction \alpha * U \restriction \mu_{n+1}]}$.
  $\C$ is ordered like a pure term forcing, that is to say
  $c_1 \le c_0$ if and only if $\dom(c_0) \subseteq \dom(c_1)$ and
  $\forces c_1(\alpha) \le c_0(\alpha)$ for all $\alpha \in \dom(c_0)$. 
  
\item ${\C}^{+F}$: Let $F \subseteq \A * \U$ be a filter,
  and define a forcing poset ${\C}^{+F}$
  with the same set of conditions as $\C$ but a richer ordering:
  $c_1 \le c_0$ if and
  only if $\dom(c_0) \subseteq \dom(c_1)$ and there is $(a, u) \in F$ such that
  for all $n$ and all $\alpha \in \dom(c_0) \cap (\mu_{n+1}, \mu_{n+2})$,
  $(a \restriction \alpha, u \restriction \mu_{n+1}) \forces c_1(\alpha) \le c_0(\alpha)$.

\item  $\S = \C^{+ A * U}$.
  
 \end{itemize}

$\C$ serves as a term forcing for $\S$ in roughly the same way that $\B$ serves as a term forcing
for $\U$.     
Restrictions of the posets $\C$ and $\S$ to intervals are defined in the natural way, and there is an analogous
version of Fact \ref{Itay4.5} for $\C$ and $\S$. 

We let $\B_n = \B \restriction [\mu_{n+1}, \mu_{n+2})$, $\U_n = \U \restriction [\mu_{n+1}, \mu_{n+2})$,
    and  $\S_n = \S \restriction [\mu_{n+1}, \mu_{n+2})$.
      In connection with this we note that
      $\A_n = \A \restriction [\mu_{n+1}, \mu_{n+2})$ and $\C_n = \C \restriction [\mu_{n+1}, \mu_{n+2})$.
      It is easy to see that $\U \restriction \mu_n \in V[A \restriction \mu_n]$ and
      $\S \restriction \mu_n \in V[A \restriction \mu_n][U \restriction \mu_{n-1}]$ for all $n > 1$.

\begin{remark} Each of the posets $\A, \B, \C$ consists of partial functions
  with domains contained in $[\mu_1, \mu_\omega)$. It is useful to note that
  we are using different supports in each of these posets on the interval
  $[\mu_{n+1}, \mu_{n+2})$, which corresponds to the factors with index $n$:
    supports of size less than $\mu_n$ for $\A_n$, Easton supports for $\B_n$,
    supports of size less than $\mu_{n+1}$ for $\C_n$.
\end{remark}

In the current setting, $\S$ is just a product in $V[A * U]$ of the posets $\S_n$.
We emphasise that $\U$ is a not a product but an iteration.      
    We may view $\A * \U * \S$ as a projection of $\A \times \B \times \C$ in the natural way.   
    Much as in Remark \ref{easyaubfacts}  we may also view $\B$ as a projection of
    $\prod_n \B_n$, and so may view $\A * \U * \S$ as a projection
    of $\prod_n \A_n \times \B_n \times \C_n$. See Lemma \ref{portmanteaulemma} below for
    more on this. 
    
One small difference with \cite{NeemanUpto} is that here the definitions are valid for $n=0$, because we
fixed the value of $\mu_1$ in advance. The definitions for $n=0$ have some special features that will
be useful later, and which we record in the following remarks.

\begin{remark} \label{nzerospecial}
  $\B_0 \restriction \mu_1$ and $\U_0 \restriction \mu_1$ are trivial.
  $\U_0 = (\B_0)^{+A_0} \in V[A_0]$. Since $U_0 \restriction \mu_1$ is trivial,
  $U_0$ is irrelevant to the definition of $\S_0$, and $\S_0 = \C_0^{+A_0} \in V[A_0]$.
  $V[A_0 * U_0 * S_0] = V[A_0 * (U_0 \times S_0)]$, and we may view
  $\A_0 * \U_0 * \S_0$ as a projection of $\A_0 \times \B_0 \times \C_0$.
  $\A_0 * \S_0$ is essentially Mitchell forcing \cite{Mitchell},
  and $\C_0$ is essentially the term forcing from Abraham's product analysis of Mitchell
  forcing \cite{Abraham}.
\end{remark}

\begin{remark} \label{useful}  The natural forcing to add
  a $\B_0 \times \C_0$-generic object $B_0 \times C_0$ such that $A_0 \times (B_0 \times C_0)$
  induces $A_0 * U_0 * S_0$ is $<\mu_0$-closed in $V[A_0 * U_0 * S_0]$.
  The argument is essentially the same as that for Lemma \ref{qtot-closed}: conditions
  are pairs $(b, c)$ with $(b, c)[A_0] \in U_0 \times S_0$, $\A_0 * \U_0 * \S_0$
  is $<\mu_0$-distributive so that decreasing $<\mu_0$-sequences lie in $V$,
  hence it is easy to find a lower bound.
\end{remark}

We quote more facts from \cite{NeemanUpto}.

\begin{fact}[{\cite[Claim 4.15]{NeemanUpto}}] \label{Itay4.15}
\leavevmode  
  \begin{enumerate}
  \item Let $\alpha \le \mu_{n+1}$ and let $F = A \restriction \alpha * U \restriction \alpha$.
    Then in $V[F]$ the poset ${\C}^{+F} \restriction [\mu_{n+1}, \mu_\omega)$ is
        $<\mu_{n+1}$-directed closed.
  \item  Let $\alpha \in (\mu_{n+1}, \mu_{n+2})$ and let $F = A \restriction \alpha * U \restriction \mu_{n+1}$.     
    Then in $V[F]$ the poset ${\C}^{+F} \restriction [\alpha, \mu_{n+2})$ is
      $<\mu_{n+1}$-directed closed.
  \end{enumerate}
\end{fact}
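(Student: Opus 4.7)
My plan is to adapt the standard term-forcing directed closure argument, drawing on two facts: first, $\Add(\mu_{m+1}, 1)$ is canonically $<\mu_{m+1}$-closed in its ambient generic extension, with greatest lower bounds computed as unions of partial functions; and second, the supports of conditions in $\C_m$ are of size less than $\mu_{m+1}$, so that the union of fewer than $\mu_{n+1} \le \mu_{m+1}$ such supports remains an admissible support by regularity of $\mu_{m+1}$. I would handle part~(1) first; part~(2) is analogous, and in fact slightly easier because $F$ already determines $\U$ up to $\mu_{n+1}$ on the relevant index range.

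Given a directed set $D = \{c^\xi\}_{\xi < \lambda}$ in $\C^{+F} \restriction [\mu_{n+1}, \mu_\omega)$ with $\lambda < \mu_{n+1}$, I first form the union of domains $d = \bigcup_\xi \dom(c^\xi)$ and check that $|d \cap (\mu_{m+1}, \mu_{m+2})| < \mu_{m+1}$ for every $m \ge n$, using that $\lambda < \mu_{n+1} \le \mu_{m+1}$ and each individual support is of size less than $\mu_{m+1}$. I then define the candidate lower bound $c$ with $\dom(c) = d$ and, for each $\alpha \in d \cap (\mu_{m+1}, \mu_{m+2})$, let $c(\alpha)$ be an $\A \restriction \alpha * \dot\U \restriction \mu_{m+1}$-name whose interpretation in a generic extension $V[G]$ is the union $\bigcup\{c^\xi(\alpha)[G] : \alpha \in \dom(c^\xi)\}$ (viewed as a partial function) when this union is a function in $\Add(\mu_{m+1}, 1)^{V[G]}$, and the trivial condition otherwise.

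Verifying that $c$ is a lower bound in $\C^{+F}$ reduces, for each $c^{\xi_0} \in D$, to producing a witness $(a^{\xi_0}, u^{\xi_0}) \in F$ that uniformly in $\alpha \in \dom(c^{\xi_0})$ forces $c(\alpha) \le c^{\xi_0}(\alpha)$. Once the interpretations $c^\xi(\alpha)[G]$ are forced to be pairwise compatible as partial functions, the union $c(\alpha)[G]$ is automatically a function containing $c^{\xi_0}(\alpha)[G]$, since $c^{\xi_0}(\alpha)$ appears among the terms of the union. Pairwise compatibility at each pair $\xi_1, \xi_2 < \lambda$ is witnessed in $F$ by the pair-witness $(a_{\xi_1,\xi_2}, u_{\xi_1,\xi_2}) \in F$ coming from $\C^{+F}$-directedness of $D$ at that pair, and that witness is uniform in $\alpha$ by the definition of the $\C^{+F}$ ordering.

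The main obstacle is combining the $\lambda^2 < \mu_{n+1}$ pair-witnesses into a single element of $F$ with enough strength. Since $\A * \U$ is only $<\mu_0$-closed in general, the filter $F$ does not by itself carry the $<\mu_{n+1}$-directedness required by a naive combination. I expect the actual argument to circumvent this by exploiting the factoring of $\U$ through the highly directed-closed term forcing $\B^{+F'}$ of Fact~\ref{Itay4.7}: the relevant piece of $F$ can be viewed as a generic for a forcing whose tail is $<\mu_{n+1}$-directed closed, which is enough to build the needed common refinement. An alternative, somewhat more combinatorial route, would construct $c$ and the witness $(a^{\xi_0}, u^{\xi_0})$ simultaneously along a well-ordered enumeration of $D$, using canonical greatest lower bounds in $\Add(\mu_{m+1}, 1)$ to push through limit stages inside the coordinate posets while building a matching decreasing sequence in $F$ coordinate-by-coordinate.
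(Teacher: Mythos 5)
Your setup --- forming the union of the names coordinatewise, the support count, and the reduction to producing one witness in $F$ for each $c^{\xi_0}$ --- is the right skeleton, and you have correctly located the crux. But neither of your two proposed resolutions of that crux works. Route (a) fails because the witnesses $(a,u)$ live in $F = A \restriction \alpha * U \restriction \alpha$, whose $\A$-part is generic for Cohen forcing at cardinals below $\mu_{n+1}$; Fact~\ref{Itay4.7} gives directed closure of the poset $\B^{+F'}$ \emph{above} $\alpha$, which says nothing about finding common refinements \emph{inside the filter} $F$, and there is no ``tail'' of $F$ that carries the witnesses. Route (b) fails outright: a generic filter for a poset that is only $<\mu_0$-closed does not contain lower bounds for decreasing sequences of its own elements of length $\ge \mu_0$ (already for $\Add(\mu_0,1)$ a strictly decreasing $\mu_0$-sequence of conditions from the generic has no lower bound in the poset at all), so one cannot ``build a matching decreasing sequence in $F$''.

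The point you are missing is that no combination of the pair-witnesses is needed. Since $D$ is directed in $\C^{+F}$, and this is a statement about $F$ true in $V[F]$, fix a single $p_0 \in F$ forcing ``$\dot D$ is directed in $\C^{+\dot F}$'' (here $\dot D$ is a name for $D$, which need not lie in $V$ even though each of its elements does). Now let $\beta \in \dom(c) \cap (\mu_{m+1},\mu_{m+2})$ and let $G$ be any $\A\restriction\beta * \U\restriction\mu_{m+1}$-generic with $p_0 \in G$, inducing $G_0 \ni p_0$ generic for $\A\restriction\alpha * \U\restriction\alpha$. For each pair $\xi_1,\xi_2$, directedness of $\dot D[G_0]$ in $\C^{+G_0}$ produces pair-witnesses lying in $G_0 \subseteq G$, and therefore the \emph{interpretations} $c^{\xi_1}(\beta)[G]$ and $c^{\xi_2}(\beta)[G]$ have a common extension in $V[G]$: compatibility of interpreted conditions is an absolute fact about $V[G]$ and needs no uniformization of name-level witnesses. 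Hence $p_0$ alone forces, uniformly in $\beta$, that your union name is a condition extending each interpreted $c^{\xi_0}(\beta)$; refining $p_0$ inside $F$ to some $p_{\xi_0}$ deciding $\dot c^{\xi_0} = \check c^{\xi_0}$ yields the required witness for $c \le c^{\xi_0}$. This move --- pass to one condition in $F$ forcing directedness, then argue at the level of interpretations --- is exactly the technique in the paper's own proofs of Lemma~\ref{modestclosure} and Lemma~\ref{q0info}; note that Fact~\ref{Itay4.15} itself is quoted from \cite{NeemanUpto} without proof, so those two lemmas are where to see the intended argument carried out.
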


\begin{fact}[{\cite[Claim 4.30]{NeemanUpto}}] \label{Itay4.30}
  Let $F = A \restriction \mu_{n+2} * U \restriction \mu_{n+2}$,
  let $\P_1$ be the poset to refine $U \restriction [\mu_{n+2}, \mu_\omega)$ to a generic
   object for $\B^{+F} \restriction [\mu_{n+2}, \mu_\omega)$, and 
 let $\P_2$ be the poset to refine $S \restriction [\mu_{n+2}, \mu_\omega)$ to a generic
   object for $\C^{+F} \restriction [\mu_{n+2}, \mu_\omega)$. 
     Then both $\P_1$ and $\P_2$ are $<\mu_{n+1}$-closed in $V[A][U][S \restriction [\mu_{n+1}, \mu_\omega)]$.
\end{fact}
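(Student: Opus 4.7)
The approach is to view $\P_1$ and $\P_2$ as quotient-to-term style forcings in the spirit of Section \ref{termforcing}, and to reduce $<\mu_{n+1}$-closure in the outer model $W := V[A][U][S \restriction [\mu_{n+1}, \mu_\omega)]$ to the strong closure of the underlying term-like posets $\B^{+F} \restriction [\mu_{n+2}, \mu_\omega)$ and $\C^{+F} \restriction [\mu_{n+2}, \mu_\omega)$ over $V[F]$ provided by Fact \ref{Itay4.7} and its $\C$-analog. I describe the argument for $\P_1$; the argument for $\P_2$ is strictly parallel, with $\C^{+F}$ replacing $\B^{+F}$ and Fact \ref{Itay4.15} (extended to give $<\mu_{n+2}$-directed closure of $\C^{+F} \restriction [\mu_{n+2}, \mu_\omega)$ in $V[F]$) replacing Fact \ref{Itay4.7}.

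The poset $\P_1$ is definable in $V[A*U]$: its underlying set consists of those $b \in \B^{+F} \restriction [\mu_{n+2}, \mu_\omega)$ that extend some element of $U \restriction [\mu_{n+2}, \mu_\omega)$ in the $\B^{+F}$-ordering, and $\P_1$ inherits this ordering. The passage from $V[A*U]$ to $W$ is effected by $\S \restriction [\mu_{n+1}, \mu_\omega) = \C^{+A*U} \restriction [\mu_{n+1}, \mu_\omega)$, which (as a product of the $\S_m$'s for $m \ge n$, each a quotient-to-term style poset for Cohen forcing at $\mu_{m+1}$) is $<\mu_{n+1}$-directed closed in $V[A*U]$ by the $\C$-analog of Fact \ref{Itay4.7}. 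Hence no new $<\mu_{n+1}$-sequences of $V[A*U]$-elements appear in $W$, and it suffices to establish $<\mu_{n+1}$-closure of $\P_1$ in $V[A*U]$.

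Given a decreasing $\rho$-sequence $(b_i)_{i<\rho}$ in $\P_1$ with $\rho < \mu_{n+1}$, each $b_i$ lies in $\B \in V$, so the sequence is a $\rho$-sequence of $V$-elements. The passage from $V[F]$ to $V[A*U]$ factors as two steps: first one adds $A \restriction [\mu_{n+2}, \mu_\omega)$ via $\A \restriction [\mu_{n+2}, \mu_\omega)$, which is $<\mu_{n+1}$-closed in $V$ and, by Easton's Lemma applied to the $\mu_{n+2}$-cc poset $\A \restriction \mu_{n+2} * \U \restriction \mu_{n+2}$, remains $<\mu_{n+1}$-distributive in $V[F]$; then one adds $U \restriction [\mu_{n+2}, \mu_\omega)$ via $\U \restriction [\mu_{n+2}, \mu_\omega) = \B^{+A * (U \restriction \mu_{n+2})} \restriction [\mu_{n+2}, \mu_\omega)$, which by Fact \ref{Itay4.7} (with $\alpha = \alpha' = \mu_{n+2}$ and $F' = A * (U \restriction \mu_{n+2})$) is $<\mu_{n+2}$-directed closed over $V[A*(U\restriction\mu_{n+2})]$, hence $<\mu_{n+1}$-distributive. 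The composite is $<\mu_{n+1}$-distributive, so $(b_i)_{i<\rho} \in V[F]$.

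Finally, Fact \ref{Itay4.7} with $\alpha = \alpha' = \mu_{n+2}$ and $F' = F$ yields $<\mu_{n+2}$-directed closure of $\B^{+F} \restriction [\mu_{n+2}, \mu_\omega)$ in $V[F]$, so $(b_i)_{i<\rho}$ has a greatest lower bound $b$ in $\B^{+F}$. Since each $b_i \in \P_1$ satisfies $b_i \le u_i$ in $\B^{+F}$ for some $u_i \in U \restriction [\mu_{n+2}, \mu_\omega)$, transitivity gives $b \le u_i$ in $\B^{+F}$, so $b$ extends $U \restriction [\mu_{n+2}, \mu_\omega)$ and $b \in \P_1$. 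The main technical obstacle is marshalling the chain-condition and closure hypotheses for the various component forcings — in particular verifying the $\mu_{n+2}$-cc of $\A \restriction \mu_{n+2} * \U \restriction \mu_{n+2}$ and the $<\mu_{n+1}$-directed closure of $\S \restriction [\mu_{n+1}, \mu_\omega)$ in $V[A*U]$ — but once these are in place, no ingredient beyond Facts \ref{Itay4.7} and \ref{Itay4.15} is needed.
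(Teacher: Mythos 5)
The paper does not actually prove this statement --- it is quoted from \cite{NeemanUpto} with the remark that the proofs there go through --- so there is no in-paper argument to compare against; I assess your proposal on its own merits. Your two distributivity reductions are the right skeleton: a $\rho$-sequence from $\P_1$ consists of elements of $\B \in V$, the passage from $V[A*U]$ to $V[A][U][S\restriction[\mu_{n+1},\mu_\omega)]$ adds no new $<\mu_{n+1}$-sequences of ordinals, and the passage from $V[F]$ to $V[A*U]$ is $<\mu_{n+2}$-distributive by exactly the factorization you give. (One miscitation: $\S\restriction[\mu_{n+1},\mu_\omega)$ is certainly not $<\mu_{n+1}$-directed closed in $V[A*U]$ --- Lemma \ref{modestclosure} only gives $<\mu_0$-closure --- but the distributivity you actually need follows from the product decomposition in Lemma \ref{portmanteaulemma} and Corollary \ref{rightafterportmanteau}, so this is repairable.)

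The genuine gap is in the last step, and it is hidden by a misstatement of what $\P_1$ is. The quotient poset described after Fact \ref{Itay4.5} has as its conditions the elements of the generic filter $U\restriction[\mu_{n+2},\mu_\omega)$ itself, reordered by $\le_{\B^{+F}}$; it is not the set of $b$ that merely lie $\B^{+F}$-below \emph{some} element of $U\restriction[\mu_{n+2},\mu_\omega)$ (a generic for that poset need not cohere with, and so need not induce, $U\restriction[\mu_{n+2},\mu_\omega)$). With the correct definition, producing a $\B^{+F}$-lower bound $b \in V[F]$ of $(b_i)_{i<\rho}$ via Fact \ref{Itay4.7} is not enough: you must produce a lower bound that lies \emph{in} $U\restriction[\mu_{n+2},\mu_\omega)$, and your closing sentence ("$b \le u_i$, so $b \in \P_1$") does not address this. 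The natural route is a density argument over $V[A*(U\restriction\mu_{n+2})]$: writing $F' = A*(U\restriction\mu_{n+2})$, one wants the set of $c \in \B^{+F'}\restriction[\mu_{n+2},\mu_\omega)$ that are either $\B^{+F}$-below every $b_i$ or $\B^{+F'}$-incompatible with some $b_i$ to be dense, and then genericity of $U\restriction[\mu_{n+2},\mu_\omega)$ finishes. The difficulty --- and the actual content of Neeman's Claim 4.30 --- is that $\B^{+F'}$-compatibility of a condition with the $b_i$'s is witnessed by information from $F'$ that is not available in $F$, so it does not automatically yield a common refinement whose extension of the $b_i$'s is witnessed in the coarser $\B^{+F}$ sense. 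Your proposal never engages with this step, so as written it does not establish the fact.
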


As we already mentioned, if we  force with $\A * \U * {\S}$
we obtain an extension in which $2^{\mu_n} = \mu_{n+2}$ for all $n$,
and $\mu_{n+1} = \mu_n^+$ for all $n > 0$.
\begin{itemize}
\item $\A$ is responsible for blowing up the powersets of the $\mu_n$'s.
\item $\U$ is responsible for ensuring that
  $\mu_{n+2}$ has the indestructible generic supercompactness property from Fact \ref{Itay4.12}  in $V[A * U \restriction \mu_{n+2}]$. 
\item $\S$ is responsible for collapsing cardinals in the interval $(\mu_{n+1}, \mu_{n+2})$
   to have cardinality $\mu_{n+1}$. 
\end{itemize}

$\A * \U * {\S}$ is a descendant of Mitchell's original forcing \cite{Mitchell}
for collapsing a large cardinal while preserving the tree property. Exactly
as in that forcing the $\S$-coordinate is collapsing cardinals between $\mu_{n+1}$ and $\mu_{n+2}$
``in parallel'' with the $\A$ coordinate adding subsets of $\mu_n$, so that
there is no inner model where $2^{\mu_n} = \mu_{n+1}$ and $\mu_{n+2} = \mu_{n+1}^+$
and we do not run afoul of Specker's result from \cite{Specker}.

We record some information about $\A * \U * \S$ for use later.

\begin{lemma} \label{portmanteaulemma}
\leavevmode
\begin{enumerate}

\item \label{imaginary1}
  $\A \restriction \mu_{n+2}$ is $\mu_{n+1}$-Knaster and $\A \restriction [\mu_{n+2}, \mu_\omega)$
   is $< \mu_{n+1}$-directed closed. 

\item \label{imaginary2}  $\B \restriction \mu_{n+1}$ is $\mu_{n+1}$-Knaster. 

\item \label{imaginary3}  $\C \restriction \mu_{n+1}$ is $\mu_{n+1}$-Knaster.   

\item \label{imaginary4}  $\A \restriction \mu_{n+2} * \U \restriction \mu_{n+1} * \S \restriction \mu_{n+1}$ 
 is $\mu_{n+1}$-Knaster. 

\item \label{imaginary5}  $\B \restriction [\mu_{n+1}, \gamma)$ is $<\mu_{n+1}$-directed closed for all $\gamma$,
  in particular $\B_n$ is $<\mu_{n+1}$-directed closed.

\item \label{imaginary6}  $\C_n$ is $<\mu_{n+1}$-directed closed, as is $\C \restriction [\mu_{n+1}, \mu_\omega)$.

\item \label{portmanteau3}
    For each $n$, the forcing poset $\A* \U* \S$ is the projection of $\P_0 \times \P_1$, where
    $\P_0 = \A \restriction \mu_{n+2} * \U \restriction \mu_{n+1} * \S \restriction \mu_{n+1}$ 
    and $\P_1 = \A \restriction [\mu_{n+2}, \mu_\omega) \times \B \restriction [\mu_{n+1}, \mu_\omega) \times \C \restriction [\mu_{n+1}, \mu_\omega)$.  
    $\P_0$ is $\mu_{n+1}$-Knaster and $\P_1$ is $<\mu_{n+1}$-directed closed. 

\item \label{forrobustness} It is forced by $\P_1$ that $\P_0$ is $\mu_{n+1}$-cc.

\item \label{fortailanalysis} $\prod_{n < \omega} \A_n \times \B_n \times \C_n$
   adds no $<\mu_0$-sequences of ordinals, and  preserves the cardinals $\mu_n$ for
          $n < \omega$ together with $\mu_\omega^+$. 
   Since $\A * \U * \S$ is a projection of $\prod_{n < \omega} \A_n \times \B_n \times \C_n$,
   the same holds for $\A * \U * \S$.
    
\end{enumerate}
\end{lemma}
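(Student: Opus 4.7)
The strategy is to prove each of items (1)--(9) in turn, with most items reducing to standard chain-condition and closure arguments together with the projection and term-forcing machinery already developed in this section.

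For (1), (5), and (6) we appeal directly to definitions and to earlier facts. The poset $\A \restriction \mu_{n+2}$ is the finite full-support product $\A_0 \times \cdots \times \A_n$ of Cohen posets, each $\A_i = \Add(\mu_i,[\mu_{i+1},\mu_{i+2}))$ being $\mu_i^+$-Knaster by the standard $\Delta$-system argument using $\mu_i^{<\mu_i}=\mu_i$, so the product is $\mu_{n+1}$-Knaster; the tail $\A \restriction [\mu_{n+2},\mu_\omega)$ is a full-support product of $<\mu_k$-directed closed Cohen posets for $k \geq n+1$, hence $<\mu_{n+1}$-directed closed. Item (5) is exactly Remark~\ref{bclosure}, and (6) either follows from Fact~\ref{Itay4.15} applied with the trivial filter or is proved directly from the term-forcing description of $\C_n$ over $<\mu_{n+1}$-closed names.

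Items (2) and (3) are proved by parallel $\Delta$-system arguments. Given $\mu_{n+1}$ conditions in $\B \restriction \mu_{n+1}$, the Easton-support requirement forces each domain to be bounded in $\mu_{n+1}$; we thin to a subsequence whose domains form a $\Delta$-system with common root $r$, bounded in $\mu_{n+1}$ and of size $<\mu_{n+1}$. The closure-point assumption on $\phi$ ensures that $\phi(\alpha') \in V_{\mu_{n+1}}$ for every $\alpha' \in r$, so the space of $\A \restriction \alpha' * \dot\U \restriction \alpha'$-names for values at $\alpha'$ has size at most $\mu_{n+1}$, and a second thinning yields a subsequence of pairwise compatible conditions. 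Item (3) is entirely analogous, exploiting the term-forcing structure of $\C \restriction \mu_{n+1}$ and that each $\C_m$-entry for $m < n$ is a name in an iteration of size $\leq \mu_{n+1}$. Items (4), (7), and (8) then assemble these: (4) follows because $\A \restriction \mu_{n+2} * \U \restriction \mu_{n+1} * \S \restriction \mu_{n+1}$ is a projection of the $\mu_{n+1}$-Knaster product $\A \restriction \mu_{n+2} \times \B \restriction \mu_{n+1} \times \C \restriction \mu_{n+1}$; for (7), the projection $\P_0 \times \P_1 \to \A * \U * \S$ is coordinatewise concatenation in the style of the global projection from $\A \times \B \times \C$, with $\P_0$ Knaster by (4) and $\P_1$ directed closed by the closure clauses of (1), (5), (6); and (8) is Easton's lemma applied to the $\mu_{n+1}$-cc poset $\P_0$ and the $<\mu_{n+1}$-closed poset $\P_1$.

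For item (9), the full product $\prod_{n<\omega}(\A_n \times \B_n \times \C_n)$ is $<\mu_0$-closed because each $\A_n$ is $<\mu_n$-directed closed and each $\B_n, \C_n$ is $<\mu_{n+1}$-directed closed, so no $<\mu_0$-sequences of ordinals are added. To preserve $\mu_m$ for each $m$, split the product at $m$: the tail is $<\mu_m$-closed and the head is $\mu_m$-Knaster by the same arguments as in (1)--(3), so Easton's lemma gives that the head remains $\mu_m$-cc after the tail. Preservation of $\mu_\omega^+$ follows from a standard $\Delta$-system argument across the $\omega$-product using the Knaster properties of the individual factors, and $\A * \U * \S$ inherits both conclusions as a projection of the product. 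The main technical obstacle is the $\Delta$-system argument for item (2): one must carefully bound the space of names appearing as values on the $\Delta$-root, and here the closure-point assumption on $\phi$ together with the inaccessibility of $\mu_{n+1}$ are both essential.
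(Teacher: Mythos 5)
Most of your proof follows the paper's (very terse) argument, with the standard details filled in: items (1), (4)--(8) are handled exactly as in the paper, and for (2)--(3) you are spelling out the ``standard arguments'' the paper merely cites. Two small points there: the bare generalized $\Delta$-system lemma for $\mu_{n+1}$ many sets of size $<\mu_{n+1}$ is \emph{false} at an inaccessible (consider the initial segments $[0,\alpha)$), so your thinning to a $\Delta$-system must go through the Mahloness of $\mu_{n+1}$ --- press down $\sup(\dom(p_\alpha)\cap\alpha)$ on the stationary set of inaccessibles, then separate the tails --- which is exactly why the paper emphasizes that $\mu_{n+1}$ is Mahlo; and the count of names appearing on the root must be strictly \emph{less} than $\mu_{n+1}$ (the paper's $|\B\restriction\alpha|<\mu_{n+1}$), not ``at most $\mu_{n+1}$'' as you wrote, or the final thinning fails. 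Both are repairable.

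The genuine gap is in item (9), the preservation of $\mu_\omega^+$. You propose ``a standard $\Delta$-system argument across the $\omega$-product,'' i.e.\ a chain-condition argument for the full product, but the full-support product $\prod_n(\A_n\times\B_n\times\C_n)$ is \emph{not} $\mu_\omega^+$-cc, so no such argument can succeed. Indeed, for each $x\in\prod_n 2^{\mu_{n-1}}$ let $p_x$ have $\A_n$-coordinate the function $x(n)$ placed on a fixed interval of length $\mu_{n-1}<\mu_n$ inside $[\mu_{n+1},\mu_{n+2})$; then $p_x$ and $p_y$ are incompatible whenever $x\neq y$, and there are $2^{\mu_\omega}\geq\mu_\omega^{\aleph_0}>\mu_\omega$ such conditions, giving an antichain of size $\geq\mu_\omega^+$. (This is also why Section \ref{further} of the paper works so hard to embed the product into a two-step iteration $\E_0*\E_1$ with $\E_0$ distributive and only the \emph{second} step $\mu_\omega^+$-cc.) The correct argument is the one the paper gives: if $\mu_\omega^+$ were collapsed, then since $\mu_\omega$ is singular we would have $\cf(\mu_\omega^+)<\mu_n$ for some $n$ in the extension; but splitting the product at $n$ into a $\mu_{n+1}$-Knaster head and a $<\mu_{n+1}$-closed tail, Easton's Lemma puts any short cofinal map into the head extension alone, and a $\mu_{n+1}$-cc forcing preserves the regularity of $\mu_\omega^+$ --- a contradiction. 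You should replace the last step of your item (9) with this cofinality argument.
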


\begin{proof}

\leavevmode

\begin{enumerate}

\item  This is immediate since $\A \restriction \mu_{n+2} = \prod_{i \le n} \A_i$
  and $\A \restriction [\mu_{n+2}, \omega) = \prod_{i > n} \A_i$.

\item For $n = 0$, $\B \restriction \mu_1$ is trivial forcing. For $n > 0$ the supports of conditions
    in $\B \restriction \mu_{n+1}$ are Easton subsets of the Mahlo cardinal $\mu_{n+1}$,
    $\vert \B \restriction \alpha \vert < \mu_{n+1}$ for all $\alpha < \mu_{n + 1}$, and
    the $\mu_{n+1}$-Knaster property for $\B \restriction \mu_{n+1}$ follows by standard arguments in iterated forcing.

  \item  For $n = 0$, $\C \restriction \mu_1$ is trivial forcing. For $n > 0$,
    $\C \restriction \mu_{n+1} = (\C \restriction \mu_n) \times \C_{n-1}$, and $\vert \C \restriction \mu_n \vert < \mu_{n+1}$
    so this factor is trivially $\mu_{n+1}$-Knaster. $\C_{n-1}$ is the product taken with $<\mu_n$-supports of
    $\mu_{n+1}$ posets each with cardinality less than $\mu_{n+1}$,  and $\mu_{n+1}$ is inaccessible,
    so the $\mu_{n+1}$-Knaster property for $\B \restriction \mu_{n+1}$ follows by standard arguments in product forcing.
    
\item $\A \restriction \mu_{n+2} * \U \restriction \mu_{n+1} * \S \restriction \mu_{n+1}$ 
  is a projection of $\A \restriction \mu_{n+2} \times \B \restriction \mu_{n+1} \times \C \restriction \mu_{n+1}$,
  which is a product of $\mu_{n+1}$-Knaster posets. We note that the projection is the identity map
  between two posets with the same underlying set but different orderings.

\item  This follows from Remark \ref{bclosure}. 
  
 \item $\C_n$ is the product taken with $<\mu_{n+1}$-supports of $<\mu_{n+1}$-closed term forcing posets,
   and $\C \restriction  [\mu_{n+1}, \mu_\omega) = \prod_{i \ge n} \C_i$.

\item 
    It is routine to verify that the natural map from $\P_0 \times \P_1$
    to $\A * \U * \S$ is a projection. The claims about closure and chain condition
    follow immediately from what we already proved. 

\item This is immediate by Easton's lemma. 
    
  \item The preservation of $\mu_0$ and the claim about $<\mu_0$-sequences are immediate, as the product is $<\mu_0$-closed.
    $\prod_{m \le n} \A_m \times \prod_{m<n} \B_m \times \prod_{m < n} \C_m$ is
    $\mu_{n+1}$-Knaster and $\prod_{m > n} \A_m \times \prod_{m \ge n} \B_m \times \prod_{m \ge n} \C_m$ is
    $< \mu_{n+1}$-closed, so that $\mu_{n+1}$ is preserved by Easton's Lemma. If $\mu_{\omega}^+$ were collapsed
    we would have $\cf(\mu_\omega^+) < \mu_\omega$ in the extension, but this is impossible by Easton's Lemma.

\end{enumerate}

\end{proof}

\begin{remark} Item \ref{forrobustness} is immediate in our current setting
  but will hold and be useful in more general settings, as we discuss
  in Section \ref{modify} below.
\end{remark}

\begin{corollary} \label{rightafterportmanteau}

  \leavevmode

\begin{enumerate}

\item   
  Every $<\mu_{n+1}$-sequence of ordinals from $V[A * U * S]$ lies in the submodel
  $V[A \restriction \mu_{n+2}][U \restriction \mu_{n+1}][S \restriction \mu_{n+1}]$.

\item \label{aus-covering}  Every set of ordinals of cardinality less than $\mu_{n+1}$ in $V[A * U * S]$
    is covered by such a set lying in $V$.

 \item If $\Q$ is $\mu_\omega$-closed in $V$ and $Q$ is $\Q$-generic over $V[A * U * S]$,
    every set of ordinals of cardinality less than $\mu_{n+1}$ in $V[A * U * S][Q]$
    is covered by such a set lying in $V$.

\end{enumerate}
    
\end{corollary}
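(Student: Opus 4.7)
The plan rests on the factorization from Lemma \ref{portmanteaulemma}(\ref{portmanteau3}): $\A*\U*\S$ is a projection of $\P_0 \times \P_1$, where $\P_0 = \A \restriction \mu_{n+2} * \U \restriction \mu_{n+1} * \S \restriction \mu_{n+1}$ is $\mu_{n+1}$-Knaster in $V$, and $\P_1 = \A \restriction [\mu_{n+2}, \mu_\omega) \times \B \restriction [\mu_{n+1}, \mu_\omega) \times \C \restriction [\mu_{n+1}, \mu_\omega)$ is $<\mu_{n+1}$-directed closed in $V$. Given $A * U * S$, forcing over $V[A*U*S]$ with the natural quotient from $\P_0 \times \P_1$ to $\A*\U*\S$ produces mutually generic $(P_0, P_1)$ with $V[A*U*S] \subseteq V[P_0 \times P_1]$ and $V[P_0] = V[A \restriction \mu_{n+2}][U \restriction \mu_{n+1}][S \restriction \mu_{n+1}]$.

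For (1), Easton's Lemma applied to $\P_0$ ($\mu_{n+1}$-cc) and $\P_1$ ($<\mu_{n+1}$-closed) in $V$ tells us that $\P_1$ is $<\mu_{n+1}$-distributive over $V[P_0]$; hence any $<\mu_{n+1}$-sequence of ordinals in $V[P_0 \times P_1]$, in particular any such sequence in the submodel $V[A*U*S]$, lies in $V[P_0]$. Part (2) follows: a set of ordinals of size $<\mu_{n+1}$ in $V[A*U*S]$ enumerates as a $<\mu_{n+1}$-sequence and so by (1) lies in $V[P_0]$; since $\P_0$ is $\mu_{n+1}$-cc in $V$, the usual ``name for each point'' argument covers it by a set in $V$ of size $<\mu_{n+1}$.

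For (3), given $Q$ generic for $\Q$ over $V[A*U*S]$, force over $V[A*U*S][Q]$ with the quotient $(\P_0 \times \P_1)/(A*U*S)$, which is a forcing poset in $V[A*U*S]$. Both the quotient and $\Q$ live in $V[A*U*S]$, so the standard product lemma (as in Lemma \ref{rearrangeqtot} and Remark \ref{rearrangeqtotremark}) gives that $(P_0, P_1)$ and $Q$ are mutually generic over $V[A*U*S]$, and therefore that $(P_0, P_1, Q)$ is $\P_0 \times \P_1 \times \Q$-generic over $V$ with $(P_0, P_1)$ inducing $A*U*S$. In $V$, $\P_1 \times \Q$ is $<\mu_{n+1}$-closed (since $\Q$ is $\mu_\omega$-closed in $V$, hence $<\mu_{n+1}$-closed) and $\P_0$ is $\mu_{n+1}$-cc, so Easton's Lemma again makes $\P_1 \times \Q$ be $<\mu_{n+1}$-distributive over $V[P_0]$. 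A set of ordinals of size $<\mu_{n+1}$ in $V[A*U*S][Q] \subseteq V[P_0 \times P_1 \times Q]$ thus lies in $V[P_0]$, and by the $\mu_{n+1}$-cc of $\P_0$ is covered by a set of size $<\mu_{n+1}$ in $V$.

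The main delicate point is the mutual genericity claim in (3): we must know that after forcing the quotient over $V[A*U*S][Q]$, the original $Q$ remains $\Q$-generic over $V[P_0 \times P_1]$. This is the only nontrivial bookkeeping in the argument; everything else is an appeal to Easton's Lemma and the chain condition of $\P_0$.
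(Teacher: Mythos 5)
Your proposal is correct and follows essentially the same route as the paper, which factors $\Q \times \A * \U * \S$ (resp.\ $\A * \U * \S$) through $\Q \times \P_0 \times \P_1$ using item \ref{portmanteau3} of Lemma \ref{portmanteaulemma} and then applies Easton's Lemma together with the $\mu_{n+1}$-cc of $\P_0$. The mutual-genericity bookkeeping you flag for part (3) is exactly what Lemma \ref{rearrangeqtot} supplies, so there is no gap.
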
 

\begin{proof} The first two claims are immediate. For the last claim write
  $\Q \times \A * \U * \S$ as the projection of $\Q \times \P_0 \times \P_1$ where
  $\P_0$ is $\mu_{n+1}$-cc and $\P_1$ is $< \mu_{n+1}$-closed, then argue as usual by Easton's Lemma.
\end{proof} 

In a similar spirit, we state some more easy projection and absorption facts about $\U_{[n, \omega)}$ and
  $\S_n$ for use in Section \ref{GroupIV}.

\begin{lemma} \label{u_into_easton} 
  $\A * \U$ can be viewed as the projection of the product of $\A * \U_{[0, n)}$ and
    the Easton support product of the term forcing posets
    $\termspace^V(\A \restriction \alpha, \U \restriction \alpha)$
    for $\alpha \in \dom(\B_{[n, \omega)})$. 
\end{lemma}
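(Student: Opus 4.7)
The plan is to exhibit $\A * \U$ as a projection via two standard decompositions: factoring $\B$ along $\mu_{n+1}$, and rewriting the resulting tail using iterated term forcing.

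First I would recall that, by the construction in Section \ref{basicaus}, the identity map on pairs $(a, b)$ with $a \in \A$ and $b \in \B$ gives a projection from $\A \times \B$ to $\A * \U$: both posets share the same underlying set, and the $\A * \U$ ordering is coarser than the $\A \times \B$ ordering (the former asks only for \emph{some} $a \in A$ to witness that the iterands decrease, while the latter asks the empty condition to do so). By Remark \ref{easyaubfacts}, the natural concatenation map is a projection $\B_{[0, n)} \times \B_{[n, \omega)} \to \B$, and composing I get a projection from $\A \times \B_{[0, n)} \times \B_{[n, \omega)}$ to $\A * \U$.

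Next I would observe that $\A \times \B_{[0, n)}$ projects to $\A * \U_{[0, n)}$. This is simply the built-in projection $\A \times \B \to \A * \U$ restricted to the initial segment of the iteration up to $\mu_{n+1}$; the full poset $\A$ is retained because $\U_{[0, n)} = \U \restriction \mu_{n+1}$ depends only on $A \restriction \mu_{n+1}$, so the remaining coordinates of $\A$ are carried along freely.

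Finally I would identify $\B_{[n, \omega)}$ with the Easton support product in the lemma. By the definition of $\B$, a condition in $\B_{[n, \omega)}$ is a function with Easton domain contained in $\dom(\phi) \cap [\mu_{n+1}, \mu_\omega)$ whose value at $\alpha$ is an $\A \restriction \alpha * \dot{\U} \restriction \alpha$-name for a condition in $\phi(\alpha)$. Hence the factor at $\alpha$ is literally $\termspace(\A \restriction \alpha * \dot{\U} \restriction \alpha, \phi(\alpha))$, which by Lemma \ref{iteratedtermforcing} is canonically isomorphic to $\termspace^V(\A \restriction \alpha, \termspace^{V[A \restriction \alpha]}(\dot{\U} \restriction \alpha, \phi(\alpha)))$ — exactly the notation $\termspace^V(\A \restriction \alpha, \U \restriction \alpha)$ of the lemma statement, unpacked. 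Since the ordering on $\B_{[n, \omega)}$ is coordinatewise and the supports are by definition Easton, $\B_{[n, \omega)}$ is exactly this Easton support product.

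The main obstacle is purely bookkeeping: checking that these natural identifications compose to a single well-defined projection, and in particular that the ordering on $\B_{[n, \omega)}$ (with its self-referential use of $\A \restriction \alpha * \dot\U \restriction \alpha$-names) matches the coordinatewise ordering on the Easton support product of term forcings after the rewrite of Lemma \ref{iteratedtermforcing}. Since each identification is standard, this reduces to routine unpacking of the definitions of $\B$, of term forcing, and of the projection $\A \times \B \to \A * \U$ at the heart of the construction.
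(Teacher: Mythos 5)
The paper states this lemma without proof, so there is no argument of the authors' to compare against; your decomposition (factor $\B$ at $\mu_{n+1}$, keep the initial segment attached to $\A$, turn the tail into term forcings) is certainly the intended one, and the descent step you defer to ``bookkeeping'' --- passing from a projection defined on $\A \times \B_{[0,n)} \times \B_{[n,\omega)}$ to one defined on $(\A * \U_{[0,n)}) \times \B_{[n,\omega)}$ --- does go through, since every map in sight is the identity on underlying sets and only coarsens orderings.

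The one genuine error is the closing claim that $\B_{[n,\omega)}$ \emph{is exactly} the Easton support product of the term forcings $\termspace(\A\restriction\alpha * \dot\U\restriction\alpha, \phi(\alpha))$. It is not: in a product of term forcings the empty condition must force $c_1(\alpha)\le c_0(\alpha)$ at each coordinate, whereas in the ordering that $\B_{[n,\omega)}$ inherits from $\B$ the witnessing condition is $(0, b_1\restriction\alpha)$, i.e.\ the lower coordinates of $b_1$ itself are fed in. This is precisely the ``self-reference'' the paper flags in Section \ref{basicaus}, and Remark \ref{easyaubfacts} makes the same point for $\B\restriction\beta$ versus $\B\restriction\alpha\times\B\restriction[\alpha,\beta)$: the underlying sets agree but the posets are not isomorphic, one only has a projection. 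As written your argument therefore exhibits $\A*\U$ as a projection of $(\A*\U_{[0,n)})\times\B_{[n,\omega)}$, which is weaker than the lemma: the entire point of the statement is to produce a genuine \emph{product} of term posets, so that Lemma \ref{Easton-collapse-absorb} can absorb it into an Easton collapse, and $\B_{[n,\omega)}$ with its inherited ordering is not such a product. The repair is short --- the identity map from the Easton product of term forcings to $\B_{[n,\omega)}$ is order-preserving, and the standard mixing argument (define $c'(\alpha)$ by recursion on $\alpha$ to agree with $b(\alpha)$ on the part of the generic compatible with $(0,b\restriction\alpha)$ and with $c(\alpha)$ elsewhere) shows it is a projection, which composes with what you already have --- but ``exactly this Easton support product'' is false and should be replaced by one more projection step.
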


We note that by  Lemma \ref{Easton-collapse-absorb}, the product of term forcing in
Lemma \ref{u_into_easton} may be 
absorbed into a suitable Easton collapse.

\begin{lemma} \label{s_into_easton} 
  $\A * \U * \S_n$ can be viewed as the projection of the product of $\A * \U$ and $\C_n$.
\end{lemma}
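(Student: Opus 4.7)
The plan is to use the standard term forcing projection from Section \ref{termforcing} and Lemma \ref{standardtermforcinglemma}, adapted to the ``enriched ordering'' setup used to define $\S$ from $\C$. The underlying sets of conditions in $\C_n$ and $\S_n$ are identical: both consist of partial functions $c$ with $\dom(c) \subseteq (\mu_{n+1}, \mu_{n+2})$ of cardinality less than $\mu_{n+1}$, where $c(\alpha)$ is an $\A \restriction \alpha * \dot\U \restriction \mu_{n+1}$-name for a condition in $\Add(\mu_{n+1}, 1)^{V[A \restriction \alpha * U \restriction \mu_{n+1}]}$. The only difference is that $\C_n \in V$ uses the ``pure term'' ordering $c_1 \le c_0$ iff $\dom(c_0) \subseteq \dom(c_1)$ and $\forces c_1(\alpha) \le c_0(\alpha)$ for all $\alpha \in \dom(c_0)$, while $\S_n \in V[A*U]$ enriches this ordering by allowing a witness $(a,u) \in A*U$ forcing the coordinatewise inequalities.

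The projection $\pi : (\A * \U) \times \C_n \to \A * \U * \S_n$ is then the identity map on underlying sets, $\pi((a,u),c) = (a,u,c)$. Well-definedness is immediate since $\C_n$ and $\S_n$ share their underlying sets. Order preservation is also immediate: if $(a_1, u_1) \le (a, u)$ in $\A * \U$ and $c_1 \le c$ in $\C_n$, then $\forces c_1(\alpha) \le c(\alpha)$ for $\alpha \in \dom(c)$, hence \emph{a fortiori} $(a_1 \restriction \alpha, u_1 \restriction \mu_{n+1}) \forces c_1(\alpha) \le c(\alpha)$, so that $(a_1, u_1)$ forces $c_1 \le c$ in $\S_n$ (using $(a_1, u_1)$ itself as the witness); thus $(a_1, u_1, c_1) \le (a, u, c)$ in $\A * \U * \S_n$.

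For the projection property I would take a refinement $(a', u', s') \le (a, u, c)$ in $\A * \U * \S_n$ and construct $((a_1, u_1), c_1)$ in the product with image refining $(a', u', s')$. Set $(a_1, u_1) = (a', u')$. By strengthening $(a', u')$ if necessary we may assume that $(a', u')$ itself serves as a witness to $s' \le c$ in $\S_n$, so $\dom(c) \subseteq \dom(s')$ and $(a' \restriction \alpha, u' \restriction \mu_{n+1}) \forces s'(\alpha) \le c(\alpha)$ for every $\alpha \in \dom(c)$. Now define $c_1$ by $\dom(c_1) = \dom(s')$, and at each coordinate $\alpha$ take $c_1(\alpha)$ to be the mixed name equal to $s'(\alpha)$ below $(a' \restriction \alpha, u' \restriction \mu_{n+1})$ and equal to $c(\alpha)$ elsewhere (for $\alpha \in \dom(s') \setminus \dom(c)$ simply use $s'(\alpha)$ below $(a' \restriction \alpha, u' \restriction \mu_{n+1})$ and the trivial condition elsewhere). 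Then $\forces c_1(\alpha) \le c(\alpha)$ for every $\alpha \in \dom(c)$, so $c_1 \le c$ in $\C_n$; and $(a', u')$ witnesses $c_1 \le s'$ in $\S_n$, so $(a', u', c_1) \le (a', u', s') \le (a, u, c)$ in $\A * \U * \S_n$.

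The only mildly delicate step is the mixing construction of $c_1(\alpha)$, and this is routine Boolean-valued bookkeeping rather than a real obstacle; the overall argument is a direct instance of the general principle that $\P \times \termspace(\P, \dot\Q)$ projects onto $\P * \dot\Q$, specialised to the present ``enriched term'' variant where $\S_n$ plays the role of $\dot\Q$ and $\C_n$ plays the role of the term forcing.
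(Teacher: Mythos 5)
Your proof is correct and is exactly the argument the paper intends: the lemma is stated without proof as an instance of the general term-forcing principle (Lemma \ref{standardtermforcinglemma}), and the paper's own remark in Lemma \ref{portmanteaulemma} that "the projection is the identity map between two posets with the same underlying set but different orderings" is precisely your construction. The mixing step you flag is the standard fullness argument and works as you describe.
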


We note that $\C_n$ may be viewed as the $<\mu_{n+1}$-support
 product of the term forcing posets $\termspace^V(\A \restriction \alpha * \U \restriction \mu_{n+1}, \dot\Add(\mu_{n+1}, 1))$
for $\alpha \in (\mu_{n+1}, \mu_{n+2})$.    
As such, by Lemma \ref{Levy-collapse-absorb} $\C_n$ may be
absorbed into a suitable $<\mu_{n+1}$-closed Levy collapse.

For use later (notably in Sections \ref{moreprep} and \ref{GroupIV})
we record the fact that $\U_{[n, \omega)}$ and
  $\S_{[n, \omega)}$ have a very modest degree of closure in the models where they
    are defined. These results are surely not optimal (and in some special cases we will
    need and prove more closure) but are all we need for the purposes of
    Sections \ref{moreprep} and  \ref{GroupIV}. The argument is similar to but easier than the proofs
    of Fact \ref{Itay4.7} or  Lemma \ref{q0info}. 

\begin{lemma} \label{modestclosure}
      \item   $\U_{[n, \omega)}$ is $<\mu_0$-closed in $V[A * U_{[0, n)}]$.
      \item   $\S_{[n, \omega)}$ is $<\mu_0$-closed in $V[A * U]$.
\end{lemma}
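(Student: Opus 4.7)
Both parts are established by the same strategy, so I focus on the first; the second is parallel with $\C \restriction [\mu_{n+1},\mu_\omega)$ playing the role of $\B \restriction [\mu_{n+1},\mu_\omega)$ and Fact \ref{Itay4.15} playing the role of Remark \ref{bclosure}.

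The plan is to exploit that, at the scale of $<\mu_0$, the forcing that produces the filter $F = A * U_{[0,n)}$ is so closed that both the decreasing sequence and its comparison data can be pulled back to $V$. Concretely, I would first check that $\A * \U_{[0,n)}$ is $<\mu_0$-directed closed in $V$: the full-support product $\A$ is $<\mu_0$-directed closed because each factor $\A_m$ is $<\mu_m$-closed with $\mu_m \ge \mu_0$, while $\U_{[0,n)}$, viewed in $V[A]$, is essentially an Easton-support iteration whose iterands $\phi(\alpha)$ are $<\alpha$-directed closed for $\alpha \ge \mu_1 > \mu_0$ (the interaction with the tail $\A \restriction [\mu_{n+1}, \mu_\omega)$, which is itself $<\mu_{n+1}$-directed closed in $V$, causes no difficulty via the usual product decomposition). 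Consequently $\A * \U_{[0,n)}$ is $<\mu_0$-distributive, so any decreasing $\delta$-sequence $(u_i)_{i<\delta}$ with $\delta < \mu_0$ lies in $V$, and the generic filter $F$ is $<\mu_0$-directed as a filter.

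Given this, I would produce the lower bound as follows. For each pair $i<j<\delta$ pick a witness $(a_{ij},v_{ij})\in F$ in the sense of the definition of $\B^{+F}$, and use $<\mu_0$-directedness of $F$ to obtain a common refinement $(a^*, v^*) \in F$. Define $u$ by $\dom(u) = \bigcup_{i<\delta} \dom(u_i)$, which is an Easton subset of $[\mu_{n+1},\mu_\omega)$ since it is a union of fewer than $\mu_0 < \mu_{n+1}$ Easton sets. For each $\alpha \in \dom(u)$ set $I_\alpha = \{i : \alpha \in \dom(u_i)\}$, so $|I_\alpha| < \mu_0 < \alpha$; take $u(\alpha)$ to be an $\A \restriction \alpha * \dot\U \restriction \alpha$-name that, conditionally on $(a^*, v^*)$ being in the generic, uses $<\alpha$-directed closure of $\phi(\alpha)$ to name a lower bound of $\{u_i(\alpha) : i \in I_\alpha\}$ in $\phi(\alpha)$, and names the trivial element otherwise. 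A routine verification shows that $(a^*, v^*)$ witnesses $u \le u_i$ in $\B^{+F}$ for every $i<\delta$.

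The main technical point is the $<\mu_0$-directed closure of $\A * \U_{[0,n)}$ in $V$, which does not follow immediately from Fact \ref{Itay4.7} because here the filter extends past the threshold $\mu_{n+1}$ via the full $A$. However, this is noticeably easier than Fact \ref{Itay4.7} itself, because we only need closure at the modest level $\mu_0$, well below where the Easton-support structure becomes delicate; the decomposition of $\A$ as $\A \restriction \mu_{n+1} \times \A \restriction [\mu_{n+1}, \mu_\omega)$ with the second factor $<\mu_{n+1}$-directed closed handles the complication.
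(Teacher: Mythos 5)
Your overall strategy --- pull the sequence back to $V$ by distributivity, take the union of the domains as the (Easton) domain of the lower bound, and use the $<\alpha$-directed closure of $\phi(\alpha)$ coordinate by coordinate --- is also the paper's strategy. But two steps have real gaps. First, you need the generic filter $F = A * U_{[0,n)}$ to be $<\mu_0$-directed so that the pairwise witnesses $(a_{ij}, v_{ij})$ admit a common refinement $(a^*,v^*)\in F$. That requires $<\mu_0$-\emph{closure} of $\A * \U_{[0,n)}$ in $V$ (distributivity alone does not make a generic filter directed), and this closure is exactly the kind of statement the lemma itself asserts: it is not among the quoted facts (Lemma \ref{portmanteaulemma} only yields distributivity, via the projection from $\prod_m \A_m\times\B_m\times\C_m$, and Fact \ref{Itay4.7} concerns the term-like posets $\B^{+F'}$, not $\U$ with its $A$-dependent ordering), and its proof is not ``noticeably easier'' --- it is the same coordinate-wise induction. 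The paper sidesteps the directification entirely: it passes to a condition forcing $\vec u$ to be decreasing, so that \emph{every} generic contains witnesses, and then at each coordinate $\alpha$ finds a witness inside an arbitrary generic extending $(0, b\restriction\alpha)$ rather than fixing one witness in advance.

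Second, defining $u(\alpha)$ ``conditionally on $(a^*,v^*)$ being in the generic'' does not quite work. The witness $(a^*,v^*)$ only guarantees that $(a^*\restriction\alpha, v^*\cup u_j\restriction\alpha)$ forces $u_j(\alpha)\le u_i(\alpha)$, and this condition varies with $j$; below $(a^*\restriction\alpha, v^*)$ alone the values $u_i(\alpha)$ need not be forced to be pairwise comparable, so there is no single lower bound to name there. The repair is to make the construction a genuine recursion on $\alpha$: having arranged that $(a^*,v^*)$ already witnesses $u\restriction\alpha\le u_i\restriction\alpha$ for all $i$, the single condition $(a^*\restriction\alpha, v^*\cup u\restriction\alpha)$ refines each $(a^*\restriction\alpha, v^*\cup u_j\restriction\alpha)$, and below it the $u_i(\alpha)$ do form a decreasing sequence, so the closure of $\phi(\alpha)$ lets you name a lower bound there. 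This is precisely the induction the paper carries out, with $(0, b\restriction\alpha)$ playing the role of your conditioning condition. Your parallel treatment of $\S_{[n,\omega)}$ via $\C$ and Fact \ref{Itay4.15} is the right idea once these points are repaired.
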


\begin{proof}  We only prove the closure of $\U_n$ in $V[A * U_{[0, n)}]$, which is enough to illustrate
    the idea. Recall that the underlying set of $\U_n$ is $\B_n = \B \restriction (\mu_{n+1}, \mu_{n+2})$,
    and the ordering is defined in $V[A_{[0, n]} * U_{[0, n)}]$. Let $\eta < \mu_0$ and
    let $\vec u = (u_i)_{i < \eta}$ be a decreasing sequence in $\U_n$, where by Lemma \ref{portmanteaulemma}
    we have $\vec u \in V$. We assume without loss of generality that it is forced by
    $\A_{[0, n]} * \U_{[0, n)}$ that $\vec u$ is decreasing in $\U_n$.

    We will construct $b \in \B_n$ inductively, where $b = \bigcup_i \dom(u_i)$ is easily seen to be an
    Easton set, and arrange that it is forced by $\A_{[0, n]} * \U_{[0, n)}$ 
    that $b$ is a lower bound for $\vec u$ in $\U_n$.
    Suppose that $\mu_{n+1} \le \alpha < \mu_{n+2}$,
    and we have defined $b \restriction \alpha$ which is forced by $\A_{[0, n]} * \U_{[0, n)}$ 
    to be a lower bound for
    $\vec u \restriction \alpha$ in $\U_n \restriction \alpha$.
    We force with $\A \restriction \alpha * \U \restriction \alpha$
    below $(0, b \restriction \alpha)$ to obtain $F^A_\alpha * F^U_\alpha$. 
    
    Let $d_i = u_i(\alpha)[F^A_\alpha * F^U_\alpha]$.
    We claim that    $(d_i)_{i < \eta}$ forms a decreasing sequence
    in $\phi(\alpha)[F^A_\alpha * F^U_\alpha]$. To see this let $i < j < \eta$,
    and force to prolong $F^A_\alpha$ to $F^A$ which is $\A_{[0, n]}$-generic.
    By our hypothesis on $\vec u$,
    $V[F^A * F^A_\alpha \restriction \mu_{n+1}] \models u_j \le_{\U_n} u_i$.
    By the definition of $\U_n$, 
    there are conditions $p \in F^A_\alpha$ and $q \in F^U_\alpha \restriction \mu_{n+1}$
    such that $(p, q \frown u_j \restriction \alpha) \forces u_j(\alpha) \le u_i(\alpha)$.
    
    Since we forced below $(0, b \restriction \alpha)$,
    $(0, b \restriction \alpha) \in F^A_\alpha * F^U_\alpha$. By hypothesis 
    $V[F^A * F^A_\alpha \restriction \mu_{n+1}] \models b \restriction \alpha \le_{\U_n \restriction \alpha}
    u_j \restriction \alpha$, so extending $p$ and $q$ if necessary
    we may assume that
    $(p, q) \forces b \restriction \alpha \le u_j \restriction \alpha$.
    So $(p, q \frown u_j \restriction \alpha) \in F^A_\alpha * F^U_\alpha$
    and $d_j \le d_i$ as required.
   
    Now since $\phi(\alpha)$ is forced to be $< \alpha$-closed
    we may choose an  $\A \restriction \alpha * \U \restriction \alpha$-name $b(\alpha)$ such that
    $(0, b \restriction \alpha) \forces b(\alpha) \le \dot d_i$ for all $i$.
    Let $F = A_{[0, n]} * U_{[0, n-1)}$ be an arbitrary $\A_{[0, n]} * \U_{[0, n-1)}$-generic object.
   By induction $b \restriction \alpha \le u_i \restriction \alpha$ for all $i < \eta$ in the version
   of $\U_n$ computed in $V[F]$, and we will show that $b \restriction \alpha + 1 \le u_i \restriction \alpha + 1$
   in this poset. This is easy because for  any condition $(a, u) \in F$, $(a \restriction \alpha, u \cup b \restriction \alpha)$
   forces $b(\alpha) \le u_i(\alpha)$. 
\end{proof}

\subsection{Further analysis} \label{further}

As we saw in Section \ref{basicaus},
$\A * \U * \S$ is naturally a projection of $\prod_n (\A_n \times \B_n \times \C_n)$,
and this latter forcing preserves all cardinals  $\mu_n$ together with $\mu_\omega^+$.
To get more information 
we will use a style of analysis sometimes called ``tail forcing'', which is 
often useful in the setting of a product of $\omega$ many increasingly closed forcing posets.

Let $f$ and $g$ be elements of $\prod_n (\A_n \times \B_n \times \C_n)$.
We say that $f =_\mathrm{finite} g$ if and only if $f(n) = g(n)$ for all large $n$,
and $f \le_\mathrm{finite} g$ if and only if $f(n) \le g(n)$ for all large $n$.
Then $=_\mathrm{finite}$ is an equivalence relation on
$\prod_n (\A_n \times \B_n \times \C_n)$.

If we let
$\prod_n (\A_n \times \B_n \times \C_n)/\mathrm{finite}$ be the set of equivalence classes
then $\le_\mathrm{finite}$ naturally induces a partial  ordering on 
$\prod_n (\A_n \times \B_n \times \C_n)/\mathrm{finite}$, and it is easy to see that
$f \mapsto [f]_{\rm finite}$ is a projection from
$\prod_n (\A_n \times \B_n \times \C_n)$ to $\prod_n (\A_n \times \B_n \times \C_n)/\mathrm{finite}$.
If we define $=_\mathrm{finite}$ and $\le_\mathrm{finite}$ on 
$\prod_{n \ge m} (\A_n \times \B_n \times \C_n)$ in the natural way, then
easily $\prod_n (\A_n \times \B_n \times \C_n)/\mathrm{finite}$ is isomorphic to
$\prod_{n \ge m} (\A_n \times \B_n \times \C_n)/\mathrm{finite}$,
and $f \mapsto [f]_{\rm finite}$ is a projection from
$\prod_{n \ge m} (\A_n \times \B_n \times \C_n)$ to $\prod_{n \ge m} (\A_n \times \B_n \times \C_n)/\mathrm{finite}$,

Now we may represent $\prod_n (\A_n \times \B_n \times \C_n)$  as a two-step iteration $\E_0 * \E_1$, where
 $\E_0 = \prod_n (\A_n \times \B_n \times \C_n)/\mathrm{finite}$, and $\E_1$ is the set of elements of 
$\prod_n \A_n \times \B_n \times \C_n$ whose classes modulo finite are in $E_0$, where the ordering of $\E_1$ is the the ordering
inherited from $\prod_n \A_n \times \B_n \times \C_n$.

\begin{claim} $\E_0$ is $\mu_m$-strategically closed for every $m < \omega$,
  so in particular it is $\mu_\omega$-distributive.
\end{claim}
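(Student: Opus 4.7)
I would prove this by exhibiting a winning strategy $\sigma$ for Player II in the strategic closure game $\mathcal G_{\mu_m}(\E_0)$ of length $\mu_m$.

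The starting structural observation is that for each $n \ge m$ the factor $\A_n \times \B_n \times \C_n$ is canonically $<\mu_m$-closed. Indeed, $\A_n = \Add(\mu_n, [\mu_{n+1}, \mu_{n+2}))$ is canonically $<\mu_n$-closed as a Cohen poset (lower bounds given by unions), and by items \ref{imaginary5} and \ref{imaginary6} of Lemma \ref{portmanteaulemma} the posets $\B_n$ and $\C_n$ are $<\mu_{n+1}$-directed closed (hence canonically so). Since $\mu_n \ge \mu_m$, each factor in the tail is canonically $<\mu_m$-closed, so decreasing sequences of length $<\mu_m$ at any coordinate $n \ge m$ have canonical greatest lower bounds.

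Player II's strategy will be to maintain, as part of her strategy, distinguished representatives $\tilde h_\beta \in \prod_n(\A_n \times \B_n \times \C_n)$ of her previous moves $[h_\beta]$ and $\tilde f_\beta$ of Player I's moves $[f_\beta]$, subject to the inductive invariant that for every $n \ge m$ the combined sequence $(\tilde h_\gamma(n), \tilde f_\gamma(n))_{\gamma \le \beta}$ (taken in the order of stages) is pointwise decreasing in $\A_n \times \B_n \times \C_n$, while $\tilde h_\beta(n) = \tilde f_\beta(n) = $ trivial for $n < m$. At her turn in stage $\alpha$, II then sets $\tilde h_\alpha(n)$ for $n \ge m$ to be the canonical greatest lower bound in $\A_n \times \B_n \times \C_n$ of the previous values together with $\tilde f_\alpha(n)$; this exists because by induction the prior sequence is decreasing and has length $< \mu_m \le \mu_n$. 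The resulting $[h_\alpha]$ extends all prior moves in $\E_0$, since pointwise dominance on a cofinite set ($n \ge m$) implies dominance modulo finite.

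The main obstacle is the choice of the representatives $\tilde f_\alpha$ after Player I plays, especially at limit stages of uncountable cofinality: one needs $\tilde f_\alpha =_\text{finite} f_\alpha$ while simultaneously $\tilde f_\alpha(n) \le \tilde h_\gamma(n)$ for all $\gamma < \alpha$ and all $n \ge m$. The naive intersection of the cofinite sets witnessing $[f_\alpha] \le [h_\gamma]$ need not be cofinite when there are uncountably many constraints. This is overcome by exploiting that in the tail product each $\A_n \times \B_n \times \C_n$ is canonically closed: II replaces the given $f_\alpha$ at finitely many ``bad'' coordinates by the canonical lower bound of the previous II-moves at those coordinates, producing a representative $\tilde f_\alpha =_\text{finite} f_\alpha$ that satisfies the invariant. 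That only finitely many such modifications are required follows by an analysis of the invariant built up at successor stages and (for limits) a diagonal-type argument matching the natural number thresholds $M_{\gamma\alpha}$ to the coordinates $n$.

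Finally, $\mu_\omega$-distributivity is immediate: any sequence of dense open sets of length less than $\mu_\omega$ has length less than some $\mu_m$, and $\mu_m$-strategic closure implies $\mu_m$-distributivity by the standard argument of playing into successive dense opens while following $\sigma$.
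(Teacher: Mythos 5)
Your underlying idea -- pass to the tail product $\prod_{n\ge m}(\A_n\times\B_n\times\C_n)$, where every factor is canonically $<\mu_m$-closed, and have Player II track pointwise-decreasing representatives there -- is exactly the paper's argument, which is stated in one line: $\E_0$ is isomorphic to the image of the $<\mu_m$-closed poset $\prod_{n\ge m}(\A_n\times\B_n\times\C_n)$ under the projection $f\mapsto[f]_{\mathrm{finite}}$, and the image of a $<\kappa$-closed poset under a projection is $\kappa$-strategically closed by the standard argument. Your strategy is that standard fact unwound, but the unwinding has a gap at precisely the step you flag as the main obstacle. The invariant you impose -- that a representative $\tilde f_\alpha$ of Player I's move, obtained from $f_\alpha$ by changing only finitely many coordinates, be pointwise below $\tilde h_\gamma(n)$ for \emph{all} $\gamma<\alpha$ and all $n\ge m$ -- is not achievable in general. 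Fix thresholds $N_\gamma$ witnessing $[f_\alpha]\le[\tilde h_\gamma]$. Since the $\tilde h_\gamma(n)$ decrease in $\gamma$, ``$f_\alpha(n)\not\le\tilde h_\gamma(n)$'' is upward closed in $\gamma$ for each fixed $n$; so if $\cf(\alpha)>\omega$ one bounds the countably many witnesses of badness by a single $\gamma^*<\alpha$ and contradicts $[f_\alpha]\le[\tilde h_{\gamma^*}]$, whence only finitely many coordinates are bad. But if $\cf(\alpha)=\omega$ the witnesses $\gamma_n$ may be cofinal in $\alpha$ and the $N_\gamma$ unbounded (take $N_{\gamma_k}=k$ along a cofinal $\omega$-sequence), so infinitely many coordinates can be bad and no finite modification of $f_\alpha$ works. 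You have the hard case exactly backwards, and the ``diagonal-type argument matching thresholds to coordinates'' does not close it.

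The repair is to drop the representatives $\tilde f_\alpha$ altogether; they are never needed. Under the usual convention Player II moves at limit stages, so: at a limit stage $\alpha<\mu_m$ she plays the class of the pointwise greatest lower bound of her own representatives $(\tilde h_\gamma)_{\gamma<\alpha}$, which exists coordinatewise by the $<\mu_m$-closure of each tail factor and is below every condition played so far because the play is decreasing in $\E_0$; at a successor stage she compares Player I's move $[f_\alpha]$ only with her single most recent representative $\tilde h_{\gamma_0}$, obtains one finite threshold $N$ with $f_\alpha(n)\le\tilde h_{\gamma_0}(n)$ for $n\ge N$, and plays the class of the condition equal to $f_\alpha$ above $N$ and to $\tilde h_{\gamma_0}$ on $[m,N)$. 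This dominates $\tilde h_{\gamma_0}$ pointwise (hence all earlier representatives) and is $\le[f_\alpha]$ in $\E_0$. At no point must a single representative dominate more than one earlier condition modulo finite, which is why the projection formulation makes the whole issue disappear. Your final paragraph deducing $\mu_\omega$-distributivity is fine.
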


\begin{proof} 
  By the discussion above, $\E_0$ is isomorphic to the projection via $f \mapsto [f]_\mathrm{finite}$
  of the $<\mu_m$-closed poset $\prod_{n \ge m} (\A_n \times \B_n \times \C_n)$.
\end{proof}

\begin{claim} $\E_1$ is $\mu_\omega^+$-cc in $V[E_0]$.
\end{claim}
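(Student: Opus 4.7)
The plan is to leverage the factorization $\E_0 * \dot\E_1 \simeq \prod_n(\A_n\times\B_n\times\C_n)$ together with the decomposition in item \ref{fortailanalysis} of Lemma \ref{portmanteaulemma}. For each $n<\omega$ write $\prod_n(\A_n\times\B_n\times\C_n) = P_n \times Q_n$, where $P_n = \prod_{m\le n}\A_m \times \prod_{m<n}\B_m \times \prod_{m<n}\C_m$ is $\mu_{n+1}$-Knaster and $Q_n = \prod_{m>n}\A_m \times \prod_{m\ge n}\B_m \times \prod_{m\ge n}\C_m$ is $<\mu_{n+1}$-closed. The key structural observation is that the projection $p \mapsto [p]_{\mathrm{finite}}$ is insensitive to modifications on any bounded set of coordinates and therefore factors through $Q_n$. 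This yields, in $V[E_0]$, a natural factorization $\E_1 \cong P_n \times (Q_n / E_0)$ for every $n$.

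Next I would transfer properties from $V$ to $V[E_0]$. The $\mu_{n+1}$-Knaster property of $P_n$ survives in $V[E_0]$: any alleged antichain of size $\mu_{n+1}$ is a sequence of $V$-elements of length less than $\mu_\omega$, hence lies in $V$ by the previous claim (that $\E_0$ is $\mu_\omega$-distributive), contradicting the Knaster property of $P_n$ in $V$. Analogously, $Q_n/E_0$ inherits $<\mu_{n+1}$-closedness from $Q_n$ by lifting decreasing sequences through the projection to a lower bound in $Q_n$ whose class remains in $E_0$.

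For the main argument, suppose for contradiction that $\{p_\alpha : \alpha<\mu_\omega^+\} \subseteq \E_1$ is an antichain, and fix $n$. Project each $p_\alpha$ to its $P_n$-coordinate; since $|P_n|<\mu_\omega<\mu_\omega^+$ and $\mu_\omega^+$ is regular in $V[E_0]$, pigeonhole yields $A\subseteq \mu_\omega^+$ of size $\mu_\omega^+$ on which the $P_n$-coordinate is constant. Then the incompatibilities among $\{p_\alpha : \alpha\in A\}$ must all come from the $Q_n/E_0$-coordinates, reducing the problem to producing an antichain of size $\mu_\omega^+$ in $Q_n/E_0$. To each pair $(\alpha,\beta)\in [A]^2$ assign the least coordinate $m\ge n$ at which $p_\alpha(m)$ and $p_\beta(m)$ are incompatible in $X_m = \A_m\times\B_m\times\C_m$. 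This is a coloring with $\omega$ colors, so Erdős--Rado-style partition relations (which transfer from $V$ to $V[E_0]$ thanks to the $\mu_\omega$-distributivity preserving small subsets of $\omega$ and cardinal arithmetic below $\mu_\omega$) produce large monochromatic subsets. At the fixed coordinate $m$ one then obtains an antichain in $X_m$ contradicting the $\mu_{m+2}$-Knaster property of $X_m$ (once $n$ is chosen appropriately so that the bound on the monochromatic size exceeds $\mu_{m+2}$).

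The main obstacle will be calibrating the partition argument against the chain condition of $X_m$ as the color $m$ varies: since $\mu_{m+2}$ grows with $m$, one cannot just fix a single partition relation. I expect to handle this either by a diagonalization that iterates the choice of $n$ along with the monochromatic subset, or by first applying an additional pigeonhole inside the $Q_n/E_0$-factor to freeze the behavior of $p_\alpha$ on each fixed coordinate $m$ before invoking Knaster at that level.
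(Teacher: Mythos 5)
Your proposal diverges substantially from the paper's proof, and the divergence is not just stylistic: the partition-calculus step at the heart of your argument fails, and the failure points to the structural fact your proof never uses.

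The paper's argument is much more direct. It works in $V[E_0 * E_1] = V[R]$, where $R$ is the generic for the full product, and observes that every $p \in \E_1$ agrees with some $r \in R$ on a tail $[n,\omega)$. Grouping conditions by the pair $(n, p \restriction n)$ exhibits $\E_1$ as a union of $\mu_\omega$ many filters $F_{n,q}$ in $V[R]$; since $\mu_\omega^+$ remains a cardinal there, an antichain of size $\mu_\omega^+$ in $V[E_0]$ would put two of its members in the same filter. The essential input is thus the tail-agreement of elements of $\E_1$ with the single generic filter $R$, which is only visible after forcing with $\E_1$.

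Your argument, by contrast, after fixing the $P_n$-coordinate, colors pairs from the alleged antichain by the least coordinate $m$ of incompatibility and hopes to extract, for some $m$, a color-$m$ homogeneous set of size $\mu_{m+2}$ to contradict the chain condition of $X_m = \A_m \times \B_m \times \C_m$. No partition relation of this shape is available, and your closing paragraph correctly senses the problem but underestimates it: it is not a calibration issue but an outright obstruction. To see this, note that your antichain analysis never uses membership in $\E_1$ (beyond the product factorization), so if it worked it would prove $\mu_\omega^+$-cc for the \emph{full} product $\prod_n X_n$ — which is false. Concretely, each $X_m$ contains an antichain $A_m$ of size $\mu_m$, and $\prod_m A_m$ (full support) is an antichain of size $\prod_m \mu_m \ge \mu_\omega^{\aleph_0} \ge \mu_\omega^+$; for this antichain the least-coordinate-of-incompatibility coloring has every color-$m$ homogeneous set of size at most $|A_m| = \mu_m < \mu_{m+2}$, so no choice of $m$ yields a contradiction. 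The only thing ruling such a configuration out of $\E_1$ is that any two of its elements, having mod-finite classes in the filter $E_0$, are compatible on a tail — but the tail depends on the pair, and turning this two-dimensional information into the needed uniformity is exactly what passing to $V[R]$ accomplishes and what your argument omits. (Your preliminary factorization $\E_1 \cong P_n \times (Q_n/E_0)$ and the transfer of the Knaster property of $P_n$ to $V[E_0]$ are fine, but they do not carry the argument.)
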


\begin{proof}
   We will show the stronger assertion that $\E_1$ is the union of $\mu_\omega$ many filters in $V[E_0 * E_1]$. 
   Let $R$ be the $\prod_n (\A_n \times \B_n \times \C_n)$-generic object added by $\E_0 * \E_1$,
   so that $V[E_0 * E_1] = V[R]$,  $E_0$ is the set of equivalence classes of elements of $R$ modulo finite,
   $\E_1$ is the set of conditions which are equal mod finite to some element of $R$, and $E_1 = R$. 
   The key point is that
   $\mu_\omega^+$ is still an uncountable regular cardinal in $V[R]$.

   Now we work in $V[R]$. If $p \in \E_1$ then there are $n < \omega$ and $r \in R$ such that
   $p \restriction [n, \omega) = r \restriction [n, \omega)$. For all $n$ and
   $q \in \prod_{i < n} (\A_i \times \B_i \times \C_i)$, let
   $F_{n, q}$ be the set of $p \in \E_1$ such that $p \restriction n = q$ and
   $p \restriction [n, \omega) = r \restriction [n, \omega)$ for some $r \in R$.        
   It is easy to see that $F_{n, q}$ is a filter, $\E_1 = \bigcup_{n, q} F_{n, q}$
   and there are $\mu_\omega$ possibilities for $(n, q)$.        
 \end{proof} 

   We have proved: 
\begin{lemma} \label{tailforcinglemma}
     $V[A * U * S] \subseteq V[E_0 * E_1]$, where $\E_0$ is $\mu_\omega$-distributive in $V$ and
  $\E_1$ is $\mu_\omega^+$-cc in $V[E_0]$.
\end{lemma}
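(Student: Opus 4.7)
The plan is to package the two Claims immediately preceding the Lemma together with the projection $\prod_n(\A_n \times \B_n \times \C_n) \to \A * \U * \S$ already set up in Section~\ref{basicaus}. There is essentially nothing new to prove; the content of the Lemma is a clean restatement of what was just shown.

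First, for the inclusion $V[A * U * S] \subseteq V[E_0 * E_1]$, recall from the discussion around Lemma~\ref{portmanteaulemma} and Remark~\ref{easyaubfacts} that $\A * \U * \S$ is the image under a (natural, concatenation-style) projection of $\prod_n(\A_n \times \B_n \times \C_n)$. Since $\prod_n(\A_n \times \B_n \times \C_n)$ has been explicitly factored as $\E_0 * \E_1$, the $V$-generic object $R$ produced by $\E_0 * \E_1$ canonically induces an $\A * \U * \S$-generic object $A * U * S$ over $V$. This gives $V[A * U * S] \subseteq V[R] = V[E_0 * E_1]$.

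Second, $\E_0$ is $\mu_\omega$-distributive in $V$ by the first Claim above: it shows $\E_0$ is $\mu_m$-strategically closed for every $m < \omega$ (via the isomorphism with the tail projection $\prod_{n \ge m}(\A_n \times \B_n \times \C_n)/\mathrm{finite}$, whose preimage is $<\mu_m$-closed). Strategic closure at every $\mu_m$ rules out adding any new $<\mu_\omega$-sequence of ordinals, giving $\mu_\omega$-distributivity. Third, $\E_1$ is $\mu_\omega^+$-cc in $V[E_0]$ by the second Claim, which in fact establishes the stronger statement that $\E_1$ is the union of $\mu_\omega$ many filters $F_{n,q}$ (indexed by $n < \omega$ and $q \in \prod_{i < n}(\A_i \times \B_i \times \C_i)$) as computed in $V[E_0 * E_1]$, and hence contains no antichain of size $\mu_\omega^+$ since $\mu_\omega^+$ remains regular there.

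I do not anticipate any real obstacle: the Lemma is a bookkeeping statement assembling the projection observation with the two preceding Claims, and the only care needed is to make clear that the $\mu_\omega^+$-cc conclusion is a statement about $\E_1$ as a forcing in $V[E_0]$, which follows from the filter-decomposition argument carried out in the outer model $V[E_0 * E_1]$ together with preservation of the regularity of $\mu_\omega^+$ there.
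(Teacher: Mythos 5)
Your proposal is correct and is essentially identical to the paper's own argument: the paper proves exactly the two Claims you cite (strategic closure of $\E_0$ via the tail projections, and the filter-decomposition of $\E_1$ in $V[E_0*E_1]$) and then states the Lemma as their immediate consequence, using the projection from $\prod_n(\A_n\times\B_n\times\C_n)$ to $\A*\U*\S$ for the inclusion of models. Nothing further is needed.
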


\begin{remark} \label{morobust}
  Since $\E_0$ is $\mu_m$-strategically closed for all $m$,  the distributivity of $\E_0$ is quite robust in mild forcing extensions of $V$.
  The argument we gave for the chain condition of $\E_1$ shows that
    the chain condition of $\E_1$ is also robust in mild forcing extensions of $V[E_0]$.
\end{remark}

\subsection{Modifying the forcing} \label{modify}

In the sequel we will need to use some modified forms of $\A * \U * {\S}$. The main modifications
will be:
\begin{itemize}

\item We sometimes choose the Cohen forcing $\A_n$ from an inner model $\bar V$ (which may depend on $n$), that is we
  set $\A_n = \Add^{\bar V}(\mu_n, \mu_{n+2})$.
  When we do this we will make sure to arrange that
  $\A_{[m, n]}$ is $<\mu_m$-distributive and $\mu_{n+1}$-Knaster for $m \le n < \omega$.

\item We sometimes weaken the assumptions on the cardinals $\mu_n$ for $n \ge 2$ and the function $\phi$.
  The $\mu_n$'s will still be supercompact but may not be indestructibly supercompact, and (relatedly) the
  function $\phi \restriction \mu_n$ may only be a Laver function rather than an indestructible Laver function.
  In practice there will typically be an inner model $V'$ such that $V$ is a small generic extension of $V'$, and
  $\phi$ is obtained from an indestructible Laver function in $V'$ using Lemma \ref{laversmallfact}.
  
\end{itemize}

  With these modifications the closure assertions from Facts \ref{Itay4.7},  \ref{Itay4.15},  and \ref{Itay4.30} 
  will remain true, since they only use chain condition and distributivity properties on the $\A$-coordinate.
   Most of the conclusions of Lemmas \ref{portmanteaulemma} and \ref{rightafterportmanteau} remain true, the only difference
  is that now $\A \restriction [\mu_{n+2}, \mu_\omega)$ and $\P_1$ are merely $< \mu_{n+1}$-distributive.
  The analysis from Section \ref{further} needs  to be slightly modified but the conclusion is the same: the modified version of
  $\A * \U * {\S}$ embeds into a two-step iteration where the first step has a robust
  form of $\mu_\omega$-distributivity, and the second step has a robust form of $\mu_\omega^+$-cc.

\begin{remark} \label{twostepaus}
  At certain points in the main construction (see Sections \ref{successiveprikry} and \ref{firstprikry}) we will
  start with a sequence of cardinal parameters $\mu_0, \mu_1, \mu_2 \ldots$, force with
  $\A_0 * \U_0 * \S_0$ (in Section \ref{successiveprikry}) or $\A_0 * \U_0 * \FL$
  for some preparation forcing $\FL$ (in Section \ref{firstprikry}),
  and then work over the extension  to define and force with $\A_{[1, \omega)} * \U_{[1, \omega)} * \S_{[1, \omega)}$
  (in Section \ref{successiveprikry}) or $\A_{[1, \omega)} * \U_{[1, \omega)} * \S$ (in Section \ref{firstprikry}).
  The resulting iteration is broadly similar to the  $\A * \U * \S$ construction defined from $\mu_0, \mu_1, \mu_2 \ldots$
  but is not equivalent: we will handle this situation by analysing the two parts of the construction
  seperately. 
  In the sequel we will call this kind of iteration a {\em two-phase $\A * \U * \S$ construction.}
  Since $\A_0$ forces $2^{\mu_0} = \mu_2$ it will be important to define $\A_1$ in an inner model, so that
  $\A_1$ does not collapse $\mu_2$.
\end{remark} 

\begin{remark}
Readers of this paper and \cite{NeemanUpto} will note a limited family resemblance between $\A * \U * \S$ and the constructions
of Abraham \cite{Abraham} and Cummings and Foreman \cite{CummingsForeman}, which also involve forcing posets with
an ``add coordinate'', a ``collapse coordinate'' and an ``indestructibility coordinate''. The key differences are that in
those earlier papers the supercompact cardinals are not assumed to be indestructible, all three coordinates are iterations,
and the ``indestructibility coordinate'' comes last and uses ordinary ground model Laver functions to guess names.
\end{remark}

\section{Indestructibility results}

In the proof of Theorem \ref{mainthm} we will produce a model which combines many different instances
of the construction of Section \ref{abcu}. Roughly speaking the double successor cardinals below $\aleph_{\omega^2}$
in the final model will be grouped into blocks of length $\omega$, where cardinals
in each block will be handled by an instance of that construction. Unfortunately in each block
there is interference caused by the instances that handle the neighbouring blocks. We will deal with
some of this interference by proving  general indestructibility results
(Lemmas \ref{indestructible} and \ref{indestructible2} below) stating that instances of the
tree property produced by the construction of Section \ref{abcu} are somewhat robust under further mild forcing.
All the ideas needed for the indestructibility results are already present in \cite{NeemanUpto},
 we just need some small adjustments to the proofs. 

 In some cases we would like to use Lemma \ref{indestructible} in situations where the hypotheses
 do not quite apply, and this issue will be addressed by going to a further generic extension where
 the hypotheses do apply, and using a mutual genericity argument to finish. See Remarks \ref{mutualgenericity1} and \ref{mutualgenericity2} 
 following Lemma \ref{indestructible} for more on this. Of course we could have incorporated this
 idea into the statement and proof of Lemma \ref{indestructible}, at the cost of further complicating
 the statement and the proof.

 Let $\A * \U * \S$ be a forcing poset of the type described in Section \ref{abcu}, but allowing
 for the possibility that some of the posets $\A_k$ may be chosen in submodels as discussed in
 Section \ref{modify}. 
The poset $\A * \U * \S$ will be constructed in a universe $\Vd$: this notation is perhaps a bit cumbrous
but makes it easier to specify which universe is to play the role of $\Vd$ in the sequel.
Let $n < \omega$, with a view to showing that the tree property holds at $\mu_{n+2}$ in a wide class of generic extensions
of $\Vd[A * U * S]$, and make the following assumptions:
\begin{itemize}
\item There is an inner model $\Vi$ of $\Vd$, such that $\Vd$ is a generic extension $\Vi[X]$
    and $\Vi \models \hbox{``$\X$ is $\omega$-distributive  and $\mu_1$-cc with $\vert \X \vert \le \mu_1$''}$. 
\item In $\Vi$,  $(\mu_i)_{i < \omega}$ is an increasing sequence of regular cardinals such that
  $\mu_i$ is indestructibly supercompact for $i \ge 2$, and there is a universal indestructible
  Laver function $\psi$ defined up to $\mu_\omega$. The cardinal $\mu_1$ need not be a large cardinal,
  and in fact is often the successor of a singular cardinal. 
\item The Laver function $\phi$ used to define $\U$ is obtained from the universal indestructible Laver function
  $\psi$ in $\Vi$ using Lemma \ref{laversmallfact}, that is $\phi(\alpha) = \psi(\alpha)[X]$
  whenever $\psi(\alpha)$ is an $\X$-name in $\Vi$.
  In $\Vd$, $\phi$ is a universal Laver function on the interval $(\mu_1, \mu_\omega)$, where $\mu_\omega = \sup_{i < \omega} \mu_i$.
\item Each of the posets $\A_k$ may be defined in some inner model of $\Vd$, with the
  constraints that:
\begin{itemize}
\item   $\A_k$ is defined in $\Vi$ for $k \ge n + 2$.
\item   $\prod_{i \le m} \A_i$ is $\mu_{m+1}$-Knaster in $\Vd$ for all $m$.
\item   $\prod_{i \ge m} \A_i$ is $<\mu_m$-distributive in $\Vd$ for all $m$.
\end{itemize}
\end{itemize}

\begin{remark} With an eye to future applications, these hypotheses are slightly more
  general than is needed for our purposes in this paper.
  \end{remark}

  From our hypotheses $\A_{[n+2, \omega)} =(\prod_{n+2 \le i < \omega} \Add(\mu_i, [\mu_{i+1},\mu_{i+2})))^{\Vi}$,
    so that $\A_{[n+2, \omega)}$ is defined and $<\mu_{n+2}$-directed closed in $\Vi$.
  For $k \le n + 1$, $\A_k$ will most often be defined
  in some model intermediate between $\Vi$ and $\Vd$, and in this case
  Lemma \ref{choosea} below will handle most of the work of checking the chain condition
  and distributivity of products of the $\A_k$'s.

  \begin{lemma} \label{choosea}   Suppose that $\A_k = \Add^{\Vint{k}}(\mu_k, [\mu_{k+1}, \mu_{k+2}))$
    for $1 \le k \le n+1$,
      where $\Vint{k}$ is intermediate between $\Vi$ and $\Vd$.
      Then
      $\A_{[1, m]} = \prod_{1 \le i \le m} \A_i$ is $\mu_{m+1}$-Knaster in $\Vd$ and
      $\A_{[m, \omega)} = \prod_{i \ge m} \A_i$ is $<\mu_m$-distributive in $\Vd$
        for all $m \ge 1$.
\end{lemma}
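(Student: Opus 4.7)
The plan is to prove the Knaster and distributivity conclusions separately, using in both cases a reduction to standard properties of Cohen forcing together with Easton's Lemma and the intermediate-model structure of the $\mu_1$-cc forcing $\X$.

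For the Knaster property of $\A_{[1,m]}$, I will exploit the observation that compatibility of two Cohen conditions depends only on whether their union is a function, independently of the ambient universe. Consequently each $\A_k = \Add^{\Vint{k}}(\mu_k, [\mu_{k+1},\mu_{k+2}))$ sits as a subposet of $\Add^{\Vd}(\mu_k, [\mu_{k+1},\mu_{k+2}))$ with identical compatibility relation, so any $\mu_{m+1}$-Knaster property of the latter transfers to the former. To show that $\Add^{\Vd}(\mu_k, \cdot)$ is $\mu_{m+1}$-Knaster in $\Vd$, I first verify that $\mu_1^{<\mu_1} = \mu_1$ passes from $\Vi$ to $\Vd$ (using $|\X| \le \mu_1$ and the $\mu_1$-cc of $\X$) and that $\mu_{m+1}$ remains inaccessible in $\Vd$ (using $|\X| < \mu_{m+1}$), and then apply a standard $\Delta$-system argument. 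Since a finite product of $\mu_{m+1}$-Knaster posets is $\mu_{m+1}$-Knaster, the desired conclusion for $\A_{[1,m]}$ follows.

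For distributivity I split $\A_{[m,\omega)} = \A_{[m,n+1]} \times \A_{[n+2,\omega)}$. The tail $\A_{[n+2,\omega)}$ is defined entirely in $\Vi$, where it is $<\mu_{n+2}$-directed closed as the full-support product of $<\mu_k$-directed closed posets; the strong form of Easton's Lemma (that $\kappa$-cc forcing preserves $<\lambda$-distributivity of $<\lambda$-closed posets when $\kappa \le \lambda$), applied to $\X$ which is $\mu_1$-cc with $\mu_1 < \mu_{n+2}$, then delivers $<\mu_{n+2}$-distributivity of the tail in $\Vd$, hence $<\mu_m$-distributivity. For the finite factor $\A_{[m,n+1]}$, I will combine the subforcings of $\X$ corresponding to each $\Vint{k}$ with $m \le k \le n+1$ into a single $\mu_1$-cc intermediate extension $\mathbf{V}_{\mathrm{ext}} = \Vi[X^*]$ containing all these $\Vint{k}$. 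Working in $\mathbf{V}_{\mathrm{ext}}$, where all the $\A_k$ are simultaneously available, I will argue by reverse induction on $k$ from $n+1$ down to $m$ that $\A_{[k,n+1]}$ is $<\mu_m$-distributive in $\mathbf{V}_{\mathrm{ext}}$; at each stage I exploit that $\A_k$ is $<\mu_k$-closed in $\Vint{k}$ and apply Easton's Lemma over $\Vint{k}$ against the combined factor forcing that adds the higher-index factors on top of $\Vint{k}$. A final application of Easton's Lemma transfers the conclusion from $\mathbf{V}_{\mathrm{ext}}$ to $\Vd$.

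The main obstacle is the inductive step in the distributivity argument: I must check that the composite factor forcing used to pass from $\Vint{k}$ up to the model in which $\A_{[k+1,n+1]} \times \A_{[n+2,\omega)}$ has been generically realized is still $\mu_k$-cc over $\Vint{k}$. This requires combining the $\mu_1$-cc transition from $\Vint{k}$ into $\mathbf{V}_{\mathrm{ext}}$ with the product of the higher-index Cohen-type factors, whose chain condition is delivered by the Knaster conclusion of the first part together with Easton's Lemma applied iteratively to verify that their composition is sufficiently cc over $\Vint{k}$.
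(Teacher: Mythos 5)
Your Knaster argument is correct and takes a genuinely different route from the paper's. You observe that compatibility of Cohen conditions is absolute, embed each $\A_k$ into $\Add^{\Vd}(\mu_k,[\mu_{k+1},\mu_{k+2}))$, and run a $\Delta$-system argument entirely inside $\Vd$, where the inaccessibility of $\mu_{m+1}$ supplies the needed arithmetic. The paper instead proceeds by induction on $m$, using the covering property of $(\kappa,\mu)$-good extensions and Lemma \ref{Cohenrobust} to see that $\A_{m+1}$ is $\mu_{m+2}$-Knaster over $\Vd[A_{[1,m]}]$. Your version is more elementary and works here precisely because the ambient model is $\Vd$ itself, which contains every $\Vint{k}$; the paper's machinery is designed for the harder situation (needed elsewhere) where Knasterness must be verified in outer models of $\Vd$ that are not of this form.

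The distributivity argument, however, has a genuine gap at the inductive step, and I do not see how to repair it along the lines you propose. To show that $\A_k$ is distributive over the model obtained from $\Vint{k}$ by adjoining the higher-index factors, you want Easton's Lemma with $\A_k$ as the $<\mu_k$-closed side and the "combined factor forcing" as the $\mu_k$-cc side. But that combined forcing contains $\A_{k+1}=\Add^{\Vint{k+1}}(\mu_{k+1},[\mu_{k+2},\mu_{k+3}))$, which already has antichains of size $\mu_{k+1}>\mu_k$ (take conditions coding distinct elements of $2^{\eta}$ for $\eta<\mu_{k+1}$ on a fixed column), as well as the $<\mu_{n+2}$-closed tail; neither is remotely $\mu_k$-cc over $\Vint{k}$. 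The Knaster conclusion of the first part only gives $\mu_{n+2}$-cc for $\A_{[k+1,n+1]}$, which is far too weak, so the chain condition you flag as "the main obstacle" is in fact false and the proposed Easton application cannot be made.

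The missing idea is to move the higher-index Cohen factors onto the \emph{closed} side of Easton's Lemma rather than the cc side, via term forcing. Writing $\Vint{i}=\Vi[X^i]$, the term poset $\A_i^*=\termspace^{\Vi}(\X^i,\dot\A_i)$ is $<\mu_i$-closed in $\Vi$ itself, so $\prod_{m\le i<n+2}\A_i^*\times\A_{[n+2,\omega)}$ is $<\mu_m$-closed in $\Vi$. A quotient-to-term forcing embeds $\Vd[A_{[m,\omega)}]$ into $\Vi[X\times\prod_{m\le i<n+2}A_i^*\times A_{[n+2,\omega)}]$, and a single application of Easton's Lemma over $\Vi$ (with $\X$ as the $\mu_m$-cc factor) shows every $<\mu_m$-sequence of ordinals of the larger model lies in $\Vi[X]=\Vd$. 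This is how the paper argues, and it replaces your reverse induction entirely.
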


\begin{proof} We set $\Vint{k} = \Vi$ for $k \ge n+2$, so that
  $\A_k = \Add^{\Vint{k}}(\mu_k, [\mu_{k+1}, \mu_{k+2}))$ for $k \ge 1$. 
  As $\mu_k$ is inaccessible in $\Vd$ for $k \ge 2$, 
  $\Vint{k} \models \mbox{``$\forall \eta < \mu_{k+1} \; \eta^{<\mu_k} < \mu_{k+1}$''}$ for all $k \ge 1$.

   Let $\Vint{k} = \Vi[X_k]$ for a forcing poset $\X_k \in \Vi$, and let
  $\A_k^* = \termspace^{\Vi}(\X_k, \dot \A_k)$, so that $\A_k^*$ is $<\mu_k$-closed in $\Vi$ by Lemma \ref{standardtermforcinglemma}. 
  For $m \ge n+2$, $\A_{[m, \omega)}$ is $<\mu_m$-closed in $\Vi$, so it is
    $<\mu_m$-distributive in $\Vd$ by Easton's lemma.

    For $1 \le m \le n+1$, we may write
    $\Vd[A_{[m, \omega)}] = \Vi[(X * A_{[m, n+2)}) \times A_{[n+2, \omega)}]$,
    and by a suitable quotient to term forcing we may
    extend to obtain a generic extension
    $\Vi[X \times \prod_{m \le i < n+2} A^*_i \times A_{[n+2, \omega)}]$.
    Since  $\prod_{m \le i < n+2} \A^*_i \times \A_{[n+2, \omega)}$ is
    $<\mu_m$-closed in $\Vi$, by Easton's lemma it is 
    $<\mu_m$-distributive in $\Vd$, so that easily
    $\A_{[m, \omega)}$ is $<\mu_m$-distributive in $\Vd$.

    It is easy to see that $\Vd$ is a $(\mu_1, \mu_2)$-good extension of $\Vint{k}$, so
    by Lemma \ref{Cohenrobust} $\A_1$ is $\mu_2$-Knaster in $\Vd$. Now we show by
    induction on $m$ that $\A_{[1, m]}$ is $\mu_{m+1}$-Knaster in $\Vd$:
    if $\A_{[1, m]}$ is $\mu_{m+1}$-Knaster in $\Vd$ then
    $\Vd[A_{[1, m]}]$ is a $(\mu_{m+1}, \mu_{m+2})$-good extension of $\Vint{m+1}$,
    so that $\A_{m+1}$ is $\mu_{m+2}$-Knaster in $\Vd[A_{[1, m]}]$ and hence $\A_{[1,m+1]}$ is $\mu_{m+2}$-Knaster. 
\end{proof}

\begin{remark} Lemma \ref{choosea} leaves us only with the problem of showing that
  $\A_{[0, \omega]}$ is $<\mu_0$-distributive and $\A_{[0, m]}$ is $\mu_{m+1}$-cc.
\end{remark}

To make the hypotheses of the forthcoming Lemma \ref{indestructible} more digestible, we use some
notational conventions:
\begin{itemize}
\item We will show that the tree property at $\mu_{n+2}$ holds in generic extensions of  $\Vd[A * U * S]$ 
  by products of posets that can be written in the form  $\D^{\rm small} \times \D^0 \times \D^1 \times \D^2 \times \D^3$, where the factors satisfy
  some hypotheses to be listed later. We write this product $\D^{s, 0, 1, 2, 3}$,
  and denote subproducts and generic objects for subproducts in the natural way.
\item $W$ is $\Vd[A * U * S][D^{s,0,1,2,3}]$.  
\item  $j$ is any embedding witnessing the $\chi$-supercompactness of $\mu_{n+2}$
  in  $\Vd[H]$ where  $H = A_{[n+2, \omega)} \times D^2$
    and  $\chi = \max(\mu_{\omega}, \vert \D^{s, 0, 1, 2, 3} \vert)^+$.
    Our hypotheses will ensure that $\mu_{n+2}$ is supercompact in $\Vd[H]$, so that such embeddings
    $j$ will exist. Note that $\Vd[H] \subseteq W$.
\item If $\Q \in \Vd$ is a $\mu_{n+2}$-cc poset with $\vert \Q \vert \le \chi$ which remains $\mu_{n+2}$-cc in $\Vd[H]$,
    $j$ is an embedding as above (so that in $\Vd[H]$, $j \restriction \Q$ is a complete embedding of $\Q$ into $j(\Q)$),
  and $Q$ is $\Q$-generic over $\Vd[H]$,  then $j(\Q)/j[Q]$ is the natural poset defined in $\Vd[H][Q]$ to produce a
  $j(\Q)$-generic object $\hat Q$ with $j[Q] \subseteq \hat Q$.
\end{itemize}
We note that in the proof of Lemma \ref{indestructible} we will construct and lift a highly specific embedding $j$,
which is not known in advance and depends on the inputs to the Lemma.

\begin{remark} The posets $\D^{\rm small}, \D^0$, $\D^1$, $\D^2$, $\D^3$ are
  enumerated roughly in order of increasing distributivity. 
  They appear in a different order in the hypotheses of Lemma \ref{indestructible} because the hypotheses
  about $\D^0$ and $\D^1$ mention $\D^2$ and $\D^3$, and the hypothesis about $\D^0$ mentions $\D^1$.
\end{remark}   
  

\begin{lemma} \label{indestructible}
  With the hypotheses on $n$, $\Vi$, $\Vd$ and $\A * \U * \S$ as above,
  let $\D^{\rm small}$, $\D^0$, $\D^1$, $\D^2$, $\D^3$ be forcing posets such that,
  setting $W = \Vd[A * U * S][D^{s,0,1,2,3}]$ and $H = A_{[n+2, \omega)} \times D^2$:

  \begin{enumerate}

  \item \label{indhyp1}  $\mu_{n+1}$ and $\mu_{n+2}$ are regular cardinals in $W$. 

  \item \label{indhyp2} $\A_n$ is $\mu_{n+1}$-Knaster in $\Vd[H]$, and $\A_{n+1}$ is  $\mu_{n+2}$-Knaster in $\Vd[H]$.  
    
  \item \label{indhyp3}  $\D^2 \in \Vi$, and $\Vi \models \mbox{``$\D^2$ is $<\mu_{n+2}$-directed closed''}$.

\item \label{indhyp8}  $\D^3 \in \Vd[A \restriction \mu_{n+2} * U \restriction {\mu_{n+2}}]$ and 
  $\Vd[A \restriction \mu_{n+2} * U \restriction {\mu_{n+2}}] \models
  \mbox{``$\D^3$ is  $<\mu_{n+2}$-directed closed''}$.

  \item \label{indhyp4}  $\D^1 \in \Vd$.
    \begin{enumerate}
    \item \label{indhyp4a} $\D^1$ is $\mu_{n+2}$-Knaster in $\Vd[H]$.                      
    \item \label{indhyp4b} $\D^1$ is $<\mu_{n+1}$-distributive in $\Vd[A * U *S][D^{2,3}]$. 
    \end{enumerate}
    
\item \label{indhyp5}
  For any $j$ which is the unique lift
    \footnote{Note that by the indestructibility of $\mu_{n+2}$ in $\Vi$ and the hypotheses
    on $\A_{[n+2, \omega)}$ and $\D^2$, $\mu_{n+2}$ is $\chi$-supercompact in $\Vi[H]$. Since $\Vd[H] = \Vi[H][X]$
    and $X$ is generic for forcing of cardinality at most $\mu_1$, any embedding witnessing
    $\chi$-supercompactness for $\mu_{n+2}$ in $\Vi[H]$ lifts uniquely in a trivial fashion to an
    embedding witnessing $\chi$-supercompactness for $\mu_{n+2}$ in $\Vd[H]$.}
  to $\Vd[H]$ of an embedding witnessing
    the $\chi$-supercompactness of $\mu_{n+2}$ in $\Vi[H]$, if
    $\P_{2b} = j(\A_{n+1} \times \D^1)/j[A_{n+1} \times D^1]$ then: 
    \begin{enumerate}
    \item  \label{indhyp5a} $\P_{2b}$ is $\mu_{n+2}$-Knaster in $W$.            
    \item  \label{indhyp5b} $\P_{2b}$ is $<\mu_{n+1}$-distributive in $W$.      
    \end{enumerate}

  \item \label{indhyp6}  $\D^0 \in \Vd$, and $\D^0$ is $\mu_{n+1}$-Knaster in $\Vd[A * U * S][D^{1,2,3}][P_{2b}]$. 

  \item \label{indhyp6.5}  
    For any $W'$ which is 
    an extension of $W[P_{2 b}]$ by a forcing which is $<\mu_{n+1}$-closed in $\Vd[A * U * S \restriction [\mu_{n+1}, \mu_\omega)][D^2]$,  
    and any $j$ as in Hypothesis \ref{indhyp5},
    if $\P_{2a} = j(\A_n \times \D^0)/j[A_n \times D^0]$ then 
    $\P_{2a}$ is $\mu_{n+1}$-Knaster in $W'$.  
    
\item \label{indhyp7}
  $\D^{\rm small} \in \Vd[A * U * S]$, and $\Vd[A * U * S] \models \vert \D^{\rm small} \vert \le \mu_n$.

\end{enumerate}

  Then $W \models \mbox{``$\mu_{n+2}$ has the tree property''}$.
\end{lemma}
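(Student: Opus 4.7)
The plan is to adapt Neeman's strategy from the proof of Fact \ref{Itay4.12} to the present setting, where the additional forcings $\D^{\mathrm{small}}, \D^0, \D^1, \D^2, \D^3$ sit on top of $\A * \U * \S$. Assume toward contradiction that $T \in W$ is a $\mu_{n+2}$-Aronszajn tree. The aim is to construct, in a suitable further generic extension $W^+$ of $W$, a generic supercompactness embedding $j^{\ast} : W \to M^{\ast}$, and then argue via the branch lemmas of Section \ref{branchlemmas} that any cofinal branch of $T$ present in $M^{\ast}$ must already lie in $W$.

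To build $j^{\ast}$, I would first use the universal indestructible Laver function $\psi$ on $\Vi$ to fix an embedding $j_0 : \Vi \to M_{\mathrm{inn}}$ witnessing $\chi$-supercompactness of $\mu_{n+2}$ in $\Vi$, with $j_0(\psi)(\mu_{n+2})$ chosen to encode the master conditions required below. By indestructibility and the $<\mu_{n+2}$-directed closure of $\A_{[n+2, \omega)} \times \D^2$ in $\Vi$, together with the smallness of $\X$ relative to $\mu_{n+2}$, the embedding $j_0$ lifts canonically to $j : \Vd[H] \to M_{\mathrm{def}}[H^+]$. From there I would lift $j$ successively through the remaining forcings: components that sit below $\mu_{n+2}$ in cardinality are handled trivially; lifts through $\U_{[n+2,\omega)}$, $\S_{[n+2,\omega)}$ and $\D^3$ use master conditions and tail arguments as in Fact \ref{Itay4.30}, combined with the modest closure of Lemma \ref{modestclosure} and hypothesis \ref{indhyp8}; and $\D^{\mathrm{small}}$ is absorbed because of its size bound in hypothesis \ref{indhyp7}. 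The delicate lifts are through $\A_n \times \D^0$ and $\A_{n+1} \times \D^1$, whose cardinalities reach $\mu_{n+2}$; for these, hypotheses \ref{indhyp5} and \ref{indhyp6.5} supply the quotient posets $\P_{2b}$ and $\P_{2a}$ whose generic filters provide the missing master conditions. The end product is $j^{\ast} : W \to M^{\ast}$ defined in a generic extension $W^+$ of $W$ by an explicit composition $\FM$ of $\P_{2b}$, $\P_{2a}$, the tail quotients for the high-closure components, and auxiliary quotient-to-term forcings.

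Any node on level $\mu_{n+2}$ of $j^{\ast}(T)$ yields, via its set of predecessors, a cofinal branch $b$ of $T = j^{\ast}(T) \restriction \mu_{n+2}$ in $M^{\ast}$, and the hard part is to verify $b \in W$. I would factor $\FM$ into two stages. The first stage bundles $\P_{2b}$ with the $<\mu_{n+2}$-closed auxiliary forcings used to lift through the high-closure components; using hypothesis \ref{indhyp5a} and Easton's Lemma, it is $\mu_{n+2}$-Knaster in $W$, so that Fact \ref{chain} applied to the self-product rules out new cofinal branches of $T$ at this stage. For the remaining stage, whose effect is dominated by $\P_{2a}$, I would use hypothesis \ref{indhyp6.5} to interpolate a model $W_0$ in which $\P_{2a}$ carries $<\mu_{n+1}$-closed structure inherited from a term-forcing presentation, and arrange that the current working model is a $\mu_{n+1}$-cc extension of $W_0$; Fact \ref{formerlyclosed} then rules out any new branch of the $\mu_{n+2}$-tree $T$. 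The principal obstacle will be this orchestration: aligning the auxiliary quotient-to-term and tail forcings used in constructing $j^{\ast}$ with the closure and chain-condition demands of the branch lemmas, and verifying at each intermediate step that the relevant properties survive. Hypotheses \ref{indhyp4}, \ref{indhyp5}, \ref{indhyp6} and \ref{indhyp6.5} are designed to make exactly this orchestration succeed.
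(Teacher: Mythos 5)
Your construction of the generic embedding follows the paper's proof closely: the paper also lifts a $\chi$-supercompactness embedding chosen via the indestructible Laver function (with $j(\phi)(\mu_{n+2})$ naming the product $\B_{1a}\times\C_{1b}\times\D^3$ of term/quotient posets for the tails of $\U$, $\S$ and for $\D^3$), uses $\P_{2a}$ and $\P_{2b}$ to stretch $A_n\times D^0$ and $A_{n+1}\times D^1$, and a poset $\P_3 = j(\C)^{+\bar F}\restriction[\mu_{n+2},j(\mu_{n+2}))$ for $\S\restriction[\mu_{n+1},\mu_{n+2})$. That part of your outline is sound.

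The branch-preservation half, however, contains a genuine error: you have assigned the two branch lemmas to the wrong factors. First, the bundle of $\P_{2b}$ with the $<\mu_{n+1}$-closed auxiliary posets ($\P_{1a}$, $\P_{1b}$, $\P_3$) is \emph{not} $\mu_{n+2}$-Knaster in $W$ — indeed $\P_3$ adds a generic for $\Add(\mu_{n+1},1)$ as computed in $V[A\restriction\mu_{n+2}* U\restriction\mu_{n+1}]$ and thereby \emph{collapses} $\mu_{n+2}$ to cofinality $\mu_{n+1}$, so no appeal to Fact \ref{chain} is available for that bundle; Easton's Lemma gives preservation of cardinals from such a product, not a chain condition. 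These closed posets must instead be handled by Fact \ref{formerlyclosed}: they are $<\mu_{n+1}$-closed in a submodel $M_-[P_{2b}]$ of $M_1=W[P_{2b}]$ over which $M_1$ is a $\mu_{n+1}$-cc extension and in which $2^{|\mu_n|}\ge\mu_{n+2}$ (this is where hypotheses \ref{indhyp5b}, \ref{indhyp6} and \ref{indhyp7} are consumed). Second, $\P_{2a}$ is essentially Cohen forcing at $\mu_n$ (or $\mu_{\bar n}$); it is nowhere near $<\mu_{n+1}$-closed in any inner model, so Fact \ref{formerlyclosed} cannot be applied to it — hypothesis \ref{indhyp6.5} is a \emph{Knaster} hypothesis, and the correct tool is again Fact \ref{chain}. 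Moreover, at that final stage one must first observe that $\mu_{n+2}$ has been collapsed to cofinality $\mu_{n+1}$ by $\P_3$, so that a new cofinal branch of $T$ would yield a new branch of a tree of height $\mu_{n+1}$, which the $\mu_{n+1}$-Knaster property of $\P_{2a}$ in $M_2$ forbids; your write-up never records this cardinal-arithmetic shift. Finally, the order of the three steps matters ($\P_{2b}$ first, then $\P_1\times\P_3$, then $\P_{2a}$), since the closure of $\P_1\times\P_3$ is only "former closure" relative to $M_-[P_{2b}]$ and the Knasterness of $\P_{2a}$ is only guaranteed in $M_2$; your two-stage factorisation does not respect this.
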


Before proving Lemma \ref{indestructible}, we make some remarks about its hypotheses
and show that these hypotheses entail some additional properties.

\begin{remark}

\leavevmode

\begin{itemize}

\item 
  In applications $\D^0$ will often be a Cohen poset adding subsets to $\mu_n$ defined in some inner model of $\Vd$,
  and similarly $\D^1$ will often be a Cohen poset adding subsets to $\mu_{n+1}$ defined in some inner model of $\Vd$.

\item $\P_{2a}$ and $\P_{2b}$ are so named because they will be used
  successively in Step 2 of the construction for Lemma \ref{indestructible}.
  $\P_{2a}$ and $\P_{2b}$ are defined respectively in the submodels $\Vd[H][A_n \times D^0]$
  and $\Vd[H][A_{n+1} \times D^1]$ of the model $W$.

\item In connection with $\D^{\rm small}$, we recall that $\mu_{n+1}$ is the successor of $\mu_n$ in  $\Vd[A * U * S]$
  for $n > 0$. In the intended applications it is often the case that $\D^{\rm small}$ is defined in
  a submodel of $\Vd[A * U * S]$ where $\mu_n < \vert \D^{\rm small} \vert < \mu_{n+1}$.

\item Some cardinals (notably $\mu_n$) may be collapsed in $W$, for example we could set $\D^{\rm small} = \Coll(\omega, \mu_n)$.

\item Hypotheses \ref{indhyp2},
  \ref{indhyp4a} and \ref{indhyp6} jointly imply that
  both $\A_n \times \D^0$ and $\A_{n+1} \times \D^1$ are $\mu_{n+2}$-cc
  in $\Vd[H]$.
  It follows that the posets $\P_{2 a}$ and $\P_{2b}$
  are guaranteed to be well-defined.   

\item It is implicit in the hypotheses that $j(\A_n)/j[A_n]$ and $j(\A_{n+1})/j[A_{n+1}]$ have rather robust
  chain condition and distributivity properties.

\end{itemize}
  
\end{remark}

The following auxiliary lemma, which we will use in the proof
of Lemma \ref{indestructible}, provides a good example of the use of term forcing and ``quotient to term forcing''
   to analyse complicated generic extensions.

   \begin{lemma} \label{Wgoodextn} Under the hypotheses of Lemma \ref{indestructible}:
\begin{itemize}     
\item  $\Vd[A * U * S] \models \mbox{``$\D^{2,3}$ is $\mu_{n+1}$-distributive''}$.
\item  $\Vd[A * U * S] \models \mbox{``$\D^{1,2,3}$ is $<\mu_{n+1}$-distributive''}$.
\end{itemize}  
\end{lemma}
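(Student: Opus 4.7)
The strategy is to first prove (1), then derive (2) as a consequence. For (2): since $\mu_{n+1}$-distributivity amounts to preservation of all sequences of length $\le \mu_{n+1}$, part (1) implies that $\D^{2,3}$ adds no $<\mu_{n+1}$-sequences over $\Vd[A*U*S]$; combining this with hypothesis \ref{indhyp4b} (which says $\D^1$ is $<\mu_{n+1}$-distributive over $\Vd[A*U*S][D^{2,3}]$) immediately yields (2).

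For (1), set $V^{*} = \Vd[A \restriction \mu_{n+2} * U \restriction \mu_{n+2}]$. Since closure is preserved in outer models, hypotheses \ref{indhyp3} and \ref{indhyp8} together give that $\D^{2,3}$ is $<\mu_{n+2}$-directed closed in $V^{*}$. Let $\T$ be the forcing over $V^{*}$ producing $\Vd[A*U*S]$. I will exhibit $\T$ as a projection of $\Q_{cl} \times \Q_{cc}$ in $V^{*}$, where
\[
\Q_{cl} = \A \restriction [\mu_{n+2}, \mu_\omega) \times \B \restriction [\mu_{n+2}, \mu_\omega) \times \C \restriction [\mu_{n+2}, \mu_\omega)
\]
is $<\mu_{n+2}$-directed closed in $V^{*}$ (each factor being $<\mu_{n+2}$-directed closed in the model where it was originally defined, with closure preserved), and $\Q_{cc} = \S \restriction \mu_{n+2}$, which is $\mu_{n+1}$-Knaster in $V^{*}$ as a projection of $\C \restriction \mu_{n+2}$ (Lemma \ref{portmanteaulemma}(\ref{imaginary3})). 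The projection sends $\B$- and $\C$-generics to $\U$- and $\S$-generics respectively, exactly as in Section \ref{basicaus}. Crucially, the associated quotient-to-term forcing $\R$, which takes $\Vd[A*U*S] = V^{*}[G]$ up to $V^{*}[H]$ with $H$ the $\Q_{cl} \times \Q_{cc}$-generic, is $<\mu_{n+2}$-closed in $V^{*}[G]$ by Lemma \ref{qtot-closed} applied to the relevant iterations over $[\mu_{n+2}, \mu_\omega)$.

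With this decomposition the result follows by Easton's lemma together with mutual genericity. Let $f \in \Vd[A*U*S][D^{2,3}]$ be a $\mu_{n+1}$-sequence of ordinals. Force further with $\R$ over this model: by mutual genericity of $R$ and $D^{2,3}$ over $\Vd[A*U*S]$ (both posets lie in $\Vd[A*U*S]$), the result is $V^{*}[H][D^{2,3}]$. In $V^{*}$, $\D^{2,3} \times \Q_{cl}$ is $<\mu_{n+2}$-closed while $\Q_{cc}$ is $\mu_{n+2}$-cc, so Easton's lemma gives $<\mu_{n+2}$-distributivity of $\D^{2,3}$ over $V^{*}[H]$; hence $f \in V^{*}[H]$. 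The $<\mu_{n+2}$-closure of $\R$ over $\Vd[A*U*S]$ then places $f$ in $\Vd[A*U*S]$, as desired. The main obstacle will be to verify the projection-plus-closed-quotient structure in the second paragraph, in particular that $\R$ inherits $<\mu_{n+2}$-closure in the intricate setting where $\B$ and $\C$ play the role of modified term forcings for $\U$ and $\S$, requiring careful bookkeeping with the iterated version of Lemma \ref{qtot-closed}.
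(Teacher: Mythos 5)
The derivation of the second bullet from the first plus hypothesis \ref{indhyp4b} is exactly what the paper does, and your overall template for the first bullet (untangle the iteration via quotient-to-term forcing, then apply Easton's lemma to a closed-times-cc decomposition) is also the paper's. However, there is a genuine gap in the execution.

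The step ``since closure is preserved in outer models, hypotheses \ref{indhyp3} and \ref{indhyp8} give that $\D^{2,3}$ is $<\mu_{n+2}$-directed closed in $V^*$'' is false as a general principle and unjustified here. Closure passes \emph{down} to inner models, not \emph{up} to outer ones: $\D^2$ is closed in $\Vi$, but $V^* = \Vd[A\restriction\mu_{n+2} * U\restriction\mu_{n+2}] = \Vi[X * A\restriction\mu_{n+2} * U\restriction\mu_{n+2}]$ is a nontrivial generic extension of $\Vi$ containing new short sequences of conditions of $\D^2$ which need not have lower bounds. The same objection applies to your claim that $\A\restriction[\mu_{n+2},\mu_\omega)$ (defined and closed in $\Vi$) is closed in $V^*$; indeed Section \ref{modify} explicitly warns that after moving to outer models this poset is ``merely $<\mu_{n+1}$-distributive.'' Since Easton's lemma needs genuine closure of the closed factor in the model where it is applied, your application of it over $V^*$ does not go through. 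The paper's proof is engineered precisely to dodge this: it replaces $\D^3$ by the term forcing $\T^3 = \termspace^{\Vi}(\X * \A\restriction\mu_{n+2} * \U\restriction\mu_{n+2}, \dot\D^3)$ and the tails $\B_{[n+1,\omega)}\times\C_{[n+1,\omega)}$ by $\T^* = \termspace^{\Vi}(\X, \P^*_1)$, so that \emph{every} closed factor ($A_{[n+2,\omega)}$, $T^*$, $D^2$, $T^3$) is generic for a poset defined and $<\mu_{n+2}$-closed in the single inner model $\Vi$, while the complementary part $\X * \P^*_0$ is $\mu_{n+2}$-cc over $\Vi$; Easton's lemma is then applied over $\Vi$, not over $V^*$.

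A second, smaller gap is the one you flag yourself: you need the quotient-to-term poset $\R$ taking $\Vd[A*U*S]$ to $V^*[H]$ to be $<\mu_{n+2}$-closed over $\Vd[A*U*S]$ in order to pull the $\mu_{n+1}$-sequence $f$ back down. The paper's closure facts for these refinement posets (Fact \ref{Itay4.30}) give only $<\mu_{n+1}$-closure, and Lemma \ref{qtot-closed} does not apply at the level you need because $\A*\U*\S$ is not $\mu_{n+1}$-distributive. The paper sidesteps this entirely: instead of arguing back down through a closed quotient, it concludes from Easton's lemma that $f$ already lies in the \emph{small} submodel $\Vi[X * P^*_0] = \Vd[A_{[0,n+1]} * U_{[0,n]} * S_{[0,n]}] \subseteq \Vd[A*U*S]$, which gives the distributivity directly. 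You should restructure your argument along these lines.
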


\begin{proof}
  We begin by analysing the model $\Vd[A * U * S][D^{2,3}] = \Vd[A * U * S][D^2 \times D^3]$.
  Recalling that $\Vd = \Vi[X]$ and that $\D^2 \in \Vi$, this model is
  $\Vi[(X * A * U * S * D^3) \times D^2]$. 
  Since $\D^3 \in \Vd[A \restriction \mu_{n+2} * U \restriction \mu_{n+2}]$, we may form in
  $\Vi$ the term forcing $\T^3 = \termspace^{\Vi}(\X * \A \restriction \mu_{n+2} * \U \restriction \mu_{n+2}, \dot\D^3)$.
  By hypothesis \ref{indhyp8} of
  Lemma \ref{indestructible},
  $\D^3$ is $< \mu_{n+2}$-directed closed in $\Vd[A \restriction \mu_{n+2} * U \restriction \mu_{n+2}]$,
 and  it follows from Lemma \ref{standardtermforcinglemma} that (just like $\D^2$) the poset $\T^3$ is $<\mu_{n+2}$-directed closed in $\Vi$.

  Now we force over $\Vd[A * U * S][D^2 \times D^3]$
  with the ``quotient to term''  forcing $\T^3/(A \restriction \mu_{n+2} * U \restriction \mu_{n+2}) * D^3$,
  which is computed in the submodel $\Vd[A \restriction \mu_{n+2} * U \restriction \mu_{n+2} * D^3]$.
  By  Lemma \ref{rearrangeqtot} we obtain $T^3$ such that
  $(X * A \restriction \mu_{n+2} * U \restriction \mu_{n+2}) \times T^3$ induces
  $X * A \restriction \mu_{n+2} * U \restriction \mu_{n+2} * D^3$,
  and $D^2 \times T^3$ is generic over $\Vd[A * U * S]$ for $\D^2 \times \T^3$. 
  In particular $\Vd[A * U * S][D^2 \times D^3] \subseteq \Vd[A * U * S][D^2 \times T^3]$.

  Recall from Lemma \ref{portmanteaulemma} that in $\Vd$ we may write $\A * \U * \S$ as the projection
  of $\P^*_0 \times {\bar \P}^*_1 \times {\P}^*_1$,
  where $\P^*_0 = \A_{[0, n+1]} * \U_{[0,n]} * \S_{[0,n]}$,
   ${\bar \P}^*_1 = \A_{[n+2,\omega)}$, and ${\P}^*_1 = \B_{[n+1, \omega)} \times \C_{[n+1, \omega)}$.  
  Under our current hypotheses $\P^*_0$ is defined and $\mu_{n+2}$-cc in $\Vd$,
  ${\bar \P}^*_1$ is defined and $<\mu_{n+2}$-directed closed
  in $\Vi$, while ${\P}^*_1$ is  defined and $<\mu_{n+2}$-directed closed in $\Vd$. 

  Let $\T^* = \termspace^{\Vi}(\X, {\P}^*_1)$, so that $\T^*$ is defined and $<\mu_{n+2}$-directed closed in $\Vi$.
  With another round of quotient to term forcing and another appeal to Lemma \ref{rearrangeqtot},
  we produce a generic extension
  $\Vi[(X * P^*_0) \times A_{[n+2,\omega)} \times T^* \times D^2 \times T^3] \supseteq V[A * U * S][D^2 \times T^3]$.  
  Now $A_{[n+2,\omega)} \times T^* \times D^2 \times T^3$ is generic over $\Vi$ for $<\mu_{n+2}$-closed
  forcing and $X * P^*_0$ is generic for $\mu_{n+2}$-cc forcing.
  Appealing to Easton's Lemma, every $\mu_{n+1}$-sequence of ordinals in $\Vd[A * U * S][D^2 \times D^3]$
  is in $\Vi[X * P^*_0] = \Vd[A_{[0, n+1]} * U_{[0,n]} * S_{[0,n]}]$, in particular
  $\D^2 \times \D^3$ is $\mu_{n+1}$-distributive in $\Vd[A * U * S]$.   
  By hypothesis \ref{indhyp4b} of Lemma \ref{indestructible},
  $\D^1$ is $<\mu_{n+1}$-distributive  in $\Vd[A * U * S][D^2 \times D^3]$,
  so that $\D^1 \times \D^2 \times \D^3$ is $<\mu_{n+1}$-distributive  in $\Vd[A * U * S]$ as required.   

\end{proof}

\begin{remark} \label{benignhelix} For use in Lemma \ref{meetingindyreqs} below, we note that
  no hypotheses involving either $\D^0$ or $\D^1$ were used to prove 
  the distributivity of $\D^{2, 3}$, and that for the distributivity of
  $\D^{1,2,3}$ we used only that $\D^1$ is $<\mu_{n+1}$-distributive  in $\Vd[A * U * S][D^2 \times D^3]$.
\end{remark}

With these preliminaries out of the way, we are now ready to prove Lemma \ref{indestructible}.
      
\begin{proof} [Proof of Lemma \ref{indestructible}]
  Recall that $W = \Vd[A * U * S][D^{s,0,1,2,3}]$.
 We will  show that
 the cardinal $\mu_{n+2}$ has the tree property in $W$. This involves constructing a generic embedding with domain
 $W$ and critical point $\mu_{n+2}$,
 and then arguing that the forcing which adds the embedding will not add a branch to a $\mu_{n+2}$-tree. 
 The forcing to add the embedding will be constructed in several steps.
 
 Recall that $\mu_{n+2}$ is indestructibly supercompact in $\Vi$, and $H = A_{[n +2, \omega)} \times D^2$
   which is generic for $<\mu_{n+2}$-directed closed forcing in $\Vi$,
 so that  $\mu_{n+2}$ is supercompact in $\Vi[H]$. We will eventually choose an embedding
     $j$ defined in $\Vi[H]$ witnessing that  $\mu_{n+2}$ is sufficiently supercompact, and having some other desirable properties,
 but we defer this choice for the moment. When we choose $j$ it will trivially lift onto $\Vd[H]$, because
  $\Vd$ is a small generic extension of $\Vi$. 
       
      To help motivate the lifting construction below, we list relevant generic objects which must be added to
      $\Vd[H]$ to obtain $W$. In the following list the ``small'' group consists of generic objects for posets of size less than
      $\mu_{n+2}$ where the lifting is essentially trivial. 

      \begin{itemize}
      \item Small:  $A \restriction \mu_{n+1}$, $U \restriction \mu_{n+1}$, $S \restriction \mu_{n+1}$ and $D^{\rm small}$.
      \item Large or potentially large: $A \restriction [\mu_{n+1}, \mu_{n+3})$,
       $U \restriction [\mu_{n+1}, \mu_\omega)$,  $S \restriction [\mu_{n+1}, \mu_\omega)$ and the
       posets $D^i$ for $i =0, 1, 3$.
      \end{itemize}
      
      At several steps in the following construction we record some closure and chain condition information about the posets
      which appear in that step. This information will be used in the proof of the tree property.

 \begin{itemize}

 \item
   Step 1a: (``Remove dependence of $U \restriction [\mu_{n+2}, \mu_\omega)$  on $A \restriction [\mu_{n+2}, \mu_\omega)$'').
   Recall that $\U \restriction [\mu_{n+2}, \mu_\omega) = {\B}^{+F'}\restriction[\mu_{n+2}, \mu_\omega)$,
     where $F' = A * (U \restriction \mu_{n+2})$.
    Let $F = A \restriction \mu_{n+2} *  U \restriction \mu_{n+2}$, and note that $F \subseteq F'$.
     Let $\P_{1a}$ be the ``quotient to term'' forcing which adds a filter $B_{1 a}$
      on $\B_{1 a} = {\B}^{+F}\restriction[\mu_{n+2}, \mu_\omega)$, 
       so that $B_{1 a}$ induces  $U \restriction [\mu_{n+2}, \mu_\omega)$ as in Fact \ref{Itay4.5}. We force over $W$ with $\P_{1a}$.
         We see that
         \[
         \Vd[A * U][P_{1a}] = \Vd[F][A \restriction [\mu_{n+2}, \mu_\omega) \times B_{1 a}] = \Vd[A * U \restriction \mu_{n+2}][B_{1a}]
         \]
         and arguing as in
         \footnote{$\B_{1 a}$ is not literally the termspace forcing
         $\termspace^{\Vd[F]}(\A \restriction [\mu_{n+2}, \mu_\omega), \U \restriction [\mu_{n+2}, \mu_\omega))$,
          but it does add an $\A \restriction [\mu_{n+2}, \mu_\omega)$-name for a
            filter which is $\U \restriction [\mu_{n+2}, \mu_\omega))$-generic over $\Vd[A * U \restriction \mu_{n+2}]$
         and this is sufficient. See Remark \ref{rearrangeqtotremark}.}
         Lemma \ref{rearrangeqtot}
         $S * D^{s,0, 1,2,3}$ is $\S * \D^{s,0, 1,2,3}$-generic over $\Vd[A * U \restriction \mu_{n+2}][B_{1a}]$.
         We have
         \[
         W[P_{1a}] = \Vd[A * U \restriction \mu_{n+2}][B_{1a}][S * D^{s,0, 1,2,3}].
         \]

    Now  $\B_{1 a} \in \Vd[F]$, and it follows 
      from Fact \ref{Itay4.7} that $\B_{1 a}$ is $<\mu_{n+2}$-directed closed in this model. On the other hand $\P_{1 a} \in \Vd[A * U]$,
      and appealing to Fact \ref{Itay4.30} it is actually $<\mu_{n+1}$-closed in the larger model
      $\Vd[A * U * S \restriction [\mu_{n+1}, \mu_\omega)]$.
        This closure still holds in the further extension $\Vd[A * U * S \restriction [\mu_{n+1}, \mu_\omega)][D^{1,2,3}]$,
     since by Lemma \ref{Wgoodextn} $\D^{1,2,3}$ is $<\mu_{n+1}$-distributive in $\Vd[A * U * S]$.

   \item Step 1b: (``Remove dependence of $S \restriction [\mu_{n+2}, \mu_\omega)$  on
     $A \restriction [\mu_{n+2}, \mu_\omega) * U \restriction [\mu_{n+2}, \mu_\omega)$'').
 Recall that  $\S \restriction [\mu_{n+2}, \mu_\omega) = {\C}^{+F''}\restriction[\mu_{n+2}, \mu_\omega)$  where $F'' = A * U$, 
   and $F = A \restriction \mu_{n+2} *  U \restriction \mu_{n+2} \subseteq F''$.
  Let $\P_{1b}$ be the quotient forcing which adds a filter $C_{1b}$  
  on $\C_{1b} = {\C}^{+F}\restriction[\mu_{n+2}, \mu_\omega)$, inducing $S \restriction [\mu_{n+2}, \mu_\omega)$.

   We force over $W[P_{1a}]$ with $\P_{1b}$, and let $\P_1 = \P_{1a} \times \P_{1b}$.   
   As in Step 1a,
   \[
   W[P_1] = \Vd[A * U \restriction \mu_{n+2} * S \restriction \mu_{n+2}][B_{1a}][C_{1b}][D^{s,0, 1,2,3}],
   \]
   and $D^{s,0, 1,2,3}$ continues to be $\D^{s,0, 1,2,3}$-generic over the slightly larger model
  $\Vd[A * U \restriction \mu_{n+2} * S \restriction \mu_{n+2}][B_{1a}][C_{1b}]$.

    Similarly to step 1a,  $\C_{1 b} \in \Vd[F]$, and it follows from
      Fact \ref{Itay4.15} that $\C_{1 b}$ is $<\mu_{n+2}$-directed closed in this model.   
      On the other hand $\P_{1 b} \in \Vd[A * U * S \restriction [\mu_{n+2}, \mu_\omega)]$,
      and appealing to Fact \ref{Itay4.30} it is
      actually $<\mu_{n+1}$-closed in the larger model
      $V[A * U * S \restriction [\mu_{n+1}, \mu_\omega)]$.
   As in Step 1a, this closure still holds in the further extension 
   $\Vd[A * U * S \restriction [\mu_{n+1}, \mu_\omega)][D^{1,2,3}]$.     
    
\item  Choosing $j$: 
     Recall that we defined $\A * \U$ using a Laver function $\phi$ in $\Vd$, obtained
     from a universal indestructible Laver function $\psi$ in $\Vi$,
     setting $\phi(\alpha)=\psi(\alpha)[X]$ whenever $\psi(\alpha)$ is an $\X$-name. 
     Working in $\Vi[H]$ we choose $j$ such that
\begin{itemize}
\item $j \restriction ON$ is definable in $\Vi$.       
\item  $j$ witnesses $\mu_{n+2}$  is $\chi$-supercompact where $\chi  = \max(\mu_{\omega}, \vert \D^{0, 1, 2, 3} \vert)^+$.  
\item  The next point in $\dom(j(\psi))$ past $\mu_{n+2}$ is greater than $\chi$.
\item $j(\psi)(\mu_{n+2}) = \ddot Z$, where $\ddot Z$ is an $\X$-name in $\Vi$ for an
  $\A \restriction \mu_{n+2} * \U \restriction \mu_{n+2}$-name in $\Vd$ for $\B_{1a} \times \C_{1b} \times \D^3$.  
\end{itemize}

Since $\Vd$ is a generic extension of $\Vi$ by forcing of size at most
$\mu_1$, we trivially lift $j$ to obtain $j:\Vd[H] \rightarrow M_H$.
Note that  $j(\phi)(\mu_{n+2}) = \dot Z$, where $\dot Z$ is a
  $\A \restriction \mu_{n+2} * \U \restriction \mu_{n+2}$-name in $\Vd$ for $\B_{1a} \times \C_{1b} \times \D^3$.  

Since $\B_{1a} \times \C_{1b} \times \D^3 \in \Vd[A \restriction \mu_{n+2} * U \restriction \mu_{n+2}]$ and is
    $<\mu_{n+2}$-directed closed in this model, the choice of $j$ implies that
    $\mu_{n+2}$ is in the support of the $\U$-coordinate of $j(\A * \U)$ and the forcing
    which appears there is $\B_{1a} \times \C_{1b} \times \D^3$.  

\item Step 2a: (``Stretch $A_n \times D^0$'')

  Let $\P_{2 a} = j(\A_n \times \D^0)/j[A_n \times D_0]$. We force over $W[P_1]$ with $\P_{2 a}$ and add
  a $j(\A_n \times \D^0)$-generic object  ${\hat A}_n \times {\hat D}^0$ 
  such that $j[A_n \times D^0] \subseteq {\hat A}_n \times {\hat D}^0$.

    \item Step 2b: (``Stretch $A_{n +1} \times D^1$'')
  Let $\P_{2 b} = j(\A_{n+1} \times \D^1)/j[A_{n+1} \times D^1]$. We force over $W[P_1][P_{2a}]$ with $\P_{2 b}$ and add
  a $j(\A_{n+1} \times \D^1)$-generic object  ${\hat A}_{n+1} \times {\hat D}^1$ 
  such that $j[A_{n+1} \times D^1] \subseteq {\hat A}_{n+1} \times {\hat D}^1$.

  We let $\P_2 = \P_{2a} \times \P_{2b}$.
  Then
   \[
   W[P_1][P_2] = \Vd[\hat A * U \restriction \mu_{n+2} * S \restriction \mu_{n+2}][B_{1a}][C_{1b}][\hat D^{s,0, 1,2,3}],
   \]
   where $\hat A = A_{[0, n-1)} \times \hat A_n \times \hat A_{n+1} \times A_{[n+2, \omega)}$
   and $\hat D^{s,0, 1,2,3} = D^s \times \hat D^0 \times \hat D^1 \times D^2 \times D^3$.

    \item Step 3 (``Stretch the term forcing for $S \restriction [\mu_{n+1}, \mu_{n+2})$'')
      Recall that
      $\S \restriction [\mu_{n+1}, \mu_{n+2}) = \C^{+\bar F} \restriction [\mu_{n+1}, \mu_{n+2})$,
      where $\bar F = A \restriction \mu_{n+2} * U \restriction \mu_{n+1}$.
      Let $\P_3 = j(\C)^{+\bar F} \restriction [\mu_{n+2}, j(\mu_{n+2}))$.
      We force with $\P_3$ over $W[P_1][P_2]$.

        Note that $\P_3$ is defined in $\Vd[H][\bar F] \subseteq V[A * U \restriction \mu_{n+1}][D^2]$. 
        Modifying the proof of  \cite[Claim 4.31]{NeemanUpto} to account for $\D^2$,
        $\P_3$ is   $<\mu_{n+1}$-closed
        in $V[A * U *S \restriction [\mu_{n+1}, \mu_\omega)][D^2]$. 
          By distributivity, $\P_3$ retains this closure in the larger model
          $V[A * U * S \restriction [\mu_{n+1}, \mu_\omega)][D^{1,2,3}]$.

\end{itemize}

 We now extend $j:\Vd[H] \rightarrow M_H$ to a generic embedding with domain $W$, working (ultimately) in the extension
 $W' = W[P_{1, 2, 3}]$ where $P_{1, 2, 3}$ collects the generic objects we added in the steps above.

\begin{itemize}

\item Stage 1:
  Recall that $B_{1 a} \times C_{1 b}$ is the generic object added by $\P_1 = \P_{1 a} \times \P_{1b}$
  for $\B_{1 a} \times \C_{1 b}$, a poset which is  defined and is $<\mu_{n+2}$-directed closed
  in $\Vd[F]$, where  $F = A \restriction \mu_{n+2} * U \restriction \mu_{n+2}$.
  By our choice of $j$, $j(\A * \U * \S)$  has a ``$\U$-component''
  in which $\B_{1 a} \times \C_{1 b} \times \D^3$ appears at stage $\mu_{n+2}$.

  Modifying the proof of Fact \ref{Itay4.12} from \cite{NeemanUpto},
  we may lift $j$ onto $\Vd[A * U \restriction \mu_{n+2}][B_{1a} \times C_{1b}][D^{0,1,2,3}]$.
  We outline the modified proof, with a focus on where to find the compatible generic objects
 on the ``$j$-side''.
    
\begin{itemize}
\item $H = A_{[n+2, \omega)} \times D^2 \in \Vd[H]$,
so that if $\hat H = j(H) = j(A_{[n+2, \omega)}) \times j(D^2)$
then $\hat H \in M_H$ and $M_H=j(\Vd)[\hat H]$.
\item $j(A_{[0, n)}) = A_{[0, n)}$.
\item
$j(A_n \times D^0)$ is obtained by combining $A_n \times D^0$ and the generic object $P_{2a}$ for the ``stretching'' poset $\P_{2a}$.
\item 
$j(D^1 \times A_{n+1})$ is obtained by combining
 $D^1 \times A_{n+1}$ and the generic object $P_{2b}$ for the ``stretching'' poset $\P_{2b}$.
\item $j(U \restriction \mu_{n+2})$ is obtained by concatenating $U \restriction \mu_{n+2}$,
  $B_{1a} \times C_{1b} \times D^3$ (the generic object at $\mu_{n+2}$),
  and a generic object for $j(\U) \restriction (\mu_{n+2}, j(\mu_{n+2}))$
 which is constructed using closure under $\chi$-sequences.
\item $j(B_{1a} \times C_{1b} \times D^3)$ is constructed using closure under $\chi$-sequences and a master condition argument.   
\end{itemize}

\item  Stage 2:
  $S \restriction \mu_{n+1}$ is generic for forcing of size less than $\mu_{n + 2}$, 
  so we may trivially lift $j$
  onto $\Vd[A * U \restriction \mu_{n+2}][B_{1a} \times C_{1b}][D^{0,1,2,3}][S \restriction \mu_{n+1}]$.

\item  Stage 3:  As we noted above in the definition of $\P_3$,
      $\S \restriction [\mu_{n+1}, \mu_{n+2}) = \C^{+\bar F} \restriction [\mu_{n+1}, \mu_{n+2})$
      where $\bar F = A \restriction \mu_{n+2} * U \restriction \mu_{n+1}$.
      When we apply $j$ to $\S \restriction [\mu_{n+1}, \mu_{n+2})$ it is only
        the ``$A$-component'' which gets stretched: more precisely
        $j(A \restriction \mu_{n+2}) = A_{[0, n)} * \hat A_n$,        
        $j(\bar F) = (A_{[0, n)} * \hat A_n) * U \restriction \mu_{n+1}$ and
      $j(\S \restriction [\mu_{n+1}, \mu_{n+2})) = j(\C)^{+(A_{[0, n)} * \hat A_n) * U \restriction \mu_{n+1}} \restriction [\mu_{n+1}, j(\mu_{n+2}))$.

        Let $S_n = S \restriction [\mu_{n+1}, \mu_{n+2})$.
        Recall that  $\P_3 = j(\C)^{+\bar F} \restriction [\mu_{n+2}, j(\mu_{n+2}))$,
        and note that:
        \begin{itemize}
        \item $\bar F \subseteq j(\bar F) = (A_{[0, n)} * \hat A_n) * U \restriction \mu_{n+1}$.     
        \item $P_3$ is generic over $W[P_{1,2}]$ which contains all relevant generic objects.
        \item $j(\C)^{+ \bar F} \restriction [\mu_{n+1}, j(\mu_{n+2})) \simeq
        j(\C)^{+ \bar F} \restriction [\mu_{n+1}, \mu_{n+2}) \times \P_3 \simeq \S_n \times \P_3$.    
        \end{itemize}

        We may therefore form the upwards closure $\hat S_n$ of $S_n \times P_3$ in
        $j(\S \restriction [\mu_{n+1}, \mu_{n+2}))$, to produce $\hat S_n$
          such that $\hat S_n$ is generic
          for $j(\S \restriction [\mu_{n+1}, \mu_{n+2}))$.
            Since $\crit(j) = \mu_{n+2}$, and conditions in $\S_n$ have supports which are
            bounded subsets of $\mu_{n+2}$, 
         it is easy to see that $j[S_n] \subseteq \hat S_n$ and so we may lift $j$ onto 
         $\Vd[A * U \restriction \mu_{n+2}][B_{1a} \times C_{1b}][D^{0,1,2,3}][S \restriction \mu_{n+2}]$.

\item   Stage 4:
   Recall that $\P_{1a}$ added a filter $B_{1 a}$ on $\B_{1 a} = {\B}^{+F}\restriction[\mu_{n+2}, \mu_\omega)$,
     such that $B_{1 a}$ induces  $U \restriction [\mu_{n+2}, \mu_\omega)$.
    This used the description of $\U \restriction [\mu_{n+2}, \mu_\omega)$ as ${\B}^{+F'}\restriction[\mu_{n+2}, \mu_\omega)$,
     where $F' = A * (U \restriction \mu_{n+2})$.

     Since we have lifted $j$ onto a model which contains both $B_{1 a}$ and $F'$, we may use $j(B_{1a})$ and $j(F')$
     to induce a filter $\hat U_{[\mu_{n+2}, \mu_\omega)}$ on $j(\U \restriction [\mu_{n+2}, \mu_\omega))$ with
       $j[U \restriction [\mu_{n+2}, \mu_\omega))] \subseteq \hat U_{[\mu_{n+2}, \mu_\omega)}$.
         This lets us lift $j$ onto 
         $\Vd[A * U][B_{1a} \times C_{1b}][D^{0,1,2,3}][S \restriction \mu_{n+2}]$.

 \item   Stage 5:
 Similarly to Stage 4,  $\P_{1b}$ added a filter $C_{1b}$ 
  on $\C_{1b} = {\C}^{+F}\restriction[\mu_{n+2}, \mu_\omega)$, inducing $S \restriction [\mu_{n+2}, \mu_\omega)$.
  This used the description of  $\S \restriction [\mu_{n+2}, \mu_\omega)$ as ${\C}^{+F''}\restriction[\mu_{n+2}, \mu_\omega)$,
   where $F'' = A  * U$. 

     Since we have lifted $j$ onto a model which contains both $C_{1b}$ and $F''$, we may use $j(C_{1b})$ and $j(F'')$
     to induce a filter $\hat S_{[\mu_{n+2}, \mu_\omega)}$ on $j(\S \restriction [\mu_{n+2}, \mu_\omega))$ with
       $j[S \restriction [\mu_{n+2}, \mu_\omega))] \subseteq \hat S_{[\mu_{n+2}, \mu_\omega)}$.
             This lets us lift $j$ onto $\Vd[A * U * S][B_{1a} \times C_{1b}][D^{0,1,2,3}]$

           \item  Stage 6: Since $\D^{\rm small} \in \Vd[A * U * S]$ and $\vert \D^{\rm small} \vert \le \mu_n$, we may
             trivially lift
          $j$ onto $\Vd[A * U * S][B_{1a} \times C_{1b}][D^{s,0,1,2,3}] = W[B_{1a} \times C_{1b}]$. 

\end{itemize}

To verify the tree property, we need to check that the forcing posets used to extend $j$ onto $W$ can not
add a branch to a $\mu_{n+2}$-tree. Recall that the lifting of $j$ is defined in $W' = W[P_{1,2,3}]$. 
 As we already mentioned, $\mu_{n+1}$ and $\mu_{n+2}$ are preserved in $W$ but
it is possible that $\mu_n$ has been collapsed.

The proof which follows involves a number of auxiliary models. See the diagram
which follows the proof and its legend for a picture of how they are related.

\begin{itemize}

\item  Let $M_0 = W$ and $M_1 = M_0[P_{2b}]$.
  By hypothesis \ref{indhyp5a} of Lemma \ref{indestructible}, $\P_{2b}$ is $\mu_{n+2}$-Knaster in $M_0$, 
  so  by Lemma \ref{chain} no tree of height $\mu_{n+2}$ in $M_0$ has a new branch in $M_1$.
  By hypothesis \ref{indhyp5b}, $\P_{2 b}$ is $<\mu_{n+1}$-distributive in $M_0$, so that
  both $\mu_{n+1}$ and $\mu_{n+2}$ are regular in $M_1$. 
  
\item  Let $M_2 = M_1[P_1 \times P_3]$.
  We claim that no $\mu_{n+2}$-tree in $M_1$ has a new branch in $M_2$.

  Recall the closure property which we noted for $\P_{1a}$, $\P_{1b}$ and $\P_3$. They are all $<\mu_{n+1}$-closed in
  a certain submodel $M_{-}$ of $M_0$,
  where $M_{-} = \Vd[A * U][S \restriction [\mu_{n+1}, \mu_\omega)][D^{1,2,3}]$.
    Our aim is ultimately to make an appeal to Fact \ref{formerlyclosed} with
    $\tau = \vert \mu_n \vert$ and $\eta = \mu_{n+2}$.
  Note that in the model $M_{-}$ we have $2^{\vert \mu_n \vert} \ge \mu_{n+2}$.

  Since $M_0 = M_{-}[(D^0 \times S \restriction \mu_{n+1})*D^s]$,
  we have $M_1 = M_{-}[P_{2b}][(D^0 \times S \restriction \mu_{n+1})*D^s]$.
  Now $\P_{2b} \in M_{-}$ and by hypothesis \ref{indhyp5b}
  it is $<\mu_{n+1}$-distributive in $M_0$,
  so $\P_{2b}$  is $<\mu_{n+1}$-distributive in $M_{-}$ and hence
  $\P_{1 a} \times \P_{1 b} \times \P_3$ is $<\mu_{n+1}$-closed in $M_{-}[P_{2b}]$.
  
  By hypothesis \ref{indhyp6},
  $\D^0$ is $\mu_{n+1}$-Knaster in $\Vd[A * U * S][D^{1,2,3}][P_{2b}]$, so 
  it is $\mu_{n+1}$-cc in $M_{-}[P_{2b}]$.
  It is easy to see that $\S \restriction \mu_{n+1}$ is $\mu_{n+1}$-Knaster in $M_{-}[P_{2b}]$,
  and by hypothesis $\vert \D^{\rm small} \vert \le \mu_n$.
  So $(\D^0 \times \S \restriction \mu_{n+1})*\D^{\rm small}$ is $\mu_{n+1}$-cc in $M_{-}[P_{2b}]$.

  We are exactly in the situation of  Fact \ref{formerlyclosed} where the forcing posets live in  $M_{-}[P_{2b}]$:
 \begin{enumerate}
\item Since $M_{-}[P_{2 b}] \subseteq M_1$, $\mu_{n+1}$ and $\mu_{n+2}$ are regular in $M_{-}[P_{2 b}]$.    
\item $2^{\vert \mu_n \vert} \ge \mu_{n+2}$ in $M_{-}[P_{2b}]$. 
\item $\P_1 \times \P_3$ is $<\mu_{n+1}$-closed in $M_{-}[P_{2b}]$.
\item $M_1 = M_{-}[P_{2b}][Y]$, where $Y = (D^0 \times S \restriction \mu_{n+1})*D^s$ and $Y$ 
  is generic for $\mu_{n+1}$-cc  forcing over $M_{-}[P_{2b}]$. 
\end{enumerate}
 Since $M_2 = M_1[P_{1a} \times P_{1b} \times P_3]$, $M_2$ is an extension
 of $M_1$ by ``formerly closed'' forcing in the sense of Fact \ref{formerlyclosed}
 and we are done.

\item   
  Before the last step we need to analyse the cardinals of $M_2$. By Easton's Lemma,
  $\mu_{n+1}$ is preserved in this model.
  We claim that in $M_2$ the cardinal $\mu_{n+2}$ is collapsed so that (by Easton's Lemma
   again) it has cofinality $\mu_{n+1}$. To see this note that at coordinate $\mu_{n+2}$,
   conditions in $j(\C)$ have $\A \restriction \mu_{n+2} \times \U \restriction \mu_{n+1}$-terms 
   for conditions in $\Add(\mu_{n+1},1)^{V[A \restriction \mu_{n+2} \times \U \restriction \mu_{n+1}]}$. Since
   we are augmenting with $A \restriction \mu_{n+2} \times U \restriction \mu_{n+1}$ to form $\P_3$,
    we add a generic object for $\Add(\mu_{n+1},1)^{V[A \restriction \mu_{n+2} \times \U \restriction \mu_{n+1}]}$, and collapse
    $\mu_{n+2}$ because  $\mu_{n+2} = (2^{\mu_n})^{V[A \restriction \mu_{n+2} \times \U \restriction \mu_{n+1}]}$.

\item   We also need to analyse the chain condition of $\P_{2a}$ in $M_2$.
  $M_2 = M_1[P_1 \times P_3]$, and as we saw above $\P_1 \times \P_3$ is
  defined and $<\mu_{n+1}$-closed in $M_{-}$, hence it is formerly
  $<\mu_{n+1}$-closed in $M_{-}[S \restriction \mu_{n+1}] = V[A * U * S][D^{1,2,3}]$.
  By hypothesis \ref{indhyp6.5}, $\P_{2a}$ is
  $\mu_{n+1}$-Knaster in $M_2 = M_1[P_1 \times P_3]$.

\item
Let $M_3 = W' = M_2[P_{2a}]$. We claim that no tree of height $\mu_{n+1}$ in $M_2$ has a new branch in $M_3$.
This is immediate by Lemma \ref{chain} because $\P_{2a}$ is $\mu_{n+1}$-Knaster in $M_2$. 

\end{itemize}  

\end{proof}

\begin{tikzcd}[row sep=huge,column sep=huge]
M_- \arrow{r}{P_{2b}} \arrow{d}{Y} &   M_-[P_{2b}] \arrow{d}{Y}      &                          &                             \\ 
M_0 \arrow{r}{P_{2b}}              &   M_1 \arrow{r}{P_1 \times P_3}  &  M_2 \arrow{r}{P_{2a}}     &                   M_3       \\
\end{tikzcd} 
\begin{itemize}
\item $M_{-} = \Vd[A * U][S \restriction [\mu_{n+1}, \mu_\omega)][D^{1,2,3}]$.  
\item $M_0 = W = \Vd[A * U * S][D^{0,1,2,3,s}] = M_-[Y]$, where $Y = (D^0 \times S \restriction \mu_{n+1})*D^s$. 
\item $M_1 = M_0[P_{2b}]$.
\item $M_2=  M_1[P_1 \times P_3]$.
\item $M_3 = M_2[P_{2a}] = W[P_{1,2,3}] = W'$.
\end{itemize}

The following lemma will enable us to satisfy the hypotheses
of Lemma \ref{indestructible} in most instances. We are assuming all the background
hypotheses listed at the start of this section, notably $\A_{[s, t]}$ is $<\mu_s$-distributive
and $\mu_{t+1}$-cc in $\Vd$.

\begin{lemma} \label{meetingindyreqs}
  Let $V^a, V^b, V^c, V^d$ be inner models with $\Vi \subseteq V^x \subseteq \Vd$ for $x = a,b,c,d$.
  Assume that:
  \begin{itemize}  
  \item $n > 0$. 
  \item  $\D^{\rm small}$ is any poset in $\Vd[A * U * S]$ with  $\Vd[A * U * S] \models \vert \D^{\rm small} \vert \le \mu_n$. 
  \item   $\D^0   = \Add^{V^a}(\mu_{\bar n}, \sigma)$ for some $\sigma$ and some $\bar n \le n$.
  \item   $\A_n   = \Add^{V^b}(\mu_n, \sigma')$ for some $\sigma'$.
  \item   $\D^1 = \Add^{V^c}(\mu_{n+1}, \tau)$ for some $\tau$. 
  \item   $\A_{n+1} = \Add^{V^d}(\mu_{n+1}, \tau')$ for some $\tau'$. 
  \item   $\D^2$ is any poset in $\Vi$ with $\Vi \models \mbox{``$\D^2$ is $<\mu_{n+2}$-directed closed''}$.
  \item   $\D^3$ is any poset in $\Vd[A \restriction \mu_{n+2} * U \restriction \mu_{n+2}]$
  with $\Vd[A \restriction \mu_{n+2} * U \restriction \mu_{n+2}] \models \mbox{``$\D^3$ is $<\mu_{n+2}$-directed closed''}$.
  \end{itemize}

  Then the hypotheses of Lemma \ref{indestructible} are satisfied.
\end{lemma}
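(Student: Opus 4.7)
The plan is to walk through the nine hypotheses of Lemma~\ref{indestructible} in turn, verifying each under the explicit choices of posets in Lemma~\ref{meetingindyreqs}. Hypotheses~\ref{indhyp3}, \ref{indhyp4}, and \ref{indhyp7} are supplied verbatim by the statement, so no work is needed for them. The remaining work naturally splits into three groups: Knaster conditions on Cohen posets in $\Vd[H]$ and related models; distributivity of $\D^1$ and of $\P_{2b}$ in the full extension $W$; and Knaster conditions for the quotients $\P_{2a}$ and $\P_{2b}$.

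First, for hypothesis~\ref{indhyp1} and the Knaster claims in hypothesis~\ref{indhyp2}, I would observe that $\Vi[H]$ is obtained from $\Vi$ by $<\mu_{n+2}$-directed closed forcing, so it preserves all $<\mu_{n+2}$-sequences of ordinals and the inaccessibility of $\mu_{n+1}$ and $\mu_{n+2}$. Since $\Vd=\Vi[X]$ is then a further extension by a $\mu_1$-cc, $\omega$-distributive forcing of size at most $\mu_1$, a direct name-counting argument shows that $\Vd[H]$ is a $(\mu_n,\mu_{n+1})$-good and a $(\mu_{n+1},\mu_{n+2})$-good outer model of $\Vi$, and therefore of any intermediate $V^x$. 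Lemma~\ref{Cohenrobust} then delivers the required Knaster properties for $\A_n = \Add^{V^b}(\mu_n,\sigma')$, $\A_{n+1}=\Add^{V^d}(\mu_{n+1},\tau')$, and in a parallel way for $\D^0$ (hypothesis~\ref{indhyp6}) and $\D^1$ (hypothesis~\ref{indhyp4a}) once we account for the further forcings used to build the ambient model in each case; Lemma~\ref{choosea} feeds the needed chain conditions on products of the $\A_i$'s and their distributivity tails, and Corollary~\ref{rightafterportmanteau} supplies covering past $\A*\U*\S$.

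Second, for the distributivity clauses (\ref{indhyp4b} and \ref{indhyp5b}) I would use the standard Easton-plus-term-forcing mechanism, mirroring the proof of Lemma~\ref{Wgoodextn} and invoking Remark~\ref{benignhelix}. For $\D^1$: it is $<\mu_{n+1}$-closed in $V^c$, while the forcing taking $V^c$ up to $\Vd[A*U*S][D^{2,3}]$ can, after replacing $\D^2$ and $\D^3$ by their term-forcing avatars in $\Vi$ and rearranging via Lemma~\ref{rearrangeqtot}, be decomposed into a $<\mu_{n+2}$-closed piece and a $\mu_{n+2}$-cc piece of size controllable by the inaccessibility of $\mu_{n+1}$; Easton's Lemma then yields $<\mu_{n+1}$-distributivity of $\D^1$ over the target model. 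The same template, applied to $\P_{2b}=j(\A_{n+1}\times \D^1)/j[A_{n+1}\times D^1]$, shows it is $<\mu_{n+1}$-distributive in $W$: the quotient-to-term poset is $<\mu_{n+1}$-canonically closed in a suitable submodel by the Cohen analogue of Lemma~\ref{qtot-closed}, and the passage up to $W$ is captured by $\mu_{n+1}$-cc forcing in that submodel.

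Third, for the Knaster conditions on the quotients (\ref{indhyp5a} and \ref{indhyp6.5}), the right tool is Lemma~\ref{refinement1}, which transfers Knaster-ness from an ambient term poset to a quotient-to-term refinement. Here the ambient term posets are again Cohen of the form $\Add(\mu_{n+1}, j(\tau'\cdot\tau))$ and $\Add(\mu_n, j(\sigma'\cdot\sigma))$ computed in appropriate inner models, so Lemma~\ref{Cohenrobust} applied in $W$, respectively $W'$, gives Knaster-ness of the ambient poset, and Lemma~\ref{refinement1} drops it down to $\P_{2b}$, respectively $\P_{2a}$. The main obstacle I anticipate is exactly hypothesis~\ref{indhyp6.5}: here $W'$ is obtained from $W[P_{2b}]$ by an additional $<\mu_{n+1}$-closed forcing from $\Vd[A*U*S\restriction[\mu_{n+1},\mu_\omega)][D^2]$, and one must show that this extra step does not destroy either the $(\mu_n,\mu_{n+1})$-goodness over $j(V^a)$ needed for Cohen-robustness or the closure side of Lemma~\ref{refinement1}. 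I expect to handle this by combining the two-step decomposition of $\A*\U*\S$ from Lemma~\ref{tailforcinglemma} with Easton's Lemma, so that the $<\mu_{n+1}$-closed extension interacts benignly with the ambient term forcing and covering survives; this is the most delicate bookkeeping in the proof.
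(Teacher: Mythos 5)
Your overall strategy is the same as the paper's: establish $(\mu_n,\mu_{n+1})$- and $(\mu_{n+1},\mu_{n+2})$-goodness of the relevant models over the inner models $V^x$ and invoke Lemma~\ref{Cohenrobust} for the Knaster clauses, and use quotient-to-term rearrangements plus Easton's Lemma (as in Lemma~\ref{Wgoodextn}, via Remark~\ref{benignhelix}) for the distributivity clauses. Two points need correcting, though. First, a labelling slip: the hypotheses that come for free are \ref{indhyp3}, \ref{indhyp7} and \ref{indhyp8}, not \ref{indhyp4} --- clauses \ref{indhyp4a} and \ref{indhyp4b} require proof, which you do in fact supply later, so this is cosmetic.

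Second, and more substantively, Lemma~\ref{refinement1} is not the right tool for hypotheses \ref{indhyp5a} and \ref{indhyp6.5}. That lemma concerns quotient-to-term posets $\termspace(\P,\dot\Q)/P*Q$, whereas $\P_{2a}=j(\A_n\times\D^0)/j[A_n\times D^0]$ and $\P_{2b}=j(\A_{n+1}\times\D^1)/j[A_{n+1}\times D^1]$ are quotients of Cohen posets by pointwise images under $j$. The paper's mechanism is more direct: since $j$ is the trivial lift of an embedding defined in $\Vi[H]$ and $V^a$ and $j(V^a)$ have the same $<\mu_{\bar n}$-sequences of ordinals, one gets literally $j(\D^0)/j[D^0]=\Add^{V^a}(\mu_{\bar n}, j(\sigma)\setminus j[\sigma])$, and similarly for the other factors; so $\P_{2a}$ and $\P_{2b}$ are themselves Cohen posets over the inner models $V^a,V^b,V^c,V^d$, and Lemma~\ref{Cohenrobust} applies to them directly once goodness of $W$ (resp.\ $W'$) is established. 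Relatedly, for \ref{indhyp6.5} the decomposition you want is not the tail-forcing one of Lemma~\ref{tailforcinglemma} (whose chain condition and distributivity are calibrated at $\mu_\omega$, too coarse to give $(\mu_n,\mu_{n+1})$-goodness) but the level-$n$ decomposition $\P''_0\times\T$ from item~\ref{portmanteau3} of Lemma~\ref{portmanteaulemma}, augmented by a term poset $\termspace^{\Vi}(\X*\A*\U*\S\restriction[\mu_{n+1},\mu_\omega)*\D^2,\dot\Q)$ absorbing the extra $<\mu_{n+1}$-closed forcing $\Q$; Easton's Lemma then places all $<\mu_{n+1}$-sequences of $W'$ inside $\Vd[P''_0]$, which is what goodness over each $V^x$ requires.
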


\begin{proof}
  Hypotheses \ref{indhyp3}, \ref{indhyp7} and \ref{indhyp8} are immediate. 
  Since $\D^{\rm small} \in \Vd[A * U * S]$ and $\vert \D^{\rm small} \vert \le \mu_n$, we may assume
  that $\D^{\rm small} \in \Vd[A \restriction \mu_{n+2} * U \restriction \mu_{n+1} * S \restriction \mu_{n+1}]$.
  As we noted in Remark \ref{benignhelix}, our hypotheses imply that $\D^{2,3}$ is $\mu_{n+1}$-distributive in $\Vd[A * U * S]$.
  
  Since $H$ is generic over $\Vd$ for forcing which is defined and $<\mu_{n+2}$-closed in $\Vi$, and $\Vd$
  is a $\mu_1$-cc generic extension of $\Vi$, by Easton's Lemma $\Vd[H]$ is a $<\mu_{n+2}$-distributive extension of $\Vd$.
  Since $\Vd$ is a $\mu_1$-cc generic extension of $V^b$ and $V^d$, and both $\mu_{n+1}$ and $\mu_{n+2}$ are inaccessible
  in any submodel of $\Vd$, it follows that
  $\Vd[H]$ is a $(\mu_n, \mu_{n+1})$-good extension of $V^b$ and a $(\mu_{n+1}, \mu_{n+2})$-good extension of $V^d$,
 so that by Lemma \ref{Cohenrobust} $\A_n$ and $\A_{n+1}$ are respectively $\mu_{n+1}$-Knaster and $\mu_{n+2}$-Knaster in $\Vd[H]$. 
 We have satisfied Hypothesis \ref{indhyp2}.  Similarly $\D^1$ is $\mu_{n+2}$-Knaster in $\Vd[H]$
 and we have satisfied Hypothesis \ref{indhyp4a}. 

  We need some analysis of $\P_{2 a}$ and $\P_{2 b}$.          
     Let $\X^x \in \Vi$ and $\X_x \in V^x$ be such that $V^x = \Vi[X^x]$ and $\Vd = V^x[X_x]$ for $x = a,b,c,d$. 
     Note that we may assume that $\X^x$ is $\mu_1$-cc in $\Vi$ and $\X_x$ is $\mu_1$-cc in $V^x$.
  Recall that $j$ is an embedding witnessing that $\mu_{n+2}$ is highly supercompact in the model
  $\Vd[A_{[n+2, \omega)} \times D^2]$,   and is the trivial lift (keeping in mind that $\vert \X \vert \le \mu_1 < \mu_{n+2}$)
  of such an embedding defined in $\Vi[A_{[n+2, \omega)} \times D^2]$.
  It is easy to see that $V^a$ and $j(V^a)$ have the same $<\mu_n$-sequences of ordinals,
  so that $j(\D^0) = \Add^{V^a}(\mu_{\bar n}, j(\sigma))$ and
  $j(D_0)/j[D_0] = \Add^{V^a}(\mu_{\bar n}, j(\sigma) \setminus j[\sigma])$
  and similarly for $\A_n$, $\D^1$ and $\A_{n+1}$.

  Now we revisit the argument for Lemma \ref{Wgoodextn}, but we need a slightly different decomposition
  for $\A * \U * \S$. $\A * \U * \S$ may be written in $\Vd$ as a projection of the product
  \[
  (\A_{[0, n]} * \U_{[0,n)} * \S_{[0, n)}) \times \A_{n+1} \times \A_{[n+2, \omega)} \times \B_{[n, \omega)} \times \C_{[n, \omega)}
  \]
  where:
  \begin{itemize}
  \item  $\A_{[0, n]} * \U_{[0,n)} * \S_{[0, n)}$ is defined and $\mu_{n+1}$-cc in $\Vd$.
  \item  $\A_{n+1}$ is defined and $<\mu_{n+1}$-closed in $V^d = \Vi[X_d]$.
  \item $\A_{[n+2, \omega)}$ is defined and $<\mu_{n+2}$-closed in $\Vi$.
  \item $\B_{[n, \omega)} \times \C_{[n, \omega)}$ is defined and $<\mu_{n+1}$-closed in
      $\Vd$.
  \end{itemize}    

  As in the proof of Lemma \ref{Wgoodextn} (and keeping in mind that $\D^1$ is defined and $< \mu_{n+1}$-closed in
  $V^c = \Vi[X^c]$) we may force with a series of quotient to term forcings to extend $\Vd[A * U * S][D^{1,2,3}]$ to
  a  model of the form $\Vi[(X * P''_0) \times T  \times A_{[n+2,\omega)} \times D^2 \times T^3]$,
    where:
    \begin{itemize}
    \item $\P''_0 = \A_{[0, n]} * \U_{[0,n)} * \S_{[0, n)}$, so that $\X * \P''_0$ is $\mu_{n+1}$-cc in $\Vi$.
    \item $\T = \termspace^{\Vi}(\X^d, \dot \A_{n+1}) \times \termspace^{\Vi}(\X^c, \dot \D_1)
    \times \termspace^{\Vi}(\X, \dot \B_{[n, \omega)} \times \dot \C_{[n, \omega)})$,     
      so that $\T$ is $<\mu_{n+1}$-closed in $\Vi$.
    \item As before $\T^3 = \termspace^{\Vi}(\X * \A \restriction \mu_{n+2} * \U \restriction \mu_{n+2}, \dot D^3)$,
      so that $\T^3$ is $<\mu_{n+2}$-closed in $\Vi$.
    \end{itemize}  

    By Easton's Lemma all $<\mu_{n+1}$-sequences of ordinals from $\Vd[A * U * S][D^{1,2,3}]$ lie in
    $\Vi[X * P''_0] = \Vd[P''_0] \subseteq \Vd[A * U * S]$, so that in particular
    $\D^1$ is $<\mu_{n+1}$-distributive in $\Vd[A * U * S][D^{2,3}]$ and we satisfied
    Hypothesis \ref{indhyp4b}. 

    By Lemma \ref{rightafterportmanteau} and the hypothesis that $\Vd$ is a $\mu_1$-cc extension of $\Vi$,
    $\Vd[A * U * S]$ is both a $(\mu_n, \mu_{n+1})$-good extension of $V^a$ and a $(\mu_{n+1}, \mu_{n+2})$-good extension
    of $V^c$. Since $\D^{2,3}$ is $<\mu_{n+2}$-distributive in $\Vd[A * U * S]$, $\Vd[A * U *S][D^{2, 3}]$ is a
    $(\mu_{n+1}, \mu_{n+2})$-good extension of $V^c$, so that $\D^1$ is $\mu_{n+2}$-Knaster in $\Vd[A * U *S][D^{2, 3}]$.
    Since $\D^1$ is $<\mu_{n+1}$-distributive in $\Vd[A * U *S][D^{2, 3}]$, $\Vd[A * U *S][D^{1, 2, 3}]$ is a
    $(\mu_n, \mu_{n+1})$-good extension of $V^a$, so that $\D^0$ is $\mu_{n+1}$-Knaster in $\Vd[A * U *S][D^{1, 2, 3}]$.
    In fact $\D^0 \times \D^{\rm small}$ is $\mu_{n+1}$-Knaster in $\Vd[A * U *S][D^{1, 2, 3}]$, from which it follows
    easily that both $\mu_{n+1}$ and $\mu_{n+2}$ are regular in $W$. We have satisfied Hypothesis \ref{indhyp1}.

    The analysis of the last paragraph also shows that $W$ is a $(\mu_{n+1}, \mu_{n+2})$-good extension 
    of $V^c$ and of $V^d$. From the analysis of $j(\A_{n+1})$ and $j(\D^1)$, it follows readily that
    $\P_{2 b}$ is $\mu_{n+2}$-Knaster in $W$.  We have satisfied Hypothesis \ref{indhyp5a}. 

    Now we do another analysis in the same style as Lemma \ref{Wgoodextn}, but this
    time we expand the model $W[P_{2b}] = \Vd[A * U * S][D^{s,0,1,2,3}][P_{2b}]$ to 
    $\Vi[(X * P''_0 * (D^0 \times D^s)) \times T  \times A_{[n+2,\omega)} \times D^2 \times T^3 \times T']$,
    where $\T' = \termspace^{\Vi}(\X^c, j(\D_1)/j[D_1]) \times \termspace^{\Vi}(\X^d, j(\A_{n+1})/j[A_{n+1}])$.
    We recall from our earlier analysis that $\D^{\rm small} \in \Vd[P''_0]$ and that $\D^0$ is
    $\mu_{n+1}$-cc in $\Vd[A * U * S]$, so that easily $\X * \P''_0 * (\D^0 \times \D^{\rm small})$
    is $\mu_{n+1}$-cc in $\Vi$. By Easton's lemma all $<\mu_{n+1}$-sequences of ordinals in
    $W[P_{2b}]$ lie in the submodel $\Vi[X * P''_0 * D^{0,s}]$ of $W$, so that
    $\P_{2 b}$ is $<\mu_{n+1}$-distributive in $W$. We have satisfied hypothesis \ref{indhyp5b}. 

    We saw already that $\Vd[A * U * S][D^{1,2,3}]$ is a $(\mu_n, \mu_{n+1})$-good extension
    of $V^a$. Since $\P_{2 b}$ is $<\mu_{n+1}$-distributive in $W$ it has this property
    in $\Vd[A * U * S][D^{1,2,3}]$, so $\Vd[A * U * S][D^{1,2,3}][P_{2b}]$ is
    a $(\mu_n, \mu_{n+1})$-good extension of $V^a$, and 
    thus  $\D^0$ is $\mu_{n+1}$-Knaster in $\Vd[A * U * S][D^{1,2,3}][P_{2b}]$.
    We have satisfied hypothesis \ref{indhyp6}.

    Finally let $\Q$ be defined and
    $<\mu_{n+1}$-closed in $\Vd[A * U][S \restriction [\mu_{n+1}, \mu_\omega)][D^2]$,
    and let $W' = W[P_{2b}][Q] = \Vd[A * U * S][D^{1,2,3}][D^{0, s}][Q][P_{2b}]$.
    Arguing as before we expand 
    $\Vd[A * U * S][D^{1,2,3}][Q][P_{2b}]$
    to a model
    $\Vi[(X * P''_0)  \times T  \times A_{[n+2,\omega)} \times D^2 \times T^3 \times T' \times T'']$,
    where $T'' = \termspace^{\Vi}(\X * \A * \U * \S \restriction [\mu_{n+1}, \mu_\omega) * \D^2, \dot \Q)$,
    and use this to argue that all $<\mu_{n+1}$-sequences of ordinals from
    $\Vd[A * U * S][D^{1,2,3}][Q][P_{2b}]$ lie in $\Vd[P''_0]$.    
    It follows that $\Vd[A * U * S][D^{1,2,3}][Q][P_{2b}]$ is a $(\mu_n, \mu_{n+1})$-good extension 
  of each model $V^x$, so that easily $\D^0 \times \D^{\rm small} \times \P_{2a}$ is
  $\mu_{n+1}$-Knaster in $\Vd[A * U * S][D^{1,2,3}][Q][P_{2b}]$. So
  $\P_{2 a}$ is $\mu_{n+1}$-Knaster in $\Vd[A * U * S][D^{1,2,3}][Q][P_{2b}][D^{0,s}] = W'$,
  and we have satisfied hypothesis \ref{indhyp6.5}.
\end{proof}

It will be useful later (in the $n=0$ cases from Sections \ref{GroupI} and \ref{GroupIII},
and again in Section \ref{plustwo})
to know that certain initial segments of
$\Vd[A * U * S]$ have similar indestructibility properties to those in Lemma \ref{indestructible}.
The following lemma is stated under the same hypotheses as that lemma, and as far as possible with the same notation.
Although the construction of the relevant generic embedding
is very similar to that for Lemma \ref{indestructible}, we have given it in some
detail as a service to readers of Section \ref{plustwo}. We have not stated the Lemma in the maximum
possible generality, in particular we have dispensed with $\D^1$ and have only
some specific instances of $\D^0$.

\begin{lemma}\label{indestructible2}
  Let $\mu_{n+2} \le \eta < \mu_\omega$ and let
  $V' = \Vd[A \restriction \eta * U \restriction \mu_{n+2} * S \restriction \mu_{n+2}]$.  
  Let $\D^2, \D^3, \D^0, \D^{small}$ be forcing posets such that
  \begin{enumerate}
  \item $\D^2 \in \Vi$ and $\D^2$ is $<\mu_{n+2}$-directed closed in $\Vi$.
  \item $\D^3 \in \Vd[A \restriction \mu_{n+2} * U \restriction \mu_{n+2}]$ and $\D^3$ is $<\mu_{n+2}$-directed closed in
    $\Vd[A \restriction \mu_{n+2} * U \restriction \mu_{n+2}]$
  \item $\D^0 = \Add^{\Vd}(\mu_n, \sigma)$ for some $\sigma$, or $\D^0 = Coll(\omega, \rho)$ for some $\rho < \mu_1$.  
  \item $\D^{small} \in V'$ and $V' \models \vert \D^{small} \vert \le \mu_n$.
\end{enumerate}
\end{lemma}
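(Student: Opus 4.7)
The plan is to adapt the proof of Lemma \ref{indestructible} to the present setting, taking advantage of the simplifications that $\D^1$ is absent and only initial segments of $A$, $U$, and $S$ appear in $V'$. Set $W = V'[D^{0,2,3,small}]$; the conclusion to aim for is that $W \models \mbox{``$\mu_{n+2}$ has the tree property''}$. The strategy is, in a generic extension $W'$ of $W$, to build an elementary embedding $j: W \to M$ with $\crit(j) = \mu_{n+2}$ and $j(\mu_{n+2}) > \mu_\omega$, and then to verify using Facts \ref{chain} and \ref{formerlyclosed} that the forcing passing from $W$ to $W'$ adds no cofinal branches to $\mu_{n+2}$-trees from $W$.

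To choose the ground embedding, I would work in $\Vi[A \restriction [\mu_{n+2}, \eta) \times D^2]$; this is a $<\mu_{n+2}$-directed closed extension of $\Vi$, so $\mu_{n+2}$ is indestructibly $\chi$-supercompact there for $\chi = \max(\mu_\omega, |D^{0,2,3,small}|)^+$. Using the universal indestructible Laver function $\psi$, I would select $j$ witnessing $\chi$-supercompactness such that $j \restriction ON$ is definable in $\Vi$, the next point of $\dom(j(\psi))$ past $\mu_{n+2}$ is greater than $\chi$, and $j(\psi)(\mu_{n+2})$ is a name whose realisation is $\D^3$ (note that the $B_{1a} \times C_{1b}$ factors of Lemma \ref{indestructible} are not needed here, because there is no tail of $U$ or $S$ beyond $\mu_{n+2}$ in $V'$ whose dependence on the tail of $A$ must be removed). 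Since $\Vd$ is a $\mu_1$-cc extension of $\Vi$, $j$ lifts trivially to $\Vd[A \restriction [\mu_{n+2}, \eta) \times D^2]$.

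Lifting $j$ onto $W$ proceeds in steps analogous to but shorter than those of Lemma \ref{indestructible}. First, stretch $A_n$ and $A_{n+1}$ with the quotient-to-term posets $\P_{2a} = j(\A_n)/j[A_n]$ and $\P_{2b} = j(\A_{n+1})/j[A_{n+1}]$. Second, lift through $U \restriction \mu_{n+2}$ by using $D^3$ as the generic at stage $\mu_{n+2}$ in $j(\U)$ and invoking $\chi$-closure of the target to build a generic for $j(\U) \restriction (\mu_{n+2}, j(\mu_{n+2}))$. Third, lift through $S \restriction \mu_{n+1}$ trivially by size, and through $S \restriction [\mu_{n+1}, \mu_{n+2})$ using $\P_3 = j(\C)^{+\bar F} \restriction [\mu_{n+2}, j(\mu_{n+2}))$ exactly as in Step 3 of the proof of Lemma \ref{indestructible}. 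Finally handle $\D^0$ (in the $\Add^{\Vd}(\mu_n, \sigma)$ case by stretching through $j(\D^0)/j[D^0]$, and in the $\Coll(\omega, \rho)$ case trivially since $\rho < \mu_1 < \mu_{n+2}$), $\D^2$ by a term-forcing stretch $j(\D^2)/j[D^2]$, and $\D^{small}$ trivially since $|\D^{small}| \le \mu_n < \crit(j)$.

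The main obstacle is the branch-preservation argument. Following the end of the proof of Lemma \ref{indestructible}, I would factor the total lifting forcing $\P_{1,2,3}$ over $W$ into its Knaster pieces and its high closed piece: $\P_{2a}$ is $\mu_{n+1}$-Knaster and $\P_{2b}$ is $\mu_{n+2}$-Knaster in the relevant intermediate models, so each adds no cofinal branch to any tree of the appropriate height by Fact \ref{chain}; and the remaining ``high'' forcing $\P_3$ (together with the stretching component for $\D^2$) is formerly $<\mu_{n+1}$-closed over a $\mu_{n+1}$-cc extension in which $2^{<\mu_n} \ge \mu_{n+2}$, so it adds no branches to $\mu_{n+2}$-trees by Fact \ref{formerlyclosed}. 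The requisite chain condition and distributivity verifications parallel those of Lemma \ref{meetingindyreqs} and are in fact simpler, because $\D^1$ is absent and the tail factors of $U$ and $S$ do not appear in $W$.
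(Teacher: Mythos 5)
Your proposal is correct and follows essentially the same route as the paper's proof: drop $\P_1$, build $j$ in $\Vi[A\restriction[\mu_{n+2},\eta)\times D^2]$ with $j(\psi)(\mu_{n+2})$ naming $\D^3$, use the same $\P_{2a},\P_{2b},\P_3$, and run the identical Knaster/formerly-closed branch-preservation factorization. One small redundancy: since $j$ is already defined on a model containing $D^2$ (and lifts trivially over the $\mu_1$-cc extension to $\Vd[\bar A\times D^2]$), no separate ``stretch'' $j(\D^2)/j[D^2]$ is needed or used in the paper.
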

Then the tree property holds at $\mu_{n+2}$ in $V'[D^{s, 0, 2, 3}]$.

\begin{proof} Let $W = V'[D^{s, 0, 2, 3}]$.
  Let $\bar A = A \restriction [\mu_{n+2}, \eta)$,
    so that $\bar A$ is generic for $<\mu_{n+2}$-directed closed forcing defined in $\Vi$. 
  $\mu_{n+2}$ is indestructibly supercompact in $\Vi$,
  we will construct a supercompactness embedding $j$ defined in $\Vi[D^2 \times \bar A]$
  and lift it to such an embedding defined in $\Vd[D^2 \times \bar A]$.
  
  We can dispense with Steps 1a and 1b from the previous construction, so there is no $\P_1$.  We
  choose $\chi$ suitably large and then work in $\Vi[D^2 \times \bar A]$ to
  choose $j$ such that $j \restriction ON$ is defined in $\Vi$,
  $j$ witnesses $\mu_{n+2}$ is $\chi$-supercompact, the next point in $\dom(j(\psi))$ past
  $\mu_{n+2}$ is greater than $\chi$, and $j(\psi)(\mu_{n+2})$ is a name in $\Vi$ for an
  $\A \restriction \mu_{n+2} * U \restriction \mu_{n+2}$-name for $\D^3$. Then after lifting
  $j$ to $\Vd[D^2 \times \bar A]$,
  $j(\phi)(\mu_{n+2})$ is an $\A \restriction \mu_{n+2} * U \restriction \mu_{n+2}$-name in $\Vd$ for $\D^3$.
  
  $\P_{2 a}$ is chosen as before, with the simplification that it is now just a forcing in
  $\Vd$ adding Cohen subsets to $\mu_n$.
  As before $\P_{2 a}$ is $\mu_{n+1}$-Knaster in a robust way.
   $\P_{2 b}$ is also as before, with the simplification that there is no $\D^1$ and so this poset
  is just ``stretching'' $A_{n+1}$: as before $\P_{2 b}$ is $<\mu_{n+1}$-distributive and $\mu_{n+2}$-cc.

  $\P_3$ is chosen essentially as before. $\P_3$ is defined in $\Vd[A \restriction \eta * U \restriction \mu_{n+1}][D^2]$,
  and is $< \mu_{n+1}$-closed in
  $\Vd[A \restriction \eta * U \restriction \mu_{n+2}][S \restriction [\mu_{n+1}, \mu_{n+2})][D^{2, 3}]$.

      In the lifting argument we lift $j$ onto $\Vd[A \restriction \eta * U \restriction \mu_{n+2}][D^{0, 2, 3}]$
      (like Stage 1), extend to $\Vd[A \restriction \eta * U \restriction \mu_{n+2} * S \restriction \mu_{n+1}][D^{0, 2, 3}]$
      (like Stage 2), extend to $\Vd[A \restriction \eta * U \restriction \mu_{n+2} * S \restriction \mu_{n+2}][D^{0, 2, 3}]$
      (like Stage 3), and finally extend to
      $\Vd[A \restriction \eta * U \restriction \mu_{n+2} * S \restriction \mu_{n+2}][D^{s, 0, 2, 3}]$ (like Stage 6).
      The argument for the tree property is essentially identical, as we still have the relevant cardinal
      arithmetic and all the posets $\P_i$ are either missing or have the same properties as before. 
\end{proof}

\begin{remark} \label{mutualgenericity1}
  As we mentioned in the preamble to Lemma \ref{indestructible}, there are a couple of
  instances where we would like to apply the Lemma but the hypotheses are not quite
  satisfied. To be more precise, we want to prove that $\mu_{n+2}$
  has the tree property in some extension $W' = \Vd[A * U * S][D']$ where $\D'$ is a product of posets
  which does not quite meet the hypotheses of Lemma \ref{indestructible}.
  In this case we can sometimes use mutual genericity to our advantage.

  More specifically, assume that by forcing over $W'$ with some poset $\P'$,
  we obtain a generic embedding with domain $W'$ and critical point $\mu_{n+2}$.
  Let $T \in W'$ be a $\mu_{n+2}$-tree, so that $T$ has a branch $b \in W'[P']$.
  Assume further that $E$ is mutually generic with $P'$ over $W'$,
  and that our previous arguments can be adapted to show that every branch of $T$
  from $W'[E][P']$ lies in $W'[E]$. Then $b \in W'[E] \cap W'[P']$,
  and by the mutual genericity of $E$ and $P'$ we have $b \in W'$ as required.
\end{remark}

\begin{remark}  \label{mutualgenericity2}
  A particular instance of the idea of Remark \ref{mutualgenericity1}
  can be used to handle more posets of
  cardinality $\mu_{n+1}$ in the setting of Lemma \ref{indestructible}.
  It is clear that in general a forcing poset of size $\mu_{n+1}$ can destroy the tree property
    at $\mu_{n+2}$, for example $\Coll(\omega, \mu_{n+1})$ will always do this. In the language 
    of Lemma \ref{indestructible}, such a poset may not be a viable choice for $\D^0$ (insufficient
    chain condition) or $\D^1$ (insufficient distributivity).

    Suppose that $\D \in \Vd$ and let $W^* = W[D]$ where $W = \Vd[A * U * S][D^{s,0,1,2,3}]$
    as in Lemma \ref{indestructible}. Assume that: 
\begin{enumerate}
\item $\vert \D \vert = \mu_{n+1}$.
\item  $\D$ is the projection of a two-step iteration $\P * \dot \Q$
  where
    $\P$ forces that $\Q$ is the union of fewer than $\mu_{n+1}$ filters,
    and $\vert \P * \Q \vert = \mu_{n+1}$. 
  \item $\P_{2a}$ is $\mu_{n+1}$-Knaster in $W[P_1 \times P_{2b} \times P_3 \times P]$.
  \item $\P_{2b}$ is $<\mu_{n+1}$-distributive in $\Vd[A * U][S \restriction [\mu_{n+1}, \mu_\omega)][D^{1,2,3}][P]$. 
  \item $\P$ is $<\mu_{n+1}$-distributive in  $\Vd[A * U][S \restriction [\mu_{n+1}, \mu_\omega)][D^{1,2,3}]$.  
\end{enumerate}

Then we claim that Lemma \ref{indestructible} remains true if we add $\D$ as a factor to the
product of posets which preserves the tree property at $\mu_{n+2}$, that is to say we claim that $\mu_{n+2}$
has the tree property in $W^*$. As for Lemma \ref{indestructible}, the proof is followed by a
picture with a legend to help the reader keep track of all the models and forcing posets. 

    Let $T$ be a $\mu_{n+2}$-tree in $W^*$.
    We define the embedding $j$ and lift it to $W$ in the model $W[P_{1,2,3}]$ just as in Lemma \ref{indestructible}.
    Since $\vert \D \vert = \mu_{n+1}$, we may trivially lift the embedding onto
    $W[D] = W^*$, working in the model $W^*[P_{1,2,3}]$.
    As usual we obtain a branch $b$ in $W^*[P_{1,2,3}]$.
    
    To cope with the problem that $\D$ is not necessarily $\mu_{n+1}$-cc, force over 
    $W^*[P_{1,2,3,}]$ with $\P * \Q/D$ to obtain $P * Q$ which induces $D$,
    and is mutually generic over $W$ with $P_{1,2,3}$.
    Let 
    $W^{**}= W[P * Q]$, so that $W^* \subseteq W^{**}$ and $b \in W^{**}[P_{1,2,3}] = W[P_{1,2,3} \times P * Q]$. 

    Now let $M_0 = W^{**}$, $M_1 = M_0[P_{2b}]$, $M_2=M_1[P_1 \times P_{3}]$,
    $M_3 = M_2[P_{2a}] = W^{**}[P_{1,2,3}]$. We aim to argue that $b \in M_0$.
    Since $\vert \P * \Q \vert = \mu_{n+1}$, $\P_{2 b}$ is $\mu_{n+2}$-Knaster in $M_0$,
    so there is no change in the step from $M_1$ to $M_0$.
    
    Let $M_{-} = \Vd[A * U][S \restriction [\mu_{n+1}, \mu_\omega)][D^{1,2,3}][P]$.
      By hypothesis
      $\P$ is $<\mu_{n+1}$-distributive in  $\Vd[A * U][S \restriction [\mu_{n+1}, \mu_\omega)][D^{1,2,3}]$,
     so that $\P_1 \times \P_3$ is $<\mu_{n+1}$-closed in $M_{-}$.

     By hypothesis $\P_{2b}$ is $<\mu_{n+1}$-distributive in $M_{-}$. It follows that 
    $\P_1 \times \P_3$ is $<\mu_{n+1}$-closed in $M_{-}[P_{2b}]$. 
    Finally $(\S \restriction \mu_{n+1} \times \Q  \times \D^0) * \D^{\rm small}$   is $\mu_{n+1}$-cc in 
    $M_{-}[P_{2b}]$, where the factor $\Q$ causes no problems because $\Q$ is the union of fewer than $\mu_{n+1}$ filters, 
     so $\P_1 \times \P_3$ is formerly closed in
     $M_1 = M_{-}[P_{2b}][(S \restriction \mu_{n+1} \times Q \times D^0) * D^s]$ and we finish the step from
     $M_2$ to $M_1$  as before.
     
     Finally our hypotheses imply that $\P_{2a}$ is $\mu_{n+1}$-Knaster in $W[P_1 \times P_{2b} \times P_3 \times P][Q] = M_2$,
     and we finish the step from $M_3$ to $M_2$ as before.          
  It follows that $b \in W^{**} = W^*[P*Q/D]$. Since $b \in W^*[P_{1,2,3}]$ and
  $P*Q/D$ is mutually generic with $P_{1,2,3}$, $b \in W^*$ and we are done.

\begin{tikzcd}[row sep=huge,column sep=huge]
  W \arrow{r}{D} \arrow{ddd}{P_{1,2,3}} & W^* \arrow[blue]{r}{P*Q/D} \arrow[violet]{ddd}{P_{1,2,3}} & M_0 \arrow{d}{P_{2b}}       & M_{-} \arrow[swap]{l}{Y} \arrow{d}{P_{2b}}     \\
  {}                                &               {}                           & M_1 \arrow{d}{P_1 \times P_3} & M_{-}[P_{2b}] \arrow[swap]{l}{Y}                      \\
  {}                               &                {}                           & M_2 \arrow{d}{P_{2a}}                                 \\
  W[P_{1,2,3}]     \arrow{r}{D}       & W^{*}[P_{1,2,3}] \arrow{r}{P*Q/D}            & M_3                                                  \\   
\end{tikzcd}

The blue arrow for $P*Q/D$ and the violet arrow for $P_{1,2,3}$ indicate mutually generic objects
over $W^*$. 
\begin{itemize}
\item $W=\Vd[A * U * S][D^{0,1,2,3,s}]$. 
\item $W^* = W[D]$.
\item $M_{-} =  \Vd[A * U][S \restriction [\mu_{n+1}, \mu_\omega)][D^{1,2,3}][P]$. 
\item $M_0 = W^{**} =W^*[P*Q/D] = W[P * Q] =\Vd[A * U * S][D^{0,1,2,3,s}][P * Q] = M_{-}[Y]$, where $Y = (S \restriction \mu_{n+1} \times Q \times D^0) * D^s$.
\item $M_1 = M_0[P_{2b}]$.
\item $M_2 = M_1[P_1 \times P_3]$.
\item $M_3 = M_2[P_{2a}] = W^{**}[P_{1,2,3}]$.
\end{itemize}

\end{remark}

\begin{remark} The main construction for Theorem \ref{mainthm} contains many instances of
  ``quotient to term'' posets, for instance in the definitions of $\Q_1(\tau, \tau^*)$ and
  $\Q_2(\tau, \tau^*)$ in Section \ref{successiveprikry}.
  The role of these quotient to term posets is typically to produce generic objects
  which fit into one of the indestructibility schemes from the current section.
\end{remark}

\section{Initial hypotheses} \label{setup}

We are now ready to begin the main construction.  As we mentioned in the introduction,
we will be introducing many pieces of notation which will have fixed meanings for the rest
of the paper. Every time we introduce one or more of these important pieces of notation,
we will flag it as ``Global notation'' and add a corresponding entry in the index of notation.

\subsection{Preparing $V$} \label{preparation}

We start with a model $V_0$ with the following properties:
\begin{enumerate}
\item  $\theta$ is the least supercompact cardinal.
\item There exist cardinals $\kappa_\alpha$ for $\alpha < \theta^+$ 
  such that $\theta < \kappa_0$ and each $\kappa_\alpha$ is supercompact.
  We let $\kappa = \kappa_0$ and $\delta = \sup_{\alpha < \theta^+} \kappa_\alpha$. 
\item There is an elementary embedding $j_0 : V_0 \to M_0$ such that $j_0$ witnesses
 that  $\kappa$ is $\delta^+$-supercompact, and in addition $\kappa_\alpha$ is supercompact in $M_0$ for all $\alpha < \theta^+$.
\item There is a universal indestructible Laver function $\phi_0$ defined up to $\delta$,
  in particular $\theta$ and the $\kappa_\alpha$'s are all indestructibly supercompact.
  Every element of $\dom(\phi_0)$ is an inaccessible closure point of $\phi_0$.   
\item $j_0(\phi_0) \restriction \delta = \phi_0$.   
\end{enumerate}

\smallskip

\noindent Global notation: $V_0$\index[Notation]{$V_0$},
                 $\theta$\index[Notation]{$\theta$},
                 $\kappa_\alpha$\index[Notation]{$\kappa_\alpha$}, 
                 $\kappa$\index[Notation]{$\kappa$},
                 $\delta$\index[Notation]{$\delta$},
                 $j_0$\index[Notation]{$j_0$},
                 $M_0$\index[Notation]{$M_0$}, 
                 $\phi_0$\index[Notation]{$\phi_0$}.

\smallskip

\begin{remark}
  Given a model $V_0'$ where  hypotheses 1-3 hold we may arrange that hypotheses 1-5 hold
  in a suitable extension $V_0$ of $V_0'$.  To see this let 1-3 hold in $V_0'$ where
  hypothesis 3 is witnessed by $j_0' : V_0' \to M_0'$. The main point is that in $V_0'$ we
  may choose $\phi_0'$ a universal Laver function defined up to $\delta$ such that
  $j_0'(\phi_0') \restriction \delta = \phi_0'$: doing the corresponding Laver preparation
  will give a model $V_0$ for hypotheses 1-5. 
 
  To see that we can choose a suitable function $\phi_0'$, recall that to define $\phi_0'(\alpha)$
  we choose a counterexample to $\phi_0' \restriction \alpha$
    being a Laver function which is minimal with respect to some well-ordering. We will fix a well-ordering
    $\prec$ of $V_\kappa$ such that $\prec \restriction V_\alpha$ is an initial segment of $\prec$ for all $\alpha < \kappa$,
    and define $\phi_0'$ using $\prec^* = j_0'(\prec) \restriction V_\delta$: this works because $\prec^* \restriction V_\kappa = \prec$
    and $j_0'(\prec^*) \restriction V_\delta =j_0'(\prec^* \restriction V_\kappa) \restriction V_\delta =
    j_0'(\prec) \restriction V_\delta = \prec^*$.    
\end{remark}
  
It will be important later that all Laver functions used during the construction are derived
from the initial Laver function $\phi_0$ as in \ref{laversmallfact} and \ref{universalindestructibleLaverfunction}.
Let $E_0$ be defined in $V_0$ as the set of inaccessible closure points
of $\phi_0$ in the interval $(\theta, \delta)$, and let $\alpha^* = \min(E_0 \setminus (\alpha + 1))$
for $\alpha < \delta$.

\smallskip

\noindent Global notation: $E_0$\index[Notation]{$E_0$}, $\alpha^*$\index[Notation]{$\alpha^*$}. 

\smallskip

Our first step is to produce an extension $V$ of $V_0$ in which the $\kappa_\alpha$'s retain
the properties listed above, $\theta$ is the continuum,
and $\theta$ is ``generically indestructibly supercompact via Cohen reals''. 
To be more precise:

\begin{lemma} \label{fromVzerotoV}
  There is a generic extension $V$ of $V_0$ in which:
\begin{itemize}
\item  $2^{\aleph_0} = \theta$.
\item  For every $<\theta$-directed closed generic extension $V[H]$ and every $\gamma > \theta$,
  there exists a generic $\gamma$-supercompactness embedding $\pi: V[H] \to N$ with critical point $\theta$.
  The embedding $\pi$ exists in an extension of $V[H]$ obtained by adding $\pi(\theta)$ Cohen reals.
\item  The embedding $\pi$ lifts an embedding $i: V_0 \rightarrow N_0$ defined in $V_0$,
  where $\crit(i) = \theta$ and $i$ may be chosen to witness an arbitrarily high degree of supercompactness
  for $\theta$ in $V_0$. 
\item There is a universal indestructible Laver function $\phi$ defined on $(\theta, \delta)$, in
particular every supercompact cardinal up to $\delta$ is indestructible.
\item There is an elementary embedding $j : V \to M$ such that $j$ witnesses
  $\kappa$ is $\delta^+$-supercompact and in addition each $\kappa_\alpha$ is supercompact in $M$.
\end{itemize}
\end{lemma}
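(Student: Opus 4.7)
The plan is to take $V = V_0[C]$ where $C$ is $\Add^{V_0}(\omega,\theta)$-generic over $V_0$, and derive all the required properties from the fact that $\Add(\omega,\theta)$ is small relative to every $\kappa_\alpha$. The equation $2^{\aleph_0} = \theta$ is immediate, and I would define the Laver function $\phi$ in $V$ from $\phi_0$ using Lemma \ref{laversmallfact}, obtaining a universal Laver function on $(\theta,\delta)$. To see that $\phi$ is moreover \emph{indestructibly} Laver on $(\theta,\delta)$, given any $<\kappa_\alpha$-directed closed $V[E]$ and target $x \in V[E]$, I would use standard term-forcing manipulations and Easton's lemma (since $\Add(\omega,\theta)$ is $\theta^+$-cc and $\kappa_\alpha > \theta^+$) to rearrange the extension so that the indestructibility of $\phi_0$ in $V_0$ applies, then lift the resulting embedding trivially through $C$ because its critical point $\kappa_\alpha$ exceeds $\theta$ and so fixes $\Add(\omega,\theta)$.

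Lifting $j_0:V_0 \to M_0$ to $j:V \to M$ uses the same small-forcing observation: $j_0(\Add(\omega,\theta)) = \Add(\omega,\theta)$ because $\crit(j_0) = \kappa > \theta$, so $C$ itself is generic for the image and $j$ is the unique lift with $M = M_0[C]$. In $M$, each $\kappa_\alpha$ remains supercompact because it was supercompact in $M_0$ by hypothesis and the Cohen forcing has size $\theta < \kappa_\alpha$.

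The heart of the proof is establishing generic $\gamma$-supercompactness of $\theta$ via Cohen reals. Given a $<\theta$-directed closed $\Q \in V$ with generic $H$ and target $\gamma > \theta$, I would first select in $V_0$ an embedding $i: V_0 \to N_0$ with $\crit(i) = \theta$ witnessing $\lambda$-supercompactness for $\lambda$ much larger than $\max(\gamma, \vert \Q \vert)$. Over $V[H]$ I would force with $\Add^{V_0}(\omega, i(\theta) \setminus \theta)$ to add a further Cohen generic $C'$; the combined $\tilde C = C \cup C'$ is $\Add^{V_0}(\omega, i(\theta))$-generic, and because $\crit(i) = \theta$ each condition of $C$ lies trivially inside $i(\Add(\omega,\theta)) = \Add^{N_0}(\omega, i(\theta))$, so $i[C] \subseteq \tilde C$ and $i$ lifts to $\tilde i: V \to N_0[\tilde C]$ inside $V[H][C']$. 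Next I would lift $\tilde i$ past $H$: since $i$ witnesses high supercompactness in $V_0$, the pointwise image $\tilde i[H]$ is a directed set of size less than $i(\theta)$ lying in $N_0[\tilde C]$, while $\tilde i(\Q)$ is $<i(\theta)$-directed closed there, so a master-condition argument produces a suitable generic $\tilde H$ for $\tilde i(\Q)$ containing $\tilde i[H]$ in $V[H][C']$. The resulting $\pi: V[H] \to N_0[\tilde C][\tilde H]$ lives in $V[H][C']$, which is precisely an extension of $V[H]$ by $\pi(\theta) = i(\theta)$ Cohen reals, as required.

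The principal technical difficulty is locating the generic $\tilde H$ inside the intended model $V[H][C']$ rather than in some further extension. This rests on the closure properties of $N_0$ inherited from the supercompactness of $i$, together with the bound $\vert \tilde i(\Q) \vert^{N_0[\tilde C]} < i(\theta)$, so that the number of maximal antichains to meet is controlled and the required generic can be constructed directly using the directed closure of $\tilde i(\Q)$ below the master condition.
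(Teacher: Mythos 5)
Your choice of $V = V_0[C]$ with $C$ merely $\Add(\omega,\theta)$-generic cannot work, and the obstruction is Fact \ref{superdestructible} (Hamkins), which the paper itself relies on elsewhere. Since $\vert\Add(\omega,\theta)\vert = \theta < \kappa_\alpha$, every $\kappa_\alpha$ becomes \emph{superdestructible} in $V_0[C]$: its supercompactness (indeed its weak compactness) is destroyed by any further $<\kappa_\alpha$-closed forcing adding a subset of $\kappa_\alpha$. So no supercompact cardinal in $(\theta,\delta)$ is indestructible in $V_0[C]$, and the fourth bullet of the lemma --- the existence of a universal indestructible Laver function $\phi$ on $(\theta,\delta)$ --- is outright false there. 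Your claim that ``standard term-forcing manipulations and Easton's lemma'' recover the indestructibility of $\phi_0$ after the small forcing is exactly what superdestructibility rules out: those manipulations can produce \emph{generic} embeddings in further extensions, but indestructibility requires an actual supercompactness embedding of $V_0[C][E]$ with the guessing property, and in general none exists. The paper avoids this by taking $V = V_0[A^0 * U^0]$, where $\U^0$ is an $\A*\U$-style iteration that, above $\theta$, is essentially a Laver preparation $\FL^0$ on $(\theta,\delta)$ performed \emph{after} the Cohen forcing; it is this preparation that manufactures $\phi$ and restores indestructibility. The same structure (the name $\dot R = \FL^0 * \dot\Q$ being guessed at stage $\theta$ of $i(\U^0)$) is also what drives the generic supercompactness argument.

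There is a secondary gap in your supercompactness argument even setting the above aside. You propose to build the generic $\tilde H$ for $\tilde i(\Q)$ over $N_0[\tilde C]$ ``directly'' in $V[H][C']$ using the directed closure of $\tilde i(\Q)$ \emph{in} $N_0[\tilde C]$. That closure is only usable for descending sequences lying in $N_0[\tilde C]$; a recursion carried out in $V[H][C']$ uses $H$ and $C'$ as parameters, and its partial outputs need not land in $N_0[\tilde C]$ (which is closed under $\lambda$-sequences from $V_0[\tilde C]$, not from $V_0[\tilde C][H]$). The correct route --- the one the paper takes --- is to pass to the term forcing $\hat\R = \termspace^{V_0}(\A^0 * \dot\U^0\restriction\theta, \dot\R)$, observe that $i(\hat\R)$ is $\gamma$-closed in $V_0$ by the closure of $N_0$, build a generic $R^* \in V_0$ for it over $N_0$ (below a \emph{term} $\dot r$ for the master condition, chosen using $i\restriction\hat\R \in N_0$), and only then realize $R^*$ by the Cohen-plus-$\U^0$ generic to obtain $\tilde H$. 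Your final paragraph gestures at the right cardinality bookkeeping but locates the construction in the wrong model.
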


\smallskip

\noindent Global notation: $V$\index[Notation]{$V$},
                            $j$\index[Notation]{$j$},
                            $M$\index[Notation]{$M$},
                            $\phi$\index[Notation]{$\phi$}

\smallskip

\begin{proof}
    The construction will be reminiscent of that of $\A * \dot\U$ from Section \ref{abcu}.
    This resemblance is not coincidental and will be used later, see Section \ref{firstprikry}.
    We will freely use the notation and ideas of Section \ref{abcu}.
    Objects in the current construction typically have
    names as in Section \ref{abcu} decorated with a superscript $0$
    
    We work in $V_0$.
    Let $\A^0$ be the poset of finite partial functions $a$ from $\delta$ to $2$ with $\dom(a) \subseteq \theta$.
    Of course $\A^0$ is equivalent to $\Add(\omega, \theta)$, and $\A^0 \restriction \alpha = \A^0$
    for $\theta \le \alpha < \delta$: defining $\A^0$ in this artificial way just makes the following definitions
    more uniform.
    Let $A^0$ be $\A^0$-generic. As in Section \ref{abcu} we define posets $\B^0 \in V$ and $\U^0 \in V[A^0]$
    such that:
    \begin{itemize}
    \item $\B^0$ and $\U^0$ have the same set of conditions.
    \item 
      The support of ${\B}^0$ consists of $\alpha < \delta$ such that
      $\phi_0(\alpha)$
      is a pair $(\psi_0(\alpha), \psi_1(\alpha))$ with the following properties:
      \begin{itemize}
      \item $\psi_0(\alpha)$ is an 
      $\A^0 \restriction \alpha * \U^0 \restriction \alpha$-name for a $<\alpha$-directed closed
        forcing poset.
      \item $\psi_1(\alpha)$ is an 
        $\A^0 \restriction \alpha * \U^0 \restriction \alpha$-name.
      \end{itemize}  
    \item  
      An element $b \in \B^0$ is a function such that $\dom(b)$ is an Easton subset of the support of $\B^0$, and 
      $b(\alpha)$ is an $\A^0 \restriction \alpha * \U^0 \restriction \alpha$-name for an element of
      $\psi_0(\alpha)$.  
    \item For $b_0, b_1 \in \B^0$, $b_1 \le b_0$ if and only if $\dom(b_0) \subseteq \dom(b_1)$ and
      $(0, b_1 \restriction \alpha) \forces b_1(\alpha) \le b_0(\alpha)$ for all $\alpha \in \dom(b_0)$.
    \item For $u_0, u_1 \in  \U^0$, $u_1 \le u_0$ if and only if $\dom(u_0) \subseteq \dom(u_1)$ and 
      there is $a \in A^0$ such that $(a \restriction \alpha , u_1 \restriction \alpha) \forces u_1(\alpha) \le u_0(\alpha)$
      for all $\alpha \in \dom(u_0)$.
    \end{itemize}

\smallskip
    
    \noindent Global notation: $\A^0$\index[Notation]{$\A^0$},
                               $\B^0$\index[Notation]{$\B^0$},
                               $\U^0$\index[Notation]{$\U^0$}
     
\smallskip

    Let $V = V_0[A^0 * U^0]$ where $A^0 * U^0$ is $\A^0 * \U^0$-generic over $V_0$.
    We record a few remarks:
    \begin{itemize}
    \item $a \restriction \alpha = a \restriction \min(\alpha, \theta)$ for all $a \in \A^0$,
      and similarly for $\A^0$ and $A^0$.
    \item Since $\theta$ is supercompact, $\theta \notin \dom(\phi_0)$.
    \item We can view $\A^0 * \U^0$ as a two-step iteration, forcing first with
      $\Add(\omega, \theta) * \dot\U^0 \restriction \theta$ and then with a forcing poset
      ${\FL}^0$ defined in $V_0[A^0 * U^0 \restriction \theta]$.
    \item   The forcing poset $\FL^0$ is essentially a Laver preparation on the interval $(\theta, \delta)$,
      with the minor modification that the guessing function is just guessing names for forcing posets rather
      than pairs consisting of a name for a forcing poset and an ordinal: in the standard Laver preparation the role of the ordinals
      is to ``space out'' the support of the image of the preparation under supercompactness embeddings,
      and in our context this is handled by the properties of $\phi_0$.
    \item  In $V$ we have a universal indestructible Laver function $\phi$ on $(\theta, \delta)$, given
      by $\phi: \alpha \mapsto \psi_1(\alpha)[A^0 * U^0 \restriction \alpha]$. 
    \item $2^\omega = \theta$ in $V$.
    \item $\A^0 * \U^0 \restriction \theta$ is $\theta$-cc in $V_0$.
    \item The poset ${\FL}^0$ is  $\theta$-directed closed in $V_0[A^0 * U^0 \restriction \theta]$.
\end{itemize}

\smallskip
    
    \noindent Global notation: $\FL^0$\index[Notation]{$\FL^0$},

\smallskip

The main point is to establish that $\theta$ is indestructibly generically supercompact via
adding Cohen reals. Since the argument is essentially that for 
\cite[Claim 4.12]{NeemanUpto} with certain simplifications, we have relegated
it to Appendix \ref{appendixA}. For use in Lemma \ref{selection}, we note that
if $\mathbb Q \in V$ is $<\theta$-directed closed and our goal is generic supercompactness for $\theta$ via
Cohen reals in the extension by $\Q$, then we lift $i:V_0 \rightarrow N_0$ where $i$ witnesses 
a high degree of supercompactness for $\theta$ in $V_0$, and the forcing at $\theta$
in the second coordinate of $i(\A^0 * \U^*)$ is $\FL^0 * \Q$.

    It remains to lift $j_0$ onto $V$, which is comparatively straightforward. Let
    $V_1 = V_0[A^0 * U^0 \restriction \theta]$ and $M_1 = M_0[A^0 * U^0 \restriction \theta]$,
    so that easily $j_0$ lifts to $j_1: V_1 \rightarrow M_1$. It is easy to verify that
    $j_1( {\FL}^0 ) \restriction \delta = {\FL}^0$.
    We construct a compatible generic object $L^* \in V_1$ for $j_1( {\FL}^0 )$ as follows:
    \begin{itemize}
    \item $L^* \restriction \delta = L^0$.
    \item $L^* \restriction (\delta, j_0(\kappa))$ is constructed by counting antichains and closure.
    \item $L^* \restriction (j_0(\kappa), j_0(\delta))$ is constructed by counting antichains and closure,
      working below a master condition chosen as a lower bound for $j_0[L^0 \restriction (\kappa, \delta)]$.
    \end{itemize}
    
    Now we lift as usual to get $j: V \rightarrow M = M_1[L^*] = M_1[L^0][L^* \restriction (\delta, j(\delta))]$.
    Each $\kappa_\alpha$ is (indestructibly) supercompact in $M$  by the Levy-Solovay theorem and the fact that $L^*$ is generic for a 
    Laver preparation over $M_1$.

 \end{proof}

\begin{remark} If $\lambda < \delta$ and $\lambda$ is supercompact in either $V$ or $M$, and
  $\alpha$ is such that $\lambda < \kappa_\alpha$, then (by the agreement between $V$ and
  $M$) $\lambda$ is $\kappa_\alpha$-supercompact in both $V$ and $M$. Since $\kappa_\alpha$ is supercompact
  in both $V$ and $M$, $\lambda$ is supercompact both in $V$ and in $M$. 
\end{remark} 

\begin{remark} The reader may be wondering why we need the $\kappa_\alpha$'s to be supercompact in $M$.
  The point is that we will eventually be doing a version of Prikry forcing at $\kappa$, so that
  each Prikry point $\tau$ comes with reflections of the $\kappa_\alpha$'s which are fully supercompact.
  This is convenient because when $\tau$ and $\tau^*$ are successive Prikry points, so that $\tau^*$
  is far above the reflections of the $\kappa_\alpha$'s attached to $\tau$, we need these reflections to
  be supercompact for a long way past $\tau^*$. 
\end{remark}

\begin{remark} Our starting hypotheses are consistent relative to the existence of a $2$-huge cardinal,
  and in fact relative to the hypothesis that there is a cardinal which is both
  supercompact and huge, which we will show is weaker.
  \begin{itemize}
  \item Let $\kappa$ be $2$-huge, and fix $i: V \rightarrow N$ such that $\crit(i) = \kappa$,
    $i(\kappa) = \lambda$, $i(\lambda) = \mu$ and ${}^\mu N \subseteq N$. Then
    easily $V_\mu \models \mbox{``$\kappa$ is huge with target $\lambda$''}$.
    Also $\kappa$ is supercompact up to $\lambda$, by elementarity
    and closure $\lambda$ is supercompact up to $\mu$, so
    that $\kappa$ is supercompact up to $\mu$ and hence
    $V_\mu \models \mbox{``$\kappa$ is supercompact''}$.
  \item   Suppose now that $\kappa$ is supercompact and also is huge with target $\lambda$,
    as witnessed by $i: V \rightarrow N$ with $\crit(i) = \kappa$, $i(\kappa) = \lambda$,
    ${}^\lambda N \subseteq N$. By elementarity, $\lambda$ is supercompact in $N$.
    By the agreement between $V$ and $N$, $\kappa$ is supercompact up to $\lambda$ in $N$,
    so by reflection there are unboundedly many $\alpha < \kappa$ with $\alpha$ supercompact
    up to $\kappa$. Applying $i$, in $N$ there are unboundedly many $\beta < \lambda$ which
    are supercompact up to $\lambda$. Let $B$ be the set of such $\beta$, where 
    since $\lambda$ is supercompact in $N$ it follows that every $\beta \in B$ is supercompact in $N$.
    
    For any $\eta$ with $\kappa < \eta < \lambda$, let $U$ be the supercompactness measure
    on $P_\kappa \eta$ derived from $i$. It is easy to see that every $\beta \in B \cap \eta$
    is supercompact in $Ult(V, U)$. In the universe $V_\lambda$ every $\beta \in B$ is supercompact,
    and every $\beta \in B \cap \eta$ is supercompact in the ultrapower by $U$. It is now easy
    to get the starting hypotheses. 
  \end{itemize}
\end{remark}

\subsection{$\raux(\lambda)$ and $\laux(\rho, \lambda)$} \label{Rlambda}

We now work in the universe $V$ constructed in the last section, and construct auxiliary posets
$\raux(\lambda)$ and $\laux(\rho, \lambda)$. The subscript ``aux'' is to underline that we
will not actually force with these posets during the main construction. Their role is to help
us choose parameters for the main construction, which we will do in Section \ref{choiceofrho}.

Let $\lambda$ be a supercompact cardinal with $\lambda < \delta$.  
We define:
\begin{itemize}
\item $\lambda_0 = \lambda$.
\item For $n < \omega$, $\lambda_{n+1}$ is the least supercompact cardinal greater than $\lambda_n$. 
\item $\lambda_\omega = \sup_{n<\omega} \lambda_n$.
\item $\lambda_{\omega+1} = \lambda_\omega^+$.
\item For $0 < n < \omega$, $\lambda_{\omega+n+1}$ is the least supercompact cardinal greater than $\lambda_{\omega+n}$.
\end{itemize}

In $V$ we define a poset $\raux(\lambda)$ to be the product of the following three posets:
\begin{enumerate}
\item  $\East^{E_0}(\lambda_{\omega+1}, <\lambda_{\omega+2}) \times \prod_{n<\omega}\East^{E_0}(\lambda_n,<\lambda_{n+1})$.
  Here $\East$ is the Easton collapse defined above in Section \ref{projections} and $E_0$ is the set of inaccessible closure
  points in the interval $(\theta, \delta)$ of our initial Laver function $\phi_0$: note that elements of $E_0$
  are inaccessible closure points of $\phi$, $\dom(\phi) \subseteq E_0$, and $E_0$ is stationary in every supercompact cardinal up to $\delta$.

\item $\Coll(\lambda_{\omega+1},<\lambda_{\omega+2}) \times \prod_{n<\omega}
\Coll(\lambda_n,<\lambda_{n+1})$.

\item $\Add(\lambda_{17}, \lambda_{\omega+2})
\times \prod_{n < \omega} \Add(\lambda_n,\lambda_{n+2}) \times \Add(\lambda_{\omega+1}, \lambda_{\omega+2})$. 
\end{enumerate}

\smallskip

\noindent Global notation: $\lambda_i$\index[Notation]{$\lambda_i$},
$\raux(\lambda)$\index[Notation]{$\raux(\lambda)$}

\smallskip

Recall that in $V_0$ the cardinal $\theta$ is supercompact, and $\phi_0 \restriction \theta$ is
an indestructible Laver function for $\theta$.
We claim that there are many cardinals $\rho < \theta$ such that
in $V_0$ the cardinal $\rho$ is a limit of $\omega$ many inaccessible cardinals,
and there is an active stage $\bar \theta < \rho$ of the preparation forcing from Section \ref{preparation} 
such that $\rho$ becomes
an $\omega$-successor cardinal in $V_0[A^0 \restriction \bar\theta * (U^0 \restriction \bar\theta + 1)]$. 
To see this let $\rho'$ be the limit of the first $\omega$ $V_0$-inaccessible cardinals greater than $\theta$.
Use the guessing property of $\phi_0 \restriction \theta$ to anticipate a suitable $<\theta$-directed closed collapsing forcing
defined in $V_0[A^0 * U^0 \restriction \theta]$ which makes $\rho'$ into an $\omega$-successor cardinal.

Let $\mathrm{Index}$ be the set of all such $\rho$.
For each $\rho \in \mathrm{Index}$ let $\bar\theta(\rho)$ be the least ordinal such that
$\rho$ is an $\omega$-successor cardinal in $V_0[A^0 \restriction \bar\theta(\rho) * (U^0 \restriction \bar\theta(\rho) + 1)]$,
let $W(\rho) = V_0[A^0 \restriction \bar\theta(\rho) * (U^0 \restriction \bar\theta(\rho) + 1)]$,
and define $\laux(\rho,\lambda)$ to be the poset $\Coll(\omega,\rho) \times \Coll^{W(\rho)}(\rho^+,\lambda_1)$.
It is routine to check that $\Coll^{W(\rho)}(\rho^+,\lambda_1)$ is $\rho$-distributive in any $\lambda$-closed extension of $V$,
a fact which will be used in the proof of Lemma \ref{selection}. 

\smallskip

\noindent Global notation: 
$\mathrm{Index}$\index[Notation]{$\mathrm{Index}$},
$\bar\theta(\rho)$\index[Notation]{$\bar\theta(\rho)$},
$W(\rho)$\index[Notation]{$W(\rho)$},
$\laux(\rho, \lambda)$\index[Notation]{$\laux(\rho, \lambda)$} 

\smallskip

\begin{remark} The proof of Lemma \ref{selection} uses ideas from unpublished work of Hayut. 
\end{remark}

\begin{lemma}\label{selection} For each supercompact cardinal
 $\lambda$ with  $\kappa < \lambda < \delta$,  there is $\rho \in \mathrm{Index}$ such that
$\forces^V_{\laux(\rho,\lambda) \times \raux(\lambda)} \mbox{``the tree property holds at $\lambda_{\omega+1}$''}$  \end{lemma}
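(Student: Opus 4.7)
The plan is to reduce to Fact \ref{Itay3.10} after a preparatory forcing step. Since $\laux(\rho, \lambda) \times \raux(\lambda)$ is a product, I will first force with $\raux(\lambda)$ over $V$ to obtain $V[R]$, and then apply the Fact inside $V[R]$ with $\FM(\rho) = \laux(\rho, \lambda)$. This handles the cardinality restriction of the Fact, since $|\laux(\rho,\lambda)| \le \lambda_1$, whereas I will take $\nu = \lambda_\omega$, $\nu^+ = \lambda_{\omega+1}$, $\kappa_2 = \theta$, and $\kappa_{m+2} = \lambda_m$ for $m \ge 1$, so that $\mathrm{Index} \subseteq \theta = \kappa_2$ and $|\FM(\rho)| \le \kappa_3$.

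Before applying the Fact, I need to record the structural properties of $V[R]$. Each factor of $\raux(\lambda)$ is $<\lambda_0$-directed closed, so since $\theta < \lambda_0$, the generic supercompactness of $\theta$ via Cohen reals from Lemma \ref{fromVzerotoV} is preserved in $V[R]$. The $\Coll$ factors arrange $\lambda_{n+1} = \lambda_n^+$ for $n \ge 0$ and $\lambda_{\omega+2} = \lambda_{\omega+1}^+$, the $\Add$ factors produce $2^{\lambda_n} \ge \lambda_{n+2}$, and the Easton collapse factors provide an absorption reservoir (via the results of Section \ref{projections}) for forcings arising from images of supercompactness embeddings.

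For hypothesis (1) of the Fact, given large $m$ I choose $n$ with $\lambda_n > \kappa_m$. Since $\lambda_n$ was indestructibly supercompact in $V$ and $\raux(\lambda)$ decomposes as a $\lambda_n$-cc part below $\lambda_n$ together with a $<\lambda_n$-directed closed part at and above $\lambda_n$, standard lifting arguments produce generic $\lambda_{\omega+1}$-supercompactness embeddings of $\lambda_n$ in suitable extensions of $V[R]$. A Cohen-type poset $\P$ adds such an embedding $\pi$ with $\crit(\pi) = \lambda_n > \kappa_m$ and $\pi$ discontinuous at $\lambda_{\omega+1}$, while $\Q$ can be taken as a product adding $\kappa_m$ mutually generic filters for $\P$; the requisite cardinal-preservation and cofinality-change properties follow from the chain condition and closure of these posets together with Easton's lemma.

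Hypothesis (2) is the delicate step. I exploit that $\theta$ remains generically $\lambda_{\omega+1}$-supercompact via Cohen reals in $V[R]$. For stationarily many $X \prec R$ with $\lambda_{\omega+1} \subseteq X$ and $|X| = \lambda_{\omega+1}$, a suitable Cohen-on-$\omega$ poset of size below $\lambda_{\omega+1}^{++}$ produces an embedding $\pi: M \to M^*$ with $\crit(\pi) = \theta$ and $\pi(\theta) > \lambda_{\omega+1}$, where $M$ is the transitive collapse of $X$; this poset is $\aleph_2$-Knaster, and hence $\nu^+$-Knaster. The main obstacle is producing an $L$ which is $\pi(\FM)(\nu) = \pi(\laux)(\lambda_\omega, \pi(\lambda))$-generic over $M^*$, a poset which decomposes as a $\Coll(\omega, \lambda_\omega)$ component together with a Levy collapse of $\pi(\lambda_1)$ to $\lambda_{\omega+1}$ defined in $\pi(W)(\lambda_\omega)$. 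The $\Coll(\omega, \lambda_\omega)$ component can be enumerated inside the Cohen generic, since $M^*$ has only $\lambda_{\omega+1}$ dense sets to avoid and the Cohen poset has more than enough coordinates. The Levy collapse component is absorbed into the $\Coll(\lambda_{\omega+1}, <\lambda_{\omega+2})$ and Easton collapse factors of $\raux(\lambda)$ concentrated at $\lambda_{\omega+1}$. For this absorption to be well-defined, the image function $\pi(\phi_0)$ must anticipate the appropriate collapse at stage $\pi(\bar\theta)(\lambda_\omega) < \lambda_\omega$, which is arranged using the guessing property of the universal indestructible Laver function $\phi_0$. A density argument then yields the desired $L$, and Fact \ref{Itay3.10} produces the required $\rho \in \mathrm{Index}$.
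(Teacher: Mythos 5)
Your overall strategy -- force with the $\raux$ part first and then verify the two hypotheses of Fact \ref{Itay3.10} with $\nu = \lambda_\omega$, $\kappa_2 = \theta$, and $\FM(\rho) = \laux(\rho,\lambda)$ -- is the same as the paper's, and your treatment of hypothesis (1) is essentially right in outline. But your verification of hypothesis (2) has two genuine gaps, both concentrated in the production of the $\pi(\FM)(\nu)$-generic object $L$ over $M^*$. First, the claim that ``the $\Coll(\omega,\lambda_\omega)$ component can be enumerated inside the Cohen generic'' is false: a filter on $\Coll(\omega,\lambda_\omega)$ that is generic over $M^*$ must meet the dense sets $D_\alpha = \{p : \alpha \in \rge(p)\}$ for every $\alpha < \lambda_\omega$ (all of which lie in $M^*$), and hence codes a genuine surjection from $\omega$ onto $\lambda_\omega$; no ccc extension of $V[R]$ can contain such an object. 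The collapse must appear as an explicit factor of the poset $\P_X$, as in the paper, where $\P_X = \Add(\omega,\pi(\theta)-\theta) \times \Coll(\omega,\lambda_\omega)$; this product is still $\nu^+$-Knaster since the second factor has size $\lambda_\omega$, and mutual genericity then gives genericity of $h \times C$ over $M^* \subseteq M[A]$.

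Second, your plan to absorb the Levy-collapse component $\Coll^{M}(\lambda_{\omega+1},\pi_X(\lambda_1))$ into the collapse factors of $\raux(\lambda)$ does not work as stated: that poset is computed inside $M^*$, a model that only exists in the Cohen extension $M[A]$ and whose parameter $\pi_X(\lambda_1)$ depends on $A$, so it is not a ground-model poset that the projections of Section \ref{projections} could have been aimed at in advance. The paper instead builds this generic filter $C$ by hand inside $V[G]$, using that $\vert M \vert = \lambda_{\omega+1}$ and $M$ is closed under $\lambda_\omega$-sequences, and the latter is only possible if $2^{\lambda_\omega} = \lambda_{\omega+1}$ holds in the model where $X$ is chosen. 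This is exactly why the paper first replaces $\raux(\lambda)$ by $\raux'(\lambda)$, shrinking $\Add(\lambda_{17},\lambda_{\omega+2})$ to $\Add(\lambda_{17},\lambda_{\omega+1})$, and recovers the full statement afterwards by a chain-condition and homogeneity argument. After forcing with the full $\raux(\lambda)$, as you do, one has $2^{\lambda_\omega} \ge \lambda_{\omega+2}$, so no $X$ of size $\lambda_{\omega+1}$ closed under $\lambda_\omega$-sequences exists and the hand-construction of $C$ is unavailable; your proposal omits this reduction and supplies no working substitute for it.
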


\begin{proof}
  For technical reasons we will prove a slightly different (but equivalent) version of the conclusion.
  Let $\raux'(\lambda)$ be the result of replacing $\Add(\lambda_{17}, \lambda_{\omega+2})$
  by $\Add(\lambda_{17}, \lambda_{\omega+1})$ in the product that defines $\raux(\lambda)$.
  We will show that for some $\rho$,
  $\forces^V_{\laux(\rho,\lambda) \times \raux'(\lambda)} \mbox{``the tree property holds at $\lambda_{\omega+1}$''}$.
  This is good enough because if $H \times G$ is $\laux(\rho,\lambda) \times \raux(\lambda)$-generic over $V$
  and $T \in V[H \times G]$ with $T$ a $\lambda_{\omega+1}$-tree,
  then by chain condition and homogeneity there is a submodel $V[H \times G']$ where
  $T \in V[H \times G']$ and $H \times G'$ is $\laux(\rho,\lambda) \times \raux'(\lambda)$-generic over $V$

Let $G$ be $\raux'(\lambda)$-generic over $V$. 
  We will use Fact \ref{Itay3.10} in $V[G]$  with the parameters set as follows:
  \begin{itemize}
  \item $\kappa_2$ is $\theta$
  \item $\kappa_n$ is $\lambda_{n-3}$ for $n > 2$.
  \item $\nu$ is $\lambda_\omega$ and $\nu^+$ is $\lambda_{\omega+1}$. 
  \item Index is the set Index we just defined. 
  \item $M(\rho)$ is $\laux(\rho,\lambda)$. 
  \end{itemize}
  
  Once we have verified that the hypotheses of Fact \ref{Itay3.10} hold in $V[G]$
  the conclusion will be immediate.
  For the first hypothesis, let $n \ge 18$ and observe that
  $\raux'(\lambda)$ can be factored as $\R_0 \times \R_1$ where $\R_0$ is $\lambda_{n+2}$-cc
  and $\R_1$ is $<\lambda_{n+2}$-directed closed. We decompose $G$ accordingly as $G_0 \times G_1$.
  By indestructibility, there is an embedding
  $j$ defined in $V[G_1]$ witnessing that $\lambda_{n+2}$ is $\nu^+$-supercompact in that model.
  Forcing over $V[G_1]$ with $\P = j(\R_0)/j[G_0]$ will add a generic object which enables us to lift
  $j$ onto $V[G]$.

  Clearly $\R_0$ is a product of terms of the form $\Coll(\lambda_m, < \lambda_{m+1})$, $East^{E_0}(\lambda_m, < \lambda_{m+1})$ and $\Add(\lambda_m, \lambda_{m+2})$ 
  for $m \le n+1$, together with $\Add(\lambda_{17}, \lambda_{\omega+1})$.
  Since many factors in $\R_0$ are fixed by $j$, the corresponding factors in $j(R_0)/G_0$ are
  trivial. It follows that  $j(\R_0)/G_0$ is the product of the factors $\Coll(\lambda_{n+1}, [\lambda_{n+2}, j(\lambda_{n+2}))$,
    $East^{j(E_0)}(\lambda_{n+1}, [\lambda_{n+2}, j(\lambda_{n+2}))$,
    $\Add(\lambda_n, j(\lambda_{n+2}) - \lambda_{n+2})$, $\Add(\lambda_{n+1}, j(\lambda_{n+3}) - j[\lambda_{n+3}])$  
   and $\Add(\lambda_{17}, j(\lambda_{\omega+1}) - j[\lambda_{\omega+1}])$.

   Now we let $\Q$ be the product of $\lambda_n$ copies of $j(\R_0)/G_0$ with the following supports:
   full support for the $<\lambda_{n+1}$-closed components,
   supports of size less than $\lambda_n$ for the
   components of form $\Add(\lambda_n, j(\lambda_{n+2}) - \lambda_{n+2})$  and
  supports of size less than $\lambda_{17}$ for the
  components of form $\Add(\lambda_{17}, j(\lambda_{\omega+1}) - j[\lambda_{\omega+1}])$.
  It is routine to check that $\Q$ preserves cardinals up to and including $\lambda_{n+1}$ and
  forces that $\cf(\lambda_{\omega+1}) = \lambda_{n+1}$, so that $\Q$ is as required.

  For the second hypothesis, we will use the indestructible generic supercompactness
  of $\theta$ in $V$ secured by Lemma \ref{fromVzerotoV}
  to define a certain generic embedding, and then reflect the
  existence of this embedding to a well-chosen elementary substructure $X \prec R$
  where $R$ is a suitable rank initial segment of $V[G]$.
  
  More precisely, let $\pi: V[G] \rightarrow V^*$ be a generic embedding added by
  the forcing poset $\Add(\omega, \pi(\theta) - \theta)$ such that:
  \begin{itemize}
  \item $\crit(\pi) = \theta$
  \item $\pi(\theta) > \lambda_{\omega+1}$
  \item $\pi$ is discontinuous at $\lambda_{\omega+1}$.
  \end{itemize}
  From the proof of Lemma \ref{fromVzerotoV} we recall that
\begin{itemize} 
\item $V = V_0[A^0 * U^0 \restriction \theta * L^0]$, so that
  $V[G] = V_0[A^0 * U^0 \restriction \theta * L^0 * G]$. 
\item  $\pi$ is a lift of a supercompactness embedding $i:V_0 \rightarrow N_0$
  with critical point $\theta$ defined  in $V_0$, with the property that 
  the forcing at coordinate $\theta$ in $i(\U^0)$ is  $\FL^0 * \raux'(\lambda)$.
\item  The embedding $i$  may witness an arbitrarily high degree of supercompactness for $\theta$
  in $V_0$.
\end{itemize}

  We claim that $\lambda_\omega \in \pi(\mathrm{Index})$: this is easy because $\lambda_\omega$ is a limit
  of supercompact cardinals in $N_0[A^0 * U^0]$ but becomes $\lambda^{+\omega}$ in
  $N_0[A^0 * U^0 * L^0 * G]$. By definition
  $\pi(\FM)(\lambda_\omega) = \pi(\laux)(\lambda_\omega, \pi(\lambda)) = \Coll(\omega, \lambda_\omega) \times \Coll^{N_0[A^0 * U^0 * L^0 * G]}(\lambda_{\omega+1}, \pi(\lambda_1))$.
  Recalling that $V[G] = V_0[A^0 * U^0 * L^0 * G]$ and that $i$ can witness arbitrary levels of supercompactness,
  we may arrange that $\Coll^{N_0[A^0 * U^0 * L^0 * G]}(\lambda_{\omega+1}, i(\lambda_1)) = \Coll^{V[G]}(\lambda_{\omega+1}, \pi(\lambda_1))$,
  in particular it is defined and $\lambda_\omega$-closed  in $V[G]$.

  In summary, we have shown that there is a generic embedding $\pi:V[G] \rightarrow V^*$ added
  by $\Add(\omega, \pi(\theta) - \theta)$ such that
  $\crit(\pi) = \theta$, $\pi(\theta) > \lambda_{\omega+1}$, $\pi$ is discontinuous at $\lambda_{\omega+1}$,
  $\lambda_\omega \in \pi(\mathrm{Index})$ and $\pi(\laux)(\lambda_\omega, \pi(\lambda)) =  \Coll(\omega, \lambda_\omega) \times \Coll^{V[G]}(\lambda_{\omega+1}, \pi(\lambda_1))$.
  Observe that $2^{\lambda_\omega} = \lambda_{\omega+1}$ in $V[G]$ (it was for this reason that we replaced
  $\raux(\lambda)$ by $\raux'(\lambda)$). We choose $R$ a long enough rank initial segment of $V[G]$ that 
  for any algebra of finitary functions on $R$, 
  we may find $X \prec R$ which has size $\lambda_{\omega+1}$, is closed
  under $\lambda_\omega$-sequences, and reflects the statement asserting the existence of a suitable generic embedding $\pi$
 
  Let $M$ be the collapse of $X$, and let $A \times h$ be generic over $V[G]$ for
  $\P_X = \Add(\omega, \pi(\theta) - \theta) \times \Coll(\omega, \lambda_\omega)$. 
  Using $A$, we may define in $M[A]$ a generic embedding $\pi_X: M \rightarrow M^* \subseteq M[A]$
  such that $\crit(\pi_X) = \theta$, $\pi_X(\theta) > \lambda_{\omega+1}$, $\pi_X$ is discontinuous at $\lambda_{\omega+1}$,
  $\lambda_\omega \in \pi_X(\mathrm{Index})$ and $\pi_X(\laux)(\lambda_\omega, \pi_X(\lambda)) =  \Coll(\omega, \lambda_\omega) \times \Coll^M(\lambda_{\omega+1}, \pi_X(\lambda_1))$.
  Since $\vert M \vert = \lambda_{\omega+1}$ and ${}^{\lambda_\omega} M \subseteq M$, we may build
  a filter $C \in V[G]$ which is generic over $M$  for $\Coll^M(\lambda_{\omega+1}, \pi_X(\lambda_1))$.

  Now $M[C] \subseteq V[G]$ and $A \times h$ is generic over $V[G]$, so $A \times h$ is generic over $M[C]$,
  and since $M^* \subseteq M[A]$ we see that $h \times C$ is generic over $M^*$. 
  It follows that forcing over $V[G]$ with the $\lambda_{\omega+1}$-Knaster poset $\P_X =\Add(\omega, \pi_X(\theta) - \theta) \times \Coll(\omega, \lambda_\omega)$
  has added the embedding $\pi_X:M \rightarrow M^*$ and a filter $h \times C$ which is $\pi_X(\laux)(\lambda_\omega, \pi_X(\lambda))$-generic over $M^*$. 
  Since we constructed $X$ to be closed under an arbitrary algebra on $R$, there are stationarily many $X$ and we have fulfilled
  the second clause in the hypotheses of Fact \ref{Itay3.10}.   
\end{proof} 

\subsection{Selecting $\rho$} \label{choiceofrho} 
Using the fact that there are $\theta^+$
supercompact cardinals above $\kappa$, we choose supercompact cardinals $\lambda^a$ and $\lambda^b$
above $\kappa$ such that $\lambda^a_{\omega+3} < \lambda^b$
and the cardinals $\lambda^a, \lambda^b$ select the same cardinal $\rho$
from Lemma \ref{selection}.  We can assume that $(\rho,\lambda^{a},\lambda^{b})$
is the lexicographically least such triple with this property: recalling that
$j : V \to M$ is a $\delta^+$-supercompactness embedding with critical point $\kappa$,
 we see that $(\rho,\lambda^{a},\lambda^{b})$ is definable from $\kappa$ in $M$ using the same definition.

Having fixed $\rho$, we also fix some related parameters. We set  $\bar\theta = \bar\theta(\rho)$
and ${\bar W} = W(\rho)$. It follows that for $\lambda = \lambda^a, \lambda^b$
we have:
\begin{itemize}
\item $\laux(\rho,\lambda) = \Coll(\omega,\rho) \times \Coll^{\bar W}(\rho^+,\lambda_1)$.
\item It forced by $\laux(\rho, \lambda) \times \raux(\lambda)$ that the tree property
  holds at $\lambda_{\omega+1}$. 
\end{itemize}

It follows that there is  a measure one set of points $\tau$ below
$\kappa$ with reflected versions $\Lambda^a(\tau)$ and $\Lambda^b(\tau)$
of the cardinals $\lambda^{a}$ and $\lambda^{b}$.
To be more specific:
\begin{enumerate}
\item    $j(\Lambda^x_i)(\kappa) = \lambda^x_i$ 
  for $x \in \{a, b\}$. 
\item $\theta < \tau < \Lambda^a(\tau) < \Lambda^b(\tau) < \kappa$.
  where $\Lambda^a(\tau)$ and $\Lambda^b(\tau)$ are supercompact.
\item Setting $\Lambda^x_i(\tau) = \Lambda^x(\tau)_i$ for $x \in \{ a, b \}$ and
  $i < \omega + \omega$, $\Lambda^a_{\omega + 3}(\tau) < \Lambda^b(\tau)$.
\item  It is forced by
  $\laux(\rho, \Lambda^x(\tau)) \times \raux(\Lambda^x(\tau))$ that the tree property holds at
  $\Lambda^x_{\omega+1}(\tau)$ for $x \in \{a, b\}$. 
\end{enumerate}

\smallskip

\noindent Global notation: $\lambda^a$\index[Notation]{$\lambda^a$},
$\lambda^b$\index[Notation]{$\lambda^b$},
$\rho$\index[Notation]{$\rho$},
$\bar\theta$\index[Notation]{$\bar\theta$},
$\bar{W}$\index[Notation]{$\bar{W}$},
$\Lambda^a(\tau)$\index[Notation]{$\Lambda^a(\tau)$},
$\Lambda^b(\tau)$\index[Notation]{$\Lambda^b(\tau)$},
$\Lambda^a_i(\tau)$\index[Notation]{$\Lambda^a_i(\tau)$},
$\Lambda^b_i(\tau)$\index[Notation]{$\Lambda^b_i(\tau)$}  
  
\smallskip

\section{More preparation} \label{moreprep}

Let  $Y$ be the set of supercompact cardinals $\tau$ less than $\kappa$
which are such that $\Lambda^a(\tau)$ and $\Lambda^b(\tau)$ are defined,
and 
are closed under the function $\sigma \mapsto \Lambda^b_{\omega+3}(\sigma)$.
 We define an Easton support iteration $\FL$ which is nontrivial only at each $\tau \in Y \cup \{\kappa\}$.
 For ease of notation we specify the forcing at step $\kappa$ and
note that the forcing at $\tau$ can be obtained by replacing $\kappa$ with
$\tau$ and $\lambda^z_i$ by $\Lambda^z_i(\tau)$ (for $z \in \{a , b \}$ and
$i \in \omega + \omega$).

\smallskip

\noindent Global notation: $Y$\index[Notation]{$Y$},  $\FL$\index[Notation]{$\FL$}

\smallskip

The forcing $\FL(\tau)$ at stage $\tau$ will be  $<\Lambda^a_{17}(\tau)$-closed.
Since $2^\omega = \theta$ in $V$, and $Y$ is a set of supercompact cardinals,
the forcing $\FL$ will be much more than $\theta^+$-closed. This will be important
in Section \ref{GroupIII}.

Of course we define the forcing at $\kappa$ in $V[L \restriction \kappa]$ where $L \restriction \kappa$ is $\FL_\kappa$-generic.
The preparation forcing at $\kappa$ will be defined in stages, and will ultimately
have components $\FL^b$, $\I^b$, and $\A_e \times \J^c$. 

\smallskip

\noindent Global notation: $\FL^b$\index[Notation]{$\FL^b$},
                           $\I^b$\index[Notation]{$\I^b$},
                           $\A_e$\index[Notation]{$\A_e$},
                           $\J^c$\index[Notation]{$\J^c$}
\smallskip

Note that by Fact \ref{superdestructible},
the cardinals $\lambda^b_n$ are supercompact but no longer indestructible in $V[L \restriction \kappa]$.
Let ${\FL}^b \in V[L \restriction \kappa]$ be a Laver preparation
for the interval $(\lambda^a_{\omega+1}, \lambda^b_\omega)$,  defined using
the Laver function $\alpha \mapsto \phi(\alpha)[L \restriction \kappa]$ on this interval.
Let $L^b$ be ${\FL}^b$-generic over $V[L \restriction \kappa]$,
and let $\psi$ be the  universal indestructible Laver function
added by $L^b$ on the interval $(\lambda^a_{\omega+1}, \lambda^b_\omega)$.

\smallskip

\noindent Global notation: $\psi$\index[Notation]{$\psi$}

\smallskip

Working in $V[(L \restriction \kappa) * L^b]$, we define posets
$\A^b$, $\B^b$, $\C^b$, $\S^b =  ({\C}^b)^{+A^b * U^b}$ following the recipe
in Section \ref{abcu} with the parameters set as follows:
\begin{itemize}
\item $\mu_0 = \lambda^a_{17}$, $\mu_1 = \lambda^a_{\omega+1}$, $\mu_2= \lambda^a_{\omega+2}$,  $\mu_{n+3} = \lambda^b_n$ for
  $n < \omega$.
\item The universal indestructible Laver function is the function $\psi$ which we just added using $L^b$.
\end{itemize}

\smallskip

\noindent Global notation: $\A^b$\index[Notation]{$\A^b$},
$\B^b$\index[Notation]{$\B^b$}, $\C^b$\index[Notation]{$\C^b$},
$\S^b$\index[Notation]{$\S^b$}

\smallskip

\begin{remark} \label{abzeroone} Since $\FL^b$ is defined on the interval $(\lambda^a_{\omega+1}, \lambda^b_\omega)$,
  it is $<\lambda^a_{\omega+1}$-closed, so that
  $\A^b_0 = \Add^{V[L \restriction \kappa]}(\mu_0, [\mu_1, \mu_2))$ and $\A^b_1 = \Add^{V[L \restriction \kappa]}(\mu_1, [\mu_2,\mu_3))$.
\end{remark}

Let $I^b$ be generic over $V[(L \restriction \kappa) * L^b]$ for
$\I^b = \A^b * \U^b * \S^b$,
 where $I^b$ decomposes in the obvious way as $A^b * U^b * S^b$. 
 For the record, in $V[L \restriction \kappa * L^b * I^b]$ we have the
 following situation:
 \begin{itemize}
 \item The cardinals $\lambda^a_{\omega+1}, \lambda^a_{\omega+2}, \lambda^b_0, \lambda^b_1, \ldots \lambda^b_\omega, \lambda^b_{\omega+1}$ form
   a block of $\omega + 2$ consecutive cardinals.
 \item
    $2^{\lambda^a_{17}} = \lambda^a_{\omega+2}$, $2^{\lambda^a_{\omega+1}} = \lambda^b_0$,
   $2^{\lambda^a_{\omega+2}} = \lambda^b_1$, $2^{\lambda^b_n} = \lambda^b_{n+2}$ for $n < \omega$.
\end{itemize} 

\smallskip
 
\noindent Global notation:
 $\I^b$\index[Notation]{$\I^b$},
 $I^b$\index[Notation]{$I^b$}, 
 $A^b$\index[Notation]{$A^b$},
 $U^b$\index[Notation]{$U^b$},
 $S^b$\index[Notation]{$S^b$}

\smallskip
 
 Working over the model $V[(L \restriction \kappa) * L^b * I^b]$
 (but using some Cohen posets defined in inner models of this model)
 we will define a poset $\A_e \times \J^c$,
 where $\A_e = \Add^{V[(L \restriction \kappa) * L^b]}(\lambda^b_{17}, \lambda^b_{\omega+3})$.
 We digress briefly to prove that $\A_e$ has reasonable chain condition and distributivity
 properties in $V[(L \restriction \kappa) * L^b * I^b]$. The point of defining
 $\A_e$ in the submodel $V[(L \restriction \kappa) * L^b]$ is that after forcing with
 $\I^b$ we have $2^{\lambda^b_{16}} = \lambda^b_{18}$, so that $\Add^{V[(L \restriction \kappa) * L^b * I^b]}(\lambda^b_{17}, \lambda^b_{\omega+3})$
 collapses $\lambda^b_{18}$. 
 \begin{lemma} \label{samplecohen} 
   $\A_e$ is $<\lambda^b_{17}$-distributive and $\lambda^b_{18}$-Knaster in $V[(L \restriction \kappa) * L^b * I^b]$. 
\end{lemma}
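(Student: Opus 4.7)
The plan is to show both claims by factoring $\I^b$ using the machinery of Section \ref{abcu} and then applying Easton-type arguments over a suitably closed intermediate model. First observe that in $V[(L \restriction \kappa) * L^b]$ the cardinals $\lambda^b_{17}$ and $\lambda^b_{18}$ are inaccessible, so that $\A_e$ is natively $<\lambda^b_{17}$-closed and $\lambda^b_{18}$-Knaster there. The task is to show that these two properties persist after forcing with $\I^b$.

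Next I would apply Lemma \ref{portmanteaulemma}(\ref{portmanteau3}) to the $\A^b * \U^b * \S^b$ construction with parameter $n = 19$; under the dictionary $\mu_0 = \lambda^a_{17}$, $\mu_1 = \lambda^a_{\omega+1}$, $\mu_2 = \lambda^a_{\omega+2}$, $\mu_{k+3} = \lambda^b_k$ we have $\mu_{n+1} = \lambda^b_{17}$ and $\mu_{n+2} = \lambda^b_{18}$. The lemma expresses $\I^b$ as the projection of a product $\P_0 \times \P_1$ computed in $V[(L \restriction \kappa) * L^b]$, where $\P_0$ is $\lambda^b_{17}$-Knaster and $\P_1$ is $<\lambda^b_{17}$-directed closed.

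For the distributivity claim I would force first with $\P_1$: since $\P_1$ adds no new $<\lambda^b_{17}$-sequences of ordinals over $V[(L \restriction \kappa) * L^b]$, the poset $\A_e$ is still $<\lambda^b_{17}$-closed in $V[(L \restriction \kappa) * L^b][P_1]$. Over this intermediate model $\P_0$ remains $\lambda^b_{17}$-cc by Easton's lemma, so a second application of Easton's lemma gives that $\A_e$ is $<\lambda^b_{17}$-distributive in $V[(L \restriction \kappa) * L^b][P_1][P_0]$, and the distributivity passes down to the submodel $V[(L \restriction \kappa) * L^b * I^b]$.

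For the Knaster property I would invoke Lemma \ref{Cohenrobust} with $\kappa = \lambda^b_{17}$, $\mu = \lambda^b_{18}$, and ground model $V[(L \restriction \kappa) * L^b]$. The arithmetic hypothesis $\eta^{<\lambda^b_{17}} < \lambda^b_{18}$ for $\eta < \lambda^b_{18}$ is immediate from the inaccessibility of $\lambda^b_{18}$ there. The covering hypothesis — that every set of ordinals of size less than $\lambda^b_{17}$ in $V[(L \restriction \kappa) * L^b * I^b]$ is covered by one of the same size in $V[(L \restriction \kappa) * L^b]$ — follows from Corollary \ref{rightafterportmanteau}(\ref{aus-covering}), and $\lambda^b_{18} = \mu_{21}$ remains regular in the extension by Lemma \ref{portmanteaulemma}(\ref{fortailanalysis}). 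There is really no main obstacle beyond getting the bookkeeping right; the key conceptual point — which explains why $\A_e$ is defined in the inner model $V[(L \restriction \kappa) * L^b]$ rather than in $V[(L \restriction \kappa) * L^b * I^b]$ — is that $\I^b$ forces $2^{\lambda^b_{16}} = \lambda^b_{18}$, so that the naive Cohen poset $\Add(\lambda^b_{17}, \cdot)$ computed in the outer model would collapse $\lambda^b_{18}$. The detour through the inner model, together with the Easton factoring, is precisely what keeps chain condition and distributivity working simultaneously.
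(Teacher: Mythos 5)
Your proposal is essentially the paper's own proof: the same appeal to item \ref{portmanteau3} of Lemma \ref{portmanteaulemma} (with the same indexing, $\mu_{n+1}=\lambda^b_{17}$, $\mu_{n+2}=\lambda^b_{18}$), Easton's Lemma for distributivity, and Lemma \ref{Cohenrobust} together with item \ref{aus-covering} of Corollary \ref{rightafterportmanteau} for the Knaster property.

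One step in the distributivity argument needs tightening: you conclude that $\A_e$ is $<\lambda^b_{17}$-distributive over $V[(L \restriction \kappa) * L^b][P_1][P_0]$ and then assert that ``the distributivity passes down to the submodel'' $V[(L \restriction \kappa) * L^b * I^b]$. That is not a valid general principle (a filter generic over the submodel need not be generic over the larger model, and a sequence landing in the larger model need not land in the smaller one). The paper avoids this by applying Easton's Lemma once, to $\P^b_0$ versus $\P^b_1 \times \A_e$, which \emph{locates} every $<\lambda^b_{17}$-sequence of ordinals of $V[(L \restriction \kappa) * L^b * I^b][A_e]$ inside $V[(L \restriction \kappa) * L^b][P^b_0]$; since $\P^b_0$ is an initial segment of $\I^b$, that model is already contained in $V[(L \restriction \kappa) * L^b * I^b]$, and distributivity follows directly. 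Alternatively your two-step route can be repaired by noting that the quotient-to-term generic producing $P_0 \times P_1$ from $I^b$ is obtained by forcing over $V[(L \restriction \kappa) * L^b * I^b][A_e]$, hence is mutually generic with $A_e$, so any sequence lying in both $V[(L \restriction \kappa) * L^b * I^b][A_e]$ and the expanded model lies in $V[(L \restriction \kappa) * L^b * I^b]$. Either fix is routine, but some such argument must be supplied.
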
  
\begin{proof} By item \ref{portmanteau3} of Lemma \ref{portmanteaulemma}, we may force to expand
  $V[(L \restriction \kappa) * L^b * I^b][A_e]$ to $V[(L \restriction \kappa) * L^b][P^b_0 \times P^b_1 \times A_e]$,
  where $\P^b_0, \P^b_1 \in  V[(L \restriction \kappa) * L^b]$, with
  $\P^b_0$ a  $\lambda^b_{17}$-cc initial segment of $\I^b$
  and $\P^b_0$ being $<\lambda^b_{17}$-closed  in $V[(L \restriction \kappa) * L^b]$. 
  By Easton's Lemma applied to $\P^b_0$ and $\P^b_1 \times \A_e$,
  all $<\lambda^b_{17}$-sequences of ordinals in $V[(L \restriction \kappa) * L^b * I^b][A_e]$ lie in
  $V[(L \restriction \kappa) * L^b][P^b_0] \subseteq V[(L \restriction \kappa) * L^b * I^b]$. 

  Since $\lambda^b_{18}$ is supercompact in $V[L \restriction \kappa][L^b]$,
  it follows that in this model $\eta^{< \lambda^b_{17}} < \lambda^b_{18}$ for all
  $\eta < \lambda^b_{18}$. By item \ref{aus-covering} of Lemma \ref{rightafterportmanteau},
  $V[(L \restriction \kappa) * L^b * I^b]$ is a $(\lambda^b_{17}, \lambda^b_{18})$-good extension of
  $V[(L \restriction \kappa) * L^b]$. Appealing to Lemma \ref{Cohenrobust},
  $\A_e$ is $\lambda^b_{18}$-cc in $V[(L \restriction \kappa) * L^b * I^b]$.
\end{proof}

In the sequel, there will be many situations where we use Cohen conditions
chosen from inner models, for example the Cohen posets $\A^c_0$ and $\A^c_1$ used
below in the definition of $\J^c$. We generally leave the verification
of the needed chain condition and distributivity properties,
which can all be proved along the lines of of the proof of Lemma \ref{samplecohen}, to the reader.

 The generic functions added
by $\A_e$ will be used below in the lifting arguments of Section \ref{auxcomp}.
The poset $\J^c$ will be an initial segment of the kind of ``two-phase $\A * \U * \S$ construction'' discussed in Remark \ref{twostepaus},
using different cardinal parameters
from the ones we used for $\I^b$. We first force with a poset $\A^c_0 * \U^c_0 * \S^c_0$, and then do the rest of the
construction over the extension by $\A^c_0 * \U^c_0 * \S^c_0$: an important new point is that the remainder of the construction
now involves $S^c_0$. This will be used to get some extra closure in Lemma \ref{q0info} below. 

  To define $\J^c$ we proceed as follows: 
\begin{itemize}
\item $\mu_0 = \lambda^b_{17}$, $\mu_1 = \lambda^b_{\omega+1}$, $\mu_2 = \lambda^b_{\omega+2}$, $\mu_3 = \lambda^b_{\omega+3}$.

\item  $\J^c$ will have the form $(\A^c_0 * \U^c_0 * \S^c_0) * (\A^c_1 * \U^c_1 * \S^c_1)$, so all its components have supports
  contained in $\mu_3$. 

\item $\A^c_0 = \Add^{V[(L \restriction \kappa) * L^b]}(\mu_0, [\mu_1, \mu_2))$

\item  To define $\U^c_0$ we use a Laver function on $(\mu_1, \mu_2)$ derived from $\phi$
      as in Fact \ref{laversmallfact}, that is the function $\alpha \mapsto \phi(\alpha)[(L \restriction \kappa) * L^b * I^b]$
      defined at those $\alpha$ where this makes sense.

 \item $\B^c \restriction \mu_2$ and $\U^c_0 = \U^c \restriction \mu_2$ are defined as in Section \ref{abcu}.

 \item As we noted in Remark \ref{nzerospecial},
   $\S^c_0 = ({\C}^c_0)^{+A^c_0 * U^c_0 \restriction \mu_1} = ({\C}^c_0)^{+A^c_0}$,
   so that $\A^c_0 * \S^c_0$ is just the standard Mitchell forcing to force that $2^{\mu_0} = \mu_2 = \mu_1^+$
   and $\mu_2$ has the tree property. 

 \item  For the definition of $\A^c_1 * \U^c_1 * \S^c_1$, we work in $V[(L \restriction \kappa) * L^b * I^b][A^c_0 * U^c_0 * S^c_0]$.
   We use the Laver function $\alpha \mapsto \phi(\alpha)[(L \restriction \kappa) * L^b * I^b][A^c_0 * U^c_0 * S^c_0]$
   on the interval $(\mu_2, \mu_3)$. The posets $\B^c_1$ and $\C^c_1$ are defined
   in $V[(L \restriction \kappa) * L^b * I^b][A^c_0 * U^c_0 * S^c_0]$, so that for example a condition
   $b \in B^c_1$ has domain a subset of $[\mu_2, \mu_3)$ lying in $V[(L \restriction \kappa) * L^b * I^b][A^c_0 * U^c_0 * S^c_0]$,
   and $b(\alpha)$ is a name which lies in this model.  
   
 \item  $\A^c_1 = \Add^{V[(L \restriction \kappa) * L^b]}(\mu_1, [\mu_2,\mu_3))$, and we define $\U^c_1$ and $\S^c_1$ by feeding
   in information from $A^c_1$ working over the model $V[(L \restriction \kappa) * L^b * I^b][A^c_0 * U^c_0 * S^c_0]$.
   In particular  $\U^c_1$ and $\S^c_1$ are both defined in the model 
$V[(L \restriction \kappa)*L^b*I^b][A^c_{[0,1]} * U^c_0 * S^c_0]$.

\end{itemize}

 The last stage of the preparation forcing $\FL$ at $\kappa$ is to force with $\A_e \times \J^c$ over $V[L \restriction \kappa * L^b * I^b]$, where
   $\J^c = (\A^c_0 * \U^c_0 * \S^c_0) * (\A^c_1 * \U^c_1 * \S^c_1)$.
We write the generic object as $A_e \times J^c$ where $J^c = (A^c_0 * U^c_0 * S^c_0) * (A^c_1 * U^c_1 * S^c_1)$.
We note that $\FL$ is $\lambda^b_{\omega+3}$-cc.

 Again we record some information about cardinals and cardinal arithmetic.
 In $V[L \restriction \kappa * L^b * I^b * (A_e \times J^c)]$ we have:
\begin{itemize}
\item The cardinals $\lambda^a_{\omega+1}$, $\lambda^a_{\omega+2}$, $\lambda^b_0$, $\lambda^b_1, \ldots \lambda^b_\omega$,
  $\lambda^b_{\omega+1}$, $\lambda^b_{\omega+2}$, $\lambda^b_{\omega+3}$
  form    a block of $\omega + 4$ consecutive cardinals.
 \item
    $2^{\lambda^a_{17}} = \lambda^a_{\omega+2}$, $2^{\lambda^a_{\omega+1}} = \lambda^b_0$,
     $2^{\lambda^a_{\omega+2}} = \lambda^b_1$, $2^{\lambda^b_n} = \lambda^b_{n+2}$ for $n < 17$,
     $2^{\lambda^b_n} = \lambda^b_{\omega+3}$ for $17 \le n \le \omega +2$. 
\end{itemize}

Let $V[L]$ be the model obtained after forcing with $\FL$.
The generic object added by $L$  at a stage $\tau \in Y$ is written as
$L(\tau) = L^b(\tau) * I^b(\tau) * (A_e(\tau) \times J^c(\tau))$ with the obvious notation for the components
 of $I^b(\tau)$ and $J^c(\tau)$. 

 \smallskip

 \noindent Global notation: 
 $L(\tau)$\index[Notation]{$L(\tau)$},
 $L^b(\tau)$\index[Notation]{$L^b(\tau)$}, 
 $I^b(\tau)$\index[Notation]{$I^b(\tau)$},
 $A^e(\tau)$\index[Notation]{$A^e(\tau)$},
 $J^c(\tau)$\index[Notation]{$J^c(\tau)$}

\smallskip

We will ultimately do a lifting argument to show that $\kappa$ is still a large cardinal in $V[L]$.
This will enable us to choose some supercompactness measures and other data,
which will be ultimately be used to define the Prikry
forcing $\bar \P$ in Section \ref{prikryforcing}. The lifting argument involves some 
objects introduced in Section \ref{successiveprikry}, so we defer it until the start of Section \ref{auxcomp}.

\begin{remark} \label{Lclosureremark} It follows readily from Lemma \ref{modestclosure} that
  $\FL(\tau)$ is $<\Lambda^a_{17}(\tau)$-closed in $V[L \restriction \tau]$.
  For the purposes of Section \ref{GroupIV} we note that as
   a consequence all initial segments of $\FL$ are $\rho$-closed in $V$. 
\end{remark}

\section{The interleaved forcing posets} \label{interleaved} 

\subsection{Between successive Prikry points} \label{successiveprikry}

  We now work in $V[L]$ to define the forcing ${\Q}(\tau, \tau^*)$ which the Prikry-type forcing
  $\bar\P$ will interleave between successive Prikry points $\tau$ and $\tau^*$.
  A few points to note:
  \begin{itemize}
  \item The points $\tau$ and $\tau^*$ will be elements of $Y$.  
  \item The poset $\bar\P$ is defined in a certain generic extension $V[L][\Agg]$ of $V[L]$, but 
   each poset ${\Q}(\tau, \tau^*)$ will actually be 
   defined in the extension of $V$ by a  certain initial segment of $L$ which we specify shortly.
  \item The filter on ${\Q}(\tau, \tau^*)$ added by forcing with $\bar\P$
    will be generic over $V[L * \Agg * E]$, where $E$ is the product of the finitely many generic
    objects added by $\P$ for the preceding interleaved forcing posets.
  \item ${\Q}(\tau, \tau^*)$ will be quite large (bigger than $\tau^*$) and will have an effect on the
    universe past $\tau^*$, and by the same token $E$ will have an effect on the universe past $\tau$.
    On the other hand $\Q(\tau, \tau^*)$ does not start to have an effect till some way past $\tau$, so
    that if $\tau, \tau^*, \tau^{**}$ are successive points on the Prikry sequence then there is a large
    gap between the intervals where $\Q(\tau, \tau^*)$ and $\Q(\tau^*, \tau^{**})$ are each doing their work:
    this is crucial to later arguments, particularly in Section \ref{treeprop}.
  \item $\bar\P$ will also have to act between $\omega$ and the first Prikry point.
  This will require special treatment, see Section \ref{firstprikry} below.
  \end{itemize}

  Ultimately the Prikry-type forcing of Section \ref{prikryforcing} will add
  (mutually) generic objects over $V[L]$ for the posets $\Q(\tau_n, \tau_{n+1})$ where $\tau_n$ and $\tau_{n+1}$ are
  successive points on the Prikry sequence, together with a generic object for $\Q^*(\tau_0)$ where $\Q^*(\tau)$ is
  defined in Section \ref{firstprikry}. 

  Notation: In the sequel it will be convenient to have a compact notation for certain initial
  segments of $V[L]$. For $\tau \in Y \cup \{ \kappa \}$ we will let:
  \begin{itemize}
  \item $\Vl(\tau) = V[L \restriction \tau]$.
  \item $\Vlb(\tau) = V[L \restriction \tau][L^b(\tau)]$.
  \item  $V^{lbi}(\tau) = V[L \restriction \tau][L^b(\tau)][I^b(\tau)]$.
   \end{itemize}

  Global notation: $\Vl(\tau)$\index[Notation]{$\Vl(\tau)$},
  $\Vlb(\tau)$\index[Notation]{$\Vlb(\tau)$},
  $\Vlbi(\tau)$\index[Notation]{$\Vlbi(\tau)$}.

  Recall that:
  \begin{itemize}
  \item  Part of the the final step of the preparation at stage $\tau$ was a forcing $\J^c(\tau)$,
    which added a generic object $J^c(\tau)$ for an initial segment of
    the kind of two-phase construction discussed in Remark \ref{twostepaus}.
    The cardinal parameters were
    $\Lambda^b_{17}(\tau)$, $\Lambda^b_{\omega+1}(\tau)$, $\Lambda^b_{\omega+2}(\tau)$, $\Lambda^b_{\omega+3}(\tau)$.
  \item One of the first steps of the preparation at stage $\tau^*$ was to add a generic object
    $I^b(\tau^*)$ for a version of the construction of Section \ref{abcu} whose first few cardinal
    parameters were
    $\Lambda^a_{17}(\tau^*), \Lambda^a_{\omega+1}(\tau^*), \Lambda^a_{\omega+2}(\tau^*),  \Lambda^b_0(\tau^*)$: this
    was computed in $\Vlb(\tau^*)$. In particular we added a generic
    object $A^b_0(\tau^*)$ 
    where
    $\A^b_0(\tau^*) = \Add^{\Vlb(\tau^*)}(\Lambda^a_{17}(\tau^*), [\Lambda^a_{\omega+1}(\tau^*), \Lambda^a_{\omega+2}(\tau^*)))$.

  \item  $L^b(\tau^*)$ is generic over $\Vl(\tau^*)$ for a forcing which is sufficiently closed that actually
    $\A^b_0(\tau^*) = \Add^{\Vl(\tau^*)}(\Lambda^a_{17}(\tau^*), [\Lambda^a_{\omega+1}(\tau^*), \Lambda^a_{\omega+2}(\tau^*)))$.
  \end{itemize}

  As we construct ${\Q}(\tau, \tau^*)$, we will keep track of the models in which its various components 
  are computed. This information will be used later in Lemma \ref{fordistrib1variant}.
  We will also keep track of some closure properties of the components.
  This is mostly for use in Section \ref{GroupIV}, where 
  all we will need  is that certain components are $\rho$-closed. 
  ${\Q}(\tau, \tau^*)$ is the product of three factors $\Q_i(\tau, \tau^*)$ for $i < 3$.

\smallskip
  
\noindent Global notation: $\Q(\tau, \tau^*)$\index[Notation]{$\Q(\tau, \tau^*)$}

\smallskip

  The first factor $\Q_0(\tau, \tau^*)$ completes 
  $J^c(\tau)$ to a generic object for a certain forcing poset defined in the model $\Vlbi(\tau)$,
  which we now describe: 
\begin{itemize}
\item   The forcing poset is a two-phase $\A * \U * \S$ construction of the type discussed in Remark \ref{twostepaus}, 
  and it has the form $(\A^c_0(\tau) * \U^c_0(\tau) * \S^c_0(\tau)) * (\A^c_{[1, \omega)}(\tau, \tau^*)
  * \U^c_{[1, \omega)}(\tau, \tau^*) * \S^c_{[1, \omega)}(\tau, \tau^*))$, 
      where $\A^c_{[1, \omega)}(\tau, \tau^*) * \U^c_{[1, \omega)}(\tau, \tau^*) * \S^c_{[1, \omega)}(\tau, \tau^*)$
            is computed in the extension by $A^c_0(\tau) * U^c_0(\tau) * S^c_0(\tau)$.

\item The first two steps were added as the component $J^c(\tau)$ of $L(\tau)$,
  explicitly
  \[
  J^c(\tau) = J^c_0(\tau) * J^c_1(\tau) = (A^c_0(\tau) * U^c_0(\tau) * S^c_0(\tau)) * (A^c_1(\tau) * U^c_1(\tau) * S^c_1(\tau)).
  \]
            
\item  $\mu_0 = \Lambda^b_{17}(\tau)$,  $\mu_1 = \Lambda^b_{\omega+1}(\tau)$,  $\mu_2 = \Lambda^b_{\omega+2}(\tau)$,
  $\mu_3 = \Lambda^b_{\omega+3}(\tau)$, then $\mu_{4 + n} = \Lambda^a_n(\tau^*)$ for $n < \omega$.

\item  The forcing $\A^c_{[1, \omega)}(\tau, \tau^*) * \U^c_{[1, \omega)}(\tau, \tau^*) * \S^c_{[1, \omega)}(\tau, \tau^*)$ is computed  
 in the model $\Vlbi(\tau)[J^c_0(\tau)]$ with parameters set as follows:
\begin{itemize}

\item  $\A^c_1(\tau) = \Add^{\Vlb(\tau)}(\mu_1, [\mu_2,\mu_3))$.

\item $\A^c_n(\tau, \tau^*) = \Add^V(\mu_n, [\mu_{n+1}, \mu_{n+2}))$ for $2 \le n < \omega$.

\item We define $\B^c_{[1, \omega)}(\tau, \tau^*)$ and $\U^c_{[1, \omega)}(\tau, \tau^*)$ using the Laver function  
    $\alpha \mapsto \phi(\alpha)[L \restriction \tau * L^b(\tau) * I^b(\tau)][A^c_0(\tau) * S^c_0(\tau) * U^c_0(\tau)]$   
    on the interval $(\mu_2, \mu_\omega)$.
  \item  The supports of conditions in $\B^c_{[1, \omega)}(\tau, \tau^*)$ and $\C^c_{[1, \omega)}(\tau, \tau^*)$ are defined in
      $\Vlbi(\tau)[J^c_0(\tau)]$.
\end{itemize}
    
\end{itemize}   

\smallskip

\noindent Global notation: $\Q_0(\tau, \tau^*)$\index[Notation]{$\Q_0(\tau, \tau^*)$}

\smallskip

Keeping in mind that $\J^c(\tau)$ has already added 
 $J^c(\tau) = (A^c_0(\tau) * U^c_0(\tau) * S^c_0(\tau)) * (A^c_1(\tau)  * U^c_1(\tau) * S^c_1(\tau))$,
$\Q_0(\tau, \tau^*)$  will add a generic object $Q_0(\tau, \tau^*)$ composed of: $\A_n$-generic objects $A^c_n(\tau, \tau^*)$ for $n \ge 2$,
together with
generic objects $U^c_{[2, \omega)}(\tau, \tau^*)$ for  $\U^c_{[2,\omega)}$ and $S^c_{[2, \omega)}(\tau, \tau^*)$ for $\S^c_{[2, \omega)}$.

  The last claim in the following Lemma is similar to some closure facts from Neeman's paper \cite{NeemanUpto},
  notably Claim 4.7, but  the setting is a bit different and we give a few more details.      

\begin{lemma} \label{q0info}
  $\Q_0(\tau, \tau^*)$ is a forcing poset of cardinality
  $\Lambda^a_{\omega+1}(\tau^*)$ defined in the model $\Vlbi(\tau)[J^c(\tau)]$.
  $\A^c_{[2, \omega)}(\tau, \tau^*)$ is defined and $<\Lambda^b_{\omega + 2}(\tau)$-closed in $V$,
  and is $<\Lambda^b_{\omega + 2}(\tau)$-distributive in $\Vlbi(\tau)[J^c(\tau)]$.
   $\U^c_{[2, \omega)}(\tau, \tau^*) * \S^c_{[2, \omega)}(\tau, \tau^*)$ is defined and $<\Lambda^b_{\omega + 2}(\tau)$-closed
    in $\Vlbi(\tau)[J^c(\tau)][A^c_{[2, \omega)}(\tau, \tau^*)]$.     
\end{lemma}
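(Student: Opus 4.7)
The statement has three parts, and the plan is to treat them separately. The location and cardinality count are a direct unpacking of the definition: the posets $\B^c_{[1,\omega)}$ and $\C^c_{[1,\omega)}$, together with their supports, are by construction elements of $\Vlbi(\tau)[J^c_0(\tau)] \subseteq \Vlbi(\tau)[J^c(\tau)]$, while $\A^c_{[2,\omega)}(\tau,\tau^*)$ is an object of $V$; all three factors therefore lie in $\Vlbi(\tau)[J^c(\tau)]$. For cardinality, each $\A^c_n$ has size $\mu_{n+2}^{<\mu_n}=\mu_{n+2}$ (the relevant GCH values above $\mu_3$ are inherited from $V$, where each $\Lambda^a_n(\tau^*)$ is indestructibly supercompact and hence satisfies $2^{\Lambda^a_n(\tau^*)}=\Lambda^a_n(\tau^*)^+$), and the full-support product has size $\mu_\omega^{\aleph_0}=\mu_\omega^+=\Lambda^a_{\omega+1}(\tau^*)$; an identical computation handles the $\B^c$ and $\C^c$ pieces.

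For the closure of $\A^c_{[2,\omega)}$ in $V$ there is essentially nothing to do: each $\A^c_n$ is $<\mu_n$-closed in $V$, and a full-support product of $<\mu_n$-closed posets with $\mu_n\ge \mu_2$ for all $n\ge 2$ is $<\mu_2$-closed. The interesting claim is the $<\mu_2$-distributivity in $\Vlbi(\tau)[J^c(\tau)]$, which I expect to be the main obstacle because $\J^c(\tau)$ has cardinality $\mu_3>\mu_2$ and in particular $\A^c_1$ adds fresh $\mu_1^c$-sequences, so one cannot simply factor the extension from $V$ as a $\mu_2$-cc piece times a $<\mu_2$-closed piece. The plan is to apply the $\P_0\times\P_1$ decomposition of Lemma \ref{portmanteaulemma} item \ref{portmanteau3} to $\J^c(\tau)$ with $n=1$ (relative to the parameters $\nu_0,\dots,\nu_3$ of $\J^c$), obtaining in $\Vlbi(\tau)$ a $\mu_2$-Knaster factor $\P^J_0=\A^c_0 * \A^c_1 *\U^c_0*\S^c_0$ and a $<\mu_2$-directed closed factor $\P^J_1=\B^c_1\times\C^c_1$ whose product projects onto $\J^c(\tau)$. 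Setting $\mathbb{G}=\FL\!\restriction\!\tau * \dot{\FL}^b(\tau) * \dot{\I}^b(\tau)$, which has cardinality $<\Lambda^b_{\omega+1}<\mu_2$ in $V$, form the term posets $\T^J_0=\termspace^V(\mathbb{G},\dot{\P^J_0})$ and $\T^J_1=\termspace^V(\mathbb{G},\dot{\P^J_1})$. Since $\mu_2=\Lambda^b_{\omega+2}(\tau)$ is weakly compact in $V$ by reflection from $\lambda^b_{\omega+2}$, Lemma \ref{termforcingcclemma} yields that $\T^J_0$ is $\mu_2$-cc in $V$, while Lemma \ref{standardtermforcinglemma} yields that $\T^J_1$ is $<\mu_2$-directed closed in $V$. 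The product $\mathbb{G}\times\T^J_0\times\T^J_1$ projects in $V$ to $\mathbb{G} * \J^c(\tau)$, and because $|\mathbb{G}|<\mu_2$ the product $\mathbb{G}\times\T^J_0$ is $\mu_2$-cc in $V$. Applying Easton's Lemma in $V$ to the $\mu_2$-cc poset $\mathbb{G}\times\T^J_0$ and the $<\mu_2$-closed poset $\T^J_1\times\A^c_{[2,\omega)}$ shows that $\A^c_{[2,\omega)}$ is $<\mu_2$-distributive over $V[\mathbb{G}\times T^J_0\times T^J_1]$; since this extension contains $\Vlbi(\tau)[J^c(\tau)]$ as a submodel and $\A^c_{[2,\omega)}$ is an object of $V$, distributivity passes to the submodel by the standard argument that every dense open subset of $\A^c_{[2,\omega)}$ living in $\Vlbi(\tau)[J^c(\tau)]$ also lives in the larger model.

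The third claim, $<\mu_2$-closure of $\U^c_{[2,\omega)} * \S^c_{[2,\omega)}$ in $\Vlbi(\tau)[J^c(\tau)][A^c_{[2,\omega)}(\tau,\tau^*)]$, is handled in the same spirit as Fact \ref{Itay4.7} and Lemma \ref{modestclosure}, but the bookkeeping is easier here because $\U^c_{[2,\omega)}$ is supported on $[\mu_3,\mu_\omega)$ with $\mu_3>\mu_2$. In the model in question we already have $A^c * U^c\!\restriction\!\mu_3$, so by the direct analog of Fact \ref{Itay4.7} applied with $\alpha=\alpha'=\mu_3$, the poset $\U^c_{[2,\omega)}=(\B^c)^{+A^c * U^c\restriction\mu_3}\!\restriction\![\mu_3,\mu_\omega)$ is $<\mu_3$-directed closed; since $\mu_2<\mu_3$ this is in particular $<\mu_2$-closed. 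The same argument, using the $\C^c$-analog of Fact \ref{Itay4.15}, shows that $\S^c_{[2,\omega)}$ is $<\mu_3$-directed closed after further forcing with $\U^c_{[2,\omega)}$, hence a fortiori $<\mu_2$-closed. A two-step iteration of two $<\mu_2$-closed posets is $<\mu_2$-closed, completing the proof.
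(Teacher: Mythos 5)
Your first two parts are essentially sound. The cardinality and location claims are routine, and your argument for the distributivity of $\A^c_{[2,\omega)}(\tau,\tau^*)$ is the same term-forcing-plus-Easton argument the paper uses, modulo one slip: $\B^c_1(\tau)\times\C^c_1(\tau)$ is defined in $\Vlbi(\tau)[J^c_0(\tau)]$, not in $\Vlbi(\tau)$, so the term poset you call $\T^J_1$ must be taken over $\FL\restriction\tau * \FL^b(\tau)*\I^b(\tau)*\J^c_0(\tau)$ rather than over $\mathbb{G}$ alone. (The paper keeps $\J^c_0(\tau)*\A^c_1(\tau)$ inside the chain-condition factor and only term-forces $\B^c_1\times\C^c_1$, which avoids the appeal to weak compactness via Lemma \ref{termforcingcclemma}; your route is more roundabout but can be made to work.)

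The third part has a genuine gap. Fact \ref{Itay4.7} concerns posets $\B^{+F'}\restriction[\alpha,\beta)$ where $F'$ is generic for $\A\restriction\alpha'*\U\restriction\alpha'$ with $\alpha'\le\alpha$ --- that is, where the $A$-coordinate of the filter being fed in is cut off at or below $\alpha$. The poset $\U^c_{[2,\omega)}(\tau,\tau^*)$ is $(\B^c)^{+F}\restriction[\mu_3,\mu_\omega)$ for $F=A^c*U^c\restriction\mu_3$, whose $A$-coordinate is the \emph{full} Cohen generic, including $A^c_1$ and $A^c_{[2,\omega)}$ on coordinates in $[\mu_3,\mu_\omega)$. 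This is precisely the case Fact \ref{Itay4.7} excludes, and it is the whole point of the distinction between the closed ``term-like'' posets $\B^{+F'}$ and the posets $\U\restriction[\alpha,\beta)$ whose ordering depends on the Cohen generic: the witnesses to $u_1\le u_0$ in $\U^c_{[2,\omega)}$ are conditions in the non-closed posets $\A^c_1,\A^c_2,\ldots$, and for a decreasing sequence of length $\ge\mu_1$ these witnesses need not be amalgamable into a single condition, so closure is not automatic. This is also why the lemma claims only $<\Lambda^b_{\omega+2}(\tau)$-closure rather than the $<\mu_3$-directed closure your citation would give --- the overshoot is a sign the cited fact does not apply. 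The correct argument (as in Lemma \ref{modestclosure}) first uses Corollary \ref{rightafterportmanteau} to locate any decreasing $<\mu_2$-sequence in the submodel $\Vlbi(\tau)[J^c_0(\tau)][A^c_1(\tau)]$, and then builds a lower bound coordinate by coordinate: at each $\alpha$ one forces with $\A^c\restriction\alpha*\U^c\restriction\alpha$ below the partial master condition $(0,b\restriction\alpha)$, checks via the filter property that the realised values $d_i$ are decreasing, and uses the $<\alpha$-closure of $\phi(\alpha)$ to choose $b(\alpha)$. The same objection applies to your appeal to the $\C$-analogue of Fact \ref{Itay4.15} for $\S^c_{[2,\omega)}$, since $\S^c_{[2,\omega)}$ is $(\C^c_{[2,\omega)})^{+A^c*U^c}$ with the full generic fed in.
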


\begin{proof}
  It is easy to see that $\FL \restriction \tau * \FL^b(\tau) * \I^b(\tau) * \J^c_0(\tau) * \A^c_1(\tau)$
  is $\Lambda^b_{\omega + 2}(\tau)$-cc in $V$. In the model
  $\Vlbi(\tau)[J^c_0(\tau)]$,
  $U^c_1(\tau) * S^c_1(\tau)$ is the projection of the $<\Lambda^b_{\omega+2}(\tau)$-closed poset
  $\B^c_1(\tau) \times \C^c_1(\tau)$. So by a suitable quotient-to-term forcing we may
  extend $\Vlbi(\tau)[J^c(\tau)]$ to
  $\Vlbi(\tau)[J^c_0(\tau) * A^c_1(\tau) \times T]$,
    where $T$ is generic for the term forcing $\termspace^V(\FL \restriction \tau * \FL^b(\tau) * \I^b(\tau) * \J^c_0(\tau), 
   \B^c_1(\tau) \times \C^c_1(\tau))$ which is $<\Lambda^b_{\omega+2}(\tau)$-closed in $V$. 
  By a standard application of  Easton's lemma, $\A^c_{[2, \omega)}(\tau, \tau^*)$
    is $<\Lambda^b_{\omega + 2}(\tau)$-distributive in $\Vlbi(\tau)[J^c(\tau)]$.
    
    For the closure of $\U^c_{[2, \omega)}(\tau, \tau^*) * \S^c_{[2, \omega)}(\tau, \tau^*)$,
    start by noting that by Corollary \ref{rightafterportmanteau} every $<\Lambda^b_{\omega + 2}(\tau)$ sequence of ordinals from
    $\Vlbi(\tau)[J^c(\tau)][A^c_{[2, \omega)}(\tau, \tau^*)]$ lies in the submodel
      $\Vlbi(\tau)[J^c_0(\tau) \times A^c_1(\tau)]$.

      Let $V' = \Vlbi(\tau)[J^c_0(\tau)]$.
      To lighten the notation we drop the parameters $\tau$ and $\tau^*$, and use the ``$\mu_i$ notation''
      for the cardinal parameters.  We will only prove closure for $\U^c_2$,
      since this proof contains all the ideas. Note that the underlying set $\B^c \restriction (\mu_3, \mu_4)$ of $\U^c_2$ lies in $V'$,
      the ordering on $\U^c_2$ is defined in $V'[A^c_{[1, 2]} * U^c_1]$, and the relevant decreasing
      $<\mu_2$-sequences from $\U^c_2$ lie in $V'[A^c_1]$.

      We work for the moment in $V'$.
      Let $\eta < \mu_2$ and let $(\dot b_i)_{i < \eta}$ be a  sequence of $\A^c_1$-names
      for elements of $\B^c \restriction (\mu_3, \mu_4)$,   where without
      loss of generality the trivial condition in $\A^c_{[1, 2]} * \U^c_1$ forces that
      the $b_i$'s form a decreasing sequence in $\U^c_2$. 

      We will construct $b \in \B^c \restriction (\mu_3, \mu_4)$ which is forced
      to be a lower bound for the $b_i$'s. We let $\dom(b)$ be the union over $i$ of the possible
      values of $\dom(b_i)$, where it is easy to see that this is an Easton subset of $(\mu_3, \mu_4)$. Suppose that
      $\alpha \in \dom(b)$, we have defined $b \restriction \alpha$, and
      $b \restriction \alpha$ is forced to be a lower bound for the $b_i \restriction \alpha$'s. 
      
      Force with $\A^c \restriction (\mu_2, \alpha) * \U^c \restriction (\mu_2, \alpha)$
      below the condition  $(0, b \restriction \alpha)$ to obtain
      a generic object $F^A_\alpha * F^U_\alpha$. Let
      $c_i = \dot b_i[F^A_\alpha \restriction (\mu_2, \mu_3)] \in \B^c \restriction (\mu_3, \mu_4)$,
      and let $d_i = c_i(\alpha)[F^A_\alpha * F^U_\alpha] \in \phi(\alpha)[F^A_\alpha * F^U_\alpha]$
      if $\alpha \in \dom(c_i)$. We note that $\dom(c_i)$ increases with $i$, so that
      either $d_i$ is never defined or it is defined for all large $i < \eta$. 

      Let $i < j < \eta$ where $d_i$ and $d_j$ are both defined, we claim that $d_j \le d_i$. 
           Since it is forced that the $b_i$'s are decreasing in $\U^c_2$,
      there is a condition in $F^A_\alpha * F^U_\alpha$ forcing that $c_j(\alpha) \le c_i(\alpha)$,
      and so $d_j \le d_i$ as required. Since $\phi(\alpha)$ is forced to be $<\alpha$-directed closed,
      we may choose $b(\alpha)$ as a name such that $(0, b \restriction \alpha)$ forces $b(\alpha) \le \dot d_i$
      for all $i$.

      Now let $\bar F = A^c_1 * A^c_2 * U^c_1$ be $\A^c_{[1, 2]} * \U^c_1$-generic over $V'$,
      let $\alpha \in \dom(b)$ and let $c_i = \dot b_i[A^c_1]$. By the induction hypothesis $b \restriction \alpha \le c_i \restriction \alpha$
      for all $i$ in the version of $\U^c_2$  computed by $V'[\bar F]$.
      If $\alpha \notin \dom(c_i)$ there is nothing to do, so assume that $\alpha \in \dom(c_i)$ and choose a condition
      $(a_1, a_2, u_1) \in \bar F$ where $a_1 \restriction (\mu_2, \mu_3)$ forces $\dot b_i = \check c_i$. 
      Consider the condition $(a_1, a_2 \restriction (\mu_3, \alpha), u_1, b \restriction \alpha)$:
      it forces that $\dot d_i = c_i(\alpha)$ by the choice of $a_1$, and so forces that
      $b(\alpha) \le c_i(\alpha)$ because it refines $(0, b \restriction \alpha)$.
      So $b \restriction \alpha +1 \le c_i \restriction \alpha + 1$, with
      $(a_1, a_2 \restriction (\mu_3, \alpha), u_1)$ as the witnessing condition at coordinate $\alpha$.
\end{proof} 

As we mentioned earlier in Section \ref{moreprep}, Lemma \ref{q0info} depends critically on the definition of
$\J^c$ as a two-phase construction where we defined everything past stage zero using $S^c_0$.

  Recall 
  that $\A^b_0(\tau^*) = \Add^{\Vl(\tau^*)}(\Lambda^a_{17}(\tau^*), [\Lambda^a_{\omega+1}(\tau^*), \Lambda^a_{\omega+2}(\tau^*)))$.
    By the discussion in Section \ref{termforcing}, we may force over
  $\Vl(\tau^*)[A^b_0(\tau^*)]$ to produce a generic object $A^V_0(\tau^*)$ for 
  $\Add^V(\Lambda^a_{17}(\tau^*), \Lambda^a_{\omega+2}(\tau^*))$ so that
  $\Vl(\tau^*) \subseteq \Vl(\tau^*)[A^b_0(\tau^*)] \subseteq V[(L \restriction \tau^*) \times A^V_0(\tau^*)]$.
  $\Q_1(\tau, \tau^*)$ is the ``quotient to term'' forcing to produce $A^V_0(\tau^*)$ from $A^b_0(\tau^*)$.

\smallskip
  
\noindent Global notation: $\Q_1(\tau, \tau^*)$\index[Notation]{$\Q_1(\tau, \tau^*)$}

\smallskip

  \begin{lemma} \label{q1info}
    $\Q_1(\tau, \tau^*)$  is defined in $\Vl(\tau^*)[A^b_0(\tau^*)]$ and
    has size $\Lambda^a_{\omega + 2}(\tau^*)$.
    
 \end{lemma}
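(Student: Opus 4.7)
The plan is to read off both conclusions from the general termspace/quotient-to-term framework of Section~\ref{termforcing}, combined with Lemma~\ref{oldlemma} to identify the relevant term forcing with a ground-model Cohen poset.

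First, for the location of $\Q_1(\tau,\tau^*)$: by construction $A^b_0(\tau^*)$ is $\A^b_0(\tau^*)$-generic over $\Vl(\tau^*)$, and $\Q_1(\tau,\tau^*)$ is by definition the quotient-to-term poset that stretches this generic to an $\Add^V(\Lambda^a_{17}(\tau^*),\Lambda^a_{\omega+2}(\tau^*))$-generic $A^V_0(\tau^*)$ inducing $A^b_0(\tau^*)$. By item~\ref{quotient_to_term} of Lemma~\ref{standardtermforcinglemma} (and Remark~\ref{rearrangeqtotremark}), such a poset is defined in the model where both the ``$\P$'' and the ``$\dot\Q$''-generic are available, which here is exactly $\Vl(\tau^*)[A^b_0(\tau^*)]$.

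Second, for the size, I will invoke Lemma~\ref{oldlemma} with $\P=\FL\restriction\tau^*$, $\kappa=\Lambda^a_{17}(\tau^*)$, $\lambda=\Lambda^a_{\omega+2}(\tau^*)$, and $\dot\Q$ the canonical $\P$-name for $\A^b_0(\tau^*)=\Add^{V[P]}(\kappa,[\Lambda^a_{\omega+1}(\tau^*),\lambda))$ (equivalent to $\Add^{V[P]}(\kappa,\lambda)$). The hypothesis $\kappa^{<\kappa}=\kappa$ holds in $V$ since $\Lambda^a_{17}(\tau^*)$ is supercompact there. For $|\P|\le\kappa$ and $\kappa$-cc, I use that $\tau^*\in Y$, so $\tau^*$ is supercompact and closed under $\sigma\mapsto\Lambda^b_{\omega+3}(\sigma)$; hence for every nontrivial stage $\sigma\in Y\cap\tau^*$ the cardinality of $\FL(\sigma)$ is at most $\Lambda^b_{\omega+3}(\sigma)<\tau^*$. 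Combining this with the Easton-support structure of $\FL$ and the inaccessibility of $\tau^*$ gives $|\FL\restriction\tau^*|=\tau^*$, which is less than $\Lambda^a_0(\tau^*)<\Lambda^a_{17}(\tau^*)=\kappa$, and standard Easton-iteration arguments give that $\FL\restriction\tau^*$ is $\tau^*$-cc, hence $\kappa$-cc.

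Lemma~\ref{oldlemma} then yields $\termspace(\FL\restriction\tau^*,\dot\A^b_0(\tau^*))\simeq\Add^V(\Lambda^a_{17}(\tau^*),\Lambda^a_{\omega+2}(\tau^*))$, a poset of cardinality $\Lambda^a_{\omega+2}(\tau^*)$ in $V$ (the exponent $\Lambda^a_{\omega+2}(\tau^*)^{<\Lambda^a_{17}(\tau^*)}=\Lambda^a_{\omega+2}(\tau^*)$ follows from the GCH pattern of $V$ at the supercompact cardinals, arranged during the preparation of Section~\ref{preparation}). Since $\Q_1(\tau,\tau^*)$ is the quotient-to-term poset, it sits as a subposet of this term forcing inside $\Vl(\tau^*)[A^b_0(\tau^*)]$, so its size is at most $\Lambda^a_{\omega+2}(\tau^*)$; equality holds because $\Q_1(\tau,\tau^*)$ is forced to be nontrivial and produces a generic for a poset of cardinality $\Lambda^a_{\omega+2}(\tau^*)$.

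The only nontrivial check is the chain-condition/size bound on $\FL\restriction\tau^*$; everything else is bookkeeping in the term-forcing machinery of Section~\ref{termforcing}.
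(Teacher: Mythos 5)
Your proof is correct and follows essentially the same route as the paper's (one-sentence) argument: the poset is defined from $L\restriction\tau^*$ and $A^b_0(\tau^*)$ alone, and its cardinality is obtained by counting terms. Your extra detail — identifying the ambient term space with $\Add^V(\Lambda^a_{17}(\tau^*),\Lambda^a_{\omega+2}(\tau^*))$ via Lemma~\ref{oldlemma} after checking that $\FL\restriction\tau^*$ is $\tau^*$-cc of size $\tau^*$ — is exactly the bookkeeping the paper leaves implicit (though the GCH appeal is unnecessary: inaccessibility of $\Lambda^a_{\omega+2}(\tau^*)$ already gives the cardinality computation).
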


\begin{proof} 
  Clearly the definition of  $\Q_1(\tau, \tau^*)$ only needs $L \restriction \tau^*$ and $A^b_0(\tau^*)$, and
  we can compute its cardinality by counting terms.
\end{proof}

  To define $\Q_2(\tau, \tau^*)$, recall that at stage $\tau$ in $\FL$ we force with
  $\FL(\tau) = \FL^b(\tau) * \I^b(\tau) * (\A_e(\tau) \times \J^c(\tau))$ over $V[L \restriction \tau]$.
  Of course $\FL \restriction (\tau, \tau^*) \in V[L \restriction \tau + 1]$, and
  is easily seen to be  $\tau^*$-cc forcing of cardinality $\tau^*$ which is at least $<\Lambda^b_{\omega+3}(\tau)$-directed closed.
  Working in  $\Vlbi(\tau)[J^c(\tau)]$ we can compute the term forcing
  $\T(\tau, \tau^*) = \termspace(\A_e(\tau), \FL \restriction (\tau, \tau^*))$.
  We note that $\T(\tau, \tau^*)$ is a $<\Lambda^b_{\omega+3}(\tau)$-directed closed 
  and  $\tau^*$-cc forcing poset of cardinality $\tau^*$. 
  $\Q_2(\tau, \tau^*)$ 
    is the quotient to term forcing (see Section \ref{termforcing}) to produce a generic object $T(\tau, \tau^*)$ for the term forcing
    such that $A_e(\tau) \times T(\tau, \tau^*)$ induces $A_e(\tau) * L \restriction (\tau, \tau^*)$:
    we write $Q_2(\tau, \tau^*)$ for the generic object for $\Q_2(\tau, \tau^*)$.
 
\smallskip

\noindent Global notation: $\Q_2(\tau, \tau^*)$\index[Notation]{$\Q_2(\tau, \tau^*)$}
$\T(\tau, \tau^*)$\index[Notation]{$\T(\tau, \tau^*)$}

\smallskip

\begin{lemma} \label{q2info}
  $\Q_2(\tau, \tau^*)$ is a forcing poset of cardinality $\tau^*$ defined in $\Vl(\tau^*)$.
\end{lemma}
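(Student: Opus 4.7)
The plan is as follows.

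First I would argue that $\Q_2(\tau, \tau^*)$ belongs to $\Vl(\tau^*)$. By definition, $\Q_2(\tau,\tau^*)$ is the quotient-to-term poset that produces a $\T(\tau,\tau^*)$-generic object $T(\tau,\tau^*)$ such that $A_e(\tau)\times T(\tau,\tau^*)$ induces $A_e(\tau)*L\restriction(\tau,\tau^*)$. As discussed in Section \ref{termforcing} (item \ref{quotient_to_term} of Lemma \ref{standardtermforcinglemma}, and Remark \ref{rearrangeqtotremark}), a quotient-to-term poset of the form $\termspace(\P,\dot\Q)/P*Q$ lives in $V[P*Q]$. In our setting the analogous model is $V[L\restriction\tau*L^b(\tau)*I^b(\tau)*(A_e(\tau)*L\restriction(\tau,\tau^*))]$. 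Since $L(\tau)=L^b(\tau)*I^b(\tau)*(A_e(\tau)\times J^c(\tau))$ and $L\restriction\tau^* = (L\restriction\tau)*L(\tau)*(L\restriction(\tau,\tau^*))$, every generic object that enters the definition of $\Q_2(\tau,\tau^*)$ is visible from $\Vl(\tau^*)$. (The factor $J^c(\tau)$ is inessential to the definition but is already present in $\Vl(\tau^*)$.) Hence $\Q_2(\tau,\tau^*)\in\Vl(\tau^*)$.

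Next I would bound the cardinality. A dense subset of $\T(\tau,\tau^*)$ is given by the nice $\A_e(\tau)$-names for conditions in $\FL\restriction(\tau,\tau^*)$; since $\Q_2(\tau,\tau^*)$ embeds as a suborder of $\T(\tau,\tau^*)$ it suffices to bound the number of such nice names. We know $|\FL\restriction(\tau,\tau^*)|=\tau^*$ and that it is $\tau^*$-cc. We also know $\A_e(\tau)=\Add^{\Vlb(\tau)}(\Lambda^b_{17}(\tau),\Lambda^b_{\omega+3}(\tau))$, which in $\Vlb(\tau)$ has cardinality $\Lambda^b_{\omega+3}(\tau)$ and, by Lemma \ref{samplecohen}, is $\Lambda^b_{18}(\tau)$-Knaster in $\Vlbi(\tau)$. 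A nice $\A_e(\tau)$-name for an element of $\FL\restriction(\tau,\tau^*)$ is determined by a family of antichains of $\A_e(\tau)$ indexed by $\FL\restriction(\tau,\tau^*)$, so the count is at most $(\tau^*)^{|\A_e(\tau)|}$; this is computed in $\Vl(\tau^*)$, and since $\tau^*\in Y$ is supercompact and $\Lambda^b_{\omega+3}(\tau)<\tau^*$, cardinal arithmetic in $\Vl(\tau^*)$ (which is a mild extension of $V$ and still makes $\tau^*$ inaccessible) gives $(\tau^*)^{\Lambda^b_{\omega+3}(\tau)}=\tau^*$. Thus $|\Q_2(\tau,\tau^*)|\le\tau^*$, and the lower bound $\ge\tau^*$ is witnessed by the canonical names $\check l$ for $l\in \FL\restriction(\tau,\tau^*)$ that project into $L\restriction(\tau,\tau^*)$.

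There is no serious obstacle; the only point that deserves care is verifying that the passage to nice names really gives the bound $\tau^*$, and in particular that the cardinal arithmetic $(\tau^*)^{\Lambda^b_{\omega+3}(\tau)}=\tau^*$ holds in $\Vl(\tau^*)$. This amounts to noting that $\FL\restriction\tau$ has size at most $\tau$, that $\FL(\tau)$ is of size at most $\Lambda^b_{\omega+3}(\tau)<\tau^*$, so $\Vl(\tau^*)$ is a ``small'' extension of $V$ relative to $\tau^*$ and preserves the inaccessibility (indeed supercompactness) of $\tau^*$; this in turn follows from Remark \ref{Lclosureremark} and the fact that each stage of $\FL$ has cardinality strictly below the next element of $Y$.
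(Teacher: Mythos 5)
Your proof is correct and follows exactly the route of the paper's own (one-line) argument: the paper simply observes that the definition of $\Q_2(\tau,\tau^*)$ only needs $L\restriction\tau^*$ and that the cardinality follows by counting terms, which is precisely what you carry out in detail. The extra care you take with the nice-name count and the preservation of the inaccessibility of $\tau^*$ is sound but not needed beyond what the paper tacitly assumes.
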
   
    
\begin{proof}
  Clearly the definition of  $\Q_2(\tau, \tau^*)$ only needs $L \restriction \tau^*$, and
  the cardinality can be calculated by counting terms.
\end{proof}   

\begin{remark} \label{q1and q2closure} By Remark \ref{Lclosureremark} and
  Lemma \ref{qtot-closed}, $\Q_1(\tau, \tau^*)$ and $\Q_2(\tau, \tau^*)$ are both $\rho$-closed. 
\end{remark}

It is clear from Lemmas \ref{q0info}, \ref{q1info} and \ref{q2info} that $\Q(\tau, \tau^*)$
is a forcing poset of cardinality $\Lambda^a_{\omega+2}(\tau^*)$
defined in $V[L \restriction \tau^* + 1]$.
    To clarify what $\Q(\tau, \tau^*)$ is doing we record some information about cardinals and cardinal arithmetic after
    forcing with this poset.
    Since $\Q(\tau, \tau^*) \in V[L \restriction \tau^*][A^b_0(\tau^*)]$
    and it has cardinality less than $\Lambda^b_0(\tau^*)$, to analyse the extension of $V[L]$ by $\Q(\tau, \tau^*)$
    it is sufficient to analyse the extension of $V[L \restriction \tau^* +1]$ by $\Q(\tau, \tau^*)$.

    Recall from Section \ref{moreprep} that in $V[L \restriction \tau^* +1]$:
\begin{itemize}
\item
   $\Lambda^a_{\omega+1}(\tau)$, $\Lambda^a_{\omega+2}(\tau)$, $\Lambda^b_0(\tau)$, $\Lambda^b_1(\tau), \ldots \Lambda^b_\omega(\tau)$,
  $\Lambda^b_{\omega+1}(\tau)$, $\Lambda^b_{\omega+2}(\tau)$, $\Lambda^b_{\omega+3}(\tau)$
  form   a block of $\omega + 4$ consecutive cardinals, and similarly for $\tau^*$.
 \item
   $2^{\Lambda^a_{17}(\tau)} = \Lambda^a_{\omega+2}(\tau)$, $2^{\Lambda^a_{\omega+1}(\tau)} = \Lambda^b_0(\tau)$,
     $2^{\Lambda^a_{\omega+2}}(\tau) = \Lambda^b_1(\tau)$, $2^{\Lambda^b_n(\tau)} = \Lambda^b_{n+2}(\tau)$ for $n < 17$,
   $2^{\Lambda^b_n(\tau)} = \Lambda^b_{\omega+3}(\tau)$ for $17 \le n \le \omega +2$, and similarly for $\tau^*$.
\end{itemize} 
After forcing with $\Q(\tau, \tau^*)$ we have that:
\begin{itemize}
\item  The cardinals $\Lambda^b_{\omega+3}(\tau)$ , $\Lambda^a_0(\tau^*)$, $\Lambda^a_1(\tau^*)$,
  $\Lambda^a_2(\tau^*), \ldots   \Lambda^a_\omega(\tau^*)$ form a block of $\omega$ successive cardinals.
\item $2^{\Lambda^b_{\omega+2}(\tau)} = \Lambda^a_0(\tau^*)$, $2^{\Lambda^b_{\omega+3}(\tau)} = \Lambda^a_1(\tau^*)$,
   $2^{\Lambda^a_n(\tau^*)} = \Lambda^a_{n+2}(\tau^*)$ for $0 \le n < 17$. 
\end{itemize}

To help analyse $\Q(\tau, \tau^*)$, we embed the generic extension by this poset into something
more tractable. This will be useful immediately in the proof of distributivity for $\Q(\tau, \tau^*)$
in Lemma \ref{qdistrib}, and again in Section \ref{auxcomp}.
The poset $\Q(\tau, \tau^*)$ is defined in $V[L \restriction \tau^*][A^b_0(\tau^*)]$,
but for our purposes we work over the slightly larger model $V[L \restriction \tau^* + 1]$.

\begin{lemma} \label{fordistrib1variant}
 Let $\tau, \tau^* \in Y$ with $\tau < \tau^*$ and let $Q$ be $\Q(\tau, \tau^*)$-generic over $V[L \restriction \tau^* + 1]$.
 Let $A^c_2(\tau, \tau^*)$
 be the $\Add^V(\Lambda^b_{\omega+2}(\tau), [\Lambda^b_{\omega+3}(\tau), \Lambda^a_0(\tau^*)))$-generic filter added by $Q$ as the $\A^c_2(\tau, \tau^*)$-component.
 Let $\lambda = \Lambda^b_{\omega+3}(\tau)$. Then in some generic extension of $V[L \restriction \tau^* +1][Q]$ there exists $L'$ such that:
 \begin{enumerate}
 \item $V[L \restriction \tau^* + 1][Q] \subseteq V[L \restriction \tau + 1 \times A^c_2(\tau, \tau^*) \times L']$.  
 \item $A^c_2(\tau, \tau^*) \times L'$ is generic for the product of $\A^c_2(\tau, \tau^*)$
   and some $<\lambda$-closed forcing $\FL'$ lying in $V$, where $\FL'$ has cardinality $\Lambda^b_{\omega+3}(\tau^*)$.
 \end{enumerate}
\end{lemma}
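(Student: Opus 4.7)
The plan is to construct $\FL' \in V$ as a product of term forcings encoding each component of $L \restriction (\tau, \tau^* + 1)$ and $Q$ apart from the distinguished Cohen factor $\A^c_2(\tau, \tau^*)$, and then to perform one final round of quotient-to-term forcings to realise $L'$ as an $\FL'$-generic mutually generic with $A^c_2(\tau,\tau^*)$. The cardinal $\lambda = \Lambda^b_{\omega+3}(\tau)$ is the key threshold: every $\A$-style factor appearing above stage $\tau$ lives either at a level $\ge \Lambda^a_{17}(\tau^*) > \lambda$, or is $\A^c_2(\tau, \tau^*)$ itself, which lives at level $\Lambda^b_{\omega+2}(\tau) < \lambda$. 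This forces $\A^c_2(\tau, \tau^*)$ to be singled out; every other component is, in its appropriate model of definition, $<\lambda$-closed.

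For the ``closed'' side of the bookkeeping, I would handle the tail $L \restriction (\tau, \tau^*)$, the $L^b(\tau^*)$ step, the $\U, \S$ components of $I^b(\tau^*)$ and of $J^c(\tau^*)$, and the $\U^c_{[2,\omega)} * \S^c_{[2,\omega)}$ piece of $Q_0(\tau, \tau^*)$ together. By Remark~\ref{Lclosureremark} and the analogous closure facts for the $\U, \S$ iterands at stage $\tau^*$, each of these is $<\Lambda^a_{17}(\tau') \ge \lambda$-closed in its model of definition. Iterated applications of Lemma~\ref{iteratedtermforcing} and Lemma~\ref{termiteratedforcing} let me push each to a term forcing computed in $V$, which by Lemma~\ref{qtot-closed} remains $<\lambda$-closed; subsequent quotient-to-term forcings via Lemma~\ref{rearrangeqtot} realise the required generics. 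The factor $Q_2(\tau, \tau^*)$ is already doing exactly this kind of operation for $L \restriction (\tau, \tau^*)$, with $\T(\tau, \tau^*)$ playing the role of the $V$-term forcing.

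For the Cohen side, the factors $\A^b(\tau^*)$, $\A_e(\tau^*)$, $\A^c(\tau^*)$ and $\A^c_{[3, \omega)}(\tau, \tau^*)$ all live at levels $\ge \Lambda^a_{17}(\tau^*) > \lambda$. By Remark~\ref{abzeroone} several of these are already essentially $V$-definable; the factor $\A^b_0(\tau^*)$ is relocated to $\Add^V(\Lambda^a_{17}(\tau^*), \Lambda^a_{\omega+2}(\tau^*))$ by $Q_1(\tau, \tau^*)$, producing $A^V_0(\tau^*)$. The remaining Cohen factors are relocated to $V$-Cohen posets of corresponding levels via Lemma~\ref{oldlemma} together with further quotient-to-term arguments, each yielding a $<\lambda$-closed Cohen poset in $V$.

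Collecting all the resulting $V$-term forcings and $V$-Cohen posets (excluding $\A^c_2(\tau,\tau^*)$) into a single product $\FL' \in V$ gives a $<\lambda$-closed poset whose cardinality is bounded by $\Lambda^b_{\omega+3}(\tau^*)$, since each component has cardinality at most this value and there are fewer than $\Lambda^b_{\omega+3}(\tau^*)$ components. The sequence of quotient-to-term forcings then produces $L'$ which is $\FL'$-generic over $V[L \restriction \tau + 1][A^c_2(\tau, \tau^*)]$, mutually generic with $A^c_2(\tau, \tau^*)$, and unwinding the induced generics recovers $L \restriction (\tau, \tau^*+1)$ and $Q$ inside $V[L \restriction \tau + 1 \times A^c_2(\tau, \tau^*) \times L']$. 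The main obstacle is the delicate bookkeeping needed to keep each term-forcing computation in $V$ rather than in some intermediate model, and to verify that $<\lambda$-closure is preserved through each term-forcing lift and each quotient-to-term step; this is where Lemmas~\ref{iteratedtermforcing} and~\ref{qtot-closed} do most of the work.
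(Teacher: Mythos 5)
Your overall architecture is exactly the paper's: single out $\A^c_2(\tau,\tau^*)$ as the one factor living below $\lambda$, absorb every other component of $L\restriction(\tau,\tau^*+1)$ and $Q$ into term forcings computed in $V$ via iterated quotient-to-term forcing, and collect these into $\FL'$. The treatment of the $\tau^*$-side components, of $Q_1$ and $Q_2$, and the cardinality count all match the paper's proof.

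There is, however, one concrete gap, and it sits at precisely the point the paper spends most of its effort on. You assert that $\U^c_{[2,\omega)}(\tau,\tau^*) * \S^c_{[2,\omega)}(\tau,\tau^*)$ is $<\lambda$-closed in its model of definition and then propose to pass directly to its $V$-term space. But Lemma \ref{q0info} only gives $<\Lambda^b_{\omega+2}(\tau)$-closure, one cardinal short of $\lambda=\Lambda^b_{\omega+3}(\tau)$, and nothing in the paper establishes $<\lambda$-closure of this poset: its ordering feeds in information from $A^c_1(\tau)$, which is Cohen forcing at level $\Lambda^b_{\omega+1}(\tau)<\lambda$, so decreasing sequences of length in $[\Lambda^b_{\omega+2}(\tau),\lambda)$ need not be amenable to the argument of Lemma \ref{q0info}. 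Without forced $<\lambda$-closure of the iterand, Lemma \ref{standardtermforcinglemma} does not give $<\lambda$-closure of the term space (and Lemma \ref{qtot-closed}, which you cite here, concerns closure of the quotient-to-term poset, not of the term forcing itself). The repair is the paper's detour: do not absorb $\U^c_{[2,\omega)} * \S^c_{[2,\omega)}$ directly, but first force to replace it by generic objects for $(\B^c_{[2,\omega)}(\tau,\tau^*))^{+A^c_1(\tau)*U^c_1(\tau)}$ and $\C^c_{[2,\omega)}(\tau,\tau^*)$, which project onto it and are genuinely $<\lambda$-closed — by Fact \ref{Itay4.7} for the $\B$-part and by the support structure (supports contained in $[\mu_3,\mu_\omega)$ with $\mu_3=\lambda$) for the $\C$-part — and only then take $V$-term spaces of those posets over the appropriate initial segments of the iteration. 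A smaller imprecision: not every remaining Cohen factor lives at level $\ge\Lambda^a_{17}(\tau^*)$ (e.g.\ $\A^c_3(\tau,\tau^*)$ lives at level $\lambda$ itself), and the tail $\FL\restriction(\tau,\tau^*)$ is only $<\Lambda^a_{17}(\sigma)$-closed for $\sigma$ the first point of $Y$ above $\tau$; in both cases the relevant bound is $\ge\lambda$, which is all you need, but the stated degrees of closure are not the correct ones.
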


\begin{proof} Decompose $Q$ in the natural way as $Q_0 \times Q_1 \times Q_2$.
  We recall that $\Q_0(\tau, \tau^*) \in \Vlbi(\tau)[J^c(\tau)]$,
  $\Q_1(\tau, \tau^*) \in \Vl(\tau^*)[A^b_0(\tau^*)]$, and 
  $\Q_2(\tau, \tau^*) \in \Vl(\tau^*) = \Vlbi(\tau)[J^c(\tau)]
  [A_e(\tau) * L \restriction (\tau, \tau^*)]$.
 
  We recall also that $Q_0 = A^c_{[2, \omega)}(\tau, \tau^*) * (U^c_{[2, \omega)}(\tau, \tau^*) * S^c_{[2, \omega)}(\tau, \tau^*))$
  where: 
\begin{itemize} 
\item $A^c_{[2, \omega)}(\tau, \tau^*)$ is generic for a product
  $\A^c_{[2, \omega)}(\tau, \tau^*) =  \A^c_2(\tau, \tau^*) \times  \A^c_{[3, \omega)}(\tau, \tau^*)$
  of Cohen posets defined in $V$.    
\item $\A^c_2(\tau, \tau^*) = \Add^V(\Lambda^b_{\omega+2}(\tau), [\Lambda^b_{\omega+3}(\tau), \Lambda^a_0(\tau^*)))$. 
\item $\A^c_{[3, \omega)}(\tau, \tau^*)$ is defined and $<\lambda$-closed in $V$.
\end{itemize}

  We will produce
  $V[L \restriction \tau +1 \times A^c_2(\tau, \tau^*) \times L']$ from $V[L \restriction \tau^* + 1][Q]$
  by a series of rearrangements and quotient to term forcings. We will be making several appeals to the
  Product Lemma and Lemma \ref{rearrangeqtot}, but we will not make these explicit: the point is that
  each generic object will be generic for the forcing which originally introduced it over various larger models
  than the model where that forcing was originally defined.

\begin{itemize}  

\item  We may rearrange $I^b(\tau^*)$ as $A^b_0(\tau^*) * I^b_-(\tau^*)$
  where $I^b_-(\tau^*)$ collects the remaining components of $I^b(\tau^*)$. Recall from
  Remark \ref{abzeroone} that $\A^b_0(\tau^*) \in V[L \restriction \tau^*]$. 
  Forcing with an appropriate
   series of quotient to term forcing posets
  we extend $V[L \restriction \tau^* + 1][Q]$ to $V[T_0][L \restriction \tau^*][A^b_0(\tau^*)[Q]$
    where $T_0$ is generic for the product $\T_0$ of the following term forcings:
    \begin{itemize}
    \item $\termspace^V(\FL \restriction \tau^*, \FL^b(\tau^*))$.    
    \item $\termspace^V(\FL \restriction \tau^* * \FL^b(\tau^*) * \A^b_0(\tau^*), \I^b_-(\tau^*))$.
    \item $\termspace^V(\FL \restriction \tau^* * \FL^b(\tau^*), \A^c_0(\tau^*))$
    \item $\termspace^V(\FL \restriction \tau^* * \FL^b(\tau^*), \A^c_1(\tau^*))$
    \item $\termspace^V(\FL \restriction \tau^* * \FL^b(\tau^*), \A_e(\tau^*))$
    \item $\termspace^V(\FL \restriction \tau^* * \FL^b(\tau^*) * \I^b(\tau^*) * \A^c_0(\tau^*), \U^c_0(\tau^*) * \S^c_0(\tau^*))$.
    \item $\termspace^V(\FL \restriction \tau^* * \FL^b(\tau^*) * \I^b(\tau^*) * \J^c_0(\tau^*) * \A^c_1(\tau^*), \U^c_1(\tau^*) * \S^c_1(\tau^*))$.
    \end{itemize}

\item $V[T_0][L \restriction \tau^*][A^b_0(\tau^*)[Q] = 
   V[T_0][(L \restriction \tau^*) * A^b_0(\tau^*) * Q_1][Q_0 \times Q_2]
   = V[T_0][(L \restriction \tau^*) \times A^V_0(\tau^*)][Q_0 \times Q_2]$,
   using the definition of $\Q_1$ as a quotient to term forcing.
   It will be convenient to reorganise $A^V_0(\tau^*)$ as $T_1$ which is generic
   for $\T_1 = \termspace(\FL \restriction \tau^*, \A^b_0(\tau^*))$, so our model becomes
   $V[T_0][T_1][L \restriction \tau^*][Q_0 \times Q_2]$,

 \item  $V[T_0][T_1][L \restriction \tau^*][Q_0 \times Q_2]
   = V[T_0][T_1][L \restriction \tau][L^b(\tau) * I^b(\tau) * J^c(\tau)][A_e(\tau) * L \restriction (\tau, \tau^*) * Q_2][Q_0]
   = V[T_0][T_1][L \restriction \tau][L^b(\tau) * I^b(\tau) * J^c(\tau)][A_e(\tau) \times T(\tau, \tau^*)][Q_0]
   = V[T_0][T_1][L \restriction \tau][L^b(\tau) * I^b(\tau) * J^c(\tau)][T(\tau, \tau^*)][A_e(\tau)][Q_0]$. 
   using the definition of $\Q_2$ as a quotient to term forcing.

 \item 
  By Lemma \ref{iteratedtermforcing} $\termspace^V(\FL \restriction \tau * L^b(\tau) * I^b(\tau) * J^c(\tau), \T(\tau, \tau^*)) 
  \simeq \termspace^V(\FL \restriction \tau + 1, \FL \restriction (\tau, \tau^*))$.
  So forcing with an appropriate quotient to term forcing we extend to obtain
  $V[T_0][T_1][T_2][L \restriction \tau][L^b(\tau) * I^b(\tau) * J^c(\tau)][A_e(\tau)][Q_0]$,
  where $T_2$ is generic for
  $\T_2 = \termspace^V(\FL \restriction \tau + 1, \FL \restriction (\tau, \tau^*))$.

\item     By the definition of $\Q_0(\tau, \tau^*)$,
  \begin{align*}
  & V[T_0][T_1][T_2][L \restriction \tau][L^b(\tau) * I^b(\tau) * J^c(\tau)][A_e(\tau)][Q_0]  = \\
  & V[T_0][T_1][T_2][L \restriction \tau][L^b(\tau) * I^b(\tau)][A_e(\tau)][A^c(\tau, \tau^*) * U^c(\tau, \tau^*) * S^c(\tau, \tau^*)]  \\
  \end{align*}

\item  We defined $\A^c_{[1, \omega)}(\tau, \tau^*) * \U^c_{[1, \omega)}(\tau, \tau^*) * \S^c_{[1, \omega)}(\tau, \tau^*)$
  as an $\A * \U * \S$ construction as in Section \ref{abcu}, 
  performed in the model  $V[L \restriction \tau * L^b(\tau) * I^b(\tau)][A^c_0(\tau) * U^c_0(\tau) * S^c_0(\tau)]$.
  In particular the construction involved auxiliary posets $\B^c_{[1,\omega)}(\tau, \tau^*)$ and $\C^c_{[1, \omega)}(\tau, \tau^*)$
      constructed in this model. $\C^c_{[1, \omega)}(\tau, \tau^*)$ breaks down as
        $\C^c_{[1, \omega)}(\tau, \tau^*) = \C^c_1(\tau, \tau^*) \times \C^c_{[2, \omega)}(\tau, \tau^*)$
            where $\C_{[2, \omega)}(\tau, \tau^*)$ is $<\lambda$-closed in      
              $V[L \restriction \tau * L^b(\tau) * I^b(\tau)][A^c_0(\tau) * U^c_0(\tau) * S^c_0(\tau)]$.
 The forcing poset $\S^c_{[2 , \omega)}(\tau, \tau^*)$ is defined from $\C^c_{[2, \omega)}(\tau, \tau^*)$
                  and $A^c_{[1, \omega)}(\tau, \tau^*) * U^c_{[1, \omega)}(\tau, \tau^*)$ as
     $(\C^c_{[2, \omega)}(\tau, \tau^*))^{+A^c_{[1, \omega)}(\tau, \tau^*) * U^c_{[1, \omega)}(\tau, \tau^*)}$ as in Section
         \ref{abcu}, so that in $V[L \restriction \tau * L^b(\tau) * I^b(\tau)][A^c_0(\tau) * U^c_0(\tau) * S^c_0(\tau)]$ we may view
         $\A^c_{[1, \omega)}(\tau, \tau^*) * \U^c_{[1, \omega)}(\tau, \tau^*) * \S^c_{[2, \omega)}(\tau, \tau^*)$ as a projection of
 the product $\A^c_{[1, \omega)}(\tau, \tau^*) * \U^c_{[1, \omega)}(\tau, \tau^*) \times \C^c_{[2, \omega)}(\tau, \tau^*)$.                               
    In $V[L \restriction \tau * L^b(\tau) * I^b(\tau)][A^c_0(\tau) * S^c_0(\tau) * U^c_0(\tau)][A^c_1(\tau) * U^c_1(\tau)]$
    we may view $A^c_{[2, \omega)}(\tau, \tau^*) * U^c_{[2, \omega)}(\tau, \tau^*)$ as a projection of 
        $A^c_{[2, \omega)}(\tau, \tau^*) \times (\B^c_{[2, \omega)}(\tau, \tau^*))^{+A^c_1(\tau) * U^c_1(\tau)}$. By Lemma \ref{Itay4.7} for
            $\A^c_{[1, \omega)}(\tau, \tau^*) * \U^c_{[1, \omega)}(\tau, \tau^*) * \S^c_{[1, \omega)}(\tau, \tau^*)$ with $\alpha = \alpha' = \lambda$
    (so that $F'$ in the lemma is $A^c_1 * U^c_1$),              
     $(\B^c_{[2, \omega)}(\tau, \tau^*))^{+A^c_1(\tau) * U^c_1(\tau)}$ is $<\lambda$-closed         
    in $V[L \restriction \tau * L^b(\tau) * I^b(\tau)][A^c_0(\tau) * U^c_0(\tau) * S^c_0(\tau)][A^c_1(\tau) * U^c_1(\tau)]$.       

  \item Since $S^c_{[1, \omega)}(\tau, \tau^*)$ is a product,
\begin{align*}    
    & V[T_0][T_1][T_2][L \restriction \tau][L^b(\tau) * I^b(\tau)][A_e(\tau)][A^c(\tau, \tau^*) * U^c(\tau, \tau^*) * S^c(\tau, \tau^*)]\\
  =  & V[T_0][T_1][T_2][L \restriction \tau][L^b(\tau) * I^b(\tau)][A_e(\tau)][A^c_0(\tau) * U^c_0(\tau) * S^c_0(\tau)]\\ 
    &[A^c_{[1, \omega)}(\tau, \tau^*) * U^c_{[1, \omega)}(\tau, \tau^*) * S^c_{[2, \omega)}(\tau, \tau^*)][S^c_1(\tau)] \\
\end{align*}
          
      Forcing with a suitable quotient to term forcing, we may extend to obtain
      a model
      \begin{align*}
      &V[T_0][T_1][T_2][L \restriction \tau][L^b(\tau) * I^b(\tau)][A_e(\tau)][A^c_0(\tau) * U^c_0(\tau) * S^c_0(\tau)]\\
      &[A^c_{[1, \omega)}(\tau, \tau^*) * U^c_{[1, \omega)}(\tau, \tau^*) \times C^c_{[2, \omega)}(\tau, \tau^*)][S^c_1(\tau)],\\
      \end{align*}
      and then reorganise as
      \begin{align*}
      &V[T_0][T_1][T_2][L \restriction \tau][L^b(\tau) * I^b(\tau)][A_e(\tau)][J^c(\tau)] \\
      &[A^c_{[2, \omega)}(\tau, \tau^*) * U^c_{[2, \omega)}(\tau, \tau^*) \times C^c_{[2, \omega)}(\tau, \tau^*)]. \\
      \end{align*}  
    Forcing with another quotient to term forcing, we may extend to obtain  a model            
    $V[T_0][T_1][T_2][L \restriction \tau][L^b(\tau) * I^b(\tau)][A_e(\tau)][J^c(\tau)]
    [A^c_{[2, \omega)}(\tau, \tau^*) \times X \times C^c_{[2, \omega)}(\tau, \tau^*)]$
        where $X$ is $(\B^c_{[2, \omega)}(\tau, \tau^*))^{+A^c_1(\tau) * U^c_1(\tau)}$-generic. 
    This model may be rewritten as 
    $V[T_0][T_1][T_2][L \restriction \tau + 1][A^c_{[2, \omega)}(\tau, \tau^*) \times X \times C^c_{[2, \omega)}(\tau, \tau^*)]$.

      \item Now let
     \begin{align*}
       & \T_3 \\
       & = \termspace^V(\FL \restriction \tau * \FL^b(\tau) * \I^b(\tau) * \J^c_0(\tau) * \A^c_1(\tau) * \U^c_1(\tau),
       (\B^c_{[2, \omega)}(\tau, \tau^*))^{+A^c_1(\tau) * U^c_1(\tau)}) \\
      &\times   
     \termspace^V(\FL \restriction \tau * \FL^b(\tau) * \I^b(\tau) * \A^c_0(\tau) * \U^c_0(\tau) * \S^c_0(\tau), \C^c_{[2, \omega)}(\tau, \tau^*)),\\
     \end{align*}      
       where $\T_3$ is $<\lambda$-closed in $V$. 
      With one more round of quotient to term forcing we may extend     
      $V[T_0][T_1][T_2][L \restriction \tau + 1][A^c_{[2, \omega)}(\tau, \tau^*) \times X \times C^c_{[2, \omega)}(\tau, \tau^*)]$
          to obtain
      $V[T_0][T_1][T_2][T_3][L \restriction \tau + 1][A^c_{[2, \omega)}(\tau, \tau^*)]$    
\end{itemize}

We set $\FL' = \A^c_{[3, \omega)}(\tau, \tau^*) \times \T_0 \times \T_1 \times \T_2 \times \T_3$.
  It is routine to check that $\FL'$ is $<\lambda$-closed and has cardinality
  $\lambda^b_{\omega + 3}(\tau^*)$.
\end{proof}

\begin{lemma} \label{qdistrib}
  The poset $\Q(\tau, \tau^*)$ is $<\Lambda^b_{\omega+2}(\tau)$-distributive in $V[L]$. 
\end{lemma}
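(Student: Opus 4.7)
Let $\mu = \Lambda^b_{\omega+2}(\tau)$, and suppose $s \in V[L][Q]$ is a sequence of ordinals of length less than $\mu$; the plan is to show $s \in V[L]$. By mutual genericity one has $V[L][Q] = V[L \restriction \tau^*+1][Q \times L \restriction (\tau^*+1, \kappa)]$, and Lemma \ref{fordistrib1variant} absorbs this into $V[L \restriction \tau+1][A^c_2(\tau, \tau^*) \times L']$, where $A^c_2(\tau, \tau^*) \times L'$ is generic over $V[L \restriction \tau+1]$ for $\A^c_2(\tau, \tau^*) \times \FL'$, a product of $V$-posets that is $<\mu$-closed in $V$ (the Cohen factor $\A^c_2 = \Add^V(\mu, \cdot)$ is $<\mu$-closed, and $\FL'$ is $<\Lambda^b_{\omega+3}(\tau)$-closed). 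Reorganizing, $s \in V[L][A^c_2 \times L']$.

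The main obstacle to a direct Easton's-lemma argument is that $\FL \restriction \tau+1$ fails to be $\mu$-cc in $V$, since the $\U^c_1 * \S^c_1$ portion of $\J^c(\tau)$ can have antichains of size $\mu$. The plan is to circumvent this using the portmanteau decomposition of Lemma \ref{portmanteaulemma} (item 7), applied with $n = 1$ to the $\A * \U * \S$ construction defining $\J^c(\tau)$ (whose parameters are $\mu_0 = \Lambda^b_{17}(\tau), \ldots, \mu_3 = \Lambda^b_{\omega+3}(\tau)$): this realizes $\J^c(\tau)$ as the projection of $\P_0 \times \P_1$, where $\P_0 = \J^c_0(\tau) * \A^c_1(\tau)$ is $\mu$-Knaster and $\P_1 = \B^c_1(\tau) \times \C^c_1(\tau)$ is $<\mu$-directed closed. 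Let $\FL_{cc} = \FL \restriction \tau * \FL^b(\tau) * \I^b(\tau) * \A_e(\tau) * \P_0$: each iterand is $\mu$-cc in its ambient model (in particular $\FL \restriction \tau$ is $\tau$-cc with $\tau < \mu$, $\FL^b$ and $\I^b$ are of size less than $\mu$, $\A_e$ is $\mu$-Knaster, and $\P_0$ is $\mu$-Knaster by the portmanteau), so $\FL_{cc}$ is $\mu$-cc in $V$. Using a $<\mu$-closed quotient-to-term forcing (the analogue of Lemma \ref{qtot-closed} for the $+F$-construction), extend the $U^c_1 * S^c_1$-generic inside $V[L]$ to a $\P_1$-generic $B^c_1 \times C^c_1$, producing an enlarged model $V[\FL_{cc}][B^c_1 \times C^c_1][L \restriction (\tau+1, \kappa)][A^c_2 \times L']$ that still contains $s$.

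This enlarged model is a generic extension of $V[\FL_{cc}]$ by $\P_1 * (\FL \restriction (\tau+1, \kappa) \times \A^c_2 \times \FL')$. The factor $\P_1$ is $<\mu$-closed in $\Vlbi(\tau)[J^c_0]$ and hence, by Easton's lemma (using that $\A^c_1$ is $\mu$-cc over $\Vlbi(\tau)[J^c_0]$), $<\mu$-distributive over $V[\FL_{cc}]$. After forcing $\P_1$, the residual $\FL \restriction (\tau+1, \kappa) \times \A^c_2 \times \FL'$ is $<\mu$-closed: the tail $\FL \restriction (\tau+1, \kappa)$ is $<\Lambda^a_{17}(\tau^{**})$-closed in $V[L \restriction \tau+1]$ (where $\tau^{**}$ is the next element of $Y$ past $\tau$, so $\tau^{**} > \Lambda^b_{\omega+3}(\tau) > \mu$ by closure of $Y$), and $\A^c_2 \times \FL'$ remains $<\mu$-distributive in this model by Easton applied to the $\mu$-cc total forcing structure. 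The iterated forcing is therefore $<\mu$-distributive over $V[\FL_{cc}]$, so $s \in V[\FL_{cc}] \subseteq V[L]$, as required. The hard part of the proof is making the portmanteau absorption rigorous so that the iterated forcing actually decomposes as claimed; the quotient-to-term closure and the verification that each piece of the residual retains $<\mu$-closure or distributivity in the relevant intermediate model are the technical core.
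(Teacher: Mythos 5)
Your proof is correct and follows the same route as the paper: reduce to $V[L \restriction \tau^* + 1]$, absorb $V[L \restriction \tau^*+1][Q]$ into $V[L \restriction \tau + 1 \times A^c_2(\tau,\tau^*) \times L']$ via Lemma \ref{fordistrib1variant}, and finish with Easton's lemma. The paper's own proof stops at ``immediate by Easton's lemma''; the obstruction you identify (that $\FL \restriction \tau+1$ is not $\Lambda^b_{\omega+2}(\tau)$-cc because of the $\U^c_1(\tau) * \S^c_1(\tau)$ component of $\J^c(\tau)$) is real, and your fix --- the portmanteau decomposition plus a quotient-to-term forcing replacing $U^c_1 * S^c_1$ by a generic for the $<\Lambda^b_{\omega+2}(\tau)$-closed term poset $\termspace^V(\FL\restriction\tau * \FL^b(\tau) * \I^b(\tau) * \J^c_0(\tau), \B^c_1(\tau)\times\C^c_1(\tau))$ --- is exactly the maneuver the paper carries out explicitly in the proof of Lemma \ref{q0info}, so it is the intended content of that phrase. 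Two small simplifications: since Lemma \ref{fordistrib1variant} already places the sequence in $V[L\restriction\tau+1][A^c_2 \times L']$ you need not reintroduce the tail $L\restriction(\tau+1,\kappa)$; and the last step is cleaner as a single two-factor Easton application (the $\mu$-cc $V$-poset $\FL_{cc}$ against the product of the $<\mu$-closed $V$-posets, namely the term poset for $\B^c_1\times\C^c_1$ together with $\A^c_2 \times \FL'$) rather than as an iteration of distributivity claims, each of which would otherwise need separate justification over the cumulative extension.
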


\begin{proof}
  By the agreement between $V[L]$ and $V[L \restriction \tau^* + 1]$, it is enough to show that
  $\Q(\tau, \tau^*)$ is $<\Lambda^b_{\omega+2}(\tau)$-distributive in $V[L \restriction \tau^* + 1]$.
  Let $Q$ be $\Q(\tau, \tau^*)$-generic over $V[L \restriction \tau^* + 1]$, then
  by Lemma \ref{fordistrib1variant}
  $V[L \restriction \tau^* +1] [Q] \subseteq V[L \restriction \tau + 1 \times A^c_2(\tau, \tau^*) \times L']$,  
  where $A^c_2(\tau, \tau^*) \times L'$ is generic for the product of $\Add^V(\Lambda^b_{\omega+2}(\tau), \Lambda^a_0(\tau^*))$
  and some $<\Lambda^b_{\omega+3}(\tau)$-closed forcing $\FL'$ lying in $V$.
  The conclusion is now immediate by Easton's lemma.
\end{proof} 
A minor elaboration of this argument shows:
\begin{lemma} \label{qdistribmany}
  Let $\tau_0 < \ldots < \tau_n$ with $\tau_i \in Y$ for all $i$. Then
  $\prod_{0 \le i < n} \Q(\tau_i, \tau_{i+1})$ is $<\Lambda^b_{\omega+2}(\tau_0)$-distributive in $V[L]$. 
\end{lemma}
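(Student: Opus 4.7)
The argument is a straightforward elaboration of the proof of Lemma \ref{qdistrib}, applying the rearrangement of Lemma \ref{fordistrib1variant} simultaneously to each pair $(\tau_i, \tau_{i+1})$. First, by the agreement between $V[L]$ and $V[L \restriction \tau_n + 1]$ — the tail of $L$ past stage $\tau_n$ is generic for forcing which (by Remark \ref{Lclosureremark} and the even stronger closure at supercompact stages) is far more than $\Lambda^b_{\omega+2}(\tau_0)$-closed in $V$ — any $<\Lambda^b_{\omega+2}(\tau_0)$-sequence of ordinals in $V[L]$ lies already in $V[L \restriction \tau_n+1]$. It therefore suffices to prove that $\prod_{0 \le i < n} \Q(\tau_i, \tau_{i+1})$ is $<\Lambda^b_{\omega+2}(\tau_0)$-distributive over $V[L \restriction \tau_n+1]$.

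Let $Q = \prod_i Q_i$ be generic for this product over $V[L \restriction \tau_n+1]$. The plan is to carry out, for each $i$ in turn from $n-1$ down to $0$, the sequence of quotient-to-term rearrangements used in the proof of Lemma \ref{fordistrib1variant}. The definitions of $\Q(\tau_i, \tau_{i+1})$ and of the accompanying term forcings $\T_0, \T_1, \T_2, \T_3$ from that proof only use the initial segment $L \restriction \tau_{i+1} + 1$, together with a Cohen-like piece added at stage $\tau_{i+1}$. The relevant cardinal window over which each pair's absorbed forcing acts nontrivially lies between $\Lambda^b_{\omega+3}(\tau_i)$ and $\Lambda^b_0(\tau_{i+1})$, so the windows for distinct pairs are disjoint and the rearrangements commute without interaction. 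After carrying them all out we arrive at an extension of $V[L \restriction \tau_n + 1][Q]$ of the form
\[
V[(L \restriction \tau_0+1) \times \prod_{i} A^c_2(\tau_i, \tau_{i+1}) \times \prod_i L'_i \times T],
\]
where each $L'_i \in V$ is generic for a $<\Lambda^b_{\omega+3}(\tau_i)$-closed forcing $\FL'_i$ as furnished by Lemma \ref{fordistrib1variant}, and $T$ collects the additional term-forcing generics introduced by the rearrangements (each of which is at least $<\Lambda^b_{\omega+3}(\tau_i)$-closed in $V$ for the appropriate $i$).

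Every factor of the forcing producing $\prod_i A^c_2(\tau_i, \tau_{i+1}) \times \prod_i L'_i \times T$ lies in $V$, and all of them are at least $<\Lambda^b_{\omega+2}(\tau_0)$-closed there: the factor $\A^c_2(\tau_0, \tau_1) = \Add^V(\Lambda^b_{\omega+2}(\tau_0), [\Lambda^b_{\omega+3}(\tau_0), \Lambda^a_0(\tau_1)))$ has exactly this closure, while all remaining factors involve cardinals $\Lambda^b_{\omega+2}(\tau_i) > \Lambda^b_{\omega+3}(\tau_0) > \Lambda^b_{\omega+2}(\tau_0)$ or the closed Laver-type posets $\FL'_i$ and the term forcings, which are uniformly more closed. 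Since closure of a ground-model poset is preserved in any outer model, the entire combined forcing is $<\Lambda^b_{\omega+2}(\tau_0)$-closed in $V[L \restriction \tau_0 + 1]$, and hence $<\Lambda^b_{\omega+2}(\tau_0)$-distributive over that model. Therefore every $<\Lambda^b_{\omega+2}(\tau_0)$-sequence of ordinals in the enlarged model lies in $V[L \restriction \tau_0 + 1] \subseteq V[L \restriction \tau_n + 1]$, which yields the desired distributivity.

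The only real issue is the bookkeeping of the nested quotient-to-term rearrangements — one must verify that the long sequence of extensions worked out in the proof of Lemma \ref{fordistrib1variant} can be performed for each pair $(\tau_i, \tau_{i+1})$ without the rearrangements interfering with one another. This is the step that requires care, but it reduces to the observation above that the active cardinal windows for the successive pairs are disjoint; once this is in place, the rest of the argument is essentially copied verbatim from the proof of Lemma \ref{qdistrib}.
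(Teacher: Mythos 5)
Your overall architecture is exactly what the paper intends: the paper gives no proof of this lemma, saying only that it is ``a minor elaboration'' of the argument for Lemma \ref{qdistrib}, and your plan — restrict to $V[L\restriction\tau_n+1]$, apply the rearrangement of Lemma \ref{fordistrib1variant} to each pair in turn, observe that the pairs act in disjoint cardinal windows so the quotient-to-term forcings do not interfere, and then argue that all the absorbed forcing is sufficiently closed in $V$ — is the right elaboration. (One bookkeeping point you leave implicit: when you rearrange the pair $(\tau_{n-1},\tau_n)$ you are forcing over a model that already contains the other factors $Q_i$, $i<n-1$; this is handled by mutual genericity of the product and Lemma \ref{rearrangeqtot}, as the paper does throughout.)

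There is, however, one genuinely false step at the end. You write that ``closure of a ground-model poset is preserved in any outer model,'' and use this to conclude that the combined $V$-poset $\prod_i \A^c_2(\tau_i,\tau_{i+1})\times\prod_i\FL'_i\times\T$ is $<\Lambda^b_{\omega+2}(\tau_0)$-closed in $V[L\restriction\tau_0+1]$. This principle is not true: an outer model typically contains new short decreasing sequences of ground-model conditions with no lower bound in the poset (for instance, $\Add^V(\omega_1,1)$ is not countably closed after adding a Cohen real, since the union of a new $\omega$-sequence of conditions is a countable partial function not lying in $V$ and hence not a condition). What one actually gets, and what the paper's proof of Lemma \ref{qdistrib} invokes at the corresponding point, is Easton's Lemma: a $<\kappa$-closed poset of $V$ is $<\kappa$-\emph{distributive} (not closed) in any $\kappa$-cc extension, and more generally one first writes $\FL\restriction\tau_0+1$ (via item \ref{portmanteau3} of Lemma \ref{portmanteaulemma} and a further round of quotient-to-term forcing, in the style of Lemma \ref{Wgoodextn}) as a projection of the product of a $\Lambda^b_{\omega+2}(\tau_0)$-cc poset and a $<\Lambda^b_{\omega+2}(\tau_0)$-closed poset, and then applies Easton's Lemma to conclude that every $<\Lambda^b_{\omega+2}(\tau_0)$-sequence of ordinals in the enlarged model lies in $V[L\restriction\tau_0+1]$. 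With that substitution your argument is correct; as written, the last paragraph rests on a false lemma.
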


Since $\Q(\tau_n, \tau_{n+1})$ is distributive over the cardinality of $\prod_{0 \le i < n} \Q(\tau_i, \tau_{i+1})$,
we immediately deduce:

\begin{lemma} \label{qdistriboverprevious}
  Let $\tau_0 < \ldots < \tau_n < \tau_{n+1}$ with $\tau_i \in Y$ for all $i$,
  and let $E$ be $\prod_{0 \le i < n} \Q(\tau_i, \tau_{i+1})$-generic over $V[L]$
  Then $\Q(\tau_n, \tau_{n+1})$ is $<\Lambda^b_{\omega+2}(\tau_n)$-distributive in $V[L][E]$. 
\end{lemma}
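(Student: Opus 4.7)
The plan is a short deduction combining the distributivity of the single factor $\Q(\tau_n,\tau_{n+1})$ in $V[L]$ with a cardinality bound on the product forcing, and then appealing to the standard preservation principle that $<\kappa$-distributivity survives passage to any $V$-generic extension by a poset of cardinality strictly less than $\kappa$.

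First I would set $\kappa = \Lambda^b_{\omega+2}(\tau_n)$ and observe, by Lemma \ref{qdistrib}, that $\Q(\tau_n,\tau_{n+1})$ is $<\kappa$-distributive in $V[L]$. Next, by Lemmas \ref{q0info}, \ref{q1info}, and \ref{q2info}, each factor $\Q(\tau_i,\tau_{i+1})$ has cardinality $\Lambda^a_{\omega+2}(\tau_{i+1})$ in $V[L]$; since $\tau_{i+1} \le \tau_n$ for every $i<n$ and the product is finite, $|\prod_{0 \le i < n}\Q(\tau_i,\tau_{i+1})|^{V[L]} \le \Lambda^a_{\omega+2}(\tau_n) < \Lambda^b_0(\tau_n) < \kappa$.

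The conclusion then follows from the general fact that if $\mathbb{P}$ is $<\kappa$-distributive in $V$ (with $\kappa$ regular) and $\Q^{*}$ is a forcing poset in $V$ with $|\Q^{*}| < \kappa$, then $\mathbb{P}$ remains $<\kappa$-distributive in every $\Q^{*}$-generic extension of $V$. Briefly: given a $\mathbb{P} \times \Q^{*}$-name $\dot f \in V$ for a $\nu$-sequence of ordinals with $\nu < \kappa$, and letting $G \times H$ be $\mathbb{P} \times \Q^{*}$-generic, the function $F : \nu \times \Q^{*} \to \mathrm{Ord}$ defined in $V[G]$ by $F(i,q) = \alpha$ iff some $p \in G$ satisfies $(p,q) \forces \dot f(i) = \check{\alpha}$ is well-defined (compatibility within $G$ forces unicity), has domain of size $\nu \cdot |\Q^{*}| < \kappa$, and therefore lies in $V$ by $<\kappa$-distributivity of $\mathbb{P}$; one then recovers $\dot f[G \times H]$ from $F$ and $H$, yielding $\dot f[G \times H] \in V[H]$.

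Applying this principle with $V[L]$ in place of $V$, $\mathbb{P} = \Q(\tau_n,\tau_{n+1})$, $\Q^{*} = \prod_{0 \le i < n}\Q(\tau_i,\tau_{i+1})$, and $H = E$ yields the desired $<\kappa$-distributivity of $\Q(\tau_n,\tau_{n+1})$ in $V[L][E]$. There is no real obstacle in this argument: everything of substance is already packaged into Lemma \ref{qdistrib} and the cardinality computations from Section \ref{successiveprikry}, and the preservation principle itself is a routine use of distributivity applied to a name of size below $\kappa$.
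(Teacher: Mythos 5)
Your proof is correct and takes essentially the same route as the paper, which simply observes that $\Q(\tau_n,\tau_{n+1})$ is distributive well beyond the cardinality of $\prod_{0\le i<n}\Q(\tau_i,\tau_{i+1})$ and declares the lemma immediate. You have merely written out in full the standard preservation argument (reading off the realisation of a name from a ground-model partial function on $\nu\times\Q^{*}$ together with the small generic) that the paper leaves implicit.
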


\subsection{The first Prikry point} \label{firstprikry}

In this section we define a forcing poset ${\Q}^*(\tau)$ which will be used in the Prikry
forcing $\bar\P$ when $\tau$ is the first Prikry point. 

Recall from Section \ref{preparation} that $V = V_0[A^0  * U^0 \restriction \theta * L^0]$
where $A^0 * U^0 \restriction \theta$
forces that $2^\omega = \theta$
and makes $\theta$ indestructibly generically supercompact via Cohen reals, and $L^0$ is generic over $V_0[A^0 * U^0 \restriction \theta]$
for what is essentially a standard Laver indestructibility iteration in the interval $(\theta, \delta)$.
Let $V_1=V_0[A^0 * U^0 \restriction \theta]$. 
  Recall from the discussion preceding Lemma \ref{selection} that there is a unique stage $\bar \theta < \rho$
 of the preparation such that $\rho$ is a limit of supercompact cardinals in $V_0[A^0 \restriction \bar \theta * U^0 \restriction \bar \theta]$
 and  $\rho$ is an $\omega$-successor in $V_0[A^0 \restriction \bar \theta * U^0 \restriction \bar \theta + 1]$.
 
  Recall from Remark \ref{abzeroone} that $\A^b_0(\tau) 
 =\Add^{V[L \restriction \tau]}(\Lambda^a_{17}(\tau), [\Lambda^a_{\omega+1}(\tau), \Lambda^a_{\omega+2}(\tau)))$
 and is part of the component of $\FL$ at stage $\tau$. 
   The poset ${\Q}^*(\tau)$ will ultimately be defined in $V[L \restriction \tau][A^b_0(\tau)]$
and will have three components $\Q^*_i(\tau)$ for $i \in \{ 0, 1, 2 \}$.

 The idea for defining $\Q_0^*(\tau)$ is that we view the forcing $\A^0  * \U^0 \restriction \theta * \L^0$
 which produces $V$ from $V_0$ as the first phase of a two-phase $\A * \U * \S$ construction,
 and
 that $\Q_0^*(\tau)$ is defined in $V$ and implements the second phase. 
 Here are the details of the two-phase construction.

 \begin{itemize}
 \item  The cardinal parameters are $\mu_0 = \omega$, $\mu_1 = \rho^+$, $\mu_2 = \theta$, $\mu_{3 + n} = \Lambda^a_n(\tau)$ for all $n \in \omega$.
 \item  $\A_0 = \A^0 =  \A^0 \restriction \theta$, $\B_0 = \B^0 \restriction \theta$
   and $\U_0 = \U^0 \restriction \theta$ were already defined in $V_0$, and the construction
   of Section \ref{preparation} already gave us the generic object $A_0 * U_0$.
 \item   ${\C}_0$ is also defined in $V_0$  as in Section \ref{abcu}, in particular
     it adds generic objects for  $\Add(\rho^+, 1)^{V_0[A_0 \restriction \alpha * U_0 \restriction \alpha]}$ only
       for $\alpha$ with $\mu_1 < \alpha < \theta$.
     \item $\A_1 = \Add(\mu_1, [\mu_2,\mu_3))^{\bar W}$, where we recall that
       $\bar W = V_0[A_0 \restriction \bar \theta * U_0 \restriction \bar \theta + 1]$.      
 \item $\A_n = \Add(\mu_n,[\mu_{n+1},\mu_{n+2}))^V$ for $2 \le n < \omega$.
 \item The Laver function is the universal indestructible function $\phi$  from Lemma \ref{fromVzerotoV}.   
 \item For $n \ge 1$, $\B_n$ and $\C_n$ are defined over $V$. To be more precise conditions in $\B_n$
   are functions $b \in V$ with supports which are Easton subsets of $(\mu_{n+1}, \mu_{n+2})$, consisting of points $\alpha$
   where the Laver function returns an $\A \restriction \alpha * U \restriction \alpha$ name in $V$ for
   a forcing which is $<\alpha$-directed closed in $V[A \restriction \alpha * U \restriction \alpha]$. As usual $b(\alpha)$
   will name an element of this poset. The definition of $\C_n$ is similar.
 \end{itemize}

 The first component $\Q_0^*(\tau)$ of ${\Q}^*(\tau)$ prolongs $A_0 * U_0 * L^0$ to a generic object for this two=phase construction. 
 The second component $\Q_1^*(\tau)$ is defined over $V[L \restriction \tau][A^b_0(\tau)]$ and adds  
 $A^V_0(\tau)$ which is $\Add^V(\Lambda^a_{17}(\tau), \Lambda^a_{\omega+2}(\tau))$-generic over $V[L \restriction \tau]$ and is such that
 $V[L \restriction \tau] \subseteq V[L \restriction \tau][A^b_0(\tau)] \subseteq V[(L \restriction \tau) \times A^V_0(\tau)]$.
 The third component $\Q_2^*(\tau)$ is $\Coll(\omega, \rho)$.
We note that ${\Q}^*(\tau)$ has cardinality $\Lambda^a_{\omega + 2}(\tau)$.

\begin{remark}  By contrast with $\Q(\tau, \tau^*)$, not all components of $\Q^*(\tau)$ are $\rho$-closed
  posets defined in $V$ or some $\rho$-closed extension of $V$. For use in Section \ref{GroupIV},
  we categorise the components of $\Q^*(\tau)$.

  \begin{itemize}
  \item $\A_{[2, \omega)}$ is $\rho$-closed in $V$.
  \item $\A_1$ is $\rho$-closed in $\bar W$ which is a proper submodel of $V$.
  By the usual arguments with Easton's lemma, it is
    $\rho$-distributive in $V$. 
  \item $\U_{[1, \omega)} * \S_{[1, \omega)}$ is $\rho$-closed in $V[A_{[1, \omega)}]$. 
 \item As in the case of $\Q(\tau, \tau^*)$, $\Q^*_1(\tau)$ is $\rho$-closed in $V[L \restriction \tau][A^b_0(\tau)]$.
 \item Of course, $\Q_2^*(\tau)$ is not even $\omega$-distributive.    
  \end{itemize}
\end{remark}

\smallskip

\noindent Global notation:
  $\Q^*(\tau)$\index[Notation]{$\Q^*(\tau)$},
  $\Q^*_0(\tau)$\index[Notation]{$\Q^*_0(\tau)$},
  $\Q^*_1(\tau)$\index[Notation]{$\Q^*_1(\tau)$},
  $\Q^*_2(\tau)$\index[Notation]{$\Q^*_2(\tau)$}

\smallskip

  \subsection{Some auxiliary computations} \label{auxcomp}

Recall that $j: V \to M$ has critical point $\kappa$ and witnesses that
$\kappa$ is $\delta^+$-supercompact. We derive a supercompactness extender
$E$ from $j$ witnessing that $\kappa$ is $<\lambda^b_{\omega+3}$-supercompact:
to be more concrete, for each $\eta$ with  $\kappa \le \eta <\lambda^b_{\omega+3}$ we let
$W_\eta$ be the supercompactness measure on $P_\kappa \eta$ derived from $j$,
and let $E$ be the system of measures $\langle W_\eta: \kappa \le \eta <\lambda^b_{\omega+3} \rangle$,
with projection maps $\pi_{\eta \zeta}: P_\kappa \zeta \to P_\kappa \eta$ given
by $\pi_{\eta \zeta} : x \mapsto x \cap \eta$.

Let $j_E: V \to  Ult(V, E)$ be the limit ultrapower by $E$,
so that by standard arguments $\crit(j_E) = \kappa$ and
$Ult(V, E)$ is closed under $<\lambda^b_{\omega+3}$-sequences. As usual there is a an elementary embedding
$k_E: Ult(V, E) \rightarrow M$ such that $k_E \circ j_E = j$ and $\crit(k_E) \ge \lambda^b_{\omega + 3}$.
Using $k_E$ it is easy to see that
$j_E(\Lambda^z_i)(\kappa) = j(\Lambda^z_i)(\kappa) = \lambda^z_i$ for $z \in \{a, b \}$ and $i < \omega + 3$.

\smallskip

\noindent Global notation: $E$\index[Notation]{$E$}, $W_\eta$\index[Notation]{$W_\eta$}, $j_E$\index[Notation]{$j_E$}

\smallskip

We will need to iterate the ultrapower by $E$, but only for two steps. To simplify the notation
let $j_{0 1} = j_E$ and $M_1 = Ult(V, E)$. Then as usual $j_{1 2}: M_1 \rightarrow M_2$ is the ultrapower
map computed in $M_1$ using the extender $j_{0 1}(E)$, and $j_{0 2} = j_{1 2} \circ j_{0 1}$.
Note that by the usual chain condition argument, $V[L] \models {}^{<\lambda^b_{\omega+3}} M_1[L] \subseteq M_1[L]$.

We will use the identity $j_{01} \restriction M_1 \circ j_{01} = j_{02}$. The proof is quite easy:
by the elementarity of $j^V_E$ and the fact that $j^V_E$ is defined in $V$,
$j_{01}(j_{01}(x)) = j^V_E(j^V_E(x)) = j^{M_1}_{j^V_E(E)}(j^V_E(x)) = j_{12}(j_{01}(x)) = j_{02}(x)$.

\smallskip

\noindent Global notation: $j_{01}$\index[Notation]{$j_{01}$}, $j_{12}$\index[Notation]{$j_{12}$},  $j_{02}$\index[Notation]{$j_{02}$},
$M_0$\index[Notation]{$M_0$}, $M_1$\index[Notation]{$M_1$},  $M_2$\index[Notation]{$M_2$}

\smallskip

It is easy to see that:
\begin{itemize}
\item $\crit(j_{12}) = j_{01}(\kappa) > \lambda^b_{\omega+3}$. 
\item $M_1 \models {}^{<j_{01}(\lambda^b_{\omega + 3})} M_2 \subseteq M_2$.
\item For any function $g: \kappa \rightarrow \kappa$,
  $j_{02}(g)(\kappa) = j_{12}(j_{01}(g)(\kappa)) = j_{01}(g)(\kappa)$ and 
  $j_{02}(g)(j_{01}(\kappa)) = j_{01}(j_{01}(g)(\kappa))$. In particular
  $j_{02}(\Lambda^z_i)(\kappa) = \lambda^z_i$, and also
  $j_{02}(\Lambda^z_i)(j_{01}(\kappa)) = j_{01}(\lambda^z_i)$,
  for $z \in \{a, b \}$ and $i < \omega + 3$.
\end{itemize}


\begin{lemma} \label{technical}
 Let $\Aggforcing = \Add^V(\lambda^b_{\omega +2}, j_{01}(\lambda^a_0))$.
 There exists $L^* \in V[L]$ and $K \in V[L][\Agg]$ such that:
 \begin{enumerate}
 \item   $L^*$  is $j_{01}(\FL)$-generic over $M_1$.
 \item   $j_{01}[L] \subseteq L^*$.
 \item   $L^* \restriction \kappa + 1 = L$.
 \item   If we lift $j_{01}$ to obtain $j^*_{01}: V[L] \rightarrow M_1^* = M_1[L^*]$,
   let $j_{12}^* = j_{01}^*(j_{01}^*): M_1^* \rightarrow M_2^*$ and $j_{02}^* = j_{12}^* \circ j_{01}^*$,
   and define $\Q_\infty = \Q^{M^*_2}(\kappa, j_{01}(\kappa))$, then $K$ is $\Q_\infty$-generic
   over $M_1^*$. 
 \end{enumerate}  
\end{lemma}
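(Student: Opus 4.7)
The plan is to construct $L^*$ by lifting $L$ through $j_{01}(\FL)$ using the standard combination of master conditions and antichain counting, and then to extract $K$ by using $\Aggforcing$ to supply the key Cohen coordinate of $\Q_\infty$ while diagonally constructing generics for the remaining highly closed components. The main obstacle will be the identification and coordination of these two parts in the construction of $K$.

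For $L^*$, the first step is to set $L^* \restriction \kappa + 1 = L$, which immediately secures clause (3). For stages $\sigma \in j_{01}(Y) \cap (\kappa, j_{01}(\kappa))$, I will construct $L^*(\sigma)$ in $V[L]$ by transfinite recursion: the iterand $j_{01}(\FL)(\sigma)$ is sufficiently closed in $M_1[L^* \restriction \sigma]$ by the elementary image of Remark \ref{Lclosureremark}, and has at most $j_{01}(\kappa)$ many antichains there, so one can meet them diagonally in $V[L]$ since $M_1$ is closed under $<\lambda^b_{\omega+3}$-sequences. At stage $j_{01}(\kappa)$ itself, I will lift each component of $L(\kappa) = L^b * I^b * (A_e \times J^c)$ by a master condition $q^* = \bigcup \{j_{01}(p) : p \in \mbox{generic}\}$: for directed-closed iterands this is a valid condition by directed closure, and for Cohen-like iterands it remains a condition because pairwise compatibility is preserved by $j_{01}$ and the total size is well below the relevant inaccessible threshold. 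These master conditions lie in $M_1$ since $M_1$ is closed under sequences of length at most $\lambda^b_{\omega+3}$, which bounds the cardinality of each component generic. Meeting antichains below each master condition then completes $L^*$ and delivers clauses (1) and (2).

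Having built $L^*$, I will lift to $j_{01}^*: V[L] \to M_1^* = M_1[L^*]$ and iterate internally to obtain $j_{12}^*, j_{02}^*, M_2^*$. Unwinding $\Q_\infty = \Q^{M_2^*}(\kappa, j_{01}(\kappa)) = \Q_0 \times \Q_1 \times \Q_2$ using elementarity (so that $\Lambda^b_i(\kappa) = \lambda^b_i$ and $\Lambda^a_i(j_{01}(\kappa)) = j_{01}(\lambda^a_i)$ as computed in $M_2^*$), the Cohen coordinate $\A^c_2(\kappa, j_{01}(\kappa))^{M_2^*}$ inside $\Q_0$ turns out to be exactly $\Add^V(\lambda^b_{\omega+2}, [\lambda^b_{\omega+3}, j_{01}(\lambda^a_0)))$, which is isomorphic to $\Aggforcing$ as a forcing poset. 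Thus $\Agg$ directly supplies an $\A^c_2$-generic over $M_1^*$. For the remaining coordinates---the closed parts of $\Q_0$ together with $\Q_1$ and $\Q_2$, whose distributivity and closure are provided by Lemmas \ref{q0info} and \ref{qdistrib}---I will diagonally meet the $M_1^*$-antichains in $V[L][\Agg]$; Easton's Lemma, combined with the $<\lambda^b_{\omega+2}$-closure of $\Aggforcing$ in $V$ and the $<\lambda^b_{\omega+3}$-closure of $M_1^*$ in $V[L]$, will ensure that these antichains remain enumerable in $V[L][\Agg]$ and can be met one by one. The hard part will be verifying that the Cohen part extracted from $\Agg$ and this diagonally constructed closed part are genuinely mutually generic over $M_1^*$, so that the combined filter $K$ is $\Q_\infty$-generic rather than merely generic for each factor separately; this will rely on a careful term-forcing and Easton-lemma argument in the style of Section \ref{termforcing} and Lemma \ref{fordistrib1variant}.
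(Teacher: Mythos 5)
Your outline of the construction of $L^*$ breaks down at the master-condition step for stage $j_{01}(\kappa)$. The components $\A_e$, $\A^c_1$ and $\U^c_1 * \S^c_1$ of $\FL(\kappa)$ each have cardinality exactly $\lambda^b_{\omega+3}$, while $M_1$ (being the limit ultrapower by measures on $P_\kappa\eta$ for $\eta<\lambda^b_{\omega+3}$ only) is closed merely under ${<}\lambda^b_{\omega+3}$-sequences, not $\lambda^b_{\omega+3}$-sequences. Consequently $j_{01}[A_e]$ is \emph{not} an element of $M_1$: its union codes $j_{01}\restriction\lambda^b_{\omega+3}$, and if that were in $M_1$ the embedding would be fully $\lambda^b_{\omega+3}$-supercompact. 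So your ``$q^* = \bigcup j_{01}[\hbox{generic}]$'' is not a condition of $j_{01}(\A_e)$ from $M_1$'s point of view, and the claim that ``these master conditions lie in $M_1$ since $M_1$ is closed under sequences of length at most $\lambda^b_{\omega+3}$'' is false for precisely these coordinates. This is the crux of the lemma. The paper handles it with Magidor's method: one builds the generic for the relevant term forcing as the upward closure of a descending $\lambda^b_{\omega+3}$-chain, at each stage appending a lower bound for $j_{01}[A_e\restriction\gamma]$ for larger and larger $\gamma<\lambda^b_{\omega+3}$ (these initial segments \emph{are} in $M_1$), and meeting each dense set $D=j_{01}(d)(j_{01}[\sigma])$ of $M_1$ by passing to an inaccessible closure point of an auxiliary function as in Remark \ref{magidortrick}. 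Along the way the paper also arranges $j^*_{01}(f_i)(\eta)=\gamma_i$ for the $\A_e$-generic functions; this is not needed for the literal statement of the lemma but is indispensable for the computation $\crit(k_n)>j_{01}(\lambda^b_{17})$ immediately afterwards, so a proof that omits it would leave the rest of Section \ref{auxcomp} unsupported.

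Your plan for $K$ also leaves its hardest point unresolved. The ``closed part'' of $\Q_\infty$ is not a factor of a product with the $\A^c_2$-coordinate: $\U^c_{[2,\omega)}*\S^c_{[2,\omega)}$ is only closed in models that already contain the $\A^c_2$-generic, so building it by a separate diagonal construction in $V[L][\Agg]$ and then asserting mutual genericity with the piece extracted from $\Agg$ is exactly the step that needs an argument, and it is not supplied by Easton's Lemma alone. The paper avoids the issue structurally: it applies Lemma \ref{fordistrib1variant} inside $M_2$ with $\tau=\kappa$, $\tau^*=j_{01}(\kappa)$ to obtain, in $M_1$, a projection from $\FL\times\Aggforcing\times\FL'$ onto $j_{01}(\FL)*\Q(\kappa,j_{01}(\kappa))$ with $\FL'$ a ${<}\lambda^b_{\omega+3}$-closed product of term forcings having only $\lambda^b_{\omega+3}$ maximal antichains in $M_1$. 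A single generic $L'$ for $\FL'$ (built by the Magidor argument above) then yields $L^*$ and $K$ simultaneously, with genericity of $K$ over $M_1^*$ automatic from the projection. If you want to salvage your route, you should reorganize the whole construction around such a term-forcing decomposition rather than treating $L^*$ and $K$ as two independent lifting problems.
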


\begin{proof} 
  We start with some easy remarks:
  \begin{enumerate}
  \item By Easton's Lemma, $\Aggforcing$ is $<\lambda^b_{\omega + 2}$-distributive
    in $V[L]$. 
  \item Since $L^* \restriction \kappa + 1 = L$,
    it will follow that $M_1^*$ is closed under $<\lambda^b_{\omega+3}$-sequences in $V[L]$.  
    By elementarity $M_2^*$ will be  closed under $<j_{01}(\lambda^b_{\omega+3})$-sequences in $M_1^*$,
    in particular $M_1^*$ and $M_2^*$ will agree for a long way past the rank of $\Q_\infty$. 
  \end{enumerate}  

We now appeal to Lemma \ref{fordistrib1variant} in the model $M_2$ with $\tau = \kappa$ and $\tau^* = j_{01}(\kappa)$.
Using the fact that $M_2$ is closed under $<j_{01}(\lambda^b_{\omega+3})$-sequences in $M_1$,
we get a projection in $M_1$ from $\FL \times \Aggforcing \times \FL'$
to $j_{01}(\FL) * \Q(\kappa, j_{01}(\kappa))$,
where $\Aggforcing = \Add^{M_2}(\lambda^b_{\omega +2}, j_{01}(\lambda^a_0)) =  \Add^V(\lambda^b_{\omega +2}, j_{01}(\lambda^a_0))$ and
$\FL'$ is the product of various term forcing posets.
The most relevant factors in $\FL'$ are:
\begin{itemize}
\item $\FL'_0  = \termspace^{M_1}(\FL, j_{01}(\FL) \restriction (\kappa, j_{01}(\kappa))$.
\item $\FL'_1  = \termspace^{M_1}(j_{01}(\FL \restriction \kappa), j_{01}(\FL^b))$.  
\item $\FL'_2  = \termspace^{M_1}(j_{01}(\FL \restriction \kappa), j_{01}(\A^b_0))$.
\item $\FL'_3  = \termspace^{M_1}(j_{01}(\FL \restriction \kappa * \A^b_0), \I^b_-)$, where $\I^b = \A^b_0 * \I^b_-$.
\item $\FL'_4  = \termspace^{M_1}(j_{01}(\FL \restriction \kappa * \FL^b), j_{01}(\A^c_0))$.
\item $\FL'_5  = \termspace^{M_1}(j_{01}(\FL \restriction \kappa * \FL^b), j_{01}(\A^c_1))$.
\item $\FL'_6  = \termspace^{M_1}(j_{01}(\FL \restriction \kappa * \FL^b), j_{01}(\A_e))$.
\item $\FL'_7  = \termspace^{M_1}(j_{01}(\FL \restriction \kappa * \FL^b * \I^b * \A^c_0), j_{01}(\U^c_0 * \S^c_0))$.      
\item $\FL'_8  = \termspace^{M_1}(j_{01}(\FL \restriction \kappa * \FL^b * \I^b * \J^c_0 * \A^c_1), j_{01}(\U^c_1 * \S^c_1))$.    
\end{itemize}
In the proof of Lemma \ref{fordistrib1variant} $\FL'_2$ corresponds to $\T_1$, $\FL'_0$ corresponds to $\T_2$,
and the remaining factors correspond to factors in $\T_0$.
The projection uses $\FL$ and the factors $\FL'_i$ listed above in the 
 obvious way to prolong the $\FL$-generic to a $j_{01}(\FL)$-generic object.

It is straightforward to verify that the set of maximal antichains of $\FL'$ which lie in $M_1$ has cardinality $\lambda^b_{\omega + 3}$ in $V$.
Since $\FL'$ is $<\lambda^b_{\omega + 3}$-closed in $M_1$, and $M_1$ is closed under $<\lambda^b_{\omega + 3}$-sequences in $V$,
we may readily work in $V$ to  build $L'$ which is $\FL'$-generic over $M_1$, but since we will ultimately use $L'$ to
build $L^*$ we need to build $L'$ more  carefully. The construction will involve successively lifting $j_{01}$
onto larger and larger initial segments of $V[L]$: to lighten the notation we will denote all the embeddings
by ``$j_{01}$'' and resolve any ambiguity by making the domain and codomain explicit.

To start we choose $L'_0 \in V$ which is $\FL'_0$-generic over $M_1$, and combine it with $L$
to construct $L^*_0 \in V[L]$ which is $j_{01}(\FL) \restriction (\kappa, j_{01}(\kappa))$-generic over $M_1[L]$.
Note that by the closure of $\FL'_0$,  $V \models {}^{<\lambda^b_{\omega+3}}M_1[L'_0] \subseteq M_1[L'_0]$. 
As usual we may lift to obtain $j_{01}:V[L \restriction \kappa] \rightarrow M_1[L^*_0]$. 
The next stage is slightly harder, because we must choose $L'_1$ so that it combines with $L * L^*_0$ to
produce $L^*_1$ so that we may lift $j_{01}$ to $V[L \restriction \kappa][L^b]$.

To this end, let $H$ be any filter which is $j_{01}(\FL \restriction \kappa)$-generic over $M_1$,
so that we may lift to obtain $j_{01}:V[H \restriction \kappa] \rightarrow M_1[H]$.
Since $\vert \FL^b \vert < \lambda^b_{\omega+3}$, it is easy to see that if $H'$ is the generic
filter on $\FL^b$ added by $H$ then $j_{01}[H'] \in M_1[H]$, and $j_{01}[H']$ has a lower bound
in $j_{01}(\FL^b)$. Let $\dot m$ be a $j_{01}(\FL \restriction \kappa)$-name for such a lower bound,
so that we may view $\dot m$ as a condition in $\FL'_1$ and build $L'_1 \in V$ which is $\FL'_1$-generic
over $M_1[L'_0]$ with $\dot m \in L'_1$. We combine $L'_1$ with $L * L^*_0$ to obtain $L^*_1 \in V[L]$
which is $j_{01}(\FL^b)$-generic over $M_1[L^*_0]$. By construction $j_{01}[L^b] \subseteq L^*_1$,
so that we may lift and obtain $j_{01}: V[L \restriction \kappa * L^b] \rightarrow M_1[L^*_0 * L^*_1]$.

Similar arguments will handle the other factors of size less than $\lambda^b_{\omega+3}$, but
the factors of size $\lambda^b_{\omega+3}$ will need more care because we do not have closure
under $\lambda^b_{\omega+3}$-sequences. We will handle this problem using ideas
of Magidor \cite{MagidorNonRegular}.

We will only do the argument for
$\A_e$, which has an extra twist: the arguments for $\A^c_1$ and $\U^c_1 * \S^c_1$
are similar but simpler. Recall that $\A_e = \Add^{V[L \restriction \kappa][L^b]}(\lambda^b_{17}, \lambda^b_{\omega+3})$:
forcing with $\A_e$ adds $\lambda^b_{\omega+3}$ many generic functions from $\lambda^b_{17}$ to
$\lambda^b_{17}$, and for $\alpha < \lambda^b_{\omega+3}$ we let $f_\alpha$ be the function
with index $\alpha$.

As we noted in the previous paragraph, $j_{01}[A_e \restriction \eta] \in M_1[j_{01}(L \restriction \kappa * L^b)]$
for all $\eta < \lambda^b_{\omega+3}$. We will use the following easy remark:
\begin{remark} \label{magidortrick}
  For every dense subset $D$ of $\termspace(\FL \restriction \kappa * \FL^b, \A_e)$,
  there is $f : \lambda^b_{\omega+3} \rightarrow \lambda^b_{\omega+3}$ 
  such that if $\gamma$ is an inaccessible closure point of $f$
  and $\forces \dot \sigma \in \dot \A_e \restriction \gamma$, there is
  $\dot \tau \in D$ such that $\forces \dot \tau \le \dot \sigma$ and
  $\forces \dot \tau \in \dot \A_e \restriction \gamma$. 
 \end{remark} 
  
Since $\vert j_{01}(\lambda^b_{17}) \vert = \lambda^b_{\omega+3}$,
we  enumerate the elements of $j_{01}(\lambda^b_{17})$ as $\gamma_j$ for $j < \lambda^b_{\omega+3}$.
  Let $\eta = \sup j_{01}[\lambda^b_{17}]$, and note that if $p \in \A_e$ then the support of $j_{01}(p)$ is contained in  $j_{01}(\lambda^b_{\omega+3}) \times \eta$.
  We will arrange the lifting construction so that in the end 
  $j_{01}^*: V[L] \rightarrow M_1^*$ has the property that $j_{01}^*(f_i)(\eta) = \gamma_i$
  for every $i < \lambda^b_{\omega+3}$. This idea originates in unpublished work of Woodin,
  and was used in a construction similar to ours by Gitik and Sharon \cite{GitikSharon}.

We will construct $L_2'$ as the upwards closure of a decreasing $\lambda^b_{\omega+3}$-sequence in
$\FL'_2$. View $\dot m$ as a condition in $j_{01}(\FL \restriction \kappa * L^b)$,
  and let $H$ be an arbitrary filter which is $j_{01}(\FL \restriction \kappa * L^b)$-generic over
  $M_1$ and contains this condition. Let $H'$ be the $\FL \restriction \kappa * L^b$-generic filter
  induced by $H$, so that $j_{01}[H'] \subseteq H$ by the choice of $\dot m$ and we may
  lift to obtain $j_{01}:V[H] \rightarrow M_1[H']$. Much as in the construction for $L'_1$,
  we will use this embedding to define suitable conditions in $\FL'_2$. 
    
  We will build a decreasing $\lambda^b_{\omega+3}$-sequence of conditions in $\FL'_2$, with the aim of generating a filter which is generic
  over $M_1$, and induces a filter $L^*_2$ which  is compatible with $j_{01}'$ and $A_e$
  and assigns the right values to $j_{01}^*(f_i)(\eta)$.
  Suppose that we have reached a stage of the construction where we built a condition $\dot q \in \FL'_2$ with the following
  properties:
  \begin{itemize}
  \item $\forces \dot q \in j_{01}(\A_e \restriction \alpha)$.
  \item $\forces \dot q \le j_{01}[A_e \restriction \alpha]$.
  \item $\forces \dot q(j_{01}(i), \eta) = \gamma_i$ for all $i < \alpha$.
  \end{itemize}
  
  Suppose that the next dense set in $\FL'_2$ to be handled is $D \in M_1$, and note that (since $E$ is a supercompactness extender)
  $D = j_{01}(d)(j_{01}[\sigma])$ for some $\sigma < \lambda^b_{\omega+3}$ and function $d \in V$ with $\dom(d) = P_\kappa \sigma$. We
  may assume that $d(x)$ is a dense subset of $\termspace(\FL \restriction \kappa * \FL^b, \A_e)$
  for all $x$: it is now easy to produce a function $f$ which satisfies the conclusion of Remark \ref{magidortrick}
  for all the dense sets $d(x)$ simultaneously.
   
  Let $\gamma > \alpha$ be an inaccessible closure point of $f$. We build a name $\dot r$
  for a condition extending $\dot q$ in stages,
  making sure that $\dot r$ names a condition in  $j_{01}(\A_e \restriction \gamma)$:
  \begin{itemize}
  \item Let $\dot r_1$ name   $q \cup \bigcup j_{01}[A_e \restriction [\alpha, \gamma)]$,
    so that $r_1$ names  a lower bound for $j_{01}[A_e \restriction \gamma]$.
  \item Let $\dot r_2$ name $r_1  \cup \{ (j_{01}(i), \eta, \gamma_i) : \alpha \le i < \gamma \}$. 
  \item Let $\dot r \in D$ with $\forces \dot r \le \dot r_2$,
    where it is possible to arrange  that $\forces \dot r \in j_{01}(\A_e \restriction \gamma)$ by the careful choice of $\gamma$.
  \end{itemize}   
  The condition $\dot r$ will be the next entry in our descending chain.

  By construction, if we induce $L^*_2$ using $L * L^*_0 * L^*_1$ then $j_{01}[A_e] \subseteq L^*_2$.
  We lift to obtain $j_{01}: V[L \restriction \kappa][L^b][A_e] \rightarrow M_1[L * L^*_0 * L^*_1 * L^*_2]$,
  where $j_{01}(f_i)(\eta) = \gamma_i$ for all $i$. Continuing in the same way we build the remainder of $L'$,
  induce $L^*$, and finally lift to get $j_{01}^*: V[L] \rightarrow M_1^* = M_1[L^*]$.

  Let $\Agg$ be $\Aggforcing = \Add^V(\lambda^b_{\omega +2}, j_{01}(\lambda^a_0))$-generic over $V[L]$, so that
  $\Agg$ is generic over $M_1[L \times L']$. Using the projection map in $M_1$ from
  $\FL \times \Aggforcing \times \FL'$ to $j_{01}(\FL) * \Q(\kappa, j_{01}(\kappa))$,
  we get $K \in V[L][\Agg]$  which is $\Q(\kappa, j_{01}(\kappa))$-generic over $M_1[L^*]$.
\end{proof}

\smallskip

\noindent Global notation: $j^*_{01}$\index[Notation]{$j^*_{01}$}, 
  $M_1^*$\index[Notation]{$M_1^*$}, $j^*_{02}$\index[Notation]{$j^*_{02}$}, $M^*_2$\index[Notation]{$M^*_2$},
$\Q_\infty$\index[Notation]{$\Q_{\infty}$}, $\Agg$\index[Notation]{$\Agg$},  $K$\index[Notation]{$K$}

\smallskip

  Working in $V[L]$ we derive for each $n \ge 17$ a supercompactness measure $U_n$
  on $P_\kappa \lambda^b_n$ using the embedding $j_{01}^*$. 
  We do some computations in $V[L]$ which will be useful when we define the Prikry forcing $\bar\P$ in
  Section \ref{prikryforcing}. For $n$ with $17 \le n < \omega$ let $N_n = Ult(V[L], U_n)$  
  and  $j_n = j_{U_n}^{V[L]}$, so that $j_n: V[L] \rightarrow N_n$ and
  we obtain as usual a factor map $k_n: N_n \rightarrow M_1^*$
  with $j^*_{01} = k_n \circ j_n$. 

  We will show that $k_n$ has a very large critical point, in fact
  $\crit(k_n) > j_{01}(\lambda^b_{17})$.
  To see this observe that the range of $k_n$ is the set of elements in $M_1^*$ of the form $j_{01}^*(f)(j_{01}[\lambda^b_n])$ where
  $f \in V[L]$ and $\dom(f) = P_\kappa \lambda^b_n$. If we let $f(x) = f_i(\sup(x \cap \lambda^b_{17}))$ then
  $j_{01}^*(f)(j_{01}[\lambda^b_n]) = j_{01}^*(f_i)(\eta) = \gamma_i$, so that easily $j_{01}(\mu_1) + 1 \subseteq \rge(k_n)$ and
  hence $\crit(k_n) > j_{01}(\lambda^b_{17})$.

  We will use  the observations that since $\crit(k_n) > j_{01}(\lambda^b_{17})$ for $n \ge 17$:
\begin{itemize}
\item  $j^*_{01}(\eta) = j_n(\eta)$ for all $\eta \le \lambda^b_{17}$.
\item  $j^*_{01}(\Lambda^b_k)(\kappa) = \lambda^b_k = j_n(\Lambda^b_k)(\kappa)$  for $k \le 17 \le n$.
\end{itemize}

\smallskip

\noindent Global notation: $U_n$\index[Notation]{$U_n$}, 
$N_n$\index[Notation]{$N_n$}, $j_n$\index[Notation]{$j_n$}, $k_n$\index[Notation]{$k_n$}

\smallskip

  We are interested in comparing the two-step iteration $j_{02}^*$ defined above, 
  and the iteration $i_n$ where we apply $j_n$ and then $j_n(j_{n+1})$. We use the easy equations
   $i_n = j_n(j_{n+1}) \circ j_n = j_n \circ j_{n+1}$ and $j_{02}^* = j_{01}^* \circ j_{01}^*$. 

\smallskip

\noindent Global notation: $i_n$\index[Notation]{$i_n$} 

\smallskip

\begin{lemma} 
   For all $n \ge 17$, $\Q_\infty = i_n({\Q})(\kappa, j_n(\kappa))$. 
\end{lemma}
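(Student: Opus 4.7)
The strategy rests on two facts: (i) the ``intermediate'' critical-point image ordinals of the two iterations coincide, namely $j_n(\kappa) = j_{01}(\kappa)$; and (ii) there is a factor map $\ell: j_n(N_{n+1}) \to M^*_2$ with $\ell \circ i_n = j^*_{02}$ whose critical point is so large that $\ell$ fixes the poset $i_n(\Q)(\kappa, j_n(\kappa))$.

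First, I will verify (i). Since $j^*_{01} = k_n \circ j_n$ and $j^*_{01}$ agrees with $j_{01}$ on ordinals, we have $k_n(j_n(\kappa)) = j_{01}(\kappa)$. The measure $U_n$ lives on $P_\kappa \lambda^b_n$, so $j_n(\kappa) < (\lambda^b_n)^{+V[L]} \le \lambda^b_{n+1} < j_{01}(\lambda^b_{17}) < \crit(k_n)$. Thus $k_n$ fixes $j_n(\kappa)$ and so $j_n(\kappa) = j^*_{01}(\kappa) = j_{01}(\kappa)$, as already noted in the surrounding discussion.

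Next, I construct $\ell$. Applying $j_n$ to the existence of the factor map $k_{n+1}: N_{n+1} \to M^*_1$ (with $k_{n+1} \circ j_{n+1} = j^*_{01}$ and $\crit(k_{n+1}) > j_{01}(\lambda^b_{17})$) yields, inside $N_n$, a factor map $j_n(k_{n+1}): j_n(N_{n+1}) \to j_n(M^*_1)$ for the second step of the iteration, with $\crit(j_n(k_{n+1})) > j_n(j_{01}(\lambda^b_{17}))$. Composing this with the external ultrapower-like map provided by $k_n$ on the $M^*_1$-side (equivalently, defining $\ell$ seed-wise by $\ell(i_n(f)(a,b)) = j^*_{02}(f)(a,b)$ on appropriate seeds from $j_n[\lambda^b_n]$ and $j_n(j_{n+1})[j_n(\lambda^b_{n+1})]$) produces a well-defined elementary $\ell: j_n(N_{n+1}) \to M^*_2$ with $\ell \circ i_n = j^*_{02}$ and $\crit(\ell) > j_n(\lambda^b_{17})$; in particular $\crit(\ell)$ is well above $j_n(\kappa) = j_{01}(\kappa)$ and above $j_n(\lambda^a_{\omega+2})$.

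Finally, I combine these. The poset $i_n(\Q)(\kappa, j_n(\kappa))$ is computed in $j_n(N_{n+1})$ and has cardinality $i_n(\lambda^a_{\omega+2}(\kappa)) = j_n(\lambda^a_{\omega+2}) < \crit(\ell)$, and all the data defining it (the relevant initial segment of $i_n(L)$, the cardinal parameters $\Lambda^{a,b}_i(\kappa), \Lambda^{a,b}_i(j_n(\kappa))$) lies below $\crit(\ell)$. Hence $\ell$ fixes $i_n(\Q)(\kappa, j_n(\kappa))$ pointwise. On the other hand, by elementarity of $\ell$ together with $\ell \circ i_n = j^*_{02}$, $\ell(\kappa) = \kappa$, and $\ell(j_n(\kappa)) = j_n(\kappa) = j_{01}(\kappa)$,
\[
\ell\bigl(i_n(\Q)(\kappa, j_n(\kappa))\bigr) = j^*_{02}(\Q)(\kappa, j_{01}(\kappa)) = \Q^{M^*_2}(\kappa, j_{01}(\kappa)) = \Q_\infty.
\]
Combining the two, $i_n(\Q)(\kappa, j_n(\kappa)) = \Q_\infty$, as required.

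The main delicate point is the verification that $\ell$ has critical point above the rank of the poset; this amounts to repeating, one iteration step in, the seed analysis that was used to pin down $\crit(k_n) > j_{01}(\lambda^b_{17})$ in the construction of $j^*_{01}$. Once this bookkeeping is done, the conclusion follows formally from elementarity and the identity $j_n(\kappa) = j_{01}(\kappa)$.
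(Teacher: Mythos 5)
Your argument is correct and follows essentially the same route as the paper: the map $\ell$ you construct is exactly the paper's $k = k_n \circ j_n(k_{n+1})$ (with $k_n$ restricted to $j_n(M_1^*)$), and the conclusion is drawn in the same way from $\ell \circ i_n = j_{02}^*$, the bound $\crit(\ell) > j_{01}(\lambda^b_{17})$, and the observation that $i_n(\Q)(\kappa, j_n(\kappa))$ is coded below the critical point. (One minor slip: a supercompactness ultrapower by a measure on $P_\kappa\lambda^b_n$ only gives $j_n(\kappa) < (2^{\lambda^b_n})^+$, not $j_n(\kappa) < (\lambda^b_n)^+$; but the identity $j_n(\kappa) = j_{01}(\kappa)$ already follows from the agreement $j_n(\eta) = j^*_{01}(\eta)$ for $\eta \le \lambda^b_{17}$ recorded just before the lemma, so nothing is lost.)
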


\begin{proof}
We will produce a map $k$ such that $k \circ i_n = j_{02}^*$ and $\crit(k) > j_{01}(\lambda^b_{17})$. This will
suffice because $i_n({\Q})(\kappa, j_n(\kappa))$ can be coded as a bounded subset of $i_n(\Lambda^b_{17})(j_n(\kappa))$,
and $i_n(\Lambda^b_{17})(j_n(\kappa)) = j_n(j_{n+1}(\Lambda^b_{17}))(j_n(\kappa)) = j_n(j_{n+1}(\Lambda^b_{17})(\kappa)) 
    =j_n(\lambda^b_{17}) = j_{01}(\lambda^b_{17})$.

    Start by applying the embedding $j_n$ to the equation $j_{01}^* = k_{n+1} \circ j_{n+1}$, to get $j_n(j_{01}^*) = j_n(k_{n+1}) \circ j_n(j_{n+1})$.
    Here $j_n(j_{01}^*): N_n \rightarrow j_n(M^*_1)$, $j_n(j_{n+1}): N_n \rightarrow j_n(N_{n+1})$, and
    $j_n(k_{n+1}): j_n(N_{n+1}) \rightarrow j_n(M^*_1)$. By elementarity $\crit(j_n(k_{n+1})) > j_n(j_{01}(\lambda^b_{17}))$.

    Since $M^*_1$ is a class of $V[L]$, $j_n(M^*_1)$ is a class of $N_n$ and we may form the restriction
    $k_n \restriction j_n(M^*_1)$. Since $k_n \circ j_n = j_{01}^*$, it is routine to check that
    $k_n(j_n(M^*_1)) = j_{01}^*(M^*_1) = M_2^*$ and that $k_n \restriction j_n(M^*_1): j_n(M^*_1) \rightarrow M_2^*$ is elementary.

    To finish, we set $k = k_n \circ j_n(k_{n+1})$. To confirm this works,
    recall first that $j_n(j_{01}^*) = j_n(k_{n+1}) \circ j_n(j_{n+1})$.    
    Now
    \begin{align*}   k \circ i_n & = k_n \circ j_n(k_{n+1}) \circ j_n(j_{n+1}) \circ j_n \\
                             {} & = k_n \circ j_n(j_{01}^*) \circ j_n \\
                             {} & = k_n \circ j_n \circ j_{01}^* \\
                             {} & = j_{01}^* \circ j_{01}^*\\
                             {} & = j_{02}^*
    \end{align*}
    where the first equation holds because $k = k_n \circ j_n(k_{n+1})$ and $i_n = j_n(j_{n+1}) \circ j_n$, 
    the second equation holds because $j_n(j_{01}^*) = j_n(k_{n+1}) \circ j_n(j_{n+1})$,
    the third equation holds because $j_n \circ j_{01}^* = j_n(j_{01}^*) \circ j_n$, the fourth equation holds
    because $k_n \circ j_n = j_{01}^*$ and the last equation holds because $j_{02}^* = j_{01}^* \circ j_{01}^*$. 
    
    As for the critical point,
    $\crit(k_n) > j_{01}(\lambda^b_{17})$ for all $n$, so that $\crit(j_n(k_{n+1})) = j_n(\crit(k_{n+1})) > j_{01}(\lambda^b_{17})$,
    and since $k = k_n \circ j_n(k_{n+1})$ we have that $\crit(k) > j_{01}(\lambda^b_{17})$.

\smallskip

\begin{tikzcd}[row sep=huge,column sep=huge]
M_2^* &        &                    j_n(M_1^*)   \arrow{ll}[swap]{k_n \restriction j_n(M_1^*)}                        &                             &  j_n(N_{n+1}) \arrow{ll}[swap]{j_n(k_{n+1})} \arrow[bend right]{llll}[swap]{k} \\ 
&  M_1^* \arrow{lu}{j_{12}^*}  &                                               &   N_n  \arrow{ru}[swap]{j_n(j_{n+1})} \arrow{lu}{j_n(j_{01}^*)}  &              \\
&        &   V[L] \arrow{ru}[swap]{j_n} \arrow{lu}{j_{01}^*} \arrow[bend right=50]{ruru}[swap]{i_n} \arrow[bend left=50]{lulu}{j_{02}^*} \arrow{uu}{j_n \circ j_{01}^*} &                              &              \\
\end{tikzcd}

\end{proof}

The two-step iteration $i_n$ can be viewed as a one-step ultrapower by the measure
$U_n \times U_{n+1}$ on $P_\kappa \lambda^b_n \times P_\kappa \lambda^b_{n+1}$, where $A \in U_n \times U_{n+1}$
if and only if $\{ x : \{ y : (x, y) \in A \} \in U_{n+1} \} \in U_n$. We define product measures
$U_n \times U_{n+1} \times \ldots \times U_{n+i-1}$ with $i$ factors in a similar way. 
The following Lemma is an immediate consequence of the normality of the measures $U_j$. 
\begin{lemma} \label{Uprod}
  $A \in U_n \times U_{n+1} \times \ldots \times U_{n+i-1}$ if and only if 
  there exist sets $A_j \in U_j$ for $n \le j < n+i$ such that
  every $\prec$-increasing sequence from $\prod_{n \le j < n+i} A_i$ lies in $A$.
\end{lemma}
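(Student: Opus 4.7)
The plan is to prove the lemma by induction on $i$, with the case $i=1$ trivial and the inductive step reducing to the key case $i=2$ combined with the recursive definition $A \in U_n \times (U_{n+1} \times \cdots \times U_{n+i-1})$ iff $\{x : A_x \in U_{n+1} \times \cdots \times U_{n+i-1}\} \in U_n$, where $A_x = \{(y_{n+1},\ldots,y_{n+i-1}) : (x,y_{n+1},\ldots,y_{n+i-1}) \in A\}$.

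For the case $i=2$, the backward direction $(\Leftarrow)$ will proceed as follows: assume $A_n \in U_n$ and $A_{n+1} \in U_{n+1}$ are given with every $\prec$-increasing pair lying in $A$. For any fixed $x \in A_n$, the set $\{y : x \prec y\}$ lies in $U_{n+1}$: fineness gives $\{y : x \subseteq y\} \in U_{n+1}$ (since $x \in P_\kappa \lambda^b_n \subseteq P_\kappa \lambda^b_{n+1}$), while $\{y : y \cap \kappa > \ot(x)\} \in U_{n+1}$ follows from $\kappa$-completeness and the fact that $j^{V[L]}_{n+1}[\lambda^b_{n+1}] \cap j_{n+1}(\kappa) = \kappa > \ot(x)$. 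Hence $A_{n+1} \cap \{y : x \prec y\} \subseteq \{y : (x,y) \in A\}$ witnesses that the slice is in $U_{n+1}$, so $A_n \subseteq \{x : (\text{slice is in } U_{n+1})\}$ and $A \in U_n \times U_{n+1}$. For the forward direction $(\Rightarrow)$, set $B = \{x : \{y : (x,y) \in A\} \in U_{n+1}\} \in U_n$, let $B_x = \{y : (x,y) \in A\}$ for $x \in B$ and $B_x = P_\kappa \lambda^b_{n+1}$ otherwise, and take $A_n = B$ together with the generalized diagonal intersection $A_{n+1} = \{y : \forall x \in P_\kappa \lambda^b_n \; (x \prec y \Rightarrow y \in B_x)\}$. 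Membership $A_{n+1} \in U_{n+1}$ will be verified by pulling the problem into the ultrapower $N_{n+1}$: the canonical object is $j_{n+1}[\lambda^b_{n+1}]$, and any $x \in N_{n+1}$ with $x \subseteq j_{n+1}(\lambda^b_n)$ and $x \prec j_{n+1}[\lambda^b_{n+1}]$ is of the form $j_{n+1}[s] = j_{n+1}(s)$ for some $s \in P_\kappa \lambda^b_n$ (inverting $j_{n+1}$, using that $\ot(x) < \kappa$), and then $j_{n+1}(B)_x = j_{n+1}(B_s)$ contains the canonical object exactly because $B_s \in U_{n+1}$ by construction.

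The inductive step will iterate this: given $A \in U_n \times \cdots \times U_{n+i-1}$, apply the $i=2$ argument to extract $A_n \in U_n$ and, uniformly for $x \in A_n$, sets $(A^x_{n+1}, \ldots, A^x_{n+i-1})$ witnessing the inductive hypothesis for the slice $A_x$. Another application of the generalized normality argument (taking the generalized diagonal intersection of each $A^x_j$ over $x \in A_n$ inside $U_j$) will produce uniform $A_{n+1},\ldots,A_{n+i-1}$; the verification that a $\prec$-increasing sequence $(x_n,\ldots,x_{n+i-1})$ from the product lies in $A$ will use that $x_n \prec x_j$ for all $j > n$, so the diagonal condition fires at $x_n$. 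The converse direction inductive step simply applies fineness and $\kappa$-completeness at each coordinate as in the $i=2$ case.

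The main obstacle will be establishing the generalized diagonal intersection property of supercompactness measures: if $\langle B_x : x \in P_\kappa \lambda^b_n \rangle$ satisfies $B_x \in U_{n+1}$, then $\{y \in P_\kappa \lambda^b_{n+1} : \forall x \prec y, y \in B_x\} \in U_{n+1}$. This is a routine consequence of normality and fineness for supercompactness measures, but the indexing by $P_\kappa \lambda^b_n$ (which has cardinality $\ge \kappa$) rather than by ordinals means we cannot use the standard $\lambda$-indexed diagonal intersection directly, and the cleanest proof goes through the ultrapower characterization sketched above, which I expect to be the most delicate part of the argument.
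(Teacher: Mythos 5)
Your proof is correct. The paper gives no argument at all for this lemma (it is dismissed as "an immediate consequence of the normality of the measures $U_j$"), and your write-up — fineness plus $\kappa$-completeness for the backward direction, and the $P_\kappa\lambda^b_n$-indexed diagonal intersection verified in the ultrapower (using $x = j_{n+1}[s] = j_{n+1}(s)$ for $x \prec j_{n+1}[\lambda^b_{n+1}]$) for the forward direction — is exactly the standard normality argument the authors have in mind, correctly executed including the induction on the number of factors.
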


Motivated by the $i = 2$ case of Lemma \ref{Uprod} we define a modified version of the Cartesian product. 
\begin{definition}
  Let $A \subseteq P_\kappa \lambda^b_n$ and $B \subseteq P_\kappa \lambda^b_{n+1}$, then
  $A \times^\prec B = \{ (x, y) \in A \times B : x \prec y \}$. 
\end{definition}
With this definition, the $i = 2$ case of Lemma \ref{Uprod} states that
$U_n \times U_{n+1} = \{ X \subseteq P_\kappa \lambda^b_n \times P_\kappa \lambda^b_{n+1} :
\exists A \in U_n \; \exists B \in U_{n+1} \; A \times^\prec B \subseteq X \}$.

\smallskip

\noindent Global notation: $\times^\prec$\index[Notation]{$\times^{\prec}$}

\smallskip 

We will need the following version of Rowbottom's theorem, which also follows easily
from the normality of the measures $U_j$.
\begin{lemma} \label{rowbottom}
  Let $m < n < \omega$, let $(A_j)_{m \le j < n}$ be a sequence of sets with
  $A_j \in U_j$ and let $F$ be a colouring of the $\prec$-increasing sequences
  from $\prod_{m \le j < n} A_j$ in fewer than $\kappa$ colours. Then there exists a sequence $(B_j)_{m \le j < n}$ with
  $B_j \subseteq A_j$ and $B_j \in U_j$ such that $F \restriction \prod_{m \le j < n} B_j$
  is constant.
\end{lemma}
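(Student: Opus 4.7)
The plan is to bootstrap the lemma from the $\kappa$-completeness of the product measure $\mathcal{U} := U_m \times U_{m+1} \times \cdots \times U_{n-1}$ on $\prod_{m \le j < n} P_\kappa \lambda^b_j$, combined with the characterization of $\mathcal{U}$-measure-one sets given in Lemma~\ref{Uprod}.

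First I would verify that $\mathcal{U}$ is $\kappa$-complete. Given a $\delta$-sequence $(X_\alpha)_{\alpha < \delta}$ of $\mathcal{U}$-large sets with $\delta < \kappa$, apply Lemma~\ref{Uprod} to each $X_\alpha$ to obtain sets $A_j^\alpha \in U_j$ with $\prod_{m \le j < n}^{\prec} A_j^\alpha \subseteq X_\alpha$, then use $\kappa$-completeness of each $U_j$ to intersect: letting $A_j = \bigcap_{\alpha < \delta} A_j^\alpha \in U_j$, every $\prec$-increasing sequence from $\prod_j A_j$ lies in every $X_\alpha$, so $\bigcap_\alpha X_\alpha \in \mathcal{U}$.

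Next, observe that the set $S$ of $\prec$-increasing sequences from $\prod_{m \le j < n} A_j$ belongs to $\mathcal{U}$ by a direct application of Lemma~\ref{Uprod} (using the $A_j$ themselves as witnesses). The colouring $F$ partitions $S$ into fewer than $\kappa$ pieces $F^{-1}(\beta)$, so by $\kappa$-completeness some $F^{-1}(\beta^*) \in \mathcal{U}$. A final application of Lemma~\ref{Uprod} yields sets $B_j \in U_j$ with $\prod_{m \le j < n}^{\prec} B_j \subseteq F^{-1}(\beta^*)$; replacing each $B_j$ by $B_j \cap A_j$ preserves membership in $U_j$ and the inclusion, so we obtain the required sequence on which $F$ is constantly $\beta^*$.

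There is no serious obstacle here. The only step that requires any attention is the $\kappa$-completeness of $\mathcal{U}$, which is automatic once one has Lemma~\ref{Uprod}; one could alternatively note that $\mathcal{U}$ is the ``Rudin--Keisler-style'' product measure derived from the iterated ultrapower $i_{m, n-1}$ and hence is $\kappa$-complete for the same reason that each $U_j$ is.
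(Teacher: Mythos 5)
Your proof is correct, and it realizes exactly what the paper intends: the paper gives no explicit argument (it merely asserts the lemma "follows easily from the normality of the measures $U_j$"), and your derivation via the $\kappa$-complete product ultrafilter together with Lemma~\ref{Uprod} — which is where the normality is packaged — is the natural way to fill that in, given that the lemma is stated immediately after the product-measure discussion. No gaps: the product of $\kappa$-complete ultrafilters is an ultrafilter, your completeness argument via Lemma~\ref{Uprod} is sound, and intersecting the $B_j$ with the $A_j$ at the end correctly secures the stated conclusion.
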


Now that we have defined $j_{01}^*$, we can define an auxiliary poset
that will be useful in Section \ref{prikryforcing}.
Recall that $Y$ is a measure one set of cardinals
which are less than $\kappa$ and reflect some properties of $\kappa$,
which we can think of as the ``potential Prikry points''. 
By  the agreement between $j_{01}^*$ and $j_n$,
$j_{01}^*(\Q)(\alpha, \kappa) = j_n(\Q)(\alpha, \kappa)$ for $\alpha \in Y$.
In a mild abuse of notation, we will write $\Q(\alpha, \kappa)$ for this poset. 

The following Lemma is immediate from Lemmas \ref{qdistrib} and \ref{qdistribmany}, together with the elementarity
of $j_{01}^*$ and the agreement between $V[L]$ and $M_{01}^*$.

\begin{lemma} \label{qdistribj}
  Let $\tau_0 < \ldots < \tau_n$ with $\tau_i \in Y$ for all $i$.
  Then $\prod_{0 \le i < n} \Q(\tau_i, \tau_{i+1}) \times \Q(\tau_n, \kappa)$ is
  $<\Lambda^b_{\omega+2}(\tau_0)$-distributive. In particular, in the case $n = 0$,
  $\Q(\tau_0, \kappa)$ is $<\Lambda^b_{\omega+2}(\tau_0)$-distributive. 
\end{lemma}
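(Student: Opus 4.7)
The plan is to derive the lemma by applying Lemma \ref{qdistribmany} inside $M_1^*$ via elementarity of $j_{01}^*$ and then transferring the resulting distributivity back to $V[L]$ using the closure property ${}^{<\lambda^b_{\omega+3}} M_1^* \subseteq M_1^*$ which holds in $V[L]$. First I would verify the preliminary agreements: since $\crit(j_{01}^*) = \kappa$, each $\tau_i < \kappa$ is fixed and lies in $j_{01}^*(Y)$; moreover $\kappa \in j_{01}^*(Y)$, since $Y$ was chosen to be measure-one in the relevant supercompactness measure at $\kappa$ and hence belongs to the weaker measure coming from $j_{01}$. Finally $j_{01}^*(\Lambda^b_{\omega+2})(\tau_0) = \Lambda^b_{\omega+2}(\tau_0)$ because $\tau_0 < \kappa$.

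Applying Lemma \ref{qdistribmany} inside $M_1^*$ to the sequence $\tau_0 < \tau_1 < \cdots < \tau_n < \kappa$ of elements of $j_{01}^*(Y)$ (which has length $n+2$, with $\kappa$ playing the role of the top index) yields that
\[
\P \;:=\; \prod_{0 \le i < n} j_{01}^*(\Q)(\tau_i, \tau_{i+1}) \,\times\, j_{01}^*(\Q)(\tau_n, \kappa)
\]
is $<\Lambda^b_{\omega+2}(\tau_0)$-distributive in $M_1^*$. By the agreement between $V[L]$ and $M_1^*$, each factor $j_{01}^*(\Q)(\tau_i, \tau_{i+1})$ coincides with $\Q(\tau_i, \tau_{i+1})$, because $\Q(\tau_i, \tau_{i+1})$ is already computed inside the initial segment $V[L \restriction \tau_{i+1}+1]$, which is common to both models; and $j_{01}^*(\Q)(\tau_n, \kappa)$ equals $\Q(\tau_n, \kappa)$ by the abuse of notation introduced just before the lemma statement.

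The remaining step, and the only real obstacle, is to transfer distributivity from $M_1^*$ to $V[L]$. Using Lemmas \ref{q0info}--\ref{q2info} for the bounded-$\tau$ factors and applying $j_{01}^*$ to them for the last factor, one has $|\P|^{V[L]} \le \lambda^a_{\omega+2} < \lambda^b_{\omega+3}$. Any dense open $D \subseteq \P$ in $V[L]$ then has $V[L]$-cardinality less than $\lambda^b_{\omega+3}$ and consists entirely of elements of $M_1^*$; a well-ordering of $D$ in $V[L]$ is a sequence of length $<\lambda^b_{\omega+3}$ with values in $M_1^*$, so by closure it lies in $M_1^*$, whence $D \in M_1^*$. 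The same closure argument puts any $V[L]$-sequence $(D_\alpha)_{\alpha < \beta}$ of such dense open sets with $\beta < \Lambda^b_{\omega+2}(\tau_0)$ into $M_1^*$, and invoking distributivity of $\P$ in $M_1^*$ then gives density of $\bigcap_\alpha D_\alpha$, completing the proof. The case $n=0$ is the instance where the product reduces to the single factor $\Q(\tau_0, \kappa)$.
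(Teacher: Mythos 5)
Your proposal is correct and is precisely the argument the paper has in mind: the paper's one-line proof cites Lemma \ref{qdistribmany}, the elementarity of $j_{01}^*$, and the agreement between $V[L]$ and $M_1^*$, and you have filled in exactly those steps (elementarity to get distributivity of the product in $M_1^*$, identification of the factors, and the $<\lambda^b_{\omega+3}$-closure of $M_1^*$ to transfer dense open sets and their sequences back). No gaps.
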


\smallskip

\noindent Global notation: $\Q(\alpha, \kappa)$\index[Notation]{$\Q(\alpha, \kappa)$}

\smallskip 

\section{Prikry forcing} \label{prikryforcing}

Let $\Aggforcing= \Add^V(\lambda^b_{\omega+2}, j_{01}(\lambda^a_0))$, let $\Agg$ be $\Aggforcing$-generic over $V[L]$,
and let $K \in V[L][\Agg]$ be the $\Q_\infty$-generic filter over $M_1^*$ constructed in Section \ref{auxcomp}.
Working in $V[L][\Agg]$ we will define a Prikry-type forcing $\bar \P$.
Conditions in $\bar \P$ will each lie in $V[L]$, but $K$ will be required to recognise the
set of conditions, so that $\bar\P \in V[L][\Agg]$. Since $\Agg$ is generic over $V[L]$ for $<\lambda^b_{\omega+2}$-distributive forcing, 
the models $V[L]$ and $V[L][\Agg]$ agree on bounded subsets of $\lambda^b_{\omega+2}$.
We will use this agreement without comment at several points below.

\subsection{Defining the forcing} \label{defining}  
The definition of $\bar \P$  will use the measures $U_n$ for $n \ge 17$.
A typical point for $U_n$ is a set $x \in P_\kappa \lambda^b_n$ with
$\kappa(x) = x \cap \kappa \in Y$. In a mild abuse of notation we
write (for example) ``$\Lambda^b_n(x)$'' as a shorthand for ``$\Lambda^b_n(\kappa(x))$''.  

The poset $\bar \P$ will add a sequence $\langle x_n : 17 \le n < \omega \rangle$ where:
\begin{itemize}
\item $x_n \in P_\kappa \lambda^b_n$.
\item $\kappa(x_n) \in Y$.
\item The sequence is $\prec$-increasing, that is $x_n \subseteq x_{n+1}$ and $\ot(x_n) < \kappa(x_{n+1})$.
\end{itemize}
We call the $x_n$'s the ``supercompact Prikry points'', and the associated cardinals
$\kappa(x_n)$ ``the Prikry points''. 

When $x$ and $y$ are successive supercompact Prikry points, the forcing poset $\bar \P$
will add a generic object for the poset $\Q(\kappa(x), \kappa(y))$
as defined in Section \ref{successiveprikry}. 
Recall from Section \ref{successiveprikry} that since $\kappa(x), \kappa(y) \in Y$ the preparation
forcing $\FL$ did some collapsing in a block of cardinals
associated
with $\kappa(x)$, and some more collapsing at a higher block of cardinals
associated with $\kappa(y)$:  
the point of forcing with $\Q(\kappa(x),\kappa(y))$ is to ``close the gap'' between these
two blocks of cardinals. When $x$ is the first supercompact Prikry point $\bar \P$ will add
a generic object for the poset $\Q^*(\kappa(x))$ as defined in Section \ref{firstprikry}.
In the sequel we will lighten the notation by writing ``$\Q(x, y)$'' for $\Q(\kappa(x),\kappa(y))$
and ``$\Q^*(x)$'' for $\Q^*(\kappa(x))$.

\smallskip

\noindent Global notation:  $\Q(x, y)$\index[Notation]{$\Q(x, y)$},
$\Q^*(x)$\index[Notation]{$\Q^*(x)$}

\smallskip

 Conditions in $\bar \P$ have the form
\[ p = \langle q_{17},x_{17}, \dots q_{n-1},x_{n-1},f_n,A_n,F_{n+1}, A_{n+1} ,F_{n+2},
A_{n+2}, \dots \rangle \]
where:
\begin{enumerate}
\item $n \ge 17$ (so that for $n = 17$ the condition $p$ is of the form $\langle f_{17}, A_{17}, F_{18}, \ldots \rangle$).
\item $A_j \in U_j$ for all $j \ge n$.   
\item For all $i \geq n+1$, $F_i$ is a function with domain
  $A_{i-1} \times^\prec A_i$, such that  $F_i(x, y) \in \Q(x, y)$ for all $(x, y) \in A_{i-1} \times^\prec A_i$
  and $[F_i]_{U_{i-1} \times U_i} \in K$.
\item $\langle x_i \mid 17 \leq i < n \rangle$ is a $\prec$-increasing sequence
where $x_i \in \mathcal{P}_\kappa(\lambda^b_i)$ and $\kappa(x_i) \in Y$. 
\item If $n > 17$, then
  \begin{enumerate}
  \item  For all $m \ge n$ and all $y \in A_m$,  $x_{n-1} \prec y$.   
  \item  $q_{17} \in \Q^*(x_{17})$.
  \item  $q_i \in \Q(x_{i-1},x_i)$ for all $i$ with $17 < i < n$.
  \item  $\dom(f_n) = A_n$ and $f_n(x) \in \Q(x_{n-1}, x)$ for all $x \in A_n$.
  \end{enumerate}    
\item If $n = 17$, then $f_n$ is a function with $\dom(f_n) = A_n$ such that
  $f_n(x) \in \Q^*(x)$ for all $x \in A_n$.  
\end{enumerate}

\smallskip

\noindent Global notation: $\bar\P$\index[Notation]{$\bar\P$}

\smallskip

The {\em length} $\lh(p)$ of $p$ is $1$ plus the index of the last $x_i$ entry in $p$, 
so that $\lh(p) = n$ for the condition displayed above. Note that the length
of a condition is the index of the measure one set from which the next
``$x$ point'' will be drawn when the condition is extended. 

For $p$ as above, the {\em lower part} of $p$
is the initial segment
\[
\langle q_{17},x_{17}, \dots q_{n-1}, x_{n-1} \rangle
\]
 and the
{\em stem} (written $\stem(p)$) of the condition $p$ is
\[
\langle q_{17},x_{17}, \dots q_{n-1}, x_{n-1}, [f_n]_{U_n} \rangle
\]
The {\em length} $\lh(h)$ of a stem $h$ is the length of the corresponding condition,
  so that the stem displayed above has length $n$.
The {\em upper part or constraint part} is
\[ 
\langle f_n, A_n, F_{n+1}, A_{n+1} ,F_{n+2}, \ldots  \rangle.
\]

\smallskip

\noindent  Global notation: $\lh(p)$\index[Notation]{$\lh(p)$}, $\stem(p)$\index[Notation]{$\stem(p)$}

\smallskip

\begin{remark} The point of distinguishing stems and lower parts is that the function $f_n$ can
  be a source of incompatibility between two conditions of the same length.
\end{remark}

\begin{remark} \label{countstems} 
  Since $\vert \Q(\tau, \tau^*) \vert, \vert \Q^*(\tau^*) \vert < \Lambda^b_0(\tau^*)$, there are fewer than $\lambda^b_0$ possibilities
  for $[f_n]_{U_n}$. Since $(\lambda^b_n)^{<\kappa} = \lambda^b_n$ for all $n$, it follows that there
  are $\lambda^b_n$ possible stems for conditions of length $n + 1$.
\end{remark}

Suppose that
\[
p' = \langle q'_{17},x'_{17}, \dots q'_{m-1},x'_{m-1},f_m',A_m'
F'_{m+1},A'_{m+1}, \dots \rangle
\]
is another condition.  Then $p' \leq p$
if:

\begin{enumerate}
\item $m \geq n$.
\item $\vec{x}'$ end-extends $\vec{x}$, that is to say $x_i = x_i'$ for $17 \le i < n$.
\item For all $i$ such that $n \le i < m$,  $x_i' \in A_i$.
\item For all $i \geq m$, $A_i' \subseteq A_i$.
\item If $m >n$, then
\begin{enumerate}
\item $q_n' \leq f_n(x_n)$,
\item for all $i$ such that $n < i < m$, $q_i' \leq F_i(x_{i-1},x_i)$ and 
\item for all $x \in A_m'$, $f_m'(x) \leq F_m(x_{m-1}',x)$.
\end{enumerate}
\item If $m=n$, then for all $x \in A_m'$, $f_m'(x) \leq f_m(x)$.
\item For all $i < n$, $q_i' \leq q_i$.
\item For all $i \geq m+1$ and all $(x,y) \in
A_{i-1}' \times^\prec A_i'$, $F_i'(x,y) \leq F_i(x,y)$.
\end{enumerate}

\begin{remark}
  Since the definition of $\bar \P$ includes the demands
  that $A_i \in U_i$ and $[F_i] \in K$, incompatibility between conditions of the same length
  can only arise from the stems.
\end{remark}

In the case when $q \le p$ with $\lh(q) = \lh(p)$ we say that $q$ is a {\em direct extension of $p$} and write $q \le^* p$.
When $\lh(q) - \lh(p) = t$ we say that $q$ is a {\em $t$-step extension} of $p$. 
As is typical for Prikry-type forcing posets, when $q \le p$ we may view $q$ as obtained by first adding the points
$x_i$ for $\lh(p) \le i < \lh(q)$, and then taking a direct extension
of the result.

More formally:
\begin{definition} \label{minimalextension}
  Let
\begin{align*}
  p & = \langle q_{17}, x_{17}, \dots q_{n-1}, x_{n-1}, f_n, \\
  & A_n, F_{n+1}, A_{n+1}, F_{n+2}, A_{n+2}, \dots \rangle \\
  \end{align*}
  and let $\vec x = (x_n, \ldots x_{n+t-1})$ be a $\prec$-increasing non-empty sequence such that $x_j \in A_j$ for $n \le j < n+t$ and
  $x_{n-1} \prec x_n$.
  Then $p\cat \vec x$ (the {\em minimal extension of $p$ by $\vec x$})
  is the condition 
  \begin{align*}
    & \langle q_{17}, x_{17}, \dots q_{n-1}, x_{n-1}, q_n, x_n, \ldots q_{n+t-1}, x_{n+t-1}, f_{n+t}, \\
    & A^*_{n+t},  F^*_{n+t+1}, A^*_{n+t+1} \dots \rangle \\
  \end{align*}
where $q_n = f_n(x_n)$, $q_{n + k} = F_{n+k}(x_{n+k-1}, x_{n+k})$ for $0 < k < t$,
$A^*_{n+k} = \{ y \in A_{n+k} : x_{n+t-1} \prec y \}$ for $k \ge t$,
$F^*_{n+k} = F_{n+k} \restriction A^*_{n+k-1} \times A^*_{n + k}$ for $k > t$,
and $f_{n+t}(y) = F_{n+t}(x_{n+t-1}, y)$ for $y \in A^*_{n+t}$.
\end{definition}

\smallskip

\noindent Global notation: $p\cat {\vec x}$\index[Notation]{$p\cat {\vec{x}}$}

\smallskip

By convention $p\cat \vec x = p$ for $\vec x$ empty, and we abuse notation by writing $p\cat x$ for $p\cat \langle x \rangle$
for sequences of length one. The following Lemma is routine:

\begin{lemma} $p\cat {\vec x} \le p$, and if $q \le p$ then there is a unique $\vec x$ such that $q \le^* p\cat \vec x$.
\end{lemma}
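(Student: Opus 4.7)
The plan is to verify both assertions by direct inspection of Definition \ref{minimalextension} and the ordering on $\bar\P$. For the first claim, $p\cat \vec x \le p$, one simply checks conditions (1)--(8) in the definition of the ordering. The length grows from $n$ to $n+t$, end-extension holds by construction, and each new supercompact Prikry point $x_{n+k}$ lies in the corresponding $A_{n+k}$ by the hypothesis on $\vec x$. The sets $A^*_{n+k}$ are obtained from $A_{n+k} \in U_{n+k}$ by intersecting with $\{y : x_{n+t-1} \prec y\}$, which is in $U_{n+k}$ by normality and the fact that $\vert x_{n+t-1} \vert < \kappa$, so $A^*_{n+k} \in U_{n+k}$; the functions $F^*_i$ are restrictions, so their $U_{i-1}\times U_i$-equivalence classes remain in $K$; and the new $q_{n+k}$ values together with the new function $f_{n+t}$ are literal values of the old $F_{n+k}$'s and $F_{n+t}$, hence satisfy the ordering requirements trivially.

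For the existence part of the second claim, suppose $q \le p$ with
\[
q = \langle q'_{17}, x'_{17}, \ldots, q'_{m-1}, x'_{m-1}, f'_m, A'_m, F'_{m+1}, \ldots \rangle
\]
and $\lh(q) = m \ge n = \lh(p)$. Set $\vec x = \langle x'_n, \ldots, x'_{m-1}\rangle$. By the ordering conditions, $x'_i \in A_i$ for $n \le i < m$, and the sequence $\vec x$ is $\prec$-increasing and $\prec$-extends $x'_{n-1} = x_{n-1}$ (or starts the sequence, if $n = 17$). Hence $p\cat \vec x$ is well-defined and has length $m$. A term-by-term comparison shows $q \le^* p\cat \vec x$: the lower parts agree through index $n-1$; the values $q'_{n+k}$ satisfy $q'_{n+k} \le F_{n+k}(x'_{n+k-1}, x'_{n+k})$ for $0 < k < t$ and $q'_n \le f_n(x'_n)$, which are precisely the values appearing in the lower part of $p\cat \vec x$; the new function $f'_m$ satisfies $f'_m(y) \le F_m(x'_{m-1}, y)$ on $A'_m$, matching the definition of $f_{n+t}$ in $p\cat \vec x$; and the constraints $A'_i \subseteq A_i$ and $F'_i \le F_i$ for $i > m$ immediately yield $A'_i \subseteq A^*_i$ and $F'_i \le F^*_i$ because $A^*_i$ and $F^*_i$ are just restrictions.

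For uniqueness, if $\vec x$ and $\vec x'$ both satisfy $q \le^* p\cat \vec x$ and $q \le^* p \cat \vec x'$, then both $p\cat \vec x$ and $p\cat \vec x'$ have length $\lh(q) = m$, so $\vec x$ and $\vec x'$ have the same length $t = m - n$. Moreover, the supercompact Prikry points in a direct extension are unchanged, so both $\vec x$ and $\vec x'$ must be the subsequence $\langle x'_n, \ldots, x'_{m-1} \rangle$ of the supercompact Prikry points of $q$ beyond those of $p$. Thus $\vec x = \vec x'$.

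The argument is essentially a bookkeeping exercise; no step is a real obstacle, the only point requiring any care is confirming that the restricted sets $A^*_{n+k}$ remain in the relevant supercompactness measures, which is immediate from $\vert x_{n+t-1}\vert < \kappa$ and the normality of $U_{n+k}$.
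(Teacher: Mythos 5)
Your proof is correct and is exactly the routine verification the paper has in mind (the lemma is stated without proof as "routine"). The only point you gloss slightly is that $A'_i \subseteq A^*_i$ for $i \ge m$ uses not just $A'_i \subseteq A_i$ but also clause 5(a) of the definition of conditions applied to $q$ (every $y \in A'_i$ satisfies $x'_{m-1} \prec y$), which is immediate.
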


\begin{lemma} \label{samestemub} 
  Let $p, q \in \bar \P$ with $\stem(p) = \stem(q) = h$. Then
  there is a lower bound $r \le p, q$ with $\stem(r) = h$.
\end{lemma}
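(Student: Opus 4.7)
Since $\stem(p) = \stem(q) = h$, the two conditions have the same length $n$, identical lower parts, and satisfy $[f^p_n]_{U_n} = [f^q_n]_{U_n}$; write their upper parts as $\langle f^p_n, A^p_n, F^p_{n+1}, A^p_{n+1}, \dots\rangle$ and $\langle f^q_n, A^q_n, F^q_{n+1}, A^q_{n+1}, \dots\rangle$. The plan is to take the lower part of $r$ equal to the common lower part of $p$ and $q$, to arrange $[f^r_n]_{U_n}=[f^p_n]_{U_n}$ so that $\stem(r)=h$, and to build the remaining data $A^r_j, f^r_n, F^r_i$ by intersecting measure one sets and selecting, for each $i > n$, a common refinement inside the generic filter $K$ of $[F^p_i]_{U_{i-1}\times U_i}$ and $[F^q_i]_{U_{i-1}\times U_i}$.

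For the function at level $n$, the set $B_n = \{x \in A^p_n \cap A^q_n : f^p_n(x) = f^q_n(x)\}$ lies in $U_n$ precisely because $[f^p_n]_{U_n} = [f^q_n]_{U_n}$; defining $f^r_n(x) = f^p_n(x) = f^q_n(x)$ on $B_n$ guarantees $[f^r_n]_{U_n} = [f^p_n]_{U_n}$, and hence $\stem(r) = h$.

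The crucial step is handling the constraints $F_i$ for $i > n$. Since $K$ is a filter on $\Q_\infty = i_{i-1}(\Q)(\kappa, j_{i-1}(\kappa))$ and both $[F^p_i]_{U_{i-1}\times U_i}$ and $[F^q_i]_{U_{i-1}\times U_i}$ lie in $K$, they admit a common lower bound $r^\ast_i \in K$. Choosing a representative $G_i$ of $r^\ast_i$, {\L}o\'s's theorem for the product measure $U_{i-1}\times U_i$ yields a $(U_{i-1}\times U_i)$-measure one set on which the pointwise inequality $G_i(x,y) \le F^p_i(x,y), F^q_i(x,y)$ holds in $\Q(x,y)$; Lemma~\ref{Uprod} then provides a rectangle $C^i_{i-1}\times^\prec C^i_i$ inside this set with $C^i_{i-1}\in U_{i-1}$ and $C^i_i\in U_i$. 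I will then set $A^r_n = B_n \cap C^{n+1}_n$, $A^r_j = A^p_j\cap A^q_j\cap C^j_j\cap C^{j+1}_j$ for $j>n$, and $F^r_i(x,y) = G_i(x,y)$ on $A^r_{i-1}\times^\prec A^r_i$; by construction $[F^r_i]_{U_{i-1}\times U_i} = r^\ast_i \in K$, as required.

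Verifying $r \le p,q$ and $\stem(r)=h$ is then a routine matching against the definition of the ordering on $\bar{\mathbb P}$: the lower parts agree, $A^r_j \subseteq A^p_j, A^q_j$, on $A^r_n$ we have $f^r_n = f^p_n = f^q_n$, and on $A^r_{i-1}\times^\prec A^r_i$ the pointwise inequality $F^r_i \le F^p_i, F^q_i$ holds by the choice of rectangles. The main obstacle, and the only conceptually nontrivial step, is converting filter-theoretic compatibility in $K$ into pointwise compatibility on a measure-one rectangle; this is exactly what the product-measure reformulation of Lemma~\ref{Uprod} delivers, and it is precisely the reason the definition of $\bar{\mathbb P}$ builds in the requirement $[F_i]_{U_{i-1}\times U_i}\in K$ from the outset.
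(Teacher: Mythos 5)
Your proof is correct and follows essentially the same route as the paper's: identical lower part, using $[f^p_n]_{U_n}=[f^q_n]_{U_n}$ to define $f^r_n$ on the measure-one set of agreement, and then choosing the remaining entries compatibly. The paper dismisses the constraint functions with ``it is now easy to choose the remaining entries''; your use of the filter property of $K$ together with {\L}o\'s and Lemma~\ref{Uprod} is exactly the right way to fill in that step.
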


\begin{proof} Let the common length of $p$ and $q$ be $n$.
  We choose the lower part of $r$ to agree with the common lower part of $p$ and $q$.
  The main point is that 
  $[f^p_n]_{U_n} = [f^q_n]_{U_n}$, so that $\{ x : f^p_n(x) = f^q_n(x) \} \in U_n$. We may therefore choose
  $f^r_n$ such that $f^r_n(x) = f^p_n(x) = f^q_n(x)$ for all $x \in \dom(f^n_r)$.
  It is now easy to choose the remaining entries of $r$ to ensure that $r \le p, q$.
\end{proof}

\begin{lemma} \label{ccforPbar}
  In $V[L][\Agg]$ the poset $\bar \P$ is $\lambda^b_\omega$-centered, in particular it has the
  $(\lambda^b_\omega)^+$-cc.
\end{lemma}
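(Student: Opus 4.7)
The plan is to exploit Lemma \ref{samestemub} by partitioning $\bar\P$ along stems and then counting. For each stem $h$, define $\bar\P_h = \{\, p \in \bar\P : \stem(p) = h \,\}$, so that $\bar\P = \bigcup_h \bar\P_h$.

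The first step is to verify that each piece $\bar\P_h$ is centered. Lemma \ref{samestemub} handles two conditions sharing a stem and, crucially, produces a lower bound whose stem is still $h$. So given finitely many $p_1,\ldots,p_k \in \bar\P_h$, a straightforward induction works: set $r_1 = p_1$ and, having produced $r_j \le p_1,\ldots,p_j$ with $\stem(r_j) = h$, apply Lemma \ref{samestemub} to $r_j$ and $p_{j+1}$ to obtain $r_{j+1} \le r_j, p_{j+1}$ in $\bar\P_h$. After $k$ steps $r_k$ is a common lower bound, so $\bar\P_h$ is centered.

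The second step is to count stems. By Remark \ref{countstems} there are $\lambda^b_n$ stems of length $n+1$, hence $\sup_{n \ge 17} \lambda^b_n = \lambda^b_\omega$ stems in total; this computation is insensitive to whether one works in $V[L]$ or $V[L][\Agg]$, since the latter is a $<\lambda^b_{\omega+2}$-distributive extension and thus agrees on bounded subsets of $\lambda^b_\omega$ and on all the relevant cardinalities. Consequently $\bar\P$ is a union of $\lambda^b_\omega$ centered subsets, i.e., $\lambda^b_\omega$-centered.

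Finally, $(\lambda^b_\omega)^+$-cc follows by pigeonhole: any family of $(\lambda^b_\omega)^+$ conditions contains two sharing a stem, and those two are compatible by Lemma \ref{samestemub}. There is no real obstacle here --- the only thing to be careful about is that the lower bound furnished by Lemma \ref{samestemub} preserves the stem, which is exactly what makes the inductive argument for finite centeredness go through.
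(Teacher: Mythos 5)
Your proof is correct and follows the same route as the paper: count the stems via Remark \ref{countstems} and use the stem-preserving lower bounds from Lemma \ref{samestemub}. The paper simply states the conclusion is immediate from these two facts, whereas you spell out the finite induction for centeredness; the content is identical.
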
   

\begin{proof}
   It follows from Remark \ref{countstems} that the total number of stems is $\lambda^b_\omega$.
   The conclusion is now immediate from Lemma \ref{samestemub}.
\end{proof}    
   
Essentially the same proof as for Lemma \ref{samestemub} shows:
\begin{lemma} \label{omegasamestemub} 
  Let $h$ be a stem, let $\nu < \kappa$, and let $p_i \in \bar \P$ for $i < \nu$, with $\stem(p_i) = h$ for all $i$.  Then
  there is $r$ such that $\stem(r) = h$ and  $r \le p_i$ for all $i$. 
\end{lemma}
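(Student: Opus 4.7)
I will generalize the proof of Lemma \ref{samestemub} from two conditions to $\nu$ conditions using $\kappa$-completeness of the supercompactness measures $U_j$ (for the measure-one intersections) and the corresponding closure of the generic filter $K$ (for the constraint functions). Write the common stem as
\[
h = \langle q_{17}, x_{17}, \ldots, q_{n-1}, x_{n-1}, [f_n]_{U_n}\rangle,
\]
so each $p_i$ shares the same lower part $\langle q_{17}, x_{17}, \ldots, q_{n-1}, x_{n-1}\rangle$ and satisfies $[f_n^{p_i}]_{U_n} = [f_n]_{U_n}$ for some fixed representative $f_n$. The desired lower bound $r$ will have this same lower part and stem; I only need to describe $f_n^r$, the sets $A_j^r$ for $j \geq n$, and the constraint functions $F_j^r$ for $j > n$.

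At coordinate $n$: for each $i < \nu$ the set $B_i := \{x \in A_n^{p_i} : f_n^{p_i}(x) = f_n(x)\}$ lies in $U_n$. Since $U_n$ is $\kappa$-complete and $\nu < \kappa$, the intersection $A_n^r := \bigcap_{i<\nu} B_i$ is still in $U_n$, and I shrink further to insist $\kappa(x) > \nu$ for every $x \in A_n^r$. I set $f_n^r := f_n \restriction A_n^r$, so that $f_n^r(x) = f_n^{p_i}(x)$ for every $x \in A_n^r$ and every $i$; in particular $[f_n^r]_{U_n} = [f_n]_{U_n}$, so the stem of $r$ is indeed $h$. For each $j > n$ I take $A_j^r := \bigcap_{i<\nu} A_j^{p_i} \cap \{y : \kappa(y) > \nu\}$, which is in $U_j$ by $\kappa$-completeness. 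For the constraint functions, since each $[F_j^{p_i}]_{U_{j-1}\times U_j}$ lies in $K$, I select a common lower bound $[G_j] \in K$ for the family $\{[F_j^{p_i}] : i < \nu\}$, fix a representative $G_j$, shrink $A_{j-1}^r$ and $A_j^r$ if necessary to a $U_{j-1}\times U_j$-measure-one subset on which $G_j(x,y) \le F_j^{p_i}(x,y)$ pointwise for every $i$, and define $F_j^r := G_j$ restricted to this smaller domain.

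With these choices in hand, $r$ has stem $h$ by construction, and $r \le p_i$ for every $i < \nu$ follows by checking the ordering clauses of $\bar\P$ exactly as in the two-condition case. The main obstacle is the existence of the common lower bounds $[G_j] \in K$: this rests on the claim that $\Q_\infty = i_n(\Q)(\kappa, j_n(\kappa))$ is sufficiently closed in the model over which $K$ is generic, so that the dense set of lower bounds of any $<\kappa$-many elements of $K$ is met by $K$. This is obtained by applying elementarity to the closure properties of $\Q(\tau,\tau^*)$ in $V[L]$ established in Lemma \ref{q0info} and Remark \ref{q1and q2closure}, together with the observation that restricting to $(x,y)$ with $\kappa(x) > \nu$ makes each fibre $\Q(x,y)$ automatically $\nu^+$-closed. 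Once this closure is secured, every remaining step is a routine $\kappa$-completeness argument.
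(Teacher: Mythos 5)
Your proof is correct and is exactly the elaboration the paper intends when it says the argument is ``essentially the same'' as for Lemma \ref{samestemub}: you intersect the measure-one sets by $\kappa$-completeness and handle the constraint functions by finding a common refinement $[G_j]\in K$ using the genericity of $K$ over the highly closed model $M_1^*$ together with the distributivity of $\Q_\infty$, which is the same move the paper makes explicitly in the proof of Lemma \ref{Phighdist}. The only cosmetic quibble is that what is dense is the set of conditions deciding compatibility with each $[F_j^{p_i}]$ (whose intersection over $i<\nu$ is dense by distributivity, and any member of $K$ in it must be a common lower bound), not ``the dense set of lower bounds'' itself; the restriction to $\kappa(x)>\nu$ is harmless but unnecessary.
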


\begin{remark} We only need Lemma \ref{omegasamestemub} in the case where $\nu = \omega$.
  It will be used to verify Hypothesis \ref{preslemmahyp7} when we appeal
  to Lemma \ref{preslemma}.
\end{remark}

We define $\P$ to be the set of $p$  which satisfy all the conditions
for membership in $\bar \P$, except the condition that $[F_i]_{U_{i-1} \times U_i} \in K$.
Note that $\P \in V[L]$. We can view $\P$ as the set of potential elements of $\bar \P$.  

It will be convenient to factor the forcing poset $\bar \P \downarrow p$ for $p \in \bar \P$ in various ways. Let 
\[
p = \langle q_{17},x_{17}, \dots q_{n-1}, x_{n-1}, f_n, A_n, F_{n+1}, A_{n+1} ,F_{n+2}, \ldots \rangle
\]
 Let $\tau_j = \kappa(x_j)$ for $17 \le j < n$,
 and let $17 \le m < n -1$. Then $\bar \P$ below $p$ is isomorphic to $\P_{\rm low} \downarrow p_0 \times \P_{\rm high} \downarrow p_1$
 where:
 \begin{enumerate}
 \item $\P_{\rm low} = \Q^*(\tau_{17}) \times \prod_{17 < j \le m} \Q(x_{j-1}, x_j)$.
 \item $p_0 = (q_{17}, \ldots, q_m)$.    
 \item $\P_{\rm high}$ is defined in a similar way to $\bar \P$, with conditions of the form
   \[
   \langle q'_{m+1}, x'_{m+1}, \dots q'_{n'-1}, x'_{n'-1}, f'_{n'}, A'_{n'}, F'_{n'+1}, A'_{n'+1} ,F'_{n'+2}, \ldots \rangle
   \] 
 ordered in the natural way.  
 \item $p_1 = \langle q_{m+1}, x_{m+1}, \dots q_{n-1}, x_{n-1}, f_n, A_n, F_{n+1}, A_{n+1} ,F_{n+2}, \ldots \rangle$
 \end{enumerate}

 It follows that if $G$ is $\bar \P$-generic and $\langle \tau_j : 17 \le j < \omega \rangle$ is the
 Prikry sequence added by $G$, then for every $m \ge 17$ the generic object $G$ induces a
 $\Q^*(\tau_{17}) \times \prod_{17 < j \le m} \Q(x_{j-1}, x_j)$-generic filter.
 
 \begin{remark} Formally the posets $\P_{\rm low}, \P_{\rm high}$ and conditions $p_0, p_1$ depend on the choice
   of $m$. When we use this kind of factorisation in the sequel, the value of $m$ should
   always be clear from the context.
 \end{remark}

\subsection{The Prikry lemma}

Recall from section \ref{auxcomp} that in $V[L]$ we derived measures $U_n$ on $P_\kappa \lambda^b_n$ for $17 \le n < \omega$
from the embedding $j_{01}^*:V[L] \rightarrow M_1^*$, and formed ultrapower maps $j_n: V[L] \rightarrow N_n = Ult(V[L], U_n)$.
We arranged that if $k_n: N_n \rightarrow M_1^*$ is the natural factor map with $j_{01}^* = k_n \circ j_n$, then
$\crit(k_n) > j_{01}(\lambda^b_{17})$. It follows that for $\alpha \le \lambda^b_{17}$ we have
$j_{01}(\alpha) = j_n(\alpha)$.

Recall also that $i_n = j_n(j_{n+1}) \circ j_n$, and that
$\Q_\infty = i_n(\Q)(\kappa, j_n(\kappa))$ for all $n \ge 17$.
Now $\Q_\infty \in M_{01}^*$, and in $M_{01}^*$ we have  $\vert \Q_\infty \vert = j_{01}(\lambda^a_{\omega+2})$
and $2^{j_{01}(\lambda^a_{\omega+2})} = j_{01}(\lambda^b_1)$.
It follows that $\Q_\infty \in N_n$ and $K$ is $\Q_\infty$-generic over $N_n$ for all $n \ge 17$.
By similar arguments, if we let $N_n^+ = Ult(N_n, j_n(U_{n+1}))$, so that
$i_n: V[L] \rightarrow N_n^+$, then $\Q_\infty \in N_n^+$ and 
$K$ is $\Q_\infty$-generic over $N_n^+$.

\smallskip

\noindent  Global notation: $N_n^+$\index[Notation]{$N_n^+$}

\smallskip

For each $n$ with $17 \le n < \omega$, $\vert P_\kappa \lambda^b_n \vert = \lambda^b_n$ in $V[L]$, so that
$U_n$ is still a supercompactness measure on $P_\kappa \lambda^b_n$ in the $< \lambda^b_{\omega+2}$-distributive extension
$V[L][\Agg]$. It follows that $j_n$ lifts to the ultrapower map computed from $U_n$ in $V[L][\Agg]$,
and we write $j_n^A: V[L][\Agg] \rightarrow N_n^A = N_n[j_n^A(\Agg)]$. Similarly
$i_n$ lifts, and we obtain $i_n^A: V[L][\Agg] \rightarrow N_n^{A+} = N_n^+[i_n^A(\Agg)]$.
By distributivity it is easy to see that $K$ is still $\Q_\infty$-generic over the models
$N_n^A$ and $N_n^{A+}$. We also note that $\Q_\infty$ is still $<\lambda^b_{\omega + 2}$-distributive in each
of the models $N_n$, $N_n^+$, $N_n^A$ and $N_n^{A+}$.

\smallskip

\noindent   Global notation: $j_n^A$\index[Notation]{$j_n^A$}, $i_n^A$\index[Notation]{$i_n^A$},
$N_n^A$\index[Notation]{$N_n^A$}, $N_n^{A+}$\index[Notation]{$N_n^{A+}$}

\smallskip

Fix $E$ a dense open subset of $\bar \P$ with $E \in V[L][\Agg]$,
and let $E^{(k)}$ be the dense open set of conditions whose every $k$-step
extension lies in $E$. 
We describe a series of steps to ``canonise''
membership in $E$.  

\smallskip

\noindent Global notation: $E^{(k)}$\index[Notation]{$E^{(k)}$}

\smallskip

For each $n > 17$ we define ${\mathbf F}_n$ to be the set of functions of two variables $F$
such that $\dom(F) = A \times^\prec B$ for some $A \in U_{n-1}$ and $B \in U_n$,
and $F(x, y) \in \Q(\kappa(x), \kappa(y))$ for all
$(x, y) \in \dom(F)$: that is to say, ${\mathbf F}_n$ is the set of functions which can
appear as $F^p_n$ for some $p \in \P$. In this situation, for each $x \in A$  we define $F(x, -)$
to be the function with domain $\{ y \in B : x \prec y \}$ given by $F(x, -)(y) = F(x, y)$. 

\smallskip

\noindent Global notation: ${\mathbf F}_n$\index[Notation]{${\mathbf{F}}_n$},
$F(x, -)$\index[Notation]{$F(x, -)$}

\smallskip

We define $L_n$ to be the set of lower parts
$s$ of the form $q_{17} \ldots x_{n-1}$.
When $n > 17$ and $s = q_{17} \ldots x_{n-1} \in L_n$ we let $\kappa(s) = \kappa(x_{n-1})$,
and for $x \in P_\kappa \lambda^b_n$ we write  $s \prec x$ for $x_{n-1} \prec x$.
By convention $L_{17}$ is the singleton set containing the empty sequence,
$\langle \rangle \prec x$ for all $x \in P_\kappa \lambda^b_{17}$, and
$\Q(\langle \rangle, x) = \Q^*(\kappa(x))$ for all $x \in P_\kappa \lambda^b_{17}$.

\smallskip

\noindent Global notation:  $L_n$\index[Notation]{$L_n$}

\smallskip

It easy to
see that if $L \subseteq L_n$ and $(A_s)_{s \in L}$ is an $L$-indexed family of sets in
$U_n$, then $\{ x \in P_\kappa \lambda^b_n : \forall s \in L\; s \prec x \implies x \in A_s \} \in U_n$. In the sequel we use this
form of normality for $U_n$ without comment.

\begin{lemma} \label{Prikrystep1}
  There exist functions $(F^0_n)_{n > 17}$ and sets $(A^0_n)_{n \ge 17}$ such that:
 \begin{itemize}
 \item $A^0_n \in U_n$.    
 \item $\dom(F^0_n) = A^0_{n-1} \times^\prec A^0_n$.
\item  $[F^0_n]_{U_{n-1} \times U_n} \in K$.
\item
  For every $k$, every $n \ge 17$, every $x \in A^0_n$,  every
  lower part $s \in L_n$ with $s \prec x$, 
  and every condition $q \in \Q(\kappa(s), \kappa(x))$,
  one of the two mutually exclusive conditions holds:
  \begin{itemize}
  \item There is a condition in $E^{(k)}$ with an initial segment of the form
    \[
    s^\frown q^\frown x^\frown F^0_{n+1}(x, -).
    \]
  \item
    There is no condition in $E^{(k)}$ with an initial segment of the form
    \[
    s^\frown q^\frown x^\frown {f_{n+1}}
    \]
    where $f_{n+1} \le F_{n+1}^0(x,-)$.
  \end{itemize}  
 \end{itemize}
\end{lemma}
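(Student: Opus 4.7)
The plan is to build $(F^0_n)_{n > 17}$ and $(A^0_n)_{n \ge 17}$ separately for each $n$, exploiting the genericity of $K$ over the iterated ultrapower $N_n^{A+} = \Ult(V[L][\Agg], U_n \times U_{n+1})$ and the $<\lambda^b_{\omega+2}$-distributivity of $\Q_\infty$ in that model. Fix $n \ge 17$ and work in $N_n^{A+}$: the canonical pair $(x_*, y_*)$ has $\kappa(x_*) = \kappa$, $\kappa(y_*) = j_n(\kappa)$; the poset $i_n^A(\Q)(\kappa, j_n(\kappa))$ is exactly $\Q_\infty$; and $K$ remains $\Q_\infty$-generic over $N_n^{A+}$ since $\Aggforcing$ is $<\lambda^b_{\omega+2}$-distributive.

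For each $k \in \omega$ and each $(s_*, q_*)$ with $s_* \in i_n^A(L_n)$, $s_* \prec x_*$, and $q_* \in i_n^A(\Q)(\kappa(s_*), \kappa)$, define
\[
D(k, s_*, q_*) = \{F \in \Q_\infty : \text{either } s_* \cat q_* \cat x_* \cat F \text{ has a completion in } i_n^A(E^{(k)}), \text{ or no } F' \le F \text{ does}\},
\]
where ``$s_* \cat q_* \cat x_* \cat F$'' denotes the initial segment of a length-$(n+1)$ condition in $i_n^A(\bar\P)$ with $F$ playing the role of $f_{n+1}$. Density of $D(k, s_*, q_*)$ is immediate from the case split: given $G \in \Q_\infty$, either some $G' \le G$ admits a completion (and $G' \in D(k, s_*, q_*)$), or none does (and then $G \in D(k, s_*, q_*)$).

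Bounding the parameter space in $N_n^{A+}$, the number of such $(k, s_*, q_*)$ is strictly less than $\lambda^b_{\omega+2}$: since $\kappa$ is inaccessible in $N_n^{A+}$ and the components of $s_*$ are subsets of $x_*$ of cardinality less than $\kappa$, there are at most $\vert x_* \vert^{<\kappa} = \lambda^b_n$ choices of $s_*$, while $q_*$ ranges over a poset of size less than $\lambda^b_{\omega+2}$. By $<\lambda^b_{\omega+2}$-distributivity of $\Q_\infty$, the intersection $D^* = \bigcap_{k, s_*, q_*} D(k, s_*, q_*)$ is dense in $\Q_\infty$, and by genericity of $K$ I fix $F^0_{n+1} \in \mathbf{F}_{n+1}$ with $[F^0_{n+1}]_{U_n \times U_{n+1}} \in K \cap D^*$.

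Finally, \L o\'s's theorem pulls back the dichotomy: the set of $(x, y) \in P_\kappa \lambda^b_n \times^\prec P_\kappa \lambda^b_{n+1}$ at which, for every admissible $(k, s, q)$ with $s \prec x$ and $q \in \Q(\kappa(s), \kappa(x))$, the dichotomy holds with $F^0_{n+1}(x, -)$, lies in $U_n \times U_{n+1}$; projecting to the first coordinate yields $A^0_n \in U_n$, and $A^0_{n+1} \in U_{n+1}$ is any set making the domain of $F^0_{n+1}$ fit. The main technical obstacle is matching the ``completability of $F$ in $\Q_\infty$'' used in the density argument with the ``pointwise completability of $F^0_{n+1}(x, -)$ in $V[L]$'' appearing in the statement; this requires checking that a function $f_{n+1} \le F^0_{n+1}(x, -)$ in the original model lifts under $i_n^A$ to an element of $\Q_\infty$ below $[F^0_{n+1}]$, and conversely that witnesses of completability in $\Q_\infty$ can be instantiated as functions in $\mathbf{F}_{n+1}$ via \L o\'s, both of which rest on the closure of $N_n^{A+}$ under the relevant sequences.
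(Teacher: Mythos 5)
Your overall strategy---dense subsets of $\Q_\infty$ indexed by $(k,s,q)$, genericity of $K$ combined with the $<\lambda^b_{\omega+2}$-distributivity of $\Q_\infty$, then a {\L}o\'s pullback---is the right one, but two things go wrong. First, the model is mismatched: for this lemma one works in the one-step ultrapower $N_n^A$ with the single seed $x^1_n = j_n[\lambda^b_n]$, because there the class $[f_{n+1}]_{j_n(U_{n+1})}$ of the one-variable constraint function of a length-$(n+1)$ condition of $j_n^A(\bar\P)$ extending via $x^1_n$ lands exactly in $j_n(\Q)(\kappa, j_n(\kappa)) = \Q_\infty$; this is the key observation that makes the dense sets live in $\Q_\infty$ at all. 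In your two-step model $N_n^{A+}$, a length-$(n+1)$ condition of $i_n^A(\bar\P)$ has $[f_{n+1}]$ lying in $i_n(\Q)(\kappa, i_n(\kappa))$, not in $\Q_\infty = i_n(\Q)(\kappa, j_n(\kappa))$, so ``$F\in\Q_\infty$ playing the role of $f_{n+1}$'' does not typecheck: an element of $\Q_\infty$ realized at the canonical pair $(x_*,y_*)$ is the value of a \emph{two-variable} constraint function there, which is the setup needed for Lemma \ref{Prikrystep3}, not for this lemma.

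Second, and more seriously, the step you defer as ``the main technical obstacle'' is exactly where the content lies, and closure of the ultrapower does not resolve it. {\L}o\'s gives you, on a $U_n$-large set, a dichotomy stated at the level of equivalence classes: either some $f_{n+1}$ with $[f_{n+1}]_{U_{n+1}} = [F'_{n+1}(x,-)]_{U_{n+1}}$ heads a condition in $E^{(k)}$, or no $f_{n+1}$ with $[f_{n+1}]_{U_{n+1}} \le [F'_{n+1}(x,-)]_{U_{n+1}}$ does. The negative alternative transfers for free, since $f_{n+1}\le F^0_{n+1}(x,-)$ implies $[f_{n+1}]\le[F^0_{n+1}(x,-)]$. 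But the positive alternative in the lemma demands that the \emph{literal} function $F^0_{n+1}(x,-)$ head a condition in $E^{(k)}$, not merely some function with the same class. To arrange this you must, for each $(k,s,q,x)$ realizing the positive alternative, extract from a witnessing condition a set $B'_{n+1}(k,s,q,x)\in U_{n+1}$ such that $F'_{n+1}(x,-)\restriction B'_{n+1}(k,s,q,x)$ still works, take the diagonal intersection over all such $(k,s,q,x)$, and cut $A^0_{n+1}$ (hence $\dom(F^0_{n+1}(x,-))$) down to it, using openness of $E^{(k)}$ to see that the shrunken function still heads a condition in $E^{(k)}$. In particular $A^0_{n+1}$ cannot be ``any set making the domain fit'': it must lie inside this diagonal intersection to close the argument at level $n$ and simultaneously inside the {\L}o\'s set at level $n+1$.
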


\begin{proof}
  Fix $n$ for the moment.
  Recall that $j_n^A:V[L][\Agg] \rightarrow N_n^A$ is the ultrapower map computed from $U_n$ in $V[L][\Agg]$,
  and $j_n^A$ is a lift of $j_n$. Let $x^1_n = j_n[\lambda^b_n]$, so that
  $U_n = \{ X \subseteq P_\kappa \lambda^b_n : x^1_n \in j_n(X) \}$.
   Observe that
  $\{ t \in j_n(L_n) : t \prec x^1_n \} = j_n[L_n] \in N_n$.

  The key point is now to observe that if $F \in {\mathbf F}_{n+1}$ then $j_n(F)(x^1_n, -) \in N_n$, 
  and is a function which can be integrated in $N_n$ with respect to $j_n(U_{n+1})$ to obtain $[F]_{U_n \times U_{n+1}} \in   \Q_\infty$.
  For each $k$, each $s \in L_n$ and each $Q \in \Q^{N_n}(\kappa(s), \kappa)$ we define in $N_n^A$ a dense open set of conditions
  in $\Q_\infty$, namely the set of conditions $r \in \Q_\infty$ such that
  one of the following mutually exclusive conditions
  holds:
  \begin{itemize}
  \item
    There is a condition in $j_n(E^{(k)})$ with an initial segment of the form
    \[
    j_n(s)^\frown Q^\frown {x^1_n}^\frown {f_{n+1}}
    \]
    where $[f_{n+1}]_{j_n(U_{n+1})} = r$. 
  \item
    There is no condition in $j_n(E^{(k)})$ with an initial segment of the form
    \[
    j_n(s)^\frown Q^\frown {x^1_n}^\frown {f_{n+1}}
    \]
    where $[f_{n+1}]_{j_n(U_{n+1})} \le r$. 
  \end{itemize}
  
  By the genericity of $K$ over $N_n^A$ and the distributivity of $\Q_\infty$ in $N_n^A$, there is
  $r_n \in K$ which is in the dense set defined above for every $s$ and $Q$. We choose
  $F'_{n+1} \in {\mathbf F}_{n+1}$ such that $[F'_{n+1}]_{U_n \times U_{n+1}} = r_n$, that is $[j_n(F'_{n+1})(x^1_n, -)]_{j_n(U_{n+1})} = r_n$.
  
  Working in $V[L][\Agg]$, let $A'_n$ be the set of $x \in P_\kappa \lambda^b_n$ such that
  for every $k$, every $s \in L_n$ with $s \prec x$, and every $q \in \Q(\kappa(s), \kappa(x))$
  one of the following mutually exclusive conditions
  holds:
  \begin{itemize}
  \item[$1_x$)]
    There is a condition in $E^{(k)}$ with an initial segment of the form
    \[
    s^\frown q^\frown x^\frown {f_{n+1}}
    \]
    where $[f_{n+1}]_{U_{n+1}} = [F'_{n+1}(x, -)]_{U_{n+1}}$.  
  \item[$2_x$)]
    There is no condition in $E^{(k)}$ with initial segment of the form
    \[
    s^\frown q^\frown x^\frown {f_{n+1}}
    \]
    where $[f_{n+1}]_{U_{n+1}} \le [F'_{n+1}(x, -)]_{U_{n+1}}$.  
  \end{itemize}
  By {\L}o\'s's theorem $A'_n \in U_n$.
  
  For each $k$, $x \in A'_n$, $s \prec x$, and $q \in \Q(\kappa(s), \kappa(x))$
  such that $1_x$ holds, let $B'_{n+1}(k, s, q, x) \in U_{n+1}$ be such that
  there is a condition in $E^{(k)}$ with initial segment
  \[
  s^\frown q^\frown x^\frown {F'_{n+1}(x, -) \restriction B'_{n+1}(k, s, q, x)}.
  \]
  Let $B'_{n+1}$ be the set of $y \in P_\kappa \lambda^b_{n+1}$ such that $y \in B'_{n+1}(k, s, q, x)$ for every $k$, every $x \in A'_n$ with
  $x \prec y$ and every relevant $s$ and $q$, so that $B'_{n+1} \in U_{n+1}$ by normality. 
  
  Now we choose $A^0_n \in U_n$ so that $A^0_n \subseteq A'_n \cap B'_n$ for every relevant $n$,
  let $F^0_n = F'_n \restriction A^0_{n-1} \times^\prec A^0_n$, and verify that this satisfies the desired property.
  Let $n \ge 17$ and suppose that $k < \omega$, $x \in A^0_n$,  $s \in L_n$ with $s \prec x$, and $q \in \Q(\kappa(s), \kappa(x))$.
  By construction $x \in A'_n$.

  Suppose first that $1_x$ holds, so that we defined $B'_{n+1}(k, s, q, x)$.
  By definition $\dom(F^0_{n+1}(x, -) ) \subseteq \dom(F'_{n+1}(x, -)) \subseteq \{ y \in B'_{n+1} : x \prec y \} \subseteq B'_{n+1}(k, s, q, x)$,
  so that $F^0_{n+1}(x, -) \le F'_{n+1}(x, -) \restriction B'_{n+1}(k, s, q, x)$ and hence there is a condition
  in $E^{(k)}$ with initial segment
  \[
  s^\frown q^\frown x^\frown {F^0_{n+1}(x, -)}.
  \]
  If alternatively $2_x$ holds then {\em a fortiori} there is no condition in $E^{(k)}$ with an initial segment of the form
  \[
  s^\frown q^\frown x^\frown {f_{n+1}}
  \]
  where
  $f_{n+1} \le F_{n+1}^0(x,-)$, because in this case we have \[ [f_{n+1}]_{U_{n+1}} \le [F'_{n+1}(x, -)]_{U_{n+1}} = [F^0_{n+1}(x, -)]_{U_{n+1}} .\]  
\end{proof}

\begin{lemma} \label{Prikrystep2}
  There exist functions $(F^1_n)_{n > 17}$ and sets $(A^1_n)_{n \ge 17}$ such that:
 \begin{itemize}
 \item $A^1_n \in U_n$ with $A^1_n \subseteq A^0_n$.
 \item $\dom(F^1_n) = A^1_{n-1} \times^\prec A^1_n$ with $F^1_n \le F^0_n$.
\item  $[F^1_n]_{U_{n-1} \times U_n} \in K$.
\item For every $k$, every $n \ge 17$, every $x \in A^1_n$, and every $t \in L_{n+1}$ with $t = s^\frown q^\frown x$, if there is a condition
  in $E^{(k)}$ with initial segment
  \[
  t \frown {F^0_{n+1}(x, -)}
  \]
  then
  \[
  t \frown {F^0_{n+1}(x, -)}^\frown (A^1_{m-1}, F^*_m)_{m > n + 1} \in E^{(k)},
  \]
  where
  $F^*_m = F^1_m \restriction \{ (y, z) \in A^1_{m-1} \times A^1_m : x \prec y \prec z \}$.
 \end{itemize}
\end{lemma}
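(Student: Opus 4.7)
The plan is to strengthen the absorption from Lemma \ref{Prikrystep1} by simultaneously refining the family of witness conditions it provides. For each quadruple $(k, n, t, x)$ with $t = s\cat q\cat x \in L_{n+1}$ such that clause $1_x$ of Lemma \ref{Prikrystep1} holds (so there is a condition in $E^{(k)}$ with initial segment $t\cat F^0_{n+1}(x, -)$), I would pick once and for all a witness $p^{k,n,t} \in E^{(k)}$ as guaranteed by that lemma, and write its upper part as $(A^{k,n,t}_m, F^{k,n,t}_m)_{m > n+1}$. Since $p^{k,n,t} \in \bar\P$, every $[F^{k,n,t}_m]_{U_{m-1} \times U_m}$ lies in $K$. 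The goal is then to construct $A^1_m$ and $F^1_m$ so that the canonical upper part $(A^1_{m-1}, F^*_m)_{m > n+1}$ refines the upper part of every such $p^{k,n,t}$; since $E^{(k)}$ is open, the resulting condition will automatically belong to $E^{(k)}$.

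Next, for each $m > 17$ I would construct $A^1_m \subseteq A^0_m$ by a diagonal intersection exploiting the normality of $U_m$. For each fixed $x \in \bigcup_{n < m} A^0_n$, the set
\[
A^{**}_m(x) = \bigcap \{A^{k,n,t}_m : (k,n,t) \text{ with } t \in L_{n+1} \text{ ending at } x,\ 1_x \text{ holds}\}
\]
is an intersection of fewer than $\kappa$ many sets in $U_m$: in $V[L]$ the number of stems in $L_{n+1}$ ending at a given $x$ is bounded by $\omega \cdot 2^{|x|} < \kappa$ (using that $\kappa$ is inaccessible in $V[L]$ and the $\Q$-posets have size below $\kappa$), so the $<\kappa$-completeness of $U_m$ gives $A^{**}_m(x) \in U_m$. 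Taking the diagonal intersection of the $A^{**}_m(x)$'s over $x \in P_\kappa \lambda^b_{m-1}$ and intersecting with $A^0_m$ yields $A^1_m \in U_m$ with the property that every $y \in A^1_m$ lies in every $A^{k,n,t}_m$ for which the last coordinate of $t$ is $\prec y$.

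For the functions I would choose $F^1_m \in \mathbf{F}_m$ by working through the ultrapower and exploiting the genericity of $K$. The requirement is $[F^1_m]_{U_{m-1}\times U_m} \in K$, $F^1_m \le F^0_m$ pointwise, and $F^1_m(y,z) \le F^{k,n,t}_m(y,z)$ for every $(y,z)$ in $A^1_{m-1} \times^\prec A^1_m$ with $x_t \prec y$. Since all the $[F^{k,n,t}_m]$ lie in the filter $K$, and $\Q_\infty$ is $<\lambda^b_{\omega+2}$-distributive in $M_1^*$ with its components enjoying further closure properties (Lemma \ref{q0info}, Remark \ref{q1and q2closure}), I would extract a common lower bound $r^*_m \in K$ below all the $[F^{k,n,t}_m]$. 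Representing $r^*_m = [F^1_m]_{U_{m-1} \times U_m}$ for some $F^1_m \in \mathbf{F}_m$, the pointwise inequalities then hold on a $U_{m-1} \times U_m$-measure-one set of pairs; one further normality-style shrinking of $A^1_{m-1}$ and $A^1_m$ (again indexing by $x_t$ and using that for each $x$ the family of relevant $(k,n,t)$ has size $<\kappa$) ensures the pointwise refinement is valid on the entire $A^1_{m-1} \times^\prec A^1_m$ restricted to $\{(y,z) : x_t \prec y\}$.

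The hard part will be the extraction of the common lower bound $r^*_m \in K$ for a family of size up to $\lambda^b_{m-1}$: mere $<\lambda^b_{\omega+2}$-distributivity of $\Q_\infty$ does not give directed closure of the required length, so the argument must exploit both that the family is coherent (each member is in the generic filter $K$, hence pairwise compatible) and the finer closure properties inherited from the term-forcing components of $\Q$. Once $A^1_m$ and $F^1_m$ are in place, the conclusion is immediate: for any $(k, n, t, x)$ satisfying the hypothesis, the constructed condition $t\cat F^0_{n+1}(x,-)\cat (A^1_{m-1}, F^*_m)_{m > n+1}$ has the same initial segment as $p^{k,n,t}$ and its upper part refines that of $p^{k,n,t}$, so it lies in the open set $E^{(k)}$ as required.
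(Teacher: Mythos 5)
Your strategy is the paper's: fix a witness $p^{t,k}\in E^{(k)}$ for each pair where the hypothesis holds, drive the upper parts of all these witnesses down to a single sequence of constraint functions whose classes lie in $K$, reflect the ultrapower inequalities to pointwise ones by diagonal intersections, and conclude by openness of $E^{(k)}$. The one step you leave unresolved --- extracting a common lower bound in $K$ for the family $\{[F^{t,k}_m]\}$ --- is the only real issue, and your diagnosis of it is slightly off: no directed closure of $\Q_\infty$ is needed, and the ``finer closure properties inherited from the term-forcing components'' are a red herring. For fixed $m$ the family has size at most $\lambda^b_{m-2}$ (by Remark \ref{countstems} there are at most $\lambda^b_n$ lower parts of length $n+1$, and only $n\le m-2$ and $k<\omega$ are relevant); although the witnesses are chosen in $V[L][\Agg]$, the indexed family of classes is a short sequence of elements of $M_1^*$, hence lies in $V[L]$ by the $<\lambda^b_{\omega+2}$-distributivity of $\Aggforcing$ and then in $M_1^*$ by the closure of $M_1^*$ under $<\lambda^b_{\omega+3}$-sequences. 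Now for each member $x$ of the family, the set $D_x$ of conditions of $\Q_\infty$ that either extend $x$ or are incompatible with $x$ is dense open; by $<\lambda^b_{\omega+2}$-distributivity $K$ meets $\bigcap_x D_x$ in some $r$, and since $r$ and each $x$ both lie in the filter $K$ they are compatible, whence $r\le x$ for every $x$ in the family. That is the entire content of the ``coherence'' you gesture at, and it is the same argument the paper reuses in Lemma \ref{Phighdist}. (The paper organises the extraction in two stages, first over $(t,k)$ for each fixed $n$ and then over $n$ together with $F^0_m$, but your one-shot version is equivalent since the total family still has size below $\lambda^b_{\omega+2}$.) With this filled in, the remainder of your argument --- the $<\kappa$ count of lower parts ending at a fixed $x$, the normality/diagonal-intersection step, and the conclusion via openness of $E^{(k)}$ --- coincides with the paper's proof.
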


\begin{proof}
  Fix $n$ for the moment. For every $k$, $x \in A^0_n$, $s \in L_n$ with $s \prec x$, and
  $q \in \Q(\kappa(s), \kappa(x))$, let $t = s^\frown q^\frown x$ (so that $t \in L_{n+1}$) and if
  there is a condition in $E^{(k)}$ with initial segment $t \frown {F^0_{n+1}(x, -)}$ then choose
  such a condition $p^{t, k}$. To lighten the notation let $F^{t,k}_m = F^{p^{t, k}}_m$ for $m > n + 1$.

  For all $k$, $t$, and  $m$, $[F^{t, k}_m]_{U_{m-1} \times U_m} \in K$. Since $K$ is generic over $M_1^*$,
  it follows from the closure properties of $M_1^*$ and the distributivity of $\Q_\infty$ that
  there exists a sequence $(G^n_m)_{m > n + 1}$ such that $G^n_m \in {\mathbf F_m}$,
  $[G^n_m]_{U_{m-1} \times U_m} \in K$ and $[G^n_m]_{U_{m-1} \times U_m} \le [F^{t, k}_m]_{U_{m-1} \times U_m}$
  for all $t$ and $k$. Using closure and distributivity again 
  there exists a sequence $(G_m)$ such that $G_m \in {\mathbf F_m}$,
  $[G_m]_{U_{m-1} \times U_m} \in K$, $[G_m]_{U_{m-1} \times U_m} \le [F^0_m]_{U_{m-1} \times U_m}$,
  and $[G_m]_{U_{m-1} \times U_m} \le [G^n_m]_{U_{m-1} \times U_m}$ for all $n > m + 1$.

  By taking appropriate diagonal intersections to define the sets $A^1_m$ and setting $F^1_m = G_m \restriction A^1_{m-1} \times_\prec A^1_m$,
  we may arrange that for every
  $k$, $n$,  $t \in L_{n+1}$, $m > n + 1$ and $(y, z) \in \dom(F^1_m)$ with $t \prec y \prec z$,
  we have $F^1_m(y, z) = G_m(y, z) \le G^n_m(y, z) \le F^{t, k}_m(y, z)$ and $F^1_m(y, z) \le F^0_m(y, z)$.
  To verify that this works, let $t = s^\frown q^\frown x \in L_{n+1}$
  with $x \in A^1_n$ and assume that there is a condition in $E^{(k)}$ with initial segment $t \frown {F^0_{n+1}(x, -)}$,
  so that we chose $p^{t, k} \in E^{(k)}$. The desired conclusion is immediate. 
\end{proof}

\begin{lemma} \label{Prikrystep3}
  There exist functions $(F^2_n)_{n > 17}$ and sets $(A^2_n)_{n \ge 17}$ such that:
 \begin{itemize}
 \item $A^2_n \in U_n$ with $A^2_n \subseteq A^1_n$.
 \item $\dom(F^2_n) = A^2_{n-1} \times^\prec A^2_n$ with $F^2_n \le F^1_n$.
\item  $[F^2_n]_{U_{n-1} \times U_n} \in K$.
\item  For every $k$, every $n \ge 17$, every  $(x, y) \in \dom(F^2_{n+1})$, every $s \in L_n$ with $s \prec x$, and every
  $q \in \Q(\kappa(s), \kappa(x))$, 
  one of the following mutually exclusive statements holds:
\begin{itemize}
\item There is a condition in $E^{(k)}$ with initial segment
  \[
  s^\frown q^\frown x^\frown {F^2_{n+1}(x, y)}^\frown y^\frown {F^1_{n+2}(y, -)}.
  \]
\item There is no condition in $E^{(k)}$ with initial segment of the form
  \[
  s^\frown q^\frown x^\frown {\bar r}^\frown y^\frown {F^1_{n+2}(y, -)}
  \]
  where $\bar r \le F^2_{n+1}(x, y)$.
\end{itemize}
   \end{itemize}
\end{lemma}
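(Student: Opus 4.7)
The plan is to adapt the argument for Lemma~\ref{Prikrystep1} to a double-ultrapower setting. The functions $F^2_{n+1}$ we seek are integrated as elements of $\Q_\infty$ with respect to $U_n \times U_{n+1}$, so the appropriate embedding is $i_n^A : V[L][\Agg] \to N_n^{A+}$ rather than $j_n^A$. Fix $n \ge 17$ for the moment. We exploit the facts that $\Q_\infty = i_n(\Q)(\kappa, j_n(\kappa))$ lies in $N_n^{A+}$, that $K$ is $\Q_\infty$-generic over $N_n^{A+}$, and that $\Q_\infty$ is $<\lambda^b_{\omega+2}$-distributive in $N_n^{A+}$. Let $x^1 = j_n[\lambda^b_n]$ and let $x^2$ denote the canonical $j_n(U_{n+1})$-representative in $N_n^+$; then for any $F \in {\mathbf F}_{n+1}$ the value $i_n(F)(x^1,x^2)$ equals $[F]_{U_n \times U_{n+1}}$, and $i_n(F^1_{n+2})(x^2,-)$ is an $i_n(U_{n+2})$-integrand in $N_n^+$ whose integral equals $[F^1_{n+2}]_{U_{n+1}\times U_{n+2}} \in K$.

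Working in $N_n^{A+}$, for each $k < \omega$, each $s \in L_n$ with $i_n(s) \prec x^1$, and each $Q \in i_n(\Q)(\kappa(s), \kappa(x^1))$, define $D_{k,s,Q}$ to be the set of $r \in \Q_\infty$ such that one of the following mutually exclusive conditions holds: either there is a condition in $i_n(E^{(k)})$ with initial segment
\[
i_n(s) \cat Q \cat x^1 \cat r \cat x^2 \cat [i_n(F^1_{n+2})(x^2, -)],
\]
or no condition in $i_n(E^{(k)})$ has an initial segment of this form with $r$ replaced by any $\bar r \le r$. Each $D_{k,s,Q}$ is a dense open subset of $\Q_\infty$ in $N_n^{A+}$, and the collection has cardinality less than $\lambda^b_{\omega+2}$ in $N_n^{A+}$ (using that $|L_n|$ is bounded by $\lambda^b_n$ and that $|i_n(\Q)(\kappa(s), \kappa(x^1))|$ is bounded in $N_n^{A+}$ by $i_n$ applied to the cardinality bound on $\Q$). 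By the distributivity of $\Q_\infty$ in $N_n^{A+}$ and the genericity of $K$, we may select a condition $r^*_n \in K$ refining $[F^1_{n+1}]_{U_n \times U_{n+1}} \in K$ and lying in every $D_{k,s,Q}$. Choose $G_{n+1} \in {\mathbf F}_{n+1}$ with $G_{n+1} \le F^1_{n+1}$ pointwise and $[G_{n+1}]_{U_n \times U_{n+1}} = r^*_n$.

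By {\L}o\'s's theorem applied to the product measure $U_n \times U_{n+1}$, the dichotomy chosen in the ultrapower reflects down: there exist sets $A'_n \subseteq A^1_n$ and $A'_{n+1} \subseteq A^1_{n+1}$ in $U_n$ and $U_{n+1}$ respectively such that, by Lemma~\ref{Uprod}, for every $(x,y) \in A'_n \times^\prec A'_{n+1}$, every $k$, every $s \in L_n$ with $s \prec x$, and every $q \in \Q(\kappa(s), \kappa(x))$, the desired dichotomy holds with $G_{n+1}(x, y)$ in place of $r^*_n$ and $F^1_{n+2}(y,-)$ in place of $i_n(F^1_{n+2})(x^2,-)$. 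Carrying out this construction for every $n \ge 17$ in parallel and then taking suitable diagonal intersections in the style of Lemmas~\ref{Prikrystep1} and~\ref{Prikrystep2} yields the desired sequences $(A^2_n)_{n \ge 17}$ and $(F^2_{n+1})_{n \ge 17}$, with $F^2_{n+1} = G_{n+1} \restriction (A^2_n \times^\prec A^2_{n+1})$. The main technical point is verifying that the collection of dense sets $D_{k,s,Q}$ has size strictly below $\lambda^b_{\omega+2}$ in $N_n^{A+}$; once this is established, the distributivity of $\Q_\infty$ and the genericity of $K$ supply the master condition, and the rest is bookkeeping with {\L}o\'s and normality.
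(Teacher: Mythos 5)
Your proposal follows essentially the same route as the paper's proof: pass to the two-step iteration $i_n$, define for each triple $(k,s,Q)$ a dense open subset of $\Q_\infty$ encoding the dichotomy at the canonical seed pair, use the genericity of $K$ over $N_n^{A+}$ together with the $<\lambda^b_{\omega+2}$-distributivity of $\Q_\infty$ to find a single master condition $r_n \in K$, represent it by a function in ${\mathbf F}_{n+1}$, and reflect via {\L}o\'s and Lemma~\ref{Uprod}. The one slip is in the choice of seed: the pair representing $U_n \times U_{n+1}$ under $i_n$ is $(i_n[\lambda^b_n],\, j_n(j_{n+1})[j_n(\lambda^b_{n+1})])$, not $(j_n[\lambda^b_n],\, \cdot\,)$ — since $\crit(j_n(j_{n+1})) = j_n(\kappa) < j_n(\lambda^b_n)$, the sets $j_n[\lambda^b_n]$ and $i_n[\lambda^b_n]$ genuinely differ, so your identification $i_n(F)(x^1,x^2) = [F]_{U_n \times U_{n+1}}$ and the subsequent {\L}o\'s reflection must be stated at $i_n[\lambda^b_n]$ in the first coordinate; with that one-line correction the argument is the paper's.
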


\begin{proof} 
  As in the proof of Lemma \ref{Prikrystep1}, let $x^1_n = j_n[\lambda^b_n]$.
  Let $x^2_n = j_n(j_{n+1})(x^1_n) = i_n[\lambda^b_n]$, and
  let $y^2_{n+1} = j_n(j_{n+1})[j_n(\lambda^b_{n+1})]$.  
  By a routine calculation
  $U_n \times U_{n+1} = \{ X \subseteq P_\kappa \lambda^b_n \times P_\kappa \lambda^b_{n+1} : (x^2_n, y^2_{n+1}) \in i_n(X) \}$.
  It is easy to see that $i_n[L_n] = \{ t \in i_n(L_n) : t \prec x^2_n \}$.

  Let $k < \omega$, $s \in L_n$, and $Q \in \Q^{N_n^+}(\kappa(s), \kappa)$. Working in $N_n^{+A}$ define the dense open set of conditions
  $r \in \Q_\infty$ such that
  one of the following mutually exclusive conditions holds:
  \begin{itemize}
  \item There is a condition in $i_n(E^{(k)})$ with initial segment
    \[
    i_n(s)^\frown Q^\frown {x^2_n}^\frown r^\frown {y^2_{n+1}}^\frown
    {i_n(F^1_{n+2})(y^2_{n+1}, -)}.
    \]
  \item There is no condition in $i_n(E^{(k)})$ with initial segment of the form
    \[
    i_n(s)^\frown Q^\frown {x^2_n}^\frown {\bar r}^\frown {y^2_{n+1}}^\frown {i_n(F^1_{n+2})(y^2_{n+1}, -)}
    \]
    where ${\bar r} \le r$.
  \end{itemize}  
  Using the genericity of $K$ over $N_n^{A+}$ and the distributivity of $\Q_\infty$ in this model, we find
  $r_n \in K$ which lies in this dense open set for every $k$, $s$, and $Q$, and fix $F''_{n+1} \in {\mathbf F}_{n+1}$ such that
  $[F''_{n+1}]_{U_n \times U_{n+1}} = i_n(F''_{n+1})(x^2_n, y^2_n) = r_n$.

  By {\L}o\'s's theorem there is a set $C_{n+1} \in U_n \times U_{n+1}$ such that for every $k$, $(x, y) \in C_{n+1}$,
   $s \in L_n$ with $s \prec x$, and every $q \in \Q(\kappa(s), \kappa(x))$
  one of the following mutually exclusive conditions holds:
  \begin{itemize}
  \item There is a condition in $E^{(k)}$ with initial segment
    \[
    s^\frown q^\frown x^\frown {F''_{n+1}(x, y)}^\frown y^\frown {F^1_{n+2}(y,-)}.
    \]
  \item There is no condition in $E^{(k)}$ with an initial segment of the form
    \[
    s^\frown q^\frown x^\frown {\bar r}^\frown y^\frown {F^1_{n+2}(y,-)}
    \]
    where $\bar r \le F''_{n+1}(x, y)$.
  \end{itemize}  

  Now we choose $F^2_n \le F''_n, F^1_n$ and $A^2_n \subseteq A^1_n$ so that
  $\dom(F^2_{n+1}) = A^2_n \times^\prec A^2_{n+1} \subseteq C_{n+1}$. 
  Clearly this satisfies the requirements. 
\end{proof}

\begin{lemma} \label{Prikrystep4}
   There exist sets $(A^3_n)_{n \ge 17}$ such that:
 \begin{itemize}
 \item $A^3_n \in U_n$ with $A^3_n \subseteq A^2_n$.
 \item For every $k$, every $n \ge 17$, and every  $t \in L_{n+1}$, one of the following mutually exclusive conditions holds:
\begin{itemize}
\item
  For every $y \in A^3_{n+1}$ with $t \prec y$,
  there is a condition in $E^{(k)}$ with initial segment
  \[
  t^\frown {F^2_{n+1}(x, y)}^\frown y^\frown F^2_{n+2}(y, -).
  \]
\item   
  For every $y \in A^3_{n+1}$ with $t \prec y$,
  there is no condition in $E^{(k)}$ with initial segment
  \[
  t^\frown {F^2_{n+1}(x, y)}^\frown y^\frown F^2_{n+2}(y, -).
  \]
\end{itemize} 
 \end{itemize}
\end{lemma}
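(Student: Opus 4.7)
The plan is to apply the normality of $U_{n+1}$, handling the potentially large index set $L_{n+1}$ by grouping its elements according to their last coordinate. Fix $n \ge 17$ and $k < \omega$. For each $t \in L_{n+1}$ with last coordinate $x \in A^2_n$, the set $\{y \in A^2_{n+1} : x \prec y\}$ belongs to $U_{n+1}$, and splits into the set $B(t,k)$ of $y$'s for which a condition in $E^{(k)}$ with initial segment $t^\frown F^2_{n+1}(x,y)^\frown y^\frown F^2_{n+2}(y,-)$ exists and its relative complement. Since $U_{n+1}$ is an ultrafilter, exactly one of these subsets lies in $U_{n+1}$; call that subset $C(t,k)$. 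On $C(t,k)$ the existence or non-existence of the $E^{(k)}$-condition is uniform in $y$, giving the dichotomy asserted in the lemma, but only for $y$'s that already happen to be in $C(t,k)$.

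The key combinatorial observation is that for each fixed $x \in A^2_n$, the set $L_{n+1}^x$ of $t \in L_{n+1}$ with last coordinate $x$ has size less than $\kappa$. Indeed, any such $t = (q_{17}, x_{17}, \dots, q_n, x_n)$ with $x_n = x$ has each $x_i$ satisfying $x_i \prec x$, so $x_i \in [x]^{<\kappa(x)}$, and each $q_i$ lies in a $\Q$-poset of size less than $\kappa(x) \le |x| < \kappa$. Inaccessibility of $\kappa$ then yields $|L_{n+1}^x| < \kappa$. So by $\kappa$-completeness of $U_{n+1}$, one may set $C^{x,k} := \bigcap_{t \in L_{n+1}^x} C(t,k) \in U_{n+1}$, which deals with all $t$'s sharing the last coordinate $x$ at once.

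Next, define
\[
A^{(k)}_{n+1} := \{ y \in A^2_{n+1} : \forall x \in P_\kappa \lambda^b_n,\ x \prec y \Rightarrow y \in C^{x,k}\}.
\]
I claim $A^{(k)}_{n+1} \in U_{n+1}$ by normality: were its complement in $U_{n+1}$, a choice function $y \mapsto f(y)$ selecting $x = f(y) \prec y$ with $y \notin C^{f(y),k}$ would be regressive with respect to $\prec$, hence by the normality of $U_{n+1}$ on $P_\kappa \lambda^b_{n+1}$ would be constantly equal to some $x_0$ on a $U_{n+1}$-measure-one set, contradicting $C^{x_0,k} \in U_{n+1}$. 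Set $A^3_{n+1} := \bigcap_{k<\omega} A^{(k)}_{n+1} \in U_{n+1}$ (by $\kappa$-completeness of $U_{n+1}$), and $A^3_{17} := A^2_{17}$. Verification is immediate: for $y \in A^3_{n+1}$ and $t \in L_{n+1}$ with last coordinate $x$ and $t \prec y$, we have $x \prec y$, hence $y \in C^{x,k} \subseteq C(t,k)$, so the desired dichotomy for $(t, k)$ is inherited from the choice of $C(t,k)$.

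The main point requiring care is the normality step: since $L_{n+1}$ can have cardinality $\lambda^b_n$, we cannot take a plain intersection over all $t \in L_{n+1}$. Organising $L_{n+1}$ by last coordinate reduces the problem to $\kappa$-complete intersections within each fiber and a single diagonal intersection indexed by elements of $P_\kappa \lambda^b_n$, which embeds into $P_\kappa \lambda^b_{n+1}$ through the $\prec$-relation on which the normality of $U_{n+1}$ directly operates.
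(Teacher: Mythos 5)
Your proof is correct and follows essentially the same route as the paper: partition $\{y : t\prec y\}$ according to the dichotomy, keep whichever piece is $U_{n+1}$-large, and pass to the diagonal intersection over $k$ and $t \in L_{n+1}$ — the paper does exactly this in one line, invoking the ``form of normality'' it records just before Lemma \ref{Prikrystep1} (a diagonal intersection indexed by lower parts), which your fiber-by-fiber argument via $\kappa$-completeness plus a regressive-function/normality step simply proves from scratch. One small inaccuracy: $\Q(x_{i-1},x_i)$ has cardinality $\Lambda^a_{\omega+2}(\kappa(x_i))$, which exceeds $\kappa(x)$ rather than being below it; but it is still below $\Lambda^b_0(\kappa(x)) < \kappa$, which is all your inaccessibility count $|L_{n+1}^x|<\kappa$ actually needs.
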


\begin{proof}
 For every $k$, $n$ and $t = s^\frown q^\frown x \in L_{n+1}$ partition $\{ y \in A^2_{n+1} : t \prec y \}$ as follows:
 \begin{itemize}
 \item
   $A^+_{n+1}(t, k)$ is the set of $y \in A^2_{n+1}$ such that $t \prec y$ and
   there is a condition in $E^{(k)}$ with initial segment
   \[
   t^\frown {F^2_{n+1}(x, y)}^\frown y^\frown F^2_{n+2}(y, -).
   \]
 \item
   $A^-_{n+1}(t, k)$ is the set of $y \in A^2_{n+1}$ such that $t \prec y$ and there is no condition in $E^{(k)}$ with initial segment
   \[
   t^\frown {F^2_{n+1}(x, y)}^\frown y^\frown F^2_{n+2}(y, -).
   \]
 \end{itemize}  

 Let $A^3_{n+1}(t, k)$ be whichever of the sets $A^+_{n+1}(t, k)$ and $A^-_{n+1}(t, k)$ lies in $U_{n+1}$, 
 and then let $A^3_{n+1} = \{ y \in A^2_{n+1} : \forall k \; \forall t \in L_{n+1} \; t \prec y \implies y \in A^3_{n+1}(t, k) \}$.
 Clearly this satisfies the requirements.
\end{proof} 

   To keep the indices in step, we define $F^3_m = F^2_m \restriction A^3_{m-1} \times^\prec A^3_m$.

\begin{lemma} \label{Prikrystep5}
  Let $n \ge 17$, let $x \in P_\kappa \lambda^b_n$, and let $f_{n+1}$ be a function such that
  $A_{n+1} = \dom(f_{n+1}) \in U_{n+1}$, where  $x \prec y$ and $f_{n+1}(y) \in \Q(\kappa(x), \kappa(y))$
  for all $y \in A_{n+1}$.
  Then there exist $B_{n+1} \subseteq A_{n+1}$ and $f_{n+1}'$ with domain $B_{n+1}$
  such that:
  \begin{itemize}
  \item $f'_{n+1}(y) \le f_{n+1}(y)$ for all $y \in B_{n+1}$.
  \item For every $k$ and every $t \in L_{n+1}$ of the form $s^\frown q^\frown x$,  
 one of the two following mutually exclusive  conditions holds:
\begin{itemize} 
\item
  For every $y \in B_{n+1}$,  there is a condition in $E^{(k)}$ with initial segment
  \[
  t^\frown {f'_{n+1}(y)}^\frown y^\frown F^3_{n+2}(y, -).
  \]
\item  
  For every $y \in B_{n+1}$ and every $r \le f'_{n+1}(y)$,
  there is no condition in $E^{(k)}$ with initial segment
  \[
  t^\frown r^\frown y^\frown F^3_{n+2}(y, -).
  \]
\end{itemize}
\end{itemize}
  \end{lemma}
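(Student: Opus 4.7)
The plan is to construct $f'_{n+1}(y)$ for each $y \in A_{n+1}$ by refining $f_{n+1}(y)$ so that it decides, for every pair $(k, t)$ with $t \in L_{n+1}$ of the form $s^\frown q^\frown x$, which horn of the dichotomy holds, and then to apply normality of $U_{n+1}$ to extract a measure-one set $B_{n+1}$ on which the decision is uniform across $y$.

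First I would fix $y \in A_{n+1}$ and work inside the forcing $\Q(\kappa(x), \kappa(y))$. For each pair $(k, t)$, declare $r \in D^y_{k,t}$ if either (a) there is a condition in $E^{(k)}$ with initial segment $t^\frown r^\frown y^\frown F^3_{n+2}(y, -)$, or (b) no $r' \le r$ admits such a completion. This set is dense below $f_{n+1}(y)$: given $r_0 \le f_{n+1}(y)$, either some $r' \le r_0$ satisfies (a) (and so lies in $D^y_{k,t}$), or none does and $r_0$ itself satisfies (b). Both alternatives persist under further refinement of $r$: for (a), substituting $r'' \le r$ for $r$ in a witnessing condition $p \in E^{(k)}$ yields $p'' \le p$, which remains in $E^{(k)}$ by openness; (b) is preserved trivially.

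Second, I would intersect the sets $D^y_{k,t}$. By Lemma \ref{qdistribj} together with the $<\lambda^b_{\omega+2}$-distributivity of $\Aggforcing$, $\Q(\kappa(x), \kappa(y))$ is $<\Lambda^b_{\omega+2}(\kappa(x))$-distributive in $V[L][\Agg]$. The number of pairs $(k, t)$ is bounded above by $\aleph_0 \cdot \vert\{s \in L_n : s \prec x\}\vert \cdot \Lambda^a_{\omega+2}(\kappa(x))$, itself controlled by $\vert x\vert^{<\kappa(x)}$; standard reflection through $j_n$ (applied to $\lambda^b_n < \lambda^b_{\omega+2}$) gives $\vert x \vert < \Lambda^b_{\omega+2}(\kappa(x))$ on a $U_n$-measure-one set, keeping the total count strictly below $\Lambda^b_{\omega+2}(\kappa(x))$. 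Choose $f'_{n+1}(y) \le f_{n+1}(y)$ in $\bigcap_{k, t} D^y_{k, t}$, or equivalently, iterate sequentially through the pairs $(k, t)$ using the persistence of (a) and (b) to carry earlier decisions through later refinements.

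Third, I would apply normality of $U_{n+1}$. For each $(k, t)$, partition $A_{n+1} = Y^{k,t}_+ \cup Y^{k,t}_-$ according to whether $f'_{n+1}(y)$ witnesses (a) or (b) respectively, and let $Z^{k,t} \in U_{n+1}$ be whichever half has measure one. Since the number of pairs $(k, t)$ is less than $\kappa$, the diagonal intersection $B_{n+1} = \bigcap_{k, t} Z^{k, t}$ lies in $U_{n+1}$ by normality, and by construction the dichotomy of the lemma holds uniformly on $B_{n+1}$. The main technical obstacle is the cardinal-arithmetic bookkeeping that ensures the count of $(k, t)$ pairs remains strictly below the distributivity parameter $\Lambda^b_{\omega+2}(\kappa(x))$; this rests on reflecting properties of $\vert x\vert^{<\kappa(x)}$ through $j_n$ to the cardinal arithmetic above $\kappa$, which hold on a $U_n$-measure-one set of $x$.
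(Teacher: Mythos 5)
Your proof follows the paper's argument exactly: for each fixed $y$ the two alternatives determine a dense open subset of $\Q(\kappa(x),\kappa(y))$ below $f_{n+1}(y)$ for every pair $(t,k)$, the $<\Lambda^b_{\omega+2}(\kappa(x))$-distributivity of $\Q(\kappa(x),\kappa(y))$ lets you meet all of them with a single $f'_{n+1}(y)\le f_{n+1}(y)$, and then completeness of $U_{n+1}$ uniformizes the choice of horn on a measure-one $B_{n+1}$ (a plain intersection of fewer than $\kappa$ sets suffices here; normality and diagonal intersections are not needed since $x$ is fixed). Two small corrections. First, you should shrink $A_{n+1}$ into $A^3_{n+1}$ at the outset, since $F^3_{n+2}(y,-)$ is only defined for $y\in A^3_{n+1}$. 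Second, your cardinality bookkeeping is off: the lemma is asserted for the given $x$, so you cannot retreat to a $U_n$-measure-one set of $x$'s to control the number of pairs $(t,k)$; what is actually used (and what the paper states directly) is that the number of $t\in L_{n+1}$ of the form $s^\frown q^\frown x$ is at most $\Lambda^b_n(\kappa(x))$, which is below both the distributivity parameter $\Lambda^b_{\omega+2}(\kappa(x))$ and $\kappa$, with no reflection argument required.
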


\begin{proof}
  Shrinking $A_{n+1}$ if necessary, we may assume that $A_{n+1} \subseteq A^3_{n+1}$.
  Note that $\Q(\kappa(x), \kappa(y))$ is $<\Lambda^b_{\omega+2}(\kappa(x))$-distributive, and the set of elements
  of $L_{n+1}$ of form $s^\frown q^\frown x$ is of cardinality at most $\Lambda^b_n(\kappa(x))$.
  It follows that for each $y \in A_{n+1}$ there is $r \le f_{n+1}(y)$ such that
  for every $k$ and every $t$ in $L_{n+1}$ of the form $s^\frown q^\frown x$ 
  one of the two following mutually exclusive  conditions holds:
  \begin{itemize}
  \item[$1_{t,k}$)] There is a condition in $E^{(k)}$ with initial segment
    \[
    t^\frown r^\frown y^\frown F^3_{n+2}(y, -).
    \]
  \item[$2_{t,k}$)] There is no condition in $E^{(k)}$ with an initial segment of the form
    \[
    t^\frown {\bar r}^\frown y^\frown F^3_{n+2}(y, -)
    \]
    where $\bar r \le r$.
  \end{itemize}
  
  For each $y \in A_{n+1}$ choose  $f_{n+1}'(y)$ be some $r \le f_{n+1}(y)$ as above. For each $t$ and $k$,
  let $A_{n+1}(t, k)$ be whichever of the sets $\{ y \in A_{n+1} : \mbox{$f_{n+1}(y)$ satisfies $1_{t,k}$} \}$
    and $\{ y \in A_{n+1} : \mbox{$f_{n+1}(y)$ satisfies $2_{t,k}$} \}$ is measure one for $U_{n+1}$.
  Let $B_{n+1} = \bigcap_{t, k} A_{n+1}(t,k)$.       
\end{proof}

\begin{remark} Lemma \ref{Prikrystep5} is only useful for conditions of length $n > 17$,
because for a condition $\langle f_{17}, A_{17}, F_{18} \rangle$ of length $17$,
$f_{17}(x) \in \Q^*(x)$ for all $x \in A_{17}$. This explains why the following Lemma \ref{strongPrikry}
is restricted to conditions of length greater than $17$.
\end{remark}

\begin{lemma}[Strong Prikry Lemma] \label{strongPrikry} 
  For every dense open subset $E$ of $\bar \P$ and every
  condition $p \in \bar\P$ of length greater than $17$, there exist $q$ a direct extension of $p$ and
  $k \in \omega$ such that $q \in E^{(k)}$. 
\end{lemma}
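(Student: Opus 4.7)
The plan is to combine the canonizations of Lemmas \ref{Prikrystep1}--\ref{Prikrystep5} with density of $E$ and a backwards induction along the path that the density witness produces. First, refine $p$ so that its constraint functions and measure-one sets past position $n$ are the canonized $(F^3_m,A^3_m)$, and apply Lemma \ref{Prikrystep5} (with indices shifted so its ``$n+1$'' is our $n$) to the function $f_n$ at position $n$, producing a direct extension $q_0\le^* p$ whose function at level $n$ is $f'_n$ defined on $B_n\subseteq A^3_n$. By density of $E$, fix $r\le q_0$ with $r\in E$; let $k:=\lh(r)-n$ and, writing $r=q_0\cat(y_0,\dots,y_{k-1})\cat^* r'$ with $r'$ a direct refinement, set $r_i:=q_0\cat(y_0,\dots,y_{i-1})$ for $0\le i\le k$. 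If $k=0$, then $r\le^* q_0$ and $q:=r\in E=E^{(0)}$ already witnesses the conclusion, so assume $k\ge1$. The main claim I will prove is that $r_i\in E^{(k-i)}$ for every $0\le i\le k$; applied at $i=0$ this gives $q_0\in E^{(k)}$, and setting $q:=q_0$ finishes the proof.

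For the base case $i=k$, I need $r_k\in E$. The initial segment $\mathrm{lp}(r_k)\cat f^r_{n+k}$ of $r$, with $f^r_{n+k}\le F^3_{n+k}(y_{k-1},-)\le F^0_{n+k}(y_{k-1},-)$, falls into the positive case of Lemma \ref{Prikrystep1} at level $n+k-1$ with lower part $\mathrm{lp}(r_{k-1})$, Prikry point $y_{k-1}$, and $\Q$-condition given by position $n+k-1$ of $r_k$; Lemma \ref{Prikrystep2} then upgrades this to the canonical condition in $E^{(0)}$ with $F^0$-constraint at $n+k$ and canonical $(A^1_m,F^1_m)$-tail, which refines (by openness) to a condition below $r_k$, hence $r_k\in E$. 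For the inductive step $0\le i<k$, from $r_{i+1}\in E^{(k-i-1)}$ one runs the same Lemma \ref{Prikrystep1}--\ref{Prikrystep2} upgrade at level $n+i$ and then Lemma \ref{Prikrystep3} to produce a condition in $E^{(k-i-1)}$ whose initial segment is in the $F^2$-canonical form demanded by Lemma \ref{Prikrystep4} (when $i\ge 1$) or by Lemma \ref{Prikrystep5} (when $i=0$, with $f'_n$ playing the role of $F^2_{n}(\cdot,y)$). That lemma then yields $y$-uniformity: for every admissible $y$ at level $n+i$ (in $A^3_{n+i}$, or $B_n$ when $i=0$) there is a condition in $E^{(k-i-1)}$ with the corresponding canonical initial segment. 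Refining each such condition downward to the $F^3,A^3$-tail produces $r_i\cat y\in E^{(k-i-1)}$ for every admissible $y$, which is precisely $r_i\in E^{(k-i)}$.

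The main obstacle is bridging the refinement hierarchy at each step. The existential information supplied by the density witness $r$ sits at the most-refined $F^3$-level, whereas the dichotomies of Lemmas \ref{Prikrystep4} and \ref{Prikrystep5} that deliver uniformity across Prikry points require canonical initial segments at the intermediate $F^2$-level. Each inductive step therefore needs a short ``ladder'': climb up via Lemmas \ref{Prikrystep1}--\ref{Prikrystep2} to an $F^0$-canonical condition, descend via Lemma \ref{Prikrystep3} to the $F^2$-canonical form where the uniformity lemma applies, and then use openness of $E$ to refine back down to the $F^3,A^3$-tail that actually matches $r_i\cat y$. A secondary point is that for $i=0$ one must use Lemma \ref{Prikrystep5} rather than Lemma \ref{Prikrystep4}, since the function at position $n$ in $q_0$ is the general $f'_n$ rather than one of the $F^m(\cdot,-)$'s; this is precisely what forces the hypothesis $\lh(p)>17$, because at length $17$ the function at position $17$ takes values in $\Q^*(x)$ and no analogue of Lemma \ref{Prikrystep5} has been set up.
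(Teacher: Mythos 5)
Your overall strategy — canonize with Lemmas \ref{Prikrystep1}--\ref{Prikrystep5}, invoke density once, and run a backwards induction along the path of the density witness — is the same as the paper's. But there is a genuine gap in how you handle the lower part, and it breaks the argument starting at the base case.

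You set $q := q_0$, where $q_0$ is the direct extension of $p$ obtained \emph{before} consulting the density witness $r$; in particular $q_0$ has the same lower-part $\Q$-conditions $q_{17},\dots,q_{n-1}$ as $p$. You then claim $r_k = q_0{}^\frown(y_0,\dots,y_{k-1}) \in E$ by feeding ``the initial segment $\mathrm{lp}(r_k){}^\frown f^r_{n+k}$ of $r$'' into Lemma \ref{Prikrystep1}. But $\mathrm{lp}(r_k)$ is not an initial segment of $r$: the direct refinement $r'$ in $r = q_0{}^\frown\vec y{}^\frown{}^*r'$ is allowed to strengthen every $\Q$-coordinate of the lower part, so $\mathrm{lp}(r)$ is in general strictly below $\mathrm{lp}(r_k)$. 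The dichotomies of Lemmas \ref{Prikrystep1}--\ref{Prikrystep5} are stated per lower part $s$ and per $\Q$-condition $q$, so what $r\in E$ actually triggers is the positive case for $\mathrm{lp}(r)$'s (refined) coordinates, and the canonical condition you obtain in $E$ carries those refined coordinates. Since $E$ is dense \emph{open}, i.e.\ closed downward, membership of that refinement does not propagate up to $r_k$. The failure is not cosmetic: if $E$ is, say, the dense open set of conditions whose coordinate $q_{17}$ decides some statement, then no $k$-step extension of $q_0$ with undecided $q_{17}$ ever enters $E$, so $q_0\notin E^{(k)}$ for any $k$, whereas the lemma's conclusion is still attainable by a direct extension that strengthens $q_{17}$.

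The repair is exactly what the paper does: let $p''\le p'$ be the density witness and carry its refined lower-part conditions $q''_{17},\dots,q''_{n-1}$ (and $q''_n,\dots,q''_{m-1}$ at the intermediate levels of the induction) through every step, so that each condition shown to lie in $E^{(i)}$ has the lower part of $p''$, and the final direct extension is $q = \langle q''_{17},x_{17},\dots,x_{n-1},f'_n,A'_n,F'_{n+1},\dots\rangle$ rather than $q_0$. With that change your ladder of appeals to Lemmas \ref{Prikrystep1}--\ref{Prikrystep5} (including the correct observation that Lemma \ref{Prikrystep5} rather than Lemma \ref{Prikrystep4} must be used at level $n$, which is why $\lh(p)>17$ is required) matches the paper's proof.
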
 

\begin{proof}
  Let the condition $p$ be
$\langle q_{17}, x_{17} \ldots x_{n-1}, f_n, A_n, F_{n+1}, \ldots \rangle$ where $n > 17$, 
  and as usual $\dom(f^n) = A_n$ and $\dom(F_m) = A_{m-1} \times^\prec A_m$ for $n < m < \omega$.

  Appealing to Lemma \ref{Prikrystep5}, we refine $f_n$ to $f_n' \le f_n$ with $\dom(f_n') = A'_n \subseteq A_n$
  such that for every $k$ and every $t \in L_n$ with last entry $x_{n-1}$, one of the following holds:
  \begin{itemize}
  \item For every $y \in A_n'$ there is a condition in $E^{(k)}$ with initial segment
    \[t^\frown {f'_n(y)}^\frown y^\frown F^3_{n+1}(y, -).
    \]
  \item For every $y \in A_n'$ there is no condition in $E^{(k)}$ with an initial segment of the form
    \[
    t^\frown r^\frown y^\frown F^3_{n+1}(y, -)$ where $r \le f'_n(y).
    \]
  \end{itemize}  
  We then form a direct extension $p'$ of $p$, where $p'$ has the form  
  \[
  \langle q_{17}, x_{17} \ldots x_{n-1}, f'_n, A'_n, F'_{n+1}, \ldots \rangle
  \]
  with $F'_m \le F_m, F^3_m$ for all $m > n$.
Since $E$ is a dense open set, there is a condition $p'' \le p'$ such that $p'' \in E$.
Let $p''$ be a $k$-step extension of $p'$. If $k = 0$ we are done  setting $q = p''$, so assume that $k > 0$.

The condition $p''$ has the form
\[
\langle q''_{17}, x_{17} \ldots x_{n-1}, q''_n, x_n \ldots q''_{m-1}, x_{m-1}, f''_m, A''_m, F''_{m+1}, \ldots \rangle
\]
 where  $m = n + k > n$. We note that:
 \begin{itemize}
\item $q''_n \le f'_n(x_n)$.   
\item $x_j \in A^3_j$ for $n \le j < m$.  
\item $f''_m \le F^3_m(x_{m-1}, -)$.
\item $F''_j \le F^3_j$ for $j > m$.
\end{itemize}

\begin{claim}
  If $p^{**}$ is
  the condition 
  \[
  \langle q''_{17}, x_{17} \ldots x_{n-1}, q''_n, x_n, F'_{n+1}(x_n, -), A'_{n+1},  F'_{n+2},   F'_{n+3}, \ldots \rangle
  \]
  then $p^{**} \in E^{(k - 1)}$.
\end{claim}

\begin{proof}
  We will show by induction on $i$ that for $0 \le i \le k-1$,
  if $p^*$ is the condition
\[
\langle q''_{17}, x_{17}  \ldots q''_{m-i-1}, x_{m-i-1}, F'_{m-i}(x_{m-i-1}, -), A'_{m-i}, F'_{m-i+1}, \ldots \rangle
\]
then $p^* \in E^{(i)}$.

\noindent (Base case) $i = 0$: Since $f''_m \le F'_m(x_{m-1}) \le F^0_m(x_{m-1})$ and $x_{m-1} \in A^0_{m-1}$,
  it follows from Lemma \ref{Prikrystep1} that
  there is a condition in $E$ with initial segment
\[
\langle q''_{17}, x_{17}  \ldots q''_{m-1}, x_{m-1}, F^0_m(x_{m-1}, -) \rangle
\]
  Since $E$ is open, it follows from the choice of the functions $F'_j$ and Lemma \ref{Prikrystep2} that
  \[
  p^* = \langle q''_{17}, x_{17}  \ldots q''_{m-1}, x_{m-1}, F'_m(x_{m-1}), A'_{m+1}, F'_{m+1}, \ldots \rangle \in E=E^{(0)}.
  \]

  \noindent (Successor step) $i = i_0 + 1$ for $0 \le i_0 < k-1$.
  By the induction hypothesis if $p^{*-}$ is the condition
\[
\langle q''_{17}, x_{17}  \ldots q''_{m-i}, x_{m-i}, F'_{m-i+1}(x_{m-i}, -), A'_{m-i+1}, F'_{m-i+2}, \ldots \rangle
\]
  then $p^{*-} \in E^{(i-1)}$.

  Since $q''_{m-i} \le F'_{m-i}(x_{m-i-1}, x_{m-i}) \le F^2_{m-i}(x_{m-i-1}, x_{m-i})$,
  $x_{m-i-1} \in A^2_{m-i-1}$, and $x_{m-i} \in A^2_{m-i}$,
  it follows from Lemma \ref{Prikrystep3}
  that  there is a condition in $E^{(i-1)}$ with initial segment
  \[
  \langle q''_{17}, x_{17} \ldots x_{m-i-1}, F^2_{m-i}(x_{m-i-1}, x_{m-i}), x_{m-i}, F^1_{m-i+1}(x_{m-i},-) \rangle.
  \]
  Since $E^{(i-1)}$ is open 
  there is a condition in $E^{(i-1)}$ with initial segment
\[
  \langle q''_{17}, x_{17} \ldots x_{m-i-1}, F^2_{m-i}(x_{m-i-1}, x_{m-i}), x_{m-i}, F^2_{m-i+1}(x_{m-i},-) \rangle.
\]

  Since $x_{m-i} \in A^3_{m-i}$, it follows from Lemma \ref{Prikrystep4}
  that for every $y \in A^3_{m-i}$ with $x_{m-i-1} \prec y$, there is a condition in $E^{(i-1)}$ with initial segment
\[
\langle q''_{17}, x_{17} \ldots x_{m-i-1}, F^3_{m-i}(x_{m-i-1}, y), y, F^3_{m-i+1}(y) \rangle.
\]
   It follows from the choice of the functions $F'_j$ that for every such $y$ 
\begin{align*}
  & \langle q''_{17}, x_{17} \ldots x_{m-i-1}, F'_{m-i}(x_{m-i-1}, y), y, F'_{m-i+1}(y), A'_{m-i+1}, F'_{m-i+2}, \ldots \rangle \\
  & \in E^{(i-1)}. 
\end{align*}
 So every $1$-step extension of $p^*$ lies in $E^{(i-1)}$, so by definition $p^* \in E^{(i)}$.
\end{proof}

Since $q''_n \le f_n'(x_n)$, it follows from the choice of $f_n'$ that for every
$y \in A'_n$, there is a condition in $E^{(k-1)}$ with initial segment
\[
\langle q''_{17}, x_{17} \ldots x_{n-1}, f'_n(y), y, F^3_{n+1}(y, -) \rangle.
\]
Let $q$ be the condition
\[
\langle q''_{17}, x_{17} \ldots x_{n-1}, f'_n, A'_n, F'_{n+1}, \ldots \rangle
\]
By the choice of the functions $F'_j$ and Lemma \ref{Prikrystep5}, it follows that
${q}^\frown y \in E^{(k-1)}$ for all $y \in A_n'$, that is to
say $q \in E^{(k)}$.
\end{proof}

\begin{lemma}[Prikry Lemma] \label{prikry}
  Let $b$ be a Boolean value for $\bar\P$ and let $p \in \bar\P$ be a condition of length greater than
  $18$. Then there is $s \le^* p$ such that $s$ decides $b$.
\end{lemma}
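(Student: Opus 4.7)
The plan is to derive the Prikry Lemma from the Strong Prikry Lemma (Lemma~\ref{strongPrikry}) together with a Rowbottom-style homogeneity argument (Lemma~\ref{rowbottom}). Let $E$ be the set of conditions in $\bar\P$ that decide $b$. Since extensions of deciding conditions decide (and agree), $E$ is open, and $E$ is dense by the usual forcing fact that any condition has a decision-making extension. Applying Lemma~\ref{strongPrikry} to $E$ and $p$ (using $\lh(p) > 18 > 17$), I would obtain a direct extension $q \le^* p$ and some $k < \omega$ such that $q \in E^{(k)}$; that is, every $k$-step extension of $q$ decides $b$. If $k = 0$ then $s := q$ works, so assume $k \ge 1$.

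Write $n = \lh(q)$ and $q = \langle q_{17}, x_{17}, \ldots, x_{n-1}, f_n, A_n, F_{n+1}, A_{n+1}, \ldots \rangle$. For any $\prec$-increasing sequence $\vec x = (x_n, \ldots, x_{n+k-1})$ with $x_j \in A_j$ for $n \le j < n+k$, the minimal extension $q \cat \vec x$ (Definition~\ref{minimalextension}) lies in $E$ and therefore either forces $b$ or forces $\neg b$. This is a $2$-coloring of the $\prec$-increasing sequences in $\prod_{n \le j < n+k} A_j$, so by Lemma~\ref{rowbottom} I can find $B_j \in U_j$ with $B_j \subseteq A_j$ (for $n \le j < n+k$) on which the coloring is constant. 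Shrinking $A_j$ to $B_j$ and restricting $f_n, F_{n+1}, \ldots, F_{n+k-1}$ to the appropriate $\prec$-products of the $B_j$'s produces a direct extension $s \le^* q$ with the property that every $k$-step extension $s \cat \vec x$ decides $b$ in the same way; without loss of generality each such extension forces $b$.

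It remains to show $s \Vdash b$, which reduces to proving that $\{r \le s : r \Vdash b\}$ is dense below $s$. Given an arbitrary $s' \le s$, write $s' \le^* s \cat \vec y$ for some (possibly empty) $\prec$-increasing $\vec y$. If $\lh(s') \ge n + k$, then $s \cat (y_n, \ldots, y_{n+k-1})$ is a $k$-step minimal extension of $s$, hence forces $b$, and $s' \le s \cat (y_n, \ldots, y_{n+k-1})$ therefore forces $b$. If $\lh(s') < n + k$, then I can extend $s'$ by appending suitable coordinates drawn from the remaining measure-one sets (which is always possible by the definition of the order) to obtain $s'' \le s'$ of length at least $n + k$, and apply the previous case to $s''$. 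So every $s' \le s$ has an extension forcing $b$, proving $s \Vdash b$ and completing the argument.

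The main obstacle is conceptual rather than computational: one must verify that the ``decision'' made by $q \cat \vec x$ really depends only on $\vec x$ (so that the Rowbottom-style argument applies), which is exactly the content of the minimal extension construction---the functions $f_n, F_{n+1}, \ldots$ and the lower part of $q$ are fixed once $q$ is fixed, so the condition $q \cat \vec x$ is determined by $\vec x$ and the existing data of $q$. Once this is recognised, the combination of Lemmas~\ref{strongPrikry} and~\ref{rowbottom} gives the Prikry property in the standard way.
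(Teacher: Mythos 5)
Your proof is correct and follows essentially the same route as the paper's: apply the Strong Prikry Lemma to the dense open set of conditions deciding $b$, homogenize the resulting two-colouring of $k$-step minimal extensions via Lemma~\ref{rowbottom}, and conclude that the homogenized direct extension decides $b$ because every extension of it is compatible with some $k$-step extension. Your closing density argument is just a slightly more explicit version of the paper's one-line observation to the same effect.
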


\begin{proof}
  Let $E$ be the dense open set of conditions which decide $b$ and 
  let $q \leq^* p$ and $k$ be as in the conclusion of Lemma \ref{strongPrikry}.
  For each appropriate $k$-tuple $\vec x$, define $F(\vec x) = 0$ if $q\cat\vec x \forces \neg b$ and
  $F(\vec x) = 1$ if $q\cat\vec x \forces b$. By Lemma \ref{rowbottom}, we may find $r \le^* q$ such that
  all $k$-step extensions of $r$ decide $b$ the same way: since every extension of $r$ is compatible with some
  $k$-step extension, $r$ decides $b$.  
\end{proof}

Recall from the end of Section \ref{defining} that
given a condition 
\[
p = \langle q_{17},x_{17}, \dots q_{n-1}, x_{n-1}, f_n, A_n, F_{n+1}, A_{n+1} ,F_{n+2}, \ldots \rangle
\]
and $m < n -1$,
we factored $\bar \P \downarrow p$
as $\P_{\rm low} \downarrow p_0 \times \P_{\rm high} \downarrow p_1$, where
$\P_{\rm high}$ is defined in a very similar way to $\bar \P$ with the associated Prikry sequence
starting at $x_{m+1}$. 
The proofs of
Lemmas \ref{strongPrikry} and \ref{prikry} can easily be adapted
to prove the parallel assertions for $\P_{\rm high}$.

\begin{lemma} \label{Phighdist}
  Let $p$, $m$, $\P_{\rm low}$, $\P_{\rm high}$, $p_0$ and $p_1$ be as above. 
  Let $\tau = \kappa(x'_m)$ and $\lambda = \Lambda^b_{\omega + 2}(\tau)$. Then:
\begin{enumerate}
\item Forcing with $\P_{\rm high} \downarrow p_1$  adds no new bounded subsets of $\lambda$.
\item Forcing with $\bar \P \downarrow p$, all bounded subsets of $\lambda$ are in the intermediate extension by
  $\P_{\rm low}$. 
\end{enumerate}  
\end{lemma}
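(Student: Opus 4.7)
The plan is to derive both parts from a Prikry-style distributivity argument tailored to $\P_{\rm high}$.

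For part (1), I will mimic the proof of the Prikry Lemma (Lemma \ref{prikry}), replacing $\bar\P$ by $\P_{\rm high}$ throughout. Since $\P_{\rm high}$ is structurally identical to $\bar\P$ with indices shifted to start from $m+1$, and since the measures $U_i$ for $i > m$, the guiding generic $K$, and the Rowbottom property (Lemma \ref{rowbottom}) all remain in force, the preparatory Lemmas \ref{Prikrystep1}--\ref{strongPrikry} carry over verbatim, yielding both the Strong Prikry Lemma and the single-Boolean-value Prikry Lemma for $\P_{\rm high}$. Given a $\P_{\rm high}$-name $\dot f$ for a $\mu$-sequence of ordinals with $\mu < \lambda$ and a condition $p_1$, I then construct by transfinite recursion on $\alpha \le \mu$ a $\le^*$-descending sequence of direct extensions $p_1^\alpha \le^* p_1$ with $p_1^{\alpha+1}$ deciding $\dot f(\alpha)$; successor steps use the adapted Prikry Lemma, and the limit step requires closure of $\le^*$ on conditions with a common stem under sequences of length $<\mu$.

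The main obstacle is establishing this closure at limit stages. Intersecting measure-one sets $A_i^\beta \in U_i$ is routine from $\kappa$-completeness of $U_i$ (since $\mu < \lambda < \kappa$), but combining the stem function $f_n^\beta$ and the constraint functions $F_i^\beta$ demands pointwise lower bounds in the interleaving forcings $\Q(x,y)$, which are only $<\Lambda^b_{\omega+2}(\kappa(x))$-distributive rather than closed. I resolve this by invoking Lemma \ref{fordistrib1variant}: the generic extension by each $\Q(x,y)$ (with $\kappa(x) > \tau$) is absorbed into the product of a small Cohen factor and a $<\Lambda^b_{\omega+3}(\kappa(x))$-closed $V$-definable forcing $\FL'$; the closed factor supplies the needed pointwise lower bounds on a suitably shrunk measure-one set, while the Cohen factor is neutralised by an Easton-type argument using $\mu < \lambda$. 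For the constraint $[F_i]_{U_{i-1}\times U_i} \in K$, I exploit that $K$ is generic for the $<\lambda^b_{\omega+2}$-distributive poset $\Q_\infty$ over $M_1^*$, together with $\mu < \lambda \ll \lambda^b_{\omega+2}$: the sequence $([F_i^\beta])_{\beta<\mu}$ already lies in $M_1^*$, and a density argument inside $K$ produces a common refinement whose equivalence class is in $K$. With closure in hand, $p_1^\mu$ decides $\dot f$ completely, so $f \in V[L][\Agg]$, proving (1).

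Part (2) follows by observing that the factorisation $\bar\P \downarrow p \cong \P_{\rm low} \downarrow p_0 \times \P_{\rm high} \downarrow p_1$ yields $G = G_{\rm low} \times G_{\rm high}$. Since $|\P_{\rm low}| \le \Lambda^a_{\omega+2}(\tau) < \lambda < \kappa$, the forcing $\P_{\rm low}$ is too small to disturb any of the structural ingredients of the Prikry analysis of $\P_{\rm high}$: each $U_i$ remains $\kappa$-complete in $V[L][\Agg][G_{\rm low}]$, and $K$ remains $\Q_\infty$-generic over $M_1^*$ (noting that $\P_{\rm low} \in V[L][\Agg]$ is disjoint from the forcing that produced $K$). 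The argument of part (1) therefore goes through verbatim in the larger base model $V[L][\Agg][G_{\rm low}]$, giving that $\P_{\rm high}$ is $<\lambda$-distributive over it, which is precisely the statement of (2).
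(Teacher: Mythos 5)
There is a genuine gap at the limit stages of your recursion, and it is exactly the point the whole design of this lemma is built to avoid. A direct extension in $\P_{\rm high}$ may strengthen the entries $q_i'$ (conditions in the interleaved posets $\Q(x_{i-1},x_i)$) and the class $[f_{n'}]$ (an element of $\Q(\tau_{n'-1},\kappa)$). So a $\le^*$-descending sequence of length $\mu$ produced by repeated applications of the Prikry Lemma yields descending $\mu$-sequences inside posets that are only $<\Lambda^b_{\omega+2}(\cdot)$-\emph{distributive}, not closed, and distributivity gives no lower bounds for descending sequences. Your proposed repair via Lemma \ref{fordistrib1variant} does not help: that lemma embeds the \emph{generic extension} by $\Q(\tau,\tau^*)$ into an extension by a Cohen poset times a closed poset, which is how the paper proves distributivity (Lemma \ref{qdistrib}), but an absorption of models does not transfer closure back to the original partial order --- a descending sequence of conditions of $\Q(\tau,\tau^*)$ need not have a lower bound in $\Q(\tau,\tau^*)$ merely because the extension sits inside a nicer one. (The closure you do correctly have --- lower bounds in $K$ for the constraint classes $[F_j]$, using genericity over the closed $M_1^*$, and intersections of measure-one sets by $\kappa$-completeness --- does not touch the $q_i$'s and $[f_{n'}]$, which are the real obstruction. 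Note also that the paper only guarantees same-stem lower bounds, as in Lemma \ref{omegasamestemub}, and your recursion cannot keep the stem fixed since the Prikry Lemma changes it.)

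The paper's proof avoids any transfinite chain of direct extensions. For each $\alpha<\gamma$ it defines a dense open set $D_\alpha$ in the \emph{product} $\Q(\tau,\tau_{m+1})\times\prod_j\Q(\tau_{j-1},\tau_j)\times\Q(\tau_{n'-1},\kappa)$, consisting of those tuples $(q''_{m+1},\dots,q''_{n'})$ realised by some direct extension deciding $\alpha\in\dot X$; density of $D_\alpha$ is the adapted Prikry Lemma, and Lemma \ref{qdistribj} (distributivity of this product, which is exactly the form of distributivity available) gives a single tuple in $\bigcap_{\alpha<\gamma}D_\alpha$. Only then does it amalgamate the $\gamma$-many witnessing upper parts in one flat step: the $f^\alpha_{n'}$ all represent the same class so agree on a $U_{n'}$-large set, the $[F^\alpha_j]$ are refined inside $K$ using closure of $M_1^*$, and domains are shrunk by $\kappa$-completeness. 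If you want to salvage your write-up, you must restructure it along these lines rather than patch the limit stages. Your treatment of part (2) is fine in spirit (and matches the paper's, which just codes $\P_{\rm high}$-names over $V[L][\Agg][G_{\rm low}]$ by small objects using $|\P_{\rm low}|<\lambda$), but it inherits the gap from part (1).
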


\begin{proof} To show the first claim, let $\gamma < \lambda$ and let $\dot X$ name a subset of $\gamma$.
  Let $p'$ be a condition in  $\P_{\rm high} \downarrow p_1$, and let
   \[
   p' = \langle q'_{m+1}, x'_{m+1}, \dots q'_{n'-1}, x'_{n'-1}, f'_{n'}, A'_{n'}, F'_{n'+1}, A'_{n'+1} ,F'_{n'+2}, \ldots \rangle,
   \] 
   where $x'_{j} = x_j$ for $m < j < n$. 

  Let $\tau_j = \kappa(x'_j)$ for $m < j < n'$.
  For each $\alpha < \gamma$ we will define a subset $D_\alpha$ of
  $\Q(\tau, \tau_{m+1}) \times \prod_{m + 1 < j < n'} \Q(\tau_{j-1}, \tau_j) \times \Q(\tau_{n'-1}, \kappa)$
  as follows:  $D_\alpha$ is the set of tuples $(q''_{m+1}, \ldots, q''_{n'-1}, q''_{n'})$
  such that there is a direct extension $p'' \le^* p'$ deciding $\alpha \in \dot X$ where
   \[
   p'' = \langle q''_{m+1}, x'_{m+1}, \dots q''_{n'-1}, x'_{n'-1}, f''_{n'}, A''_{n'}, F''_{n'+1}, A''_{n'+1} ,F''_{n'+2}, \ldots \rangle,
   \] 
   and $q''_{n'} = [f''_{n'}]$.

   Clearly $D_\alpha$ is open. 
   It follows by Lemma \ref{prikry} for $\P_{\rm high}$ that $D_\alpha$ is dense
   below $(q'_{m+1}, \ldots q'_{n'-1}, [f'_{n'}])$ for each $\alpha$.
   By Lemma \ref{qdistribj} $\bigcap_{\alpha < \gamma} D_\alpha$ is dense 
   below $(q'_{m+1}, \ldots q'_{n'-1}, [f'_{n'}])$, so we 
   find $(q''_{m+1}, \ldots q''_{n'-1}, q''_{n'}) \le (q'_{m+1}, \ldots q'_{n'-1}, [f'_{n'}])$ 
   with $(q''_{m+1}, \ldots q''_{n'-1}, q''_{n'}) \in \bigcap_{\alpha < \gamma} D_\alpha$.

   For each $\alpha < \gamma$, choose a condition
   \[
   p^\alpha = \langle q''_{m+1}, x'_{m+1}, \dots q''_{n'-1}, x'_{n'-1}, f^\alpha_{n'}, A^\alpha_{n'}, F^\alpha_{n'+1}, A^\alpha_{n'+1} ,F^\alpha_{n'+2}, \ldots \rangle
   \] 
   witnessing that $(q''_{m+1}, \ldots q''_{n'-1}, [f''_{n'}]) \in D_\alpha$. Since $[f^\alpha_{n'}] = q''_{n'}$ for each
   $\alpha$, by $\kappa$-completeness there is a large set where all the functions $f^\alpha_{n'}$ agree,
   so refining their domains we may as well assume that there is a fixed function $f^*_{n'}$ with
   $f^\alpha_{n'} = f^*_{n'}$ for all $\alpha < \gamma$.

   For each $j > n'$ and $\alpha < \gamma$,  $[F^\alpha_j]_{U_{j-1} \times U_j} \in K$. Since $K$ is generic over a highly closed inner model,
   we may find $F^*_j$ for $j > n$ such that $[F^*_j]_{U_{j-1} \times U_j} \in K$ and $[F^*_j] \le [F^\alpha_j]$ for all $\alpha$.
   Since $\gamma < \kappa$, by $\kappa$-completeness we may refine the domains of the functions $F^*_j$ and
   assume that $F^*_j \le F^\alpha_j$ for all $\alpha$.
   In summary we have constructed a condition
   \[
  p^* = \langle q''_{m+1}, x'_{m+1}, \dots q''_{n'-1}, x'_{n'-1}, f^*_{n'}, A^*_{n'}, F^*_{n'+1}, A^*_{n'+1} ,F^*_{n'+2}, \ldots \rangle
  \]
  which refines $p'$ and decides $\alpha \in \dot X$ for all $\alpha < \gamma$.

  For the second claim we observe that $\vert \P_{\rm low} \vert < \lambda$, so that all $\P_{\rm low}$-names for bounded subsets
  of $\lambda$ are coded by bounded subsets of $\lambda$, and we are done by the factorisation of $\bar \P \downarrow p$ and
  the first claim. 
\end{proof}

   The following corollary is immediate. 

\begin{corollary} \label{bounded_sets} 
   Let $G$ be $\bar \P$-generic and let $\langle \tau_j : 17 \le j < \omega \rangle$ be the
   Prikry sequence added by $G$. Let $\gamma < \kappa$, and let $m > 17$ be least such that
   $\gamma < \Lambda^b_{\omega +2}(\tau_m)$, and let $X \in P(\gamma)^{V[L][\Agg][G]}$.
   Then $X \in V[L][G_0]$ where $G_0$ is the 
 $\Q^*(\tau_{17}) \times \prod_{17 < j \le m} \Q(x_{j-1}, x_j)$-generic filter induced by $G$.
\end{corollary}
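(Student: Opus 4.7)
The plan is straightforward: reduce the statement to Lemma \ref{Phighdist}(2) and then eliminate $\Agg$ using the distributivity of $\Aggforcing$ over $V[L]$.

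First I would pick, by genericity, a condition $p \in G$ of length strictly greater than $m$; such conditions are dense in $\bar\P$. Factoring $\bar\P \downarrow p$ according to the index $m$ as in the discussion at the end of Section \ref{defining}, I would write $\bar\P \downarrow p \simeq \P_{\mathrm{low}} \downarrow p_0 \times \P_{\mathrm{high}} \downarrow p_1$ where
\[
\P_{\mathrm{low}} = \Q^*(\tau_{17}) \times \prod_{17 < j \le m} \Q(x_{j-1}, x_j).
\]
Set $\lambda = \Lambda^b_{\omega+2}(\tau_m)$; by the minimality of $m$ we have $\gamma < \lambda$. Applying Lemma \ref{Phighdist}(2) in the universe $V[L][\Agg]$ where $\bar\P$ is defined, all bounded subsets of $\lambda$ in $V[L][\Agg][G]$ lie in the intermediate extension by $\P_{\mathrm{low}}$, namely $V[L][\Agg][G_0]$. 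In particular $X \in V[L][\Agg][G_0]$.

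To finish I would remove the dependence on $\Agg$. Fix a nice $\P_{\mathrm{low}}$-name $\dot X \in V[L][\Agg]$ for $X$ of the form $\{(\check\alpha, p) : \alpha < \gamma,\ p \in B_\alpha\}$ with $B_\alpha \subseteq \P_{\mathrm{low}}$. Since $|\P_{\mathrm{low}}| < \lambda < \lambda^b_{\omega+2}$, each $B_\alpha$ can be coded by a bounded subset of $\lambda^b_{\omega+2}$, and the whole sequence $\langle B_\alpha : \alpha < \gamma \rangle$ is a $<\lambda^b_{\omega+2}$-sequence of ordinals lying in $V[L][\Agg]$. Because $\Aggforcing$ is $<\lambda^b_{\omega+2}$-distributive in $V[L]$ (as noted at the start of Section \ref{prikryforcing}), this sequence already lies in $V[L]$, so $\dot X \in V[L]$ and $X = \dot X^{G_0} \in V[L][G_0]$.

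There is no serious obstacle here; the only place to be careful is the last paragraph, where one must confirm that the $\P_{\mathrm{low}}$-name for $X$ extracted from $V[L][\Agg]$ can be pulled back into $V[L]$. This is exactly what the $<\lambda^b_{\omega+2}$-distributivity of $\Aggforcing$ over $V[L]$ buys us, and is the reason the author describes the corollary as ``immediate.''
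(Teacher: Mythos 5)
Your argument is correct and is exactly the unpacking of what the paper leaves as ``immediate'': apply Lemma \ref{Phighdist}(2) below a long enough condition of $G$, then pull the resulting $\P_{\mathrm{low}}$-name back from $V[L][\Agg]$ into $V[L]$ using the $<\lambda^b_{\omega+2}$-distributivity of $\Aggforcing$ noted at the start of Section \ref{prikryforcing}. One cosmetic slip: the factorisation at the end of Section \ref{defining} requires $m < \lh(p)-1$, so you should take $p \in G$ of length at least $m+2$ rather than merely greater than $m$ (such conditions are of course still dense).
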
 

For the purposes of the analysis
in Section \ref{treeprop}, we record some more refined information about how much of the various generic objects
we need to define some bounded subsets of $\kappa$. We remind the reader that $\Q(\tau, \tau^*)$
is $<\Lambda^b_{\omega+2}(\tau)$-distributive, and that only the $\A^b_0$ component of
$\I^b(\tau)$ adds any subsets of $\Lambda^a_\omega(\tau)$.
The proof of the following easy Lemma uses these facts and Corollary \ref{bounded_sets}. 

\begin{lemma} \label{longboringlemma} In the generic extension by $\bar \P$:
  \begin{itemize}
  \item   If $\tau$ and $\tau^*$ are successive Prikry points, the cardinals between $\tau$ and $\tau^*$
    are $\Lambda^a_j(\tau)$ for $j < \omega+3$ and $\Lambda^b_k(\tau)$ for $k < \omega+4$.
    If $E$ is the generic object added by the interleaved forcing between Prikry points up to
    $\tau$, and $Q(\tau, \tau^*)$ is the generic object added by the forcing between $\tau$ 
    and $\tau^*$, then:
    \begin{itemize}
    \item All bounded subsets of $\Lambda^b_\omega(\tau)$ lie in $\Vlbi(\tau)[E][A_e(\tau) \times J^c_0(\tau)]$.
    \item All subsets of $\Lambda^b_{\omega+1}(\tau)$ lie in
    $\Vlbi(\tau)[E][J^c_0(\tau)][A_e(\tau) \times A^c_1(\tau)]$.
    \item All bounded subsets of $\Lambda^a_\omega(\tau^*)$ lie in
      $V[L \restriction \tau^*][E][Q(\tau, \tau^*)][A^b_0(\tau^*)]$.
    \item All subsets of $\Lambda^a_{\omega+1}(\tau^*)$ lie in
      \[
      V[L \restriction \tau^*][E][Q(\tau, \tau^*)][L^b(\tau^*) \restriction \lambda^a_{\omega+2}(\tau^*)][A^b_{[0,1]}(\tau^*) * U^b_0(\tau^*) * S^b_0(\tau^*)].
      \]
    \end{itemize}
  \item   If $\tau$ is the first Prikry point then the infinite cardinals below $\tau$ are
    $\omega$, $\rho^+ = \omega_1$, $\theta= \omega_2$. If $Q^*(\tau)$ is the generic object added by the first interleaved forcing then:
    \begin{itemize}
    \item All  bounded subsets of $\Lambda^a_\omega(\tau)$ lie in  $V[L \restriction \tau][Q^*(\tau)][A^b_0(\tau)]$. 
    \item All subsets of $\Lambda^a_{\omega+1}(\tau)$ lie in
      \[
      V[L \restriction \tau][Q^*(\tau)][L^b(\tau) \restriction \lambda^a_{\omega+2}(\tau)][A^b_{[0,1]}(\tau) * U^b_0(\tau) * S^b_0(\tau)].
      \]
    \end{itemize}
  \end{itemize}
\end{lemma}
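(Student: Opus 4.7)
The cardinal structure claims are immediate from the analyses of Section~\ref{moreprep} and the paragraph following Lemma~\ref{q2info}: in $V[L \restriction \tau^{*}+1]$ the cardinals $\Lambda^a_{\omega+1}(\tau), \ldots, \Lambda^b_{\omega+3}(\tau)$ already form a block of $\omega+4$ consecutive cardinals and similarly at $\tau^{*}$, and forcing with $\Q(\tau,\tau^{*})$ simply collapses $[\Lambda^b_{\omega+3}(\tau), \Lambda^a_0(\tau^{*}))$ to have size $\Lambda^b_{\omega+2}(\tau)$ while preserving every other cardinal in the interval $(\tau, \tau^{*})$; no component of $\bar\P$ associated with later Prikry points, and no part of $L$ above $\tau^{*}$, can further change cardinals in this interval, since these are all forcings of small size or high closure relative to $\Lambda^b_{\omega+3}(\tau)$.

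For the location claims, the main tool is Corollary~\ref{bounded_sets}. Given a subset $X$ of $\gamma < \kappa$ in the final model $V[L][\Agg][G]$, the corollary places $X$ in $V[L][G_0]$, where $G_0$ is generic for $\Q^{*}(\tau_{17}) \times \prod_{17 < j \le m}\Q(\tau_{j-1},\tau_j)$ and $m$ is least with $\gamma < \Lambda^b_{\omega+2}(\tau_m)$. For $\gamma = \Lambda^b_\omega(\tau)$ or $\Lambda^b_{\omega+1}(\tau)$ with $\tau = \tau_n$ we take $m = n$, so $G_0 = E$ (including the last factor $Q(\tau_{n-1},\tau)$); for $\gamma = \Lambda^a_\omega(\tau^{*})$ or $\Lambda^a_{\omega+1}(\tau^{*})$ we take $m = n+1$, so $G_0 = E \times Q(\tau,\tau^{*})$.

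To replace $V[L][G_0]$ by the narrower intermediate model specified in each clause, we must verify that the excluded part of $L$ does not add any subset of $\gamma$. The tail $\FL \restriction (\tau, \kappa)$ is $<\Lambda^a_{17}(\tau^{**})$-directed closed for $\tau^{**}$ the next element of $Y$ past $\tau$ (Remark~\ref{Lclosureremark}), and $\Lambda^a_{17}(\tau^{**})$ sits above $\Lambda^b_{\omega+3}(\tau)$; this closure survives into the surrounding extension by a routine Easton's Lemma argument, and the analogous statement holds for $\FL \restriction (\tau^{*}, \kappa)$. Within $L(\tau)$ the components $\U^c_1(\tau)*\S^c_1(\tau)$ are $<\Lambda^b_{\omega+2}(\tau)$-directed closed in their defining model by the construction of Section~\ref{abcu}, and $\A^c_1(\tau) = \Add^{\Vlb(\tau)}(\Lambda^b_{\omega+1}(\tau), [\Lambda^b_{\omega+2}(\tau), \Lambda^b_{\omega+3}(\tau)))$ is $<\Lambda^b_{\omega+1}(\tau)$-closed in $\Vlb(\tau)$ and remains $<\Lambda^b_{\omega+1}(\tau)$-distributive in the larger model by a term-forcing argument in the style of Lemma~\ref{Wgoodextn}. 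At $\tau^{*}$ the forcings $\FL^b(\tau^{*})$, the non-$\A^b_0$ components of $\I^b(\tau^{*})$, $\A_e(\tau^{*}) \times \J^c(\tau^{*})$ and the tail above $\tau^{*}$ are all at least $<\Lambda^a_{\omega+1}(\tau^{*})$-distributive in their respective ground models, giving the first clause at $\tau^{*}$; for the second clause one retains the part of $L^b(\tau^{*})$ below $\Lambda^a_{\omega+2}(\tau^{*})$ and $\A^b_{[0,1]}(\tau^{*})*\U^b_0(\tau^{*})*\S^b_0(\tau^{*})$, since these are precisely the pieces capable of adding subsets of $\Lambda^a_{\omega+1}(\tau^{*})$, while everything excluded is $<\Lambda^a_{\omega+2}(\tau^{*})$-distributive.

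The first-Prikry-point claims are handled in exactly the same way with $\tau = \tau_{17}$, $E$ empty, and $\Q^{*}(\tau)$ in place of $\Q(\tau_{n-1},\tau)$; the cardinal list below $\tau$ is read off from $\Q^{*}_0(\tau)$, which continues the two-phase $\A * \U * \S$ construction of Section~\ref{firstprikry} with $\mu_0 = \omega$, $\mu_1 = \rho^+$, $\mu_2 = \theta$, collapsing everything between $\theta$ and $\Lambda^a_0(\tau)$ and leaving only $\omega$, $\rho^+ = \omega_1$, $\theta = \omega_2$ as cardinals below $\tau$. The main obstacle is purely bookkeeping: for each excluded forcing one must identify the ground model in which its closure is visible and then transfer that closure to the surrounding extension via a term-forcing/Easton's Lemma argument in the style of Lemmas~\ref{Wgoodextn} and~\ref{fordistrib1variant}. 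Since in every case the relevant closure is at a level strictly above $\gamma$, the reductions go through without incident, and this is why the lemma is only as long as a careful enumeration of cases.
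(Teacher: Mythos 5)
Your proposal follows exactly the route the paper intends: the paper gives no written proof beyond the remark that the lemma "uses these facts and Corollary \ref{bounded_sets}" (the facts being the $<\Lambda^b_{\omega+2}(\tau)$-distributivity of $\Q(\tau,\tau^*)$ and that only the $\A^b_0$ component of $\I^b$ adds subsets of $\Lambda^a_\omega$), and your write-up is a correct expansion of precisely that argument. One small slip in your cardinal-structure paragraph: $\Q(\tau,\tau^*)$ collapses the open interval $(\Lambda^b_{\omega+3}(\tau),\Lambda^a_0(\tau^*))$ to have size $\Lambda^b_{\omega+3}(\tau)$, not the half-open interval $[\Lambda^b_{\omega+3}(\tau),\Lambda^a_0(\tau^*))$ to size $\Lambda^b_{\omega+2}(\tau)$ — as written your claim would collapse $\Lambda^b_{\omega+3}(\tau)$ itself, contradicting the lemma's assertion that $\Lambda^b_k(\tau)$ remains a cardinal for all $k<\omega+4$ (compare the summary following Lemma \ref{q2info}).
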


\begin{lemma} \label{cardinals_in_Prikry_extension}
  $\kappa = (\aleph_{\omega^2})^{V[L][\Agg][\bar P]}$ and
  $(\lambda^b_\omega)^+ = (\kappa^+)^{V[L][\Agg][\bar P]}$.
\end{lemma}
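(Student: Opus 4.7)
The plan is to verify four claims: $(i)$ $\kappa$ is a cardinal in $V[L][\Agg][\bar P]$; $(ii)$ the $\aleph$-index of $\kappa$ in that model is $\omega^2$; $(iii)$ every $V[L]$-cardinal $\mu \in [\kappa, \lambda^b_\omega]$ has $|\mu|^{V[L][\Agg][\bar P]} = \kappa$; and $(iv)$ $\lambda^b_{\omega+1}$ is preserved as a cardinal.

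For $(i)$, I would apply Corollary \ref{bounded_sets}: every bounded subset of $\kappa$ in the final extension lies in $V[L][G_0]$ for some $G_0$ generic for a product of finitely many interleaved forcings each of size less than $\kappa$. Since $\kappa$ is inaccessible in $V[L]$, every ordinal below $\kappa$ retains cardinality less than $\kappa$ in the final model, so $\kappa$ is preserved, and the Prikry sequence witnesses $\cf(\kappa) = \omega$. For $(ii)$, I would read off the complete list of cardinals below $\kappa$ in the final model from Lemma \ref{longboringlemma}: below $\tau_{17}$ they are $\omega = \aleph_0, \rho^+ = \aleph_1, \theta = \aleph_2$, and between successive Prikry points $\tau_n$ and $\tau_{n+1}$ they are $\Lambda^a_j(\tau_n)$ for $j < \omega+3$ and $\Lambda^b_k(\tau_n)$ for $k < \omega+4$. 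Using that the $\A * \U * \S$ structure of $\Q_0^*(\tau_{17})$ and $\Q_0(\tau_n, \tau_{n+1})$ collapses each gap in the $\Lambda^a$-block, that the preparation $\I^b(\tau_n) * \J^c(\tau_n)$ collapses the gaps in and surrounding the $\Lambda^b$-block, and that $\Q(\tau_n, \tau_{n+1})$ bridges $\Lambda^b_{\omega+3}(\tau_n)$ to $\Lambda^a_0(\tau_{n+1})$, a careful ordinal computation shows the $\aleph$-index reaches $\omega^2$ at the supremum $\kappa$.

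For $(iii)$, the crucial fact is that each supercompact Prikry point $x_n \in P_\kappa \lambda^b_n$ is generically cofinal in $\lambda^b_n$. For any $\alpha < \lambda^b_n$, taking $\beta = \alpha + 1 < \lambda^b_n$ gives $j_{01}^*(\beta) \in j_{01}^*[\lambda^b_n]$ with $j_{01}^*(\beta) > j_{01}^*(\alpha)$; hence $\sup j_{01}^*[\lambda^b_n] > j_{01}^*(\alpha)$, and {\L}o\'s's theorem gives $\{x : \sup(x) > \alpha\} \in U_n$. A Prikry density argument then shows $\sup(x_n) = \lambda^b_n$ in the extension, and the same argument applied to $x \cap \mu$ shows $x_n \cap \mu$ is cofinal in $\mu$ for any $V[L]$-cardinal $\mu \le \lambda^b_n$. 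Now induct upward on $V[L]$-cardinals $\mu \in [\kappa, \lambda^b_\omega]$: at $\mu = \kappa$ there is nothing to show; for $\mu > \kappa$, choose $n$ with $\mu \le \lambda^b_n$, so that
\[
|\mu|^{V[L][\Agg][\bar P]} \le |x_n \cap \mu| \cdot \sup\{|\alpha|^{V[L][\Agg][\bar P]} : \alpha \in x_n \cap \mu\} \le \kappa
\]
by the inductive hypothesis applied to the $V[L]$-cardinalities of elements of $x_n \cap \mu$. At $\mu = \lambda^b_\omega$, the sequence $(\lambda^b_n)_{n < \omega}$ is cofinal and each term has size $\kappa$, so $|\lambda^b_\omega|^{V[L][\Agg][\bar P]} = \kappa$.

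For $(iv)$, Lemma \ref{ccforPbar} gives that $\bar\P$ is $(\lambda^b_\omega)^+$-cc, so $\lambda^b_{\omega+1}$ is preserved as a cardinal; since by $(iii)$ every $V[L]$-cardinal in $(\kappa, \lambda^b_\omega]$ is collapsed to size $\kappa$ in the extension, $\lambda^b_{\omega+1}$ is the immediate successor of $\kappa$ there, i.e., $\kappa^+ = \lambda^b_{\omega+1} = (\lambda^b_\omega)^+$. The main obstacle will be the detailed ordinal bookkeeping in $(ii)$, tracking precisely how each $\S$-component of the various interleaved and preparation forcings collapses the intermediate intervals so that the total $\aleph$-index of $\kappa$ lands at $\omega^2$.
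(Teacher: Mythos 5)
Your parts $(i)$, $(ii)$ and $(iv)$ are sound and match the paper's (much terser) argument: the cardinal structure below $\kappa$ is read off from Lemma \ref{longboringlemma}, and preservation of $(\lambda^b_\omega)^+$ is exactly Lemma \ref{ccforPbar}. (A small caveat on $(i)$: a surjection $\gamma \to \kappa$ is not literally a bounded subset of $\kappa$, so you want Lemma \ref{shortsequencenames} or Lemma \ref{Phighdist} for short sequences of ordinals rather than Corollary \ref{bounded_sets}; this is routine.)

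Part $(iii)$, however, rests on a false claim. Each $x_n$ is an element of $(P_\kappa\lambda^b_n)^{V[L]}$, hence a set of size less than $\kappa$ lying in $V[L]$, and $\lambda^b_n$ is regular (indeed supercompact) in $V[L]$; therefore $\sup(x_n) < \lambda^b_n$, and $x_n$ is \emph{never} cofinal in $\lambda^b_n$. Your {\L}o\'s computation only shows that for each \emph{fixed} $\alpha < \lambda^b_n$ the set $\{x : \sup(x) > \alpha\}$ is $U_n$-large, and the intended density argument breaks down: once a condition has length greater than $n$, the value of $x_n$ is frozen, so the set of conditions forcing $\sup(x_n) > \alpha$ is dense only below conditions of length at most $n$, and genericity gives nothing. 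The correct argument — and the one the paper uses — exploits the fact that for each $\alpha < \lambda^b_\omega$ there are infinitely many \emph{future} indices $n$ at which $\alpha$ can still be caught: given any condition $p$ and any $\alpha < \lambda^b_\omega$, pick $n \ge \lh(p)$ with $\alpha < \lambda^b_n$, shrink $A^p_n$ to the $U_n$-large set $\{x \in A^p_n : \alpha \in x\}$, and take a one-point extension. This shows $\{p : p \forces \alpha \in \bigcup_n x_n\}$ is dense, hence $\lambda^b_\omega = \bigcup_{n \ge 17} x_n$ in the extension. Since this is a union of countably many sets each of size less than $\kappa$, it has cardinality $\kappa$, which collapses every $V[L]$-cardinal in $(\kappa, \lambda^b_\omega]$ to $\kappa$ in one stroke — no upward induction on $\mu$ and no cofinality claims about individual $x_n$'s are needed.
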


\begin{proof} It follows immediately from \ref{longboringlemma} that $\kappa = \aleph_{\omega^2}$
  in $V[L][\Agg][\bar P]$. By Lemma \ref{ccforPbar}, $(\lambda^b_{\omega})^+$ is a cardinal
  in this model. An easy density argument shows that $\lambda^b_\omega = \bigcup_{n \ge 17} x_n$
  where the $x_n$'s are the supercompact Prikry points added by $\bar P$, and it follows
  immediately that $\lambda^b_\omega$ is collapsed to have cardinality $\kappa$
  in $V[L][\Agg][\bar P]$. 
\end{proof}

The following Lemma gives an analysis of names for sequences of ordinals, in a similar spirit to  
Lemma \ref{Phighdist} and Corollary \ref{bounded_sets}.

\begin{lemma} \label{shortsequencenames} 
  Let $p \in \bar \P$, where  
\[
p = \langle q_{17},x_{17}, \dots q_{n-1}, x_{n-1}, f_n, A_n, F_{n+1}, A_{n+1} ,F_{n+2}, \ldots \rangle.
\]
Let $\tau_j = \kappa(x_j)$, let $\gamma < \Lambda^b_{\omega + 2}(\tau_{n-1})$, and let
$\dot f$ be a $\P$-name for a function from $\gamma$ to $ON$. 
Let $\P_{\rm low} = \Q^*(\tau_{17}) \times \prod_{17 < j < n} \Q(x_{j-1}, x_j)$,
and let $p_0 = (q_{17}, \ldots q_{n-1})$. Then there are a direct extension
\[
p' = \langle q_{17},x_{17}, \dots q_{n-1}, x_{n-1}, f'_n, A'_n, F'_{n+1}, A'_{n+1} ,F'_{n+2}, \ldots \rangle.
\]
 of $p$, conditions $(p^\alpha_0)_{\alpha < \gamma}$ in $\P_{\rm low}$ and natural numbers $(k_\alpha)_{\alpha < \gamma}$ such that for all $\alpha < \gamma$:
 \begin{itemize}
 \item $p^\alpha_0 \le p_0$.
 \item If $p^\alpha_0 = (q^\alpha_{17}, \ldots q^\alpha_{n-1})$, then every
   $k_\alpha$-step extension of 
\[
p^\alpha = \langle q^\alpha_{17},x_{17}, \dots q^\alpha_{n-1}, x_{n-1}, f'_n, A'_n, F'_{n+1}, A'_{n+1} ,F'_{n+2}, \ldots \rangle
\]
decides ${\dot f}(\alpha)$.
\end{itemize}
\end{lemma}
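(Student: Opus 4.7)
The strategy is to apply the Strong Prikry Lemma (Lemma \ref{strongPrikry}) once per $\alpha < \gamma$ and then fuse the resulting direct extensions into a single upper part, in close analogy with the proof of Lemma \ref{Phighdist}: distributivity of the stem poset $\Q(\tau_{n-1}, \kappa)$ will be used to align stems across $\alpha$, and the same closure arguments appearing in that proof will be used to align the upper tails.

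First I would, for each $\alpha < \gamma$, let $E_\alpha$ be the dense open set of conditions in $\bar\P$ that decide $\dot f(\alpha)$, and define $D_\alpha$ to be the set of $q \le [f_n]_{U_n}$ in $\Q(\tau_{n-1}, \kappa)$ for which there exist a refined lower part $p^\alpha_0 \le p_0$, a representative $f^\alpha_n$ with $[f^\alpha_n]_{U_n} = q$, a direct extension $(A^\alpha_n, F^\alpha_{n+1}, A^\alpha_{n+1}, \ldots)$ of the upper part of $p$, and $k_\alpha < \omega$, such that every $k_\alpha$-step extension of $\langle p^\alpha_0, f^\alpha_n, A^\alpha_n, F^\alpha_{n+1}, \ldots\rangle$ lies in $E_\alpha$. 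An application of Lemma \ref{strongPrikry} to a condition with prescribed stem $q$ shows that $D_\alpha$ is dense open below $[f_n]_{U_n}$. Since $\Q(\tau_{n-1}, \kappa)$ is $<\Lambda^b_{\omega+2}(\tau_{n-1})$-distributive by Lemma \ref{qdistribj} and $\gamma < \Lambda^b_{\omega+2}(\tau_{n-1})$, the intersection $\bigcap_{\alpha<\gamma} D_\alpha$ is dense below $[f_n]_{U_n}$; pick some $q^*$ in this intersection together with witnesses $(p^\alpha_0, f^\alpha_n, A^\alpha_n, F^\alpha_{n+1}, \ldots, k_\alpha)$ for each $\alpha$.

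Next I would fuse the witnesses into a single upper part $(f'_n, A'_n, F'_{n+1}, A'_{n+1}, \ldots)$. All the $f^\alpha_n$ represent the class $q^*$, and since $\gamma < \kappa$ the $\kappa$-completeness of $U_n$ yields a set $A^*_n \in U_n$ on which the $f^\alpha_n$ agree; take $f'_n$ to be this common restriction and let $A'_n \subseteq A^*_n$. Setting $A'_m = \bigcap_{\alpha<\gamma} A^\alpha_m \in U_m$ for $m > n$ handles the measure-one coordinates by $\kappa$-completeness. For each $m > n$, the sequence $\langle [F^\alpha_m]_{U_{m-1}\times U_m} : \alpha < \gamma\rangle$ lies in $K$ and in the highly closed inner model $M^*_1$, so the same appeal that produces the condition $F^*_j$ in the proof of Lemma \ref{Phighdist} (closure of $M^*_1$ combined with the distributivity of $\Q_\infty$) yields $[F'_m] \in K$ below every $[F^\alpha_m]$; a further application of $\kappa$-completeness of $U_{m-1}\times U_m$ refines $F'_m$ so that $F'_m \le F^\alpha_m$ pointwise on $A'_{m-1}\times^\prec A'_m$ for all $\alpha$. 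Setting $p' = \langle p_0, f'_n, A'_n, F'_{n+1}, A'_{n+1}, \ldots\rangle$ produces the required direct extension: for each $\alpha$ the condition $p^\alpha$ displayed in the statement is a $\le^*$-refinement of the $\alpha$-th witnessing condition, and so every $k_\alpha$-step extension of $p^\alpha$ is also a $k_\alpha$-step extension of that witness and hence lies in $E_\alpha$ by openness.

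The main obstacle will be the fusion step, that is, combining the $\gamma$-sequence of $K$-compatible conditions $[F^\alpha_m]$ into a common member of $K$. The distributivity of $\Q(\tau_{n-1}, \kappa)$ handles the stem coordinate and the $\kappa$-completeness of each $U_m$ handles the measure-one coordinates and (once the stems are equalized) the values of $f'_n$, but the functions $F^\alpha_m$ live in a forcing that is not itself closed in the required sense. The needed closure is exactly the one used to construct the $F^*_j$'s in the proof of Lemma \ref{Phighdist}, passing through the highly closed inner model $M^*_1$ over which $K$ is generic, and no genuinely new ideas are required.
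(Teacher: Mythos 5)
Your proposal is correct and follows essentially the same route as the paper: define the dense open sets $D_\alpha$ in the stem poset $\Q(\tau_{n-1},\kappa)$ via Lemma \ref{strongPrikry}, intersect them using the distributivity from Lemma \ref{qdistribj}, and then fuse the $\gamma$ witnessing upper parts exactly as in the proof of Lemma \ref{Phighdist} ($\kappa$-completeness for the $f^\alpha_n$ and the measure-one sets, genericity of $K$ over the highly closed $M^*_1$ together with the distributivity of $\Q_\infty$ for the constraint functions). The paper's proof is a compressed version of precisely this argument.
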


\begin{proof} 
  For each $\alpha$, let $D_\alpha$ be the set of $q \in \Q(\tau_{n-1}, \kappa)$ such that there
  exist a direct extension $\bar p$ of $p$ with $[f^{\bar p}_n] = q$ and $k < \omega$ such that
  every $k$-step extension of $\bar p$ decides ${\dot f}(\alpha)$. Clearly $D_\alpha$ is open,
  and by Lemma \ref{strongPrikry} $D_\alpha$ is dense below $[f_n]$. Since
  $\gamma < \Lambda^b_{\omega + 2}(\tau_{n-1})$ and $\Q(\tau_{n-1}, \kappa)$
  is $<\Lambda^b_{\omega + 2}(\tau_{n-1})$-distributive, we may find $q \le [f_n]$
  with $q \in \bigcap_{\alpha < \gamma} D_\alpha$.

  For each $\alpha < \gamma$ we choose ${\bar p}^\alpha \le^* p$ witnessing that $q \in D_\alpha$.
  Arguing exactly as in the proof of Lemma \ref{Phighdist}, we may assume that all the entries
  of $p^\alpha$ past $x_{n-1}$ are independent of $\alpha$. This defines a suitable
  condition $p'$.
\end{proof}

\section{The tree property below $\aleph_{\omega^2}$ in the final model} \label{treeprop}

We now establish the various instances of the tree property below $\aleph_{\omega^2}$  needed to prove Theorem \ref{mainthm}.
The instances above $\aleph_{\omega^2}$ require different techniques and will be discussed in Section \ref{treeprop2}.

Let $\langle \tau_i : 17 \le i < \omega \rangle$ be the Prikry-sequence added by $\bar \P$,
that is $\tau_i = \kappa(x_i)$ in the notation of Section \ref{prikryforcing}.
As we noted in Lemma \ref{longboringlemma}, it follows from the Prikry lemma that bounded subsets of $\kappa$ in the final model
live in extensions of $V[L]$ by posets of the form
$\Q^*(\tau_{17}) \times \Q(\tau_{17}, \tau_{18}) \times \ldots \times \Q(\tau_i, \tau_{i+1})$
for some $i < \omega$.

\smallskip

\noindent Global notation: $\tau_i$\index[Notation]{$\tau_i$} 
\smallskip

The general plan is based on the fact that all the cardinals of interest are either double successor cardinals
or successors of singular cardinals. To handle the double successors we will use the fact that the
forcing posets $\Q^*(\tau_0)$, $\I^b(\tau_i)$ and $\J^c(\tau_i) * \Q_0(\tau_i, \tau_{i+1})$ establish
instances of the tree property at all the cardinals which concern us, but only in submodels of our final model.
We will use Lemma \ref{indestructible} to show that these instances of the tree property persist into our
final model. To handle the successors of singulars we will exploit the fact that all the cardinals
of concern have the form $\lambda_{\omega+1}$ where $\lambda = \Lambda^{a,b}(\tau_i)$, and that
it is forced over $V$ by $\laux(\rho,\lambda) \times \raux(\lambda)$ that the tree property holds at $\lambda_{\omega+1}$:
the extension by $\laux(\rho,\lambda) \times \raux(\lambda)$ collapses so many cardinals that $\lambda_{\omega+1} = \aleph_{\omega+1}$,
the point is that this extension absorbs enough of our final model to argue that the tree property also holds at $\lambda_{\omega+1}$
in our final model.

Let $\tau_i$ and $\tau_{i+1}$ be successive Prikry points. To lighten notation we make the following definitions:
\begin{itemize}
\item $\tau = \tau_i$.
\item $\tau^* = \tau_{i+1}$.
\item $\formerlyrho^a_n = \Lambda^a_n(\tau)$, $\formerlyrho^b_n = \Lambda^b_n(\tau)$.
\item $\formerlyrho^{a*}_n = \Lambda^a_n(\tau^*)$, $\formerlyrho^{b*}_n = \Lambda^b_n(\tau^*)$.   
\end{itemize}

We will discuss the cardinals in groups,  roughly corresponding to the various instances of the 
$\A * \U * \S$ construction which are used below $\kappa$. Recall that one instance
of this construction was done entirely by the $\I^b(\tau)$ component of  $\FL(\tau)$ at stage $\tau$ with
cardinal parameters $\mu_0 = \formerlyrho^a_{17}$, $\mu_1 = \formerlyrho^a_{\omega+1}$, $\mu_2= \formerlyrho^a_{\omega+2}$,  $\mu_{n+3} = \formerlyrho^b_n$ for $n < \omega$.
Another ``two -phase'' version was done partly by the $\J^c(\tau)$ component of $\FL(\tau)$ and partly by $\Q(\tau, \tau^*)$ with
cardinal parameters $\mu_0 = \formerlyrho^b_{17}$, $\mu_1 = \formerlyrho^b_{\omega+1}$, $\mu_2= \formerlyrho^b_{\omega+2}$, $\mu_3= \formerlyrho^b_{\omega+3}$,
$\mu_{n+4} = \formerlyrho^{a*}_n$ for $n < \omega$.  When $\tau$ is the first Prikry point yet a third version
was done partly by the construction of $V$ and partly by the forcing $\Q^*(\tau)$,
this time with cardinal parameters $\mu_0 = \omega$, $\mu_1 = \rho^+$, $\mu_2 = \theta$, $\mu_{3 + n} = \formerlyrho^a_n$ for $n < \omega$.

We let $\E = \Q^*(\tau_{17}) \times \prod_{17 \le k < i} \Q(\tau_k, \tau_{k+1})$, so
that $\E$ accounts for the forcing posets interleaved between the Prikry points
up to and including $\tau$. Let $E$ be the $\E$-generic object added to $V[L]$
by $\bar P$.

\subsection{Group I: $\formerlyrho^b_{\omega +2}$, $\formerlyrho^b_{\omega +3}$, $\formerlyrho^{a*}_n$ for $n < \omega$} \label{GroupI}

Recall from Section \ref{moreprep} that $L(\tau) = L^b(\tau) * I^b(\tau) * (A_e(\tau) \times J^c(\tau))$
and is generic over $V[L \restriction \tau]$. 
Recall also from Section \ref{successiveprikry} that $\Q(\tau, \tau^*) \in V[L \restriction \tau^*][A^b_0(\tau^*)]$ and adds
$Q(\tau, \tau^*) = \prod_{i< 3} Q_i(\tau, \tau^*)$ which is generic over $V[L][E]$. 
The generic objects $J^c(\tau)$ and $Q_0(\tau, \tau^*)$ combine as described in Section \ref{successiveprikry}.
The cardinal parameters are $\mu_0 = \formerlyrho^b_{17}$, $\mu_1 = \formerlyrho^b_{\omega+1}$, $\mu_2= \formerlyrho^b_{\omega+2}$, $\mu_3= \formerlyrho^b_{\omega+3}$,
$\mu_{n+4} = \formerlyrho^{a*}_n$ for $n < \omega$. 

To help the reader keep track, here is a picture of some the most relevant cardinals
for this group under the various names that they go by in the proof . Cardinals in each 
row are equal, cardinals in each column are strictly increasing.  
\[
\begin{array}{cccc}
\vdots & \vdots & \vdots & \vdots \\
\vdots    & \Lambda^a_1(\tau^*) & \sigma^{a*}_1 & \mu_5 \\  
\vdots    & \Lambda^a_0(\tau^*) & \sigma^{a*}_0 & \mu_4 \\  
\tau_{i+1} & \tau^* & \vdots & \vdots \\
\vdots    &  \vdots & \vdots & \vdots \\
\vdots    & \Lambda^b_{\omega+3}(\tau) & \sigma^{b}_{\omega+3} & \mu_3 \\  
\vdots    & \Lambda^b_{\omega+2}(\tau) & \sigma^{b}_{\omega+2} & \mu_2 \\  
\vdots    & \Lambda^b_{\omega+1}(\tau) & \sigma^{b}_{\omega+1} & \mu_1 \\  
\vdots    & \vdots & \vdots & \vdots \\
\vdots    &  \Lambda^b_{17}(\tau) & \sigma^b_{17} & \mu_0 \\
\vdots    &  \vdots & \vdots & \vdots \\
\tau_i    &  \tau   & \vdots & \vdots \\
\end{array} 
\]

As we noted in Lemma \ref{longboringlemma} above, all the relevant trees for cardinals in Group I exist
in the model $W_{I} = V[L \restriction \tau^*][A^b_0(\tau^*)][E][Q(\tau, \tau^*)]$.
So to cover the cardinals in Group I it will suffice to prove: 
\begin{lemma} \label{WIlemma} 
  For all $n < \omega$, $\mu_{n+2}$ has the tree property in $W_{I}$.
\end{lemma}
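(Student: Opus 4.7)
The plan is to apply Lemma \ref{indestructible} to the two-phase $\A^c * \U^c * \S^c$ construction, whose cardinal parameters $\mu_0 = \sigma^b_{17}$, $\mu_1 = \sigma^b_{\omega+1}$, $\mu_2 = \sigma^b_{\omega+2}$, $\mu_3 = \sigma^b_{\omega+3}$, $\mu_{n+4} = \sigma^{a*}_n$ list exactly the cardinals we must cover. First I would write
\[
W_{I} = \Vlbi(\tau)[A^c * U^c * S^c][D^{s,0,1,2,3}],
\]
where $\Vd = \Vlbi(\tau)$, $A^c * U^c * S^c$ is the generic for $\J^c(\tau) * \Q_0(\tau,\tau^*)$, and the remaining generics $E$, $Q_1(\tau,\tau^*)$, $Q_2(\tau,\tau^*)$, and $A^b_0(\tau^*)$ are repackaged as the product $\D^{s,0,1,2,3}$. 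The natural assignments are: $E \in \D^{\rm small}$ (since $|\E| < \tau < \mu_0$, using Lemmas \ref{q1info}, \ref{q2info}); the closed ``quotient to term'' posets $\Q_1(\tau,\tau^*)$ and $\Q_2(\tau,\tau^*)$ fit into $\D^3$ by Remark \ref{q1and q2closure}, being $<\mu_3$-directed closed in a model of the form demanded by hypothesis \ref{indhyp8}; and $A^b_0(\tau^*) = \Add^{\Vlb(\tau^*)}(\sigma^{a*}_{17}, [\sigma^{a*}_{\omega+1}, \sigma^{a*}_{\omega+2}))$ is a Cohen poset at $\mu_{21}$, which goes into $\D^0$ or $\D^1$ when $n+2$ is near $21$, and into $\D^{\rm small}$ or $\D^2$ otherwise.

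For the indestructibility requirement I would take $\Vi = V$ and $\X = \FL\restriction\tau * \FL^b(\tau) * \I^b(\tau)$: in $V$ the cardinals $\sigma^b_{\omega+2}, \sigma^b_{\omega+3}, \sigma^{a*}_n$ are indestructibly supercompact thanks to the universal indestructible Laver function $\phi$ from Lemma \ref{fromVzerotoV}, and the Laver function used to define $\A^c * \U^c * \S^c$ is the canonical derivation $\alpha \mapsto \phi(\alpha)[L\restriction\tau * L^b(\tau) * I^b(\tau)]$. The cardinality of $\X$ is $\sigma^b_\omega < \mu_1$ and $\X$ is $\mu_1$-cc. Granting for the moment that the hypotheses can be met, the conclusion of Lemma \ref{indestructible} gives the tree property at $\mu_{n+2}$ in $W_I$ for every $n$. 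The Knaster/distributivity requirements on the factors $\D^i$ relative to $\Vd[H]$ would then be verified by an appeal to Lemma \ref{meetingindyreqs}, choosing the intermediate models $V^a, V^b, V^c, V^d$ so that each Cohen component is defined over the correct inner model; this step is mostly bookkeeping given the form of each $\D^i$.

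The main obstacle is that the factorisation $\Vd = \Vi[X]$ does \emph{not} literally satisfy the hypothesis of Lemma \ref{indestructible} that $\X$ be $\omega$-distributive: the tower $\FL\restriction\tau * \FL^b(\tau) * \I^b(\tau)$ opens with Cohen forcing on $\sigma^a_{17}$ inside $\I^b(\tau)$. I would resolve this by not treating the whole of $\I^b(\tau)$ as part of $\X$; instead the Cohen components $\A^b$ (and the corresponding $\S^b$ collapses) are pulled out and absorbed, via term forcing, into additional factors of the product $\D^{s,0,1,2,3}$ — this is precisely the mechanism packaged in Lemma \ref{meetingindyreqs}, which selects inner models $V^a, V^b, V^c, V^d$ for each Cohen forcing encountered. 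What remains as $\X$ is the ``$\B$-and-$\C$ part'' of the preparation, which is $<\mu_0$-distributive in $V$ by Fact \ref{Itay4.7} (equivalently, Remark \ref{bclosure}) and $\mu_1$-cc, so the hypotheses of Lemma \ref{indestructible} are met.

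The remaining subtlety is the edge case $n = 0$, since Lemma \ref{meetingindyreqs} requires $n > 0$: the cardinal $\mu_2 = \sigma^b_{\omega+2}$ must be handled separately. For this I would use Lemma \ref{indestructible2}, which is tailored to the tree property at $\mu_{n+2}$ in initial segments of the $\A * \U * \S$ extension, with $\eta$ chosen so that $\Vd[A\restriction\eta * U\restriction\mu_{n+2} * S\restriction\mu_{n+2}]$ captures all relevant subsets of $\mu_2$ that can appear in $W_I$; combined with the closed posets $Q_1(\tau,\tau^*), Q_2(\tau,\tau^*)$ (which are $<\mu_{n+2}$-directed closed in the appropriate model) and the ``small'' contributions from $E$ and $A^b_0(\tau^*)$, this yields the tree property at $\sigma^b_{\omega+2}$. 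In the few remaining cases where $A^b_0(\tau^*)$ or some other factor falls awkwardly between the allowed slots (e.g.\ its size is exactly $\mu_{n+1}$), I would invoke the mutual genericity technique of Remarks \ref{mutualgenericity1} and \ref{mutualgenericity2} to reduce to the clean setup.
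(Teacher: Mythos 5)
Your overall strategy (Lemma \ref{indestructible} plus Lemma \ref{meetingindyreqs} for most $n$, Lemma \ref{indestructible2} for the bottom case, mutual genericity for awkward factors) is the right shape, but two of your concrete steps would fail. First, you cannot apply Lemma \ref{indestructible} to the two-phase construction $\A^c * \U^c * \S^c$ as a single $\A * \U * \S$ construction over $\Vd = \Vlbi(\tau)$ with $\mu_0 = \formerlyrho^b_{17}$: the second phase $\A^c_{[1,\omega)} * \U^c_{[1,\omega)} * \S^c_{[1,\omega)}$ is \emph{defined} in the extension by $J^c_0(\tau)$ (its $\B$- and $\C$-supports live there), which is exactly the caveat of Remark \ref{twostepaus}. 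The correct move is to take $\Vd = \Vlbi(\tau)[J^c_0(\tau)]$ and apply the lemma only to the second phase, so the indices shift by one ($\mu_{k+1}$ plays the role of $\mu_k$) and $\mu_2 = \formerlyrho^b_{\omega+2}$ is forced out of the lemma's scope — that, not the $n>0$ restriction in Lemma \ref{meetingindyreqs}, is why $\formerlyrho^b_{\omega+2}$ needs Lemma \ref{indestructible2}. Your "main obstacle" about $\omega$-distributivity of $\X$ is a misdiagnosis: every component of $\FL\restriction\tau * \FL^b(\tau) * \I^b(\tau)$ is far more than $\omega$-distributive, so that hypothesis is never in danger; the real obstacle is the two-phase structure.

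Second, $\Q_1(\tau,\tau^*)$ and $\Q_2(\tau,\tau^*)$ cannot be placed in $\D^3$: Remark \ref{q1and q2closure} gives only $\rho$-closure (with $\rho < \theta \ll \mu_3$), not $<\mu_{n+2}$-directed closure, and they are defined in $\Vl(\tau^*)[A^b_0(\tau^*)]$ rather than in $\Vd[A\restriction\mu_{n+2} * U\restriction\mu_{n+2}]$ as hypothesis \ref{indhyp8} demands. What one must do instead is exploit their quotient-to-term nature: $A^b_0(\tau^*) * Q_1(\tau,\tau^*)$ rearranges into $A^V_0(\tau^*)$, generic for the Cohen poset $\Add^V(\formerlyrho^{a*}_{17}, \formerlyrho^{a*}_{\omega+2})$ defined in $V = \Vi$, and it is \emph{this} object that migrates among $\D^0$, $\D^1$, $\D^2$ as $n$ passes $21$; while $A_e(\tau) * L\restriction(\tau,\tau^*) * Q_2(\tau,\tau^*)$ rearranges into $A_e(\tau) \times T(\tau,\tau^*)$ with $T(\tau,\tau^*)$ generic for a $<\mu_3$-directed closed term poset of size strictly between $\mu_3$ and $\mu_4$. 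That size is precisely what makes the cases $n=1$ and $n=2$ genuinely hard ($\T(\tau,\tau^*)$ is too big for $\D^{\rm small}$ and in the wrong model for $\D^3$), requiring a further quotient-to-term step to remove its dependence on $S^c_1(\tau)$ and, at $n=2$, a mutual genericity argument in the style of Remark \ref{mutualgenericity1}; your proposal does not identify this as the pressure point. (Minor: $\vert\E\vert = \formerlyrho^a_{\omega+2} > \tau$, though it is still below $\mu_0$, so $\E$ does fit in $\D^{\rm small}$.)
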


\begin{proof} 
 Expanding $L \restriction \tau^*$,
\[
W_{I} = \Vlbi(\tau)[A_e(\tau) \times J^c(\tau)][L \restriction(\tau, \tau^*)][A^b_0(\tau^*)][E][Q(\tau, \tau^*)].
\]
With a view to rearranging $W_{I}$ we note that:
\begin{itemize}
\item By similar considerations as for $\Q(\tau, \tau^*)$, $\E \in \Vl(\tau)[A^b_0(\tau)]$. 
\item $\A_e(\tau) \in \Vlb(\tau)$.
\item $\J^c(\tau) \in \Vlbi(\tau)$.
\item $\Q_0(\tau, \tau^*) \in \Vlbi(\tau)[J^c(\tau)]$.
\item $\Q_1(\tau, \tau^*) \in \Vlbi(\tau)[A_e(\tau) \times J^c(\tau)][L \restriction(\tau, \tau^*)][A^b_0(\tau^*)]$.
\item $\Q_2(\tau, \tau^*) \in \Vlbi(\tau)[I^b(\tau)][A_e(\tau) \times J^c(\tau)][L \restriction(\tau, \tau^*)]$.
\end{itemize}  
So we may rearrange $W_{I}$ as 
\[
\Vlbi(\tau)[E][J^c(\tau)][A_e(\tau)][L \restriction(\tau, \tau^*)] 
     [Q_0(\tau, \tau^*) \times Q_2(\tau, \tau^*)][A^b_0(\tau^*) * Q_1(\tau, \tau^*)].
\]

By the definition of $\Q_1(\tau, \tau^*)$, we may rearrange $A^b_0(\tau^*) * Q_1(\tau, \tau^*)$ as $A^V_0(\tau^*)$ which is generic
 for $\A^V_0(\tau^*) = \Add^V(\formerlyrho^{a*}_{17}, \formerlyrho^{a*}_{\omega +2})$.  We note that $\formerlyrho^{a*}_{17} = \mu_{21}$ in
our list of cardinal parameters.

So $W_{I}$ is 
\[
   \Vlbi(\tau)[E][J^c(\tau)][A_e(\tau)][L \restriction(\tau, \tau^*)][Q_0(\tau, \tau^*) \times Q_2(\tau, \tau^*)][A^V_0(\tau^*)].
\]  
Since $\A^V_0(\tau^*) \in V$ we may rearrange $W_{I}$ as 
\[
  \Vlbi(\tau)[E][J^c(\tau)][Q_0(\tau, \tau^*)][A^V_0(\tau^*)][A_e(\tau)]
  [L \restriction(\tau, \tau^*)][Q_2(\tau, \tau^*)].
\]
Recalling that $Q_2(\tau, \tau^*)$ adds a term generic $T(\tau, \tau^*)$ such that $A_e(\tau) \times T(\tau, \tau^*)$ projects to $A_e(\tau) * L \restriction (\tau, \tau^*)$,
we may rearrange $W_{I}$ as
\[
\Vlbi(\tau)[E][J^c(\tau)][Q_0(\tau, \tau^*)][A^V_0(\tau^*)][A_e(\tau) \times T(\tau, \tau^*)],
\]
and then as
\[
\Vlbi(\tau)[J^c(\tau) * Q_0(\tau, \tau^*)][T(\tau, \tau^*)][A^V_0(\tau^*)][A_e(\tau)][E]
\]

We recall that  $J^c(\tau) = (A^c_0(\tau) * U^c_0(\tau) * S^c_0(\tau)) * (A^c_1(\tau) * U^c_1(\tau) * S^c_1(\tau))$,
 while $Q_0(\tau, \tau^*)$ adds $A^c_{[2, \omega)}(\tau, \tau^*)$,
   $U^c_{[2, \omega)}(\tau, \tau^*)$ and $S^c_{[2, \omega)}(\tau, \tau^*)$.
      The reader is advised to keep in mind that $\mu_j$ for $j \le 3$ depends on $\tau$ while $\mu_j$ for $j \ge 4$
       depends on $\tau^*$, so there is a ``seam'' between $\mu_3$ and $\mu_4$. 
       
       Bearing in mind that
       $\A^c_{[1, \omega)}(\tau, \tau^*) * \U^c_{[1, \omega)}(\tau, \tau^*) * \S^c_{[1, \omega)}(\tau, \tau^*)$
               is defined
   in the extension by $\J^c_0(\tau) = \A^c_0(\tau) * \U^c_0(\tau) * \S^c_0(\tau)$,
   we reorganise $J^c(\tau) * Q_0(\tau, \tau^*)$ as
   $J^c_0(\tau) * (A^c_{[1, \omega)}(\tau, \tau^*) * U^c_{[1, \omega)}(\tau, \tau^*) * S^c_{[1, \omega)}(\tau, \tau^*))$.
So $W_{I}$ is          
\[
 \Vlbi(\tau)[J^c_0(\tau) *
  (A^c_{[1, \omega)}(\tau, \tau^*) * U^c_{[1, \omega)}(\tau, \tau^*) * S^c_{[1, \omega)}(\tau, \tau^*)]
       [T(\tau, \tau^*)][A^V_0(\tau^*)][A_e(\tau)][E].
\]

   The general idea is now to use the indestructibility guaranteed by Lemma \ref{indestructible},
   but there are a couple of obstacles:
   \begin{itemize}
   \item Since  $\A^c_{[1, \omega)}(\tau, \tau^*) * \U^c_{[1, \omega)}(\tau, \tau^*) * \S^c_{[1, \omega)}(\tau, \tau^*)$
         was defined in an extension
         by $\A^c_0(\tau) * \U^c_0(\tau) * \S^c_0(\tau)$, we need to treat $\mu_2$ separately.
   \item For $n=2$,  $\T(\tau, \tau^*) \times \A^V_0(\tau^*) \times \A_e(\tau) \times \E$
     does not fit perfectly into the hypotheses of Lemma \ref{indestructible} as applied to $\mu_4$ and 
     $\A^c_{[1, \omega)}(\tau, \tau^*) * \U^c_{[1, \omega)}(\tau, \tau^*) * \S^c_{[1, \omega)}(\tau, \tau^*)$, and extra arguments are required.
   \end{itemize}
   
   With a view to applying Lemma \ref{indestructible} to
   $\A^c_{[1, \omega)}(\tau, \tau^*) * \U^c_{[1, \omega)}(\tau, \tau^*) * \S^c_{[1, \omega)}(\tau, \tau^*)$
   in $\Vlbi(\tau)[J^c_0(\tau)]$,
   recall that:
   \begin{itemize}
   \item $E$ is  generic for a poset $\E \in \Vlbi(\tau)$,
     where $\vert \E \vert < \mu_0$. 
   \item $A_e(\tau)$ is generic for a Cohen poset adding $\mu_3$ subsets of $\mu_0$, defined in
     $\Vlb(\tau)$.
   \item  $A^V_0(\tau^*)$ is generic for a Cohen poset adding many subsets of $\mu_{21}$, defined in $V$.
   \item $T(\tau, \tau^*)$ is generic for a $<\mu_3$-directed closed poset $\T(\tau, \tau^*)$
     defined in $\Vlbi(\tau)[J^c(\tau)]$, with
     $\mu_3 < \vert \T(\tau, \tau^*) \vert < \mu_4$.
   \item $\A^c_{[2, \omega)}(\tau, \tau^*)$ is a product of Cohen posets defined in $V$.
   \item
     The cardinals $\mu_j$ for $j \ge 3$ are indestructibly supercompact in $V$, and
     $\phi$ is an indestructible Laver function there. They remain
     supercompact in $\Vlbi(\tau)[J^c_0(\tau)]$,
     and $\A^c_{[1, \omega)}(\tau, \tau^*) * \U^c_{[1, \omega)}(\tau, \tau^*) * \S^c_{[1, \omega)}(\tau, \tau^*)$
           was defined using a Laver function
           derived from $\phi$.
   \item $\Vlbi(\tau)[J^c_0(\tau)]$,
           is an extension of $V$ by
      a poset of size $\mu_2$.  
   \end{itemize}
   
   Now we verify that $\mu_{n+2}$ has the tree property in $W_{I}$ for all $n$.
   For most $n$ we can directly apply Lemma \ref{indestructible} to
   $\A^c_{[1, \omega)}(\tau, \tau^*) * \U^c_{[1, \omega)}(\tau, \tau^*) * \S^c_{[1, \omega)}(\tau, \tau^*)$
         in $\Vlbi(\tau)[J^c_0(\tau)]$.
         In this context   
         $\Vd = \Vlbi(\tau)[J^c_0(\tau)]$, and
  $\Vi = V$. The reader is warned that since we are working with indices in the interval $[1, \omega)$,
   $\mu_{k+1}$ in our current context plays the role of $\mu_k$ in Lemma \ref{indestructible}.  
    In most cases, our appeals to Lemma \ref{indestructible} are justified by
    Lemma \ref{meetingindyreqs}.

The proof involves various auxiliary models, which we have sought to name in a consistent way.
The models $W_{I}^{x}$ where $x = i, ii, iii$ are submodels of $W_{I}$ which isolate
some families of trees, and $W_I^{x *}$ is a generic extension of $W_{I}^{x}$ obtained by
some form of quotient to term forcing.

  \begin{claim} $\mu_{n+2}$ has the tree property in $W_{I}$ for all $n \ge 21$. 
  \end{claim}  

  \begin{proof}
    Use Lemma \ref{indestructible} with $\D^0 = \A^V_0(\tau^*)$,  
    $\D^{\rm small} = \T(\tau, \tau^*) \times \A_e(\tau) \times \E$, 
    and the other factors trivial. The hypotheses of Lemma \ref{indestructible} are satisfied
    by appealing to Lemma \ref{meetingindyreqs}, where we note that
    $\A^V_0(\tau^*)$ is a Cohen poset defined in $V$ (which is $\Vi$) adding subsets of $\mu_{21}$,
    so that it is a reasonable value for $\D^0$. 
\end{proof} 

\begin{claim} $\mu_{n+2}$ has the tree property in $W_{I}$ for $n = 20$ 
\end{claim}  

\begin{proof}  
     Use Lemma \ref{indestructible} with $\D^1 = \A^V_0(\tau^*)$, 
     $\D^{\rm small} = \T(\tau, \tau^*) \times \A_e(\tau) \times \E$, 
     and the other factors trivial. Again we use Lemma \ref{meetingindyreqs} to justify the appeal
     to Lemma \ref{indestructible}, where this time $\A^V_0(\tau^*)$ is a Cohen poset defined in $V$
     adding subsets of $\mu_{n+1}$, so that it is a reasonable value for $\D^1$.  
     
\end{proof}

\begin{claim} $\mu_{n+2}$ has the tree property in $W_{I}$ for  $3 \le n \le 19$. 
\end{claim}

\begin{proof}
    Use Lemma \ref{indestructible} with $\D^2 = \A^V_0(\tau^*)$, 
    $\D^{\rm small} = \T(\tau, \tau^*) \times \A_e(\tau) \times \E$, 
    and the other factors trivial. In this range of values of $n$, $\A^V_0(\tau^*)$ is $<\mu_{n+2}$-directed closed forcing
    defined in $V$, hence it is a reasonable value for $\D^2$. 
\end{proof}

\begin{claim} $\mu_{n+2}$ has the tree property in $W_{I}$ for  $n =2$. \label{GroupInequals2} 
\end{claim}

\begin{proof}
 This case is slightly harder because we need the factor $T(\tau, \tau^*)$, but this
    doesn't fit smoothly into  Lemma \ref{indestructible}. We will use the   mutual
   genericity idea from Remark \ref{mutualgenericity1}. 
 
  All the relevant $\mu_4$-trees lie in the model 
  $
  W^{i}_{I}= \Vlbi(\tau)[J^c_0(\tau)]
  [\A^c_{[1, \omega)}(\tau, \tau^*) * \U^c_{[1, \omega)}(\tau, \tau^*) * \S^c_{[1, \omega)}(\tau, \tau^*)]
        [E \times A_e(\tau) \times T(\tau, \tau^*)]
  $.
  Let $T \in W^{i}_{I}$ be a $\mu_4$-tree. 

  Now while $\vert \E \vert < \mu_0$ (so $\E$ would a reasonable value for $\D^{\rm small}$),
  and $\A_e(\tau)$ is Cohen forcing defined in a model between $\Vi$ and $\Vd$ adding Cohen subsets to $\mu_0$
  (so would be a reasonable value for $\D^0$), the poset $\T(\tau, \tau^*)$ does not fit into our indestructibility scheme.

  Proceeding exactly as in the proof of Lemma \ref{indestructible}, we construct a generic embedding
  $j$ with critical point $\mu_4$ in an extension $W^{i}_{I}[P_{1-3}]$. Since $\vert \T(\tau, \tau^*) \vert < \mu_4$,
  no additional forcing is needed to handle $T(\tau, \tau^*)$. Using $j$ we obtain a branch
  $b$ of the tree $T$ with $b \in W^{i}_{I}[P_{1-3}]$, and aim to show that $b \in W^{i}_{I}$.

  To this end we force over $W^{i}_{I}[P_{1-3}]$ with a ``quotient to term'' forcing $\QTT$ to remove the dependence of $T(\tau, \tau^*)$
  on $S^c_1$,
  obtaining a generic object $TT$ for the term forcing poset
  $\TT = \termspace^{\Vlbi(\tau)[J^c_0(\tau)]
    [\A^c_1(\tau) * \U^c_1(\tau)]}(\S^c_1(\tau), \T(\tau, \tau^*))$
  such that $S^c_1(\tau) \times TT$ induces $S^c_1(\tau) * T(\tau, \tau^*)$.

  Note that:
  \begin{enumerate}
  \item $TT$ is generic for the term forcing poset $\TT$  which is  $<\mu_3$-closed in the model
    $\Vlbi(\tau)[J^c_0(\tau)][A^c_1(\tau) * U^c_1(\tau)]$, and by
    routine calculations this term poset is still $<\mu_3$-distributive
    in $\Vlbi(\tau)[J^c_0(\tau)] 
      [A^c_{[1, \omega)}(\tau, \tau^*) * U^c_{[1, \omega)}(\tau, \tau^*) * S^c_{[1, \omega)}(\tau, \tau^*)]$.
   
     \item
    It follows from Lemma \ref{qtot-closed} that $\QTT$ is defined and $<\mu_2$-closed in the model
    $\Vlbi(\tau)[A^c_{[0,1]}(\tau) * U^c_{[0,1]}(\tau) * S^c_{[0,1]}(\tau)][T(\tau, \tau^*)]$, and by the usual
    distributivity arguments $\QTT$ remains $<\mu_2$-closed in the model
    $\Vlbi(\tau)[A^c(\tau, \tau^*) * U^c(\tau, \tau^*) * S^c(\tau, \tau^*)][T(\tau, \tau^*)]$.
  \end{enumerate}

  Let $W^{i*}_{I} = W^{i}_{I}[QTT] = \Vlbi(\tau)[A^c(\tau, \tau^*) * U^c(\tau, \tau^*) * S^c(\tau, \tau^*)]
        [E \times A_e(\tau) \times TT]$, so that $b \in W^{i*}_{I}[P_{1-3}]$. We now proceed to argue that $b \in W^{i*}_{I}$ by a
        similar line of argument to that in Lemma \ref{indestructible}.
        
        Let $M_0 = W^{i*}_{I}$, $M_1 = M_0[P_{2 b}]$, $M_2 = M_1[P_{1 a} \times P_{1 b} \times P_3]$, $M_3 = M_2[P_{2 a}]$.
        The arguments that $b \in M_1 \implies b \in M_0$ and $b \in M_3 \implies b \in M_2$ work exactly as before.
        To complete the argument  we need only to argue to argue that $M_2$ is an extension of $M_1$ by ``formerly $<\mu_3$-closed''
        forcing in the
        sense of Fact \ref{formerlyclosed}.

        Arguing as before,  $\P_{1 a} \times \P_{1 b} \times \P_3$ is $<\mu_3$-closed in 
        $\Vlbi(\tau)[A^c_0(\tau) * U^c_0(\tau) * S^c_0(\tau)]
        [A^c_{[1, \omega)}(\tau, \tau^*) * U^c_{[1, \omega)}(\tau, \tau^*) * S^c(\tau, \tau^*) \restriction [\mu_3, \mu_\omega)]$,
        and it remains $<\mu_3$-closed in
        $M_{-} = \Vlbi(\tau)
        [A^c_0(\tau) * U^c_0(\tau) * S^c_0(\tau)]
        [A^c_{[1, \omega)}(\tau, \tau^*) * U^c_{[1, \omega)}(\tau, \tau^*) * S^c(\tau, \tau^*) \restriction [\mu_3, \mu_\omega)][TT]$.       
        Now $M_0 = M_{-}[S^c(\tau, \tau^*) \restriction [\mu_2, \mu_3) \times E \times A_e(\tau)]$ and
          $M_1 = M_{-}[P_{2 b}][S^c(\tau, \tau^*) \restriction [\mu_2, \mu_3) \times E \times A_e(\tau)]$.
        Since $S^c(\tau, \tau^*) \restriction [\mu_2, \mu_3) \times E \times A_e(\tau)$ is $\mu_3$-cc in $M_{-}[P_{2 b}]$,
        $\P_{1 a} \times \P_{1 b} \times \P_3$ is formerly  $<\mu_3$-closed, and we see that $b \in M_2 \implies b \in M_1$.

        We have shown that $b \in M_0 = W^{i*}_{I} = W^{i}_{I}[QTT]$. Since $QTT$ is mutually generic with $P_{1-3}$ and
        $b \in W^{i}_{I}[P_{1-3}]$, $b \in W^{i}_{I}$ and we are done. 
\end{proof}        
        
\begin{claim} $\mu_{n+2}$ has the tree property in $W_{I}$ for  $n=1$. \label{GroupInequals1}  
\end{claim}

\begin{proof}
  Again this case needs a slightly different argument, using some of the ingredients
  from the proof of Claim \ref{GroupInequals2},
  but appealing directly to Lemma \ref{indestructible} and avoiding the use of mutual genericity.    

 As in the preceding case,  all the relevant $\mu_3$-trees lie in the model 
  $
  W^{ii}_{I}=\Vlbi(\tau)[A^c(\tau, \tau^*) * U^c(\tau, \tau^*) * S^c(\tau, \tau^*)]
        [E \times A_e(\tau) \times T(\tau, \tau^*)]
  $,
    and the troublesome factor is $T(\tau, \tau^*)$.   Let $T \in W^{ii}_{I}$ be a $\mu_3$-tree. 
 
  Exactly as in the proof of Claim \ref{GroupInequals2}, we force over $W^{ii}_{I}$ with the quotient to term forcing $\QTT$ 
  to obtain a term forcing generic $TT$ such that $S^c_1(\tau, \tau^*) \times TT$ induces $S^c_1(\tau, \tau^*) * T(\tau, \tau^*)$.
  Since $TT$ is generic for a term forcing poset $\TT$ which is defined and  $<\mu_3$-directed closed in  
  $\Vlbi(\tau)[A^c_{[0,1]}(\tau, \tau^*)  * U^c_{[0,1]}(\tau, \tau^*) * S^c_0(\tau, \tau^*)]$,
  we may appeal
  to Lemma \ref{indestructible} for $\A^c_{[1, \omega)}(\tau, \tau^*) * \U^c_{[1, \omega)}(\tau, \tau^*) * \S^c_{[1, \omega)}(\tau, \tau^*)$
      with $\D^{\rm small} = E$, $\D^0 = \A_e(\tau)$, and $\D^3 = \TT$.
  As before, Lemma \ref{meetingindyreqs} ensures that we satisfied the hypotheses of Lemma \ref{indestructible}.   
  We conclude that $\mu_3$ has the tree property in  $W^{ii}_{I}[QTT]$, so that
    our tree has a branch $b$ in $W^{ii}_{I}[QTT]$.

   Since $\QTT$ is  $<\mu_2$-closed in 
   $
   \Vlbi(\tau)[A^c(\tau, \tau^*) * U^c(\tau, \tau^*) * S^c(\tau, \tau^*)][T(\tau, \tau^*)],
   $
         and $\E \times \A_e(\tau)$ is $\mu_2$-cc in this model,
    $\QTT$ is formerly $<\mu_2$-closed in $W^{ii}_{I}$ and so $b \in W^{ii}_{I}$ by Fact \ref{formerlyclosed}.      
\end{proof}

\begin{claim} $\mu_{n+2}$ has the tree property in $W_{I}$ for  $n = 0$. \label{GroupInequals0}
\end{claim}

\begin{proof}
     Routine calculation shows that all the relevant $\mu_2$-trees lie in $W^{iii}_I$, where
     $W^{iii}_{I}=
     \Vlbi(\tau)[J^c(\tau)]
     [A^c_2(\tau, \tau^*) \times E \times  A_e(\tau)]$.     
   The key point is that  $A^c_{[0, 2]}(\tau, \tau^*) *  U^c_0(\tau) * S^c_0(\tau)$    
   is a forcing poset which is in the scope of Lemma \ref{indestructible2}.

   We need to extend $W^{iii}_I$ before applying Lemma \ref{indestructible2}.
   Let
   \[
   \TBC = \termspace^{\Vlbi(\tau)[A^c_0(\tau) * U^c_0(\tau)]}(\S^c_0(\tau), \B^c_1(\tau) \times \C^c_1(\tau)),
   \]
   so by Lemma \ref{standardtermforcinglemma} $\TBC$ is $<\mu_2$-directed closed in
   $\Vlbi(\tau)[A^c_0(\tau) * U^c_0(\tau)]$.

   Forcing over $\Vlbi(\tau)[J^c(\tau)]$
   with an appropriate quotient forcing $\QTT_0$, we may obtain an extension
   of the form  $\Vlbi(\tau)[A^c_{[0,1]}(\tau) * U^c_0(\tau) * S^c_0(\tau)][B^c_1(\tau) \times C^c_1(\tau)]$.
   Since $\A^c_1(\tau) * \U^c_1(\tau) * \S^c_1(\tau)$ is the first stage of an $\A * \U * \S$ construction defined in
   $\Vlbi(\tau)[J^c_0]$, it follows from
   Remark \ref{useful} that
   $\QTT_0$ is $<\mu_1$-closed in $\Vlbi(\tau)[J^c(\tau)]$.
   Forcing with another quotient forcing $\QTT_1$, we may further extend to obtain a model
   $\Vlbi(\tau)[A^c_{[0,1]}(\tau) * U^c_0(\tau) *S^c_0(\tau)][TBC]$:
   by Lemma \ref{qtot-closed} we see that $\QTT_1$ is $<\mu_1$-closed in   
   $\Vlbi(\tau)[J^c_0(\tau)][B^c_1(\tau) \times C^c_1(\tau)]$.
   By the distributivity of $\A^c_1(\tau)$, 
   $\QTT_1$ is $<\mu_1$-closed in 
   $\Vlbi(\tau)[A^c_{[0,1]}(\tau) * U^c_0(\tau) * S^c_0(\tau)][B^c_1(\tau) \times C_c^1(\tau)]$,
   so that if we set $\QTT = \QTT_0 * \QTT_1$ then $\QTT$ is
   $<\mu_1$-closed in $\Vlbi(\tau)[J^c(\tau)]$.

   Forcing with $\QTT$ over $W^{iii}_{I}$, we get
   \begin{align*}
    & W^{iii}_{I} = \Vlbi(\tau)[J^c(\tau)][A^c_2(\tau, \tau^*) \times E \times  A_e(\tau)] \\
     \subseteq 
    & W^{iii*}_{I} =
    \Vlbi(\tau)[A^c_{[0, 2]}(\tau, \tau^*) * U^c_0(\tau) * S^c_0(\tau)][TBC \times E \times  A_e(\tau)]. \\
    \end{align*}

   Now we use Lemma \ref{indestructible2} to show that $\mu_2$ has the tree property
   in $W^{iii*}_{I}$. To save the reader some work we record how the parameters from that Lemma should be set:
   \begin{itemize}
   \item $n$ is $0$. 
   \item $\eta$ is $\mu_4$, so that $A \restriction \eta$ is $A^c_0(\tau) \times A^c_1(\tau) \times A^c_2(\tau, \tau^*)$.   
   \item $\Vi$ is $V$.
   \item $\Vd$ is $\Vlbi(\tau)$.
   \item $\D^{small}$ is $E$. 
    \item $\D^3$  is $\TBC$.
   \item $\D^0$ is $\A_e(\tau)$.
   \end{itemize}
     
   We claim that the quotient-to-term forcing which we used to
   obtain $W^{iii*}_{I}$ from $W^{iii}_{I}$ is formerly $<\mu_1$-closed in $W^{iii}_{I}$. To see this note that
   $W^{iii}_{I}$ is obtained from $\Vlbi(\tau)[J^c(\tau)]$
   by adding $A^c_2(\tau, \tau^*)$ (which is generic for highly distributive forcing and preserves the closure) and then
   $E \times A_e(\tau)$ (which is generic for $\mu_1$-cc forcing). It follows that $\mu_2$ has the tree property in
   $W^{iii}_{I}$. 
\end{proof}

This concludes the proof of Lemma \ref{WIlemma}
\end{proof}

   \subsection{Group II: $\formerlyrho^a_{\omega+2}$, $\formerlyrho^b_n$ for $n < \omega$.} \label{GroupII}
Recall from Section \ref{moreprep} that $L(\tau) = L^b(\tau) * I^b(\tau) * (A_e(\tau) \times J^c(\tau))$
and is generic over $V[L \restriction \tau]$. Here $\FL^b(\tau)$ is making the cardinals
$\Lambda^b_n(\tau) = \formerlyrho^b_n$ for $n < \omega$ indestructible, and $\I^b(\tau)$ is a forcing of the form $\A * \U * \S$
defined in $V[L \restriction \tau][L^b(\tau)]$ with parameters set as follows: $\mu_0 = \formerlyrho^a_{17}$, $\mu_1 = \formerlyrho^a_{\omega + 1}$,
$\mu_2 = \formerlyrho^a_{\omega+2}$, $\mu_{3 + n} = \formerlyrho^b_n$. The poset $\I^b(\tau)$ uses the indestructible Laver function added
by $L^b(\tau)$.

\[
\begin{array}{cccc}
\vdots & \vdots & \vdots & \vdots \\
\vdots    & \Lambda^b_1(\tau) & \formerlyrho^b_1 & \mu_4 \\  
\vdots    & \Lambda^b_0(\tau) & \formerlyrho^b_0 & \mu_3 \\  
\vdots    & \Lambda^a_{\omega+2}(\tau) & \formerlyrho^{a}_{\omega+2} & \mu_2 \\  
\vdots    & \Lambda^a_{\omega+1}(\tau) & \formerlyrho^{a}_{\omega+1} & \mu_1 \\  
\vdots    & \vdots & \vdots & \vdots \\
\vdots    &  \Lambda^a_{17}(\tau) & \formerlyrho^a_{17} & \mu_0 \\
\vdots    &  \vdots & \vdots & \vdots \\
\tau_i    &  \tau   & \vdots & \vdots \\
\end{array} 
\]

By the design of $\I^b(\tau)$, all the cardinals in Group II have the tree property in the model $V[L \restriction \tau][L^b(\tau)][I^b(\tau)]$.
As in Group I, to see that the cardinals in Group II have the tree property in our final model $V[L][\Agg][\bar P]$ we have to
account for various generic objects added by $L \restriction [\tau, \kappa)$ and by $\bar P$.
The objects of potential concern are:
\begin{itemize}
\item $E$, which we recall is added by the interleaved posets between Prikry points up to and including $\tau$.  
  This object is slightly more troublesome in this group because it is generic for forcing of size
  $\mu_2 = \formerlyrho^a_{\omega+2}$.  
\item $A_e(\tau) \times J^c(\tau)$, where $A_e(\tau)$ is adding subsets to $\formerlyrho^b_{17} = \mu_{20}$
  and $J^c(\tau)$ is doing the first two steps of an $\A * \U * \S$ construction whose first few
  cardinal parameters are $\formerlyrho^b_{17}$, $\formerlyrho^b_{\omega+1}$, $\formerlyrho^b_{\omega+2}$: since we only
  care about trees up to $\formerlyrho^b_\omega$, the only relevant part of $J^c(\tau)$ is $A^c_0$. 
\item $Q(\tau, \tau^*)$, which we can safely ignore since it is generic for  $\formerlyrho^b_\omega$-distributive forcing.  
\end{itemize}
So all the relevant trees lie in the model
$W_{II} = \Vlbi(\tau)[E][A_e(\tau) \times A^c_0(\tau)]$.

We need a slightly finer analysis of $E$:
\begin{itemize}
\item If $\tau$ is not the first Prikry point, let $\tau^-$ be the preceding Prikry point.
  Then $E = E_0 \times Q(\tau^-, \tau)$ where
  $\E_0$ represents the product of interleaved forcing posets up to $\tau^-$. It is easy to see that $\E_0 \in V[L \restriction \tau]$
  and $\vert \E_0 \vert < \tau < \mu_0$. 
  Furthermore $\Q(\tau^-, \tau) = \prod_{i < 3} \Q_i(\tau^-, \tau)$ where: 
  \begin{itemize}
  \item  $\Q_0(\tau^-, \tau)$ is defined in $\Vlbi(\tau^-)][J^c(\tau^-)]$
(which is a submodel
    of $V[L \restriction \tau]$) and $Q_0(\tau^-, \tau)$ is generic for a version of $\A * \U * \S$ forcing of size
    $\formerlyrho^a_{\omega+1} = \mu_1$ defined in $\Vlbi(\tau^-)][J^c(\tau^-)]$.
    Appealing to Lemma \ref{tailforcinglemma}, in $\Vlbi(\tau^-)][J^c(\tau^-)]$
    the poset  $\Q_0(\tau^-, \tau)$ embeds into a two-step iteration
    where the first step is $<\mu_1$-distributive and the second step is $\mu_1$-cc.  
  \item $\Q_1(\tau^-, \tau)$ is a ``quotient to term'' forcing defined in $V[L \restriction \tau][A^b_0(\tau)]$,
    refining $A^b_0(\tau)$ to  $A^V_0(\tau)$ which is a generic object for $\Add^V(\formerlyrho^a_{17}, \formerlyrho^a_{\omega+2}) = \Add^V(\mu_0, \mu_2)$, so
    that $V[L \restriction \tau][A^b_0(\tau)][Q_1(\tau, \tau^*)] = V[(L \restriction \tau) \times A^V_0(\tau)]$.
  \item  $\Q_2(\tau^-, \tau)$ is generic for a forcing of cardinality $\tau < \mu_0$, defined in $V[L \restriction \tau]$. 
  \end{itemize}
\item   If $\tau$ is the first Prikry point then $E = \prod_{i < 3} Q^*_i(\tau)$ where:
\begin{itemize} 
\item  $Q_0^*(\tau)$ is again generic for a version of $\A * \U * \S$ forcing of size $\mu_1$,
  which embeds into an iteration where $<\mu_1$-distributive forcing is followed by $\mu_1$-cc forcing.  
\item  $Q_1^*(\tau)$ is generic for  a quotient-to-term forcing defined exactly as above.
\item  $Q_2^*(\tau)$ is generic for $\Coll(\omega, \rho)$, where we note that $\rho < \tau < \mu_0$.
\end{itemize}  
\end{itemize} 
In summary, many of the factors in $\E$ are forcing posets which lie in $\Vlb(\tau)$ and have
size less than $\mu_0$.

   Now we argue that all the cardinals in Group II have the tree property 
in $W_{II}$. We mostly do this by applying Lemma \ref{indestructible} to $\I^b(\tau)$,
with  $\Vi = \Vd = \Vlb(\tau)$. 
As before, most appeals to Lemma \ref{indestructible} are justified
 by Lemma \ref{meetingindyreqs}.

\begin{lemma}  For all $n < \omega$, $\mu_{n+2}$ has the tree property in $W_{II}$. \label{WIIlemma}
\end{lemma}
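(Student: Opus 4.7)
The plan is to follow the template of Lemma \ref{WIlemma}: apply Lemma \ref{indestructible} to the $\A*\U*\S$ construction $\I^b(\tau)$ in the setting $\Vi = \Vd = \Vlb(\tau)$, with cardinal parameters $\mu_0 = \formerlyrho^a_{17}$, $\mu_1 = \formerlyrho^a_{\omega+1}$, $\mu_2 = \formerlyrho^a_{\omega+2}$, $\mu_{3+k} = \formerlyrho^b_k$. The preparation $L^b(\tau)$ ensures that each $\mu_n$ with $n \ge 2$ is indestructibly supercompact in $\Vlb(\tau)$ and that $\psi$ serves as a universal indestructible Laver function on the relevant interval. I would first decompose $E$ as $E_0 \times Q_0(\tau^-, \tau) \times Q_1(\tau^-, \tau) \times Q_2(\tau^-, \tau)$ (or the analogous factorisation if $\tau$ is the first Prikry point), recording from Lemmas \ref{q0info}--\ref{q2info} that $|E_0|, |Q_2| < \mu_0$, $|Q_0| = \mu_1$, and $|Q_1| = \mu_2$. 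The Cohen posets $A_e(\tau)$ and $A^c_0(\tau)$ both lie in $\Vlb(\tau)$ and jointly amount to a single $<\mu_{20}$-directed closed Cohen poset at $\mu_{20} = \formerlyrho^b_{17}$.

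For $n \ge 2$ the hypotheses of Lemma \ref{indestructible} are verified via Lemma \ref{meetingindyreqs} with $\D^{\rm small} = E$ (of total cardinality at most $\mu_2 \le \mu_n$) together with the Cohen factors placed according to $n$: as $\D^2 = A_e(\tau)$ and $\D^3 = A^c_0(\tau)$ when $2 \le n \le 18$; as a single $\D^1$ factor at $\mu_{n+1} = \mu_{20}$ when $n = 19$; and as a single $\D^0$ factor at $\mu_{\bar n} = \mu_{20} \le \mu_n$ when $n \ge 20$. The tree property at $\mu_{n+2}$ in $W_{II}$ then follows at once.

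The exceptional case $n = 1$ is obstructed by $Q_1(\tau^-, \tau)$ of size $\mu_2 > \mu_1 = \mu_n$, which exceeds the $\D^{\rm small}$ bound. Mirroring Claim \ref{GroupInequals1}, I would introduce an auxiliary quotient-to-term forcing $\QTT$ over $W_{II}$ that absorbs $Q_1(\tau^-, \tau)$, together with an appropriate factor, into a term-forcing generic for a poset $\TT$ which is defined and $<\mu_3$-directed closed in $\Vi$, so that $\TT$ fits as $\D^3$ in Lemma \ref{indestructible}. Lemma \ref{refinement1}, applied to the identification $\termspace^V(\FL \restriction \tau, \A^b_0(\tau)) \simeq \Add^V(\mu_0, \mu_2)$, will verify that the requisite Knaster properties of $\QTT$ and of the stretching posets persist in the enlarged model. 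A formerly-$<\mu_2$-closed-forcing argument via Fact \ref{formerlyclosed} then transfers branches of any $\mu_3$-tree from $W_{II}[\QTT]$ back to $W_{II}$.

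For $n = 0$ all relevant $\mu_2$-trees already lie in the submodel $\Vlb(\tau)[A^b_{[0,1]}(\tau) * U^b_0(\tau) * S^b_0(\tau)][E \times A_e(\tau) \times A^c_0(\tau)]$, whose core component is within the scope of Lemma \ref{indestructible2} taken with $\eta = \mu_3$. Paralleling Claim \ref{GroupInequals0}, I would use a chain of quotient-to-term forcings to absorb both the tail of $\I^b$ above $\mu_2$ (into $<\mu_2$-directed closed term factors defined in $\Vi$) and $Q_1(\tau^-, \tau)$ (into the $V$-Cohen generic $A^V_0(\tau)$ at $\mu_0$), and then appeal to Lemma \ref{indestructible2} with $\D^0 = A^V_0(\tau)$, $\D^2 = A_e(\tau)$, $\D^3 = A^c_0(\tau)$, and $\D^{\rm small}$ collecting the remaining small factors. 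The hard part in both exceptional cases will be verifying, after these quotient-to-term rearrangements, that the chain-condition hypotheses on the stretching posets $\P_{2a}$ and $\P_{2b}$ (or their analogues for Lemma \ref{indestructible2}) remain intact; Lemma \ref{refinement1} supplies the essential robustness, and mutual-genericity bookkeeping in the style of Remark \ref{mutualgenericity2} closes the argument.
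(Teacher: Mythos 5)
Your handling of $n \ge 2$ is essentially the paper's argument (the paper simply bundles $\A_e(\tau) \times \A^c_0(\tau)$ into the single slot $\D^0$, $\D^1$ or $\D^2$ according to $n$, and takes all of $\E$ as $\D^{\rm small}$, which is legitimate since $\vert\E\vert = \mu_2 \le \mu_n$ in that range; your splitting off $A^c_0(\tau)$ as $\D^3$ is a harmless variant). The two exceptional cases, however, contain genuine gaps. For $n=1$ you propose to absorb $Q_1(\tau^-,\tau)$ into a term-forcing generic for a poset $\TT$ that is $<\mu_3$-directed closed in $\Vi$ and feed it in as $\D^3$. This cannot work: $\Q_1(\tau^-,\tau)$ is the quotient-to-term poset refining $A^b_0(\tau)$ to $A^V_0(\tau)$, so $\A^b_0(\tau) * \Q_1(\tau^-,\tau)$ is equivalent to $\Add^V(\mu_0,\mu_2)$, a poset adding $\mu_2$ subsets of $\mu_0 = \formerlyrho^a_{17}$; no forcing adding subsets of $\mu_0$ embeds into $<\mu_3$-directed closed (or even $\mu_0$-distributive) forcing. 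You have imported the template of Claim \ref{GroupInequals1}, where the troublesome factor $\T(\tau,\tau^*)$ really is highly directed closed; here the troublesome factor is Cohen-like at $\mu_0$. The correct move is to treat $\Q_1$ as the $\D^0$ factor after modifying the hypotheses of Lemma \ref{indestructible} to allow $\D^0 \in \Vd[A_{n-1}]$, verifying Knasterness of the stretched posets via Lemma \ref{refinement1} together with the observation that $j$ fixes $\A^b_0(\tau)*\Q_1$ because it has size $\mu_2 < \crit(j)$.

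For $n=0$ the decisive obstruction is $\Q_0(\tau^-,\tau)$ (or $\Q_0^*(\tau)$), which your paragraph never places: it has size $\mu_1 > \mu_0$, so it cannot go into $\D^{\rm small}$, and it is neither $\mu_1$-cc nor $<\mu_1$-distributive on its own, so it fits none of the slots of Lemma \ref{indestructible} or Lemma \ref{indestructible2}. "The remaining small factors" cannot contain it. The paper handles it as the extra factor $\D$ of Remark \ref{mutualgenericity2}, writing $\Q_0$ as a projection of $\Q_0^{\rm dist} * \Q_0^{\rm cc}$ where $\Q_0^{\rm cc}$ is robustly $\mu_1$-cc, and then doing the real work of the case: showing $\Q_0^{\rm dist}$ is $<\mu_1$-distributive in $\Vlbi(\tau)[A_e(\tau)\times A^c_0(\tau)]$ via a term-forcing decomposition of $A^b_{[0,1]}(\tau)*U^b_0(\tau)*S^b_0(\tau)$ over $\Vlb(\tau)$ and Easton's lemma. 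Your closing appeal to "mutual-genericity bookkeeping in the style of Remark \ref{mutualgenericity2}" points at the right device but does not identify $\Q_0$ as its target nor supply the distributivity verification on which the case rests; without that, the step from $M_2$ to $M_1$ in the branch-preservation argument fails.
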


\begin{proof} As before we break the proof into several claims. 

\begin{claim} 
  $\mu_{n+2}$ has the tree property in $W_{II}$ for  $n \ge 20$. 
\end{claim}

\begin{proof}
  Appeal to Lemma \ref{indestructible} with $\D^{small} = \E$,
  $\D^0 = \A_e(\tau) \times \A^c_0(\tau)$, and the remaining factors trivial.
\end{proof}

\begin{claim} 
  $\mu_{n+2}$ has the tree property in $W_{II}$ for  $n= 19$.
\end{claim}

\begin{proof}
  Appeal to Lemma \ref{indestructible} with $\D^{small} = \E$,
  $\D^1 = \A_e(\tau) \times \A^c_0(\tau)$, remaining factors trivial.
\end{proof}

\begin{claim} 
  $\mu_{n+2}$ has the tree property in $W_{II}$ for $2 \le n \le 18$.   
\end{claim}

\begin{proof}
  Appeal to Lemma \ref{indestructible} with $\D^{small} = \E$,
  $\D^2 = \A_e(\tau) \times \A^c_0(\tau)$, remaining factors trivial.
\end{proof}

Before we handle the last two cases, we need to discuss some  issues involving the
$\Q_0$ and $\Q_1$ factors in the posets $\Q(\tau^{-}, \tau)$ and $\Q^*(\tau)$.

\begin{itemize}

\item The $\Q_0$ factor: Let $\Q_0 = \Q_0^*(\tau)$ if $\tau$ is the first Prikry point,
  and $\Q_0 = \Q_0(\tau^{-}, \tau)$ otherwise. In either case
$\Q_0$ is defined in some model $V[L']$  intermediate between 
  $V$ and $V[L \restriction \tau]$. By the analysis from
 Section  \ref{further}, in $V[L']$ we may write $\Q_0$ as the projection
  of a two-step iteration $\Q_0^\mathrm{dist} * \Q_0^\mathrm{cc}$,
  where:
  \begin{itemize}
  \item  $\vert \Q_0^\mathrm{dist} * \Q_0^\mathrm{cc} \vert = \mu_1 = \formerlyrho^a_{\omega+1}$. 
  \item For all large $n < \omega$, $\Q_0^\mathrm{dist}$ is the projection
    of some $\formerlyrho^a_n$-closed forcing poset (so that 
    $\Q_0^\mathrm{dist}$ is $<\mu_1$-distributive).
  \item  It is forced by $\Q_0^\mathrm{dist}$ that  $\Q_0^\mathrm{cc}$ is the
    union of $\formerlyrho^a_\omega$ filters, so that it is $\mu_1$-cc in any outer
    model of $V[L'][\Q_0^\mathrm{dist}]$ where $\mu_1$ is still a cardinal. 
  \end{itemize}

\item The $\Q_1$ factor:
  Let $\Q_1 = \Q_1^*(\tau)$ if $\tau$ is the first Prikry point,
  and $\Q_1 = \Q_1(\tau^{-}, \tau)$ otherwise. In either case
 $\Q_1$ is a quotient to term poset defined
in $V[L \restriction \tau][A^b_0(\tau)]$, refining $A^b_0(\tau)$ to $A^V_0(\tau)$
which is generic for $\Add^V(\mu_0, \mu_2)$: in the notation from Lemma \ref{oldlemma}
$\P$ is $\FL \restriction \tau$ and $\Q$ is $\A^b_0(\tau)$. We would like to
set $\D^0$ equal to $\Q_1$ but there are some obstacles:
\begin{itemize}
\item The definition of the poset involves $A^b_0(\tau)$, so it is not
  in $\Vd$ (which is  $V[L \restriction \tau][L^b(\tau)]$) and thus hypothesis
  \ref{indhyp6}  of Lemma \ref{indestructible} is definitely not satisfied. 
\item Hypotheses \ref{indhyp6}  and \ref{indhyp6.5} require that $\D^0$ and some related posets have quite a robust chain condition,
  which in our context should be the $\mu_1$-cc since we plan to deal with $\mu_2$ as well as $\mu_3$.
  We need to verify versions of these hypotheses, appropriately modified to handle
  the dependence on $A^b_0(\tau)$,  for the poset $\Q_1$.
\end{itemize}

The cure for the first of these issues is to modify the statement and proof of Lemma \ref{indestructible}
to permit some dependence of $\D^0$ on $\A$. In the version appropriate for Claim \ref{GroupIInequals1} (resp. \ref{GroupIInequals0})
we modify the hypotheses concerning $\D^0$ as follows:
\begin{enumerate}

\item   $\D^0 \in \Vd[A_{n-1}]$ (resp. $\D^0 \in \Vd[A_n]$),
  and $\D^0$ is $\mu_{n+1}$-Knaster in $\Vd[A * U * S][D^{1,2,3}][P_{2b}]$. 

\item   
    For any $W'$ which is 
    an extension of $W[P_{2 b}]$ by a forcing which is $<\mu_{n+1}$-closed in $\Vd[A * U * S \restriction [\mu_{n+1}, \mu_\omega)][D^2]$,  
    and any $j$ as in hypotheses \ref{indhyp5} of Lemma \ref{indestructible},
    if $\P_{2a} = j(\A_{n-1} * \D^0)/j[A_{n-1} \times D^0]$)  (resp.  $\P_{2a} = j(\A_n \times \D^0)/j[A_n \times D^0]$) then 
    $\P_{2a}$ is $\mu_{n+1}$-Knaster in $W'$.  
\end{enumerate}

It is straightforward to modify the
proof of Lemma \ref{indestructible} for these slightly more general hypotheses.  
 
In our specific context we can use the following observations to satisfy these hypotheses:

\begin{itemize}

\item By Lemmas \ref{refinement1} and \ref{oldlemma}, to verify that $\Q_1$ is $\mu_1$-Knaster in some outer model
  it is sufficient to verify that $\A^V_0(\tau) = \Add^V(\mu_0, \mu_2)$ is $\mu_1$-Knaster in the same model.

\item By Lemma \ref{obviousinhindsight}, $\A^b_0(\tau) * \Q_1$ is equivalent to $\A^V_0(\tau)$ in $V[L \restriction \tau]$,
  so as for $\Q_1$ it is enough to check that $\A^V_0(\tau)$ is $\mu_1$-Knaster.

\item $\vert \A^b_0(\tau) * \Q_1 \vert = \mu_2$, so that in the case $n =1$ this poset is fixed by
  $j$ and the technical hypothesis involving stretching by $j$ is vacuously true.

\item  In the case $n=0$, $j(\A^b_0(\tau) * \Q_1)/j[A^b_0(\tau) * \Q_1)$ is easily seen to be equivalent to
  $\Add^V(\mu_0, j(\mu_2) \setminus \mu_2)$, so that again verifying $\mu_1$-Knasterness
  amounts to verifying this property for a Cohen poset adding subsets of $\mu_0$ and defined in $V$.

\end{itemize}

\end{itemize}

\begin{claim} \label{GroupIInequals1}
  $\mu_{n+2}$ has the tree property in $W_{II}$ for $n=1$.   
\end{claim}

\begin{proof}  Appeal to Lemma \ref{indestructible} (modified as above to permit $\D^0$ to depend on $A$) where 
  $\D^2$ is $\A_e(\tau) \times \A^c_0(\tau)$, $\D^0$ is $\Q_1(\tau^-, \tau)$ or $\Q_1^*(\tau)$,
  and $\D^{\rm small}$ is the  product of the remaining factors in $\E$.
\end{proof}

\begin{claim} \label{GroupIInequals0}
  $\mu_{n+2}$ has the tree property in $W_{II}$ for $n=0$.   
\end{claim}

\begin{proof}
   Appeal to Lemma \ref{indestructible} in the more general
  version from Remark \ref{mutualgenericity2}. Here $\D^2 = \A_e(\tau) \times \A^c_0(\tau)$,
  the factor $\D$ from Remark \ref{mutualgenericity2} is $\Q_0(\tau^-, \tau)$ or $\Q_0^*(\tau)$,
  $\D^0$ is $\Q_1(\tau^-, \tau)$ or $\Q_1^*(\tau)$,
  and $\D^{\rm small}$ is the
  product of the remaining factors in $\E$.

  To see that this is legitimate we need to verify that the hypotheses from Remark \ref{mutualgenericity2} are satisfied.
  Since $\D^1$ is trivial and $\vert \D^0 \vert < \mu_0$,  $\P_{2a}$ and $\P_{2b}$ are Cohen posets computed in
  $\Vd$ adding subsets to $\mu_0$ and $\mu_1$ respectively, so that by the usual arguments we can establish
  the necessary Knasterness and distributivity hypotheses. As we already discussed,
  $\Q_0^\mathrm{cc}$ has a very robust form of $\mu_1$-cc, 
  so it remains to show that
  $\Q_0^\mathrm{dist}$ is $<\mu_1$-distributive
 in   $\Vlbi(\tau)[A_e(\tau) \times A^c_0(\tau)]$.  

 Since $\vert \Q_0^\mathrm{dist} \vert = \mu_1$ and
 $A_e(\tau) \times A^c_0(\tau)$ is generic for highly distributive forcing,
    it is enough to verify the distributivity in 
$\Vlbi(\tau)$.
 In fact by article \ref{portmanteau3} of  Lemma \ref{portmanteaulemma} it will be enough to verify    
 it in $\Vlb(\tau)[A^b_{[0,1]}(\tau) * U^b_0(\tau) * S^b_0(\tau)]$.

 Recall from Remark \ref{abzeroone} that since $\FL^b(\tau)$ is $\mu_1$-closed, $\A^b_0(\tau)$ and $\A^b_1(\tau)$
 are Cohen posets defined in $V[L \restriction \tau]$, moreover
 $\A^b_0(\tau)$ is $\mu_0^+$-cc (where $\mu_0 = \formerlyrho^a_{17}$)
 and $\A^b_1(\tau)$ is $<\mu_1$-closed. In $\Vlb(\tau)$,
 $\A^b_{[0,1]}(\tau) * \U^b_0(\tau) * \S^b_0(\tau)$ is the projection of
  $\A^b_{[0,1]}(\tau) \times \B^b_0(\tau) \times \C^b_0(\tau)$,
  where  $\B^b_0(\tau) \times \C^b_0(\tau)$ is $<\mu_1$-closed.

    By the usual
    methods we may extend 
    $\Vlb(\tau)[A^b_{[0,1]}(\tau) * U^b_0(\tau) * S^b_0(\tau)]$.
    to $V[L \restriction \tau][A^b_0(\tau) \times T]$, where $A^b_0(\tau)$ is generic
    over $V[L \restriction \tau]$ for $\mu_0^+$-cc forcing, and $T$ is generic
    over $V[L \restriction \tau]$ for some $<\mu_1$-closed term forcing.
    By Easton's lemma, for all large $n$ we have that $\Q_0^\mathrm{dist}$
    is $\formerlyrho^a_n$-distributive in $V[L \restriction \tau][A^b_0(\tau) \times T]$.
    So $\Q_0^\mathrm{dist}$ is $<\mu_1$-distributive in 
    $\Vlbi(\tau)[A_e(\tau) \times A^c_0(\tau)]$, as required for an
    appeal to Remark \ref{mutualgenericity2}.

\end{proof}

This concludes the proof of Lemma \ref{WIIlemma}
\end{proof}

\subsection{Group III (At the first Prikry point): $\theta$, $\formerlyrho^a_n$ for $n < \omega$} \label{GroupIII}
Let $\tau'$ be the first Prikry point, and define $\formerlyrho^a_n = \Lambda^a_n(\tau')$ and so on as usual. 
We recall some salient facts and definitions from
Section \ref{firstprikry}.
\begin{itemize}

\item   $V=V_0[A_0 * U_0 * L^0]$,  $V_1=V_0[A_0 * U_0]$.

\item $L \restriction \tau'$ is generic over $V$ for $\theta^+$-directed  $\tau'$-cc forcing of cardinality
  $\tau'$. 

\item $L^0 * L \restriction \tau'$ is generic for $\theta^+$-directed closed forcing defined in $V_1$. 
  
\item $\bar \theta < \rho < \theta < \tau' < \formerlyrho^a_0$.

\item $\rho$ is a limit of supercompact cardinals in
$V_0[A_0 \restriction \bar \theta *  U_0 \restriction \bar \theta]$, 
but becomes an $\omega$-successor cardinal in
$V_0[A_0 \restriction \bar \theta *  U_0 \restriction \bar \theta + 1]$,

\item $A^b_0(\tau')$ is generic for $\Add^{V[L \restriction \tau']}(\formerlyrho^a_{17}, [\formerlyrho^a_{\omega+1}, \formerlyrho^a_{\omega+2}))$.
  This is added as part of $L(\tau')$. 

\item $\Q^*_0(\tau') \in V$. 
  $A_0 * U_0$ combines with $Q^*_0(\tau')$  to give us a generic object
  $A * U * S$ for a two-phase $\A * \U * \S$ construction with cardinal
  parameters $\mu_0 = \omega$, $\mu_1=\rho^+$, $\mu_2=\theta$, and $\mu_{3 +n} =  \formerlyrho^a_n$ for $n < \omega$.
  So $Q^*_0(\tau')$ adds $A_{[1, \omega)}$, $U_{[1, \omega)}$ and $S = S_0 * S_{[1, \omega)}$. 

\item  $\C_0$ is defined in $V_0$, while $\B_n$ and $\C_n$ for $n \ge 1$ are defined in $V$.
  The definition of $\S_0$ does not depend on $U^0$. 
        
\item $\theta$ is indestructibly supercompact in $V_0$.
  $A_0 * U_0 * S_0$ may be viewed as generic for the first stage of an $\A * \U * \S$ construction defined over $V_0$
  with parameters $\omega, \rho^+, \theta$ and using the indestructible Laver function $\phi_0$ from 
  $V_0$. 
  
\item        The cardinals $\mu_k$ for $k \ge 3$ are indestructibly supercompact
  in $V$, with an indestructible Laver function $\phi$ which was added by $L^0$.
  It was $\phi$ which was used to define $\A_{[1, \omega)} * \U_{[1, \omega)} * \S_{[1, \omega)}$. 
  So  $A_{[1, \omega)} * U_{[1, \omega)} * S_{[1, \omega)}$ may be viewed as generic for a version 
  of the  $\A * \U * \S$ construction defined in $V$ with parameters $\mu_1, \mu_2, \mu_3 \ldots$ and $\phi$.      
  
\item $\A_1$ is defined in $V_0[A_0 \restriction \bar\theta * U_0 \restriction \bar \theta +1]$,
$\A_n$ for $n \ge 2$ is defined in $V$.

\item $\Q^*_1(\tau') \in V[L \restriction \tau'][A^b_0(\tau')]$.  
  $\Q^*_1(\tau')$ is a ``quotient to term'' forcing poset, and refines $A^b_0(\tau')$ to
  $A^V_0(\tau')$ which is $\A^V_0(\tau')$-generic where  $\A^V_0(\tau') = \Add^V(\formerlyrho^a_{17}, \formerlyrho^a_{\omega + 2})$-generic.
  
\item $\Q^*_2(\tau') =\Coll(\omega, \rho)$, we call the generic object $h$.

\end{itemize}  

\[
\begin{array}{cccc}
\vdots & \vdots & \vdots & \vdots \\
\vdots    & \Lambda^a_1(\tau') & \sigma^a_1 & \mu_4 \\  
\vdots    & \Lambda^a_0(\tau') & \sigma^a_0 & \mu_3 \\  
\tau_0    &  \tau'   & \vdots & \vdots \\
\vdots    &  {} & \theta & \mu_2 \\  
\vdots    &  {} & \rho^+ & \mu_1 \\  
\vdots    &  {} & \omega & \mu_0 \\
\end{array} 
\]

Arguing as in Sections \ref{GroupI} and \ref{GroupII}, all the relevant trees for Group III lie in
$W_{III}$ where $W_{III} = V[L \restriction \tau][A_{[1, \omega)} * U_{[1, \omega)} * S][A^V_0(\tau')][h]$.  

\begin{lemma} For all $n < \omega$, $\mu_{n+2}$ has the tree property in $W_{III}$. \label{WIIIlemma}
\end{lemma}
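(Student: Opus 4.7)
The proof follows the structure of Lemmas \ref{WIlemma} and \ref{WIIlemma}, splitting into cases based on $n$ and applying variants of Lemma \ref{indestructible} (together with the mutual-genericity schemes of Remarks \ref{mutualgenericity1} and \ref{mutualgenericity2}) to the two-phase $\A*\U*\S$ construction whose global parameters are $\mu_0=\omega$, $\mu_1=\rho^+$, $\mu_2=\theta$, $\mu_{3+k}=\formerlyrho^a_k$. Beyond the $A*U*S$-generic itself, the extraneous factors in $W_{III}$ are: $L\restriction\tau'$, generic over $V$ for a highly closed Easton iteration of size $\tau'<\mu_3$; the Cohen extension $A^V_0(\tau')$ (which absorbs $A^b_0(\tau')*Q^*_1(\tau')$), adding Cohen subsets of $\formerlyrho^a_{17}=\mu_{20}$ over $V$; and $h=\Coll(\omega,\rho)$ of size $\rho<\mu_1$. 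In each case we take $\Vi=V$ and $\Vd=V$, using that $\mu_k$ is indestructibly supercompact in $V$ for $k\ge 3$ and that the universal indestructible Laver function $\phi$ on $(\theta,\delta)$ has already been arranged in Lemma \ref{fromVzerotoV}.

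For $n\ge 20$, for $n=19$, and for $3\le n\le 18$, I would apply Lemma \ref{indestructible} with $A^V_0(\tau')$ slotted in as $\D^0$, $\D^1$, and $\D^2$ respectively, and with $L\restriction\tau'\times h$ forming $\D^{\rm small}$ (noting that $|L\restriction\tau'|\cdot|h|<\mu_3\le\mu_n$). The hypotheses are verified via Lemma \ref{meetingindyreqs}, with the small caveat that $\S_0$ is part of the second phase, handled as for the corresponding cases in Group II. For the smaller cases $n=2$ and $n=1$ the cardinality of $A^V_0(\tau')$ exceeds $\mu_{n+2}$, so as in Claims \ref{GroupInequals2}, \ref{GroupInequals1}, and \ref{GroupIInequals1} I would introduce a term-forcing quotient $\QTT$ to reorganise $A^b_0(\tau')*Q^*_1(\tau')$ as the projection of $A^V_0(\tau')$ and a suitable $<\mu_3$-directed closed term poset $\TT$ in $V$; Lemma \ref{indestructible} then applies to the enlarged model with $\TT$ filling the $\D^3$-slot, and Fact \ref{formerlyclosed} is used to descend branches back into $W_{III}$.

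The main obstacle is $n=0$, i.e.\ the tree property at $\theta=\mu_2$. Lemma \ref{indestructible} does not apply directly because $\theta$ is only \emph{generically} indestructibly supercompact in $V$ (via adding Cohen reals, by Lemma \ref{fromVzerotoV}), rather than ordinarily indestructibly supercompact. My plan is to adapt the proof of Lemma \ref{indestructible} by using a generic supercompactness embedding $\pi:V\to V^*$ with $\crit(\pi)=\theta$, obtained after first forcing with the $<\theta$-directed closed tail of the construction and then adding $\pi(\theta)$ Cohen reals, in place of the ultrapower embedding $j$. The tail of $\FL\restriction\tau'$ above its first active stage is more than $\theta^+$-closed over $V$, the second-phase part $A_{[1,\omega)}*U_{[1,\omega)}*S$ admits the usual product decomposition of Lemma \ref{portmanteaulemma} into a $\mu_2$-cc factor and a $<\mu_2$-closed factor, $A^V_0(\tau')$ is $<\mu_{20}$-closed over $V$, and $h$ has cardinality less than $\theta$; combining these, $\pi$ can be lifted onto $W_{III}$ by successive stretching of the Cohen coordinates and master-condition arguments on the closed pieces, exactly as in the proof of Fact \ref{Itay4.12}. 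The hard part will be verifying that the stretching forcings used to lift $\pi$ neither add new branches to $\theta$-trees in $W_{III}$ nor fail the chain-condition requirements; this should be carried out by the branch-preservation scheme of Lemma \ref{indestructible} together with a mutual-genericity argument in the spirit of Remarks \ref{mutualgenericity1} and \ref{mutualgenericity2}, used to handle the $\mu_1$-sized collapse $\S_0$ which sits at the seam of the two phases.
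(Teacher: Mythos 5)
Your treatment of the cases $n\ge 3$ matches the paper's in substance, but there are two genuine problems. First, a structural one in the middle cases: there is no analogue here of the troublesome factor $T(\tau,\tau^*)$ from Group I, so $n=2$ needs no $\QTT$-style detour at all --- $\A^V_0(\tau')$ is $<\mu_{20}$-directed closed in $V=\Vi$ and serves as $\D^2$ for every $n$ with $2\le n\le 18$ (its cardinality is irrelevant for the $\D^2$ slot). The case that genuinely needs a different parameter assignment is $n=1$: there $A^V_0(\tau')$ can simply be ignored by distributivity, but $S_0$ (of size $\theta=\mu_2$, hence too large for $\D^{\rm small}$) and $L\restriction\tau'$ must be promoted to the $\D^0$ and $\D^1$ slots respectively. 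Your proposal never confronts $S_0$ at $n=1$ --- indeed you misplace it as ``part of the second phase'' when it belongs to the first --- and your proposed rearrangement of $A^b_0(\tau')*Q^*_1(\tau')$ is vacuous, since that product already \emph{is} $A^V_0(\tau')$ by the definition of $\Q^*_1$ as a quotient-to-term poset.

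The more serious gap is $n=0$. You propose to lift a \emph{generic} supercompactness embedding $\pi:V\to V^*$ with $\crit(\pi)=\theta$ obtained from Lemma \ref{fromVzerotoV}, but that lemma only supplies such embeddings over $<\theta$-directed closed extensions of $V$, and $W_{III}$ is not one: it contains $S_0$ (which collapses $(\rho^+,\theta)$ to $\rho^+$), $A_1$ (Cohen subsets of $\rho^+$), and $h=\Coll(\omega,\rho)$. Lifting onto these pieces requires master conditions and term-generic objects for $\pi(\S_0)$, $\pi(\A_1)$, etc., and the closed term posets needed for this ($\C_0$, and the model $\bar W$ in which $\A_1$ is defined) live in $V_0$ and its small initial extensions, not in $V$. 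This is exactly why the paper instead observes that $A_0*U_0*S_0$ is the first stage of an $\A*\U*\S$ construction over $V_0$ --- where $\theta$ is \emph{ordinarily} indestructibly supercompact via $\phi_0$ --- and applies Lemma \ref{indestructible2} with $\Vd=\Vi=V_0[A^0\restriction\bar\theta * U^0\restriction\bar\theta+1]$, $\eta=\mu_3$, $\D^3=\FL^0*(\FL\restriction\tau'\times\A_2\times\B_1\times\C_1)$ and $\D^0=\Coll(\omega,\rho)$, descending afterwards via Fact \ref{formerlyclosed}. The generic supercompactness of $\theta$ in $V$ is used elsewhere (Lemma \ref{selection}) and is not the right tool here; without returning to $V_0$ your sketch of the $n=0$ case does not close.
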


\begin{proof}
We will break the problem of establishing the tree property at $\mu_{n+2}$ into two parts.
For $n \ge 1$ we will apply Lemma \ref{indestructible} to $\A_{[1, \omega)} * \U_{[1, \omega)} * \S_{[1, \omega)}$,      
      working over $V$, which makes sense because $\A_{[1, \omega)} * \U_{[1, \omega)} * \S_{[1, \omega)}$ is an $\A * \U * \S$
  construction defined in $V$ which establishes the tree property from $\mu_3$ onwards. 
  In this setting we have to account for the effects of $L \restriction \tau$, $S_0$, $A^V_0(\tau')$ and $h$.

\begin{claim}
  $\mu_{n+2}$ has the tree property in $W_{III}$ for $n \ge 20$.   
\end{claim}

\begin{proof}
  Appeal to Lemma \ref{indestructible} with $\D^{\rm small} = \FL \restriction \tau \times S_0 \times h$,
  $\D^0 = \A^V_0(\tau')$.
\end{proof}

\begin{claim}
  $\mu_{n+2}$ has the tree property in $W_{III}$ for $n = 19$.   
\end{claim}

\begin{proof}
  Appeal to Lemma \ref{indestructible} with $\D^{\rm small} = \FL \restriction \tau \times S_0 \times h$,
  $\D^1 = \A^V_0(\tau')$.
\end{proof}

\begin{claim}
  $\mu_{n+2}$ has the tree property in $W_{III}$ for $2 \le n \le 18$. 
\end{claim}

\begin{proof}
  Appeal to Lemma \ref{indestructible} with $\D^{\rm small} = \FL \restriction \tau' \times S_0 \times h$,
  $\D^2 = \A^V_0(\tau')$.
\end{proof}

\begin{claim}
  $\mu_{n+2}$ has the tree property in $W_{III}$ for $n = 1$. 
\end{claim}

\begin{proof}
  Note that $A^V_0$ is generic for highly distributive forcing, so adds no $\mu_3$-trees and
  can be ignored in this context. 
  Appeal to Lemma \ref{indestructible} with $\D^{\rm small} = \Coll(\omega, \rho)$, $\D^0 = \S_0$, $\D^1 = L \restriction \tau'$.
\end{proof}

\begin{claim} 
 $\mu_{n+2}$ has the tree property in $W_{III}$ for $n = 0$. 
\end{claim}

\begin{proof} 
  We are concerned with $\mu_2 = \theta$.
  By the usual distributivity arguments, the only relevant part of
  $A_{[1, \omega)} * U_{[1, \omega)} * S_{[1, \omega)}$ is $(A_1 * U_1 * S_1) \times A_2$.
  Note that $\A_2 = \Add(\mu_2, [\mu_3, \mu_4))^V = \Add(\mu_2, [\mu_3, \mu_4))^{V_1}$, where the last equality
  holds by the distributivity of $L^0$.     

  We aim to apply Lemma \ref{indestructible2}  but we need to be careful because
  $\A_1$ is only defined in the model $V_0[A^0 \restriction \bar \theta * U^0 \restriction \bar \theta + 1]$,
  so we view this as the ground model $\Vd$ in our appeal to the lemma. This is not a problem because
  the remaining part of $A^0 * U^0$ is $A^0 \restriction [\bar\theta, \theta) * U^0 \restriction (\bar\theta, \theta)$,
   and by the definition of $\Q^*_0(\tau)$ the definition of $\S_0$ only uses $A^0 \restriction \alpha$
  where $\alpha > \mu_1 > \rho > \bar\theta$.  

  Working over $V_0[A^0 * U^0 * S_0]$,
  we have to account for the effects of
  $L^0$ (which prolongs $V_1$ to $V$) plus $L \restriction \tau$, $h$ and    
  $A_{[1, \omega)} * U_{[1, \omega)} * S_{[1, \omega)}$.

  As usual we may force over $V[A_1 * U_1 * S_1]$ with a suitable quotient to term forcing
  to remove the dependence of $U_1 * S_1$ on $A_1$. We obtain $B_1 \times C_1$
  which is $\B_1 \times \C_1$-generic over $V[A_1]$, so that $A_1 \times B_1 \times C_1$
  induces $A_1 * U_1 * S_1$ and 
  $V[A_1 * U_1 * S_1] \subseteq V[A_1 \times B_1 \times C_1]$.
  $B_1 \times C_1$ is generic for $<\mu_2$-directed closed forcing
  defined in $V$, and by Remark \ref{useful} the forcing which produces $B_1 \times C_1$ is generic for $<\mu_1$-closed forcing
  defined in $V[A_1 * U_1 * S_1]$.

   At this point we recall that $S_0$ is added
  as part of $Q^*(\tau)$, is generic for $\S_0 \in V_0[A_0]$, and is mutually generic with
  $U_0 * L^0 * L \restriction \tau * (A_{[1, 2]} * U_1 * S_1 \times h)$.
  We will use Lemma \ref{indestructible2} with the non-trivial parameters set as follows:
  \begin{itemize}
  \item $\Vd = \Vi = V_0[A_0 \restriction \bar \theta * U_0 \restriction \bar \theta + 1]$.
  \item $\eta = \mu_3$   
  \item $\D^3 = \FL^0 * (\FL \restriction \tau \times \A_2 \times \B_1 \times \C_1)$ 
  \item $\D^0 = \Coll(\omega, \rho)$.
  \end{itemize}

  It follows from Lemma \ref{indestructible2} that $\theta$ has the tree property in
\begin{align*}
 &  V_0[A_0 * U_0 * S_0][A_1][L^0][L \restriction \tau \times A_2 \times B_1 \times C_1][h] \\
 = 
 &V[L \restriction \tau][S_0][A_1 \times A_2 \times B_1 \times C_1][h].\\
\end{align*} 
  The forcing which produces $B_1 \times C_1$ is
  $<\mu_1$-closed in $V[A_1 * U_1 * S_1]$, and retains this closure in $V[L \restriction \tau][A_1 * U_1 * S_1 \times A_2]$.
  Since $\A_0 * \S_0$ is the projection of $\A_0 \times \C_0$ where $\C_0$ is $<\mu_1$-closed in $V_0$,
  the usual arguments show that $\S_0$ is $<\mu_1$-distributive in 
  $V[L \restriction \tau][S_0][A_1 * U_1 * S_1 \times A_2]$, so that the forcing which produces
  $B_1 \times C_1$ is $<\mu_1$-closed in $V[L \restriction \tau'][S_0][A_1 * U_1 * S_1 \times A_2]$,
  and so is formerly $<\mu_1$-closed in $V[L \restriction \tau'][S_0][A_1 * U_1 * S_1 \times A_2][h]$ because $\Coll(\omega, \rho)$ trivially
  has $\mu_1$-cc. So by Fact \ref{formerlyclosed} $\mu_2$ has the tree property in 
  $V[L \restriction \tau][S_0][A_1 * U_1 * S_1 \times A_2][h]$ and we are done. 
\end{proof}

This concludes the proof of Lemma \ref{WIIIlemma}.
\end{proof}

\subsection{Group IV (Successors of singulars)}: $\formerlyrho^a_{\omega+1}$, $\formerlyrho^b_{\omega+1}$  \label{GroupIV}

Finally we treat the cardinals below $\kappa$ which are successors of singular cardinals. Such cardinals
are either of the form $\formerlyrho^a_{\omega+1} = \Lambda^a_{\omega+1}(\tau)$ or $\formerlyrho^b_{\omega+1} = \Lambda^b_{\omega+1}(\tau)$ for some Prikry point $\tau$.
The case of $\Lambda^a_{\omega+1}(\tau')$ for $\tau'$ the first Prikry point, which becomes $\aleph_{\omega+1}$ in the final
model, requires special attention. 

Recall from Section \ref{choiceofrho} that we chose the cardinals, $\rho$, $\lambda^a$ and $\lambda^b$ so
that for $\lambda \in \{ \lambda_a, \lambda_b \}$ the cardinal
$\lambda_{\omega+1}$ has the tree property in the extension of $V$ by $\laux(\rho, \lambda) \times \raux(\lambda)$, where
$\lambda_{\omega+1}$ becomes $\aleph_{\omega+1}$. We defined
the reflected versions $\Lambda^a(\tau)$ and $\Lambda^b(\tau)$ of $\lambda^a$ and
$\lambda^b$ to secure the following key property: for every potential Prikry point $\tau$,
if $\lambda \in \{ \Lambda^a(\tau), \Lambda^b(\tau) \}$ then
$\lambda_{\omega+1}$ is forced to have the tree property in the extension of $V$ by $\laux(\rho, \lambda) \times \raux(\lambda)$.

The key idea in this section is that for every relevant $\lambda$,
we can establish the tree property at $\lambda_{\omega+1}$ in our final model $V[L][\bar P]$ by embedding an appropriate submodel
of $V[L][\bar P]$ into the extension of $V$ by $\laux(\rho, \lambda) \times \raux(\lambda)$. It is important that the quotient to term posets
 that accomplish this embedding are $\rho$-closed, and this consideration played a role in the design of our construction.
The posets $\laux(\rho, \lambda)$ and $\raux(\lambda)$
were designed to absorb the many different posets which will appear in these embedding arguments.
We will make repeated use of the term forcing and absorption arguments from Sections \ref{termforcing}
and Section \ref{projections}. 

At this point it becomes important that all of the Laver functions used in our construction were
derived from the initial Laver function $\phi_0$. It is for this reason that various products of
term forcings which will be used in the absorption arguments fit the hypotheses of
Lemma \ref{Easton-collapse-absorb}.

\subsubsection{The cardinal $\formerlyrho^b_{\omega+1}$}: \label{GroupIVa} 
By the usual arguments, all relevant trees in the final model lie in the submodel
$\Vlbi(\tau)[E][J^c_0(\tau)][A_e(\tau) \times A^c_1(\tau)]$.
Since any particular $\formerlyrho^b_{\omega+1}$-tree only involves at most $\formerlyrho^b_{\omega + 1}$
coordinates in the (highly homogeneous) generic object $A_e(\tau) \times A^c_1(\tau)$, it will suffice to establish the tree
property in $M$, where 
$M=\Vlbi(\tau)[E][J^c_0 (\tau)]
[A_e(\tau) \restriction \formerlyrho^b_{\omega+2} \times A^c_1(\tau)']$
and
 $A^c_1(\tau)'$ is $Add^{V[L \restriction \tau][L^b(\tau)]}(\formerlyrho^b_{\omega+1}, \formerlyrho^b_{\omega+2})$-generic.  

For the purposes of absorbing $M$ into an extension of $V$ by $\laux(\rho, \formerlyrho^b_0) \times \raux(\formerlyrho^b_0)$, we note that:
\begin{itemize}
\item $V = V_0[A_0 * U_0* L^0] = {\bar W}[A_0 \restriction [\bar\theta, \theta) * U_0 \restriction (\bar \theta + 1, \theta)][L^0]$,
  where we note that the first element in the support of $\U_0 \restriction (\bar \theta + 1, \theta)]$ is much larger than
  $\rho$.
\item $E$ breaks down as $h \times A_1 \times E'$, where:
\begin{itemize}  
\item $h \times  A_1$ is part of the generic object for the forcing at the first Prikry point $\tau'$ as described in Section \ref{firstprikry}.
  $h$ is $\Coll(\omega, \rho)$-generic and $A_1$ is $\Add^{\bar W}(\rho^+, \mu')$-generic, where $\mu' = \Lambda^a_0(\tau')$.   
\item  $E'$ collects the rest of $E$, that is the remainder of $Q^*(\tau')$ together with generic objects for the
  interleaved forcing posets up to and including $\tau$.  $E'$  is generic  for a poset of cardinality $\formerlyrho^a_{\omega+2}$.    
\end{itemize}
\item $I^b(\tau)$ is generic for an $\A * \U * \S$ construction defined in $\Vlb(\tau)$
  with parameters $\formerlyrho^a_{17}, \formerlyrho^a_{\omega+1}, \formerlyrho^a_{\omega+2}, \formerlyrho^b_0, \ldots$.
\item $J^c_0(\tau)$ is the first phase of a two-phase $\A * \U * \S$ construction defined in
  $\Vlbi(\tau)$
  where the relevant parameters are $\formerlyrho^b_{17}$, $\formerlyrho^b_{\omega+1}$ and $\formerlyrho^b_{\omega+2}$.
  $A^c_0(\tau)$ is generic for $\Add^{\Vlb(\tau)}(\formerlyrho^b_{17}, [\formerlyrho^b_{\omega+1}, \formerlyrho^b_{\omega+2}))$,
    and the support of $U^c_0(\tau) * S^c_0(\tau)$ is contained in the interval $(\formerlyrho^b_{\omega+1}, \formerlyrho^b_{\omega+2})$.
\end{itemize} 

To help the reader keep track of the indices, we note that $\sigma^b_n$ is playing the role of $\mu_{n+3}$ in
$\I^b(\tau)$, and that the components of this forcing with index $n$ have supports in the interval
$[\mu_{n+1}, \mu_{n+2})$  as usual. 

As a first step we set aside $h$ (which will eventually be absorbed by the $\Coll(\omega, \rho)$ component of $\FL(\rho, \formerlyrho^b_0)$),
to obtain a model
$
\bar M =   \Vlbi(\tau)[A_1][E'] 
     [J^c_0(\tau)] [A_e(\tau) \restriction \formerlyrho^b_{\omega+2} \times A^c_1(\tau)'].
$
We note that $\bar M$ is a $\rho$-distributive extension of $V$. 
Then we isolate the generic objects which we plan to absorb into $\raux(\formerlyrho^b_0)$: these are
$L^b(\tau) \restriction [\formerlyrho^b_0, \formerlyrho^b_\omega)$, $A^b_{[3, \omega)}(\tau)$,
      $U^b_{[2, \omega)}(\tau) = U^b(\tau) \restriction [\formerlyrho^b_0, \formerlyrho^b_\omega)$,
          $S^b_{[2, \omega)}(\tau) = S^b(\tau) \restriction [\formerlyrho^b_0, \formerlyrho^b_\omega)$,
            $A^c_0(\tau)$, $U^c_0(\tau)$, $S^c_0(\tau)$, $A_e(\tau) \restriction \formerlyrho^b_{\omega+2}$
            and $A^c_1(\tau)'$.

            We now need to specify how these various objects are to be absorbed into $\raux(\formerlyrho^b_0)$
            by doing a series of quotient to term forcings, which in every
      case will be $\rho$-closed in the models where they are defined. The closure of these forcings will follow
      by appealing to Fact \ref{absorb}, Lemma \ref{Easton-collapse-absorb} and Lemma \ref{Levy-collapse-absorb}.

\begin{itemize}
\item $\FL^b(\tau) \restriction [\formerlyrho^b_0, \formerlyrho^b_\omega)$ is a Laver-type iteration defined
  in the model $V[L \restriction \tau][L^b(\tau) \restriction \formerlyrho^b_0)]$.

We claim that
$\FL \restriction \tau * \FL^b(\tau) \restriction \formerlyrho^b_0 * \FL^b(\tau) \restriction [\formerlyrho^b_0, \formerlyrho^b_\omega)$
  can be written as a projection of the product of
  $\FL \restriction \tau * \FL^b(\tau) \restriction \formerlyrho^b_0$
  and an Easton support product of term posets
  of the form $\termspace^V(\FL \restriction \tau * \FL^b(\tau) \restriction \alpha, \phi(\alpha))$,
  taken over $\alpha \in (\formerlyrho^b_0, \formerlyrho^b_\omega)$ such that $\alpha \in \dom(\phi)$
  and $\phi(\alpha)$ is an appropriate name. 
  The only tricky point is that since $\FL \restriction \tau * \FL^b(\tau) \restriction \formerlyrho^b_0$ has cardinality $\formerlyrho^b_0$,
  taking Easton supports in $V$ suffices.

  Since $\dom(\phi)$ consists of inaccessible closure points of $\phi$,
  and $\phi(\alpha)$ names a $<\alpha$-directed closed poset for all relevant $\alpha$, 
  it follows from Lemma \ref{Easton-collapse-absorb} 
  that the Easton product of term forcing posets can be absorbed by the component
  $\prod_{n < \omega} \East^{E_0}(\formerlyrho^b_n, <\formerlyrho^b_{n+1})$ of $\raux(\formerlyrho^b_0)$.
  
\item $\A^b_{[3, \omega)} = \prod_{n \ge 3} \A^b_n$, where 
  $\A^b_n = \Add^{V[L \restriction \tau * L^b(\tau)]}(\formerlyrho^b_n, [\formerlyrho^b_{n+1}, \formerlyrho^b_{n+2}))$,
    and so by closure of tails of $L^b(\tau)$ in fact 
    $\A^b_n =
  \Add^{V[L \restriction \tau * L^b(\tau) \restriction \formerlyrho^b_n]}(\formerlyrho^b_n, [\formerlyrho^b_{n+1}, \formerlyrho^b_{n+2}))$

  For each $n$, $\FL \restriction \tau * \FL^b(\tau) \restriction \formerlyrho^b_n * \A^b_n$
  is the projection of the product of $\FL \restriction \tau * \FL^b(\tau) \restriction \formerlyrho^b_n$
  and $\termspace^V(L \restriction \tau * L^b(\tau) \restriction \formerlyrho^b_n,
  \dot\Add(\formerlyrho^b_n, [\formerlyrho^b_{n+1}, \formerlyrho^b_{n+2}))$. 
  By Lemma \ref{oldlemma} the term poset at $n$ is equivalent to $\Add^V(\formerlyrho^b_n, \formerlyrho^b_{n+2})$,
  and can be absorbed by the component $\Add(\formerlyrho^b_n, \formerlyrho^b_{n+2})$ of $\raux(\formerlyrho^b_0)$.
  
\item We claim that  $\FL \restriction \tau * \FL^b(\tau) * \A^b(\tau) * \U^b(\tau)$
   is the projection of the product of $\FL \restriction \tau * \FL^b(\tau) * \A^b(\tau) * \U^b_{[0, 1]}(\tau)$ 
  and  an Easton support product of term posets 
  of the form
  $\termspace^V(\FL \restriction \tau * \FL^b(\tau) \restriction \gamma_\alpha * \A^b(\tau) \restriction \alpha * \U^b(\tau) \restriction \alpha, \dot \U^b(\tau)(\alpha))$,
  where  $\formerlyrho^b_0 < \alpha < \formerlyrho^b_\omega$,
  $\gamma_\alpha < \alpha^*$ and $\gamma_\alpha$ is chosen large enough that $\A^b(\tau) \restriction \alpha * \U^b(\tau) \restriction \alpha +1$
  can be defined in $V[L \restriction \tau * L^b(\tau) \restriction \gamma_\alpha]$.

This is a variation on Lemma \ref{u_into_easton}, adjusted to take account of the fact that
$\I^b(\tau)$ is defined in the generic extension $V[L \restriction \tau][L^b(\tau)]$ of $V$. 
The only subtle points here are that we restricted $L^b(\tau)$ to make
 the term forcing small enough,  and that we used Lemma \ref{eastonsets} to ensure that we may take an Easton support
product in $V$.

  This product of term forcing posets can be absorbed by the component $\prod_{n < \omega} \East^{E_0}(\formerlyrho^b_n, <\formerlyrho^b_{n+1})$ of $\R(\formerlyrho^b_0)$.

\item 
  $\FL \restriction \tau * \FL^b(\tau) * \A^b(\tau) * \U^b(\tau) *  \S^b(\tau)_{[2, \omega)}$
  is the projection of the product of $\FL \restriction \tau * \FL^b(\tau) * \A^b(\tau) * \U^b(\tau)$ and
  a full support product of term posets defined for $n < \omega$, where
   at $n$ we take the product with $<\formerlyrho^b_n$ supports over $\alpha \in (\formerlyrho^b_n, \formerlyrho^b_{n+2})$
   of posets of the form $\termspace^V(\FL \restriction \tau * \FL^b(\tau) \restriction \gamma_\alpha * \A^b(\tau) \restriction \alpha *
   \U^b(\tau) \restriction \formerlyrho^b_n,
  \dot \Add(\formerlyrho^b_n, 1))$ and $\delta_\alpha < \alpha^*$ is chosen large enough that
  $A^b(\tau) \restriction \alpha * \U^b(\tau) \restriction \formerlyrho^b_{n+1} * \Add(\formerlyrho^b_{n+1}, 1)$ can be defined in $V[L \restriction \tau * L^b \restriction \delta_\alpha]$. 

  The issue about supports is easier here than for $\U^b(\tau)_{[2, \omega)}$. Given a set of ordinals of size less than
    $\rho^b_n$ in $V[L \restriction \tau][L^b(\tau)]$, we use closure to cover it by a small set in
    $V[L \restriction \tau][L^b(\tau) \restriction \rho^b_n]$, and then chain condition to cover
    the covering set by a small set in $V$. 
  
    Here the product of term forcing posets can be absorbed by the component
    $\prod_{n < \omega} \Coll(\formerlyrho^b_n, \formerlyrho^b_{n+1})$ of $\raux(\formerlyrho^b_0)$.

  \item $\A^c_0(\tau) = \Add^{V[L \restriction \tau][L^b(\tau)]}(\formerlyrho^b_{17}, [\formerlyrho^b_{\omega+}, \formerlyrho^b_{\omega+2}))$,
  and by the closure of tails of $L^b(\tau)$,     
  $\A^c_0(\tau)   = \Add^{V[L \restriction \tau][L^b(\tau) \restriction \formerlyrho^b_{17}]}(\formerlyrho^b_{17}, [\formerlyrho^b_{\omega+}, \formerlyrho^b_{\omega+2}))$.

   $\FL \restriction \tau * \FL^b(\tau) \restriction \formerlyrho^b_{17} * \A^c_0(\tau)$ 
    is a projection of the product of $\FL \restriction \tau * \FL^b(\tau) \restriction \rho^b_{17}$ and 
    $\termspace^V(\FL \restriction \tau * \FL^b(\tau) \restriction \formerlyrho^b_{17}, \dot \Add(\formerlyrho^b_{17}, [\formerlyrho^b_{\omega+}, \formerlyrho^b_{\omega+2})))$,
    which is equivalent to $\Add^V(\formerlyrho^b_{17}, \formerlyrho^b_{\omega+2})$ by Lemma \ref{oldlemma}.
    The term forcing poset can be absorbed by the component $\Add^V(\formerlyrho^b_{17}, \formerlyrho^b_{\omega+2})$ of $\raux(\formerlyrho^b_0)$.
    
  \item $\FL \restriction \tau * \FL^b(\tau) * \I^b(\tau) * \A^c_0(\tau) * \U^c_0(\tau)$ 
    is the projection of the product of $\FL \restriction \tau * \FL^b(\tau) * \I^b(\tau) * \A^c_0(\tau)$ 
    and an Easton support product of term forcing posets of the form
    $\termspace^V(\FL \restriction \tau * \FL^b(\tau) * \I^b(\tau) * \A^c_0(\tau) \restriction \alpha * \U^c_0(\tau) \restriction \alpha,
    \dot \U^c_0(\tau)(\alpha))$
  for relevant $\alpha \in (\formerlyrho^b_{\omega+1}, \formerlyrho^b_{\omega+2})$. By similar arguments to those above,
  this product of term forcing posets can be absorbed by the component  $\East^{E_0}(\formerlyrho^b_{\omega+1}, <\formerlyrho^b_{\omega+2})$
  of $\raux(\formerlyrho^b_0)$.
  
\item $\FL \restriction \tau * \FL^b(\tau) * \I^b(\tau) * \A^c_0(\tau) * \S^c_0(\tau)$
  is the projection of the product of $\FL \restriction \tau * \FL^b(\tau) * \I^b(\tau) * \A^c_0(\tau)$ 
  with $< \formerlyrho^b_{\omega+1}$ supports of term posets of the form
    $\termspace^V(\FL \restriction \tau * \FL^b(\tau) * \I^b(\tau) * \A^c_1(\tau) \restriction \alpha, \dot \Add(\formerlyrho^b_{\omega+1}, 1))$
  for $\alpha \in (\formerlyrho^b_{\omega+1}, \formerlyrho^b_{\omega+2})$. By similar arguments to those above,
  this product of term forcing posets can be absorbed by the component
  $\Coll(\formerlyrho^b_{\omega+1}, <\formerlyrho^b_{\omega+2})$ of $\raux(\formerlyrho^b_0)$.

  \item $\A_e(\tau) \restriction \formerlyrho^b_{\omega+2} = \A^c_0(\tau)$, and in exactly the same
    way it may be written as the projection of the product of $\FL \restriction \tau * \FL^b(\tau) \restriction \rho^b_{17}$ 
    and a term forcing,  and the term forcing may be absorbed into
  the component $\Add^V(\formerlyrho^b_{17}, \formerlyrho^b_{\omega+2})$ of $\raux(\formerlyrho^b_0)$.

\item  $\A^c_1(\tau)' = \Add^{V[L \restriction \tau * L^b(\tau)]}(\formerlyrho^b_{\omega+1}, \formerlyrho^b_{\omega+2})$,
  and  $\FL \restriction \tau * \FL^b(\tau) * \A^c_1(\tau)'$  
  is the projection of the product of $\FL \restriction \tau * \FL^b(\tau)$
  and $\termspace^V(\FL \restriction \tau * \FL^b(\tau), \dot \Add(\formerlyrho^b_{\omega+1}, \formerlyrho^b_{\omega+2}))$.
  By Lemma \ref{oldlemma} this term poset is equivalent to $\Add^V(\formerlyrho^b_{\omega+1}, \formerlyrho^b_{\omega+2})$,
  and so can be absorbed by the component $\Add^V(\formerlyrho^b_{\omega+1}, \formerlyrho^b_{\omega+2})$ of $\raux(\formerlyrho^b_0)$.
        
\end{itemize}

\begin{remark} 
  Note that we have used some components of $\raux(\formerlyrho^b_0)$ to absorb multiple term forcing posets.
  This is not problematic, we may easily write (for example) $\East^{E_0}(\formerlyrho^b_n, <\formerlyrho^b_{n+1})$
  as the product of two copies of itself and use it to absorb two Easton iterations in the interval
  $[\formerlyrho^b_n,  \formerlyrho^b_{n+1})$.
\end{remark}

Forcing over $\bar M$ with an appropriate iteration of quotient to term forcings, we may absorb all the generic objects
for these forcing posets which we isolated above into a generic object $\rauxgen$ for $\raux(\formerlyrho^b_0)$.
Since $\bar M$ is a $\rho$-distributive extension of $V$ and each step in our iteration is $\rho$-closed
in the model where it is defined, the whole iteration is $\rho$-closed.
We produce 
$M^*_0 = V[L \restriction \tau][L^b(\tau) \restriction \formerlyrho^b_0 ][A^b_{[0,2]}(\tau) * U^b_{[0,1]}(\tau) * S^b_{[0,1]}(\tau)][A_1][E'][\rauxgen]$,
where $M^*_0$ is a $\rho$-closed extension of $\bar M$.
Let $M^* = M^*_0[h]$, so that $M^*$ is an extension of $M = \bar M[h]$: by Lemma \ref{MS preservation}
the passage from $M$ to $M^*$ does not add branches to $\formerlyrho^b_{\omega+1}$-trees in $M$.

We need to absorb the generic objects other than $\rauxgen$ used to obtain $M^*_0$. 
The generic objects $L \restriction \tau$, $L^b(\tau) \restriction \formerlyrho^b_0$, $A^b_{[0,2]} * U^b_{[0,1]} * S^b_{[0,1]}$
and $E'$ are generic for $\rho$-closed forcing posets of cardinality at most $\formerlyrho^b_1$,
each defined in some (possibly trivial) generic extensions of $V$. We note that all these posets actually exist
in some generic extension of $V' =  V_0[A^0 * U^0 * L^0 \restriction \formerlyrho^b_1]
= {\bar W}[Q]$, where $\Q  =  \A^0 \restriction [\bar\theta, \theta) * \U^0 \restriction (\bar \theta + 1, \theta) * \FL^0 \restriction \rho^b_1$,
  and  that in $\bar W$ the poset $\Q$ is $\formerlyrho^b_1$-cc  with cardinality
  $\formerlyrho^b_1$.

Let $M^*_1 = V'[L \restriction \tau][L^b(\tau) \restriction \formerlyrho^b_0 ][A^b_{[0,2]}(\tau) * U^b_{[0,1]}(\tau) * S^b_{[0,1]}(\tau)][A_1][E']
= {\bar W}[Q][L \restriction \tau][L^b(\tau) \restriction \formerlyrho^b_0 ][A^b_{[0,2]}(\tau) * U^b_{[0,1]}(\tau) * S^b_{[0,1]}(\tau)][A_1][E']$. 
We may perform a series of quotient to term forcings
to embed $M^*_1$ into a model of the form ${\bar W}[Q \times T \times A_1]$, where $T$ is generic for
some $\rho$-closed product of term forcings defined in ${\bar W}$, and we may assume that $T$ is generic
for forcing of size $\formerlyrho^b_1$ (it was for this reason that we truncated $V$ to $V')$.
Since $A_1$ is generic for small  $\rho$-closed forcing defined in $\bar W$, we may
do more forcing to embed $M_1^*$ into a model ${\bar W}[Q \times \lauxgen^{\rm coll}]$, where $\lauxgen^{\rm coll}$ is generic for
$\Coll^{\bar W}(\rho^+, \formerlyrho^b_1)$ which forms part of $\laux(\rho, \formerlyrho^b_0)$.
The quotient to term forcing used to produce  ${\bar W}[Q \times L_{\rm aux}^{\rm coll}]$ from $M^*_1$ also
has cardinality at most $\formerlyrho^b_1$.

Performing the same quotient to term forcing over the larger model $M^*$, we obtain a model
$M^{**} = V[\lauxgen^{\rm coll} \times h \times \rauxgen] = V[\lauxgen \times \rauxgen]$,
where $\lauxgen = \lauxgen^{\rm coll} \times h$ is $\laux(\rho, \formerlyrho^b_0)$-generic. 
By construction $\formerlyrho^b_{\omega+1}$ has the tree property in $M^{**}$.
 By Lemma \ref{chain} 
the passage from $M^*$ to $M^{**}$ does not add branches to $\formerlyrho^b_{\omega+1}$-trees in $M^*$.
So $\formerlyrho^b_{\omega+1}$ has the tree property in $M$ and we are done.

\subsubsection{The cardinal $\formerlyrho^a_{\omega+1}$ above the first Prikry point}: \label{GroupIVb}

Let $\tau$ and $\tau^*$ be successive Prikry points, we will establish the tree property at
$\formerlyrho^{a*}_{\omega+1} = \Lambda^a_{\omega+1}(\tau^*)$ in the final model.
We can do a similar analysis to that in Section \ref{GroupI} to find a submodel $M$ of the final model in which all the relevant trees will
lie. As in Section \ref{GroupIVa} we decompose $E$ as $h \times A_1 \times E'$. 

We need slightly more of the generic object $L^b(\tau^*) * I^b(\tau^*)$ than we did in Section \ref{GroupI},
because there we only considered $\formerlyrho^{a*}_n$-trees for $n$ finite. 
Recalling that $\I^b(\tau^*)$ is an $\A * \U * \S$ construction whose first few parameters are
$\formerlyrho^{a*}_{17}$, $\formerlyrho^{a*}_{\omega+1}$, $\formerlyrho^{a*}_{\omega+2}$ we see that  
the relevant parts of $L^b(\tau^*) * I^b(\tau^*)$
are $A^b_{[0,1]}(\tau^*)$, $U^b_0(\tau^*)$, $S^b_0(\tau^*)$ and
$L^b(\tau^*) \restriction \formerlyrho^{a*}_{\omega+2}$ (which is enough of $L^b(\tau^*)$ to define
$A^b_{[0,1]}(\tau^*) * U^b_0(\tau^*) * S^b_0(\tau^*)$).
By the same considerations as in Section \ref{GroupIVa}, we may replace
$A^b_1(\tau^*)$ by $A^b_1(\tau^*)'$ which is generic for
$\Add^{V[L \restriction \tau^*]}(\formerlyrho^{a*}_{\omega+1}, \formerlyrho^{a*}_{\omega+2})$.

 We see that all the relevant trees lie in the model
\begin{align*}
M = & V[E][L \restriction \tau][A^c_{[2, \omega)}(\tau, \tau^*) * U^c_{[2, \omega)}(\tau, \tau^*) * S^c_{[2, \omega)}(\tau, \tau^*)]
      \\ & [A^V_0(\tau^*) \times T(\tau, \tau^*)]
      [L^b(\tau^*) \restriction \formerlyrho^{a*}_{\omega + 2}] 
      [A^b_1(\tau^*) \restriction \formerlyrho^{a*}_{\omega + 2} \times U^b_0(\tau^*) * S^b_0(\tau^*)] \\
\end{align*}

To help keep track of the indices, we note that $\formerlyrho^{a*}_n$ plays the role of $\mu_{n+4}$
in the construction $\A^c(\tau, \tau^*) * \U^c(\tau, \tau^*) * \S^c(\tau, \tau^*)$.

As in Section \ref{GroupIVa}, we start by breaking out the generic objects which may be absorbed
by $\raux(\formerlyrho^{a*}_0)$. In this case they are $A^c_{[4, \omega)}$, $U^c_{[3, \omega)}$, $S^c_{[3, \omega)}$
      $A^V_0(\tau^*)$, $L^b(\tau^*) \restriction \formerlyrho^{a*}_{\omega+2}$, $A^b_1(\tau^*)'$
      and $U^b_0(\tau^*) * S^b_0(\tau^*)$. The argument that these generic objects may be absorbed using
      a  $\rho$-closed quotient to term forcing into an $\raux(\formerlyrho^{a*}_0)$-generic object
      $\rauxgen$ are exactly as in Section \ref{GroupIVa}.
      After the absorption process we obtain a model
\[
M^* = V[E][L \restriction \tau][A^c_{[2, 3]}(\tau^*) * U^c_2(\tau^*) * S^c_2(\tau^*)]
      [T(\tau, \tau^*)][\rauxgen]   
\]
  and just as before the passage from $M$ to $M^*$ does not add branches to  $\formerlyrho^{a*}_{\omega+1}$-trees.

  Arguing as in Section \ref{GroupIVa}, the generic objects $E' \times A_1$, $L \restriction \tau$,
  $A^c_{[2, 3]}(\tau^*) * U^c_2(\tau^*) * S^c_2(\tau^*)$ and $T(\tau, \tau^*)$ may be absorbed
  into a generic object $\lauxgen^{\rm coll}$ for $\Coll^W(\rho^+, \formerlyrho^{a^*}_1)$ where the  quotient forcing has cardinality $\formerlyrho^{a^*}_1$.
  Exactly as before we obtain a model
  $M^{**} = V[\lauxgen^{\rm coll} \times h][\rauxgen] = V[\lauxgen \times \rauxgen]$,
    where the tree property
  holds at $\formerlyrho^{a*}_{\omega+1}$,  and the passage from $M^*$ to $M^{**}$ does not add branches to  $\formerlyrho^{a*}_{\omega+1}$-trees.
  So $\formerlyrho^{a*}_{\omega+1}$ has the tree property in $M$ and we are done.

\subsubsection{The cardinal $\formerlyrho^a_{\omega+1}$ at the first Prikry point}: \label{GroupIVc}

Let $\tau'$ be the first Prikry point, so that $\formerlyrho^a_{\omega+1} = \Lambda^a_{\omega+1}(\tau')$.
We use the same notation as in Section \ref{GroupIII}. 
By the usual analysis, all the relevant trees lie in the model
\begin{align*}
    M = &V[h][L \restriction \tau'][A_{[1, \omega)} * U_{[1, \omega)} * S][A^V_0(\tau')][L^b(\tau') \restriction \formerlyrho^a_{\omega+2}]\\
    &[A^b_1(\tau') \restriction \formerlyrho^a_{\omega+2} * U^b_0(\tau') * S^b_0(\tau')] \\
\end{align*}

To help keep track of the indices, recall that $A_{[1, \omega)} * U_{[1, \omega)} * S$ comes from an $\A * \U * \S$ construction
with parameters $\omega, \rho^+, \theta, \formerlyrho^a_0, \ldots$. So the generic objects that we will absorb into
$\raux(\formerlyrho^a_0)$ are $A_{[3, \omega)}$, $U_{[2, \omega)}$, $S_{[2, \omega)}$, $A^V_0(\tau')$, $L^b(\tau') \restriction \formerlyrho^a_{\omega+2}$,
  $A^b_1(\tau') \restriction \formerlyrho^a_{\omega+2}$, $U^b_0(\tau')$, and $S^b_0(\tau')$.  Forcing with a suitable quotient
 to term forcing to absorb these generic objects
into an $\raux(\formerlyrho^a_0)$-generic object $\rauxgen$, we obtain a model
\[
M^* = V[h][L \restriction \tau][A_{[1, 2]} * U_1 * S_{[0, 1]}][\rauxgen]
\]
such that the passage from $M$ to $M^*$ adds no branches to $\formerlyrho^a_{\omega+1}$-trees from $M$.

We then force to absorb $L \restriction \tau$, $A_{[1, 2]}$, $U_1$, $S_0$ and $S_1$ into
a $\Coll^{\bar W}(\rho^+, \formerlyrho^a_1)$-generic object $\lauxgen^{\rm coll}$. Most of these objects are generic
for $\rho$-closed forcing posets defined in $V$ or generic extensions of $V$. The
exception is $S_0$, which we may absorb because (by the careful choice of $\C_0$ in
Section \ref{firstprikry}) it is the projection of a $\rho$-closed term forcing defined in ${\bar W}$. 
As usual we have absorbed $M^*$ into 
\[
M^{**} = V[\lauxgen^{\rm coll} \times h \times \rauxgen] = V[\lauxgen \times \rauxgen],
\]
without adding branches to $\formerlyrho^a_{\omega+1}$-trees from $M^*$, and
$\formerlyrho^a_{\omega+1}$ has the tree property in $M^{**}$.
 So $\formerlyrho^{a}_{\omega+1}$ has the tree property in $M$ and we are done.

\section{The tree property above $\aleph_{\omega^2}$ in the final model} \label{treeprop2}

\subsection{The tree property at $\aleph_{\omega^2+1}$}  \label{plusone}

We  argue that in our final model the tree property holds at $\aleph_{\omega^2 + 1}$.
The arguments are parallel to those in Sinapova's paper \cite[Section 4]{sinapova-omega^2},
 and also use ideas from work of Sinapova and Unger \cite{SinapovaUnger},   
but there are some additional complications: the relevant generic supercompactness embeddings
are added by a more complex forcing poset, and there are extra issues with the
constraint functions in the Prikry-type poset $\bar \P$.

Recall that $\bar \P \in V[L][\Agg]$ where $\Agg$ is generic over $V[L]$ for
the auxiliary forcing $\Aggforcing = \Add^V(\lambda^b_{\omega+2}, j_{01}(\lambda^a_0))$,
and $\Aggforcing$ is $<\lambda^b_{\omega+2}$-distributive in $V[L]$. We
defined $\bar \P$ using a filter $K \in V[L][\Agg]$ which is $\Q_\infty$-generic over $M_1^*$.

Before starting the proof, we derive some auxiliary filters ${\mathcal F}_n$ from $K$, working in $V[L][\Agg]$.
We recall from the discussion at the end of Section \ref{auxcomp}
that for $\alpha \in Y$, $\Q(\alpha, \kappa) = j_{01}^*(\Q)(\alpha, \kappa)$.

\begin{definition}
  Let $B_n=\{  (x, q) :  x\in P_\kappa(\lambda_n^b), q \in \Q(\kappa(x), \kappa) \}$.
\end{definition}
Note that $\vert B_n \vert = \lambda^b_n$.

\begin{definition}
  Let $C_n$ be the set of functions $F$ such that $\dom(F) \in U_{n-1} \times U_n$,  $F(x, y) \in \Q(x, y)$ for all $x$ and $y$
  and $[F]_{U_{n-1} \times U_n} \in K$.
\end{definition} 
If $F \in C_{n+1}$, and $x \in P_\kappa \lambda^b_n$
is such that $\{ y : (x, y) \in \dom(F) \} \in U_{n+1}$, 
then $[F(x, -)]_{U_{n+1}} \in \Q(\kappa(x), \kappa)$. 

\begin{remark} In the sequel we will often drop the subscript for the measure in expressions like
  ``$[F(x, -)]_{U_{n+1}}$'', where the relevant measure should be clear from the context.
\end{remark}

Note that $C_n$ includes the functions which can appear as $F^p_n$ for some $p \in \bar \P$. 

\begin{definition} A subset $E$ of $B_n$ is {\em downwards closed} if
  whenever $(x, q) \in E$ and $q' \le q$ then $(x, q') \in E$
\end{definition}

 \begin{definition} Let $\mathcal{F}_n $ be the filter on $B_n$  defined as follows.
   $E \in \mathcal{F}_n$ if and only there exist a set $D \in U_n$ and a function $F \in C_{n+1}$
   such that $(x, q) \in E$ for all $x \in D$ and all $q \le [F(x, -)]_{U_{n+1}}$.
\end{definition}

 \smallskip
 
\noindent Global notation: $\mathcal{F}_n$\index[Notation]{${\mathcal F}_n$}
 
\smallskip

The following Lemma is immediate from the definition and the agreement between
$j_n$ and $j_{01}^*$:
 \begin{lemma} \label{menachem}
   $E \in {\mathcal F}_n$ if and only if there is a condition $r \in K$ such that
   $(j_{01}[\lambda^b_n], q) \in j_{01}^*(E)$ for all $q \le r$.
 \end{lemma}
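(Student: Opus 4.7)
My plan is to prove both directions by translating the membership condition for $\mathcal{F}_n$ through the ultrapower maps, using the factor embedding $k_n : N_n \to M_1^*$ to relate the ``generic class" $[F(x,-)]_{U_{n+1}}$ at the seed $x = j_{01}[\lambda^b_n]$ to the product-measure class $[F]_{U_n \times U_{n+1}}$. First I would establish the key identity
\[
[j_{01}^*(F)(j_{01}[\lambda^b_n],-)]_{j_{01}^*(U_{n+1})} = k_n\bigl([F]_{U_n \times U_{n+1}}\bigr).
\]
This follows by a diagram chase using $j_{01}^* = k_n \circ j_n$ and the elementarity of $k_n$: applying $k_n$ to the $N_n$-side computation $[F]_{U_n \times U_{n+1}} = j_n(j_{n+1})(j_n(F)(j_n[\lambda^b_n],-))(y^2_{n+1})$ and noting that $k_n(j_n[\lambda^b_n]) = j_{01}[\lambda^b_n]$, $k_n(j_n(j_{n+1})) = j_{01}^*(j_{n+1})$, and $k_n(y^2_{n+1})$ is the seed representing the class modulo $j_{01}^*(U_{n+1})$ yields the identity.

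Next, because $\Q_\infty$ can be coded by ordinals below $j_{01}(\lambda^b_{17}) = \crit(k_n)$, the map $k_n$ restricts to the identity on $\Q_\infty$. Hence $k_n([F]_{U_n\times U_{n+1}}) = [F]_{U_n\times U_{n+1}}$, and the identity above simplifies to
\[
[j_{01}^*(F)(j_{01}[\lambda^b_n],-)]_{j_{01}^*(U_{n+1})} = [F]_{U_n\times U_{n+1}}.
\]

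For the forward direction, suppose $E \in \mathcal{F}_n$ is witnessed by $D \in U_n$ and $F \in C_{n+1}$, and set $r = [F]_{U_n \times U_{n+1}} \in K$. Applying $j_{01}^*$ to the defining statement of ``$(x,q) \in E$ for all $x \in D$ and $q \le [F(x,-)]$" and evaluating at the seed $x = j_{01}[\lambda^b_n]$ (which lies in $j_{01}^*(D)$ since $D \in U_n$), together with the identity above, gives $(j_{01}[\lambda^b_n],q) \in j_{01}^*(E)$ for every $q \le r$.

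For the reverse direction, given $r \in K$ satisfying the conclusion, choose any $F' \in \mathbf{F}_{n+1}$ with $[F']_{U_n \times U_{n+1}} = r$, so $F' \in C_{n+1}$, and let $D'$ be the domain set in $U_n$ on which $[F'(x,-)]_{U_{n+1}}$ is defined. Define
\[
D = \bigl\{ x \in D' : \forall\, q \le [F'(x,-)]_{U_{n+1}},\ (x,q) \in E \bigr\}.
\]
By elementarity and the seed identity, $j_{01}[\lambda^b_n] \in j_{01}^*(D)$ reduces precisely to the hypothesis that $(j_{01}[\lambda^b_n],q) \in j_{01}^*(E)$ for all $q \le r$. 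Thus $D \in U_n$, and $D$ together with $F'$ witnesses $E \in \mathcal{F}_n$. The main obstacle is verifying the seed identity and that $k_n$ fixes $\Q_\infty$; once these are in place, both directions are symmetric applications of \L o\'s's theorem to the factor $k_n$.
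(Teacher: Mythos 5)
Your proof is correct and is exactly the argument the paper has in mind: the paper gives no proof at all, declaring the lemma ``immediate from the definition and the agreement between $j_n$ and $j_{01}^*$'', and your seed identity $[j_{01}^*(F)(j_{01}[\lambda^b_n],-)]_{j_{01}^*(U_{n+1})} = [F]_{U_n\times U_{n+1}}$ together with the fact that $k_n$ fixes $\Q_\infty$ is precisely the content of that agreement. One small inaccuracy: the paper only establishes $\crit(k_n) > j_{01}(\lambda^b_{17})$, not $\crit(k_n) = j_{01}(\lambda^b_{17})$; since $\Q_\infty$ is coded as a bounded subset of $j_{01}(\lambda^b_{17})$, this is all you need and the argument is unaffected.
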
   

  The following Lemma should be viewed as expressing an ``ultrafilter-like'' property of ${\mathcal F}_n$.
 
  \begin{lemma} \label{ultrafilter-like}
    Let $E \subseteq B_n$ and let  $E$ be downwards closed. Then $E\in \mathcal{F}_n$ or $B_n \setminus E \in {\mathcal F}_n$.
\end{lemma}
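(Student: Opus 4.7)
The plan is to reduce everything to the generic filter $K$ via Lemma~\ref{menachem}, and then run a simple density argument exploiting the downward closure of $E$.

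First I would translate the question using Lemma~\ref{menachem}. Observe that $\kappa(j_{01}[\lambda^b_n]) = j_{01}[\lambda^b_n] \cap j_{01}(\kappa) = \kappa$ since $\crit(j_{01}) = \kappa$, so $j_{01}^*(\Q)(\kappa(j_{01}[\lambda^b_n]), j_{01}(\kappa)) = \Q(\kappa, j_{01}(\kappa)) = \Q_\infty$. Define
\[
E^* = \{\, q \in \Q_\infty : (j_{01}[\lambda^b_n], q) \in j_{01}^*(E) \,\}.
\]
By the downward closure of $E$ in $B_n$ and the elementarity of $j_{01}^*$, the set $j_{01}^*(E)$ is downwards closed in $j_{01}^*(B_n)$, and hence $E^*$ is a downwards closed subset of $\Q_\infty$ (in $M_1^*$).

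Next I would verify that $D = E^* \cup \{\, q \in \Q_\infty : q \text{ is incompatible with every element of } E^* \,\}$ is dense in $\Q_\infty$. Indeed, given any $q \in \Q_\infty$, either there is $q' \in E^*$ compatible with $q$, in which case any common refinement $q'' \le q, q'$ lies in $E^*$ by downward closure, giving a strengthening of $q$ inside $E^*$; or $q$ is incompatible with every element of $E^*$, in which case $q \in D$ already. Since $K$ is $\Q_\infty$-generic over $M_1^*$ and $D \in M_1^*$, pick $r \in K \cap D$.

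If $r \in E^*$, then by downward closure every $q \le r$ lies in $E^*$, so $(j_{01}[\lambda^b_n], q) \in j_{01}^*(E)$ for all $q \le r$, and Lemma~\ref{menachem} yields $E \in \mathcal{F}_n$. Otherwise $r$ is incompatible with every element of $E^*$; then any $q \le r$ is also incompatible with every element of $E^*$, so in particular $q \notin E^*$, which gives $(j_{01}[\lambda^b_n], q) \notin j_{01}^*(E)$. Since $(j_{01}[\lambda^b_n], q) \in j_{01}^*(B_n)$ (as $j_{01}[\lambda^b_n] \in j_{01}(P_\kappa \lambda^b_n)$ and $q \in \Q_\infty$), this places $(j_{01}[\lambda^b_n], q) \in j_{01}^*(B_n \setminus E)$, and Lemma~\ref{menachem} applied to $B_n \setminus E$ gives $B_n \setminus E \in \mathcal{F}_n$. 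There is no real obstacle here beyond verifying that $E^*$ really is downwards closed and that the natural density set $D$ belongs to $M_1^*$ (which it does, since $E$, $j_{01}^*$ and $\Q_\infty$ are all available there).
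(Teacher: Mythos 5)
Your proof is correct and follows essentially the same route as the paper: both reduce the claim to genericity of $K$ over $M_1^*$ via Lemma~\ref{menachem} and meet a dense set that decides whether some condition has all its extensions inside, or all outside, the set $E^* = \{q : (j_{01}[\lambda^b_n], q) \in j_{01}^*(E)\}$. Your formulation via incompatibility with $E^*$ coincides with the paper's "no extension lies in $E^*$" once downward closure is invoked, so the two arguments are the same.
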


\begin{proof}
  Let $D$ be the dense set of conditions $r$ in $\Q_\infty$ such that either $(j_{01}[\lambda^b_n], r) \in j_{01}^*(E)$
  or there is no $q \le r$ with $(j_{01}[\lambda^b_n], q) \in j_{01}^*(E)$. Since $D \in M_1^*$
  and $K$ is $\Q_\infty$-generic over $M_1^*$, there is $r \in K \cap D$, and the conclusion follows. 
\end{proof}

It is easy to see that $\mathcal{F}_n$ is a $\kappa$-complete filter on $B_n$.
We will also need a version of normality
for families of ${\mathcal F}_n$-large sets indexed by lower parts.

  Recall that for  $p=\langle q_{17}, x_{17}, . . . q_{n-1}, x_{n-1}, f_n, A_n, F_{n+1}, A_{n+1}, F_{n+2}, A_{n+2}, ...\rangle$ a condition in $\bar\P$,
  the stem of $p$ is
  $\stem(p)=\langle q_{17}, x_{17}, . . . q_{n-1}, x_{n-1}, [f_n]_{U_n}\rangle$,
  the lower part of $p$ is $\langle q_{17}, x_{17}, . . . q_{n-1}, x_{n-1} \rangle$,
  the length of $p$ is $\lh(p) = n$, and $L_n$ is the set of lower parts of conditions with length $n$.

\begin{definition} \label{diagint} 
  Let $(E_s)_{s \in L}$ be a family of subsets of $B_n$ indexed by some set $L \subseteq L_n$ of lower parts.
  Then the {\em diagonal intersection} $\Delta_{s \in L} E_s$
  is 
  $\{ (x, q) \in B_n : \forall s \in L  (s \prec x \implies (x, q) \in E_s) \}$.
\end{definition}

\begin{lemma} \label{normal-like}
Let $(E_s)_{s \in L}$ be such that $E_s \in {\mathcal F}_n$ for all $s \in L$. Then 
$\Delta_{s \in L} E_s \in {\mathcal F}_n$. 
\end{lemma}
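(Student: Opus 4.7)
The plan is to reformulate via Lemma \ref{menachem}, unwind the image of the diagonal intersection at the master seed $j_{01}[\lambda^b_n]$, and then exploit the closure of $M_1^*$ together with the distributivity of $\Q_\infty$ and the genericity of $K$.

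By Lemma \ref{menachem}, it suffices to find $r\in K$ such that $(j_{01}[\lambda^b_n],q)\in j_{01}^*(\Delta_{s\in L} E_s)$ for every $q\le r$. First I would unwind the image at $x=j_{01}[\lambda^b_n]$: a critical point argument (every $s\in L_n$ is a short sequence whose last $x$-entry lies in $P_\kappa\lambda^b_{n-1}$, hence has size less than $\kappa=\crit(j_{01}^*)$) shows that $\{t\in j_{01}^*(L):t\prec j_{01}[\lambda^b_n]\}=j_{01}^*[L]$ and that $j_{01}^*((E_s)_{s\in L})(j_{01}^*(\bar s))=j_{01}^*(E_{\bar s})$ for $\bar s\in L$. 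So the task reduces to finding $r\in K$ such that for every $\bar s\in L$ and every $q\le r$, $(j_{01}[\lambda^b_n],q)\in j_{01}^*(E_{\bar s})$.

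Next I would argue that $(E_s)_{s\in L}\in M_1^*$. Since $|L|\cdot 2^{\lambda^b_n}<\lambda^b_{\omega+2}$ and $\Aggforcing$ is $<\lambda^b_{\omega+2}$-distributive over $V[L]$, the sequence already lies in $V[L]$; by the closure of $M_1^*$ under $<\lambda^b_{\omega+3}$-sequences from $V[L]$ it then lies in $M_1^*$. By the same closure, $j_{01}^*\restriction L\in M_1^*$ and $j_{01}[\lambda^b_n]=j_{01}^*[\lambda^b_n]\in M_1^*$, so the set $T=j_{01}^*[L]$ lies in $M_1^*$ with $|T|^{M_1^*}=|L|<\lambda^b_{\omega+2}$.

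For each $t\in T$ the set $R^*_t=\{r\in\Q_\infty:\forall q\le r\;(j_{01}[\lambda^b_n],q)\in j_{01}^*((E_s)_{s\in L})(t)\}$ lies in $M_1^*$ and is downward closed, and its companion $O_t=R^*_t\cup\{r:\text{no extension of }r\text{ lies in }R^*_t\}$ is dense open in $\Q_\infty$ in $M_1^*$. Since $\Q_\infty$ is $<\lambda^b_{\omega+2}$-distributive in $M_1^*$ (by elementarity from Lemma \ref{qdistribj}, noting $j_{01}^*(\Lambda^b_{\omega+2})(\kappa)=\lambda^b_{\omega+2}$) and $|T|<\lambda^b_{\omega+2}$, the intersection $O=\bigcap_{t\in T} O_t$ is dense in $M_1^*$. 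By genericity of $K$ over $M_1^*$ pick $r\in K\cap O$. For each $\bar s\in L$, Lemma \ref{menachem} supplies some $r_{\bar s}\in K\cap R^*_{j_{01}^*(\bar s)}$; since $r,r_{\bar s}\in K$ they have a common extension $r'\in K$, and downward closure places $r'\in R^*_{j_{01}^*(\bar s)}$, ruling out the second alternative in $O_{j_{01}^*(\bar s)}$. Hence $r\in R^*_{j_{01}^*(\bar s)}$ for every $\bar s\in L$, which by Lemma \ref{menachem} is precisely $\Delta_{s\in L}E_s\in\mathcal{F}_n$. The main obstacle is the bookkeeping that locates $(E_s)_{s\in L}$ and $j_{01}^*[L]$ inside $M_1^*$ with $|j_{01}^*[L]|^{M_1^*}$ strictly below the distributivity bound, so that the $R^*_t$ can be assembled in $M_1^*$ and hit by the generic filter.
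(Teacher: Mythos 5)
Your proof is correct and uses the same ingredients as the paper's: Lemma \ref{menachem}, the closure of $M_1^*$, and the genericity of $K$ over $M_1^*$ together with the distributivity of $\Q_\infty$; the paper simply chooses for each $s\in L$ a witness $r_s\in K$ supplied by Lemma \ref{menachem}, notes $(r_s)_{s\in L}\in M_1^*$ by closure, and takes $r\in K$ below all the $r_s$, which is exactly what your dense sets $O_t$ accomplish in a slightly more roundabout way. One cosmetic slip: the inequality $|L|\cdot 2^{\lambda^b_n}<\lambda^b_{\omega+2}$ is false in the relevant range $n\ge 17$ (where $2^{\lambda^b_n}=\lambda^b_{\omega+3}$ in $V[L]$), but all you actually need is that $(E_s)_{s\in L}$ codes as a bounded subset of $\lambda^b_{\omega+2}$ (since $|L|\cdot|B_n|<\lambda^b_{\omega+2}$), so the appeal to the $<\lambda^b_{\omega+2}$-distributivity of $\Aggforcing$ goes through unchanged.
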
   

\begin{proof}
  By the characterisation of ${\mathcal F}_n$, for each $s \in L$ we choose $r_s \in K$ such that
  $(j_{01}[\lambda^b_n], q) \in j_{01}^*(E_s)$ for all $q \le r_s$. Since $(r_s)_{s \in L} \in M_1^*$ by closure,
  and $K$ is generic over $M_1^*$, there is $r \in K$ such that $r \le r_s$ for all $s$.
  As usual $j_{01}^*[L] = \{ t \in j_{01}(L) : t \prec j_{01}[\lambda^b_n] \}$.
  For $q \le r$ we have $(j_{01}[\lambda^b_n], q) \in j_{01}^*(E)_t$ for all $t \in j_{01}^*[L]$,
  so that $(j_{01}[\lambda^b_n], q) \in j_{01}^*(\Delta_{s \in L} E_s)$. It follows
  that $\Delta_{s \in L} E_s \in {\mathcal F}_n$. 
\end{proof}

\begin{definition} \label{fakeforcing}  Let $h$ be a stem.
  $h \forces^* \phi$ if and only if 
  there is a condition $p$ such that $stem(p) = h$ and $p \forces \phi$.
\end{definition}

\smallskip

\noindent Global notation: $\forces^*$\index[Notation]{$\forces^*$}

\smallskip

  Let $\nu = \lambda^b_\omega$, $\mu = \lambda^b_{\omega+1}$. The strategy of the proof will be to introduce
  an auxiliary forcing $\R \in V[L][\Agg]$,  show that every $\mu$-tree in $V[L][\Agg][\bar P]$ has a branch in
  $V[L][\Agg][R][\bar P]$, and use Lemma \ref{preslemma} to show there is a branch in $V[L][\Agg][\bar P]$.  
  Let $\dot{T}$ in $V[L][\Agg]$  be a $\bar\P$-name for a $\mu$-tree. We assume that
  $T \subseteq \mu \times \kappa$, and that level $\alpha$ is a subset of $\{ \alpha \} \times \kappa$:
  this makes sense because $\mu = \kappa^+$ in $V[L][\Agg][\bar P]$.

  For the following lemma we work in $V[L][\Agg]$.
  Before stating and proving the lemma we make a remark on compatibility of conditions
  in $\bar \P$ which explains some complications in the proof.

  \begin{remark} If $p$ and $q$ are compatible conditions in $\bar \P$ with
    $\lh(p) = \lh(q) = n$, it does not follow in general that $p$ and $q$ have a common lower
    bound $r$ with $\lh(r) = n$. The issue is that possibly $[f^p_n]_{U_n}$ and $[f^q_n]_{U_n}$ are incompatible,
    so there is no reasonable choice for $f^r_n$:  compatibility of $p$ and $q$ only guarantees that
    $f^p(x)$ and $f^q(x)$ are compatible for at least one value of $x$. If
    $p$ and $q$ have the same stem $h$, then they are compatible and there is $r \le p, q$
    with $\stem(r) = h$. 
  \end{remark}

\begin{lemma}\label{lemma1} Let $p \in \bar \P$.
  There are $n < \omega$ and cofinal $I \subseteq \mu$,
  such that for all $\alpha < \beta$ both in $I$,
  there are a condition $p' \le p$  in $\bar\P$ of length $n$, and $\xi, \delta<\kappa $ such that:
  $p' \forces \langle \alpha, \xi\rangle <_{\dot{T}} \langle\beta, \delta\rangle$.
\end{lemma}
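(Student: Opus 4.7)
The plan is a standard pigeonhole argument combined with iterated applications of the Prikry Lemma. Without loss of generality assume $\lh(p) > 18$ (extend by a direct extension if needed) so that the Prikry Lemma \ref{prikry} applies. For each $\alpha < \mu$, use density to pick $q_\alpha \le p$ and $\xi_\alpha < \kappa$ with $q_\alpha \forces \langle\alpha,\xi_\alpha\rangle \in \dot T$. By Remark \ref{countstems} the total number of stems of $\bar\P$ is at most $\nu = \lambda^b_\omega$, so the pair $(\stem(q_\alpha), \xi_\alpha)$ takes at most $\nu \cdot \kappa = \nu$ values, and since $\mu = \nu^+$ is regular, pigeonhole yields a cofinal $I \subseteq \mu$, a stem $h$ of length some $n > 18$, and $\xi < \kappa$ such that $(\stem(q_\alpha), \xi_\alpha) \equiv (h,\xi)$ on $I$. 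This $I$ and $n$ will be the witnesses.

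Fix $\alpha < \beta$ in $I$. Lemma \ref{samestemub} gives $r \le q_\alpha, q_\beta$ with $\stem(r) = h$, so $\lh(r) = n$ and $r$ forces $\langle\alpha,\xi\rangle, \langle\beta,\xi\rangle \in \dot T$. Since $r$ forces $\dot T$ to be a tree and $\langle\beta,\xi\rangle$ to lie at level $\beta$, there is a $\bar\P$-name $\dot\eta$ for an ordinal $< \kappa$ with $r \forces \langle\alpha,\dot\eta\rangle <_{\dot T} \langle\beta,\xi\rangle$. The task reduces to producing a direct extension $p' \le^* r$ (so $\lh(p') = n$) forcing $\dot\eta$ to equal some specific $\eta_0$; then taking the lemma's $\xi$ to be $\eta_0$ and its $\delta$ to be $\xi$ completes the proof.

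I first apply the Prikry Lemma to the Boolean value $\llbracket \dot\eta = \xi \rrbracket$, obtaining a direct extension deciding it; in the affirmative case $\eta_0 = \xi$ works. Otherwise I iterate the Prikry Lemma on the Boolean values $\llbracket \dot\eta < \kappa(\dot x_{n+i}) \rrbracket$ for $i = 0, 1, 2, \ldots$, where $\dot x_{n+i}$ names the $(i+1)$-th Prikry point past $x_{n-1}$. All these extensions share the stem $h$, and by the $\omega$-closure of same-stem extensions from Lemma \ref{omegasamestemub}, if all answers were negative then a common lower bound would force $\dot\eta \ge \sup_i \kappa(\dot x_{n+i}) = \kappa$ (using Lemma \ref{cardinals_in_Prikry_extension} that $\kappa = \sup_n \kappa(x_n)$), contradicting $r \forces \dot\eta < \kappa$. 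Hence at some finite stage $m$ I obtain a direct extension $r_m$ forcing $\dot\eta < \kappa(\dot x_{n+m-1})$. Then I apply the Strong Prikry Lemma \ref{strongPrikry} to $r_m$ and the dense open set of conditions deciding $\dot\eta$ to get $r^* \le^* r_m$ and some $k \ge m$ (enlarging $k$ if necessary, using that $E^{(k)} \subseteq E^{(k+1)}$) such that every $k$-step extension of $r^*$ decides $\dot\eta$. The resulting coloring $c(\vec x)$ on $\prec$-increasing $k$-tuples from $\prod_{i<k} A^{r^*}_{n+i}$ satisfies $c(\vec x) < \kappa(x_{n+m-1})$, hence is regressive in the coordinate $x_{n+m-1}$; normality of $U_{n+m-1}$ makes $c$ constant in that coordinate on a measure-one refinement. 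Iterated application of normality for the remaining coordinates, supplemented where necessary by further Prikry-lemma reductions of $\dot\eta$ against $\kappa(\dot x_{n+j})$ for other $j$ to regain regressiveness, homogenizes $c$ to a constant value $\eta_0$, yielding the required $p' \le^* r$ forcing $\dot\eta = \eta_0$.

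The main obstacle is the final homogenization step: the direct extension order $\le^*$ on $\bar\P$ is not $\kappa$-closed (the interleaved forcings $\Q(\tau, \tau^*)$ are only $\rho$-closed by Remark \ref{q1and q2closure}), so one cannot simply iterate the Prikry Lemma through all $\kappa$ candidate values of $\dot\eta$, and Lemma \ref{rowbottom} does not directly apply because the coloring $c$ a priori uses $\kappa$ colors. The bound ``$\dot\eta < \kappa(\dot x_{n+m-1})$'' secured by the first phase is what makes iterated normality effective, and the $\omega$-closure of same-stem extensions from Lemma \ref{omegasamestemub} is essential for combining the preliminary $\omega$-many Prikry-lemma applications into a single direct extension of $r$.
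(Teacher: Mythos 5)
There is a genuine gap, in fact two. First, your claim that the $\omega$-many direct extensions produced by iterating the Prikry Lemma ``all share the stem $h$'' is false: a direct extension of $r$ refines the lower-part entries $q_i$ and the class $[f_n]_{U_n}$, so it changes the stem (the paper flags exactly this in the preamble to Lemma \ref{preslemma}: ``taking a direct extension can change the stem, and the direct extension ordering is not countably closed''). Consequently Lemma \ref{omegasamestemub} does not apply to combine your conditions $r_0, r_1, \dots$, and since the direct extension order is not countably closed (e.g.\ the $\Coll(\omega,\rho)$ factor of $\Q^*(\tau_{17})$ sits inside the lower parts), the successive extensions cannot be chained to a lower bound either. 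So the ``first phase'' that was to bound $\dot\eta$ below some $\kappa(\dot x_{n+m-1})$ collapses.

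Second, and more fundamentally, the reduction to ``decide $\dot\eta$ by a direct extension of $r$'' cannot succeed: the level-$\alpha$ predecessor of $\langle\beta,\xi\rangle$ in $\dot T$ genuinely depends on the Prikry points yet to be added, so no condition of the fixed length $n$ can determine it. The Strong Prikry Lemma only yields that all $k$-step extensions decide $\dot\eta$, with values forming a colouring into $\kappa$ colours; your homogenization then needs regressiveness in the first new coordinate $x_n$, which you have no way to arrange (one cannot force $\dot\eta<\kappa(\dot x_n)$), so Lemma \ref{rowbottom} does not apply. What your pigeonhole actually gives is only that $\langle\alpha,\xi\rangle$ and $\langle\beta,\xi\rangle$ are both forced into $\dot T$ — not that they are comparable. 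The missing idea is the one the paper uses: take the supercompactness measure $U_\mu$ on $P_\kappa\mu$ (still a measure in $V[L][\Agg]$ by distributivity of $\Aggforcing$), pass to $j^*_\mu$, put $\kappa$ on the Prikry sequence of $j^*_\mu(p)$, and anchor everything to the single node $u$ at level $\sup j^*_\mu[\mu]$ of $j^*_\mu(\dot T)$; the names $\dot\xi_\alpha$ for the predecessors of $u$ are then handled uniformly by Lemma \ref{shortsequencenames}, the pigeonhole is applied to the pairs (lower-part extension, number of steps $k_\alpha$) rather than to stems, and comparability of the resulting nodes is automatic because they all lie below $u$. Elementarity then produces the conditions $p'$ of one fixed length $n$.
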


\begin{proof}
  Recall from Section \ref{auxcomp} that we defined an embedding $j_{01}^*$ in $V[L]$ witnessing that $\kappa$ is
  $<\lambda^b_{\omega + 3}$-supercompact, and used this to derive the supercompactness measures $U_n$
  on $P_\kappa \lambda^b_n$. Let $U_\mu$ be the supercompactness measure on $P_\kappa \mu$
  derived from $j_{01}^*$.
  
  Observe that $\vert P_\kappa \mu \vert = \mu$ \cite{Solovay}. 
  Since $\Agg$ is generic for $<\lambda^b_{\omega+2}$-distributive forcing, it is easy to see that
  $U_\mu$ is still a supercompactness measure on $P_\kappa \mu$ in $V[L][\Agg]$,
  and that taking the ultrapower of $V[L][\Agg]$ by $U_\mu$ gives
  an embedding  $j^*_\mu: V[L][\Agg] \rightarrow N$ which lifts the ultrapower of $V[L]$ by $U_\mu$. 
  It follows that $j^*_\mu$ induces the measure $U_n$ for every $n$.
  
  Let $p \in \bar \P$ have length $m$, so that the first measure one set appearing
  in $p$ is $A^p_m \in U_m$. By the choice of $j^*_\mu$
  we have that $j^*_\mu[\lambda^b_m]  \in j^*_\mu(A^p_m)$,
  so we may form in $j^*_\mu(\bar \P)$ a one point extension $q$ of $j^*_\mu(p)$ which forces that
  $\kappa$ is the Prikry point with index $m$.

  Let $\tau_{17}, \ldots \tau_m$ be the Prikry points determined by $q$, so that
  $\tau_m = \kappa$, and let $\Q_{\rm low} = \prod_{17 \le i < m} {\Q}^N(\tau_i, \tau_{i+1})$.
  We observe that $\vert \Q_{\rm low} \vert = \lambda^a_{\omega+2} < \mu$, and that every extension $r$ of $q$ determines
  a condition $r_{\rm low} \in \Q_{\rm low}$.

  Now let $u = (\sup(j^*_\mu[\mu]), 0)$ and for all $\alpha < \mu$
  let $\dot \xi_\alpha$ name the unique  ordinal $\xi_\alpha < j^*_\mu(\kappa)$
  such that $(j^*_\mu(\alpha), \xi_\alpha) <_{j^*_\mu(\dot T)} u$.

  By elementarity and Lemma \ref{shortsequencenames}, 
  we may find $r \le^* q$ together with $(s_\alpha)_{\alpha < \mu}$, $(k_\alpha)_{\alpha < \mu}$
  such that:
  \begin{itemize}
  \item $r_{\rm low} = q_{\rm low}$.
  \item $s_\alpha \in \Q_{\rm low}$ with $s_\alpha \le r_{\rm low}$.
  \item $k_\alpha < \omega$.   
  \item If $r_\alpha$ is the condition obtained from $r$ by extending $r_{\rm low}$ to $s_\alpha$, then
    every $k_\alpha$-step extension of $r_\alpha$ decides $\dot \xi_\alpha$.
  \end{itemize}

  Let $I \subseteq \mu$ be an unbounded set such that for some $k < \omega$ and $s \in \Q_{\rm low}$,
  $k_\alpha = k$ and $s_\alpha = s$ for all $\alpha \in I$. Let $r'$ be obtained from $r$ by extending 
  $r_{\rm low}$ to $s$, let $r''$ be some $k$-step extension of $r'$, and for each $\alpha \in I$
  let $\xi_\alpha$ be the value of $\dot \xi_\alpha$ determined by $r''$.

  Let $r''$ have length $n$. By construction $r'' \le j^*_\mu(p)$, and for $\alpha < \beta$ both in $I$ we
  have $r'' \forces^N_{j^*_\mu(\bar \P)} (j^*_\mu(\alpha), \xi_\alpha) <_{j^*_\mu(\dot T)} (j^*_\mu(\beta), \xi_\beta)$.
  The desired conclusion follows by elementarity. 
\end{proof}


\begin{lemma} \label{keylifting}  There is a forcing poset $\R \in V[L]$ 
  such that, for all sufficiently large $\bar n < \omega$,
   there is a forcing poset $\P_1 \times \P_2 \times \P_3 \in V[L][R]$  such that:
  \begin{itemize}
  \item  $\lambda^b_{\bar n}$ is generically $\mu$-supercompact   in $V[L][\Agg][R]$ via
    $\P_1 \times \P_2 \times \P_3$.
  \item  $\R$ is countably closed and $<\mu$-distributive in $V[L][\Agg]$.
  \item  $\Aggforcing$ is $<\lambda^b_{\omega + 2}$-distributive in $V[L][R]$.
  \item  $\P_1 \times \P_{2b} \times \P_3$ is $<\lambda^b_{\bar n -1}$-distributive in
    $V[L][R][\Agg]$.
  \item   In $V[L][R][\Agg][P_1 \times P_{2b} \times P_3]$, $\mu$ has cardinality and cofinality
    $\lambda^b_{\bar n -1}$, and $\P_{2a}$ has the $\lambda^b_{\bar n -1}$-Knaster property. 
  \item  For  $\lambda' = \lambda^b_{\bar n - 3}$, there is a forcing poset in $V[L][R][\Agg]$ which adds
    $\lambda'$ mutually generic filters for $\P_1 \times \P_2 \times \P_3$, preserves the regularity
    of $\lambda'$, and forces that $\cf(\mu) > \lambda'$. 
  \end{itemize}   
\end{lemma}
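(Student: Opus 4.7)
The plan is to mimic the lifting architecture used in Lemma \ref{indestructible} and its ancestor Fact \ref{Itay4.12}, but now we must (i) arrive at a generic $\mu$-supercompactness embedding for $\lambda^b_{\bar n}$ in $V[L][\Agg][R]$ rather than just in a pure $\A*\U*\S$ extension, and (ii) arrange the auxiliary poset $\R$ so as to satisfy the hypotheses of the branch preservation Lemma \ref{preslemma}. First I would choose $\bar n$ large, well above $17$ and the indices where any preparation-relevant forcing is active, so that in $V[L\restriction\kappa]$ the cardinal $\lambda^b_{\bar n}$ is still supercompact and $\psi\restriction\lambda^b_{\bar n}$ (added by $L^b$) is an indestructible Laver function for it. Applying Fact \ref{universalindestructibleLaverfunction}, for each object we wish to guess we may select $j:V^{lb}(\kappa)\to M$ witnessing $\mu$-supercompactness of $\lambda^b_{\bar n}$ for which $j(\psi)(\lambda^b_{\bar n})$ names exactly the forcing needed to absorb the relevant data at the critical point, together with the condition that the next point in $\dom(j(\psi))$ past $\lambda^b_{\bar n}$ lies well above $\mu$.

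Next I would define $\R$ in $V[L]$ as the product of two kinds of term posets, taken in $V[L\restriction\kappa][L^b(\kappa)]$: a termspace version $\termspace(\A^b\restriction\lambda^b_{\bar n},\dot\U^b\restriction\lambda^b_{\bar n})$ together with the analogous term posets needed to remove the $A^b$-dependence of $S^b$ and of $\A_e\times\J^c$ in the interval below $\lambda^b_{\bar n}$. Exactly as in Facts \ref{Itay4.7} and \ref{Itay4.15}, each factor is $<\lambda^b_{\bar n}$-directed closed in the model where it is defined, so in $V[L][\Agg]$ the product $\R$ is countably closed and, by Easton and distributivity arguments parallel to those in the proof of Lemma \ref{Wgoodextn}, also $<\mu$-distributive. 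The mutual distributivity with $\Aggforcing$ follows from the fact that $\Aggforcing$ and $\R$ can be seen as a product of a $\lambda^b_{\omega+2}$-cc poset (in $V$) with a highly closed poset, so that $\Aggforcing$ retains its $<\lambda^b_{\omega+2}$-distributivity in $V[L][R]$.

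After forcing with $\R$, one lifts $j$ in stages: $\P_1$ absorbs the $<\lambda^b_{\bar n}$-directed closed ``lift-of-$L$'' forcing above $\lambda^b_{\bar n}$ (using the guess $j(\psi)(\lambda^b_{\bar n})$ to produce master-condition-style compatible generics, exactly in the spirit of Stage 1 of Lemma \ref{indestructible}); $\P_2=\P_{2a}\times\P_{2b}$ stretches the Cohen factors of $\A_e$ and the relevant $\A^c_i$ across $j$; and $\P_3$ stretches the corresponding term forcing for $\S$ over the interval $[\lambda^b_{\bar n},j(\lambda^b_{\bar n}))$. The chain condition of $\P_{2a}$ and the distributivity of $\P_1\times\P_{2b}\times\P_3$ in $V[L][R][\Agg]$ then follow by the same term-forcing/quotient-to-term analysis as in Lemma \ref{meetingindyreqs}: the key point is that the $\R$-generic converts $U^b$ and $S^b$ below $\lambda^b_{\bar n}$ into product-like generics, so the quotient posets that appear after applying $j$ are again Cohen-type or Levy-type posets with the desired combinatorics. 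For the last clause, one takes the product of $\lambda'=\lambda^b_{\bar n-3}$ copies of $\P_1\times\P_2\times\P_3$, using the support conventions forced by $\P_{2a}$ being Cohen at $\lambda^b_{\bar n-1}$ and $\P_1\times\P_{2b}\times\P_3$ being $<\lambda^b_{\bar n-1}$-distributive, to obtain the required mutual genericity while collapsing $\mu$ to have cofinality $\lambda^b_{\bar n-1}$.

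The main obstacle I anticipate is the simultaneous management of the many distributivity/chain-condition requirements across three different ``layers'' of the universe ($V[L][\Agg]$, $V[L][R]$, and $V[L][R][\Agg]$), which interact nontrivially because $\Agg$ is not in $V$ and $R$ depends on $L$ in a subtle way; specifically, ensuring that after the $\R$-forcing the residual ``Laverness'' of $\psi$ suffices to guess all the data needed to build $j^*$, and that $\Aggforcing$'s distributivity is not destroyed by $\R$, will require an Easton-style bookkeeping argument analogous to the one in Lemma \ref{Wgoodextn} carried out over the correct factorisation of $\A^b*\dot\U^b*\dot\S^b*\A_e*\dot\J^c$ into $\lambda^b_{\bar n}$-cc and $<\lambda^b_{\bar n}$-closed parts.
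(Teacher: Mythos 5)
Your overall architecture is the right one and matches the paper's: fix $\bar n$ large, treat $I^b=A^b*U^b*S^b$ as the $\A*\U*\S$ construction of Lemma \ref{indestructible} over $\Vd=\Vi=\Vlb(\kappa)$ with critical point $\lambda^b_{\bar n}$, repackage the remaining generic data as $\D^0\times\D^2$ via a preliminary quotient-to-term poset $\R$, and then read off the closure and chain-condition facts from the term-forcing analysis. However, your definition of $\R$ has two genuine problems.

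First, a quantifier problem: the lemma asserts the existence of a single $\R\in V[L]$ that works for \emph{all} sufficiently large $\bar n$, and the application in Section \ref{plusone} depends on this — one forces with $\R$ once and then invokes the conclusion for several different values of $\bar n$ (in Lemmas \ref{lemma2} and \ref{jlemma3}), all inside the one model $V[L][\Agg][R]$ to which Lemma \ref{preslemma} is finally applied. Your $\R$ contains the factor $\termspace(\A^b\restriction\lambda^b_{\bar n},\dot\U^b\restriction\lambda^b_{\bar n})$ and analogous term posets ``below $\lambda^b_{\bar n}$'', so it depends on $\bar n$. Moreover these factors are not needed: the part of $U^b*S^b$ below the critical point is small forcing relative to $\lambda^b_{\bar n}$ and is lifted trivially, while the dependence of $U^b$ and $S^b$ on $A^b$ \emph{above} the critical point is removed inside the lifting construction itself by the posets $\P_{1a}$ and $\P_{1b}$ (the analogues of Steps 1a and 1b of Lemma \ref{indestructible}), which form part of $\P_1$ and are forced after $\R$, not as part of it.

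Second, you have mislocated the real obstruction. What actually fails to fit the hypotheses of Lemma \ref{indestructible} is the tail $A_e\times J^c$ of $L(\kappa)$: its components live in the interval $(\lambda^b_{17},\lambda^b_{\omega+3})$, far above every $\lambda^b_{\bar n}$, and the non-Cohen parts $U^c_0*S^c_0$ and $U^c_1*S^c_1$ depend on $A^c_0$, $S^c_0$, $A^c_1$, $U^c_1$ — not on $A^b$. The job of $\R$ is precisely to replace these by generics $B^c_0\times C^c_0$ and $B^c_1\times C^c_1$ for the term posets $\termspace^{\Vlb(\kappa)}(\I^b,\B^c_0\times\C^c_0)$ and $\termspace^{\Vlb(\kappa)}(\I^b*\J^c_0,\B^c_1\times\C^c_1)$, which are $<\lambda^b_{\omega+1}$-directed closed in $\Vlb(\kappa)$ and hence qualify (together with $\A^c_1$) as $\D^2$, while $\A_e\times\A^c_0$ becomes $\D^0$. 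This $\R$ is independent of $\bar n$, is countably closed and $<\mu$-distributive by Lemma \ref{qtot-closed} plus the factorisation of $\I^b$ into a Knaster part and a closed part, and is exactly what makes the subsequent appeals to the Lemma \ref{indestructible} machinery (and the Easton computations in Claims \ref{34setup} and \ref{moresetup}) go through. You should also be explicit about the decomposition of $\I^b$ as a $\lambda^b_{\bar n-3}$-Knaster poset times a $<\lambda^b_{\bar n-3}$-closed poset when verifying that the mutual-genericity poset preserves the regularity of $\lambda'$ and forces $\cf(\mu)>\lambda'$; this is where the last clause is actually earned.
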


Before starting the proof, we make a remark about how the generic embedding from the
conclusion interacts with the supercompactness measures $U_m$ for $17 \le m < \bar n$.
The key points are that $\vert P_\kappa \lambda^b_m \vert = \lambda^b_m$ and
$2^{\lambda^b_m} = \lambda^b_{\omega+3}$, so that subsets of $P_\kappa \lambda^b_m$ are fixed,
while the power set of $P_\kappa \lambda^b_m$ and the measure $U_m$ are stretched. 

  \begin{proof}
  We will use ideas from the proof of Lemma \ref{indestructible}, in a context which is quite similar
  to that of Section \ref{GroupII}. The analogy with Section \ref{GroupII} is slightly imperfect, because 
  there we only needed to define the embedding on a submodel which contains all the subsets of the critical point. 
  In the discussion below this means that we need to deal with the whole of the last component
  $J^c$ in $L(\kappa)$, rather than just $A^c_0$.

  Recall that $V[L] = V[L \restriction \kappa][L(\kappa)]$, where
  $L(\kappa) = L^b * I^b *(A_e \times J^c)$. Decomposing further:
  \begin{itemize}
  \item  $I^b = A^b * U^b * S^b$, and is generic for an $\A * \U * \S$ construction with
    parameters  $\mu_0 = \lambda^a_{17}$, $\mu_1 = \lambda^a_{\omega+1}$, $\mu_2= \lambda^a_{\omega+2}$,
    $\mu_{n+3} = \lambda^b_n$. 
  \item  $\A_e = \Add^{\Vlb(\kappa)}(\lambda^b_{17}, \lambda^b_{\omega+3})$.
  \item  $\J^c = \J^c_0 * \J^c_1 =  (\A^c_0 * \U^c_0 * \S^c_0) * (\A^c_1 * \U^c_1 * \S^c_1)$, where the parameters are 
    $\mu_0 = \lambda^b_{17}$, $\mu_1 = \lambda^b_{\omega+1}$, $\mu_2 = \lambda^b_{\omega+2}$, $\mu_3 = \lambda^b_{\omega+3}$.
  \item  $\A^c_1 * \U^c_1 * \S^c_1$ is defined over
  $\Vlbi(\kappa)[A^c_0 * U^c_0 * S^c_0]$.
  \item $\A^c_0 = \Add^{\Vlb(\kappa)}(\lambda^b_{17}, [\lambda^b_{\omega+1},\lambda^b_{\omega+2}))$.
  \item  $\A^c_1 = \Add^{\Vlb(\kappa)}(\lambda^b_{\omega+1}, [\lambda^b_{\omega+2},\lambda^b_{\omega+3}))$.
  \end{itemize}

  The idea is now to construct a generic embedding as in the proof of Lemma \ref{indestructible}, where:
  \begin{itemize}
  \item $\Vlb(\kappa)$ plays the role of $\Vd$ (which coincides with $\Vi$).
  \item $I^b$ plays the role of $A * U * S$.
  \item $\lambda^b_{\bar n}$ plays the role of $\mu_{n+2}$.
  \end{itemize}
  Before we can do this we have to embed $A_e \times J^c$ into a generic object for a poset which
  meets the specifications of Lemma \ref{indestructible}.
  This is the job of the auxiliary poset $\R$.
  In the terminology of Lemma \ref{indestructible} 
  we will embed $\A_e \times \J^c$ into $\D^0 \times \D^2$, where $\D^0, \D^2 \in \Vlb(\kappa)$.
  The poset $\D^0$ will be $\lambda^b_{\bar n -1}$-Knaster in $\Vlb(\kappa)$,
  and $\D^2$ will be $<\lambda^b_{\omega+1}$-directed closed in this model.  

  We let $\D^0 = \A_e \times \A^c_0$.
  Before defining $\D^2$, we recall from the analysis 
  in Section \ref{further} that in $\Vlbi(\kappa)$
  the poset $\J^c_0$ is a projection
  of $\A^c_0 \times \B^c_0 \times \C^c_0$, where $\B^c_0 \times \C^c_0$ is
  $<\lambda^b_{\omega+1}$-directed closed.
  Similarly in $\Vlbi(\kappa)[J^c_0]$
  the poset $\J^c_1$ is a projection
  of $\A^c_1 \times \B^c_1 \times \C^c_1$, where $\B^c_1 \times \C^c_1$ is
  $<\lambda^b_{\omega+2}$-directed closed.

  We let  $\D^2 = \A^c_1 \times \termspace^{\Vlb(\kappa)}(\I^b, \B^c_0 \times \C^c_0)
  \times 
  \termspace^{\Vlb(\kappa)}(\I^b * \J^c_0, \B^c_1 \times \C^c_1)$.

\begin{definition} \label{lastRdef}  
  Let $\R \in V[L]$ be the (iterated) quotient to term forcing to add
  generic objects for the second and third factors in $\D^2$  inducing generic filters
  $B^c_0 \times C^c_0$ and $B^c_1 \times C^c_1$, such that in turn
  $B^c_0 \times C^c_0$ induces $U^c_0 * S^c_0$ and
  $B^c_1 \times C^c_1$ induces $U^c_1 * S^c_1$.
\end{definition}

\begin{claim} \label{portmanteauclaim2} Let $\D^2$, $\R$ and $\Aggforcing$ be as above.
\begin{enumerate}
\item  $\D^2$ is $<\lambda^b_{\omega+1}$-directed closed in $\Vlb(\kappa)$.
\item  $\R$ is countably closed and $<\mu$-distributive in $V[L][\Agg]$.
\item  $\Aggforcing$ is $<\lambda^b_{\omega + 2}$-distributive in $V[L][R]$.
\end{enumerate}
\end{claim}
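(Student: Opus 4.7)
First I would handle Part (1), which is a direct application of Lemma \ref{standardtermforcinglemma} together with the standard directed closure analysis of $\A * \U * \S$ constructions. The poset $\A^c_1 = \Add^{\Vlb(\kappa)}(\lambda^b_{\omega+1}, [\lambda^b_{\omega+2},\lambda^b_{\omega+3}))$ is trivially $<\lambda^b_{\omega+1}$-directed closed in $\Vlb(\kappa)$. For the two term forcing factors, $\B^c_0 \times \C^c_0$ is forced to be $<\lambda^b_{\omega+1}$-directed closed (by Fact \ref{Itay4.7} applied to the $\J^c_0$ construction, in which $\mu_1 = \lambda^b_{\omega+1}$) and $\B^c_1 \times \C^c_1$ is forced to be $<\lambda^b_{\omega+2}$-directed closed, so the respective term posets are $<\lambda^b_{\omega+1}$-directed closed in $\Vlb(\kappa)$. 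Products of $<\lambda^b_{\omega+1}$-directed closed posets inherit the property.

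The central idea for Parts (2) and (3) is a common factorization. Using Lemma \ref{obviousinhindsight}, the iterations $\J^c_0 * \R_0$ and $\J^c_1 * \R_1$ are equivalent to the products $\A^c_0 \times (\B^c_0 \times \C^c_0)$ over $\Vlbi(\kappa)$ and $\A^c_1 \times (\B^c_1 \times \C^c_1)$ over $\Vlbi(\kappa)[J^c_0]$ respectively. Replacing the second by its term forcing version $T := \termspace^{\Vlbi(\kappa)}(\J^c_0, \B^c_1 \times \C^c_1)$, which is $<\lambda^b_{\omega+2}$-directed closed in $\Vlbi(\kappa)$, I see that $V[L][\Agg][R]$ is an intermediate extension of $\Vlbi(\kappa)$ by the product $\A_e \times \A^c_0 \times \A^c_1 \times (\B^c_0 \times \C^c_0) \times T \times \Agg$. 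Depending on how this product is partitioned into a ``chain condition part'' and a ``closed part'', Easton's lemma yields distributivity statements in different intermediate models.

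For Part (3), I would partition the product as $X \times Y$ with $X = \A_e \times \A^c_0 \times \A^c_1 \times (\B^c_0 \times \C^c_0)$ and $Y = T \times \Agg$. The Cohen factors in $X$ are $\lambda^b_{\omega+2}$-Knaster in $\Vlbi(\kappa)$ by Lemma \ref{samplecohen}-style robustness arguments, and $\B^c_0 \times \C^c_0$ is $\lambda^b_{\omega+2}$-Knaster there because $\lambda^b_{\omega+2}$ remains supercompact in $\Vlbi(\kappa)$ and the poset has cardinality $\lambda^b_{\omega+2}$ with $<\lambda^b_{\omega+1}$-directed closure. The poset $Y$ is $<\lambda^b_{\omega+2}$-directed closed in $\Vlbi(\kappa)$, using that $\Agg$ remains $<\lambda^b_{\omega+2}$-closed there by regularity of $\lambda^b_{\omega+2}$. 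Easton's lemma then gives that $Y$ is $<\lambda^b_{\omega+2}$-distributive over $\Vlbi(\kappa)[X]$, and since $\Vlbi(\kappa)[X] \subseteq V[L][R]$ and $V[L][R][\Agg] \subseteq \Vlbi(\kappa)[X \times Y]$, any $<\lambda^b_{\omega+2}$-sequence of ordinals in $V[L][R][\Agg]$ already lies in $V[L][R]$.

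For Part (2), I would instead use the partition $X' = \A_e \times \A^c_0 \times \A^c_1$ and $Y' = (\B^c_0 \times \C^c_0) \times T \times \Agg$; here $Y'$ is $<\mu$-directed closed in $\Vlbi(\kappa)$, so Easton gives $<\mu$-distributivity of $Y'$ over $\Vlbi(\kappa)[X']$, and the sandwich $\Vlbi(\kappa)[X'] \subseteq V[L][\Agg] \subseteq \Vlbi(\kappa)[X' \times Y']$ shows $\R$ is $<\mu$-distributive in $V[L][\Agg]$. Countable closure of $\R$ in $V[L][\Agg]$ follows from the fact that each $\R_i$ is $<\lambda^b_{17}$-closed (respectively $<\mu$-closed) in the appropriate intermediate model by Remark \ref{useful}, combined with the observation that everything added to reach $V[L][\Agg]$ from these base models is at least countably distributive, so no new countable sequences of conditions in $\R$ appear. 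The main obstacle I anticipate is verifying the various chain condition and closure properties in $\Vlbi(\kappa)$ rather than in the original ground model, which requires care with the Cohen-robustness arguments in the style of Lemma \ref{samplecohen} and with checking that the intermediate model $\Vlbi(\kappa)[X']$ really is contained in $V[L][\Agg]$ (which it is, since all factors of $X'$ appear explicitly as components of $L(\kappa)$).
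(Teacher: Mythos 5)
Part (1) of your argument is essentially the paper's, and is fine. For Parts (2) and (3), however, your single-shot partition over $\Vlbi(\kappa)$ has a genuine gap, and it is exactly the point that forces the paper into its more laborious multi-stage Easton argument. You place $\Aggforcing$ in the ``closed part'' $Y$ (resp.\ $Y'$) and assert that it ``remains $<\lambda^b_{\omega+2}$-closed [in $\Vlbi(\kappa)$] by regularity of $\lambda^b_{\omega+2}$.'' This is false: $\Aggforcing = \Add^V(\lambda^b_{\omega+2}, j_{01}(\lambda^a_0))$ has conditions that must lie in $V$, and $\Vlbi(\kappa)$ contains new $<\lambda^b_{\omega+2}$-sequences of elements of $V$ (the intermediate forcing adds plenty of new bounded subsets of $\lambda^b_{\omega+2}$), so a decreasing sequence of conditions enumerated in $\Vlbi(\kappa)$ need not have its union in $V$. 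All that survives is \emph{distributivity} of $\Aggforcing$, and that only via Easton's Lemma applied back in $V$, pairing the closure of $\Aggforcing$ in $V$ against the $\lambda^b_{\omega+2}$-cc of $\FL\restriction\kappa * \FL^b$ over $V$. This is why the paper's proof of Part (3) first establishes distributivity of $\Aggforcing$ over $\Vlb(\kappa)$, then splits $\D^2$ into a $\lambda^b_{\omega+2}$-cc piece and a closed piece, and iterates Easton's Lemma in stages rather than applying it once to a product containing $\Agg$ on the closed side. Your Part (3) can be repaired by restructuring along those lines, but as written the step ``$Y$ is $<\lambda^b_{\omega+2}$-directed closed in $\Vlbi(\kappa)$'' fails.

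Part (2) has a second, independent problem: your ``chain condition part'' $X' = \A_e \times \A^c_0 \times \A^c_1$ contains $\A^c_1 = \Add^{\Vlb(\kappa)}(\lambda^b_{\omega+1}, [\lambda^b_{\omega+2},\lambda^b_{\omega+3}))$, which is a Cohen poset \emph{at} $\mu = \lambda^b_{\omega+1}$ and hence is not $\mu$-cc, so Easton's Lemma at level $\mu$ does not apply to your partition. (Moving $\A^c_1$ to the closed side runs into the same closure problem as $\Agg$.) The paper avoids this by proving $\lambda^b_t$-distributivity for each large $t$ separately — enough since $\lambda^b_\omega$ is singular — using the decomposition of $\I^b$ into a $\lambda^b_{t+1}$-cc initial part and a $<\lambda^b_{t+1}$-closed tail from item \ref{portmanteau3} of Lemma \ref{portmanteaulemma}, applied over $\Vlb(\kappa)$ rather than $\Vlbi(\kappa)$. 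Finally, a smaller point: your justification that $\B^c_0 \times \C^c_0$ is $\lambda^b_{\omega+2}$-Knaster ``because $\lambda^b_{\omega+2}$ remains supercompact and the poset has cardinality $\lambda^b_{\omega+2}$ with $<\lambda^b_{\omega+1}$-directed closure'' is not a valid inference (a poset of size $\kappa$ can be an antichain); the fact you want is items \ref{imaginary2} and \ref{imaginary3} of Lemma \ref{portmanteaulemma}, and for the term-forcing version used in the paper's $\D^2$ one needs Lemma \ref{termforcingcclemma}.
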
   
  
\begin{proof}
  We take each claim in turn.
\begin{enumerate}
\item  $\D^2$ is the product of three factors.
  The first term $\A^c_1$ 
  is clearly $<\lambda^b_{\omega+1}$-directed closed in $\Vlb(\kappa)$.
  The second factor is $<\lambda^b_{\omega+1}$-directed closed by
  items \ref{imaginary5} and \ref{imaginary6} of Lemma \ref{portmanteaulemma} together with Lemma \ref{standardtermforcinglemma}.
  and similarly the third factor is $<\lambda^b_{\omega+2}$-directed closed.

\item   $\R$ can be viewed as the product of two (iterated) quotient-to-term forcing posets, and each factor
  is countably closed by Lemma \ref{qtot-closed}. Since $\lambda^b_\omega$ is singular, 
  it will suffice for distributivity to show that $\R$ is $\lambda^b_t$-distributive
  in $V[L][\Agg]$ for all large enough $t < \omega$.  

  By the definition of $\R$, $V[L][\Agg][R] = \Vlb(\kappa)[I^b \times D^0 \times D^2 \times \Agg]$.
  By item \ref{portmanteau3} of Lemma \ref{portmanteaulemma}, in $\Vlb(\kappa)$ we may view $\I^b$ as a projection
  of $\I^b_0 \times \I^b_1$ where $\I^b_0$ is an initial segment of $\I^b$ with $\lambda^b_{t+1}$-cc and
  $\I^b_1$ is $<\lambda^b_{t+1}$-closed. We extend $V[L][\Agg][R]$ to obtain 
  $\Vlb(\kappa)[I^b_0 \times I^b_1 \times D^0 \times D^2 \times \Agg]$.
  
  Since $L \restriction \kappa * L^b * (I^b_0 \times D^0)$ is generic for
  $\lambda^b_{\omega+2}$-cc forcing, by Easton's lemma $\Aggforcing$ is $<\lambda^b_{\omega+2}$-distributive in 
  $\Vlb(\kappa)[I^b_0 \times D^0]$, so that 
  $I^b_1 \times D^2$ is $\lambda^b_t$-distributive in $V[L \restriction \kappa][L^b][I^b_0 \times D^0 \times \Agg]$.
  It follows that every $\lambda^b_t$-sequence of ordinals from $V[L][\Agg][R]$ lies in 
  $\Vlb(\kappa)[I^b_0 \times D^0 \times \Agg]$, which is a submodel of $V[L][\Agg]$.

\item By items \ref{imaginary2} and \ref{imaginary3} of Lemma \ref{portmanteaulemma}, $\B^c_0 \times \C^c_0$ is
  $\lambda^b_{\omega+2}$-cc in $\Vlbi(\kappa)$.
  Also $\lambda^b_{\omega+2}$ is supercompact in $\Vlb(\kappa)$,
  and $\vert I^b \vert < \lambda^b_{\omega+2}$, 
  so it follows by Lemma \ref{termforcingcclemma}
  that $\termspace^{\Vlb(\kappa)}(\I^b, \B^c_0 \times \C^c_0)$
  is $\lambda^b_{\omega+2}$-cc in $\Vlb(\kappa)$. 
  So $\D^2$ is the product of a $\lambda^b_{\omega+2}$-cc poset $\D^2_{\rm low}$ and
  a $<\lambda^b_{\omega+2}$-closed poset $\D^2_{\rm high}$ in the model $\Vlb(\kappa)$. 

  Recall that $V[L][R][\Agg] = \Vlb(\kappa)[I^b \times D^0 \times D^2_{\rm low} \times D^2_{\rm high} \times \Agg]$.
  Since  $L \restriction \kappa * L^b$ is generic over $V$ for $\lambda^b_{\omega+2}$-cc forcing,
  by Easton's Lemma $\Aggforcing$ is $<\lambda^b_{\omega+2}$-distributive in $\Vlb(\kappa)$,
    so that $\D^2_{\rm high}$ is $<\lambda^b_{\omega+2}$-closed in $\Vlb(\kappa)[\Agg]$.
  Since  $L \restriction \kappa * L^b * (I^b \times D_0 \times D^2_{\rm low})$ is generic over $V$ for
  $\lambda^b_{\omega+2}$-cc forcing, by Easton's Lemma $\FL \restriction \kappa * \FL^b * (\I^b \times \D_0 \times \D^2_{\rm low})$ is
  $\lambda^b_{\omega+2}$-cc in $V[\Agg]$, so that $\I^b \times \D_0 \times \D^2_{\rm low}$ is
    $\lambda^b_{\omega+2}$-cc in $\Vlb(\kappa)[\Agg]$. 

  By Easton's Lemma  $\D^2_{\rm high}$ is $<\lambda^b_{\omega+2}$-distributive in  
  $\Vlb(\kappa)[I^b \times D^0 \times D^2_{\rm low} \times \Agg]$
  and $\Aggforcing$ is $<\lambda^b_{\omega+2}$-distributive in $\Vlb(\kappa)[I^b \times D_0 \times D^2_{\rm low}]$.
  It follows that every $<\lambda^b_{\omega+2}$-sequence of ordinals in 
  $\Vlb(\kappa)[I^b \times D^0 \times D^2_{\rm low} \times \Agg \times D^2_{\rm high}]$
  lies in $\Vlb(\kappa)[I^b \times D^0 \times D^2_{\rm low}]$, so
  that $\Aggforcing$ is $<\lambda^b_{\omega+2}$-distributive in $V[L][R]$.

\end{enumerate}
  
\end{proof}

For use later, we record some information about forcing with $\bar \P$ over
$V[L][\Agg][R]$. 
\begin{claim} \label{for_use_later} 
  $\mu = \kappa^+$ in $V[L][\Agg][R][\bar P]$.
\end{claim}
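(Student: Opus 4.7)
The plan is to combine three facts: $\bar P$ is $\nu$-centered in $V[L][\Agg]$ with $\nu = \lambda^b_\omega$ (Lemma \ref{ccforPbar}), $\R$ is $<\mu$-distributive in $V[L][\Agg]$ (Claim \ref{portmanteauclaim2}), and $\mu = \kappa^+$ already holds in $V[L][\Agg][\bar P]$ (Lemma \ref{cardinals_in_Prikry_extension}). Since both $\R$ and $\bar P$ lie in $V[L][\Agg]$, the Product Lemma yields $V[L][\Agg][R][\bar P] = V[L][\Agg][\bar P][R]$, so we may reorder the forcings as convenient.

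First I would argue that $\mu$ is preserved as a cardinal in $V[L][\Agg][R][\bar P]$. Since $\R$ is $<\mu$-distributive in $V[L][\Agg]$, the cardinal $\mu$ remains regular in $V[L][\Agg][R]$. The partition of $\bar P$ by stem into at most $\nu$ classes of pairwise compatible conditions, guaranteed by Lemma \ref{samestemub}, is a definable object of $V[L][\Agg]$ whose witnessing property (a common lower bound for any two conditions sharing a stem) is upward absolute; so $\bar P$ is still $\nu$-centered, and hence $\mu$-cc, in $V[L][\Agg][R]$. Since a $\mu$-cc poset preserves any regular $\mu$, the cardinal $\mu$ survives into $V[L][\Agg][R][\bar P]$.

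To conclude, I would note that by Lemma \ref{cardinals_in_Prikry_extension} every ordinal $\beta$ with $\kappa < \beta < \mu$ has cardinality at most $\kappa$ in $V[L][\Agg][\bar P]$, witnessed by an explicit surjection from $\kappa$ in that model. Such a surjection persists into $V[L][\Agg][\bar P][R] = V[L][\Agg][R][\bar P]$, so no such $\beta$ can be a cardinal there, and combined with the preservation of $\mu$ this yields $\mu = \kappa^+$. The only mild point requiring attention is the upward absoluteness of centering, which is immediate since the relevant witnesses (the stems together with the common lower bounds from Lemma \ref{samestemub}) are finite-character objects in $V[L][\Agg]$ that retain their property in any outer model.
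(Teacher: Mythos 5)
Your argument is correct and coincides with the paper's own proof, which likewise rests on the $<\mu$-distributivity (and countable closure) of $\R$ from Claim \ref{portmanteauclaim2} together with the observation that the stem-counting argument of Lemma \ref{ccforPbar} goes through over $V[L][\Agg][R]$ to give the $\mu$-cc of $\bar\P$ there. The paper compresses the remaining bookkeeping --- the reordering of the two generics by mutual genericity and the upward persistence of the collapses supplied by Lemma \ref{cardinals_in_Prikry_extension} --- into ``the claim follows'', and you have simply spelled that out.
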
   

\begin{proof}   By Lemma \ref{portmanteauclaim2}
   the poset $\R$ is $<\mu$-distributive and countably closed 
   in $V[L][\Agg]$, and the proof of Lemma \ref{ccforPbar} easily shows that $\bar \P$ is
   $\mu$-cc in $V[L][\Agg][R]$.
   The claim follows.
\end{proof}

    By construction $V[L][R] = \Vlbi(\kappa)[D^0 \times D^2]$.
    Choosing $\bar n$ large enough, we may arrange that in the model
    $\Vlb(\kappa)$, $D^0$ is generic for $\lambda^b_{\bar n -1}$-cc forcing,
   and  $D^2$ is generic for $<\lambda^b_{\bar n}$-directed closed forcing.
   In fact since $\D^0$ is adding Cohen subsets to $\lambda^b_{17}$ and
   $\D^2$ is $<\lambda^b_{\omega+1}$-directed closed, any $\bar n \ge 19$ will work.

   We will perform the
  construction from the proof of Lemma \ref{indestructible} to obtain a generic embedding with
  domain $V[L][R]$, and then derive a generic embedding with domain $V[L][\Agg][R]$.
  The construction in the proof involves a regular cardinal $\chi$ which in our case
  is $\max(\nu, \vert \D^0 \vert, \vert \D^2 \vert)^+$. We note for the record that
  $\chi > \mu$.

  We summarise the key features of the construction from Lemma \ref{indestructible} in our current setting: 
  \begin{itemize}
  \item Working in $\Vlb(\kappa)[D^2 \times A^b_{[\bar n, \omega)}]$, we use the indestructible Laver function
    added by $L^b$ to choose an embedding $j$ which witnesses that $\lambda^b_{\bar n}$ is $\chi$-supercompact
    and satisfies some technical conditions. We may assume that $j$ is the ultrapower by
    some supercompactness measure $W$ on $P_{\lambda^b_{\bar n}} \chi$, where we note that
    $(P_{\lambda^b_{\bar n}} \chi)^{\Vlb(\kappa)[D^2 \times A^b_{[\bar n, \omega)}]} =
      (P_{\lambda^b_{\bar n}} \chi)^{\Vlb(\kappa)}$.
    \item The lifting of $j$ to $V[L][R]$ takes place in a generic extension
      $V[L][R][P_1 \times P_2 \times P_3]$.
    \item $\P_1 \times \P_3 \in \Vlb(\kappa)[A^b * U^b * S^b \restriction [\lambda^b_{\bar n - 1}, \lambda^b_\omega)][D^2]$
      and is $<\lambda^b_{\bar n - 1}$-closed in this model. 
    \item $\P_2 = \P_{2 a} \times \P_{2 b}$ where
     $\P_{2 a} = j(\A^b_{\bar n-2} \times \D^0)/j[A^b_{\bar n - 2} \times D^0]$
      and $\P_{2 b} = j(\A^b_{\bar n -1})/j[A^b_{\bar n -1}]$.
    \item $\P_{2 a} = \P_{2a}^{\rm low} \times \P_{2 a}^{\rm high}$,
      where $\P_{2a}^{\rm high}$ is a Cohen poset adding subsets to $\lambda^b_{\bar n -2}$,
      and $\P_{2a}^{\rm low}$ is a Cohen poset adding subsets to $\lambda^b_{17}$,
      both defined in $\Vlb(\kappa)$.
    \item   $\P_{2 b}$ is a Cohen poset adding subsets to $\lambda^b_{\bar n - 1}$,
      again defined in $\Vlb(\kappa)$.
   \end{itemize}

  Let $\lambda' = \lambda^b_{\bar n - 3}$.
  With a view to using Fact \ref{Itay3.4} in the proof of Lemma \ref{lemma2}, we construct
  an auxiliary forcing $\P_1^* \times (\P_{2a}^{\rm low})^* \times (\P_{2a}^{\rm high})^*
 \times \P_{2b}^* \times \P_3^* \in V[L][R]$
 whose aim is to
  add $\lambda'$ many mutually generic filters for $\P_1 \times \P_2 \times \P_3$.

\begin{itemize} 
\item $\P_1^*$ (resp $\P_3^*$) is the product of $\lambda'$ copies of $\P_1$ (resp $\P_3$) computed
  with full support in
  $\Vlb(\kappa)[A^b * U^b * S^b \restriction [\lambda^b_{\bar n - 1}, \lambda^b_\omega)][D^2]$.
    As noted above $\P_1 \times \P_3$ is defined and $<\lambda^b_{\bar n -1}$-closed in this model,
    so $\P^*_1 \times \P^*_3$ is also $<\lambda^b_{\bar n -1}$-closed in this model,
    and since $\Agg$ is generic over this model for highly distributive forcing
    $\P^*_1 \times \P^*_3$ is $<\lambda^b_{\bar n -1}$-closed in
    $\Vlb(\kappa)[A^b * U^b * S^b \restriction [\lambda^b_{\bar n - 1}, \lambda^b_\omega)][D^2][\Agg]$.
    
  \item $(\P_{2a}^{\rm low})^*$ is the product of $\lambda'$ copies of $\P_{2a}^{\rm low}$
    computed with $<\lambda^b_{17}$ support in $\Vlb(\kappa)$.
    Since $\P_{2a}^{\rm low}$ is a Cohen poset adding subsets to $\lambda^b_{17}$ defined in $\Vlb(\kappa)$,
    $(\P_{2a}^{\rm low})^*$ is a similar poset in this model.

  \item $(\P_{2a}^{\rm high})^*$  (resp $\P_{2 b}^*$) 
    is the product of $\lambda'$ copies of $\P_{2a}^{\rm high}$ (resp $\P_{2 b}$)
    computed with full support in $\Vlb(\kappa)$.
    Since $\P_{2a}^{\rm high}$ (resp $\P_{2 b}$) is a Cohen poset adding subsets to $\lambda^b_{\bar n -2}$
    (resp $\lambda^b_{\bar n -1}$) defined in $\Vlb(\kappa)$,
    $(\P_{2a}^{\rm high})^*$ (resp $\P_{2 b}^*$) is a similar poset in this model.

\end{itemize}

\begin{claim} \label{34setup}  
  It is forced over $V[L][R][\Agg]$ by  $\P_1^* \times \P_2^* \times \P_3^*$ that
  $\lambda'$ remains regular and $\cf(\mu) > \lambda'$.
\end{claim}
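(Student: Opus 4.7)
The strategy is a straightforward Easton-style factor analysis. Decompose $\P_1^* \times \P_2^* \times \P_3^* = \Q_{\rm cc} \times \Q_{\rm closed}$, where $\Q_{\rm cc} = (\P_{2a}^{\rm low})^*$ and $\Q_{\rm closed} = \P_1^* \times \P_3^* \times (\P_{2a}^{\rm high})^* \times \P_{2b}^*$. By construction $\Q_{\rm cc}$ is equivalent to a Cohen poset adding subsets to $\lambda^b_{17}$ as computed in $\Vlb(\kappa)$, hence $(\lambda^b_{17})^+$-Knaster there and, by Lemma \ref{Cohenrobust} together with the distributivity statements from Claim \ref{portmanteauclaim2}, still $(\lambda^b_{17})^+$-Knaster in $V[L][R][\Agg]$. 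On the other side, $\P_1^* \times \P_3^*$ is $<\lambda^b_{\bar n -1}$-closed (a full-support product of $<\lambda^b_{\bar n -1}$-closed posets admits pointwise lower bounds), $(\P_{2a}^{\rm high})^*$ is $<\lambda^b_{\bar n -2}$-closed, and $\P_{2b}^*$ is $<\lambda^b_{\bar n -1}$-closed, so $\Q_{\rm closed}$ is $<\lambda^b_{\bar n -2}$-closed in whatever model it is defined in. Taking $\bar n \geq 21$ ensures $(\lambda^b_{17})^+ < \lambda^b_{\bar n -3} = \lambda' < \lambda^b_{\bar n -2}$, putting us squarely in the Easton setup.

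For preservation of the regularity of $\lambda'$: the cardinal $\lambda'$ is inaccessible in $V[L]$ and remains regular in $V[L][R][\Agg]$ by Claim \ref{portmanteauclaim2}. Forcing with $\Q_{\rm closed}$ adds no $<\lambda^b_{\bar n -2}$-sequences of ordinals, and in particular no cofinal sequence in $\lambda'$ of length $<\lambda'$, so $\lambda'$ remains regular in $V[L][R][\Agg][Q_{\rm closed}]$. A subsequent forcing by $\Q_{\rm cc}$, which remains $(\lambda^b_{17})^+$-cc in $V[L][R][\Agg][Q_{\rm closed}]$ by Easton's lemma applied to the $<(\lambda^b_{17})^+$-closed poset $\Q_{\rm closed}$, preserves the regularity of $\lambda'$ via the standard cofinality-preservation for $\lambda$-cc forcing at cardinals above $\lambda$.

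For the cofinality statement, the $<\lambda^b_{\bar n -2}$-closure of $\Q_{\rm closed}$ forbids any new cofinal sequence in $\mu$ of length $<\lambda^b_{\bar n -2}$, so $\cf(\mu) \geq \lambda^b_{\bar n -2} > \lambda'$ in $V[L][R][\Agg][Q_{\rm closed}]$; this holds whether or not $\mu$ remains a cardinal there, since cofinality of the ordinal $\mu$ is well defined in any case. Forcing further with the $(\lambda^b_{17})^+$-cc poset $\Q_{\rm cc}$ preserves all cofinalities $\geq (\lambda^b_{17})^+$, so $\cf(\mu) \geq \lambda^b_{\bar n -2} > \lambda'$ persists into $V[L][R][\Agg][P_1^* \times P_2^* \times P_3^*]$.

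The only genuine subtlety is that the closure of the factors of $\Q_{\rm closed}$ was originally established in various submodels of $V[L]$ (extensions of $\Vlb(\kappa)$) rather than directly in $V[L][R][\Agg]$; but the distributivity of $R$ and $\Agg$ recorded in Claim \ref{portmanteauclaim2}, together with Easton's lemma applied coordinate-by-coordinate, guarantees that $\Q_{\rm closed}$ remains $<\lambda^b_{\bar n -2}$-distributive in $V[L][R][\Agg]$, which is all that the above arguments use.
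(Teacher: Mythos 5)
Your overall strategy (split the product into a chain-condition part and a closed part, run an Easton-style argument at level $\bar n-3$ for the regularity of $\lambda'$ and at level $\bar n-2$ for $\cf(\mu)$) is the same as the paper's, but the step you dismiss as "the only genuine subtlety" is in fact the entire content of the claim, and your justification for it does not work. The model $V[L][R][\Agg]$ is $\Vlb(\kappa)[I^b \times D^0 \times D^2 \times \Agg]$, where $I^b$ is generic for an $\A * \U * \S$ construction that blows up $2^{\lambda^b_k}$ to $\lambda^b_{k+2}$ at every level $k$; so none of the factors of your $\Q_{\rm closed}$ is closed over $V[L][R][\Agg]$, and their distributivity there is exactly the kind of statement that Easton's Lemma does \emph{not} give you "coordinate-by-coordinate." Easton's Lemma needs a genuine product of a $\kappa$-cc poset and a $<\kappa$-closed poset over a \emph{common} ground model; to produce that product here you must (i) pass from $\P_1^* \times \P_3^*$ to a term-forcing generic $T$ for a poset $\T$ that is defined and $<\lambda^b_{\bar n-1}$-closed in $\Vlb(\kappa)$, and (ii) split $\I^b$ itself, via item \ref{portmanteau3} of Lemma \ref{portmanteaulemma}, as a projection of $\I^b_0 \times \I^b_1$ with $\I^b_0$ being $\lambda^b_{\bar n-3}$-Knaster and $\I^b_1$ being $<\lambda^b_{\bar n-3}$-closed in $\Vlb(\kappa)$. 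Only after these quotient-to-term extensions can you group $\I^b_0 \times \D^0 \times (\P_{2a}^{\rm low})^*$ on the cc side and $\I^b_1 \times (\P_{2a}^{\rm high})^* \times \P_{2b}^* \times \T$ on the closed side over $\Vlb(\kappa)[D^2 \times \Agg]$ and invoke Easton's Lemma once. Without splitting $\I^b$ there is no valid application of the lemma, and your proof as written never performs this step.

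A secondary problem is your chain-condition bookkeeping for $\Q_{\rm cc}$. The cardinal $(\lambda^b_{17})^+$ as computed in $\Vlb(\kappa)$ is collapsed by the $\S^b$-part of $I^b$ (the successor of $\lambda^b_{17}$ in $V[L]$ is $\lambda^b_{18}$), so "$(\lambda^b_{17})^+$-Knaster in $V[L][R][\Agg]$" is not meaningful as stated; you would need $\lambda^b_{18}$-Knaster via Lemma \ref{Cohenrobust}, and verifying the $(\lambda^b_{17}, \lambda^b_{18})$-goodness of $V[L][R][\Agg][Q_{\rm closed}]$ over $\Vlb(\kappa)$ requires the covering property for small sets of ordinals — which again depends on the very product decomposition you skipped. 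In any case, for the paper's purposes the relevant chain condition is $\lambda^b_{\bar n-3}$-cc (resp.\ $\lambda^b_{\bar n-2}$-cc) for the \emph{combined} cc side $\I^b_0 \times \D^0 \times (\P_{2a}^{\rm low})^*$, not a fine chain condition for $(\P_{2a}^{\rm low})^*$ alone. Repair the argument by inserting the term-forcing extension and the $\I^b_0 \times \I^b_1$ splitting before appealing to Easton's Lemma, and then run it a second time at level $\bar n-2$ for the cofinality of $\mu$.
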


\begin{proof}
 Recall that $V[L][\Agg][R] = \Vlb(\kappa)[I^b \times D^0 \times D^2 \times \Agg]$,
 $\D^0 = \A_e \times \A^c_0$ is a Cohen poset adding subsets to $\lambda^b_{17}$ defined in
 $\Vlb(\kappa)$, while $\D^2$ is $<\lambda^b_{\omega+1}$-directed closed forcing again defined in
 $\Vlb(\kappa)$. 
  
 Since $\P_1^* \times \P_3^*$ is
 defined and $<\lambda^b_{\bar n-1}$-closed in a generic extension
 of $\Vlb(\kappa)$,
 we may force to extend the model
 $\Vlb(\kappa)[I^b \times D^0 \times D^2 \times \Agg][P_1^* \times P_2^* \times P_3^*]$
 to a model $\Vlb(\kappa)[I^b \times D^0 \times D^2 \times \Agg][P_2^* \times T]$
 where $T$ is generic for a term forcing $\T$ which is 
 defined and $<\lambda^b_{\bar n-1}$-closed in  $\Vlb(\kappa)$.
 By more forcing we may use item \ref{portmanteau3} of Lemma \ref{portmanteaulemma}  to extend to a model  
$\Vlb(\kappa)[I^b_0 \times I^b_1 \times D^0 \times D^2 \times \Agg][P_2^* \times T]$
 where $\I^b_0$ and $\I^b_1$ are defined in $\Vlb(\kappa)$,
 and in that model  $\I^b_0$ is $\lambda^b_{\bar n -3}$-Knaster and
 $\I^b_1$ is $<\lambda^b_{\bar n -3}$-closed.
 
 We reorganise our expanded model as
 \[
 \Vlb(\kappa)[D^2 \times \Agg][I^b_0 \times D^0 \times (P_{2a}^{\rm low})^*]
 [I^b_1 \times (P_{2a}^{\rm high})^* \times (P_{2b})^*  \times T].
 \]
 Note that $\I^b_0 \times \D^0 \times (\P_{2a}^{\rm low})^*$ is $\lambda^b_{\bar n - 3}$-cc in $\Vlb(\kappa)$. 
 Since $\D^2 \times \Aggforcing$ is highly distributive, 
 $(\P_{2a}^{\rm high})^* \times (\P_{2b})^*  \times \T$
 is $<\lambda^b_{\bar n-3}$-closed in $\Vlb(\kappa)[D^2 \times \Agg]$,
 and (since its definition does not change)  $\I^b_0 \times \D^0 \times (\P_{2a}^{\rm low})^*$
  is $\lambda^b_{\bar n - 3}$-cc in $\Vlb(\kappa)[D^2 \times \Agg]$.
 
  It follows by Easton's Lemma that $\lambda^b_{\bar n -3}$ is still regular
  in  $\Vlb(\kappa)[I^b_0 \times I^b_1 \times D^0 \times D^2 \times \Agg][P_2^* \times T]$,
  and so {\em a fortiori} it is regular
  in $\Vlb(\kappa)[I^b \times D^0 \times D^2 \times \Agg][P_1^* \times P_2^* \times P_3^*]$.
  To finish we note that by taking  $\I^b$ as the projection of the product of a $\lambda^b_{\bar n -2}$-Knaster
  poset and a $<\lambda^b_{\bar n -2}$-closed poset, we may repeat the argument with
  $\lambda^b_{\bar n -2}$ in place of $\lambda^b_{\bar n - 3}$. This allows us to conclude that
  $\lambda^b_{\bar n -2}$ is also still regular
  in $\Vlb(\kappa)[I^b \times D^0 \times D^2 \times \Agg][P_1^* \times P_2^* \times P_3^*]$,
  and that $\cf(\mu) \ge \lambda^b_{\bar n -2}$ in this model.
\end{proof} 

\begin{claim} \label{moresetup}
  The poset $\P_1 \times \P_{2b} \times \P_3$ is $<\lambda^b_{\bar n -1}$-distributive
  in $V[L][R][\Agg]$. 
\end{claim}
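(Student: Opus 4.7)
The plan is to apply Easton's Lemma in a carefully chosen expansion of $V[L][R][\Agg][P_1 \times P_{2b} \times P_3]$. Set $\lambda = \lambda^b_{\bar n-1}$. The starting observation is that $\P_1 \times \P_3$ is $<\lambda$-closed in $M_* = \Vlb(\kappa)[A^b * U^b * S^b \restriction [\lambda, \lambda^b_\omega)][D^2]$ by the setup in the proof of Lemma \ref{keylifting}, and that $\P_{2b}$, being a Cohen poset $\Add^{\Vlb(\kappa)}(\lambda, \cdot)$, is $<\lambda$-closed in $\Vlb(\kappa)$. Since $M_*$ is obtained from $\Vlb(\kappa)$ by $<\lambda$-directed closed forcing, the full product $\P_1 \times \P_{2b} \times \P_3$ is $<\lambda$-closed in $M_*$.

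Next I would apply item \ref{portmanteau3} of Lemma \ref{portmanteaulemma} to $\I^b$ with the index parameter chosen so that $\mu_{n+1} = \lambda$, presenting $\I^b$ in $\Vlb(\kappa)$ as a projection of $\P^*_0 \times \P^*_1$, where $\P^*_0$ is $\lambda$-Knaster and $\P^*_1$ is $<\lambda$-directed closed. Using Lemma \ref{rearrangeqtot}, I would pass from $V[L][R][\Agg][P_1 \times P_{2b} \times P_3]$ to an expansion $W^+$ in which $I^b$ is replaced by independent generics $P^*_0 \times P^*_1$ projecting to it. By Lemma \ref{qtot-closed} the associated quotient-to-term forcing is $<\lambda$-closed in $V[L][R][\Agg][P_1 \times P_{2b} \times P_3]$, so it adds no new $<\lambda$-sequences of ordinals, and it suffices to establish the distributivity in $W^+$.

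Let $N = \Vlb(\kappa)[P^*_1 \times D^2]$. Since $\P^*_1 \times \D^2$ is $<\lambda$-directed closed in $\Vlb(\kappa)$, and $P^*_1$ induces $A^b * U^b * S^b \restriction [\lambda, \lambda^b_\omega)$, the model $N$ is a $<\lambda$-closed extension of $M_*$, so $\P_1 \times \P_{2b} \times \P_3$ remains $<\lambda$-closed in $N$. The model $W^+$ is obtained from $N$ by forcing with $\P^*_0 \times \D^0$, $\Aggforcing$, and $\P_1 \times \P_{2b} \times \P_3$. The poset $\P^*_0 \times \D^0$ is $\lambda$-Knaster in $\Vlb(\kappa)$ (combining the Knasterness of $\P^*_0$ with Lemma \ref{Cohenrobust}, since $\D^0$ is a Cohen poset adding subsets to $\lambda^b_{17} < \lambda$), and hence $\lambda$-cc in $N$ by Easton's Lemma. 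For $\Aggforcing$, which is not $\lambda$-cc, I would adapt the iterated Easton argument from the proof of Claim \ref{portmanteauclaim2}(3): split the intermediate forcing further into a $\lambda^b_{\omega+2}$-cc part and a $<\lambda^b_{\omega+2}$-closed part, and apply Easton's Lemma twice---first to absorb $\Aggforcing$ on the closed side so that its effect on $N$ is $<\lambda$-distributive, and then to propagate the $<\lambda$-closure of $\P_1 \times \P_{2b} \times \P_3$ across the $\lambda$-cc piece $\P^*_0 \times \D^0$.

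The main obstacle is handling $\Aggforcing$: its cardinality and chain condition are too coarse to treat it as a cc factor directly, and the resolution requires an iterated application of Easton's Lemma in the spirit of Claim \ref{portmanteauclaim2}(3), where the chain condition / closure decomposition of $\I^b$ allows $\Aggforcing$ to pass through without damaging the $<\lambda$-closure of $\P_1 \times \P_{2b} \times \P_3$.
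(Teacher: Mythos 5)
There is a genuine gap, and it sits exactly where the closure assertions are made. You claim that $\P_1\times\P_{2b}\times\P_3$ is $<\lambda$-closed in $M_*=\Vlb(\kappa)[A^b * U^b * S^b\restriction[\lambda,\lambda^b_\omega)][D^2]$ on the grounds that $M_*$ is a $<\lambda$-directed closed extension of $\Vlb(\kappa)$. It is not: only the $\S^b$-coordinate is truncated at $\lambda$ there, while all of $A^b * U^b$ is present, and $\A^b_{[0,\bar n+1]}$ adds many new subsets of cardinals below $\lambda=\lambda^b_{\bar n-1}$. Consequently $\P_{2b}=\Add^{\Vlb(\kappa)}(\lambda^b_{\bar n-1},\cdot)$ is \emph{not} $<\lambda$-closed in $M_*$ (a short decreasing chain of conditions living in $M_*$ need not have its union in $\Vlb(\kappa)$); its $<\lambda$-distributivity is precisely what has to be proved, not assumed. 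The later step compounds this: $N=\Vlb(\kappa)[P_1^*\times D^2]$ is not an extension of $M_*$ at all ($P_1^*$ alone does not induce $A^b * U^b$ — that requires the Knaster factor $P_0^*$ as well), and $\P_1\times\P_3$ is not even an element of $N$, since those quotient-to-term posets are defined from $A^b * U^b$. So the model in which you place the product on the "closed side" of Easton's Lemma does not contain the posets, and the model which does contain them has already absorbed the chain-condition part that destroys closure.

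The missing move is the one the paper makes first: replace $P_1\times P_3$ by a generic $T$ for a term forcing $\T$ that is \emph{defined and $<\lambda$-closed in $\Vlb(\kappa)$ itself} (as in Claim \ref{34setup}). Then all three closed factors $\I^b_1\times\P_{2b}\times\T$ live in $\Vlb(\kappa)$, and the single base model over which Easton's Lemma is applied is $\Vlb(\kappa)[D^2\times\Agg]$: since $\D^2\times\Aggforcing$ is highly distributive over $\Vlb(\kappa)$, it adds no new $<\lambda$-sequences, so $\I^b_1\times\P_{2b}\times\T$ stays $<\lambda$-closed there while $\I^b_0\times\D^0$ stays $\lambda$-cc. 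One application of Easton's Lemma then puts every $<\lambda$-sequence of ordinals into $\Vlb(\kappa)[D^2\times\Agg][I^b_0\times D^0]\subseteq V[L][R][\Agg]$. Note also that $\Aggforcing$ is not the main obstacle you take it to be — it is disposed of simply by placing it in the base and quoting distributivity; the delicate point is that the closure of $\P_{2b}$ and of the term version of $\P_1\times\P_3$ must be asserted \emph{before} the $\lambda$-cc part of $I^b$ and $D^0$ have been added, which is exactly the order your argument inverts.
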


\begin{proof} The argument is very similar to that for Claim \ref{34setup}
  so we just sketch it. We force to extend
 $\Vlb(\kappa)[I^b \times D^0 \times D^2 \times \Agg][P_1 \times P_{2b} \times P_3]$
 to a model $\Vlb(\kappa)[I^b \times D^0 \times D^2 \times \Agg][P_{2b} \times T]$
 where $T$ is generic for a term forcing $\T$ which is 
 defined and $<\lambda^b_{\bar n-1}$-closed in  $\Vlb(\kappa)$, and then to
 a model  
$\Vlb(\kappa)[D^2 \times \Agg][I^b_0 \times D^0][I^b_1 \times P_{2b} \times T]$
 where $\I^b_0$ and $\I^b_1$ are defined in $\Vlb(\kappa)$,
 and in that model  $\I^b_0$ is $\lambda^b_{\bar n -1}$-Knaster and
 and $\I^b_1$ is $<\lambda^b_{\bar n -1}$-closed.

 In the model $\Vlb(\kappa)[D^2 \times \Agg]$ we appeal to Easton's lemma to see that
 all $<\lambda^b_{\bar n -1}$-sequences of ordinals from $V[L][R][\Agg][P_1 \times P_{2b} \times P_3]$
 lie in $\Vlb(\kappa)[D^2 \times \Agg][I^b_0 \times D^0]$. 
\end{proof}   

It is now easy to see that $\mu$ has cardinality and cofinality $\lambda^b_{\bar n -1}$
in the model $V[L][R][\Agg][P_1 \times P_{2b} \times P_3]$, and that $\P_{2a}$ is
$\lambda^b_{\bar n -1}$-Knaster in this model.

At this point we have produced a generic $\chi$-supercompactness embedding
$j:V[L][R] \rightarrow N$ 
with critical point $\lambda^b_{\bar n}$ which  exists in $V[L][R][P_{1,2,3}]$. 
By construction $N = \{ j(F)(a): F \in V[L][R], \dom(F) = Z \}$ where
$a = j[\chi]$ and $Z = (P_{\lambda^b_{\bar n}} \chi)^{V[L \restriction \kappa][L_b]}$.

Now let $Z_0 = (P_{\lambda^b_{\bar n}} \mu)^{V[L \restriction \kappa][L_b]}$. Factoring $j$
in the standard way we obtain a generic $\mu$-supercompactness embedding
$j_0: V[L][R] \rightarrow N_0$, such that $N_0 = \{ j_0(F_0)(a_0): F_0 \in V[L][R], \dom(F) = Z_0 \}$
where $a_0 = j_0[\mu]$. Since $\Agg$ is generic for $\mu$-distributive forcing in $V[L][R]$
and $\vert Z_0 \vert = \mu$, it is easy to see that
$j_0[\Agg]$ generates an $N$-generic filter and we may lift $j_0$
onto $V[L][R][\Agg]$. Note that the lifted $j_0$ exists in 
$V[L][\Agg][R][P_{1,2,3}]$.
  \end{proof}

\begin{lemma}\label{lemma2}
  There exist in $V[L][\Agg][R]$ a set $J$, a stem $h$ and a sequence $\langle u_\alpha \mid \alpha\in J \rangle$
such that:
\begin{itemize}
\item $J \subseteq I$ and $J$ is unbounded.
\item $h$ has length $n$.
\item $u_\alpha$ is a node of level $\alpha$.
\item For all $\alpha, \beta \in J$ with $\alpha < \beta$, $h \forces^* u_\alpha < u_\beta$.
\end{itemize}  
\end{lemma}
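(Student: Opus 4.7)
The plan is to use the generic $\mu$-supercompactness provided by Lemma~\ref{keylifting} to canonise Lemma~\ref{lemma1} via elementarity. Fix a sufficiently large $\bar n < \omega$ (in particular $\bar n > n$), and let $j_0: V[L][\Agg][R] \to N_0$ be a generic $\mu$-supercompactness embedding with $\crit(j_0) = \lambda^b_{\bar n}$, which exists in the auxiliary extension $V[L][\Agg][R][P_{1,2,3}]$. Two features of $j_0$ will be crucial: because $\kappa < \crit(j_0)$, $j_0$ is the identity on $V_{\crit(j_0)}$, so $j_0(\kappa)=\kappa$, the measures $U_m$ for small $m$ are fixed, and consequently the set of stems of length $n$ in $j_0(\bar\P)$ coincides with the corresponding set in $\bar\P$, which has size at most $\lambda^b_{n-1} < \crit(j_0)$.

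Apply $j_0$ to Lemma~\ref{lemma1}: inside $N_0$, $j_0(I)$ is cofinal in $j_0(\mu)$ and for every pair $\alpha' < \beta'$ in $j_0(I)$ there is a length-$n$ condition in $j_0(\bar\P)$ witnessing a tree relation in $j_0(\dot T)$. I would then choose an anchor $\beta^* \in j_0(I)$ with $\beta^* > \sup j_0[\mu]$, and for each $\alpha \in I$ apply this inside $N_0$ to the pair $(j_0(\alpha), \beta^*)$, obtaining $p'_\alpha \in j_0(\bar\P)$ of length $n$ with stem $h_\alpha$, together with $\xi_\alpha, \delta_\alpha < j_0(\kappa) = \kappa$, such that $p'_\alpha$ forces $\langle j_0(\alpha), \xi_\alpha\rangle <_{j_0(\dot T)} \langle \beta^*, \delta_\alpha\rangle$. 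Because there are fewer than $\mu$ possibilities for the pair $(h_\alpha, \delta_\alpha)$ while $I$ is cofinal in $\mu$ of cofinality strictly greater than $\lambda^b_{n-1}$, a pigeonhole in the outer model produces an unbounded $J \subseteq I$ together with fixed $h$ and $\delta^*$ for which $h_\alpha = h$ and $\delta_\alpha = \delta^*$ whenever $\alpha \in J$.

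For $\alpha < \beta$ in $J$, Lemma~\ref{samestemub} applied in $N_0$ gives a common refinement $r \le p'_\alpha, p'_\beta$ with $\stem(r) = h$; this $r$ forces $\langle j_0(\alpha), \xi_\alpha\rangle$ and $\langle j_0(\beta), \xi_\beta\rangle$ both to be predecessors of $\langle \beta^*, \delta^*\rangle$ in $j_0(\dot T)$, hence linearly ordered, and since $j_0(\alpha) < j_0(\beta)$ we get $r \forces \langle j_0(\alpha), \xi_\alpha\rangle <_{j_0(\dot T)} \langle j_0(\beta), \xi_\beta\rangle$. Because $\xi_\alpha, \xi_\beta, h$ all lie below $\crit(j_0)$, so that $j_0(\langle \alpha, \xi_\alpha\rangle) = \langle j_0(\alpha), \xi_\alpha\rangle$ and $j_0(h) = h$, elementarity of $j_0$ applied to the assertion ``there exists $r \in \bar\P$ with $\stem(r) = h$ forcing $\langle \alpha, \xi_\alpha\rangle <_{\dot T} \langle \beta, \xi_\beta\rangle$'' yields the same assertion in $V[L][\Agg][R]$; that is, $h \forces^* \langle \alpha, \xi_\alpha\rangle <_{\dot T} \langle \beta, \xi_\beta\rangle$. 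Setting $u_\alpha = \langle \alpha, \xi_\alpha\rangle$ then gives the desired coherent sequence.

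The main obstacle is that $J$ and $\langle u_\alpha : \alpha\in J\rangle$ are constructed a priori in $V[L][\Agg][R][P_{1,2,3}]$, whereas the lemma requires them in $V[L][\Agg][R]$. Since the relation $(\alpha,\xi) \mapsto (\beta, \eta)$ given by $h\forces^* \langle \alpha, \xi\rangle <_{\dot T} \langle \beta, \eta\rangle$ is already definable in $V[L][\Agg]$, the conclusion reduces to existence of an unbounded branch through a height-$\mu$, level-width-$\kappa$ relation that lives in $V[L][\Agg][R]$. Descending this branch from $V[L][\Agg][R][P_{1,2,3}]$ to $V[L][\Agg][R]$ is carried out using the distributivity of $\P_1 \times \P_{2b} \times \P_3$, the Knaster property of $\P_{2a}$ recorded in Lemma~\ref{keylifting}, and the countable closure and $<\mu$-distributivity of $\R$; this technical descent, carried out analogously to the argument in Sinapova~\cite{sinapova-omega^2}, is the crux of the proof.
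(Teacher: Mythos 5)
Your construction of the coherent sequence in the auxiliary extension is essentially the paper's own argument: you apply the generic embedding $j_0$ from Lemma~\ref{keylifting} with large critical point, fix a target node above $\sup j_0[\mu]$ on level $\bar\lambda \in j_0(I)$, pigeonhole on the (fewer than $\mu$ many) pairs consisting of a length-$n$ stem and the second coordinate of the target, use Lemma~\ref{samestemub} to compare two predecessors of the same node, and pull back by elementarity using $j_0(h)=h$ and $j_0(u_\alpha)=u_\alpha$ up to the level coordinate. All of this is correct, and the paper packages exactly this data as a system $(R_h)_{h\in S}$ on $I\times\kappa$ together with a system of branches $(b_{h,i})_{(h,i)\in S\times\kappa}$ added by $\R * \P_{1,2,3}$, where your pigeonhole on $\delta_\alpha$ corresponds to the index $i<\kappa$.

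The genuine gap is the last step, which you explicitly defer: getting $J$ and $\langle u_\alpha\rangle$ down from $V[L][\Agg][R][P_{1,2,3}]$ into $V[L][\Agg][R]$. The tools you name for this descent — the $<\lambda^b_{\bar n-1}$-distributivity of $\P_1\times\P_{2b}\times\P_3$, the Knaster property of $\P_{2a}$, and the closure of $\R$ — do not suffice on their own. The object to be descended is a cofinal branch of length $\mu$ through a relation of width $\kappa$, and the forcings involved are only $<\lambda^b_{\bar n-1}$-distributive and $\lambda^b_{\bar n-1}$-Knaster with $\kappa<\lambda^b_{\bar n-1}<\mu$, so neither distributivity nor the approximation property captures such a branch directly; indeed a single new cofinal branch through a single $R_h$ could in principle be added. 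What actually makes the descent work is Fact~\ref{Itay3.4} applied to the \emph{entire} family $(b_{h,i})_{(h,i)\in S\times\kappa}$: one needs the auxiliary forcing $\P_1^*\times\P_2^*\times\P_3^*$ of Claim~\ref{34setup}, which adds $\lambda'=\lambda^b_{\bar n-3}$ mutually generic filters for $\P_1\times\P_2\times\P_3$ while keeping $\lambda'$ regular and $\cf(\mu)>\lambda'$; the mutual genericity argument then shows that not all cofinal branches can be new, because $|S\times\kappa|<\lambda'$ forces two mutually generic copies of some branch to agree. Since you neither invoke this mechanism nor supply a substitute, the crux of the proof is missing rather than merely routine.
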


\smallskip

\begin{proof}
  We work for the moment in $V[L][\Agg]$.  Let $\bar n$ be so large that an elementary embedding
with critical point $\lambda^b_{\bar n}$ is guaranteed to fix each stem for a condition of
length $n$, together with the set of all such stems. Let $j_0$ be an elementary embedding
with critical point $\lambda^b_{\bar n}$  constructed as in Lemma \ref{keylifting}.
  
  Let $S$ be the set of stems for conditions of length $n$ in $\bar \P$.
  Define relations $(R_h)_{h \in S}$ on $I \times \kappa$ as follows:
  $(\alpha, \eta) R_h (\beta, \zeta)$ if and only if there is a condition
  $p \in \bar \P$ with length $n$ and stem $h$ such that
  $p \forces (\alpha, \eta) <_{\dot T} (\beta, \zeta)$. Since $\dot T$ names a tree
  and conditions with the same stem are compatible, it is easy to verify that this
  set of relations forms a system.

  We will show that forcing with $\R * \P_{1, 2, 3}$ adds a system of branches
  $(b_{h, i})_{(h, i) \in S \times \kappa}$.
  Let $\gamma \in j_0(I)$ with $\sup j_0[\mu] < \gamma$.
  By the choice of $\bar n$ we have $\vert S \vert < \lambda^b_{\bar n}$ and $j_0(S) = S$.  
  Let $\alpha \in \dom(b_{h, i})$ if and only if there exist $\eta < \kappa$
  and $p \in j_0(\bar \P)$ with stem $h$ such that
  $p \forces (j_0(\alpha), \eta) <_{j_0(\dot T)} (\gamma, i)$, and in this case let
  $b_{h, i}(\alpha)$ be the unique $\eta$ for which this holds. It is easy to see that
  this is a system of branches with $b_{h, i}$ forming a branch through $R_h$.

  Using Claim \ref{34setup} and appealing to Fact \ref{Itay3.4} in the model
  $V[L][\Agg][R]$, there exists $(h, i) \in S \times \kappa$ such that
  $b_{h, i} \in V[L][\Agg][R]$ and $\dom(b_{h, i})$ is unbounded in $\mu$.
  Now let $J = \dom(b_{h, i})$, and for $\alpha \in J$ let $u_\alpha = (\alpha, b_{h, i}(\alpha))$.
  If $\alpha < \beta$ with $\alpha, \beta \in J$ there is a condition $p \in j_0(\bar \P)$ with
  stem $h$ such that $p \forces j_0(u_\alpha), j_0(u_\beta) <_{j(\dot T)} (\gamma, i)$, so
  $p \forces j_0(u_\alpha) <_{j_0(\dot T)}  j_0(u_\beta)$. Since $j_0(h) = h$, by elementarity there is
  $p \in \bar\P$ with stem $h$ such that $p \forces u_\alpha <_{\dot T}  u_\beta$.
\end{proof}


Suppose that a stem $h'$  has  the form
$\langle q_{17}, x_{17}, . . . q_{m-1}, x_{m-1}, [g]_{U_m}\rangle$,
and that $(x, q) \in B_m$.
We write $h' + (x, q)$ for the stem
$\langle q_{17}, x_{17}, . . . q_{m-1}, x_{m-1}, g(x), x, q \rangle$. 
This is technically illegal because it depends on the choice of $g$, but
we will only use this notation in a context where the choice of $g$ is explicit.  

\smallskip

\noindent Global notation: $h' + (x, q)$\index[Notation]{$h' + (x, q)$}

\smallskip

Fix $J$, $h$ and $\langle u_\alpha: \alpha \in J \rangle$ as in the conclusion of
Lemma \ref{lemma2}. 

\smallskip

\begin{lemma}\label{jlemma3}
  Let $h'$ be a stem
of the form $\langle q_{17}, x_{17}, . . . q_{m-1}, x_{m-1}, [g]_{U_m}\rangle$.
  Assume that there exists in $V[L][\Agg][R]$ an unbounded set $\bar J \subseteq J$,
  such that $h' \forces^* u_\alpha < u_\beta$ for all $\alpha, \beta \in \bar J$ with $\alpha < \beta$.

Then there exist $\rho < \mu$ and a sequence $(E_\alpha)_{\alpha \in \bar J \setminus \rho}$ in $V[L][R][\Agg]$ such that:
\begin{enumerate}
\item $E_\alpha \subseteq B_m$ and $E_\alpha \in {\mathcal F}_m$.
\item For all $\alpha, \beta \in \bar J \setminus \rho$ with $\alpha < \beta$ and all
  $(x, q) \in E_\alpha \cap E_\beta$,
  $h' + (x, q) \forces^* u_\alpha < u_\beta$, where $h' + (x, q)$ is computed using the function $g$. 
\end{enumerate}

\end{lemma}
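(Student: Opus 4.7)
The plan is to build the sets $E_\alpha$ from pair-indexed sets $X_{\alpha,\beta}$, using witnesses in $K$ and the distributivity of $\Q_\infty$ over $M_1^*$. First I would verify, for each $\alpha < \beta$ in $\bar J$, that
\[
X_{\alpha,\beta} := \{(x, q) \in B_m : h' + (x, q) \forces^* u_\alpha <_{\dot T} u_\beta\}
\]
belongs to ${\mathcal F}_m$. Fix $p_{\alpha,\beta}$ with $\stem(p_{\alpha,\beta}) = h'$ forcing $u_\alpha <_{\dot T} u_\beta$, with upper part $(f^{\alpha\beta}_m, A^{\alpha\beta}_m, F^{\alpha\beta}_{m+1}, A^{\alpha\beta}_{m+1},\ldots)$. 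Since $[f^{\alpha\beta}_m]_{U_m} = [g]_{U_m}$, the set $D_{\alpha,\beta} := \{x \in A^{\alpha\beta}_m : f^{\alpha\beta}_m(x)=g(x)\}$ lies in $U_m$; for such $x$ and any $q \le [F^{\alpha\beta}_{m+1}(x,-)]_{U_{m+1}}$, one may pick a representative of $q$ pointwise below $F^{\alpha\beta}_{m+1}(x,-)$ on a $U_{m+1}$-large set and thereby refine the one-step extension $p_{\alpha,\beta}\cat x$ to a condition with stem $h' + (x, q)$ still forcing $u_\alpha <_{\dot T} u_\beta$. Thus $r_{\alpha,\beta} := [F^{\alpha\beta}_{m+1}]_{U_m \times U_{m+1}} \in K$ witnesses $X_{\alpha,\beta} \in {\mathcal F}_m$ via the definition of ${\mathcal F}_m$.

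The crux is to compress the pair-indexing into single-indexing. For each $\alpha \in \bar J$, the family $(r_{\alpha,\beta})_{\beta \in \bar J,\, \beta > \alpha}$ is a sequence in $K$ of length at most $\mu = \lambda^b_{\omega+1} < \lambda^b_{\omega+2}$, which is the degree of distributivity of $\Q_\infty$ in $M_1^*$. I would aim, for each $\alpha$, to select a single $r_\alpha \in K$ that lies below a cofinal tail of the $r_{\alpha,\beta}$'s. Concretely, by choosing the $p_{\alpha,\beta}$ canonically (e.g.\ by an application of Lemma \ref{strongPrikry} that extracts definable names), arrange that the dense open set of $r \in \Q_\infty$ that bound $r_{\alpha,\beta}$ for all but boundedly many $\beta$ descends into $M_1^*$; genericity of $K$ then selects $r_\alpha$. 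Pick a function $F_\alpha$ with $[F_\alpha]_{U_m \times U_{m+1}} = r_\alpha$ and define $E_\alpha := \{(x, q) \in B_m : x \in \dom F_\alpha(-,-) \text{ and } q \le [F_\alpha(x,-)]_{U_{m+1}}\}$, so that $E_\alpha \in {\mathcal F}_m$ directly from the definition.

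To complete the proof, choose $\rho < \mu$ so that, for $\alpha \in \bar J \setminus \rho$, the "cofinal tail" of $\beta$'s bounded by $r_\alpha$ begins strictly below $\alpha$ (this is possible because the thinning of $\bar J$ needed to make the canonical witnesses lie in $M_1^*$ can be absorbed on a bounded initial segment of $\mu$). If then $\alpha < \beta$ are both in $\bar J \setminus \rho$ and $(x, q) \in E_\alpha \cap E_\beta$, we obtain $q \le [F_\alpha(x,-)] \le r_{\alpha,\beta}$ (identifying pointwise bounds with the corresponding equivalence classes), and the construction of Step~1 applies with the given $q$ in place of $[F^{\alpha\beta}_{m+1}(x,-)]$: the resulting condition with stem $h' + (x, q)$ refines $p_{\alpha,\beta}\cat x$ and hence forces $u_\alpha <_{\dot T} u_\beta$, witnessing $h'+(x,q) \forces^* u_\alpha <_{\dot T} u_\beta$.

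The main obstacle will be Step 2: the naive attempt to take a lower bound of $(r_{\alpha,\beta})_\beta$ in $K$ requires closure of $\Q_\infty$ beyond mere $<\lambda^b_{\omega+2}$-distributivity, and even the dense-set argument relies on the sequence of witnesses being coded in $M_1^*$. Since the $p_{\alpha,\beta}$ are originally chosen in $V[L][\Agg]$, reselecting them uniformly requires exploiting the specific form $\Q_\infty = i_n(\Q)(\kappa, j_n(\kappa))$ from Section~\ref{auxcomp} together with the strong Prikry lemma to canonicalise; if this cannot be made to descend into $M_1^*$ directly, the fallback is to thin $\bar J$ further — the conclusion only asks for a tail $\bar J \setminus \rho$, so bounded thinning is harmless.
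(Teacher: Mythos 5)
Your first step is sound: for a fixed pair $\alpha<\beta$ the set $X_{\alpha,\beta}$ does lie in ${\mathcal F}_m$, by exactly the minimal-one-point-extension computation you describe (the paper performs the same computation, though for a single target node rather than for pairs). The genuine gap is in your compression step, and it is not repairable along the lines you sketch. To select $r_\alpha\in K$ below (a tail of) the family $(r_{\alpha,\beta})_{\beta}$ you must apply the genericity of $K$ over $M_1^*$, and that only yields anything for families of dense sets, or of elements of $K$, that are \emph{coded in $M_1^*$}. The conditions $p_{\alpha,\beta}$, hence the $r_{\alpha,\beta}$, are chosen from the statement ``$p\forces u_\alpha<_{\dot T}u_\beta$'', and the sequence $(u_\alpha)_{\alpha\in J}$ exists only in $V[L][\Agg][R]$ (it was produced by the branch lemma in Lemma \ref{lemma2}); it is not in $M_1^*$, and since $\R$ is only $<\mu$-distributive, even a $\mu$-indexed sequence of elements of $M_1^*$ computed in $V[L][\Agg][R]$ need not belong to $M_1^*$. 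So there is no dense set in $M_1^*$ for $K$ to meet. Canonicalising the $p_{\alpha,\beta}$ via the strong Prikry lemma does not help, because the objects being decided (the $u_\beta$) are themselves external to $M_1^*$; and thinning $\bar J$ cannot help either, since the set of ``exceptional'' $\beta$ depends on $\alpha$ and its suprema over $\alpha\in\bar J$ need not be bounded below $\mu$. A further warning sign: your plan would actually prove the stronger statement that $(x,q)\in E_\alpha$ \emph{alone} forces $u_\alpha<u_\beta$ for all larger $\beta$; the lemma deliberately asks only for pairs $(x,q)\in E_\alpha\cap E_\beta$, which is a hint that the single-index compression cannot be done by a lower-bound argument in $K$.

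What the paper does instead is to manufacture a single reference node to replace the pair-indexing: it builds (via Lemma \ref{keylifting}) a generic $\mu$-supercompactness embedding $j_1$ with critical point $\lambda^b_{n^*}$ for a large finite $n^*$, takes $v=j_1(u)_\gamma$ for $\gamma\in j_1(\bar J)$ above $\sup j_1[\mu]$, and works with the sets $\bar J_{x,q}=\{\alpha: h'+(x,q)\forces^*_{j_1(\bar\P)} j_1(u_\alpha)<v\}$. These are then pulled back toward $V[L][\Agg][R]$ using the approximation property of the Cohen factor $\P_{2a}$, the normality-like Lemma \ref{normal-like} and ultrafilter-like Lemma \ref{ultrafilter-like} for ${\mathcal F}_m$, and a system-of-branches argument via Fact \ref{Itay3.4}; the ordinal $\rho$ arises from separating the finitely overlapping candidates in ${\mathcal J}_{x,q}$, not from discarding exceptional $\beta$'s. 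You would need to import this entire mechanism; the lower-bound-in-$K$ shortcut does not close.
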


Before proving Lemma \ref{jlemma3}, we rewrite the conclusion
in a way that is less concise but will be useful later. 
Refining $E_\alpha$ if necessary,  we may fix  $D_\alpha \in U_m$ and $F_\alpha \in C_{m+1}$
such that $E_\alpha  = \{   (x, q) \in B_m: x \in D_\alpha, q \le [F_\alpha(x,-)] \}$.
Now $([F_\alpha])_{\alpha \in \bar J \setminus \rho} \in M_1^*$ by the distributivity
of $\Agg$ and the closure of $M_1^*$, $[F_\alpha] \in K$ for all $\alpha$,
and $K$ is generic over $M_1^*$. It follows that there is $F^* \in C_n$ such that
$[F^*] \le [F_\alpha]$ for all $\alpha$, and shrinking $D_\alpha$ we may assume that
$[F^*(x. -)] \le [F_\alpha(x, -)]$ for all $x \in D_\alpha$.
Refining $E_\alpha$ again we may assume that $E_\alpha  = \{   (x, q) \in B_m: x \in D_\alpha, q \le [F^*(x,-)] \}$.

Then $(x, q) \in E_\alpha \cap E_\beta$ if and only if $x \in D_\alpha \cap D_\beta$ and $q \le [F^*(x,-)]$,
and the conclusion amounts to saying that
if $x \in D_\alpha \cap D_\beta$ then
$h' + (x, [F^*(x, -)]) \forces^* u_\alpha < u_\beta$.
Readers of \cite{ItaySCH} and \cite{sinapova-omega^2} will notice that 
Lemma \ref{jlemma3} is parallel to \cite[Lemma 3.5]{ItaySCH} and \cite[Lemma 16]{sinapova-omega^2}.

\begin{proof}
  Choose $n^*$ such that $m + 20 < n^* < \omega$, and let $j_1: V[L][R][\Agg] \rightarrow N_1$  be a generic $\mu$-supercompactness 
  embedding with critical point $\lambda^b_{n^*}$ constructed as in Lemma \ref{keylifting}
 The embedding is added by a certain product $\P_1 \times \P_2 \times \P_3$.
We will work for the moment in $V[L][R][\Agg][P_1 \times P_2 \times P_3]$.

Let $\gamma \in j_1(\bar J)$ be such that $\sup j_1[\mu] < \gamma$, and let $v = j_1(u)_\gamma$. 
By elementarity, for every $\alpha \in \bar J$ there is a condition $r_\alpha \in j_1(\bar \P)$
such that $r_\alpha$ has stem $h'$ and $r_\alpha \forces j_1(u_\alpha) <_{j_1(T)} v$.
Now $j_1(g) = g$, and $[g]_{U_m} = [j_1(g)]_{j_1(U_m)} = [f^{r_\alpha}_m]_{j_1(U_m)}$.
Shrinking $A^{r_\alpha}_m$ if necessary, we may assume that 
$A^{r_\alpha}_m \subseteq \dom(g)$ and $f^{r_\alpha}_m = g \restriction A^{r_\alpha}_m$.

For each $y \in A^{r_\alpha}_m$, the minimal one-point extension of
$r_\alpha$ by $y$ forces $j_1(u_\alpha) <_{j_1(T)} v$. Since $g(y) = j_1(g)(y) =  f^{r_\alpha}_m(y)$,
the stem of the minimal one-point extension is
$\langle q_{17}, x_{17}, . . . q_{m-1}, x_{m-1}, g(y), y, r \rangle$
where $r = [F^{r_\alpha}_{m+1}(y, -)]_{U_{m+1}}$.
We conclude that there is a $j_1({\mathcal F}_m)$-large set $X_\alpha$
such that $h' + (y, r) \forces^*_{j_1(\bar \P)} j_1(u_\alpha) <_{j_1(T)} v$ for all $(y, r) \in X_\alpha$.
Membership of $X_\alpha$ in $j_1({\mathcal F}_m)$ is witnessed by $A^{r_\alpha}_m$ and
$F^{r_\alpha}_{m+1}$.

For $(x, q) \in B_m$, let
${\bar J}_{x, q} = \{ \alpha \in \bar J : h' + (x, q) \forces^*_{j_1(\bar \P)}   j_1(u_\alpha) < v \}$.
It is easy to see that for $\beta \in {\bar J}_{x, q}$, we have that $\alpha \in {\bar J}_{x, q} \cap \beta$
if and only if $h' + (x, q) \forces^*_{\bar \P} u_\alpha < u_\beta$, so that
${\bar J}_{x, q} \cap \beta \in V[L][\Agg]$.

Since $\mu$ has cardinality and cofinality $\lambda^b_{n^* - 1}$ in
in $V[L][R][\Agg][P_1 \times P_{2b} \times P_3]$, and $\P_{2a}$ has the
$\lambda^b_{n^*-1}$-approximation property in this model,  it follows
 that whenever ${\bar J}_{x, q}$ is unbounded in $\mu$ we have ${\bar J}_{x, q} \in V[L][R][\Agg][P_1 \times P_{2b} \times P_3]$.
 It is important to notice that even in this case the definition of $j_1$ (and hence ${\bar J}_{x, q}$) requires $P_{2a}$.

 
 Working in $V[L][R][\Agg][P_1 \times P_{2b} \times P_3]$, let ${\mathcal J}_{x, q}$ be the set of unbounded subsets $C$ of $\mu$ such that
 some condition in $\P_{2a}$ forces ${\bar J}_{x, q} = C$. It is easy to see that
 \begin{itemize}
 \item $\vert {\mathcal J}_{x, q} \vert < \lambda^b_{n^* - 1}$.
 \item The function $(x, q) \mapsto {\mathcal J}_{x, q}$ is in  $V[L][R][\Agg][P_1 \times P_{2b} \times P_3]$.
 \item If $C \in  {\mathcal J}_{x, q}$ and $\beta \in C$, then $C \cap \beta$ is the set of $\alpha < \beta$ such that
   $h' + (x, q) \forces^*_{\bar \P} u_\alpha < u_\beta$.
 \item If $C_1, C_2 \in {\mathcal J}_{x, q}$ with $C_1 \neq C_2$ then $C_1 \cap C_2$ is bounded in $\mu$.
 \end{itemize}

 Let $\rho < \mu$ be such that $C_1 \cap C_2 \subseteq \rho$ for all $(x, q) \in B_m$ and all $C_1, C_2 \in {\mathcal J}_{x, q}$
 with $C_1 \neq C_2$.  For $(x, q) \in B_m$ and $\alpha \in \bar J \setminus \rho$,
 let $f(x, q, \alpha)$ be the unique $C \in {\mathcal J}_{x, q}$ such that $\alpha \in C$ if such a $C$ exists,
 and let it be undefined otherwise.

 Let $\alpha_0 = \min(\bar J \setminus \rho)$, and let $A^*_\alpha$ be the set of $(x, q) \in B_m$ such that:
 \begin{itemize}
 \item  $f(x, q, \alpha)$  and $f(x, q, \alpha_0)$ are both defined.
 \item  $f(x, q, \alpha) = f(x, q, \alpha_0)$.
 \end{itemize}

 \begin{claim} $A^*_\alpha \in {\mathcal F}_m$.
 \end{claim}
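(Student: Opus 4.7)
The strategy is to use $r_\alpha$ and $r_{\alpha_0}$ (already constructed in $j_1(\bar\P)$, both with stem $h'$) to build a witness to $A^*_\alpha \in \mathcal{F}_m$. Applying Lemma~\ref{samestemub} inside $N_1$ yields a common extension $r \in j_1(\bar\P)$ of $r_\alpha$ and $r_{\alpha_0}$ with $\stem(r) = h'$, and $r$ forces both $j_1(u_\alpha) <_{j_1(\dot T)} v$ and $j_1(u_{\alpha_0}) <_{j_1(\dot T)} v$. Writing the upper part of $r$ as $\langle f^r_m, A^r_m, F^r_{m+1}, A^r_{m+1}, \ldots\rangle$ with $f^r_m = g \restriction A^r_m$, the key observation is that for any $(y, q) \in j_1(B_m)$ with $y \in A^r_m$ and $q \leq [F^r_{m+1}(y, -)]_{j_1(U_{m+1})}$, the minimal one-point extension of $r$ by $(y, q)$ has stem $h' + (y, q)$ and still forces both inequalities; equivalently, $\{\alpha, \alpha_0\} \subseteq \bar J_{y, q}$ in the full extension. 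Running the same construction for every $\beta \in \bar J$ and intersecting inside the $j_1(\kappa)$-complete filter $j_1(\mathcal{F}_m)$ shows, on a $j_1(\mathcal{F}_m)$-large subset of $j_1(B_m)$, that $\bar J \subseteq \bar J_{y, q}$ and hence $\bar J_{y, q}$ is unbounded in $\mu$.

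Next I would reflect $(A^r_m, F^r_{m+1})$ down to an honest witness $(D, F^*) \in U_m \times C_{m+1}$ with $[F^*] \in K$. The crucial leverage is that $\crit(j_1) = \lambda^b_{n^*}$ far exceeds $\lambda^b_{m+1}$: every $D' \in U_m$ has cardinality at most $\lambda^b_m < \lambda^b_{n^*}$, so $j_1(D') = D'$ and hence $U_m \subseteq j_1(U_m)$. Combined with the closure of $M_1^*$ under $<\lambda^b_{\omega+3}$-sequences and the genericity of $K$ over $M_1^*$, this lets one extract $D \in U_m$ contained in $A^r_m \cap P_\kappa(\lambda^b_m)$ together with $F^* \in C_{m+1}$ (so $[F^*] \in K$) whose class $[F^*(x, -)]_{U_{m+1}}$ refines $[F^r_{m+1}(x, -)]_{j_1(U_{m+1})}$ for every $x \in D$.

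For any $(x, q) \in B_m$ with $x \in D$ and $q \leq [F^*(x, -)]_{U_{m+1}}$, the first paragraph then gives $\bar J_{x, q} \supseteq \{\alpha, \alpha_0\}$ with $\bar J_{x, q}$ unbounded in $V[L][R][\Agg][P_{1,2,3}]$; hence the value of $\bar J_{x, q}$ under the $\P_{2a}$-generic producing this full model lies in $\mathcal{J}_{x, q}$ and contains both ordinals, so by the uniqueness guaranteed by the choice of $\rho$ we conclude $f(x, q, \alpha) = f(x, q, \alpha_0)$ and $(x, q) \in A^*_\alpha$. This exhibits $(D, F^*)$ as a witness to $A^*_\alpha \in \mathcal{F}_m$. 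The hard part will be the reflection in the second paragraph, since $j_1$ and $j_{01}^*$ are quite different embeddings with different critical points and $j_1(U_m) \neq U_m$ in general. A cleaner alternative, exploiting the downward closure of $A^*_\alpha$ (immediate from $q' \leq q \Rightarrow \bar J_{x, q'} \supseteq \bar J_{x, q}$) via Lemma~\ref{ultrafilter-like}: were $B_m \setminus A^*_\alpha \in \mathcal{F}_m$ with witnesses $(D_0, F_0)$, the inclusion $D_0 = j_1(D_0) \in j_1(U_m)$ would force $D_0 \cap A^r_m$ to be nonempty, and taking a common refinement of $[F_0(x, -)]_{U_{m+1}}$ and $[F^r_{m+1}(x, -)]_{j_1(U_{m+1})}$ at such an $x$ would place a pair into both $A^*_\alpha$ and $B_m \setminus A^*_\alpha$, a contradiction.
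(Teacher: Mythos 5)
Both of your routes founder on the same point: to conclude that a pair $(x,q)$ lies in $A^*_\alpha$ you must know that $\bar J_{x,q}$ is \emph{unbounded} in $\mu$, not merely that $\alpha,\alpha_0\in\bar J_{x,q}$. Indeed $f(x,q,\alpha)$ is defined only when some $C\in\mathcal{J}_{x,q}$ contains $\alpha$, and $\mathcal{J}_{x,q}$ consists solely of the possible \emph{unbounded} values of $\bar J_{x,q}$; if the actual $\bar J_{x,q}$ is bounded, then knowing it contains $\alpha$ and $\alpha_0$ tells you nothing about whether $f(x,q,\alpha)$ and $f(x,q,\alpha_0)$ are defined, let alone equal. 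Your ``cleaner alternative'' produces a pair with $\{\alpha,\alpha_0\}\subseteq\bar J_{x,q}$ and then jumps straight to $(x,q)\in A^*_\alpha$, which is exactly this unjustified step. Your first route tries to secure unboundedness by arranging $\bar J\subseteq\bar J_{y,q}$ on a large set, but the intersection you invoke is over all $\beta\in\bar J$, i.e.\ $\mu$-many sets, while $j_1(\mathcal{F}_m)$ is a filter on a set of size $\lambda^b_m$ and is at best $j_1(\kappa)=\kappa$-complete with $\kappa<\mu$; that intersection need not be large (and cannot be rescued by Lemma~\ref{normal-like}, whose diagonal intersections are indexed by lower parts, not by ordinals below $\mu$).

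The missing ingredient is a counting argument. The paper argues by contradiction: if $A^*_\alpha\notin\mathcal{F}_m$ then $B_m\setminus A^*_\alpha$ is $\mathcal{F}_m$-positive, so for \emph{each} $\beta\in\bar J\setminus\rho$ separately one can pick $(x_\beta,q_\beta)\in X_{\alpha_0}\cap X_\alpha\cap X_\beta\cap(B_m\setminus A^*_\alpha)$ — only three large sets meet one positive set, so completeness is never strained. Since $\cf(\mu)>\vert B_m\vert$ in $V[L][\Agg][R][P_1\times P_{2b}\times P_3]$, a single $(x,q)$ is chosen for unboundedly many $\beta$; for \emph{that} pair $\bar J_{x,q}$ genuinely is unbounded, hence equals some $C\in\mathcal{J}_{x,q}$ containing $\alpha$ and $\alpha_0$, forcing $(x,q)\in A^*_\alpha$ and contradicting $(x,q)\in B_m\setminus A^*_\alpha$. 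Your observation that $A^*_\alpha$ is downward closed (so Lemma~\ref{ultrafilter-like} applies) is correct and consistent with the paper's use of positivity, and your worry about matching $[F^r_{m+1}(x,-)]_{j_1(U_{m+1})}$ against honest witnesses in $C_{m+1}$ is well placed, but the pigeonhole over $\beta$ is what actually closes the argument and it is absent from both of your versions.
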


 \begin{proof}
   Otherwise $B_m \setminus A^*_\alpha \in {\mathcal F}_m^+$,
   so that applying $j_1$ we have
   $B_m \setminus A^*_\alpha = j_1(B_m \setminus A^*_\alpha) \in j_1({\mathcal F}_m)^+$. 
   For each $\beta \in \bar J \setminus \rho$,
   choose $(x_\beta, q_\beta) \in X_{\alpha_0} \cap X_\alpha \cap X_\beta \cap (B_m \setminus A^*_\alpha)$. 

   Since $\cf(\mu) > \vert B_m \vert$ in $V[L][\Agg][R][P_1 \times P_{2b} \times P_3]$,
   we may find $(x, q) \in B_m$ such that $(x_\beta, q_\beta) = (x, q)$
   for unboundedly many $\beta$. For all such $\beta$ we have by the choice of $X_\beta$ that
   $h' + (x, q) \forces^*_{j_1(\bar \P)} j_1(u_\beta) <_{j_1(T)} v$, so
   ${\bar J}_{x, q}$ is unbounded and hence ${\bar J}_{x, q} = C$
   for some $C \in {\mathcal J}_{x, q}$.

   Since $(x, q) \in A_\alpha$, $\alpha \in {\bar J}_{x, q} = C$ so that
   $f(x, q, \alpha)$ is defined and $f(x, q, \alpha) = C$. The same is
   true for $\alpha_0$, so 
   $f(x, q, \alpha) = f(x, q, \alpha_0) = C$, and $(x, q) \in A^*_\alpha$.
   This is a contradiction since by construction $(x, q) \in B_m \setminus A^*_\alpha$. 
\end{proof}

  Define relations $(R_{x, q})_{(x, q) \in B_m}$ on $(\bar J \setminus \rho) \times 1$
  as follows: $(\alpha, 0) R_{x, q} (\beta, 0)$ if and only if
  $h' + (x, q) \forces^* u_\alpha <_T u_\beta$.
  It is easy to see that these  
  relations  form a system on $(\bar J \setminus \rho) \times 1$
  in the sense of Definition \ref{systemdef}: the main point is that if $\alpha, \beta \in \bar J \setminus \rho$ with
  $\alpha < \beta$ then by hypothesis $h' \forces^* u_\alpha < u_\beta$, and any minimal one-point extension
  of a suitable condition witnessing this will witness that $h' + (x, q) \forces^* u_\alpha < u_\beta$
  for some $(x, q) \in B_m$.

  For every $(x, q) \in B_m$, let $b_{x, q} = \{ \alpha \in \bar J \setminus \rho: (x, q) \in A^*_\alpha \}$.
 If $\alpha, \beta \in b_{x,q}$ with $\alpha < \beta$,
then $(x, q) \in A^*_\alpha \cap A^*_\beta$, so
$f(x, q, \alpha) = f(x, q, \beta) = C \in {\mathcal J}_{x, q}$ and hence
$h' + (x, q) \forces^* u_\alpha < u_\beta$, that is $(\alpha, 0) R_{x, q} (\beta, 0)$. 
Let $b^*_{x, q}$ be the function with domain $b_{x, q}$ and $b^*_{x, q}(\alpha) = (\alpha, 0)$
for all $\alpha \in b_{x, q}$. 

\begin{claim} \label{plusone-SOB}
 $(b^*_{x, q})$ is a system of branches through $(R_{x, q})$ in the sense of Definition \ref{systemdef}.
\end{claim}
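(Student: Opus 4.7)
My plan is to verify the two clauses of Definition \ref{systemdef}: that each $b^*_{x,q}$ is a branch through $R_{x,q}$, and that every $\alpha \in \bar J \setminus \rho$ lies in the domain of some $b^*_{x,q}$. Since the width of the system is $\tau = 1$, the values $b^*_{x,q}(\alpha) = (\alpha, 0)$ are forced, and the branch condition collapses to showing the equivalence $\alpha \in b_{x,q} \iff (\alpha, 0) R_{x,q} (\beta, 0)$ for every $\beta \in b_{x,q}$ and every $\alpha \in (\bar J \setminus \rho) \cap \beta$. The forward direction of this equivalence was established just before the statement of the claim. Covering is immediate as well: the preceding claim gives $A^*_\alpha \in {\mathcal F}_m$, hence $A^*_\alpha$ is nonempty, and any $(x,q) \in A^*_\alpha$ witnesses $\alpha \in b_{x,q}$.

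The remaining, and principal, step is the reverse direction, which I expect to be the main obstacle because the hypothesis on $\alpha$ is only that it satisfies the system relation (not that it already lies in some $b_{x,q}$), so we must genuinely extract membership in $b_{x,q}$ from a forcing assertion about $\bar\P$. Suppose $\beta \in b_{x,q}$, $\alpha \in (\bar J \setminus \rho) \cap \beta$, and $h' + (x,q) \forces^*_{\bar\P} u_\alpha < u_\beta$. From $\beta \in b_{x,q}$ I obtain $C \in {\mathcal J}_{x,q}$ with $\beta, \alpha_0 \in C$ and a condition $r_C \in \P_{2a}$ forcing $\bar J_{x,q} = C$. By elementarity of $j_1$, our hypothesis on $\alpha,\beta$ yields a condition with stem $h' + (x,q)$ in $j_1(\bar\P)$ forcing $j_1(u_\alpha) < j_1(u_\beta)$; meanwhile the definition of $\bar J_{x,q}$, combined with $r_C \forces \beta \in \bar J_{x,q}$, produces after extending by $r_C$ a condition with the same stem $h' + (x,q)$ forcing $j_1(u_\beta) < v$. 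Lemma \ref{samestemub} amalgamates these into a single condition with stem $h' + (x,q)$ forcing $j_1(u_\alpha) < v$, so $r_C$ also forces $\alpha \in \bar J_{x,q}$, hence $r_C \forces \alpha \in \check C$. Since $C$ is a ground-model set this absoluteness yields $\alpha \in C$, so $f(x,q,\alpha) = C = f(x,q,\alpha_0)$, $(x,q) \in A^*_\alpha$, and therefore $\alpha \in b_{x,q}$ as required.
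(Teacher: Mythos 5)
Your proof is correct and follows essentially the same route as the paper: the paper also reduces the claim to the reverse implication and gets it from the fact that for $C \in {\mathcal J}_{x,q}$ and $\beta \in C$, $C \cap \beta = \{\alpha < \beta : h'+(x,q) \forces^* u_\alpha < u_\beta\}$, which is exactly the property you re-derive via elementarity of $j_1$ and Lemma \ref{samestemub}. The only difference is that the paper had already recorded this as one of the listed properties of ${\mathcal J}_{x,q}$ (behind an ``it is easy to see''), whereas you spell out that amalgamation argument explicitly.
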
 

\begin{proof} Let $\beta \in b_{x, q}$ and let $\alpha < \beta$ be such that
  $h' + (x, q) \forces^* u_\alpha <_T u_\beta$. As $(x, q) \in A^*_\beta$,
  we have $f(x, q, \beta) = f(x, q, \alpha_0) = C$ where $C \in {\mathcal J}_{x, q}$.
  By the properties of ${\mathcal J}_{x, q}$ we have $\alpha \in C$, so that
  $f(x, q, \alpha) = C = f(x, q, \alpha_0)$ and $(x, q) \in A_\alpha^*$,
  hence $\alpha \in b_{x, q}$. Finally for every $\alpha \in \bar J \setminus \rho$
  we have $\alpha \in b_{x, q}$ for any $(x, q) \in A^*_\alpha$. 
\end{proof}

Let $E$ be the set of $(x, q) \in B_m$ such that  $b_{x, q} \in V[L][\Agg][R]$
and $b_{x, q}$ is unbounded. By the distributivity of $\P_1 \times \P_{2b} \times \P_3$, we have $E \in V[L][\Agg][R]$.
We now work below a condition in $\P_1 \times \P_{2b} \times \P_3$ that determines the value of $E$. 


\begin{claim} \label{plusone-E-large}
  $E \in {\mathcal F}_m$.
\end{claim}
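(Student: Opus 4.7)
The plan is to argue by contradiction using the ultrafilter-like dichotomy of $\mathcal{F}_m$ from Lemma \ref{ultrafilter-like}, combined with Fact \ref{Itay3.4} applied to the system of branches $(b^*_{x,q})$ built in Claim \ref{plusone-SOB}. I begin by observing that because distinct elements of $\mathcal{J}_{x,q}$ are pairwise disjoint above $\rho$ (by the choice of $\rho$ in the proof of Lemma \ref{jlemma3}) and $\alpha_0 = \min(\bar J \setminus \rho) \ge \rho$, whenever $f(x,q,\alpha_0)$ is defined it equals a unique $C_0 \in \mathcal{J}_{x,q}$, and then $b_{x,q} = C_0 \setminus \rho$.

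Next I verify that $E$ is downwards closed in the $q$-coordinate. Let $(x,q) \in E$ and $q' \le q$ in $\Q(\kappa(x),\kappa)$. Choose $p_0 \in \P_{2a}$ forcing $\bar J_{x,q} = C_0$; since $\bar J_{x,q} \subseteq \bar J_{x,q'}$ is forced outright and the elements of $\mathcal{J}_{x,q'}$ are disjoint above $\rho$, every extension of $p_0$ that decides $\bar J_{x,q'}$ forces the same value $C_0' \supseteq C_0$. Hence $b_{x,q'} = C_0' \setminus \rho \supseteq b_{x,q}$ is cofinal in $\mu$. Moreover, knowing $b_{x,q} \in V[L][\Agg][R]$ pins down $C_0'$ as the unique extension of $C_0$ in $\mathcal{J}_{x,q'}$; by the $\lambda^b_{\bar n -1}$-cc of $\P_{2a}$ and the $<\lambda^b_{\bar n -1}$-distributivity of $\P_1 \times \P_{2b} \times \P_3$ the relevant pairing between $\mathcal{J}_{x,q}$ and $\mathcal{J}_{x,q'}$ is bounded in size and already lies in $V[L][\Agg][R]$, so $b_{x,q'} \in V[L][\Agg][R]$ as well, giving $(x,q') \in E$.

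Suppose for contradiction $E \notin \mathcal{F}_m$. Applying Lemma \ref{ultrafilter-like} (adapted to the outer model via a $\P_1 \times \P_2 \times \P_3 \times \R \times \Aggforcing$-name for $E$, using genericity of $K$ against the resulting dense subset of $\Q_\infty$) to the downwards-closed set $E$ yields $B_m \setminus E \in \mathcal{F}_m$, witnessed by some $D \in U_m$ and $F \in C_{m+1}$; set $B^*_m = \{(x,q) : x \in D,\ q \le [F(x,-)]_{U_{m+1}}\}$, so $B^*_m \subseteq B_m \setminus E$. Now restrict the system $(R_{x,q})$ and its system of branches $(b^*_{x,q})$ from Claim \ref{plusone-SOB} to index set $B^*_m$: the height is $\bar J \setminus \rho$, cofinal in $\nu^+ = \mu$ with $\nu = \lambda^b_\omega$, the width is $\tau = 1$, and $|I| = |J| = |B^*_m| \le \lambda^b_m \cdot \lambda^a_{\omega+2} = \lambda^b_m$. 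The branches are added by $\P_1 \times \P_2 \times \P_3$ (this is where $j_1$, and hence each $\bar J_{x,q}$, is first defined), and the last bullet of Lemma \ref{keylifting}, with $\bar n$ chosen large enough that $\lambda' := \lambda^b_{\bar n -3}$ satisfies $\lambda^b_m < \lambda' < \nu$, supplies a forcing in $V[L][\Agg][R]$ adding $\lambda'$ mutually generic filters for $\P_1 \times \P_2 \times \P_3$, preserving regularity of $\lambda'$ and forcing $\cf(\mu) > \lambda'$. Fact \ref{Itay3.4} then produces $(x,q) \in B^*_m$ with $b^*_{x,q} \in V[L][\Agg][R]$ and $\dom(b^*_{x,q}) = b_{x,q}$ cofinal in $\mu$, i.e.\ $(x,q) \in E$, contradicting $B^*_m \cap E = \emptyset$. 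The hard step is the downwards-closure verification, especially the ``$b_{x,q'} \in V[L][\Agg][R]$'' clause, which rests on distributivity of $\P_1 \times \P_{2b} \times \P_3$ together with the $\P_{2a}$-Knaster bound on $|\mathcal{J}_{x,q}|$; once this is in hand the rest is a direct application of Fact \ref{Itay3.4} in the style of Sinapova \cite{sinapova-omega^2} and Sinapova--Unger \cite{SinapovaUnger}.
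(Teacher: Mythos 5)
Your endgame---restricting the system of branches $(b^*_{x,q})$ to a large subfamily of $B_m\setminus E$ and invoking Fact \ref{Itay3.4} via the mutual-genericity clause of Lemma \ref{keylifting}---is the same as the paper's, but you reach it by a detour that is both unnecessary and, as written, contains the one real gap. The paper's proof is shorter: $E\notin\mathcal F_m$ is \emph{equivalent} to $B_m\setminus E\in\mathcal F_m^+$ (if $B_m\setminus E$ misses some member of the filter, that member is contained in $E$, and upward closure gives $E\in\mathcal F_m$), and positivity alone already guarantees that $\{b^*_{x,q}:(x,q)\in B_m\setminus E\}$ is a system of branches, since each $A^*_\alpha\in\mathcal F_m$ must meet $B_m\setminus E$. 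So neither Lemma \ref{ultrafilter-like} nor downward closure of $E$ is needed. (Your choice of $\P=\P_1\times\P_2\times\P_3$ with the full product, rather than the paper's $\P_1\times\P_{2b}\times\P_3$ with $\Q=\P_1^*\times\P_{2b}^*\times\P_3^*$, is harmless: the branches are in particular added by the full product, and the last bullet of Lemma \ref{keylifting} supplies the required $\Q$.)

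The gap is in your verification that $E$ is downwards closed, specifically the clause ``$b_{x,q'}\in V[L][\Agg][R]$''. You justify it by asserting that ``the relevant pairing between $\mathcal J_{x,q}$ and $\mathcal J_{x,q'}$ \ldots already lies in $V[L][\Agg][R]$'', but the families $\mathcal J_{x,q}$ are only defined in $V[L][\Agg][R][P_1\times P_{2b}\times P_3]$, and nothing you cite places them (or the induced map) in $V[L][\Agg][R]$; the $\mu$-approximation property of $\P_{2a}$ only puts each unbounded $\bar J_{x,q'}$ in the intermediate model. The conclusion is true, but for a different reason: since $b_{x,q}\subseteq b_{x,q'}$ and $b^*_{x,q'}$ is a branch through $R_{x,q'}$, for every $\beta\in b_{x,q}$ one has $b_{x,q'}\cap\beta=\{\alpha<\beta: h'+(x,q')\forces^* u_\alpha<_T u_\beta\}\in V[L][\Agg]$, and $b_{x,q'}$ is the union of these initial segments indexed by $b_{x,q}\in V[L][\Agg][R]$. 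Separately, your ``adaptation'' of Lemma \ref{ultrafilter-like} via a name for $E$ and genericity of $K$ is not needed and is harder to make precise than the direct route: $|B_m|=\lambda^b_m<\mu$, $\R$ is $<\mu$-distributive over $V[L][\Agg]$, and $\Aggforcing$ is $<\lambda^b_{\omega+2}$-distributive over $V[L]$, so $E\in V[L]$ and the lemma applies verbatim. None of this is fatal, but every point where your argument needs repair is a point the paper's argument never visits.
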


\begin{proof} 
  Suppose for a contradiction that $B_m \setminus E \in {\mathcal F}_m^+$.
  The set $\{ b^*_{x, q} : (x, q) \in B_m \setminus E \}$ is still a system of branches through $(R_{x, q})$,
  since for every $\alpha \in \bar J \setminus \rho$ we may choose $(x, q) \in (B_m \setminus E) \cap A^*_\alpha$
  to witness that $\alpha \in \dom(b^*_{x, q})$. 

  Now we appeal to Fact \ref{Itay3.4} with $\lambda = \lambda^b_{n^*-3}$, $\P = \P_1 \times \P_{2b} \times \P_3$,
  and $\Q = \P_1^* \times  \P_{2b}^* \times \P_3^*$ defined as in the discussion preceding Claim \ref{34setup}.
    It follows that there is $(x, q) \in B_m \setminus E$ such that $b_{x, q} \in V[L][\Agg][R]$ and
    $b_{x, q}$ is unbounded, an immediate contradiction.
\end{proof}

By distributivity, $(b_{x, q})_{(x, q) \in E} \in V[L][\Agg][R]$. 
For every $\alpha \in \bar J \setminus \rho$, let $E_\alpha = \{ (x, q) \in E : \alpha \in b_{x, q} \}$.
Since $E_\alpha = E \cap A^*_\alpha$, $E_\alpha \in {\mathcal F}_m$.
 For all $\alpha, \beta \in \bar J \setminus \rho$ with $\alpha < \beta$ and all
 $(x, q) \in E_\alpha \cap E_\beta$,
 $\alpha, \beta \in b_{x, q}$ and hence $h' + (x, q) \forces^* u_\alpha < u_\beta$.
\end{proof}

\begin{lemma} \label{endgame} 
  There exist $\rho < \mu$ and a sequence of conditions $(p_\alpha)_{\alpha \in J \setminus \rho}$
  in $V[L][\Agg][R]$
  such that:
  \begin{itemize}
  \item For all $\alpha$ the stem of $p_\alpha$ is $h$.
  \item For all $\alpha$ and $\beta$ with $\alpha < \beta$,
    $p_\alpha \wedge p_\beta \forces u_\alpha < u_\beta$.
  \end{itemize}
\end{lemma}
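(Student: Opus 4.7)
The plan is to apply Lemma \ref{jlemma3} with $h' = h$ and $\bar J = J$, where $h$ and $J$ are produced by Lemma \ref{lemma2}. Since $h \forces^* u_\alpha < u_\beta$ for all $\alpha < \beta$ in $J$, the hypotheses of Lemma \ref{jlemma3} are met with $m = n$. Writing $h = \langle q_{17}, x_{17}, \ldots, q_{n-1}, x_{n-1}, [g]_{U_n}\rangle$ and using the refined form of the conclusion recorded immediately after the statement of Lemma \ref{jlemma3}, I obtain $\rho < \mu$, a single function $F^* \in C_{n+1}$ with $[F^*]_{U_n \times U_{n+1}} \in K$, and sets $D_\alpha \in U_n$ for $\alpha \in J \setminus \rho$, all lying in $V[L][\Agg][R]$, such that for $\alpha < \beta$ in $J \setminus \rho$ and any $x \in D_\alpha \cap D_\beta$ one has $h + (x, [F^*(x,-)]) \forces^* u_\alpha < u_\beta$.

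I then define each $p_\alpha$ to be a condition of length $n$ and stem $h$, with $f^{p_\alpha}_n$ a representative of $[g]_{U_n}$ having domain $A^{p_\alpha}_n = D_\alpha$ (thinning $D_\alpha$ by a $U_n$-null set if necessary so that $D_\alpha \subseteq \dom(g)$), and with $F^{p_\alpha}_{n+1}$ equal to a suitable restriction of $F^*$. The remaining entries $A^{p_\alpha}_{n+1}, F^{p_\alpha}_{n+2}, A^{p_\alpha}_{n+2}, \ldots$ are chosen once and for all, independently of $\alpha$, as fixed functions $F_i \in C_i$ and measure-one sets $A_i \in U_i$; such a uniform choice is possible by the closure of $M_1^*$ and the genericity of $K$ used in the very construction of $F^*$. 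By construction $p_\alpha \in V[L][\Agg][R]$ and has stem $h$, so Lemma \ref{samestemub} ensures that any two of them are compatible and that the expression $p_\alpha \wedge p_\beta$ is meaningful.

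To verify $p_\alpha \wedge p_\beta \forces u_\alpha < u_\beta$ for $\alpha < \beta$ in $J \setminus \rho$, let $r$ be an arbitrary common refinement; by Lemma \ref{samestemub} I may assume $\stem(r) = h$ and $\lh(r) = n$. Taking the starting condition in the proof of Lemma \ref{lemma1} to have sufficient length, I may assume $n > 18$, so the Prikry Lemma \ref{prikry} applies to $r$. If $r$ did not force $u_\alpha < u_\beta$, one would obtain a direct extension $r^* \le^* r$ with $r^* \forces \neg(u_\alpha < u_\beta)$; picking any $x_n \in A^{r^*}_n \subseteq D_\alpha \cap D_\beta$, the one-point extension $r^* \cat x_n$ also forces $\neg(u_\alpha < u_\beta)$ and has length $n+1$. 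On the other hand, by the output of Lemma \ref{jlemma3} there is a condition $p^* \in \bar \P$ with $\stem(p^*) = h + (x_n, [F^*(x_n,-)])$ such that $p^* \forces u_\alpha < u_\beta$. Producing a common refinement of $r^* \cat x_n$ and $p^*$ then delivers the desired contradiction.

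The main technical point is this common-refinement construction, which I expect to be the principal obstacle. The lower parts of $r^* \cat x_n$ and $p^*$ match through position $n-1$ (both extending the lower part of $h$), and at position $n$ the inequality $f^{r^*}_n(x_n) \le g(x_n)$ holds because $r^* \le p_\alpha$ and $f^{p_\alpha}_n = g$ on $D_\alpha$. At the first constraint level, $[F^{r^*}_{n+1}(x_n,-)]_{U_{n+1}} \le [F^*(x_n,-)]_{U_{n+1}}$, so $F^{r^*}_{n+1}(x_n, y) \le F^*(x_n, y)$ on a $U_{n+1}$-large subset of $A^{r^* \cat x_n}_{n+1}$. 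Replacing each $A^{r^* \cat x_n}_i$ by its intersection with $A^{p^*}_i$ (still in $U_i$) and refining each constraint function $F^{r^* \cat x_n}_i$ on appropriate $\times^\prec$-large subsets so that it lies coordinatewise below $F^{p^*}_i$, one obtains a direct extension of $r^* \cat x_n$ that also refines $p^*$; this condition simultaneously forces $u_\alpha < u_\beta$ (inherited from $p^*$) and $\neg(u_\alpha < u_\beta)$ (inherited from $r^*$), completing the proof.
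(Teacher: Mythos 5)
Your first step --- a single application of Lemma \ref{jlemma3} to $h$ and $J$ --- matches the initialization of the paper's argument, but the verification of $p_\alpha \wedge p_\beta \forces u_\alpha < u_\beta$ has a genuine gap. Recall the paper's convention: $p_\alpha \wedge p_\beta \forces \phi$ means \emph{every} common refinement forces $\phi$. Your reduction ``by Lemma \ref{samestemub} I may assume $\stem(r)=h$ and $\lh(r)=n$'' is therefore not available: Lemma \ref{samestemub} produces \emph{one} common lower bound with stem $h$, but a common refinement $q$ witnessing failure need not extend that particular condition, and can have arbitrary length $m \geq n$ with an arbitrary lower part. Your one application of Lemma \ref{jlemma3} only controls stems of length $n+1$ of the form $h + (x,[F^*(x,-)])$; for a refinement $q$ of length $m \geq n+2$, the entries $q_{n+1}, x_{n+1}, \dots$ of its lower part are constrained only by the sets and functions you chose ``once and for all, independently of $\alpha$'' at levels above $n+1$, and nothing forces the resulting stem to be compatible with a condition forcing $u_\alpha < u_\beta$. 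The Prikry lemma cannot rescue this: it produces a \emph{direct extension} of a given condition deciding the statement, so it cannot retract a long refinement back to length $n$; and in your phrasing, if the direct extension $r^*$ of $r$ decides $u_\alpha < u_\beta$ \emph{positively}, that is consistent with $r \not\forces u_\alpha < u_\beta$ and yields no contradiction. What your argument actually establishes (granting the common-refinement construction at the end, which is essentially fine) is only that conditions forcing $u_\alpha < u_\beta$ are dense in the direct-extension order among length-$n$, stem-$h$ refinements --- strictly weaker than the lemma.

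The paper closes exactly this gap by iterating Lemma \ref{jlemma3} through \emph{all} levels $m \geq n$: at each level it applies the lemma to every lower part $s$ that is ``good'' for some $\alpha$, takes a diagonal intersection (Lemma \ref{normal-like}) over the $\lambda^b_m$ many such $s$, and uses the genericity of $K$ over $M_1^*$ to fix constraint functions $F^*_m$ uniformly in $\alpha$ while letting the measure-one sets $A^\alpha_m$ depend on $\alpha$ at \emph{every} level. The invariant maintained is that any lower part good for both $\alpha$ and $\beta$ at level $m$, completed by $[F^*_m(\cdot,-)]$, forces$^*$ $u_\alpha < u_\beta$; then an arbitrary bad refinement $q$ of any length has a stem refining such a stem, and Lemma \ref{samestemub} gives the contradiction. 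If you want to repair your proof, you need this full induction (with the accompanying sequence $\rho_m$ and $\rho = \sup_m \rho_m$); the single-level version does not suffice.
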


\begin{proof}
  Let the stem $h$ from the conclusion of Lemma \ref{lemma2} be $q_{17}, \ldots x_{n-1}, [g]$.
  
  We will construct an increasing sequence $(\rho_m)_{n \le m < \omega}$ of ordinals less than $\mu$,
  together with sets $(A^\alpha_m)_{n \le m, \alpha \in J \setminus \rho_{m+1}}$ and
  functions $(F^*_{m+1})_{n \le m}$ such that the following properties hold, along with another one to be stated
  below: 
  \begin{itemize}
  \item $A^\alpha_m \in U_m$. 
  \item  $F^*_{m+1} \in C_{m+1}$.
  \item $A^\alpha_m \times^\prec A^{\alpha+1}_{m+1} \subseteq \dom(F^*_{m+1})$.   
  \end{itemize}

  For   $m \ge n$, $\alpha \in  J \setminus \rho_m$ say that 
  a lower part $s$ is {\em good for $\alpha$ at $m$}
  if and only if:
  \begin{itemize}
  \item  $s$ has the the form $q'_{17}, \ldots x'_{m-1}$.
  \item $q'_{k} \le q_k$ and $x'_k = x_k$ for $17 \le k < n$.
  \item  $x'_k \in A^\alpha_k$ for $n \le k < m$.
  \item  $q'_k \le F^*_{k+1}(x'_k, x'_{k+1})$ for $n < k < m-1$.
  \item  $q'_n \le g(x'_n)$ if $m > n$.
  \end{itemize}

    The final key property is that:
  \begin{itemize}
  \item For $\alpha , \beta \in J \setminus \rho_m$, if a lower part $s$ of form $q'_{17}, \ldots x'_{m-1}$ 
    is good for both $\alpha$ and $\beta$ at $m$,
    then  $s + [g] \forces^* u_\alpha < u_\beta$ if $m =n$,
    and $s + [F^*_m(x'_{m-1}, -)] \forces^* u_\alpha < u_\beta$ if $m > n$.
  \end{itemize}
  
  To initialise the construction we set $\rho_n = 0$, and verify that the key property 
  holds for $m = n$.
  Suppose that $\alpha, \beta \in J$
  with $\alpha < \beta$, and $s$ is good for both $\alpha$ and $\beta$ at $n$.
  That is to say $s$ has the the form $q'_{17}, \ldots x_{n-1}$
  where $q'_k \le q_k$  for $17 \le k < n$.
  Therefore $s + [g] \le^* h$, and so by the conclusion of Lemma \ref{lemma2}
  $s + [g] \forces^* u_\alpha < u_\beta$.

  Continuing the initialisation apply Lemma \ref{jlemma3} to the stem $h$, set $J$ and sequence $(u_\alpha)_{\alpha \in J}$.
  Let $\rho_{n+1}$ be the ordinal $\rho$ from the conclusion of that lemma,
  and choose sets $(A^\alpha_n)_{\alpha \in J \setminus \rho_{n+1}}$ and a function $F^*_{n+1} \in C_{n+1}$ as in the discussion
  following the statement of Lemma \ref{jlemma3}. We verify that the key property holds
  for $m = n +1$.
  
  Suppose that $\alpha, \beta \in J \setminus \rho_{n+1}$ with $\alpha < \beta$,
  and $s$ is good for both $\alpha$ and $\beta$ at $n+1$.
  That is to say $s$ has the the form $q'_{17}, \ldots x_{n-1}, q, x$
  where $q'_k \le q_k$  for $17 \le k < n$, $q \le g(x)$, $x \in A^\alpha_n \cap A^\beta_n$.
  Let $t = q_{17}, \ldots x_{n-1}$, so that  
  by construction $h + (x, [F^*_{n+1}(x, -)]) = t + g(x) + x + [F^*_{n+1}(x, -)] \forces^* u_\alpha < u_\beta$.
  Since $s \le^* t + g(x) + x$,
  $s + [F^*_{n+1}(x, -)] \forces^* u_\alpha < u_\beta$ as required.  

  Now suppose that $m \ge n$ and we have constructed $\rho_k$ for $k \le m+1$,
  $(A^\alpha_k)_{\alpha \in J \setminus \rho_{k+1}}$ for $k \le m$ and $F^*_k$ for $k \le m+1$.
  Let $s$ be a lower part of form $q'_{17}, \ldots x'_m$ and let $J^s$ be the set of
  $\alpha \in J \setminus \rho_{m+1}$ such that $s$ is good for $\alpha$ at $m+1$.
  By construction $s + [F^*_{m+1}(x'_m, -)] \forces^* u_\alpha < u_\beta$  for $\alpha, \beta \in J^s$ with
  $\alpha < \beta$. 
  
  For every lower part $s$ such that $J^s$ is bounded, let $\rho^s = \sup(J^s)$. 
  For $s$ such that $J^s$ is unbounded, we apply Lemma \ref{jlemma3} to the
  unbounded set $J^s$ and the stem $s + [F^*_{m+1}(x'_m, -)]$. We obtain $\rho^s < \mu$ and
  sets $(E^s_\alpha)_{\alpha \in J^s \setminus \rho_s}$ in ${\mathcal F}_{m+1}$, such that
  $s + F^*_{m+1}(x'_m, x) + (x, q) \forces^* u_\alpha < u_\beta$ for all
  $\alpha, \beta \in J^s \setminus \rho^s$ with $\alpha < \beta$
  and all $(x, q) \in E^s_\alpha \cap E^s_\beta$.

  Now let $\rho_{m+2} = \sup_s \rho^s$, and for $\alpha \in J \setminus \rho_{m+2}$
  let $E^{m+1}_\alpha = \Delta_{s, \alpha \in J^s} E^s_\alpha$,
  that is $E^{m+1}_\alpha = \{ x : \forall s \prec x \; \alpha \in J^s \implies x \in E^s_\alpha \}$.
  It follows that
  for $\alpha, \beta \in J \setminus \rho_{m+2}$ with $\alpha < \beta$,
  $(x, q) \in E^{m+1}_\alpha \cap E^{m+1}_\beta$, and $s$ as above such that
  $s \prec x$ and $s$ is good for both $\alpha$ and $\beta$ at $m+1$
  we have $s + F^*_{m+1}(x'_m, x) + (x, q) \forces^* u_\alpha < u_\beta$.
  
  As in the discussion following Lemma \ref{jlemma3}, we now (shrinking $E^{m+1}_\alpha$ if necessary)
  choose $(A^\alpha_{m+1})_{\alpha \in J \setminus \rho_{m+2}}$ and $F^*_{m+2}$ such that
  $E^{m+1}_\alpha = \{ (x, q) : x \in A^\alpha_{m+1}, q \le [F^*_{m+2}(x, -)] \}$.
  To finish the construction we verify that we have maintained the key property.
  So let $s$ be a lower part of form $q'_{17}, \ldots x'_m, q'_{m+1}, x'_{m+1}$
  which is good for both $\alpha$ and $\beta$ at $m+2$. Let $t$ be the initial
  segment $q'_{17}, \ldots x'_m$ of $s$, so that $t \prec x'_{m+1}$ and  $t$ is good for $\alpha$ and $\beta$ at $m+1$. 
  By definition $q_{m+1} \le F^*_{m+1}(x'_m, x'_{m+1})$ and $x'_{m+1} \in A^{m+1}_\alpha \cap A^{m+1}_\beta$,
  by construction $t + F^*_{m+1}(x'_m, x'_{m+1}) + x + [F^*_{m+2}(x, -)] \forces^* u_\alpha < u_\beta$,
  so $s + [F^*_{m+2}(x, -)] \forces^* u_\alpha < u_\beta$ as required.

  Now let $\rho = \sup_{n \le m < \omega} \rho_n$, and for $\alpha \in J \setminus \rho$ define
  $p_\alpha$ as follows:
\begin{itemize}  
\item $p_\alpha$ has $q_{17}, \ldots x_{n-1}$ as an initial segment.
\item $f^{p_\alpha}_n = g \restriction A^\alpha_n$.
\item $F^{p_\alpha}_k = F^*_k \restriction A^\alpha_{k-1} \times^\prec A^\alpha_k$ for $n < k < \omega$.   
\end{itemize}

Let $\alpha, \beta \in J \setminus \rho$ with $\alpha < \beta$, and suppose for a contradiction that
there is $q \le p_\alpha, p_\beta$ such that $q \forces u_\alpha \nless u_\beta$.
Let the lower part of $q$ be $t = q'_{17}, \ldots x'_{m-1}$, where without loss of generality $m > n$.
By definition $t$ is good for both $\alpha$ and $\beta$ at $m$,
and $f^q_m \le F^*(x'_{m-1}, -)$, so that a fortiori $[f^q_m] \le [F^*(x'_{m-1}, -)]$
  The stem of $q$ is $h = t + [f^q_m]$, and so by construction
  $h \forces^* u_\alpha < u_\beta$ for an immediate contradiction.
\end{proof}

\begin{lemma} \label{almost}
  The tree property at $\mu$ holds in the model $V[L][\Agg][R][\bar P]$.
\end{lemma}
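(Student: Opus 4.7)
The plan is to assemble the work of Lemmas \ref{lemma1}--\ref{endgame} into a branch construction that witnesses the tree property in $V[L][\Agg][R][\bar P]$. Let $T$ be an arbitrary $\mu$-tree in $V[L][\Agg][R][\bar P]$, with name $\dot T$ in $V[L][\Agg][R]$. The preceding lemmas already take $\dot T$ as input and work in $V[L][\Agg][R]$ throughout, so Lemma \ref{endgame} directly supplies a stem $h$, an ordinal $\rho<\mu$, an unbounded set $J\setminus\rho\subseteq\mu$, nodes $u_\alpha$ of level $\alpha$, and conditions $(p_\alpha)_{\alpha\in J\setminus\rho}\in V[L][\Agg][R]$, each of stem $h$, such that $p_\alpha\wedge p_\beta\forces u_\alpha<_{\dot T}u_\beta$ whenever $\alpha<\beta$ in $J\setminus\rho$.

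Next I would invoke Lemma \ref{kappaccfact} in $V[L][\Agg][R]$, applied to the $\mu$-sequence $(p_\alpha)$. This requires $\bar P$ to be $\mu$-cc in $V[L][\Agg][R]$: this is exactly Claim \ref{for_use_later}, whose argument is the same centeredness count that gave Lemma \ref{ccforPbar}, together with the distributivity of $\R$ secured in Claim \ref{portmanteauclaim2}. We obtain some $\alpha^*\in J\setminus\rho$ such that $p_{\alpha^*}$ forces the set $A=\{\beta\in J\setminus\rho:p_\beta\in\dot G_{\bar P}\}$ to be unbounded in $\mu$.

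Finally, working below $p_{\alpha^*}$ in the generic extension $V[L][\Agg][R][\bar P]$, consider the set $\{u_\beta:\beta\in A\}$. Any two $p_\beta,p_\gamma$ in the generic share the common stem $h$, hence by Lemma \ref{samestemub} they have a common refinement in the generic, and that refinement forces $u_\beta$ and $u_\gamma$ to be comparable in $\dot T$. Thus $\{u_\beta:\beta\in A\}$ is an unbounded family of pairwise comparable nodes in $T$, and its downward closure is a cofinal branch. Since $T$ was arbitrary, the tree property at $\mu$ holds in $V[L][\Agg][R][\bar P]$.

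The genuine difficulty of this section has already been absorbed into Lemmas \ref{lemma2}--\ref{endgame} (the matching-slope lifting argument, the jumping via the filters $\mathcal F_n$, and the telescoping construction that stitches together single-step splittings into a single sequence of stem-sharing conditions). Relative to those, the only subtlety in the present lemma is tracking the model where chain condition is applied: I would verify $\mu$-cc of $\bar P$ in $V[L][\Agg][R]$ directly from the stem count and the $<\mu$-distributivity of $\R$ from Claim \ref{portmanteauclaim2}, so that Lemma \ref{kappaccfact} applies without further work.
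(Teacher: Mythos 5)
Your proof is correct and follows essentially the same route as the paper: feed the output of Lemma \ref{endgame} into Lemma \ref{kappaccfact} using the $\mu$-cc of $\bar\P$ over $V[L][\Agg][R]$ (Claim \ref{for_use_later}), then observe that same-stem conditions in the generic filter are compatible, so the nodes $u_\beta$ with $p_\beta\in G$ form a cofinal branch. The only point the paper makes explicit that you should add is that the entire construction of Lemmas \ref{lemma1}--\ref{endgame} can be carried out below an arbitrary condition $p$, so that the conditions forcing the existence of a branch are dense and the branch really exists in every generic extension, not merely below the particular $p_{\alpha^*}$ you produce.
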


\begin{proof} Let $p$ be arbitrary.
  Our whole construction could have been done
  below $p$,
  so that the conditions $p_\alpha$ from the conclusion of Lemma \ref{endgame} can be taken to be
  extensions of $p$. Since $\bar \P$ is $\mu$-cc, by Lemma \ref{kappaccfact} there is $\alpha$ such that $p_\alpha$
  forces $\{ \beta : p_\beta \in G \}$ to be unbounded. But for $\beta < \gamma$ with
  $p_\beta, p_\gamma \in G$, we have $p_\beta \wedge p_\gamma \in G$, so that
  $u_\beta <_T u_\gamma$. It follows that $p_\alpha$ forces that the $u_\beta$ such that
  $p_\beta \in G$ form a cofinal branch.
\end{proof}   

\begin{lemma} \label{finallythere} 
  The tree property at $\mu$ holds in the model $V[L][\Agg][\bar P]$.
\end{lemma}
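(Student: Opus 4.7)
The plan is to deduce Lemma \ref{finallythere} from Lemma \ref{almost} using the branch lemma \ref{preslemma}. Fix a $\bar\P$-name $\dot T$ in $V[L][\Agg]$ forced to be a $\mu$-tree, let $G$ be $\bar\P$-generic over $V[L][\Agg]$, and set $T = \dot T[G]$. Since $\R \in V[L][\Agg]$, we may force over $V[L][\Agg][G]$ with $\R$ to obtain $R$ such that $G \times R$ is $\bar\P \times \R$-generic over $V[L][\Agg]$; thus
\[
V[L][\Agg][G][R] \;=\; V[L][\Agg][R][G].
\]
By Lemma \ref{almost}, applied inside the model $V[L][\Agg][R]$ (over which $G$ is $\bar\P$-generic), the tree $T$ has a cofinal branch $b$ in $V[L][\Agg][R][G]$. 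The goal is to argue $b \in V[L][\Agg][G]$.

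To do this I invoke Lemma \ref{preslemma} with the ground model taken to be $V[L][\Agg]$, with $\P = \bar\P$, with $\R$ the auxiliary poset from Lemma \ref{keylifting}, with $\nu = \lambda^b_\omega$, with $\mu = \lambda^b_{\omega+1}$, and with $\kappa$ our $\kappa$. Then hypothesis \ref{preslemmahyp1} (that $\nu$ has cofinality $\omega$ and $\mu = \nu^+$) is immediate. Hypothesis \ref{preslemmahyp2} (that $\bar\P$ and $\bar\P \times \R$ both force $\mu = \kappa^+$) follows from Lemma \ref{cardinals_in_Prikry_extension} together with Claim \ref{for_use_later}. Hypothesis \ref{preslemmahyp3} (that $\R$ is $<\mu$-distributive and countably closed over $V[L][\Agg]$) is the content of the relevant clause in Claim \ref{portmanteauclaim2} inside Lemma \ref{keylifting}. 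Hypothesis \ref{preslemmahyp4} (each condition has a stem, and there are at most $\nu$ stems) is built into the definition of $\bar\P$ from Section \ref{defining}, together with Remark \ref{countstems}, which gives $\lambda^b_n$ stems of length $n+1$ and hence $\lambda^b_\omega = \nu$ stems overall. Hypotheses \ref{preslemmahyp5} and \ref{preslemmahyp7} (compatibility of same-stem conditions and countable closure within a fixed stem) are precisely Lemma \ref{samestemub} and the $\nu = \omega$ case of Lemma \ref{omegasamestemub}.

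With all hypotheses of Lemma \ref{preslemma} in place, we conclude that any cofinal branch of $T = \dot T[G]$ in $V[L][\Agg][G][R]$ already lies in $V[L][\Agg][G]$. In particular $b \in V[L][\Agg][G]$. As $\dot T$ and $G$ were arbitrary, every $\mu$-tree in $V[L][\Agg][\bar P]$ has a cofinal branch in that model, i.e.\ the tree property holds at $\mu$ in $V[L][\Agg][\bar P]$. The only substantive work is the bookkeeping in checking the hypotheses of Lemma \ref{preslemma}; there is no real obstacle here because each item was established earlier with exactly this application in mind—notably the countable closure and distributivity of $\R$ were arranged in Lemma \ref{keylifting} for precisely this purpose, and the Prikry-type structural hypotheses on $\bar\P$ were arranged in Section \ref{defining}.
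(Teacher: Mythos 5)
Your proposal is correct and follows essentially the same route as the paper: verify the hypotheses of Lemma \ref{preslemma} (citing the same supporting results — Claims \ref{for_use_later} and \ref{portmanteauclaim2}, Lemmas \ref{cardinals_in_Prikry_extension}, \ref{samestemub}, \ref{omegasamestemub}, and the stem-counting of Remark \ref{countstems}, which is also what underlies the paper's citation of Lemma \ref{ccforPbar}), then combine Lemma \ref{almost} with Lemma \ref{preslemma} to pull the branch back into $V[L][\Agg][\bar P]$. No gaps.
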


\begin{proof} 
  We need to verify that we have satisfied the hypotheses of Lemma \ref{preslemma}
  as listed at the start of Section \ref{newbranchlemma}. 
   $V[L][\Agg]$  plays the role of $V$, and  $\bar \P$ plays the role of $\P$.

  Hypothesis \ref{preslemmahyp1} is immediate.  By Lemma \ref{cardinals_in_Prikry_extension} $\kappa$ is a cardinal
   and $\mu = \kappa^+$  in $V[L][\Agg][\bar P]$, taking care of the first part
   of Hypothesis \ref{preslemmahyp2}. Claim \ref{for_use_later} takes care of the
   rest of Hypothesis \ref{preslemmahyp2}. Hypothesis \ref{preslemmahyp3} follows
   from Lemma \ref{portmanteauclaim2}. The remaining hypotheses follow from Lemmas
   \ref{samestemub}, \ref{ccforPbar} and \ref{omegasamestemub}.

  The conclusion is now immediate from Lemmas \ref{almost} and \ref{preslemma}. 
\end{proof}

\subsection{The tree property at $\lambda=\aleph_{\omega^2+2}$} \label{plustwo}

Recall from Section \ref{plusone} that $\nu = \lambda^b_\omega$ and $\mu = \lambda^b_{\omega +1}$.
Let $\lambda = \lambda^b_{\omega+2}$, so that in our final model the cardinal
$\lambda$ is destined to become $\aleph_{\omega^2 + 2}$. 
We will establish that the tree property holds at $\lambda$ in $V[L][\Agg][\bar P]$. 
The argument is quite similar to that from \cite[Theorem 3.1]{AIM},
but there are extra complications.

We start by constructing a suitable generic embedding with critical point $\lambda$
whose domain is a generic extension of $V[L][\Agg]$.
Now that the desired critical point is $\lambda$ the poset
$\FL \restriction \kappa * \FL^b * \I^b$ counts as small forcing,
and the main obstacle is to deal with $\J^c_0 * \J^c_1$. 
We note that the situation here is very like the $n =0$ case in Section \ref{GroupI},
 in particular the proof of Claim \ref{GroupInequals0} in that section.

We start by writing
$V[L][\Agg] = \Vlbi(\kappa)[J^c_0][J^c_1][A_e \times \Agg]$.
Recall that $\B^c_1$ is defined and $<\lambda$-closed in $\Vlbi(\kappa)[J^c_0]$,
and $\U^c_1 = (\B^c_1)^{+A^c_1}$. Similarly 
$\C^c_1$ is defined and $<\lambda$-closed in $\Vlbi(\kappa)[J^c_0]$,
$\S^c_1 = (\C^c_1)^{+A^c_1}$.

Parallel to the proof of Claim \ref{GroupInequals0}, let
$\TBC = {\termspace}^{\Vlbi(\kappa)[A^c_0 * U^c_0]}(\S^c_0, \B^c_1 \times \C^c_1)$,
so that $\TBC$ is defined and $<\lambda$-closed in $\Vlbi(\kappa)[A^c_0 * U^c_0]$.
Let $\QTT$ be the two-step iteration of term forcing which adds a $\TBC$-generic object
 inducing $U^c_1 * S^c_1$. Exactly as in the proof of Claim \ref{GroupInequals0},
$\QTT$ is $<\mu$-closed in $\Vlbi(\kappa)[J^c_0 * J^c_1]$. 
Forcing with $\QTT$ over $V[L][\Agg]$, we obtain a model
$V[L][\Agg][QTT] = \Vlbi(\kappa)[J^c_0][A^c_1 \times TBC][A_e \times \Agg]$.

We will do the construction for Lemma \ref{indestructible2} with appropriate parameter settings.
Start by recalling that in the context of $\A^c_0 * \U^c_0 * \S^c_0 \times \A^c_1$
we have $\mu_0 = \lambda^b_{17}$, $\mu_1 = \lambda^b_{\omega+1} = \mu$, $\mu_2 = \lambda^b_{\omega+2} = \lambda$,
$\mu_3 = \lambda^b_{\omega+3}$. Accordingly we will set the parameters for Lemma \ref{indestructible2}
as follows: 
   \begin{itemize}
   \item $n =0$.
   \item $\eta = \mu_3 = \lambda^b_{\omega+3}$.
   \item $\Vi$ is $V$.
   \item $\Vd$ is $\Vlbi(\kappa)$,
     so that $\Vd[A \restriction \mu_{n+2} * U \restriction \mu_{n+2}]$
     is $\Vlbi(\kappa)[A^c_0 * U^c_0]$. 
   \item $V'$ is $\Vlbi(\kappa)[A^c_0 * U^c_0 * S^c_0 * A^c_1]$. 
   \item $\D^2 =  \Aggforcing$.
   \item $\D^3  = \TBC$.
   \item $\D^0 =  \A_e$.
   \item $\D^{small}$ is trivial forcing.  
   \item $V'[D^{0,2,3}]$ is $V[L][\Agg][QTT]$.    
   \end{itemize}

   
   Following the argument for Lemma \ref{indestructible2}, we will start with a
   suitable embedding $j:V[\Agg] \rightarrow N$,
   defined in $V[\Agg]$ and witnessing that $\lambda$ is $\chi$-supercompact
   in $V[\Agg]$ for some large enough value of $\chi$. Since $L\restriction \kappa * L^b * I^b$
   is generic for small forcing, it is easy to lift
   $j$ to an embedding $j:\Vlbi(\kappa)[\Agg] \rightarrow N[L \restriction \kappa][L^b][I^b]$,
   defined in $\Vlbi(\kappa)[\Agg]$ and 
   witnessing that $\lambda$ is $\chi$-supercompact in this model.
   We note that the models $V$ and $V[\Agg]$ agree on $<\lambda$-sequences of ordinals,
   as do $\Vlbi(\kappa)$ and $\Vlbi(\kappa)[\Agg]$.
   
   We will force over $V[L][\Agg][QTT]$ with a product $\P_2 \times \P_3$
   defined as in the proof of Lemma \ref{indestructible2}. To analyse $\P_2$, we
   note that
   \begin{align*}
   \A^c_0 & = \Add^{\Vlb(\kappa)}(\lambda^b_{17}, [\lambda^b_{\omega+1}, \lambda^b_{\omega+2})) \\
         & = \Add^{V[\Agg][L \restriction \kappa][L^b]}(\lambda^b_{17}, [\lambda^b_{\omega+1}, \lambda^b_{\omega+2})). \\
   \end{align*}
     By elementarity,
   \[
       j(\A^c_0) = \Add^{N[L \restriction \kappa][L^b]}(\lambda^b_{17}, [\lambda^b_{\omega+1}, j(\lambda^b_{\omega+2}))).
    \]
    Since $V[\Agg][L \restriction \kappa][L^b] \models {}^\chi N[L \restriction \kappa][L^b] \subseteq N[L \restriction \kappa][L^b]$,
    \begin{align*}
        j(\A^c_0) & = \Add^{V[\Agg][L \restriction \kappa][L^b]}(\lambda^b_{17}, [\lambda^b_{\omega+1}, j(\lambda^b_{\omega+2}))) \\
                  & = \Add^{\Vlb(\kappa)}(\lambda^b_{17}, [\lambda^b_{\omega+1}, j(\lambda^b_{\omega+2}))).\\
    \end{align*}

   By this and similar arguments for $\A^c_1$ and $\A_e$ we have:
   \begin{itemize}
   \item  $j(\A^c_0)/\A^c_0 =
   \Add^{\Vlb(\kappa)}(\lambda^b_{17}, j(\lambda^b_{\omega+2}) \setminus \lambda^b_{\omega+2})$.
   \item  $j(\A_e)/A_e =
   \Add^{\Vlb(\kappa)}(\lambda^b_{17}, j(\lambda^b_{\omega+3}) \setminus j[\lambda^b_{\omega+3}])$.
   \item  $j(\A^c_1)/\A^c_1 =
     \Add^{\Vlb(\kappa)}(\lambda^b_{\omega+1}, j(\lambda^b_{\omega+3}) \setminus j[\lambda^b_{\omega+3}])$.
   \end{itemize}  
   In summary $\P_{2a}$ is a Cohen poset to add subsets of $\lambda^b_{17}$ defined in 
   $\Vlb(\kappa)$, and $\P_{2b}$ is a Cohen poset to add subsets of $\lambda^b_{\omega+1}$
   defined in the same model.

   To analyse $\P_3$, it is useful to recall that $\U^c \restriction \mu_1$ and $\S^c \restriction \mu_1$
   are both trivial. It follows that $\P_3$ is defined in
   $\Vlbi(\kappa)[A^c_0 \times A^c_1 \times \Agg]$
   and is $<\lambda^b_{\omega+1}$-closed in
   $\Vlbi(\kappa)[A^c_0*U^c_0*S^c_0][A^c_1 \times B^c_1 \times C^c_1 \times \Agg]$.
   It is important that, as we noted in the proof of Lemma \ref{indestructible},
   $\P_3$ collapses $\lambda$ to become an ordinal of cofinality $\mu$ and cardinality $\mu$.
   We note for use later that {\it a fortiori}
   $\P_3$ is $<\mu$-closed in 
   $\Vlbi(\kappa)[J^c_0][J^c_1 \times \Agg]$.
   
   As in Lemma \ref{indestructible2} we lift $j$ to obtain a generic embedding
   with critical point $\lambda$ which has domain $V[L][\Agg][QTT]$ and exists
   in $V[L][\Agg][QTT][P_2 \times P_3]$. In the current setting we may restrict
   the domain to $V[L][\Agg]$, so we have a generic embedding with domain $V[L][\Agg]$
   obtained by forcing over $V[L][\Agg]$ with $\P_2 \times (\P_3 \times \QTT)$.

   By Lemma \ref{Cohenrobust}, $\P_2 \times \P_2$ is $\lambda$-cc in $V[L][\Agg]$.
   It follows from Lemma \ref{chain} that $\P_2$ has the $\lambda$-approximation 
   property in $V[L][\Agg]$. By another similar appeal to Lemmas \ref{Cohenrobust} and \ref{chain},
   $\P_{2a}$ has the $\mu$-approximation property in $V[L][\Agg][P_{2b} \times P_3 \times QTT]$.

      By the preceding analysis
     $\QTT$ is $<\mu$-closed in $\Vlbi(\kappa)[J^c]$,
     and since $\Aggforcing$ is highly distributive the same is true in
     $\Vlbi(\kappa)[J^c][\Agg]$, which is the submodel
     of $V[L][\Agg]$ missing only $A_e$. As we noted above $\P_3$ is
     also $<\mu$-closed in this model, so that 
     $\P_3 \times \QTT$ is $<\mu$-closed in this model.
     
     We claim that $\P_{2 b}$
     is $<\mu$-distributive in $\Vlbi(\kappa)[J^c][\Agg]$.
     To see this we note that we need to show that $\P_{2 b}$ is $\lambda^b_n$-distributive for all $n$,
     and this will follow by the usual arguments using term forcing and Easton's Lemma.

     It follows that $\P_3 \times \QTT$ is $<\mu$-closed in
     $\Vlbi(\kappa)[J^c][\Agg][P_{2 b}]$.
     Therefore $\P_3 \times \QTT$ is formerly $<\mu$-closed in the sense of Fact
     \ref{formerlyclosed} in the model
     \[
     \Vlbi(\kappa)[J^c][\Agg][P_{2b}][P_{2a} \times A_e]
     = V[L][\Agg][P_2].
     \]
     The key points are that $A^c_0$ added $\lambda$ subsets of $\lambda^b_{17}$,
     and that $\P_{2 a} \times \A_e$ is $\mu$-cc
     in $\Vlbi(\kappa)[J^c][\Agg][P_{2 b}]$.

   Let $\dot T \in V[L][\Agg]$ be a $\bar \P$-name for a $\lambda$-tree. We assume that level $\alpha$
   is a subset of $\{ \alpha \} \times \mu$.

   \begin{lemma} \label{magic2} 
     In $V[L][\Agg]$ there exist a stem $h$, an unbounded set $I \subseteq \lambda$
     and $(u_\alpha)_{\alpha \in I}$ such that $u_\alpha$ is a node of level $\alpha$ for all $\alpha \in I$,
     and $h \forces^* u_\alpha <_{\dot T} u_\beta$ for all $\alpha, \beta \in I$ with $\alpha < \beta$.
   \end{lemma}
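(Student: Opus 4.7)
The strategy mirrors the proof of Lemma \ref{lemma2}, but with $\lambda$ in place of $\mu$ and Lemma \ref{widersystems} in place of Fact \ref{Itay3.4}, since the tree has width $\mu$ and height $\lambda = \mu^+$. First I would establish an analog of Lemma \ref{lemma1} at the level of $\lambda$: for each $p \in \bar{\P}$ there exist $n < \omega$ and a cofinal $I \subseteq \lambda$ such that for all $\alpha < \beta$ in $I$ there are $p' \le p$ of length $n$ and $\xi, \delta < \mu$ with $p' \forces (\alpha, \xi) <_{\dot{T}} (\beta, \delta)$. This uses the generic $\chi$-supercompactness embedding $j: V[L][\Agg] \to N^*$ with critical point $\lambda$ constructed immediately before the lemma; we pick a node $u$ at level $\sup j[\lambda]$ in $j(\dot{T})$, let $\dot{\xi}_\alpha$ name its second-coordinate predecessor at level $j(\alpha) = \alpha$, apply Lemma \ref{shortsequencenames} in $N^*$ to find uniformly (via the ``high part'' of $j(\bar{\P})$) a direct extension $r \le^* j(p)$ whose $k$-step extensions decide the $\dot{\xi}_\alpha$ via their low-part values, and then pigeonhole on low parts and stem lengths before transferring by elementarity.

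Next I would define the system $(R_h)_{h \in S}$ on $\lambda \times \mu$, where $S$ is the set of stems of length $n$ and $(\alpha, \eta) R_h (\beta, \zeta)$ holds iff some condition with stem $h$ forces $(\alpha, \eta) <_{\dot{T}} (\beta, \zeta)$. Same-stem compatibility of conditions in $\bar{\P}$ makes this a system in the sense of Definition \ref{systemdef}. Applying the embedding $j$ once more with a target node $v = (\gamma, i_0)$ at a level $\gamma > \sup j[\lambda]$ such that $v$ has predecessors at every $\alpha < \lambda$ with second coordinate $< \mu$ (the output of the first step guarantees this is possible), I define for each $(h, i) \in S \times \mu$ a partial function $b_{h,i}$ by: $\alpha \in \dom b_{h,i}$ iff some $p$ with stem $h$ in $j(\bar{\P})$ forces $(\alpha, \eta) <_{j(\dot{T})} (\gamma, i)$ for some $\eta < \mu$, setting $b_{h,i}(\alpha) = \eta$. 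Since $\alpha, b_{h,i}(\alpha), b_{h,i}(\beta)$ are all fixed by $j$, elementarity shows $b_{h,i}$ is a branch through $R_h$, and the analog-of-Lemma-\ref{lemma1} step ensures that $(b_{h,i})_{(h,i)}$ covers a cofinal subset of $\lambda$ (hence forms a system of branches on the restriction to that subset).

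To pull back the branches, I would first use the $\lambda$-Knaster property of $\P_2 \times \P_2$ in the relevant intermediate model (via Lemma \ref{Cohenrobust} and the analyses preceding the lemma) together with Fact \ref{chain} to argue the branches already lie in $V[L][\Agg][QTT][P_3]$. Then I would apply Lemma \ref{widersystems} with $W = V[L][\Agg]$ and $\P = \P_3 \times \QTT$: the parameter $\delta$ is taken as $\lambda^b_{17}$ (so that $2^\delta \ge \lambda$ holds in the relevant base via $A_e$), $\nu$ is any cardinal in $(|S|, \mu)$, and the requisite ``formerly $<\mu$-closed'' structure for $\P_3 \times \QTT$ was verified in the discussion preceding the lemma. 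The conclusion gives some $(h, i)$ with $b_{h,i} \in V[L][\Agg]$ and $\dom b_{h,i}$ cofinal in $\lambda$; setting $I = \dom b_{h,i}$ and $u_\alpha = (\alpha, b_{h,i}(\alpha))$, the branch property through $R_h$ is exactly the assertion $h \forces^* u_\alpha <_{\dot{T}} u_\beta$.

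The main obstacle will be the bookkeeping required to verify the hypotheses of Lemma \ref{widersystems} and to justify the preliminary descent through $\P_2$. In particular, one must carefully factor the auxiliary forcing $\P_2 \times \P_3 \times \QTT$ as a composition of a $\lambda$-Knaster ``Cohen-type'' part (absorbing $\P_{2a}$ and $\P_{2b}$ via Cohen robustness) and a ``formerly $<\mu$-closed'' part (absorbing $\P_3 \times \QTT$), and track which intermediate models satisfy the necessary $(\kappa, \mu)$-goodness properties. The need to arrange the target node $v$ so that its predecessors below $\lambda$ have second coordinates strictly below $\mu$, so that the collection $(b_{h,i})_{(h,i) \in S \times \mu}$ is a genuine system of branches (covering every $\alpha$ in a cofinal set), is where the preliminary analog of Lemma \ref{lemma1} becomes essential.
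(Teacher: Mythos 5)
Your skeleton is the paper's — a system on $\lambda\times\mu$ indexed by stems, branches obtained by pulling back from a node at level $\ge\lambda$ of $j(\dot T)$, then Lemma \ref{widersystems} plus the approximation property of $\P_2$ to land in $V[L][\Agg]$ — but two of your steps fail as written. First, you index the branches by $S\times\mu$, which gives $\mu$-many branches, while Lemma \ref{widersystems} requires the number of relations/branches $\nu$ to satisfy $\nu<\mu$; this constraint is exactly the delicate point of the ``height $=$ width$^{+}$'' situation that the lemma was built for, so you cannot ignore it. The repair is that varying the second coordinate $i$ of the target node over $\mu$ is unnecessary: since $\mu<\crit(j)=\lambda$ we have $j(\mu)=\mu$, so every level of $j(\dot T)$ below $j(\lambda)$ is a subset of $\{\beta\}\times\mu$, and the predecessors of a single fixed node at level $\lambda$ automatically have second coordinates below $\mu$. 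Hence one target node suffices, the branches are indexed by $S$ alone (with $\vert S\vert=\lambda^b_\omega<\mu$, so one may take $\nu=\lambda^b_\omega$), the covering clause of Definition \ref{systemdef} holds for every $\alpha<\lambda$, and your entire preliminary ``analog of Lemma \ref{lemma1}'' --- whose only stated purpose was to secure covering --- can be deleted. This is what the paper does.

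Second, your order of descent is reversed, and the first step breaks. You propose to peel off $\P_2$ first, invoking Fact \ref{chain} to put the branches in $V[L][\Agg][QTT][P_3]$. But $\P_3$ collapses $\lambda$ to an ordinal of cardinality and cofinality $\mu$, so in that intermediate model $\lambda$ is not a regular cardinal and Fact \ref{chain} (which needs the height to be regular in the model one is forcing over) does not apply at $\lambda$; nor is $\P_{2b}$ anywhere near $\mu$-cc, so there is no substitute at $\mu$. The paper descends the other way: it applies Lemma \ref{widersystems} with $\P=\P_3\times\QTT$, $\E=\P_{2a}\times\A_e$ and ground model $\Vlbi(\kappa)[J^c][\Agg][P_{2b}]$, so that the branch $b_h$ with cofinal domain lands in $W=V[L][\Agg][P_2]$, a model where $\lambda$ is still regular because $\P_2$ is $\lambda$-Knaster; only then does it use the $\lambda$-approximation property of $\P_2$ over $V[L][\Agg]$, which is legitimate because each initial segment $b_h\restriction\alpha$ already lies in $V[L][\Agg]$.
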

   
   \begin{proof} Let $j$ be the generic embedding with domain $V[L][\Agg]$ and critical point $\lambda$,
     added by forcing
     over $V[L][\Agg]$ with $\P_2 \times (\P_3 \times \QTT)$. Define a system on $\lambda \times \mu$
     indexed by stems as follows: $u R_h v \iff h \forces^* u <_{\dot T} v$.

     In $V[L][\Agg][P_2][P_3][QTT]$ define a branch $b_h$ through $R_h$ as follows:
     $\alpha \in \dom(b_h)$ if and only if there is $\eta < \mu$ such that
     $h \forces^*_{j(\bar P)} (\alpha, \eta) <_{j(\dot T)} (\lambda, 0)$.
     It is routine to check that the branches $b_h$ form a system of branches
     in the sense of Definition \ref{systemdef}. It is also routine that  if $\alpha \in \dom(b_h)$
     then $\beta \in \dom(b_h) \cap \alpha$ if and only if there is $\zeta$ such that
     $h \forces^*_{\bar \P} (\beta, \zeta) <_{\dot T} (\alpha, \eta)$, and in this
     case $b_h(\beta) = \zeta$ for the unique such $\zeta$. In particular
     $b_h \restriction \alpha \in V[L][\Agg]$ for all $\alpha \in \dom(b_h)$.

     We now appeal to Lemma \ref{widersystems} with $\P_3 \times \QTT$ in place of $\P$,
     $\P_{2a} \times \A_e$ in place of $\E$, $\Vlbi(\kappa)[J^c][\Agg][P_{2 b}]$
     in place of $V$, and $\lambda^b_{17}$ in place of $\delta$. 
     It follows that there
     is $h$ such that $b_h \in V[L][\Agg][P_2]$ and $\dom(b_h)$ is unbounded in $\lambda$.
     Since $\P_2$ has the $\lambda$-approximation property in $V[L][\Agg]$, it follows
    that $b_h \in V[L][\Agg]$. Now we set $I = \dom(b_h)$ and $u_\alpha = (\alpha, b_h(\alpha))$
     to finish.
\end{proof} 

   We can now describe the main idea of the proof that $\lambda$ has the tree property. As in
   the proof of the tree property for $\mu$ in Section \ref{plusone}, we will construct conditions $(p_\alpha)$ for all
   sufficiently large $\alpha$ in $I$, such that
   $p_\alpha \wedge p_\beta \forces^{V[L][\Agg]}_{\bar \P} u_\alpha < u_\beta$ for
   $\alpha < \beta$. This time we will construct this sequence of conditions
   in $V[L][\Agg][P_3 \times QTT]$: as in Section \ref{plusone} this will give a branch in
   $V[L][\Agg][P_3 \times QTT][\bar P]$, and we will need to use a suitable branch lemma
   to find a branch in $V[L][\Agg][\bar P]$.

   Let $h$ be of the form  $(\bar s , [g])$  for some lower part $\bar s$ and some
   $1$-variable function $g$. We note that if $\beta \in I$
   then $I \cap \beta = \{ \alpha : h \forces^* u_\alpha < u_\beta \}$.

Let $\bar q = [g]$ and let the length of $h = (\bar s, \bar q)$ be $t$. 
For all relevant $q$, fix $g_q$ such that $q = [g_q]$. We take care to choose $g_{\bar q} = g$.

\begin{remark} Let $F \in {\mathbf F}_n$, that is to say
  $F$ is a potential value of $F^p_n$ for some $p \in \P$. Then for all relevant $x$,
  $[F(x, -)] = [g_{[F(x, -)]}]$, that is to say $F(x, y) = g_{[F(x, -)]}(y)$
  for many $y$.  Taking a diagonal intersection, we may shrink the domain of $F$ to arrange that
  $F(x, y) = g_{[F(x, -)]}(y)$ for all $(x, y) \in \dom(F)$. In the sequel we will
  arrange that all $2$-variable constraint functions have been treated in this way. 
\end{remark}

As in Section \ref{plusone}, we can use the functions $g_q$ to prolong a stem
$(s, q)$ to stems $(s, q) + (x, r) = (s, g_q(x), x, r)$
for each $(x, r)$ with $x \in \dom(g_q)$. In the natural way we use this to define (recursively) 
a notion of {\em extension} for stems.

  In the generic extension $V[L][\Agg][P_2][P_3][QTT]$ where $j$ is defined,
 let $v = j(u)_{\bar \lambda}$ where ${\bar \lambda}$ is the $\lambda^{\rm th}$ point of $j(I)$. We work below some condition
  in $\P_2 \times \P_3 \times \QTT$ which fixes the values of $\bar \lambda$ and $v$. This condition forces
``for all $\alpha \in I$, $(\bar s, \bar q)  \forces^* u_\alpha < v$''.  So in $V[L][\Agg][P_2][P_3][QTT]$   
 we may choose a sequence $(r_\alpha)_{\alpha \in I}$, such that $r_\alpha \in j(\bar \P)$ with stem $(\bar s, \bar q)$
 and $r_\alpha \forces u_\alpha < v$.

 For all stems $(s, q)$ extending $(\bar{s}, \bar q)$, define
 \[
 J_{s, q} = \{\alpha \in I \mid (s,q) \forces^*_{j(\bar P)} u_\alpha <_{j(\dot T)} v \}.
 \]
 We note that in general the definition of  $J_{s,q}$ involves the generic embedding $j$,
 so it takes place in $V[L][\Agg][P_2][P_3][QTT]$. However, it is clear that $J_{\bar s, \bar q} = I$.
 As usual if  $\beta\in J_{s,q}$, then
 $J_{s,q} \cap \beta = \{ \alpha < \beta: (s,q) \forces^* u_\alpha < u_\beta \}$,
 so in particular $J_{s,q} \cap \beta \in V[L][\Agg]$.

 Recall that in $V[L][\Agg][P_3 \times QTT \times P_{2b}]$, $\lambda$ has cardinality and cofinality $\mu$, while
 $\P_{2a}$ is $\mu$-cc and has the $\mu$-approximation property. 
 It follows that  if  $J_{s,q}$ is unbounded then  $J_{\s,q} \in  V[L][\Agg][P_3 \times QTT \times P_{2b}]$.

 Working in $V[L][\Agg][P_3 \times QTT \times P_{2b}]$,
 let $\mathcal{J}_{s,q}$ be the set of all possible unbounded values
 for $J_{s,q}$. Since $\P_{2a}$ is $\mu$-cc, $\vert \mathcal{J}_{s,q} \vert < \mu$,
 and for any name $\dot C$ for a bounded subset of $\lambda$ there is $\beta < \lambda$
 such that $\forces_{\P_{2a}} \dot C \subseteq \beta$. 
 We use these facts to  choose $\rho < \lambda$ so large that:
\begin{itemize}
\item It is forced by $\P_{2a}$ that for all $(s, q)$,
  if $J_{s, q}$ is bounded in $\lambda$ then $J_{s, q} \subseteq \rho$. 
\item  For all $(s, q)$ and all distinct $C, D \in \mathcal{J}_{s,q}$,
  $C \cap D \subseteq \rho$.
\end{itemize}

 For $\alpha \in I \setminus \rho$, define a partial function $f$,
 by setting  $f((s,q),\alpha)$ equal to the unique $C \in \mathcal{J}_{s,q}$ such that $\alpha\in C$.
 We note that ${\mathcal J}_{\bar s, \bar q} = \{ I \}$, so that $f( (\bar s, \bar q), \alpha) = I$ for all $\alpha \in I \setminus \rho$. 
 
 Let $\alpha_0 = \min(I \setminus \rho)$. Fix a length $k \ge t$,
 we will consider stems $(s, q)$ of this length extending $(\bar s, \bar q)$.  
Let $\alpha \in I \setminus \rho$, and define
$B_{k, \alpha}$ as the set of pairs  $(s,q)$ such that $(s, q)$ extends $(\bar s, \bar q)$,
$(s, q)$ has length $k$, and $f((s,q), \alpha) = f( (s,q), \alpha_0)$. 

Given $(s, q) \in B_{k, \alpha}$, define
\[
F^{s, q}_\alpha =
\{ (x, r) : (s, q) + (x, r) \in B_{k+1 \alpha} \}
\]

We note that for all $\alpha \in I \setminus \rho$:
\begin{itemize}
\item Since $f( (\bar s, \bar q), \alpha) = f( (\bar s, \bar q), \alpha_0) = I$, 
  $(\bar s, \bar q) \in B_{t, \alpha}$.
\item   By the distributivity of $\P_3 \times \QTT \times \P_{2b}$,
  $(B_{k, \alpha})_{t \le k < \omega} \in V[L][\Agg]$ and  $(F^{s, q}_\alpha)_{\textup{$(s, q)$ extends $(\bar s, \bar q)$}} \in V[L][\Agg]$. 
\end{itemize}

\begin{remark}
The following Claim is an assertion in $V[L][\Agg][P_3 \times QTT \times P_{2b}]$ about
sets which all lie in $V[L][\Agg]$, but are defined in terms of the function $f$ which
only exists in $V[L][\Agg][P_3 \times QTT \times P_{2b}]$, and in turn is defined using the embedding
$j$ which only exists in $V[L][\Agg][P_3 \times QTT \times P_2]$.
We will prove it (as one would expect) by a forcing argument involving both
$\P_3 \times \QTT \times \P_{2b}$ and $\P_{2a}$. Similar remarks 
apply to Claim \ref{plustwokey} below. Throughout we will only discuss $f$ and $j$
 in appropriate generic extensions, or in formulae which are being forced to hold in such extensions. 
\end{remark}

\begin{claim} \label{Balphadomain}  For all $\alpha \in I \setminus \rho$ and all
  $(s, q) \in B_{k, \alpha}$, $F^{s, q}_\alpha \in {\mathcal F}_{k+1}$.
\end{claim}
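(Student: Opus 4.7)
The plan is to mirror the largeness claim ``$A^*_\alpha\in\mathcal{F}_m$'' inside the proof of Lemma \ref{jlemma3}, transposed from the $\mu$-system to the present $\lambda$-system by using the generic $\lambda$-supercompactness embedding $j\colon V[L][\Agg]\to N$ added by $\P_2\times(\P_3\times\QTT)$. The function $f$ and the family $(\mathcal{J}_{s,q})$ live in $V[L][\Agg][P_3\times\QTT\times P_{2b}]$, but the embedding $j$ is available in the full extension $V[L][\Agg][P_2\times\P_3\times\QTT]$, and this is where the argument takes place.

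Assume for contradiction $F^{s,q}_\alpha\notin\mathcal{F}_{k+1}$; equivalently, $B_{k+1}\setminus F^{s,q}_\alpha\in\mathcal{F}_{k+1}^{+}$. Working in the extension carrying $j$, and as in the proof of Lemma \ref{magic2}, pick $\bar\lambda\in j(I)$ with $\bar\lambda>\sup j[\lambda]$ and set $v = j(u)_{\bar\lambda}$. For each $\beta\in I\setminus\rho$ fix $r_\beta\in j(\bar\P)$ with stem $(s,q)$ and $r_\beta\forces j(u_\beta)<_{j(\dot T)} v$. Using the measure-one sets and constraint functions of $r_\beta$ at its next unfilled position, combined with the strong Prikry canonisation of Lemmas \ref{Prikrystep1}--\ref{Prikrystep5} performed inside $N$, extract a $j(\mathcal{F}_{k+1})$-large set $X_\beta$ of pairs $(y,t)$ such that $(s,q)+(y,t)\forces^*_{j(\bar\P)} j(u_\beta)<v$; define $X_\alpha$ and $X_{\alpha_0}$ in the same way from suitable conditions witnessing $u_\alpha,u_{\alpha_0}<v$. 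Since $\crit(j) = \lambda$ and both $B_{k+1}$ and $F^{s,q}_\alpha$ have hereditary size below $\lambda$, they are fixed by $j$, so $B_{k+1}\setminus F^{s,q}_\alpha\in j(\mathcal{F}_{k+1})^{+}$, and for every $\beta\in I\setminus\rho$ we can select
\[ (x_\beta,r_\beta)\in X_\beta\cap X_\alpha\cap X_{\alpha_0}\cap\bigl(B_{k+1}\setminus F^{s,q}_\alpha\bigr). \]

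Since $|B_{k+1}|<\mu = \cf(\lambda)$ in the ambient extension, the map $\beta\mapsto(x_\beta,r_\beta)$ has an unbounded fibre: there are $(x,r)\in B_{k+1}\setminus F^{s,q}_\alpha$ and an unbounded $\bar I\subseteq I\setminus\rho$ with $(x_\beta,r_\beta) = (x,r)$ for every $\beta\in\bar I$. By construction $(s,q)+(x,r)\forces^*_{j(\bar\P)} j(u_\beta)<v$ for all $\beta\in\bar I$, so $J_{(s,q)+(x,r)}$ is unbounded in $\lambda$, while $(x,r)\in X_\alpha\cap X_{\alpha_0}$ places $\alpha$ and $\alpha_0$ into the same realisation of $J_{(s,q)+(x,r)}$. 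Hence $f((s,q)+(x,r),\alpha) = f((s,q)+(x,r),\alpha_0)$, so $(x,r)\in F^{s,q}_\alpha$ --- contradicting the choice of $(x,r)$.

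The main technical obstacle is the extraction of the $X_\beta$'s: the strong Prikry analysis must be carried out inside $N$ at an embedding whose critical point is $\lambda$ rather than the $\kappa$ used for $\mathcal{F}_{k+1}$ itself, and one must verify that the resulting refinements of the $r_\beta$ live on a $j(\mathcal{F}_{k+1})$-large set (using the description of $\mathcal{F}_{k+1}$ via $K$ and $j_{01}^*$ from Lemma \ref{menachem}). The other care-point is the cofinality bookkeeping, ensuring that $|B_{k+1}|<\cf(\lambda)$ continues to hold in the specific intermediate extension where the pigeonhole is invoked.
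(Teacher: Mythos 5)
Your overall shape is right (pass to the complement, which is $j(\mathcal{F}_{k+1})$-positive; hit it with the stem of a one-step extension of a condition witnessing comparability with $v$; conclude the extended stem lands in $B_{k+1,\alpha}$, a contradiction), and it is reasonable to model the argument on the claim ``$A^*_\alpha\in\mathcal F_m$'' inside Lemma \ref{jlemma3}. But there is a genuine gap at the very first step: you ``fix $r_\beta\in j(\bar\P)$ with stem $(s,q)$ and $r_\beta\forces j(u_\beta)<v$'' for \emph{every} $\beta\in I\setminus\rho$. This is exactly what is not available. Only the original stem $(\bar s,\bar q)$ is known to force $u_\beta<v$ for all $\beta\in I$; for a longer stem $(s,q)$ the set of such $\beta$ is by definition $J_{s,q}$, which is precisely the object whose size is in question (the $\forces^*$ relation is not inherited by stem extensions, since a condition with stem $(s,q)$ need not refine any condition with stem $(\bar s,\bar q)$ witnessing $u_\beta<v$). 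To get anything you must first use the hypothesis $(s,q)\in B_{k,\alpha}$: it gives $f((s,q),\alpha)=f((s,q),\alpha_0)=C$ for some $C\in\mathcal J_{s,q}$, and after forcing with $\P_{2a}$ below a condition realising $J_{s,q}=C$ you have an unbounded set of $\beta$ (namely $C$) admitting conditions $r_\beta$ with stem $(s,q)$. Your proposal never invokes the hypothesis $(s,q)\in B_{k,\alpha}$ or the $\P_{2a}$-step, so the pigeonhole has nothing to feed on. With that repair (run the pigeonhole over $\beta\in C$, using $|B_{k+1}|<\mu=\cf(\lambda)$ in the relevant extension), your argument goes through.

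Two further remarks. First, the appeal to Lemmas \ref{Prikrystep1}--\ref{Prikrystep5} is unnecessary: the $j(\mathcal F_{k+1})$-large set $X_\beta$ is read off directly from $A^{r_\beta}_k$ and $F^{r_\beta}_{k+1}$, since every one-step extension of $r_\beta$ still forces $j(u_\beta)<v$; no canonisation is needed. Second, the paper's own proof is shorter than yours: it dispenses with the pigeonhole altogether. Having forced $J_{s,q}=C$ it takes a \emph{single} condition $p'$ with stem $(s,q)$ forcing $u_{\alpha_0},u_\alpha<v$ and a one-step extension whose stem is $(s,q)+(x,r)$ with $(x,r)\in B_{k+1}\setminus F^{s,q}_\alpha$; unboundedness of $J_{(s,q)+(x,r)}$ then comes for free from the choice of $\rho$ in Section \ref{plustwo}, which guarantees that any $J_{s',q'}$ meeting $\lambda\setminus\rho$ is unbounded. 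This is the feature that distinguishes the $\lambda$-case from the $\mu$-case of Lemma \ref{jlemma3}, where the pigeonhole really is needed.
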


\begin{proof}
  If not then let $p \in \P_3 \times \QTT \times \P_{2b}$ be such that
  \[
  p \forces^{V[L][\Agg]}_{\P_3 \times \QTT \times \P_{2b}} \mbox{``$(s, q) \in {\dot B}_{k, \alpha}$ and ${\dot F}^{s, q}_\alpha \notin {\mathcal F}_{k+1}$''}.
  \]
  Since $j$ fixes sets of rank below $\lambda$,
\[
(p, 1_{\P_{2a}}) \forces^{V[L][\Agg]}_{\P_3 \times \QTT \times \P_2} \mbox{``$B_{k+1} \setminus {\dot F}^{s, q}_\alpha \in j({\mathcal F}_{k+1})^+$''}.
\]
  Forcing below $p$ we obtain $P_3 \times QTT \times P_{2b}$, such that $(s, q) \in B_{k, \alpha}$ and $F^{s, q}_\alpha \notin {\mathcal F}_{k+1}$
  in $V[L][\Agg][P_3 \times QTT \times P_{2b}]$.
  
  Let $f( (s, q), \alpha) = f( (s, q), \alpha_0 ) = C$. Since $C \in {\mathcal J}_{s, q}$,
  $C$ is a possible value for $J_{s, q}$, and so we may choose $\bar p \in \P_{2a}$ such that
  \[
  \bar p \forces^{V[L][\Agg][P_3 \times QTT \times P_{2b}]}_{\P_{2a}} \mbox{``$C = {\dot J}_{s, q}$''}.
  \]
   Forcing below $\bar p$ we obtain $P_{2a}$, such that $C = J_{s, q}$ in
   $V[L][\Agg][P_3 \times QTT \times P_2]$.

   So $\alpha_0, \alpha \in J_{s, q}$, that is to say
   $(s, q) \forces^*_{j(\bar \P)} u_{\alpha_0}, u_\alpha < v$. 
   We choose $p' \in j(\bar \P)$ with stem $(s, q)$
    such that $p' \forces_{j(\bar \P)} u_{\alpha_0}, u_\alpha < v$.

    Take a minimal one-step extension $p''$ of $p'$,
    arranging that the stem of $p''$ is $(s, q) + (x, r)$
    and $(x, r) \in B_{k+1} \setminus F^{s, q}_\alpha$.
    $p'' \forces_{j(\bar \P)} u_{\alpha_0}, u_\alpha < v$, so that
    $(s, q) + (x, r) \forces^*_{j(\bar \P)} u_{\alpha_0}, u_\alpha < v$.

    We have $\alpha_0, \alpha \in J_{(s, q) + (x, r)}$. 
    Since $\alpha, \alpha_0 > \rho$ we see that 
    $J_{(s, q) + (x, r)}$ is unbounded. So $J_{(s, q) + (x, r)} \in {\mathcal J}_{(s, q) + (x, r)}$,
    say it is $D$.

    Returning to the model $V[L][\Agg][P_3 \times QTT \times P_{2b}]$, we have $\alpha_0, \alpha \in D$, so
    $f( (s, q) + (x, r), \alpha) = D = f( (s, q) + (x, r), \alpha_0)$,
    that is to say $(s, q) + (x, r) \in B_{k+1, \alpha}$.
    Therefore $(x, r) \in F^{s, q}_\alpha$ by definition, contradicting our choice of
    $(x, r)$ as an element of $B_{k+1} \setminus F^{s, q}_\alpha$.
\end{proof}

The following claim will ultimately be used to create a branch using
the nodes $u_\alpha$.

\begin{claim} \label{plustwokey}
  Let $\alpha, \beta \in I \setminus \rho$ and let
  $(s, q)$ have length $k$ with  $(s, q) \in B_{k, \alpha} \cap B_{k, \beta}$.
  Then   $(s, q) \forces^* u_\alpha < u_\beta$.
\end{claim}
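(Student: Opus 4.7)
The plan is to derive the required forcing statement in $V[L][\Agg]$ by first working inside the forcing extension where the generic embedding $j$ lives, producing the desired condition in $j(\bar\P)$, and then pulling the statement back to $\bar\P$ via elementarity and absoluteness of the forcing relation. Without loss of generality assume $\alpha<\beta$. The key preliminary observation is that the stem $(s,q)$ is a hereditarily small object, and in particular has rank below $\lambda=\crit(j)$: the lower part consists of finitely many ordinals and conditions of rank below $\kappa$, while the last entry $[g]_{U_t}$ is represented by a function whose domain and range lie inside $V_\lambda$. Hence $j((s,q))=(s,q)$, and similarly $j(u_\alpha)=u_\alpha$, $j(u_\beta)=u_\beta$ (since $u_\alpha,u_\beta\in\lambda\times\mu\subseteq V_\lambda$).

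First I would unpack the hypothesis. Since $(s,q)\in B_{k,\alpha}\cap B_{k,\beta}$, we have $f((s,q),\alpha)=f((s,q),\alpha_0)=f((s,q),\beta)$, and calling this common value $C$ we see that $C\in\mathcal{J}_{s,q}$ and $\{\alpha_0,\alpha,\beta\}\subseteq C$. By the definition of $\mathcal{J}_{s,q}$ there is a condition $\bar p\in\P_{2a}$ forcing $\dot J_{s,q}=\check C$. Force below $\bar p$ over $V[L][\Agg][P_3\times QTT\times P_{2b}]$ to obtain a generic extension $V[L][\Agg][P_3\times QTT\times P_2]$ in which $j$ is defined and in which $\alpha,\beta\in J_{s,q}$; by the definition of $J_{s,q}$ this means $(s,q)\forces^*_{j(\bar\P)}u_\alpha<_{j(\dot T)}v$ and $(s,q)\forces^*_{j(\bar\P)}u_\beta<_{j(\dot T)}v$.

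Now I would apply the image of Lemma \ref{samestemub} under $j$, which holds in $N$ by elementarity: any two conditions in $j(\bar\P)$ sharing the same stem admit a common lower bound with the same stem. Fixing witnessing conditions $p_\alpha,p_\beta\in j(\bar\P)$ with stem $(s,q)$, a common refinement $p'\in j(\bar\P)$ with stem $(s,q)$ satisfies $p'\forces u_\alpha,u_\beta<_{j(\dot T)}v$. Since $j(\dot T)$ is forced to be a tree and $u_\alpha,u_\beta$ lie on levels $\alpha<\beta$ both below the level of $v$, we conclude $p'\forces u_\alpha<_{j(\dot T)}u_\beta$. Thus the existential statement ``there is $p'\in j(\bar\P)$ with stem $(s,q)$ forcing $u_\alpha<_{j(\dot T)}u_\beta$'' holds in any model containing $j(\bar\P)$, and in particular holds in $N$ by absoluteness of the forcing relation.

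Finally, this statement is precisely $j$ applied to ``there is $p\in\bar\P$ with stem $(s,q)$ forcing $u_\alpha<_{\dot T}u_\beta$'', since $j$ fixes each of $(s,q)$, $u_\alpha$, $u_\beta$ and maps $\bar\P$ to $j(\bar\P)$ and $\dot T$ to $j(\dot T)$. By elementarity of $j:V[L][\Agg]\to N$ (and the fact that the forcing relation for $\bar\P$ over $V[L][\Agg]$ is definable in $V[L][\Agg]$), we conclude $(s,q)\forces^*_{\bar\P}u_\alpha<_{\dot T}u_\beta$ in $V[L][\Agg]$, as required. The main technical subtlety is keeping track of which generic extension we are working in: the definition of $J_{s,q}$ and the choice of $\bar p$ are only meaningful after forcing with $\P_{2a}$, but the conclusion depends only on parameters in $V[L][\Agg]$, so absoluteness of $\forces^*_{j(\bar\P)}$ between $N$ and its outer models together with elementarity of $j$ deliver the desired ground-model statement.
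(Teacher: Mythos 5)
Your proposal is correct and follows essentially the same route as the paper: unpack the hypothesis to a common value $C\in\mathcal{J}_{s,q}$, force below a condition in $\P_{2a}$ deciding $J_{s,q}=C$ so that $\alpha,\beta\in J_{s,q}$, amalgamate to a single condition in $j(\bar\P)$ with stem $(s,q)$ forcing $u_\alpha,u_\beta<v$ (hence $u_\alpha<u_\beta$), and pull back by elementarity using that $j$ fixes the stem and the nodes. The only cosmetic difference is that you explicitly invoke the image of Lemma \ref{samestemub} to combine the two witnessing conditions, a step the paper leaves implicit.
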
   

\begin{proof}
  We work in $V[L][\Agg][P_3 \times QTT \times P_{2b}]$. 
  By the definitions of $B_{k, \alpha}$ and $B_{k, \beta}$, 
  $f( (s, q), \alpha ) = f( (s, q) , \alpha_0) = f( (s, q), \beta ) = C$ say.
  There is $p \in \P_{2a}$ forcing that $C = J_{s, q}$: if we force below $p$
  then in the extension $\alpha, \beta \in J_{s, q}$ and we may choose $r \in j(\bar\P)$ with stem $(s, q)$
  such that $r \forces_{j(\bar \P)} u_\alpha, u_\beta < v$, from which it follows that $r \forces_{j(\bar \P)} u_\alpha < u_\beta$.
  By elementarity there is $r_0 \in \bar\P$ with stem $(s, q)$ such that
  $r_0 \forces_{\bar \P} u_\alpha < u_\beta$, so $(s, q) \forces^* u_\alpha < u_\beta$.
\end{proof}

Claim \ref{Monotonicity0}
exposes a  ``monotonicity'' property of the sets
   $B_{k, \alpha}$
   which will be crucial in the proof of Lemma \ref{endgame2} below.
   
\begin{claim} \label{Monotonicity0}
  Let $(s, q) \in B_{k, \alpha}$ and $(s', q')$ be a direct extension of $(s, q)$, then
  $(s', q') \in B_{k, \alpha}$.
\end{claim}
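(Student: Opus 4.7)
The first step will be to unpack what ``direct extension'' of stems means here: if $(s',q') \le^* (s,q)$, meaning both have the same length $k$, the $x_i$-coordinates agree, the $q_i$-coordinates of $s'$ refine those of $s$, and $q' \le q$ in $\Q_\infty$, then any condition with stem $(s,q)$ admits a direct extension with stem $(s',q')$. For the last coordinate this uses the definition of the ordering on $\Q_\infty$ together with $\kappa$-completeness of $U_n$ to shrink $\dom(f_n)$ to the set where $f'_n \le f_n$. Consequently the relation $\forces^*$ is preserved when passing to a direct extension of stems, so in any model where both make sense (in particular in $V[L][\Agg][P_3 \times QTT \times P_2]$, where $j$ lives), one has $J_{s,q} \subseteq J_{s',q'}$.

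Working now in $V[L][\Agg][P_3 \times QTT \times P_{2b}]$, let $C = f((s,q),\alpha) = f((s,q),\alpha_0)$, so $C \in \mathcal{J}_{s,q}$ and $\alpha, \alpha_0 \in C$. Since $C$ is a possible value of $J_{s,q}$, there is a condition $p \in \P_{2a}$ forcing $J_{s,q} = C$. By the inclusion from the previous paragraph, $p$ also forces $C \subseteq J_{s',q'}$, and in particular forces $\{\alpha, \alpha_0\} \subseteq J_{s',q'}$. Since $\alpha > \rho$, this forces $J_{s',q'}$ to be unbounded in $\lambda$.

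Extending $p$ if necessary we obtain $p' \le p$ which decides $J_{s',q'} = D$ for some $D \in \mathcal{J}_{s',q'}$ with $\alpha, \alpha_0 \in D$. By the defining property of $\rho$, any two distinct elements of $\mathcal{J}_{s',q'}$ intersect inside $\rho$; since $\alpha > \rho$, the element $D$ is the \emph{unique} element of $\mathcal{J}_{s',q'}$ containing $\alpha$, and likewise the unique one containing $\alpha_0$. This uniqueness is already visible in $V[L][\Agg][P_3 \times QTT \times P_{2b}]$ (it does not depend on the extra $\P_{2a}$-generic), so $f((s',q'),\alpha) = D = f((s',q'),\alpha_0)$, giving $(s',q') \in B_{k,\alpha}$ as required.

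The only step requiring real thought is the monotonicity $J_{s,q} \subseteq J_{s',q'}$; once that is in hand, the rest is bookkeeping with the definitions of $f$, $\mathcal{J}_{s,q}$ and $\rho$. There is no serious obstacle, but one does need to be careful that $\mathcal{J}$ is computed in $V[L][\Agg][P_3 \times QTT \times P_{2b}]$ while $J$ itself is only visible after adding the $\P_{2a}$-generic, which is why the uniqueness argument for $D$ must be phrased in terms of the intersection property of $\mathcal{J}_{s',q'}$ rather than of $J_{s',q'}$.
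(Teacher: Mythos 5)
Your proposal is correct and follows essentially the same route as the paper: the paper also refines a condition with stem $(s,q)$ witnessing $\alpha_0,\alpha \in J_{s,q}$ to one with stem $(s',q')$ (your monotonicity $J_{s,q}\subseteq J_{s',q'}$ is exactly this step), concludes $J_{s',q'}$ is unbounded hence equals some $D\in\mathcal{J}_{s',q'}$, and reads off $f((s',q'),\alpha)=f((s',q'),\alpha_0)=D$. Your extra care about where $f$ and $\mathcal{J}$ live versus where $J$ lives, and the uniqueness of $D$ via the $\rho$-intersection property, just makes explicit what the paper leaves implicit.
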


\begin{proof} Let $f((s,q), \alpha) = f( (s,q), \alpha_0) = C$ and let
  $p \in \P_{2a}$ force that $J_{s, q} = C$. Force below $p$,
  choose $r \in j(\bar \P)$ with stem $(s, q)$ such that $r \forces u_{\alpha_0}, u_\alpha < v$,
  and refine $r$ to a condition $r'$ with stem $(s', q')$, so that $r' \forces u_{\alpha_0}, u_\alpha < v$
  and hence $(s', q') \forces^* u_{\alpha_0}, u_\alpha < v$. So $\alpha_0, \alpha \in J_{s', q'}$,
  since $\rho < \alpha_0 < \alpha$  we see that $J_{s', q'}$ is unbounded, say $J_{s', q'} = D \in {\mathcal J}_{s', q'}$. 
  Then $f((s',q'), \alpha) = f( (s',q'), \alpha_0) = D$, so that 
  $(s', q') \in B_{k, \alpha}$.
\end{proof}

At this point we are ready to construct the conditions $p_\alpha$ for $\alpha \in I \setminus \rho$.
We will perform the construction of the entries in $p_\alpha$ in $V[L][\Agg][P_3 \times QTT \times P_{2b}]$,
and it will follow by distributivity that $p_\alpha \in V[L][\Agg]$ (so that $p_\alpha \in \bar \P$)
for each $\alpha$. However the sequence $(p_\alpha)_{\alpha \in I \setminus \rho}$ only exists in
$V[L][\Agg][P_3 \times QTT \times P_{2b}]$.

Define $p_\alpha$ for $\alpha \in I \setminus \rho$, where
\[
p_\alpha = \langle \bar s, g \restriction A^\alpha_t, A^\alpha_t, F^\alpha_{t+1}, A^\alpha_{t+1}, F^\alpha_{t+2}, \ldots \rangle.
\]

To start the construction of $p_\alpha$, recall that $(\bar s, \bar q) \in B_{t, \alpha}$, so that
$F^{\bar s, \bar q}_\alpha \in {\mathcal F}_{t+1}$ by Claim \ref{Balphadomain}. We will begin by choosing
$A^\alpha_t$ and ${\bar F}^\alpha_{t+1}$ so that:
\begin{itemize}
\item $A^\alpha_t$ and ${\bar F}^\alpha_{t+1}$ witness that $F^{\bar s, \bar q}_\alpha \in {\mathcal F}_{t+1}$,
  that is to say $(x, [{\bar F}^\alpha_{t+1}(x, -)]) \in F^{\bar s, \bar q}_\alpha$ for
  all $x \in A^\alpha_t$.
\item $A^\alpha_t \subseteq \dom(g)$.
\item $x \in A^\alpha_t$  and
  $g_{[{\bar F}^\alpha_{t+1}(x, -)]}(y) = {\bar F}^\alpha_{t+1}(x, y)$ 
  for all $(x, y) \in \dom({\bar F}^\alpha_{t+1})$.
\end{itemize}

We note for the record that for all $x \in A^\alpha_t$:
\begin{itemize}
\item $(\bar s, \bar q) + (x, [{\bar F}^\alpha_{t+1}(x, -)]) \in B_{t+1, \alpha}$. 
\item Since $g = g_{\bar q}$, the stem of the minimal extension of $p_\alpha$ by adding $x$ will be
  $(\bar s, \bar q) + (x, [{\bar F}^\alpha_{t+1}(x, -)])$.
\end{itemize}   
We will complete the choice of $F^\alpha_{t+1}$ once we have chosen $A^\alpha_{t+1}$,
by defining $F^\alpha_{t+1} = {\bar F}^\alpha_{t+1} \restriction A^\alpha_t \times_{\prec} A^\alpha_{t+1}$.
Note that $F^\alpha_{t+1}$ retains all the properties listed above for ${\bar F}^\alpha_{t+1}$.

Now assume that $k \ge t$ and we have defined sets $(A^\alpha_i)$ for $t \le i \le k$, together with
functions $F^\alpha_i$ for $t < i \le k$ and a function ${\bar F}^\alpha_{k+1}$, satisfying:
\begin{itemize}
\item $\dom(F^\alpha_i) = A^\alpha_{i-1} \times_\prec A^\alpha_i$ for $t \le i < k$,
\item $x \in A^\alpha_k$ for all $(x, y) \in \dom({\bar F}^\alpha_{k+1})$.
\item $g_{[{\bar F}^\alpha_{k+1}(x, -)]}(y) = {\bar F}^\alpha_{k+1}(x, y)$ 
    for all $(x, y) \in \dom({\bar F}^\alpha_{k+1})$.
\item $g_{[{\bar F}^\alpha_i(x, -)]}(y) = {\bar F}^\alpha_i(x, y)$ 
  for all $(x, y) \in A^\alpha_{i-1} \times_\prec A^\alpha_i$.
\item For all $\prec$-increasing sequences $\vec x = (x_i)_{t \le i \le k}$ with $x_i \in A^\alpha_i$,
  let $h'(\vec x)$ be the stem
  \[
     (\bar s, \bar q) + (x_t, [F^\alpha_{t+1}(x_t, -)]) + \ldots (x_{k-1}, [F^\alpha_k(x_{k-1}, -)])
  +  (x_k, [{\bar F}^\alpha_{k+1}(x_k, -)]),
  \]
  then:
  \begin{itemize}
  \item $h'(\vec x) \in B_{k+1, \alpha}$. 
  \item $h'(\vec x)$ is the stem of the minimal extension of $p_\alpha$ by $(x_i)_{t \le i \le k}$.   
  \end{itemize}
\end{itemize}

\begin{remark} By the choice of the functions $F^\alpha_i$, 
  \[
   h'(\vec x) =   (\bar s, g(x_t), F^\alpha_{t+1}(x_t, x_{t+1}), \ldots, x_{k-1}, F^\alpha_k(x_{k-1}, x_k),  x_k, [{\bar F}^\alpha_{k+1}(x_k, -)]).
  \]
\end{remark}

In this round of the construction we will choose $A^\alpha_{k+1}$ and ${\bar F}^\alpha_{k+2}$,
and will then define $F^\alpha_{k+1}$ as ${\bar F}^\alpha_{k+1} \restriction A^\alpha_k \times_{\prec} A^\alpha_{k+1}$.
For each $\prec$-increasing sequence $\vec x = (x_i)_{t \le i \le k}$ with $x_i \in A^\alpha_i$,
 $h'(\vec x) \in B_{k+1, \alpha}$ by our induction hypothesis, and
thus $F^{h'(\vec x)}_\alpha \in {\mathcal F}_{k+2}$   by Claim \ref{Balphadomain}.
By Lemma \ref{normal-like} we have $\Delta_{\vec x} F^{h'(\vec x)}_\alpha \in {\mathcal F}_{k+2}$,
where
\[
\Delta_{\vec x} F^{h'(\vec x)}_\alpha = \{ (x, r) : \forall \vec x \; x_k \prec x \implies (x, r) \in F^{h'(\vec x)}_\alpha \}.
\]

We
choose $A^\alpha_{k+1}$ and ${\bar F}^\alpha_{k+2}$ so that:
\begin{itemize}
\item $A^\alpha_{k+1}$ and ${\bar F}^\alpha_{k+2}$ witness that $\Delta_{\vec x} F^{h'(\vec x)}_\alpha \in {\mathcal F}_{k+2}$,
  or to be more explicit $(x, [{\bar F}^\alpha_{k + 2}(x, -)]) \in \Delta_{\vec x} F^{h'(\vec x)}_\alpha \in {\mathcal F}_{k+2}$
  for all $x \in A^\alpha_{k+1}$.
\item $x \in A^\alpha_{k+1}$  and
  $g_{[{\bar F}^\alpha_{k+2}(x, -)]}(y) = {\bar F}^\alpha_{k+2}(x, y)$ 
  for all $(x, y) \in \dom({\bar F}^\alpha_{k+2})$.
\end{itemize}
Then we define $F^\alpha_{k+1}$ as ${\bar F}^\alpha_{k+1} \restriction A^\alpha_k \times_{\prec} A^\alpha_{k+1}$.

It remains to check that we have propagated our induction hypotheses.
Only the last clause requires any work. Let $(x_i)_{t \le i \le k + 1}$ with $x_i \in A^\alpha_i$,
  and let $h'$ be the stem
  \[
     (\bar s, \bar q) + (x_t, [F^\alpha_{t+1}(x_t, -)]) + \ldots (x_k, [F^\alpha_{k+1}(x_k, -)])
  +  (x_{k + 1}, [{\bar F}^\alpha_{k+2}(x_{k+1}, -)]),
  \]
where we note that 
  \[
 h' = (\bar s, g(x_t), x_t, F^\alpha_{t+1}(x_t, x_{t+1}), \ldots, x_k, F^\alpha_{k+1}(x_k, x_{k+1}), x_{k + 1}, [{\bar F}^\alpha_{k+2}(x_{k+1}, -)]).
  \]

  Let $\vec x = (x_i)_{t \le i \le k}$, and note that $[{\bar F}^\alpha_{k+1}(x_k, -)] = [F^\alpha_{k+1}(x_k, -)]$,
  so that $h'(\vec x)$ as defined above is the stem obtained from $h'$ by deleting the last entry.
  Because $x_k \prec x_{k+1}$ and $(x_{k+1}, [{\bar F}^\alpha_{k + 2}(x_{k+1}, -)]) \in \Delta_{\vec y} F^{h'(\vec y)}_\alpha$,
  $(x_{k+1}, [{\bar F}^\alpha_{k + 2}(x_{k+1}, -)]) \in F^{h'(\vec x)}_\alpha$,
  so that $h' \in B_{k+2, \alpha}$.
  
  To finish we consider the stem of the minimal extension of $p_\alpha$ by the sequence $(x_i)_{t \le i \le k + 1}$,
  recalling that by our induction hypothesis $h'(\vec x)$ is the stem of the minimal
  extension of $p_\alpha$ by $(x_i)_{t \le i \le k}$. Recalling that the one-variable
  function in the minimal extension by $(x_i)_{t \le i \le k}$ is $F^\alpha_{k+1}(x_k, -)$,
  and that by our induction hypothesis
  $g_{[{\bar F}^\alpha_{k+1}(x_k, -)]}(x_{k+1}) = F^\alpha_{k+1}(x_k, x_{k+1})$,
  it is clear that $h'$ is the stem of the minimal extension of $p_\alpha$ by $(x_i)_{t \le i \le k + 1}$
  as required.

  We have now constructed in $V[L][\Agg][P_3 \times QTT \times P_{2b}]$ a sequence $(p_\alpha)_{\alpha \in I}$ such that
\[
p_\alpha = \langle \bar s, g \restriction A^\alpha_t, A^\alpha_t, F^\alpha_{t+1}, A^\alpha_{t+1}, F^\alpha_{t+2}, \ldots \rangle.
\]
and for every minimal extension $q$ of $p_\alpha$ by a sequence $(x_i)_{t \le i \le k}$,
the stem of $q$ is in $B_{k+1, \alpha}$. 
  
  The following Lemma is analogous to Lemma \ref{endgame}.

\begin{lemma} \label{endgame2} 
  For $\alpha, \beta \in I \setminus \rho$ with $\alpha < \beta$,
  $p_\alpha \wedge p_\beta \forces u_\alpha < u_\beta$.
\end{lemma}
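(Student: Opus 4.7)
The plan is to reduce the claim to Claim \ref{plustwokey} via the monotonicity property in Claim \ref{Monotonicity0}, and then invoke density together with Lemma \ref{samestemub} to upgrade ``forces$^*$'' into ``forces''. Throughout, recall that both $p_\alpha$ and $p_\beta$ have the common lower part $\bar s$ and the common one-variable function derived from $g$ (restricted to different measure-one sets), so they share the stem $h = (\bar s, [g])$ and are in particular compatible.

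Let $r \le p_\alpha, p_\beta$ be arbitrary, and let $r$ have length $k$, with the Prikry points beyond $\bar s$ forming a sequence $\vec y = (x_t, \ldots, x_{k-1})$ (the empty sequence is allowed, in which case $k=t$). Because $r$ extends both $p_\alpha$ and $p_\beta$, we have $x_i \in A^\alpha_i \cap A^\beta_i$ for $t \le i < k$, and $r$ is a direct extension both of $p_\alpha\cat\vec y$ and of $p_\beta\cat\vec y$. By the induction hypothesis recorded during the construction of $p_\alpha$ and $p_\beta$, the stem $h'_\alpha(\vec y)$ of $p_\alpha\cat\vec y$ lies in $B_{k,\alpha}$ and the stem $h'_\beta(\vec y)$ of $p_\beta\cat\vec y$ lies in $B_{k,\beta}$ (for $k=t$ this is the trivial fact $(\bar s,\bar q)\in B_{t,\alpha}\cap B_{t,\beta}$, which follows from $f((\bar s,\bar q),\cdot)\equiv I$). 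Since $\stem(r)$ is a direct extension of each of these two stems, two applications of Claim \ref{Monotonicity0} give
\[
\stem(r)\in B_{k,\alpha}\cap B_{k,\beta}.
\]

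Claim \ref{plustwokey} now yields $\stem(r)\forces^* u_\alpha <_{\dot T} u_\beta$, i.e.\ there is some condition $r^\dagger$ with $\stem(r^\dagger)=\stem(r)$ such that $r^\dagger\forces u_\alpha<_{\dot T}u_\beta$. By Lemma \ref{samestemub} applied to $r$ and $r^\dagger$, there is a common refinement $r^*\le r, r^\dagger$, and then $r^*\forces u_\alpha<_{\dot T}u_\beta$. Thus the set of conditions forcing $u_\alpha<_{\dot T}u_\beta$ is dense below every common refinement of $p_\alpha$ and $p_\beta$; equivalently, it is dense below $p_\alpha\wedge p_\beta$. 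In every generic filter containing $p_\alpha$ and $p_\beta$ there is therefore a condition forcing $u_\alpha<_{\dot T}u_\beta$, so $u_\alpha<_{\dot T}u_\beta$ holds in the extension. In the paper's convention this is exactly the assertion $p_\alpha\wedge p_\beta\forces u_\alpha<_{\dot T}u_\beta$.

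The only real subtlety is the bookkeeping to confirm that the stem of $r$ really is a direct extension, in the precise sense of Claim \ref{Monotonicity0}, of each of $h'_\alpha(\vec y)$ and $h'_\beta(\vec y)$; this amounts to unpacking Definition \ref{minimalextension} and comparing the entries $g(x_t)$, $F^\alpha_i(x_{i-1},x_i)$ vs.\ $F^\beta_i(x_{i-1},x_i)$, and $[\bar F^\alpha_k(x_{k-1},-)]$ vs.\ $[\bar F^\beta_k(x_{k-1},-)]$, all of which are (possibly improved versions of) the corresponding entries in $r$ because $r$ refines both $p_\alpha\cat\vec y$ and $p_\beta\cat\vec y$. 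Once this verification is in place, the argument above is complete.
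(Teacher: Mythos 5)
Your proof is correct and follows essentially the same route as the paper's: show that $\stem(r)$ directly extends the stems of the minimal extensions of $p_\alpha$ and $p_\beta$ by the interpolated points, use the induction hypothesis from the construction plus Claim \ref{Monotonicity0} to place $\stem(r)$ in $B_{k,\alpha}\cap B_{k,\beta}$, and then apply Claim \ref{plustwokey} together with Lemma \ref{samestemub}. The paper phrases the last step as a contradiction with a condition forcing $u_\alpha\not<u_\beta$ rather than as a density argument, but this is only a cosmetic difference.
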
   
  
\begin{proof}
  Suppose for a contradiction that $p \le p_\alpha, p_\beta$ and
  $p \forces u_\alpha \not< u_\beta$.
  Assume that $p$ is an $s$-step extension of $p_\alpha$ and $p_\beta$ for
  some $s \ge 2$. 
Let
\[
p = \langle s', c_t, x_t \ldots c_{t+s-1}, x_{t+s-1}, f_{t+s}, A_{t+s}, F_{t+s+1}, A_{t+s+1}, \ldots \rangle.
\]

    Let $\vec x = (x_i)_{t \le i < t + s}$. 
    The stem of $p$ directly extends the stem of the minimal extension of $p_\alpha$ 
    by $\vec x$. By construction the stem of this minimal extension
    lies in $B_{t + s, \alpha}$, and so by Claim \ref{Monotonicity0}
     $stem(p) \in B_{t + s, \alpha}$. Similarly $stem(p) \in B_{t + s, \beta}$.
    By Claim \ref{plustwokey} $stem(p) \forces^* u_\alpha < u_\beta$,
    contradicting $p \forces u_\alpha \not< u_\beta$.
\end{proof}

\begin{lemma} \label{almost2}
  The tree $T$ has a cofinal branch in $V[L][\Agg][P_3 \times QTT \times P_{2b} ][\bar P]$.
\end{lemma}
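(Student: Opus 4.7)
The plan is to adapt the proof of Lemma \ref{almost} essentially verbatim, with one twist forced on us by the fact that $\lambda$ has been collapsed in the intermediate model. First I would record the cardinal arithmetic of $V[L][\Agg][P_3 \times QTT \times P_{2b}]$: the poset $\P_3$ collapses $\lambda = \lambda^b_{\omega+2}$ to have cardinality and cofinality $\mu$, while $\mu = \lambda^b_{\omega+1}$ itself remains a regular cardinal in this model, since $\P_3$ and $\QTT$ are both $<\mu$-closed in the appropriate intermediate models and $\P_{2b}$ is a Cohen poset adding subsets to $\mu$. Since $I \setminus \rho$ is unbounded in $\lambda$ and $\cf(\lambda) = \mu$ in this model, I can thin $I \setminus \rho$ to a cofinal set $I'$ of order type $\mu$, and enumerate it as $\{\alpha_\xi : \xi < \mu\}$.

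Next I would observe that $\bar\P$ is still $\mu$-cc in $V[L][\Agg][P_3 \times QTT \times P_{2b}]$. By Lemma \ref{ccforPbar} it is $\nu$-centered in $V[L][\Agg]$, and the decomposition of $\bar\P$ into $\nu$ filters is a set belonging to $V[L][\Agg]$ which witnesses $\nu$-centeredness (hence $\nu^+ = \mu$-cc) in any outer model. With this in hand I would apply Lemma \ref{kappaccfact} with $\kappa = \mu$ to the $\mu$-sequence $(p_{\alpha_\xi})_{\xi < \mu}$, obtaining $\zeta < \mu$ such that $p_{\alpha_\zeta}$ forces $\{\xi < \mu : p_{\alpha_\xi} \in \dot{G}\}$ to be unbounded in $\mu$, where $\dot{G}$ is the canonical name for the $\bar\P$-generic filter.

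Finally, for any $\bar\P$-generic filter $G$ containing $p_{\alpha_\zeta}$, the set $B = \{\alpha \in I' : p_\alpha \in G\}$ is unbounded in $I'$ and therefore cofinal in $\lambda$. For any $\alpha < \beta$ both in $B$, the conditions $p_\alpha$ and $p_\beta$ lie in $G$ and so admit a common refinement in $G$; by Lemma \ref{endgame2} any such refinement forces $u_\alpha <_{\dot T} u_\beta$, so $u_\alpha <_T u_\beta$ in $T = \dot T[G]$. Hence $\{u_\alpha : \alpha \in B\}$ is a chain in $T$ whose nodes occupy levels cofinal in $\lambda$, and its downward closure is the desired cofinal branch. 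I expect no genuine obstacle here beyond the bookkeeping: the only substantive departure from Lemma \ref{almost} is the thinning step that replaces the (now singular) cardinal $\lambda$ by its cofinality $\mu$, which is exactly what allows the chain-condition argument to be applied at the correct scale.
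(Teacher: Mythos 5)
Your proposal is correct and follows the same route as the paper, whose own proof of this lemma is just the remark that the argument of Lemma \ref{almost} goes through once one notes that $\lambda$ has become an ordinal of cofinality $\mu$ in $V[L][\Agg][P_3 \times QTT \times P_{2b}]$ and that $\bar\P$ remains $\mu$-cc there because it is the union of $\nu$ filters indexed by stems; your thinning of $I\setminus\rho$ to a cofinal set of order type $\mu$ before invoking Lemma \ref{kappaccfact} is exactly the bookkeeping the paper leaves implicit. The only detail you elide is the standard density step from Lemma \ref{almost} (the entire construction can be carried out below an arbitrary condition $p$), which is needed to pass from ``some condition forces a cofinal branch'' to the assertion that $T$ has a cofinal branch in the actual generic extension.
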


\begin{proof} 
  The proof is essentially the same as the proof of Lemma \ref{almost}.
  The main difference is that $\lambda$ is no longer a cardinal in $V[L][\Agg][P_3 \times QTT \times P_{2b}]$,
  in fact it has become an ordinal of cofinality $\mu$. Since
  $\bar \P$ has only $\nu$ stems and conditions with the same
  stem are compatible,  $\bar \P$ still enjoys the $\mu$-cc in  $V[L][\Agg][P_3 \times QTT \times P_{2b}]$,
  and the argument goes through.
\end{proof}

\begin{lemma} \label{finallythere2}
  The tree $T$ has a cofinal branch in $V[L][\Agg][\bar P]$.
\end{lemma}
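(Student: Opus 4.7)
The plan is to mirror the strategy used for Lemma \ref{finallythere}, replacing the Prikry-style preservation lemma \ref{preslemma} with the branch lemma \ref{widersystems} from Section \ref{spencerlemma}: that lemma was tailored exactly to systems of height $\mu^+$ and width $\mu$, which is our present situation since $\lambda = \mu^+$. By Lemma \ref{almost2} the tree $T \in V[L][\Agg][\bar P]$ acquires a cofinal branch $b$ in the larger model $V[L][\Agg][P_3 \times QTT \times P_{2b}][\bar P]$, and the task is to pull $b$ back into $V[L][\Agg][\bar P]$.

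First I would set $\widetilde V = \Vlbi(\kappa)[J^c][\Agg]$, the model over which $\P_3 \times \QTT \times \P_{2b}$ was constructed and shown to be $<\mu$-closed in the analysis preceding Lemma \ref{almost2}. Observe that $V[L][\Agg][\bar P] = \widetilde V[A_e][\bar P]$, so writing $\dot{\bar\P}$ for the natural $\A_e$-name of $\bar \P$, the model $V[L][\Agg][\bar P]$ is a $\widetilde V$-generic extension by the two-step iteration $\A_e * \dot{\bar\P}$.

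With these choices I would apply Lemma \ref{widersystems} with $V = \widetilde V$, $W = V[L][\Agg][\bar P]$, $\delta = \lambda^b_{17}$, $\nu$ any cardinal with $\delta < \nu < \mu$, $\P = \P_3 \times \QTT \times \P_{2b}$, and $E$ the generic for $\A_e * \dot{\bar\P}$. The tree $T$ and branch $b$ are viewed via trivial padding as a $\nu$-relation system with $\nu$ parallel branches all equal to $b$, which is harmless since the conclusion produces some padded branch in $W$, hence $b$ itself. The $<\mu$-closure of $\P$ in $\widetilde V$ has been recorded, and $2^\delta \ge \lambda$ in $\widetilde V$ because $A^c_0$ adjoined $\lambda^b_{\omega+2} = \lambda$ many subsets of $\lambda^b_{17}$ already inside $\widetilde V$.

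The hard part will be verifying that $\A_e * \dot{\bar\P}$ is $\mu$-cc in $\widetilde V$. For the first factor, $\A_e$ is $\lambda^b_{18}$-Knaster in $\Vlbi(\kappa)$ by Lemma \ref{samplecohen}, and this propagates to $\widetilde V$ via Lemma \ref{Cohenrobust} once one checks that $\widetilde V$ is a $(\lambda^b_{17}, \lambda^b_{18})$-good extension of $\Vlb(\kappa)$, the latter following from an Easton-style term-forcing decomposition of $\I^b * \J^c \times \Aggforcing$ analogous to that of Lemma \ref{Wgoodextn}. In $\widetilde V[A_e] = V[L][\Agg]$ the poset $\bar\P$ is $\lambda^b_\omega$-centered by Lemma \ref{ccforPbar}, hence $\mu$-cc. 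The classical preservation of $\mu$-cc under two-step iterations of $\mu$-cc forcings (valid since $\mu$ is regular uncountable) then gives the chain condition for $\A_e * \dot{\bar\P}$ in $\widetilde V$, whereupon Lemma \ref{widersystems} yields $b \in V[L][\Agg][\bar P]$ as required.
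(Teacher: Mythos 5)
There is a genuine gap. You put $\P_{2b}$ into the forcing $\P$ to which Lemma \ref{widersystems} is applied and assert that the $<\mu$-closure of $\P = \P_3 \times \QTT \times \P_{2b}$ in $\widetilde{V} = \Vlbi(\kappa)[J^c][\Agg]$ ``has been recorded''. It has not, and it is false. What Section \ref{plustwo} establishes is that $\P_3 \times \QTT$ is $<\mu$-closed in $\widetilde{V}$, while $\P_{2b}$ is only shown to be $<\mu$-\emph{distributive} there. Indeed $\P_{2b}$ is a Cohen poset on $\mu = \lambda^b_{\omega+1}$ computed in $\Vlb(\kappa)$, and $\I^b$ adds new bounded subsets of $\mu$ over $\Vlb(\kappa)$; so $\widetilde{V}$ contains decreasing sequences of $\P_{2b}$-conditions of length less than $\mu$ whose unions are not elements of $\Vlb(\kappa)$, hence not conditions. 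Closure cannot be weakened to distributivity in Lemma \ref{widersystems}: its proof repeatedly takes lower bounds of externally constructed decreasing sequences of conditions in $\P$ (Claims \ref{spencerclaimtwo} and \ref{spencerclaimthree}, and the binary tree $(p_\sigma)_{\sigma \in {}^{<\delta}2}$ with lower bounds at limit stages), and this genuinely requires $\P$ to be $<\mu$-closed in the ground model. So the hypotheses of the lemma fail for your choice of $\P$, and the branch cannot be pulled down in one step this way.

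The repair is to treat $\P_{2b}$ on the chain-condition side rather than the closed side, which is how the paper argues, using the simpler Facts \ref{formerlyclosed} and \ref{chain} instead of Lemma \ref{widersystems}. First, $\P_3 \times \QTT$ is $<\mu$-closed in $\Vlbi(\kappa)[J^c][\Agg][P_{2b}]$ (here the $<\mu$-distributivity of $\P_{2b}$ is exactly what is needed), and $\A_e * \dot{\bar\P}$ is $\mu$-cc over that model; since $2^{\lambda^b_{17}} \ge \lambda$ there, Fact \ref{formerlyclosed} puts the branch into $V[L][\Agg][P_{2b}][\bar P]$. Second, $\P_{2b} \times \P_{2b}$ remains $\lambda$-cc in $V[L][\Agg][\bar P]$ (because $\bar\P \times \P_{2b} \times \P_{2b}$ is $\lambda$-Knaster in $V[L][\Agg]$), so by Fact \ref{chain} the poset $\P_{2b}$ has the $\lambda$-approximation property over $V[L][\Agg][\bar P]$ and the branch descends to that model. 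The remaining ingredients of your proposal --- the $\mu$-cc of $\A_e * \dot{\bar\P}$ over $\widetilde{V}$, the observation that $2^{\lambda^b_{17}} \ge \lambda$, and the padding of a single tree into a system --- are correct but become unnecessary once $\P_{2b}$ is split off in this way.
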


\begin{proof} 
  We start by claiming that  $\P_3 \times \QTT$ is formerly $<\mu$-closed
  in $V[L][\Agg][P_{2b}][\bar P]$, with a view to using  
  Fact \ref{formerlyclosed}.
  This is easy: because of the robust $\mu$-cc of $\bar \P$,
  $\A_e \times \bar \P$ is $\mu$-cc  
  in $\Vlbi(\kappa)[J^c][\Agg][P_{2 b}]$,
  and we can argue as in the discussion preceding
  Lemma \ref{magic2}.
  it follows that $T$ has a cofinal branch in $V[L][\Agg][P_{2b}][\bar P]$. 

  Now we claim that $\P_{2b}$ has the $\lambda$-approximation property in 
  $V[L][\Agg][\bar P]$. Again this is easy, because $\bar \P \times \P_{2b}$
  is $\lambda$-Knaster in $V[L][\Agg]$.
  It follows that $T$ has a cofinal branch in $V[L][\Agg][\bar P]$ as required.  
\end{proof}

\subsection{The tree property at $\aleph_{\omega^2+3}$} \label{plusthree}

The proof that the tree property holds at $\aleph_{\omega^2 + 3}$ in our final model is very similar to that
for $\aleph_{\omega^2 + 2}$, so we only sketch it.  The main point is to get a suitable generic embedding with domain
$V[L][\Agg]$ and critical point $\lambda^b_{\omega+3}$. This is much more straightforward than
it was for $\lambda^b_{\omega+2}$ in Section \ref{plustwo}, mostly because $\lambda^b_{\omega+3}$ is
supercompact in $\Vlbi(\kappa)[J^c_0]$, so that we only need to account for
$J^c_1$, $A_e$ and $\Agg$. Moreover $A_e$ and $\Agg$ are both adding Cohen subsets to
cardinals below the critical point. 

We recall that $J^c_1$ is a single round of the $\A * \U * \S$ construction
defined in $\Vlbi(\kappa)[J^c_0]$. The parameters are
$\lambda^b_{\omega+1}$, $\lambda^b_{\omega+2}$, and $\lambda^b_{\omega+3}$.
\begin{itemize}
\item  $\A^c_1 = \Add^{\Vlb(\kappa)}(\lambda^b_{\omega+1}, [\lambda^b_{\omega+2}, \lambda^b_{\omega+3}))$.
\item  $\Aggforcing = \Add^V(\lambda^b_{\omega +2}, \lambda^*)$,
  where $\lambda^* = j_{01}(\lambda^a_0)$.
\item    $\A_e = \Add^{\Vlb(\kappa)}(\lambda^b_{17}, \lambda^b_{\omega+3})$.
\end{itemize}

    We fix an embedding $j$ witnessing that $\lambda^b_{\omega+3}$ is highly supercompact in $V$,
    and lift it trivially to $\Vlbi(\kappa)[J^c_0]$.
    We will lift $j$ onto $V[L][\Agg]$ much as we would in Lemma \ref{indestructible},
    if our goal was to prove that $\lambda^b_{\omega+3}$ has the tree property in 
    $V[L][\Agg]$.

We will step through the construction from the proof of Lemma \ref{indestructible}, adapted to a situation
where the $\A * \U * \S$ construction only runs for one round.
In our current context the parameters are set as follows:
\begin{itemize}
\item $n = 0$.
\item $\mu_0 = \lambda^b_{\omega+1}$, $\mu_1 = \lambda^b_{\omega+2}$, $\mu_2 = \lambda^b_{\omega+3}$.
\item $\A_0 * \U_0 * \S_0 = \J^c_1$.
\item $\Vd = \Vlbi(\kappa)[J^c_0]$. There is no need for $\Vi$.
\item $\D^1 = \Agg$.
\item $\D^0 = \A_e$.  
\end{itemize} 
 
The generic embedding is added to $V[L][\Agg]$ by a product forcing
$\P_2 \times \P_3 = \P_{2a} \times \P_{2b} \times \P_3$, where:
\begin{itemize}
\item $\P_{2 b} = j(\Aggforcing)/\Agg$, so $\P_{2b}  = \Add^V(\lambda^b_{\omega +2}, j(\lambda^*) \setminus j[\lambda^*])$.
\item $\P_{2a} = j(\A_e \times \A^c_1)/A_e \times A^c_1$, so 
$\P_{2a} =  \Add^{\Vlb(\kappa)}(\lambda^b_{17}, j(\lambda^b_{\omega+3}) \setminus \lambda^b_{\omega+3})
  \times 
\Add^{\Vlb(\kappa)}(\lambda^b_{\omega+1}, j(\lambda^b_{\omega+3}) \setminus \lambda^b_{\omega+3})$. 
\item $\P_3$ is defined in
  and  $<\lambda^b_{\omega+2}$-closed in $\Vlbi(\kappa)[J^c]$,
  and retains this closure in $\Vlbi(\kappa)[J^c][\Agg]$. 
\end{itemize}  


   As in  the proof of Lemma \ref{indestructible}, $\P_3$ is still
   $<\lambda^b_{\omega+2}$-closed in
   \[
   \Vlbi(\kappa)[J^c][\Agg][P_{2b}]
   = \Vlbi(\kappa)[J^c][\hat \Agg],
   \]
   where $\hat\Agg$ is the $j(\Aggforcing)$-generic object obtained by combining
   $\Agg$ and $P_{2b}$.    
   In this model $\A_e \times \P_{2a}$ is $\lambda^b_{\omega+2}$-Knaster
   and $2^{\lambda^b_{\omega+1}} = \lambda^b_{\omega+3}$.

   To summarise the key points: 
\begin{itemize}

\item $\P_2$ is $\lambda^b_{\omega+3}$-Knaster in $V[L][\Agg]$. 
  
\item    $\P_3$ is formerly $< \lambda^b_{\omega+2}$-closed forcing
   in $V[L][\Agg][P_2]$.

\end{itemize}
With this information in hand, we may finish the proof exactly as in Section \ref{plustwo}.

\bibliographystyle{amsplain}
\bibliography{james_references}

\printindex[Notation]

\appendix

\section{A lifting argument} \label{appendixA}

As promised, we give here the details of the generic supercompactness
for $\theta$ in the statement of Lemma \ref{fromVzerotoV}.

    Let $\Q \in V$ be $<\theta$-directed closed and let $H$ be $\Q$-generic over $V$.    
    Decompose $V$ as $V_0[A^0 * U^0 \restriction \theta][L^0]$ where $L^0$ is generic over
    $V_0[A^0 * U^0 \restriction \theta]$ for the Laver preparation ${\FL}^0$. 
    Let $\dot R$ be an $\Add(\omega, \theta) * \dot \U^0 \restriction \theta$-name for
    the two-step iteration $\R = {\FL}^0 * \dot {\Q}$. Appealing to the
    properties of $\theta$ and $\phi_0$ in $V_0$ we fix $i:V_0 \rightarrow N_0$ such that
    for an appropriate $\gamma > \delta$ (which may be chosen arbitrarily large):
    \begin{itemize}
    \item $i$ witnesses that $\theta$ is $\gamma$-supercompact in $V_0$.
    \item $\gamma^{++}$ is a fixed point of $i$.
    \item $i(\psi_0)(\theta) = \dot R$.
    \item The first point of $\dom(i(\phi_0))$ past $\theta$ is greater than $\gamma$.
    \end{itemize}   

    Let $A'$ be $\Add(\omega, i(\theta) \setminus \theta)$-generic over $V[H]$. Our goal
    is to find a lifting of $i$ onto $V[H]$ defined in $V[H][A']$. 
    
    The main point is that $i(\B^0)$ agrees with $\B^0$ up to $\theta$,
    uses the name $\dot R$ at stage $\theta$, and then has nothing in its support 
    until  past $\gamma$.
    By a straightforward adaptation of the argument of \cite[Claim 4.7]{NeemanUpto}, and using
    the gap in the support which we just mentioned,  $i(\B^0) \restriction (\theta, i(\theta))$
    is $\gamma$-closed in $N_0$. An easy counting argument shows that the set of maximal antichains
    of $i(\B^0) \restriction (\theta, i(\theta))$ which lie in $N_0$ has cardinality
    at most $\gamma^+$ in $V_0$, so we may build $B \in V_0$ which is 
    $i(\B^0) \restriction (\theta, i(\theta))$-generic over $N_0$.

    Let $\hat {\R}$ be the term forcing $\termspace^{V_0}(\Add(\omega, \theta) * \dot\U^0 \restriction \theta, \dot {\R})$,
    so that $\hat {\R}$ is $<\theta$-closed in $V_0$, and hence $i(\hat {\R})$ is $<i(\theta)$-closed
    in $N_0$. Since $\gamma < i(\theta)$ the poset $i(\hat {\R})$ is $\gamma$-closed in $V_0$.
    By choosing $\gamma$ large enough we may assume that the set of maximal antichains of $i(\hat {\R})$
    which lie in $N_0$ has size $\gamma^+$ in $V_0$, and we may build $R^* \in V_0$ which
    is $i(\hat {\R})$-generic over $N_0$. We will eventually make sure that $R^*$ contains a term for
    a master condition but we defer the description of this term.

    Let $A^*$ be obtained by combining $A^0$ and $A'$ in the natural way, so that $A^*$ is
    $\Add(\omega, i(\theta))$-generic over $V_0$ and $i[A^0] \subseteq A^*$. Keep in mind that
    $A'$ was obtained by forcing over $V[H] = V_0[A^0 * U^0 * H]$, so it is mutually generic with $U^0 * H$ over $V_0[A^0]$.
    We note for use later that by this analysis:
    \begin{itemize}
    \item $A^* * U^0 \restriction \theta$ is generic over $N_0$ for $\Add(\omega, i(\theta)) * i(\U^0) \restriction \theta$.  
    \item $A'$ is mutually generic with $L^0 * H$ over $V_0[A^0 * U^0 \restriction \theta]$. 
    \end{itemize} 
    
    Recall that we built $B \in V_0$ to be generic over $N_0$
    for the forcing $i(\B^0) \restriction (\theta, i(\theta))$, which is $\gamma$-closed in $N_0$.
    It is easy to see that $\Add(\omega, i(\theta)) * i(\U) \restriction \theta$ is $\theta$-cc
    in $N_0$, and so by Easton's Lemma $B$ is $i(\B^0) \restriction (\theta, i(\theta))$-generic over $N_0[A^* * U \restriction \theta]$.

    Now we recall that $L^0 * H$ is ${\FL}^0 * {\Q}$-generic over $V_0[A^0 * U^0 \restriction \theta]$,
    so {\it a fortiori} it is ${\FL}^0 * {\Q}$-generic over $N_0[A^0 * U^0 \restriction \theta]$.  
    As we noted above $A'$ is mutually generic with $L^0 * H$ over $V_0[A^0 * U^0 \restriction \theta]$,
    so these objects are mutually generic over $N_0[A^0 * U^0 \restriction \theta]$ and hence
    $L^0 * H$ is ${\FL}^0 * {\Q}$-generic over $N_0[A^* * U^0 \restriction \theta]$.  
    Since $\dot R = i(\psi_0)(\theta)$ and it  names ${\FL}^0 * {\Q}$,
    we see that $A^* * (U^0 \restriction \theta * L^0 * H)$ is $\Add(\omega, i(\theta)) * i(\U^0) \restriction \theta + 1$-generic
    over $N_0$.

    Choosing $\gamma$ large enough we can arrange that $L^0 * H$ is generic for $\gamma^+$-cc forcing, so that by Easton's Lemma again
    $B$ is $i(\B^0)(\theta, i(\theta))$-generic over $N_0[A^* * (U^0 \restriction \theta * L^0 * H)]$.
    As in Fact \ref{Itay4.5} it follows that the upwards closure of $B$ in $i(\B^0)^{A^* * (U \restriction \theta * L * H)}(\theta, i(\theta))$ is
    generic for this forcing over $N_0[A^* * (U^0 \restriction \theta * L^0 * H)]$: combining the upwards closure of $B$ with
    $A^* * (U^0 \restriction \theta * L^0 * H)$
    we obtain $A^* * U^*_{i(\theta)}$ which is $\Add(\omega, i(\theta)) * i(\U^0 \restriction \theta)$-generic over $N_0$.
    Note that we can rearrange $A^* * (U^0 \restriction \theta * L^0 * H)$ as $A^0 * U^0 * H * A'$, and that
    $U^*_{i(\theta)} \in V_0[A^0 * U^0 * H * A'] = V[H][A']$. By standard arguments we can lift $i$ to obtain
    a generic embedding $i: V_0[A^0 * (U^0 \restriction \theta)] \rightarrow N_0[A^* * U^*_{i(\theta)}]$. 

    Recall that $\hat {\R}$ is defined in $V_0$ as the set of $\A^0 * \U^0 \restriction \theta$-names for elements
    of $\dot {\R}$. Choosing $\gamma$ large enough we may arrange that $i \restriction \hat {\R} \in N_0$,
    and it follows readily that $i \restriction {\R} \in N_0[A^* * U^*_{i(\theta)}]$. By the construction
    of $U^*_{i(\theta)}$ we have that $L^0 * H \in N_0[A^* * U^*_{i(\theta)}]$, so that $i[L^0 * H] \in N_0[A^* * U^*_{i(\theta)}]$. 
    Since $i(\R)$ is $<i(\theta)$-directed closed, $i[L^0 * H]$ has a lower bound in $i(\R)$ and
    we claim that we can choose a term $\dot r \in i(\hat {\R})$ which is forced to denote a lower bound:
    this is easy because $\dot r$ has a simple definition in terms of $i \restriction \hat {\R}$
    and the $i( \A^0 * \U^0 \restriction \theta)$-generic object.
    At this point we return to the choice of $R^*$, an object which has not been used up to now,
    and make sure that $\dot r \in {\R}^*$.

    Now we can realise the set of names $R^*$, and obtain a filter $R^+ \subseteq i(\R)$ such that
    $i[L^0 * H] \subseteq R^+$. In order to complete the lifting and obtain $i: V[H] \rightarrow N_0[A^* * U^*_{i(\theta)}][R^+]$,
    it only remains to verify that $R^+$ is generic over $N_0[A^* * U^*_{i(\theta)}]$.
    Recall that we chose $R^* \in V_0$ to be generic over $N_0$. Since $R^*$ is generic over $N_0$ for
    $< i(\theta)$-closed forcing and $A^* * U^*_{i(\theta)}$ is generic over $N_0$ for $i(\theta)$-cc forcing,
    by Easton's Lemma $R^*$ is generic over $N_0[A^* * U^*_{i(\theta)}]$. It follows that 
    $R^+$ is generic over $N_0[A^* * U^*_{i(\theta)}]$.

\end{document}